\DeclareMathAlphabet{\mathpzc}{OT1}{pzc}{m}{it}
\newcommand\da{\!\downarrow\!}
\newcommand\la{\leftarrow}
\newcommand\lra{\longrightarrow}
\newcommand\lla{\longleftarrow}
\newcommand\id{\mathrm{id}}
\newcommand\ten{\otimes}
\newcommand\CC{\mathrm{C}}
\newcommand\EE{\mathrm{E}}
\newcommand\Ru{\mathrm{R_u}}
\newcommand\Th{\mathrm{Th}\,}
\renewcommand\H{\mathrm{H}}
\newcommand\z{\mathrm{Z}}
\newcommand\N{\mathbb{N}}
\newcommand\Z{\mathbb{Z}}
\newcommand\Q{\mathbb{Q}}
\newcommand\Ql{\mathbb{Q}_{\ell}}
\newcommand\Zl{\mathbb{Z}_{\ell}}
\newcommand\R{\mathbb{R}}
\newcommand\Cx{\mathbb{C}}
\newcommand\vv{\mathbb{V}}
\newcommand\bA{\mathbb{A}}
\newcommand\bF{\mathbb{F}}
\newcommand\bG{\mathbb{G}}
\newcommand\bH{\mathbb{H}}
\newcommand\bI{\mathbb{I}}
\newcommand\bL{\mathbb{L}}
\newcommand\bO{\mathbb{O}}
\newcommand\bS{\mathbb{S}}
\newcommand\bU{\mathbb{U}}
\newcommand\cC{\mathcal{C}}
\newcommand\cD{\mathcal{D}}
\newcommand\cE{\mathcal{E}}
\newcommand\cF{\mathcal{F}}
\newcommand\cG{\mathcal{G}}
\newcommand\cN{\mathcal{N}}
\newcommand\cP{\mathcal{P}}
\newcommand\cT{\mathcal{T}}
\newcommand\cW{\mathcal{W}}
\renewcommand\O{\mathscr{O}}
\newcommand\sA{\mathscr{A}}
\newcommand\sC{\mathscr{C}}
\newcommand\sE{\mathscr{E}}
\newcommand\sF{\mathscr{F}}
\newcommand\sS{\mathscr{S}}
\renewcommand\L{\Lambda}
\newcommand\m{\mathfrak{m}}
\newcommand\g{\mathfrak{g}}
\newcommand\fh{\mathfrak{h}}
\newcommand\fp{\mathfrak{p}}
\newcommand\fu{\mathfrak{u}}
\newcommand\fv{\mathfrak{v}}
\renewcommand\hom{\mathscr{H}\!\mathit{om}}
\newcommand\cHom{\mathcal{H}\!\mathit{om}}
\newcommand\Ho{\mathrm{Ho}}
\newcommand\Alg{\mathrm{Alg}}
\newcommand\Mod{\mathrm{Mod}}
\newcommand\Hom{\mathrm{Hom}}
\newcommand\HHom{\underline{\mathrm{Hom}}}
\newcommand\Aut{\mathrm{Aut}}
\newcommand\ROut{\mathrm{ROut}}
\newcommand\RAut{\mathrm{RAut}}
\newcommand\Iso{\mathrm{Iso}}
\newcommand\Isoc{\mathrm{Isoc}}
\newcommand\Gal{\mathrm{Gal}}
\newcommand\Ob{\mathrm{Ob}\,}
\newcommand\CoLie{\mathrm{CoLie}}
\newcommand\Ind{\mathrm{Ind}}
\newcommand\Ab{\mathrm{Ab}}
\newcommand\Top{\mathrm{Top}}
\newcommand\Gp{\mathrm{Gp}}
\newcommand\agp{\mathrm{AGp}}
\newcommand\agpd{\mathrm{AGpd}}
\newcommand\mal{\mathrm{Mal}}
\newcommand\Spec{\mathrm{Spec}\,}
\newcommand\Set{\mathrm{Set}}
\newcommand\Aff{\mathrm{Aff}}
\newcommand\Sp{\mathrm{Sp}}
\newcommand\sch{\mathrm{sch}}
\newcommand\Sing{\mathrm{Sing}}
\newcommand\FD{\mathrm{FD}}
\newcommand\ad{\mathrm{ad}}
\newcommand\Lim{\varprojlim}
\newcommand\LLim{\varinjlim}
\newcommand\into{\hookrightarrow}
\newcommand\onto{\twoheadrightarrow}
\newcommand\abuts{\implies}
\newcommand\xra{\xrightarrow}
\newcommand\pr{\mathrm{pr}}
\newcommand\alg{\mathrm{alg}}
\newcommand\bt{\bullet}
\newcommand\by{\times}
\newcommand\mc{\mathrm{MC}}
\newcommand\Gg{\mathrm{Gg}}
\newcommand\Rep{\mathrm{Rep}}
\newcommand\Symm{\mathrm{Symm}}
\newcommand\SL{\mathrm{SL}}
\newcommand\GL{\mathrm{GL}}
\newcommand\et{\acute{\mathrm{e}}\mathrm{t}}
\newcommand\nr{\mathrm{nr}}
\newcommand\pnr{\mathrm{pnr}}
\newcommand\an{\mathrm{an}}
\newcommand\Tot{\mathrm{Tot}\,}
\newcommand\diag{\mathrm{diag}\,}
\newcommand\pro{\mathrm{pro}}
\newcommand\pd{\partial}
\newcommand\half{\frac{1}{2}}
\newcommand\cris{\rm{cris}}
\newcommand\pcris{\rm{pcris}}
\newcommand\Gr{\mathrm{Gr}}
\newcommand\gr{\mathrm{gr}}
\newcommand\ab{\mathrm{ab}}
\newcommand\cts{\mathrm{cts}}
\newcommand\gp{\mathrm{Gp}}
\newcommand\gpd{\mathrm{Gpd}}
\newcommand\Gpd{\mathrm{Gpd}}
\newcommand\Fil{\mathrm{Fil}}
\newcommand\algpia{\varpi_1(X,\bar{x})}
\newcommand\w{{}^W\!}
\renewcommand\alg{\mathrm{alg}}
\newcommand\red{\mathrm{red}}
\newcommand\Lie{\mathrm{Lie}}
\newcommand\sk{\mathrm{sk}}
\newcommand\cosk{\mathrm{cosk}}
\newcommand\op{\mathrm{opp}}
\newcommand\opp{\mathrm{opp}}
\newcommand\co{\colon\thinspace}
\newcommand\oR{\mathbf{R}}
\newcommand\oL{\mathbf{L}}
\newcommand\uleft\underleftarrow
\newcommand\uline\underline
\newcommand\uright\underrightarrow
\newtheorem{theorem}{Theorem}[section]
\newtheorem{proposition}[theorem]{Proposition}
\newtheorem{corollary}[theorem]{Corollary}
\newtheorem{lemma}[theorem]{Lemma}
\newtheorem*{theorem*}{Theorem}
\newtheorem*{proposition*}{Proposition}
\newtheorem*{corollary*}{Corollary}
\newtheorem*{lemma*}{Lemma}
\newtheorem*{conjecture*}{Conjecture}
\theoremstyle{definition}
\newtheorem{definition}[theorem]{Definition}
\newtheorem*{definition*}{Definition}
\theoremstyle{remark}
\newtheorem{example}[theorem]{Example}
\newtheorem{examples}[theorem]{Examples}
\newtheorem{remark}[theorem]{Remark}
\newtheorem{remarks}[theorem]{Remarks}
\newtheorem{assumption}[theorem]{Assumption}
\newtheorem*{example*}{Example}
\newtheorem*{examples*}{Examples}
\newtheorem*{remark*}{Remark}
\newtheorem*{remarks*}{Remarks}
\newtheorem*{exercise*}{Exercise}
\title{Galois actions on homotopy groups of algebraic varieties}
\author{J.P.Pridham}
\thanks{
This work was supported by Trinity College, Cambridge; and by the Engineering and Physical Sciences Research Council [grant number  EP/F043570/1].}
\begin{document}

\begin{abstract}
We study the Galois actions on the $\ell$-adic schematic and Artin--Mazur homotopy groups of algebraic varieties. For proper varieties  of good reduction over a local field $K$, we show that the $\ell$-adic schematic homotopy groups are mixed representations explicitly determined by the Galois action on cohomology of Weil sheaves, whenever $\ell$ is not equal to the residue characteristic $p$ of $K$. For quasi-projective varieties of good reduction, there is a similar characterisation involving the Gysin spectral sequence. When $\ell=p$, a slightly weaker result is proved by comparing the crystalline and $p$-adic schematic homotopy types. Under favourable conditions, a comparison theorem   transfers all these descriptions to the Artin--Mazur homotopy groups $\pi_n^{\et}(X_{\bar{K}})\ten_{\hat{\Z}} \Ql$. 
\end{abstract}

\maketitle

\section*{Introduction}

In \cite{arma}, Artin and Mazur introduced the \'etale homotopy type of an algebraic variety. This gives rise to \'etale homotopy groups $\pi_n^{\et}(X, \bar{x})$; these are pro-finite groups, abelian for $n \ge 2$, and $\pi_1^{\et}(X, \bar{x})$ is the usual \'etale fundamental group. In \cite[\S 3.5.3]{chaff}, an approach for defining $\ell$-adic schematic homotopy types was discussed, giving $\ell$-adic schematic homotopy groups $\varpi_n(X, \bar{x})$; these are (pro-finite-dimensional) $\Ql$-vector spaces when $n\ge 2$. In \cite{olssonhodge}, Olsson introduced a crystalline schematic homotopy type, and established a comparison theorem with the $p$-adic schematic homotopy type.

Thus, given a variety $X$ defined over a number field $K$, there are many notions of homotopy group:
\begin{itemize}
\item 
for each embedding $K \into \Cx$, both classical and schematic homotopy groups of the topological space $X_{\Cx}$;
\item 
the \'etale homotopy groups of $X_{\bar{K}}$;
\item 
the $\ell$-adic schematic homotopy groups of $X_{\bar{K}}$;
\item 
over localisations $K_{\fp}$ of $K$, the crystalline schematic homotopy groups of $X_{K_{\fp}}$. 
\end{itemize}
However, despite their long heritage, very little was known even about the relation between \'etale and classical homotopy groups, unless the variety is simply connected. 

The \'etale and $\ell$-adic homotopy types carry natural Galois actions, and the main aim of this paper is to study  their structure. 
In many respects, the analogous question for $X_{\Cx}$ has already been addressed, with \cite{KTP} and \cite{mhs} describing mixed Hodge structures on the classical and real schematic homotopy types.

In \cite{htpy}, a new approach to studying non-abelian cohomology and schematic homotopy types of topological spaces was introduced. Its primary application was to transfer cohomological data (in particular mixed Hodge structures) to give information about homotopy groups.  The  bulk of this paper is concerned with adapting those techniques to pro-simplicial sets. This allows us to study Artin--Mazur homotopy types of algebraic varieties, and to translate Lafforgue's Theorem and Deligne's Weil II theorems into statements about homotopy types. We 
thus establish arithmetic analogues of the results of \cite{mhs}, with Galois actions replacing mixed Hodge structures.

The main comparison results are Proposition \ref{piclasstohat} (showing when \'etale homotopy groups are pro-finite completions of classical homotopy groups), Theorem \ref{etpimal} 
(describing $\ell$-adic schematic homotopy groups in terms of \'etale homotopy groups), and Proposition \ref{crisequiv} (comparing $p$-adic and crystalline homotopy groups). 

If $X$ is smooth or proper and normal, then Corollary \ref{wgtexistspin} shows that the Galois actions on the $\ell$-adic schematic homotopy groups are mixed, with Remark \ref{wgtexistspinet} indicating when the same is true for \'etale homotopy groups.
Corollaries  \ref{formalrk} and  \ref{qformalrk} then show how to determine  $\ell$-adic schematic homotopy groups of smooth varieties over finite fields as Galois representations, by recovering them from cohomology groups of smooth Weil sheaves, thereby extending  \cite{weight1} 
from fundamental groups to higher homotopy groups, and indeed to the whole homotopy type.
Corollaries \ref{locqformalb1} and \ref{crisqformalrk} give  similar results for $\ell$-adic and $p$-adic homotopy groups of varieties over local fields. 

The structure of the paper is as follows.

In Section \ref{profinitesn}, we recall standard definitions of pro-finite  homotopy types 
and homotopy groups, and then establish some fundamental results. Proposition \ref{cohochar} shows how Kan's loop group can be used to construct the 
pro-finite
completion 
$\hat{X}$
of a space $X$, and Proposition \ref{piclasstohat} describes homotopy groups of  
$\hat{X}$.

Section \ref{review} reviews the pro-algebraic homotopy types of \cite{htpy}, with the formulation of multipointed  pro-algebraic homotopy types from \cite{mhs}, together with some new material on hypercohomology.

We adapt these results in Section \ref{algtypes}  to define non-abelian cohomology of a variety with coefficients in a simplicial algebraic group over $\mathbb{Q}_{\ell}$. The machinery developed in \cite{htpy} applies to give a pro-$\Ql$-algebraic homotopy type,  which is  a non-nilpotent generalisation of the $\mathbb{Q}_{\ell}$-homotopy type of Weil II (\cite{Weil2}). Its homotopy groups are $\ell$-adic schematic homotopy groups,  and Theorem \ref{etpimal} gives conditions for relating these to  \'etale homotopy groups. Explicitly,  if $\pi_1X$ is algebraically good (see Definition \ref{relgood2}), and the higher homotopy groups have finite rank, then the higher homotopy groups of the pro-$\Ql$-algebraic homotopy type are just $\pi_n^{\et}X\ten_{\hat{\Z}}\Ql$.
For complex varieties, we also compare the pro-algebraic homotopy types associated to the \'etale and analytic topologies. 

Section \ref{rftypes} contains technical results showing  how to extend the machinery of  Section \ref{algtypes} to relative and filtered homotopy types. The former facilitate $p$-adic Hodge theory, while the latter are developed in order to study quasi-projective varieties. We also explore what it means for a pro-discrete group to act algebraically on a homotopy type.
In Section \ref{galoisactions}, we investigate properties of homotopy types endowed with algebraic Galois actions.

In Section \ref{finite}, the techniques of \cite{weight1} for studying Galois actions on algebraic groups  then extend the finite characteristic results of \cite{gal} to  non-nilpotent and higher pro-$\Ql$-algebraic homotopy groups.  The results are  similar to \cite{mhs}, substituting  Frobenius actions for Hodge structures. Over finite fields, Theorem \ref{formal}  uses Lafforgue's Theorem and Deligne's Weil II theorems to show that the pro-$\Ql$-algebraic homotopy type of a  smooth projective variety is formal --- this means that it can be recovered from cup products on cohomology of local systems.
 For quasi-projective varieties, Corollary \ref{qformal} establishes a related property we call quasi-formality,  which is analogous to Morgan's description of the rational homotopy type (\cite{Morgan}) in terms of the Leray spectral sequence.

Section \ref{local} then addresses the same question, but over local fields.
In unequal characteristic, smooth specialisation  suffices to adapt results from finite characteristic for varieties with good reduction. In equal characteristic, we show how pro-$\Q_p$-algebraic homotopy types relate to the framework of $p$-adic Hodge theory. Proposition \ref{crisequiv} is a reworking of Olsson's non-abelian $p$-adic Hodge theory, and this has various consequences for Galois actions on Artin--Mazur homotopy types (Theorems \ref{outercris}--\ref{crisqformal}). Explicitly, the homotopy type becomes  formal as a Galois representation only after tensoring with the  ring $B_{\cris}^{\sigma}$ of Frobenius-invariant periods,  which means that the Hodge filtration is the only really new structure on the relative Malcev homotopy type (Remark \ref{finalrks}.2).

\subsection*{Acknowledgements}
I would like to record my sincere thanks to the team of referees for patiently reading through the manuscript. As well as identifying numerous errors, their suggestions have greatly improved the exposition.

\tableofcontents

\section{Pro-finite homotopy types}\label{profinitesn}

\begin{definition}
Let $\bS$ be the category of simplicial sets, and take $s\gpd$ to consist of  those simplicial objects in the category of groupoids whose spaces of objects are discrete (i.e. sets, rather than simplicial sets). 

Let $\Top$ denote the category of compactly generated Hausdorff topological spaces.  
\end{definition}

\begin{definition}
Given $G \in s\gpd$, we define $\pi_0G$ to be the groupoid with objects $\Ob G$, and morphisms $(\pi_0G)(x,y) = \pi_0G(x,y)$.
\end{definition}

\begin{definition}
A map $f\co X \to Y$ in  $\Top$ is said to be a \emph{weak equivalence} if it gives an isomorphism $\pi_0X \to \pi_0Y$ on path components, and for all $x \in X$, the maps $\pi_n(f)\co \pi_n(X,x) \to \pi_n(Y,fx)$ are all isomorphisms. 

We give $\bS$ the model structure of  \cite[Theorem V.7.6]{sht}; in particular, a map $f\co X \to Y$ in $\bS$ is said to be a  \emph{weak equivalence} if the map $|f|\co |X| \to |Y|$ of topological spaces is so, where $|\cdot |$ is the realisation functor of \cite[\S I.2]{sht}. Likewise, for $x \in X_0$ we write $\pi_n(X,x):= \pi_n(|X|,x)$.

A map $f\co G \to H$ in $s\gpd$ is a  \emph{weak equivalence} if the map  $\pi_0G \to \pi_0H$ is an equivalence, and for all objects $x \in \Ob G$, the maps $\pi_n(G(x,x))\to \pi_n(H(fx,fx))$ are all isomorphisms.  

For each of these categories, we define the corresponding  \emph{homotopy categories} $\Ho(\bS),\Ho(\Top),\Ho(s\gpd)$ by localising at weak equivalences.
\end{definition}

Note that there is a functor from $\Top$ to $\bS$ which sends $X$ to the simplicial set
$$
\Sing(X)_n= \Hom_{\Top}(|\Delta^n|, X).
$$
This is right adjoint to realisation, and these functors are a pair of Quillen equivalences, so become quasi-inverse on the corresponding homotopy categories. From now on, we will thus restrict our attention to simplicial sets.

\begin{definition}
 Given $G \in s\gpd$, define the category $\bS_G$ of $G$-spaces to consist of simplicial representations of $G$. Explicitly, $X \in \bS_G$ consists of $X(a) \in \bS$ for each $a \in \Ob G$, together with maps $G(a,b) \by X(b) \to X(a)$, satisfying the obvious associativity and unit axioms.
\end{definition}

\begin{definition}\label{wdef}
Recall from  \cite[ \S V.4]{sht} that for $G \in s\gpd$, the $G$-space $WG$ is defined by 
$$
(WG)_n(x)= \coprod_{y_n, \ldots, y_0 \in \Ob G} G_n(x,y_n)\by G_{n-1}(y_{n}, y_{n-1})\by \ldots\by G(y_1, y_0) 
$$
with operations
\begin{eqnarray*}
\pd_i(g_n, g_{n-1},\ldots ,g_0)&=& \left\{ \begin{matrix} (\pd_ig_n,\pd_{i-1}g_{n-1},\ldots, (\pd_0g_{n-i})g_{n-i-1}, g_{n-i-2}, \ldots, g_0) & i<n,\\ (\pd_ng_n,\pd_{n-1}g_{n-1}, \ldots, \pd_1g_1) & i=n, \end{matrix} \right.\\
\sigma_i(g_n, g_{n-1}, \ldots , g_0)&=& (\sigma_ig_n,\sigma_{i-1}g_{n-1},\ldots, \sigma_0g_{n-i}, \id ,  g_{n-i-1}, \ldots, g_0),
\end{eqnarray*}
and for $h \in G_n(z,x)$ and $(g_n, g_{n-1}, \ldots , g_0)\in (WG)(x)$,  
$$
h(g_n, g_{n-1}, \ldots , g_0)=(hg_n, g_{n-1}, \ldots , g_0).
$$
\end{definition}

Note  that $WG(x)$ is contractible for each $x \in \Ob G$. 

\begin{definition}\label{barwdef} As in \cite[Ch.V.7]{sht}, there is a  \emph{classifying space  functor} $\bar{W}\co s\gpd \to \bS$, given by $\bar{W}G= G\backslash WG$, the co-invariants of the $G$-action. This has a  left adjoint $G\co \bS \to s\gpd$, Dwyer and Kan's  \emph{loop groupoid functor} (\cite{pathgpd}), and   these form a pair of Quillen equivalences, so give  equivalences $\Ho(\bS)\sim \Ho(s\gpd)$.  The objects of $G(X)$ are $X_0$, and  for any $x,y \in X_0$, the geometric realisation $|G(X)(x,y)|$ is weakly equivalent to the  space of paths from $x$ to $y$ in $|X|$. 
These functors have the additional properties that  $\pi_0G(X)\cong \pi_f|X|$ (the fundamental groupoid),  $\pi_f(|\bar{W}G|)\cong \pi_0G$,   $\pi_n(G(X)(x,x))\cong\pi_{n+1}(|X|,x)$ and $\pi_{n+1}(|\bar{W}G|,x)\cong\pi_n(G(x,x))$. This allows us to study simplicial groupoids instead of topological spaces.
\end{definition}

\begin{definition}\label{localsysdef}
If $X \in \bS$, then a local system is just a representation of the groupoid $\pi_fX$, i.e. a functor $\pi_fX \to \Gp$  from the fundamental groupoid to the category of groups. As in \cite[\S VI.5]{sht},  homotopy groups form a local system $\pi_nX$, whose stalk at $x$  is  $\pi_n(X,x)$. 
\end{definition}

\subsection{Pro-simplicial $L$-groupoids}

\begin{definition}
Given a set $L$ of primes, we say that an $L$-\emph{group} is a finite group $G$ for which only primes in $L$ divide its order. We define an $L$-groupoid to be a groupoid $H$ for which $H(x,x)$ is an $L$-group  for all $x \in \Ob H$.
\end{definition}

\begin{definition}\label{prodef}
Given a category $\cC$, recall that the category $\pro(\cC)$ of pro-objects in $\cC$ has objects consisting of filtered inverse systems $\{A_{\alpha}\}$ in $\cC$, with 
$$
\Hom_{\pro(\cC)}(\{A_{\alpha}\}, \{B_{\beta}\})= \lim_{\substack{\lla \\ \beta}} \lim_{\substack{\lra \\ \alpha}} \Hom_{\cC}(A_{\alpha},B_{\beta}).
$$  
\end{definition}

\begin{remark}\label{discreterk}
 A discrete topological space is just a set. Given a pro-set $\{X_{\alpha}\}$, we can thus take the limit $\Lim_{\alpha}X_{\alpha}$ in the category of topological spaces. This functor gives a  faithful embedding of $\pro(\Set)$ into    topological spaces, so $\Lim_{\alpha}X_{\alpha}$ is discrete if and only if $\{X_{\alpha}\}$ lies in the essential image of $\Set \to \pro(\Set)$.  
We will thus refer to the essential image of $\Set \to \pro(\Set)$ as the \emph{discrete} objects.

In fact,  pro-sets endow a  topological structure which cannot be detected by weak equivalences, which is why shape theory  
 is modelled using the category $\pro(\bS)$, as in \cite{isaksen}.
\end{remark}

\begin{definition}
Given a groupoid $G$ and a set $L$ of primes, define 
$G^{\wedge_L}\in \pro(\gpd)$ by  requiring that $G^{\wedge_L}$ be the completion of $G$ with respect to all $L$-groupoids $H$. In other words, $G^{\wedge_L}$ is an inverse system of $L$-groupoids, with a canonical map $G \to G^{\wedge_L}$ inducing isomorphisms
\[
 \Hom(G^{\wedge_L},H) \to \Hom(G,H)
\]
for all $L$-groupoids $H$. 

In particular, $\Ob G^{\wedge_L} =\Ob G$ and $G^{\wedge_L}(x,x)$ is the pro-$L$ completion of the group $G(x,x)$ (in the sense of \cite[\S 6]{fried}).  If $L$ is the set of all primes, we write $\hat{G}:=G^{\wedge_L}$, so  $\hat{G}(x,x)$ is the pro-finite completion of $G(x,x)$ (in the sense of \cite[\S 1]{galoisienne}). 
\end{definition}

Note that $G^{\wedge_L}$ is a pro-$L$-groupoid in the sense of Definition \ref{prodef}. However, beware that a  pro-groupoid can be isomorphic to a pro-$L$-groupoid without actually being an inverse system of $L$-groupoids, since $\{\Gamma_{\alpha}\}_{\alpha \in I} \cong \{\Gamma_{\alpha}\}_{\alpha \ge \alpha_0}$ for any $\alpha_0 \in I$.

\begin{definition}
Say that  a simplicial groupoid $\Gamma$ is a \emph{simplicial  $L$-groupoid} if
$\Gamma_i$ is an $L$-groupoid for all $i$. Denote the category of such groupoids by $s\gpd^L$.
\end{definition}

\begin{definition}
Given a groupoid $\Gamma$, define a \emph{disconnected normal subgroupoid}  $K\lhd \Gamma$ to consist of subgroups $K(x) \le \Gamma(x,x)$ for all $x \in\Ob \Gamma$, with $a K(x) a^{-1} \in K(y)$ for all $a \in \Gamma(y,x)$. 
\end{definition}

Note that disconnected normal subgroupoids $K\lhd \Gamma$ are in one-to-one correspondence with isomorphism classes  of those surjections $f\co \Gamma \to H$ for which $\Ob f\co  \Ob \Gamma \to \Ob H$ is an  isomorphism. The equivalence is given by setting $H(x,y)= \Gamma(x,y)/K(y)= K(x)\backslash \Gamma(x,y)$, and conversely by setting $K(x):= \ker(f\co  \Gamma(x,x) \to H(fx,fx))$.

\begin{definition}
Given $\Gamma \in s\gpd$, define a \emph{simplicial disconnected normal subgroupoid} $K\lhd \Gamma$ to consist of  disconnected normal subgroupoids  $K_n\lhd \Gamma_n$, closed under the operations $\pd_i, \sigma_j$. 
\end{definition}

\begin{definition}
Given $\Gamma \in s\gpd$, define $\Gamma^{\wedge_L} \in  \pro( s\gpd^L)$ to be the inverse system $\{\Gamma/K\}_K$, where $K$ ranges over the poset of all simplicial disconnected normal subgroupoids $K\lhd \Gamma$ for which $\Gamma/K$ is a simplicial $L$-groupoid.

Given $\Gamma =\{\Gamma_{\alpha}\}_{\alpha} \in \pro(s\gpd)$, define  $\Gamma^{\wedge_L} \in  \pro( s\gpd^L)$ by 
$$
\Gamma^{\wedge_L}= \lim_{\substack{\lla \\ \alpha }} \Gamma_{\alpha}^{\wedge_L},
$$
where the limit is taken in $\pro( s\gpd^L)$. This corresponds to saying that $\Gamma^{\wedge_L}$ is the pro-object $\{\Gamma_{\alpha}/K_{\alpha}\}_{(\alpha, K_{\alpha})}$ indexed by pairs $(\alpha, K_{\alpha})$, for $K_{\alpha} \lhd \Gamma_{\alpha}$.
\end{definition}

\begin{lemma}
For $\Gamma \in \pro(s\gpd)$ and $A \in \pro(s\gpd^L)$, the canonical map
$$
\Hom_{\pro( s\gpd^L)}(\Gamma^{\wedge_L}, A) \to \Hom_{\pro( s\gpd)}(\Gamma, A)
$$
is an isomorphism.
\end{lemma}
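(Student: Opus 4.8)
The plan is to recognise the displayed map as the comparison map for the adjunction in which $(-)^{\wedge_L}\co\pro(s\gpd)\to\pro(s\gpd^L)$ is left adjoint to the inclusion $\pro(s\gpd^L)\hookrightarrow\pro(s\gpd)$. I would prove it first for genuine objects $\Gamma\in s\gpd$ and $A\in s\gpd^L$ (regarded as constant pro-objects), and then deduce the general case by a formal manipulation of $\Hom$ in $\pro(\cC)$.

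For the genuine case, recall that $\Gamma^{\wedge_L}=\{\Gamma/K\}_K$, the index running over the poset of \emph{admissible} normal subgroupoids, i.e.\ those simplicial disconnected normal $K\lhd\Gamma$ with $\Gamma/K$ a simplicial $L$-groupoid; by Definition~\ref{prodef} the left-hand side is then $\varinjlim_K\Hom_{s\gpd^L}(\Gamma/K,A)$, so the task is to show this maps bijectively to $\Hom_{s\gpd}(\Gamma,A)$. For surjectivity I would send $f\co\Gamma\to A$ to the quotient by its levelwise kernel: put $K_{f,n}(x)=\ker(f_n\co\Gamma_n(x,x)\to A_n(fx,fx))$, observe that compatibility of $f$ with faces and degeneracies makes $K_f$ a simplicial disconnected normal subgroupoid, and note that $(\Gamma/K_f)_n(x,x)\cong\im f_n$ is a subgroup of the $L$-group $A_n(fx,fx)$, hence an $L$-group, so $K_f$ is admissible and $f$ factors through $\Gamma/K_f$. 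For injectivity I would first check that the poset of admissible $K$ is closed under intersection — $(\Gamma/(K\cap K'))_n(x,x)$ embeds into the finite $L$-group $(\Gamma/K)_n(x,x)\times(\Gamma/K')_n(x,x)$ — so that it is cofiltered and $\Gamma^{\wedge_L}$ is a genuine pro-object; then, since any admissible $K$ through which $f$ factors must satisfy $K\subseteq K_f$, and each $\Gamma\to\Gamma/K$ is a levelwise epimorphism on morphisms (so a factorisation through it is unique when it exists), every element of the colimit lying over $f$ is already represented by the canonical factorisation through $\Gamma/K_f$.

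For the general case, write $\Gamma=\{\Gamma_\alpha\}_\alpha$ and $A=\{A_\beta\}_\beta$. Using that $\Gamma^{\wedge_L}$ is by definition the pro-object $\{\Gamma_\alpha/K_\alpha\}_{(\alpha,K_\alpha)}$, Definition~\ref{prodef} rewrites the left-hand side as
\[
\varprojlim_\beta\,\varinjlim_{(\alpha,K_\alpha)}\Hom_{s\gpd^L}(\Gamma_\alpha/K_\alpha,A_\beta),
\]
and I would argue that the indexing category of pairs is filtered, with the colimit over it computing $\varinjlim_\alpha\varinjlim_{K_\alpha}$; the genuine case then collapses the innermost colimit to $\Hom_{s\gpd}(\Gamma_\alpha,A_\beta)$, leaving $\varprojlim_\beta\varinjlim_\alpha\Hom_{s\gpd}(\Gamma_\alpha,A_\beta)=\Hom_{\pro(s\gpd)}(\Gamma,A)$, and one checks this identification is the one induced by $\Gamma\to\Gamma^{\wedge_L}$. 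I expect the main obstacle to be exactly this last bookkeeping — verifying that $\{\Gamma_\alpha/K_\alpha\}_{(\alpha,K_\alpha)}$ is a well-behaved (cofiltered) pro-object and that its index category is filtered enough to permit the iterated-colimit identity — rather than anything in the genuine case, where the only real input is the closure of the class of $L$-groups under subgroups and finite products.
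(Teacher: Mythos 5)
Your proof is correct and essentially matches the paper's: the key observation in both cases is that $L$-groups are closed under subgroups, so the image of any morphism $f\co\Gamma\to A$ into a simplicial $L$-groupoid has the form $\Gamma/K_f$ for an admissible $K_f$, giving $\varinjlim_K\Hom(\Gamma/K,A)\cong\Hom(\Gamma,A)$ for $\Gamma\in s\gpd$ and $A\in s\gpd^L$. The paper handles the pro-object bookkeeping you flag as ``the main obstacle'' more tersely, by first reducing to $A\in s\gpd^L$ (the outer $\varprojlim_\beta$ in the definition of pro-$\Hom$) and then invoking that such $A$ is cofinite in $\pro(\cC)$ to reduce to $\Gamma\in s\gpd$; this packages exactly the iterated-colimit identity you compute by hand.
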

\begin{proof}
By the definition of morphisms in pro-categories, it suffices to prove this when $A \in s\gpd^L$. Then $A$ is cofinite in both $\pro(s\gpd^L)$ and $\pro(s\gpd)$ (i.e. $\Hom(\Lim_{\alpha}\Gamma_{\alpha}, A) \cong \LLim_{\alpha}\Hom(\Gamma_{\alpha}, A)$ for filtered inverse systems $\{\Gamma_{\alpha}\}_{\alpha}$), so we may also assume that $\Gamma \in s\gpd$.

Now, for any morphism $f\co \Gamma \to A$, the image $H$ is a simplicial $L$-groupoid  of the form $H= \Gamma/K$, for $K\lhd \Gamma$ a disconnected normal subgroupoid. Therefore
$$
\Hom_{s\gpd}(\Gamma, A) = \LLim_K\Hom_{s\gpd}(\Gamma/K, A)= \Hom_{\pro(s\gpd)}( \Gamma^{\wedge_L},A),
$$
as required.
\end{proof}

\begin{lemma}\label{levelwiseproL}
For $\Gamma \in \pro(s\gpd)$, the pro-$L$-groupoid $(\Gamma^{\wedge_L})_n$ is just the pro-$L$ completion of $\Gamma_n$.
\end{lemma}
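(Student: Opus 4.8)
The plan is to reduce, as in the previous lemma, to the case $\Gamma \in s\gpd$, and then to identify both sides levelwise. First I would observe that $\Gamma^{\wedge_L}$ commutes with the forgetful functor to degree $n$ in the pro-category sense: if $\Gamma = \{\Gamma_\alpha\}$, then by construction $\Gamma^{\wedge_L}$ is the pro-object indexed by pairs $(\alpha, K_\alpha)$ with $K_\alpha \lhd \Gamma_\alpha$ and $\Gamma_\alpha/K_\alpha$ a simplicial $L$-groupoid, so $(\Gamma^{\wedge_L})_n$ is the pro-object $\{(\Gamma_\alpha/K_\alpha)_n\} = \{\Gamma_{\alpha,n}/K_{\alpha,n}\}$. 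Meanwhile the pro-$L$ completion of $\Gamma_n = \{\Gamma_{\alpha,n}\}$ is indexed by pairs $(\alpha, J_\alpha)$ with $J_\alpha \lhd \Gamma_{\alpha,n}$ a disconnected normal subgroupoid such that $\Gamma_{\alpha,n}/J_\alpha$ is an $L$-groupoid. So it suffices to show that the forgetful map from the indexing poset for $(\Gamma^{\wedge_L})_n$ to the one for $(\Gamma_n)^{\wedge_L}$ is cofinal (i.e. that the two pro-objects are isomorphic).

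The containment in one direction is immediate: a simplicial disconnected normal subgroupoid $K \lhd \Gamma_\alpha$ with $\Gamma_\alpha/K$ a simplicial $L$-groupoid gives, in degree $n$, a disconnected normal subgroupoid $K_n \lhd \Gamma_{\alpha,n}$ with $L$-groupoid quotient, so every object appearing in $(\Gamma^{\wedge_L})_n$ appears in $(\Gamma_n)^{\wedge_L}$. The substance is the other direction: given $J \lhd \Gamma_{\alpha,n}$ with $\Gamma_{\alpha,n}/J$ an $L$-groupoid, I need a simplicial disconnected normal subgroupoid $K \lhd \Gamma_\alpha$ with $\Gamma_\alpha/K$ simplicial $L$ and $K_n \subseteq J$. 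The construction is the standard one: let $K$ be the smallest simplicial disconnected normal subgroupoid containing $J$ in degree $n$, obtained by closing $J$ under all the face and degeneracy operators $\pd_i, \sigma_j$ (and their iterates in both directions) and then taking, in each degree $m$, the disconnected normal subgroupoid generated by all these images. Since $\Ob \Gamma_\alpha$ is discrete, the simplicial operators fix objects, so these are genuinely disconnected normal subgroupoids with the same object set. One must check that $\Gamma_\alpha / K$ is again a simplicial $L$-groupoid: in each degree $m$ the stabiliser groups $(\Gamma_\alpha/K)(x,x)$ are quotients of $\Gamma_{\alpha,m}(x,x)$, and I would argue they are finite $L$-groups by noting that each is a subquotient built from finitely many copies of the finite $L$-group $(\Gamma_{\alpha,n}/J)(y,y)$ via the operator maps — here one uses that a simplicial group generated in a fixed degree by an $L$-group, with the relations forced by the simplicial identities, has $L$-group in every degree; concretely, the group $K_m(x)$ contains the intersection over all operators $s\co [n]\to[m]$ of the preimages of $J$, which has $L$-power index, so the quotient is an $L$-group. (In $s\gpd^L$ this is precisely the content of the category being closed under the relevant colimits.)

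The main obstacle, then, is exactly this last verification: that closing a degree-$n$ normal subgroupoid with $L$-quotient under the simplicial operators still yields a simplicial $L$-groupoid quotient — i.e. that one does not accumulate non-$L$ torsion in other degrees. I expect this to follow from a direct bound: for each $m$, the map $\Gamma_{\alpha,m} \to \prod_{s\co [n]\to [m]} \Gamma_{\alpha,n}/J$ (product over the finitely many simplicial structure maps $[n]\to[m]$ in $\Delta$) has image an $L$-groupoid, and its kernel is contained in $K_m$; since $\Ho(s\gpd^L)$ is closed under finite limits this product is an $L$-groupoid, so $\Gamma_{\alpha,m}/K_m$, being a quotient of its image, is an $L$-groupoid. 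Once cofinality of the two indexing systems is established both ways, the pro-objects $(\Gamma^{\wedge_L})_n$ and $(\Gamma_n)^{\wedge_L}$ are canonically isomorphic, which is the claim.
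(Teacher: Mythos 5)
Your cofinality strategy is workable, but the central construction is stated backwards. You correctly identify what you need: given $J \lhd \Gamma_{\alpha,n}$ with $L$-quotient, a \emph{simplicial} $K \lhd \Gamma_\alpha$ with $L$-quotient and $K_n \subseteq J$. But ``the smallest simplicial disconnected normal subgroupoid containing $J$ in degree $n$'' produces $K_n \supseteq J$, not $K_n \subseteq J$, so it cannot certify cofinality; and worse, a subgroupoid generated from below by pushing $J$ around via the simplicial operators has no reason to have $L$-quotient in degrees other than $n$ (a generated subgroupoid can be too \emph{small}, leaving a non-$L$ quotient). The assertion in your last paragraph that the generated $K_m$ contains the kernel of $\Gamma_{\alpha,m} \to \prod_{s\colon[n]\to[m]}\Gamma_{\alpha,n}/J$ is also unjustified: for $\theta\colon[m]\to[n]$ and $s\colon[n]\to[m]$ one has $s^*\theta^* = (\theta s)^*$, and $J$ need not be stable under $(\theta s)^*$, so $\theta^*(J)$ need not lie in $\bigcap_s (s^*)^{-1}(J)$. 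The correct construction is the one you gesture at only in passing: take $K_m := \bigcap_{s\colon[n]\to[m]} (s^*)^{-1}(J)$, i.e.\ the kernel of the natural map $\Gamma_{\alpha} \to (\Gamma_{\alpha,n}/J)^{\Delta_n}$, where $(A^{\Delta_n})_m := A^{\Hom_\Delta([n],[m])}$. This $K$ is simplicial (because $s^*\psi^* = (\psi s)^*$ with $\psi s\colon[n]\to[m]$), has $K_n \subseteq J$ (via $s=\mathrm{id}$), and has $\Gamma_{\alpha,m}/K_m$ embedded in a finite product of $L$-groupoids, hence an $L$-groupoid; it is the \emph{largest} simplicial disconnected normal subgroupoid whose degree-$n$ part is contained in $J$, not the smallest one containing $J$.

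Once repaired, your argument is a hands-on unwinding of what the paper does more economically: the paper observes that $A\mapsto A^{\Delta_n}$ is right adjoint to evaluation at $n$ and preserves $L$-groupoids, so the chain of $\Hom$-isomorphisms
$\Hom_{\pro(s\gpd^L)}(\Gamma^{\wedge_L},A^{\Delta_n}) \cong \Hom_{\pro(s\gpd)}(\Gamma,A^{\Delta_n}) \cong \Hom_{\pro(\gpd)}(\Gamma_n,A)$
identifies $(\Gamma^{\wedge_L})_n$ with $(\Gamma_n)^{\wedge_L}$ by the Yoneda lemma, with no need to compare indexing posets explicitly.
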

\begin{proof}
Given $A \in \gpd^L$, define $A^{\Delta_n}$  (not to be confused with $A^{\Delta^n}$) to be the simplicial groupoid on objects $\Ob A$ with 
$$
A^{\Delta_n}(x,y)_i:= A(x,y)^{\Delta^i_n},
$$
with $\pd_j\co  (A^{\Delta_n})_i \to (A^{\Delta_n})_{i-1}$ coming from  $\pd^j\co  \Delta^{i-1} \to \Delta^i$, and $\sigma_j$ coming from $\sigma^j\co  \Delta^{i+1} \to \Delta^i$. Then $A^{\Delta_n}$ is clearly an $L$-groupoid, and has the key property that
$$
\Hom_{s\gpd}(\Gamma, A^{\Delta_n}) \cong \Hom_{\gpd}(\Gamma_n, A)
$$
for all $\Gamma$.

Taking colimits extends this to all $\Gamma \in \pro(s\gpd)$, and then 
$$
\Hom_{\pro(s\gpd^L)}(\Gamma^{\wedge_L}, A^{\Delta_n}) \cong \Hom_{\pro(\gpd^L)}((\Gamma^{\wedge_L})_n, A),
$$
but the left-hand side is just
$$
\Hom_{\pro(s\gpd)}(\Gamma, A^{\Delta_n}) \cong \Hom_{\pro(\gpd)}(\Gamma_n, A),
$$
so $(\Gamma^{\wedge_L})_n$ is the pro-$L$ completion of $\Gamma_n$.
\end{proof}

\begin{definition}
 Given   $X =\{X_{\alpha}\} \in \pro(\bS)$,
 define the category of local systems on $X$ to be the direct limit (over $\alpha $) of the categories of local systems on $X_{\alpha}$ (in the sense of Definition \ref{localsysdef}).
\end{definition}

\begin{remark}
Our motivation for working with $\pro(\bS)$ comes from \cite[Definition 4.4]{fried}, which associates an object $X_{\et} \in \pro(\bS)$ to each  locally Noetherian simplicial scheme $X$. Finite local systems on $X_{\et}$ then correspond to finite locally constant \'etale sheaves on $X$.
\end{remark}

\begin{definition}\label{widetildedef}
Given a pro-simplicial set $X$, and a map $\pi_fX \to \Gamma$ to a pro-groupoid with discrete objects, define the \emph{covering system} $\widetilde{X}$ by
$$
\widetilde{X}(a):= X\by_{B\Gamma}B(\Gamma\da a) \in \pro(\bS)
$$
for $a \in \Ob \Gamma$, noting that this is equipped with a natural associative action $\Gamma(a,b)\by \widetilde{X}(a) \to \widetilde{X}(b)$ in $\pro(\bS)$. Here, $B$ is the nerve functor (equal to $\bar{W}$ in this context), and $\Gamma \da a$ denotes the slice category of morphisms in $\Gamma$ with target $a$.  
\end{definition}

\begin{definition}\label{CCdef}
Given $\pi_fX \to \Gamma$ as above, with a continuous representation $S$ of $\Gamma$ in pro-sets (i.e. $S(a)\in \pro(\Set)$ for $a \in \Ob \Gamma$, equipped with an associative action $\Gamma(a,b) \by S(a) \to S(b)$ of pro-sets),
define the cosimplicial set $\CC^{\bt}(X, S)$ by
$$
\CC^n(X,S):=\Hom_{\Gamma,\pro(\Set)}(\widetilde{X}_n, S).
$$
\end{definition}

From now on, local systems will be abelian unless stated otherwise.
\begin{definition}\label{prospacecohodef}
Given $X=\{X_{\alpha}\} \in \pro(\bS)$ and a local system $M$ on $X_{\beta}$ define cohomology groups by
$$
\H^*(X,M):= \lim_{\substack{\lra\\ \alpha}}\H^*(X_{\alpha},M),
$$
where $\H^*(X_{\alpha},-)$ is  cohomology with local coefficients, and we also write $M$ for the pullbacks of $M$ to $X_{\alpha}$ and to $X$. 
Given $G \in \pro(s\gpd)$, set $\H^*(G,-):= \H^*(\bar{W}G, -)$.
\end{definition}

Note that the cosimplicial complex $\CC^{\bt}(X,M)$ extends \cite[\S VI.4]{sht}  to pro-spaces, and that  $ \H^*(X,M)=\H^*(\CC^{\bt}(X,M))$, the  cohomology groups with local coefficients.

\begin{definition}\label{prospaceprocohodef}
Given   $X \in \pro(\bS)$  with $X_0$ discrete, and an inverse system $M=\{M_i\}_{i \in \N}$ of local systems on $X$, define the continuous cohomology groups $\H^*(X,M)$ as follows. First form the cosimplicial complex  $\CC^{\bt}(X,M):= \Lim\CC^{\bt}(X,M_i)$, for $\CC^{\bt}$ as in Definition \ref{CCdef}, then set  
$$
   \H^*(X,M) := \H^*(\CC^{\bt}(X,M)),
$$
noting that this agrees with Definition \ref{prospacecohodef} when $M_i=M$ for all $i$.
\end{definition}

\begin{remark}\label{mlworks}
Observe that there is a short exact sequence
$$
0 \to {\Lim}^1 \H^{n-1}(X,M_i) \to \H^n(X,M) \to \Lim \H^n(X, M_i) \to 0,
$$
so $\H^n(X,M) \cong \Lim \H^n(X, M_i)$ whenever the inverse system $\{\H^{n-1}(X,M_i) \}_i$ satisfies the Mittag--Leffler condition (for instance if the groups are finite).

When working with the \'etale homotopy type $X_{\et}$, we will usually apply this construction to $\Zl$-local systems $\{M_i= M/\ell^i\}_i$. In that case, the exact sequence above becomes the comparison between \'etale cohomology and Jannsen's continuous \'etale cohomology (see Example \ref{ethtpy} for details).
\end{remark}

\begin{lemma}\label{ctscohodiscrete}
Given $X \in \bS$ and an inverse system $M=\{M_i\}_{i \in \N}$ of local systems on $X$, there is an isomorphism 
$$
\H^*(X, \Lim M_i) \cong \H^*(X,M).
$$ 
\end{lemma}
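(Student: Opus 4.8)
The statement claims $\H^*(X, \Lim M_i) \cong \H^*(X,M)$, where the right-hand side is the continuous cohomology of Definition \ref{prospaceprocohodef}. Here $X$ is an honest simplicial set (not a pro-object), so the covering system $\widetilde{X}$ is a genuine simplicial set with $\pi_f X$-action, and $\CC^n(X,M_i) = \Hom_{\pi_f X}(\widetilde{X}_n, M_i)$ is an ordinary abelian group.

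My plan is to prove the stronger statement that the cochain complexes agree on the nose, i.e. $\CC^{\bt}(X, \Lim M_i) = \Lim \CC^{\bt}(X, M_i)$ as cosimplicial abelian groups; the isomorphism on cohomology then follows immediately since cohomology of a cosimplicial abelian group is computed degreewise from the associated complex, and that is the definition of the right-hand side. So the real content reduces to a single adjunction-type identity in each cosimplicial degree $n$:
\[
\Hom_{\pi_f X}\bigl(\widetilde{X}_n, \Lim_i M_i\bigr) \;\cong\; \Lim_i \Hom_{\pi_f X}\bigl(\widetilde{X}_n, M_i\bigr).
\]
This is just the statement that $\Hom(\widetilde{X}_n, -)$ commutes with the inverse limit $\Lim_i M_i$ in the category of $\pi_f X$-representations (equivalently, local systems on $X$), which holds because $\Hom$ out of any fixed object always commutes with limits — limits of local systems are computed objectwise over the groupoid $\pi_f X$, and $\Lim_i M_i$ is such an objectwise limit. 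I would spell this out: an element of the left side is a $\pi_f X$-equivariant map to the limit, which by the universal property of the limit is precisely a compatible family of $\pi_f X$-equivariant maps to the $M_i$, i.e. an element of the right side.

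First I would recall that $\CC^{\bt}(X,M)$ in Definition \ref{prospaceprocohodef} is literally defined as $\Lim_i \CC^{\bt}(X, M_i)$ with the limit taken cosimplicial-degreewise, and $\H^*(X,M) := \H^*(\CC^{\bt}(X,M))$. Then I would observe that $\Lim M_i$ (taken in the category of local systems on $X$) makes sense and its stalks are $\Lim_i (M_i)_x$, so Definition \ref{prospacecohodef} applies to give $\CC^{\bt}(X, \Lim M_i)$ with $\CC^n(X,\Lim M_i) = \Hom_{\pi_f X}(\widetilde{X}_n, \Lim M_i)$. Then I would invoke the degreewise identity above, checked compatibly with the cosimplicial structure maps (these are induced by the simplicial maps of $\widetilde{X}$ and so commute with everything in sight). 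Taking cohomology of equal cosimplicial abelian groups gives the result.

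The only point requiring a little care — and the mild "obstacle", though it is not serious — is making sure the limit $\Lim M_i$ is being formed in the right category and that the displayed $\Hom$-commutes-with-limits identity is applied in that category rather than, say, the category of abelian groups after forgetting the action; one must check that the forgetful functor from local systems to (products of) abelian groups creates limits, which it does since limits of representations of a fixed groupoid are computed on underlying objects. Once that is noted, there is no real difficulty: the lemma is essentially a formal consequence of the continuity of the $\Hom$ functor, and is being recorded here mainly so that Definition \ref{prospaceprocohodef} and Definition \ref{prospacecohodef} can be used interchangeably when the pro-system of coefficients happens to stabilise to an honest limit.
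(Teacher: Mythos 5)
Your argument is correct and is essentially the paper's own proof: both reduce the statement to the cochain-level identity $\CC^n(X,\Lim M_i)=\Lim_i\CC^n(X,M_i)$, which follows because $\Hom$ out of a fixed object commutes with limits. The paper's version is a one-liner using $\Hom_{\Set}(X_n,-)$; yours is slightly more careful about working with $\Hom_{\pi_fX}(\widetilde{X}_n,-)$ and about where the limit is taken, but the key step is identical.
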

\begin{proof}
As in Definition \ref{prospacecohodef}, $\H^*(X, \Lim M_i)$ is cohomology of the complex $\Lim \CC^{\bt}(X,M_i)=\CC^{\bt}(X,\Lim M_i)$, but
$$
\CC^{n}(X,\Lim M_i)=\Hom_{\Set}(X_n, \Lim M_i)= \Lim \Hom_{\Set}(X_n, M_i)=\CC^{n}(X,M),
$$ 
as required.
\end{proof}

We will occasionally refer  to groups and groupoids as ``discrete'', to distinguish them from topological (or simplicial) groups and groupoids. As in Remark \ref{discreterk}, we  regard a pro-groupoid as a kind of  topological groupoid, so ``discrete'' will indicate that both  simplicial and pro structures are trivial.

\begin{definition}
Given a set $L$ of primes, say that a pro-groupoid $G$ with discrete object set is $(L,n)$-\emph{good} if for all  $G^{\wedge_L}$-representations $M$ in abelian $L$-groups, the canonical map
$$
\phi_M:\H^i(G^{\wedge_L}, M) \to \H^i(G,M)
$$
is an isomorphism for all $i \le n$ and an inclusion for $i=n+1$. When $L$ is the set of all primes, we say that $G$ is $n$-good. Observe that any inverse system of $(L,n)$-good groupoids is $(L,n)$-good. Say that $G$ is $L$-\emph{good} if it is $(L,n)$-good for all $n$.
\end{definition}

\begin{lemma}\label{freegood}
Free groups are $L$-good for all $L$.
\end{lemma}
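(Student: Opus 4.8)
The plan is to reduce the statement to a cohomological computation for a single free group $G=F$, since an arbitrary inverse system of free groups is automatically $L$-good once each term is, by the remark immediately preceding the lemma, and since $L$-goodness means $(L,n)$-goodness for every $n$. So fix a free group $F$ (discrete, as a one-object groupoid), a set $L$ of primes, and its pro-$L$ completion $F^{\wedge_L}$. We must show that for every $F^{\wedge_L}$-representation $M$ in abelian $L$-groups the comparison map $\phi_M\co \H^i(F^{\wedge_L},M)\to \H^i(F,M)$ is an isomorphism for all $i$.

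The key input is that both $F$ and $F^{\wedge_L}$ have cohomological dimension $\le 1$: for the discrete free group this is classical (a free group has a $2$-term free resolution of $\Z$, so $\H^i(F,-)=0$ for $i\ge 2$ on all coefficient modules), and for $F^{\wedge_L}$ this is the statement that a free pro-$L$ group has $L$-cohomological dimension $\le 1$ — here $\H^*(F^{\wedge_L},-)$ must be read as continuous cohomology of the pro-(finite $L$-group), computed as $\LLim$ over finite quotients, which matches Definition \ref{prospacecohodef}/\ref{prospaceprocohodef} applied to $\bar{W}F^{\wedge_L}$. Given this, $\phi_M$ is automatically an isomorphism in degrees $i\ge 2$ (both sides vanish) and an isomorphism in degree $0$ (both compute $M^F=M^{F^{\wedge_L}}$, using that the $F$-action on the $L$-group $M$ factors through $F^{\wedge_L}$ by the universal property of completion). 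It remains to treat degrees $1$ and the injectivity at $i=2$ that is subsumed in the degree-$\le 1$ case, so really the only work is the isomorphism in $\H^1$.

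For $\H^1$ the cleanest route is via a free generating set: if $F$ is free on a set $S$, then $\H^1(F,M)$ is the group of crossed homomorphisms modulo principal ones, and a crossed homomorphism is determined freely by its values on $S$, so $\H^1(F,M)=\coker\big(M\xrightarrow{\ (s\mapsto (s-1)m)\ } M^S\big)$, or more precisely $\H^1(F,M)\cong M^S/(\text{image of the principal crossed homs})$ when we also note $\z^1(F,M)\cong M^S$. One shows the analogous description for $F^{\wedge_L}$: a continuous crossed homomorphism $F^{\wedge_L}\to M$ (with $M$ finite of $L$-order, then pass to the limit over the inverse system $\{M_i\}$ as in Definition \ref{prospaceprocohodef} and Remark \ref{mlworks}) is again freely and continuously determined by its values on the image of $S$, because $F\to F^{\wedge_L}$ has dense image and $M$ is discrete; this is the standard fact that $F^{\wedge_L}$ is the free pro-$L$ group on $S$ (for $S$ finite; for infinite $S$ one restricts to the cofinal system of finitely generated free subgroups, all of which are $L$-good by the finite case, and takes the colimit, using that cohomology commutes with filtered colimits of groups). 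Comparing the two descriptions, $\phi_M$ on $\z^1$ is the identity on $M^S$ and on coboundaries is again compatible, so $\phi_M$ is an isomorphism on $\H^1$; and since $\H^2(F,M)=0$, the required ``inclusion for $i=n+1$'' holds vacuously.

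The main obstacle is bookkeeping rather than mathematics: one must be careful that the cohomology of $F^{\wedge_L}$ as used in the definition of $(L,n)$-good is genuinely the continuous cohomology of the pro-finite-$L$-group (equivalently $\H^*(\bar W F^{\wedge_L},-)$ in the sense of the preceding definitions), so that the vanishing $\cd_L(F^{\wedge_L})\le 1$ applies and so that the exact sequence of Remark \ref{mlworks} lets one reduce coefficient modules $M$ with abelian $L$-group values to \emph{finite} abelian $L$-groups (for which Mittag--Leffler and hence $\H^n(\bar WF^{\wedge_L},M)\cong\Lim\H^n(\bar WF^{\wedge_L},M_i)$ is immediate). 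With that reduction in place, everything rests on the two standard facts $\cd(F)\le 1$ and $F^{\wedge_L}$ is free pro-$L$ on the same generators with $\cd_L\le 1$, and the explicit crossed-homomorphism computation above; the infinite-rank case is handled by the filtered-colimit remark on goodness already quoted in the excerpt.
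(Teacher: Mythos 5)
Your overall strategy is close to the paper's: reduce to degree $1$ and compare crossed homomorphisms, using that both $F$ and its completion have cohomological dimension $\le 1$. The paper's proof takes a slightly different route at the reduction step — it invokes Serre's exercise (\cite[I\S 2.6 Ex.\ 1(a)]{galoisienne}) to reduce the isomorphism claim to surjectivity of $\H^*(\Gamma,M)\to\H^*(F,M)$, thereby never needing to know $\cd_L(F^{\wedge_L})\le 1$ at all; it only needs $\cd(F)\le 1$ on the discrete side. Your version instead asserts the pro-$L$ cohomological dimension bound, which is a genuine extra input (true, but requiring an appeal to the theory of free pro-$L$ groups, and not obviously applicable to $F(S)^{\wedge_L}$ for $S$ infinite, since that pro-$L$ group is free on a profinite space rather than on the discrete set $S$).

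There is a concrete error in your handling of infinite rank: ``cohomology commutes with filtered colimits of groups'' is false. Group \emph{homology} commutes with filtered colimits of groups; cohomology does not — the restriction maps $\H^n(G,M)\to\H^n(G_i,M)$ go the wrong way, and one only gets a $\Lim$/$\Lim^1$ statement, not the naive isomorphism you need. Fortunately, this entire reduction is unnecessary: the mechanism you gesture at for finite $S$ (freeness of $F^{\wedge_L}$) really is the derivation-to-semidirect-product argument, and it works uniformly in rank. A derivation $\alpha\colon F\to M$ assembles into a group homomorphism $\beta\colon F\to M\rtimes G$, where $G$ is a finite $L$-torsion quotient of $F$ through which the action on $M$ factors (such a $G$ exists since $M$ is an $F^{\wedge_L}$-representation and is finite). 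Since $M\rtimes G$ is a finite $L$-group, $\beta$ factors through $F^{\wedge_L}$ by the universal property of completion, whence so does $\alpha$. This gives the needed surjectivity of $\z^1_{\cts}(F^{\wedge_L},M)\to\z^1(F,M)$ regardless of whether $S$ is finite — so you should drop the filtered-colimit reduction, not repair it. Also note your justification ``dense image and $M$ discrete'' only proves injectivity of the restriction on cocycles; it does not by itself give the crucial surjectivity. Finally, the Mittag--Leffler detour through inverse systems $\{M_i\}$ is not needed: abelian $L$-groups are \emph{finite} abelian groups of $L$-torsion order by the paper's definition, so $M$ is already finite.
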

\begin{proof}
Let $F=F(X)$ be a free group generated by a  set $X$, and let $\Gamma:=F^{\wedge_L}$. By the argument of \cite[I\S 2.6 Ex. 1(a)]{galoisienne}, it suffices to show that $\H^*(\Gamma,M) \to \H^*(F,M)$ is surjective for all  discrete $\Gamma$-representations $M$ in abelian $L$-groups. Since $F$ is free, $\H^n(F,M)=0$ for $n> 1$, so it only remains to establish surjectivity for $n=1$.  

This amounts to showing  that every derivation $\alpha\co F \to M$ factors through $\Gamma$. The derivation gives rise to a map $\beta\co F \to M\rtimes G$, for some finite $L$-torsion quotient $G$ of $F$. Since $M\rtimes G$ is an $L$-group, $\beta$ factors through $\Gamma$.
\end{proof}

\begin{examples}\label{Lgoodegs}
\begin{enumerate}
\item $L$-groups are $L$-good.
\item\label{ffour} If $1 \to F \to \Gamma \to \Pi \to 1$ is an exact sequence of groups, with $F$ and $\Pi$ $L$-good, $F^{\wedge_L} \to \Gamma^{\wedge_L}$ injective, and $\H^a(F,M)$ finite for all finite $L$-torsion $\Gamma$-modules, then $\Gamma$ is $L$-good.

\item\label{ttwo} All finitely generated nilpotent groups are $L$-good for all $L$.

\item\label{tthree} The fundamental group of a compact Riemann surface is $L$-good for all $L$.
 \end{enumerate}
\end{examples}
\begin{proof}
\begin{enumerate}
\item[(\ref{ffour})] This is essentially \cite[I\S 2.6 Ex. 2(c)]{galoisienne}.

\item[(\ref{ttwo})] Express $\Gamma$ as a successive extension of finite groups and $\Z$, then apply (\ref{ffour}). 

\item[(\ref{tthree})]  Choose a smooth complex  projective curve $C$ of genus $g>0$, with $\pi_1(C)=\Gamma$. It suffices to show that for all finite $L$-torsion $\Gamma^{\wedge_L}$-representations $M$, the map
$$
\H^*(\Gamma^{\wedge_L}, M) \to \H^*_{\et}(C,M)
$$ 
is an isomorphism. 

Letting $\tilde{C}$ be the universal \'etale pro-$L$ cover of $C$, this is equivalent (by the Serre spectral sequence) to showing that $\H^*_{\et}(\tilde{C}, \bF_p)= \bF_p$ for all $p \in L$. 
$\tilde{C}$ is the inverse limit all finite $L$-covers $C' \to C$, giving
$$
\H^i_{\et}(\tilde{C}, \bF_p)= \lim_{\substack{\lla \\ C' }} \H^i_{\et}(C', \bF_p),
$$
which can only be non-zero for $i=0,1,2$. 

Note that $\pi_1(\tilde{C})= \ker(\hat{\Gamma} \to \Gamma^{\wedge_L})$. Thus the pro-$L$ completion $\pi_1(\tilde{C})^{\ab, L}$ of the abelianisation of $\pi_1(\tilde{C})$ must be $0$, or we would have a larger pro-$L$ quotient of $\hat{\Gamma}$ than $\Gamma^{\wedge_L}$. Hence  $\H^1_{\et}(\tilde{C}, \bF_p)= 0$ for all $p \in L$.

 We now adapt the proof of \cite[Proposition 15]{alexschmidt}. Since any curve $C'$ has a cover $C''$ of degree $p$, with the map $\H^2_{\et}(C', \bF_p)\to \H^2_{\et}(C'', \bF_p)$ thus being $0$, we deduce that $\H^2_{\et}(\tilde{C}, \bF_p)= 0$, which completes the proof.
\end{enumerate}
\end{proof}

\begin{proposition}\label{cohochar}
For any $X \in \bS$, the canonical morphism
$$
X \to \bar{W}(G(X)^{\wedge_L})
$$
in $\pro(\bS)$ induces an isomorphism $(\pi_fX)^{\wedge_L} \to \pi_f\bar{W}(G(X)^{\wedge_L})$ of pro-groupoids, and  has the property that for all finite abelian  $(\pi_fX)^{\wedge_L} $-representations $M$  in $L$-groups, the canonical map
$$
\H^*(\bar{W}(G(X)^{\wedge_L}), M)\to \H^*(X,M)
$$
is an isomorphism.
\end{proposition}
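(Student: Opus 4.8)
The plan is to treat the two assertions separately, the cohomological comparison being the substantial one.

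For the statement about $\pi_f$ I would use the facts recalled in Definition \ref{barwdef}: $\pi_0 G(X)\cong\pi_fX$ and $\pi_f\bar{W}H\cong\pi_0 H$ for $H\in s\gpd$, the latter applied levelwise to the pro-object $G(X)^{\wedge_L}=\{G(X)/K\}_K$, so that $\pi_f\bar{W}(G(X)^{\wedge_L})\cong\pi_0(G(X)^{\wedge_L})=\{\pi_0(G(X)/K)\}_K$. It then remains to identify this pro-groupoid with $(\pi_0 G(X))^{\wedge_L}=(\pi_fX)^{\wedge_L}$. This is formal: $\pi_0\colon s\gpd\to\gpd$ is left adjoint to the constant-simplicial-object functor $c$, so combining this with the universal property of $(-)^{\wedge_L}$ (the unlabelled lemma just before Lemma \ref{levelwiseproL}) one sees that for every $L$-groupoid $K$ the sets $\Hom_\gpd(\pi_0(G(X)^{\wedge_L}),K)\cong\Hom_{\pro(s\gpd^L)}(G(X)^{\wedge_L},cK)\cong\Hom_{\pro(s\gpd)}(G(X),cK)\cong\Hom_\gpd(\pi_0 G(X),K)$ agree; Yoneda gives the isomorphism, and one checks the displayed arrow is the canonical completion map. (Concretely, $\{\pi_0(G(X)/K)\}_K$ is just the cofinal sub-system of $(\pi_fX)^{\wedge_L}$ consisting of those finite $L$-groupoid quotients realised on the nose by simplicial $L$-groupoid quotients of $G(X)$.)

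For the cohomology statement I would first replace $X$ by $\bar{W}G(X)$ using the weak equivalence $X\xrightarrow{\sim}\bar{W}G(X)$ (unit of the Quillen equivalence), so that $\H^*(X,M)\cong\H^*(\bar{W}G(X),M)$ and the map in question becomes the one induced by $\bar{W}G(X)\to\bar{W}(G(X)^{\wedge_L})$. Then I would invoke the standard first-quadrant spectral sequence attached to a simplicial groupoid $\Gamma$ and a local system $M$ on $\bar{W}\Gamma$ (a $\pi_0\Gamma$-representation): realising $\bar{W}\Gamma$ as the diagonal of the bisimplicial set $(s,t)\mapsto N(\Gamma_s)_t$ (generalised Eilenberg--Zilber, \cite{sht}) yields $E_1^{s,t}=\H^t(\Gamma_s,M)\implies \H^{s+t}(\bar{W}\Gamma,M)$, natural in $\Gamma$ (the faces of $\Gamma_\bullet$ induce $d_1$, and $M$ is pulled back along $\Gamma_s\to\pi_0\Gamma$). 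Applying this to $\Gamma=G(X)$ and to each $G(X)/K$, and passing to $\varinjlim_K$ (a filtered, hence exact, colimit of first-quadrant spectral sequences), one obtains a map of convergent spectral sequences computing $\H^*(\bar{W}G(X),M)$ and $\varinjlim_K\H^*(\bar{W}(G(X)/K),M)=\H^*(\bar{W}(G(X)^{\wedge_L}),M)$ respectively. On $E_1$-pages, in bidegree $(s,t)$ this map is $\H^t((G(X)_s)^{\wedge_L},M)\to \H^t(G(X)_s,M)$, where I use Lemma \ref{levelwiseproL} to identify $(G(X)^{\wedge_L})_s$ with the pro-$L$ completion of $G(X)_s$, and the fact that cohomology of a pro-groupoid is the filtered colimit of cohomologies. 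The crucial point is that $G(X)_s$ is a \emph{free} groupoid for every $s$ --- this is the defining property of Dwyer--Kan's loop groupoid functor, \cite{pathgpd} --- and free groupoids are $L$-good: one repeats the proof of Lemma \ref{freegood} essentially verbatim (a free groupoid has cohomological dimension $\le 1$, and any crossed homomorphism to a finite $L$-module $M$ factors through a finite $L$-groupoid quotient, hence through the pro-$L$ completion). Thus for $M$ a finite abelian $L$-group the $E_1$-map is an isomorphism in every bidegree, so the map of spectral sequences is an isomorphism on $E_1$, therefore on $E_\infty$, therefore --- the filtration being finite in each total degree --- on the abutments, which gives the required isomorphism.

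The main obstacle is the first step of the cohomology argument: having at hand the spectral sequence $E_1^{s,t}=\H^t(\Gamma_s,M)\Rightarrow \H^{s+t}(\bar{W}\Gamma,M)$ for simplicial \emph{groupoids} with nontrivial local coefficients, and being scrupulous about functoriality and coefficients. In particular one must check that a representation $M$ of $(\pi_fX)^{\wedge_L}$ in a finite abelian $L$-group really is pulled back from a representation of $\pi_0(G(X)/K)$ once $K$ is small enough (using that $M$ is finite $L$-torsion together with the cofinality observed in the first paragraph), so that every cohomology group and comparison map in sight is defined, and that the two spectral sequences genuinely map to one another over the completion map $\pi_fX\to(\pi_fX)^{\wedge_L}$. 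Once that bookkeeping is in place the rest is formal.
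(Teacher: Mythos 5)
Your proposal is correct and takes essentially the same route as the paper: the paper also reduces the cohomological statement to $L$-goodness of the simplicial (pro-)groupoid via the first-quadrant spectral sequence $\H^q(G_p,M)\abuts \H^{p+q}(G,M)$, and then invokes freeness of $G(X)_n$ together with Lemma~\ref{freegood}. Your treatment of the fundamental-groupoid statement and of the identification of the $E_1$-terms via Lemma~\ref{levelwiseproL} is more spelled-out than the paper's one-line remarks, but it is the same argument, not a different one.
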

\begin{proof}
The statement about fundamental groupoids is immediate, since completion commutes with taking quotients. Now, observe that
$$
\H^n(\bar{W}(G(X)^{\wedge_L}), M)\cong \H^n(G(X)^{\wedge_L},M),
$$
tautologically from Definition \ref{prospacecohodef}.

 It thus suffices to show that the simplicial groupoid $G(X)$ is $L$-good, in the sense that $\H^*(G(X),M) \cong  \H^*(G(X)^{\wedge_L},M)$ for all $\pi_0G(X)^{\wedge_L}$-representations in abelian $L$-groups $M$.
This is equivalent to showing that for all $x \in X_0$, the simplicial groups $G(X)(x,x)$ are $L$-good. This  will follow if the groups $G_n(x,x)$ are all $L$-good, because there is a spectral sequence
$$
\H^q(G_p,M) \abuts \H^{p+q}(G, M). 
$$
Since the groups $G_n(x,x)$ are all free, this then follows from Lemma \ref{freegood}.
\end{proof}

Given a property $P$ of groups, we will say that a groupoid $\Gamma$  locally satisfies $P$ if the groups $\Gamma(x,x)$ satisfy $P$, for all $x \in \Ob \Gamma$.

\begin{definition}
Define $\pro(\bS)_{\delta}$ to be the full subcategory of $\pro(\bS)$ consisting of pro-spaces $X$ for which  $X_0$ is discrete (as in Remark \ref{discreterk}, so $X_0$ is  a set, not just a pro-set).

Define $\bS^{\wedge_L}$ to be the full subcategory of $\pro(\bS)_{\delta} $ consisting of spaces $X$ for which  the groups $\pi_n(X,x)$ are all  pro-$L$-groups. If $L$ is the set of all primes, we write $\hat{\bS}:= \bS^{\wedge_L}$. 
\end{definition}

\begin{definition}
 A morphism $f\co X \to Y$ in $\pro(\bS)_{\delta}$ is said to be an Artin--Mazur weak equivalence if $\pi_0X \to \pi_0Y$ is an isomorphism, and the maps  $\pi_n(X,x) \to \pi_n(Y,fx)$ are pro-isomorphisms  for all $n\ge 1$ and all $x \in X_0$.

Define $\Ho(\pro(\bS)_{\delta})$ and $\Ho(\bS^{\wedge_L})$ by formally inverting all Artin--Mazur weak equivalences.
\end{definition}

In \cite{isaksen}, Isaksen established a model structure on $\pro(\bS)$ with the right properties for modelling pro-homotopy types. In particular, \cite[Corollary 7.5]{isaksen}  shows  that a morphism in $\pro(\bS)_{\delta}$ is a weak equivalence in $\pro(\bS)$ if and only if it is an Artin--Mazur weak equivalence. 

\begin{proposition}\label{Lcohoweak}
Fix $N \in [1, \infty]$, and let $f\co X \to Y$ be a morphism in $\pro(\bS)_{\delta}$ such that $(\pi_fX)^{\wedge_L} \to (\pi_fY)^{\wedge_L}$ is a pro-equivalence of pro-groupoids, with the property that for all  abelian  $(\pi_fY)^{\wedge_L}$-representations $M$  in $L$-groups, the map
$$
\H^n(f)\co  \H^n(Y, M)\to \H^n(X,M)
$$
is an isomorphism for all $n \le N$ and injective for $n=N+1$. Then for all $Z \in \bS^{\wedge_L}$ with $\pi_iZ=0$ for $i>N$ (resp. $i >N+1$), the map 
$$
f^*\co \Hom_{\Ho(\pro(\bS)_{\delta}) }(Y,Z)\to \Hom_{ \Ho(\pro(\bS)_{\delta}) }(X,Z)
$$
is an isomorphism (resp. an inclusion). 
\end{proposition}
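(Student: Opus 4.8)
The plan is to run mapping-space obstruction theory against the Postnikov tower of $Z$. Write $Z\simeq\varprojlim_n Z^{(n)}$ for the Postnikov tower taken in $\bS^{\wedge_L}$, so $Z^{(0)}=\pi_0 Z$, $Z^{(1)}=\bar{W}(\pi_f Z)$ (recall $\pi_f Z$ is already a pro-$L$-groupoid since $Z\in\bS^{\wedge_L}$), and for $n\ge 2$ the map $Z^{(n)}\to Z^{(n-1)}$ is, up to Artin--Mazur weak equivalence, the pullback of the path fibration on $K(\pi_n Z, n+1)$ along a $k$-invariant $Z^{(n-1)}\to K(\pi_n Z, n+1)$, where $\pi_n Z$ is the pro-$L$ local system of homotopy groups. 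Writing $[-,-]$ for $\Hom_{\Ho(\pro(\bS)_\delta)}$, it suffices to show that $f^*\co [Y,Z^{(n)}]\to[X,Z^{(n)}]$ is bijective for $n\le N$ and injective for $n=N+1$: if $\pi_iZ=0$ for $i>N$ then $Z=Z^{(N)}$ and we get an isomorphism, while if $\pi_iZ=0$ for $i>N+1$ then $Z=Z^{(N+1)}$ and we get an inclusion; the case $N=\infty$ is absorbed by passing to the limit over $n$ (a routine $\varprojlim$--${\varprojlim}^1$ argument). Along the way we freely reduce the coefficient systems to finite $L$-groups, since any pro-$L$ local system is an inverse limit of these and, by the exact sequence of Remark \ref{mlworks}, it is enough to verify the cohomological hypotheses with finite coefficients.

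For $n=0$ the set $[W,Z^{(0)}]$ is just the functions $\pi_0 W\to\pi_0 Z$, and for $n=1$, by the universal property of $(-)^{\wedge_L}$ (the lemma preceding Lemma \ref{levelwiseproL}) together with the fact that $\pi_f Z$ is a pro-$L$-groupoid, $[W,Z^{(1)}]$ is the set of conjugacy classes of pro-groupoid morphisms $(\pi_f W)^{\wedge_L}\to\pi_f Z$. Since $(\pi_f X)^{\wedge_L}\to(\pi_f Y)^{\wedge_L}$ is a pro-equivalence, $f$ induces a bijection $\pi_0 X\to\pi_0 Y$ and isomorphisms on completed fundamental groups, so $f^*$ is bijective for $n=0,1$. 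This is the only use of the hypothesis on $\pi_f$, and it cannot be dropped, as cohomology alone does not distinguish a pro-$L$-group from an $L$-cohomologically indistinguishable non-$L$-good group.

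For the inductive step $n\ge 2$, fix $g\in[Y,Z^{(n-1)}]$; pulling $\pi_n Z$ back along $\pi_f(g)$ gives a local system on $Y$ which—being a finite $L$-group by the reduction above—is one of the $(\pi_f Y)^{\wedge_L}$-representations $M$ in $L$-groups to which the hypothesis applies, with pullback along $f$ the corresponding system for $f^*g\in[X,Z^{(n-1)}]$. The principal fibration $Z^{(n)}\to Z^{(n-1)}$ supplies, naturally in $W\in\pro(\bS)_\delta$: an obstruction $o(h)\in\H^{n+1}(W,h^*\pi_n Z)$ for each $h\in[W,Z^{(n-1)}]$, vanishing iff $h$ lifts to $[W,Z^{(n)}]$, and, when the set of lifts is nonempty, a free transitive action of $\H^n(W,h^*\pi_n Z)$ on it. Apply this to $(W,h)=(Y,g)$ and $(W,h)=(X,f^*g)$. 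Given $\tilde h\in[X,Z^{(n)}]$ over $f^*g$, one has $f^*o(g)=o(f^*g)=0$; for $n\le N$, injectivity of $\H^{n+1}(f)$ (an isomorphism when $n+1\le N$, injective when $n=N$) forces $o(g)=0$, so $g$ lifts, and bijectivity of $\H^n(f)$ makes the map from lifts over $Y$ to lifts over $X$ a bijection of torsors; combined with the inductive bijection on $[-,Z^{(n-1)}]$ this gives bijectivity of $f^*$ on $[-,Z^{(n)}]$. For $n=N+1$ the same bookkeeping, using bijectivity on $[-,Z^{(N)}]$ and injectivity of $\H^{N+1}(f)$ but with no input on $\H^{N+2}$, yields only injectivity of $f^*$ on $[-,Z^{(N+1)}]$. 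Passing to the limit over the tower finishes the proof.

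The main obstacle is the bookkeeping in the inductive step: one must identify the local systems that actually occur—namely $\pi_n Z$ pulled back along reference maps into $Z^{(n-1)}$—precisely with the $(\pi_f Y)^{\wedge_L}$-representations in $L$-groups to which the cohomological hypothesis is phrased, and reconcile the a priori pro-$L$ coefficients of the Postnikov $k$-invariants with the finite coefficients of the statement via the $\varprojlim$--${\varprojlim}^1$ sequence of Remark \ref{mlworks}. The remaining ingredients—that $[-,K(M,m)]=\H^m(-,M)$ on $\pro(\bS)_\delta$, the naturality of the obstruction and torsor exact sequences, and the compatibility of $\Hom_{\Ho(\pro(\bS)_\delta)}(-,Z)$ and $\H^*(-,M)$ with pro-limits in the source—are the standard formalism transported to pro-simplicial sets.
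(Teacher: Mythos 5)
Your proof is correct and follows essentially the same route as the paper: obstruction theory against the Postnikov tower of $Z$, handling $\pi_f$ first, then the finite coefficients of the $k$-invariants, with the same torsor bookkeeping in each inductive step. The paper phrases the tower as the Moore--Postnikov tower of a fibrant replacement, states the reduction to finite coefficients via $\oR\Lim$ rather than the $\Lim$--$\Lim^1$ short exact sequence, and in the $N=\infty$ case invokes the Artin--Mazur weak equivalence $Z\to\Lim_nP_nZ$ rather than a $\Lim^1$ argument; but these are presentational, not substantive, differences.
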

\begin{proof}
First observe that if $M$ is a $\pi_f(Y)^{\wedge_L}$-representation in abelian pro-$L$ groups, we can express it as an inverse system
$\{M_{\alpha}\}$   of $\pi_f(Y)$-representations in $L$-groups. Then the  complex $\CC^{\bt}(Y, M)$ of $M$-cochains is given by
\[
 \CC^{\bt}(Y, M) \simeq \oR\Lim_{\alpha} \CC^{\bt}(Y, M_{\alpha}).
\]
This implies that for all such $M$, the map $\H^n(f)\co  \H^n(Y, M)\to \H^n(X,M)$ is an isomorphism  for  all $n\le N$, and injective for $n=N+1$.

Now consider the Moore--Postnikov tower (\cite[Definition VI.3.4]{sht}) $P_nZ$ of a fibrant replacement for $Z$. The pro-equivalence on $\pi_f$ gives the required isomorphism if $Z=P_1Z$, and we can proceed by induction.

Assume that we have a homotopy class of maps $X \to P_nZ$, for $n<N$. The obstruction to lifting this to a homotopy class of maps $X \to P_{n+1}Z$ lies in $\H^{n+2}(X, \pi_{n+1}Z)$, and if non-empty,  the latter homotopy class is a principal $\H^{n+1}(X, \pi_{n+1}Z)$-space. 
As $\pi_{n+1}Z$ is a  pro-$L$-group,  the  isomorphism $\H^{n+1}(Y,-)\cong \H^{n+1}(X,-)$ and the inclusion $\H^{n+2}(Y,-)\into \H^{n+2}(X,-) $ (resp. the inclusion $\H^{n+1}(Y,-)\into \H^{n+1}(X,-) $)
mean that the pro-homotopy class of lifts $Y\to P_{n+1}Z$ is similarly determined (resp. embeds into the class of lifts $X \to P_{n+1}Z$), completing the inductive step.

Since the map $Z \to P_NZ$ (resp. $Z \to P_{N+1}Z$) is an Artin--Mazur weak equivalence, this completes the proof for $N< \infty$.
In the case $N= \infty$, the analysis above gives an isomorphism 
$$
f^*\co \Hom_{\Ho(\pro(\bS)_{\delta}) }(Y, \Lim_n P_nZ)\to \Hom_{ \Ho(\pro(\bS)_{\delta}) }(X,\Lim P_nZ);
$$
since the canonical map $Z \to \Lim_n P_nZ$ is an Artin--Mazur weak equivalence, this completes the proof.
\end{proof}

\begin{corollary}\label{Lloc}
The inclusion functor $\bS^{\wedge_L}\to \pro(\bS)_{\delta}$ has a homotopy left adjoint, which we denote by $X \leadsto X^{\wedge_L}$. This has the property  that for $X \in \bS^{\wedge_L}$, $X^{\wedge_L}\simeq X$.
\end{corollary}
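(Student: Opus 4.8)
The plan is to build the homotopy left adjoint $X \leadsto X^{\wedge_L}$ directly using the explicit construction $\bar{W}(G(X)^{\wedge_L})$ from Proposition \ref{cohochar}, and to verify the universal property using Proposition \ref{Lcohoweak} with $N=\infty$. First I would define, for $X \in \pro(\bS)_\delta$ presented as $\{X_\alpha\}$, the pro-space $X^{\wedge_L} := \bar{W}(G(X)^{\wedge_L})$, formed levelwise in $\pro(\bS)$; by Lemma \ref{levelwiseproL} this really lands in $\bS^{\wedge_L}$ (its homotopy groups are the pro-$L$ completions of those of $X$, using the identifications $\pi_n(G(X)(x,x)) \cong \pi_{n+1}(|X|,x)$ and that pro-$L$ completion of groups is exact enough to commute with the relevant homotopy group computation, i.e. $\pi_n$ of $G(X)^{\wedge_L}$ is $(\pi_n G(X))^{\wedge_L}$ — this needs a small argument via the cohomological goodness established in the proof of Proposition \ref{cohochar}, since $G(X)$ is $L$-good). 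The canonical map $X \to X^{\wedge_L}$ is the unit.

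Next I would check the adjunction. Given $Z \in \bS^{\wedge_L}$, I want $\Hom_{\Ho(\pro(\bS)_\delta)}(X^{\wedge_L}, Z) \to \Hom_{\Ho(\pro(\bS)_\delta)}(X, Z)$ to be a bijection. But the unit map $X \to X^{\wedge_L} = \bar{W}(G(X)^{\wedge_L})$ satisfies exactly the hypotheses of Proposition \ref{Lcohoweak} with $N = \infty$: by Proposition \ref{cohochar} it induces an isomorphism on pro-$L$ completions of fundamental groupoids, and for every finite abelian $(\pi_f X)^{\wedge_L}$-representation $M$ in $L$-groups the map on cohomology $\H^*(\bar{W}(G(X)^{\wedge_L}), M) \to \H^*(X,M)$ is an isomorphism in all degrees; passing to inverse systems (as in the first paragraph of the proof of Proposition \ref{Lcohoweak}) extends this to pro-$L$ coefficients. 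Proposition \ref{Lcohoweak} with $N=\infty$ then yields precisely that $f^*$ is an isomorphism on $\Hom_{\Ho(\pro(\bS)_\delta)}(-,Z)$ for every $Z \in \bS^{\wedge_L}$ (no bound on $\pi_i Z$ is needed when $N=\infty$). This gives the homotopy left adjoint.

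Finally, for $X \in \bS^{\wedge_L}$ I must show the unit $X \to X^{\wedge_L}$ is an Artin--Mazur weak equivalence, i.e. $X^{\wedge_L} \simeq X$. This follows because $X \in \bS^{\wedge_L}$ already has pro-$L$ homotopy groups, so pro-$L$ completion of each $\pi_n(X,x)$ is the identity; combined with the first paragraph's identification of $\pi_n(X^{\wedge_L})$ with $(\pi_n X)^{\wedge_L}$, the unit induces isomorphisms on all homotopy groups and on $\pi_0$, hence is an Artin--Mazur weak equivalence by the criterion recalled before Proposition \ref{Lcohoweak}. Alternatively one can deduce it formally: for $X \in \bS^{\wedge_L}$ the identity factors the unit and Proposition \ref{Lcohoweak} applied to the unit shows it is invertible in $\Ho(\pro(\bS)_\delta)$.

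The main obstacle is the bookkeeping in the first paragraph: one must be careful that $\bar{W}(G(X)^{\wedge_L})$, built as a levelwise construction on a pro-simplicial set, genuinely has the expected homotopy groups and lies in $\bS^{\wedge_L}$ — this rests on the $L$-goodness of $G(X)$ (proved within Proposition \ref{cohochar}) together with the standard fact that the pro-$L$ completion of a group $\Gamma$ has $\H^i((\Gamma^{\wedge_L}), M) \cong \H^i(\Gamma, M)$ whenever $\Gamma$ is $L$-good, applied to the (free, hence $L$-good by Lemma \ref{freegood}) groups $G_n(x,x)$ levelwise. Everything else is a direct appeal to the two preceding propositions.
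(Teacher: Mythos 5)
Your construction and the two propositions you invoke are the same ones the paper uses, and the formal Yoneda-style argument you give as an ``alternative'' at the end is essentially the paper's proof of the second assertion.

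However, there is a genuine error in the first paragraph. You claim that the homotopy groups of $X^{\wedge_L} = \bar{W}(G(X)^{\wedge_L})$ are the pro-$L$ completions of the homotopy groups of $X$, i.e.\ that $\pi_n(G(X)^{\wedge_L}) \cong (\pi_n G(X))^{\wedge_L}$, and you suggest this follows from the $L$-goodness of $G(X)$. This identification is \emph{false} in general: the $L$-goodness established inside Proposition \ref{cohochar} is a statement about group cohomology with $L$-torsion coefficients, and says nothing about commuting homotopy groups of simplicial groups with completion. The paper devotes the much harder Proposition \ref{piclasstohat} precisely to establishing when $\pi_n(X)^{\wedge_L} \to \pi_n(X^{\wedge_L})$ is a pro-isomorphism, and that result requires finite generation of the $\pi_n$, an $L$-torsion condition on the monodromy, and $(L,n)$-goodness of $\pi_fX$ — none of which hold for an arbitrary $X \in \bS$. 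Your appeal to $L$-goodness cannot substitute for this.

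The reason $\bar{W}(G(X)^{\wedge_L}) \in \bS^{\wedge_L}$ is in fact much simpler and does not require any computation of its homotopy groups in terms of those of $X$: by construction $G(X)^{\wedge_L}$ is a pro-object of simplicial $L$-groupoids, so for each quotient $G(X)/K$ the group $\pi_n\bigl((G(X)/K)(x,x)\bigr)$ is a subquotient of an $L$-group, hence an $L$-group; thus $\pi_{n+1}(\bar{W}(G(X)^{\wedge_L}),x) \cong \pi_n\bigl(G(X)^{\wedge_L}(x,x)\bigr)$ is a pro-$L$-group, and $\pi_0$ is a quotient of the discrete set $X_0$. That is all one needs for membership in $\bS^{\wedge_L}$. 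Likewise, your primary argument for ``$X^{\wedge_L}\simeq X$ when $X \in \bS^{\wedge_L}$'' relies on the same false identification of homotopy groups and should be discarded; the formal argument (taking $Z=X$ and $Z=X^{\wedge_L}$ in the established bijection $\Hom(X^{\wedge_L},Z) \cong \Hom(X,Z)$ and deducing that the unit is invertible in $\Ho(\pro(\bS)_\delta)$) is the correct and complete justification, and is what the paper has in mind.
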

\begin{proof}
Propositions \ref{cohochar} and \ref{Lcohoweak} imply that
for $X \in \bS$, the object $ X^{\wedge_L}:=\bar{W}(G(X)^{\wedge_L}) \in \bS^{\wedge_L}$ has the required properties. Given an inverse system $X=\{X_{\alpha}\}$, set $X^{\wedge_L}:= \Lim (X_{\alpha})^{\wedge_L}$.
\end{proof}

\begin{remarks}
Comparing  with \cite[Theorem 6.4 and Corollary 6.5]{fried}, we see that this gives a  generalisation of Artin and Mazur's pro-$L$ homotopy type (\cite{arma}) to unpointed spaces. Their context for pro-homotopy theory was formulated slightly differently, in terms of $\pro(\Ho(\bS))$, which is not very well-behaved. See \cite{isaksen} for details of the comparison. 

Since this paper was first written, an alternative pro-finite completion functor has been developed in \cite{quick}. However, the category of  pro-finite homotopy types in \cite{quick} is larger than ours, because for its pro-spaces $X$, the  pro-set $\pi_0X$ is pro-finite rather than discrete.  The   pro-finite completion functor thus differs from ours in that 
it also takes the pro-finite completion of the set $\pi_0X$. 

An important feature of \cite{quick} is the existence of a model structure for pro-finite spaces, and this raises the question of whether there is a model structure on   $\pro(s\gpd^L)$,
 and how the respective model structures compare. The most likely solution is that there is a fibrantly cogenerated model structure on $\pro(s\gpd^L_{\cF})$, where $s\gpd^L_{\cF}$ is the full subcategory of $s\gpd^L $ consisting of simplicial groupoids with finite object set. For this model structure, the cogenerating fibrations should be morphisms in $s\gpd^L_{\cF}$ which are fibrations in $s\Gpd$, possibly with some additional Artinian condition analogous to \cite[Theorem \ref{ddt1-scspmodel}]{ddt1}. The right adjoint $\pro(s\gpd^L_{\cF})\to \pro(s\gpd^L) $ should then induce a fibrantly cogenerated structure on the latter, while the functor $\bar{W}$ from $\pro(s\gpd^L_{\cF})$ to simplicial pro-finite sets  should be a right Quillen equivalence when $L$ is the set of all primes. 
\end{remarks}

\subsection{Comparing homotopy groups}

We now investigate when we can describe the homotopy groups of $X^{\wedge_L}$ in terms of the homotopy groups of $X$.

\begin{lemma}\label{kpn}
If $A$ is a finitely generated abelian group, then for $n\ge 2$, completion of the Eilenberg-Maclane space is given by $K(A,n)^{\wedge_L}= K(A^{\wedge_L},n)$.
\end{lemma}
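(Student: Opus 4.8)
The plan is to reduce the statement to the two things that make an Eilenberg--MacLane space special: it has a single nonzero homotopy group, and it is a good space in the sense discussed above. First I would observe that $K(A,n)$ is built from free groups through Kan's loop group construction, so by Lemma~\ref{freegood} (and the spectral sequence argument of Proposition~\ref{cohochar}) it is $L$-good; hence $K(A,n)^{\wedge_L}\simeq \bar W(G(K(A,n))^{\wedge_L})$ has the property that $\H^*(K(A,n)^{\wedge_L},M)\cong \H^*(K(A,n),M)$ for all finite $(\pi_1)^{\wedge_L}$-representations $M$ in $L$-groups. Since $K(A,n)$ is simply connected for $n\ge 2$, there are no fundamental-group subtleties, and $K(A,n)^{\wedge_L}$ lies in $\bS^{\wedge_L}$.

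Next I would identify the homotopy groups. By Corollary~\ref{Lloc}, $X\rightsquigarrow X^{\wedge_L}$ is the homotopy left adjoint to the inclusion $\bS^{\wedge_L}\hookrightarrow \pro(\bS)_\delta$, so it suffices to exhibit a space with the correct homotopy groups that receives a cohomology equivalence from $K(A,n)$ and satisfies the universal property. The candidate is of course $K(A^{\wedge_L},n)$: it is an object of $\bS^{\wedge_L}$ (its only nonzero homotopy group is the pro-$L$ group $A^{\wedge_L}$), and there is a natural map $K(A,n)\to K(A^{\wedge_L},n)$ induced by $A\to A^{\wedge_L}$. I would then apply Proposition~\ref{Lcohoweak} with $N=\infty$: I must check that this map induces an isomorphism on $\H^*(-,M)$ for all finite $L$-group coefficients $M$. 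Because both sides are simply connected, such $M$ is a trivial module, and by the universal coefficient / Künneth formalism it is enough to treat $M$ finite cyclic of prime power order $p^r$ with $p\in L$. For such $M$ one computes $\H^*(K(A,n),M)$ and $\H^*(K(A^{\wedge_L},n),M)$ via the standard description of the cohomology of Eilenberg--MacLane spaces: since $A$ is finitely generated, $A^{\wedge_L}=\prod_{p\in L}(A\otimes\Z_p)$, and the induced map $\Hom(A^{\wedge_L},\Z/p^r)\to\Hom(A,\Z/p^r)$ and $\Ext^1(A^{\wedge_L},\Z/p^r)\to\Ext^1(A,\Z/p^r)$ are isomorphisms — this is where finite generation is used. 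The full cohomology rings are generated under cup products and Steenrod/Bockstein operations by these low-degree groups (and a free polynomial/exterior structure that depends only on $n$ and the coefficient), so the map is a cohomology isomorphism with every such $M$.

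Having verified the cohomology condition, Proposition~\ref{Lcohoweak} tells us that $K(A,n)\to K(A^{\wedge_L},n)$ induces isomorphisms $\Hom_{\Ho(\pro(\bS)_\delta)}(K(A^{\wedge_L},n),Z)\xrightarrow{\sim}\Hom_{\Ho(\pro(\bS)_\delta)}(K(A,n),Z)$ for every $Z\in\bS^{\wedge_L}$; together with $K(A^{\wedge_L},n)\in\bS^{\wedge_L}$ this is exactly the universal property characterising $K(A,n)^{\wedge_L}$, so $K(A,n)^{\wedge_L}\simeq K(A^{\wedge_L},n)$ as claimed. The main obstacle is the cohomological input: one needs that for finitely generated $A$ the natural map on cohomology of Eilenberg--MacLane spaces with finite $L$-coefficients is an isomorphism. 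This is essentially the classical fact that finite coefficient cohomology only sees the pro-$L$ completion, but it must be invoked carefully — the cleanest route is probably to reduce to $A=\Z$ and $A=\Z/m$ (via the splitting of finitely generated abelian groups and the Künneth theorem, noting $K(A\oplus B,n)=K(A,n)\times K(B,n)$), handle $\Z/m$ trivially since it is already $L$-good once we discard prime-to-$L$ parts, and for $A=\Z$ use that $\H^*(K(\Z,n),\Z/p^r)\cong \H^*(K(\Z_p,n),\Z/p^r)$, which follows from the fact that $K(\Z,n)\to K(\Z_p,n)$ is a $\Z/p$-cohomology equivalence (both compute the same free graded-commutative algebra on a degree-$n$ class and its Steenrod operations). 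Finite generation is precisely what guarantees that no $\Lim^1$ or size pathologies intervene when passing to the completion.
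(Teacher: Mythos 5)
Your proposal follows essentially the same route as the paper: invoke Proposition~\ref{Lcohoweak} to reduce the claim to a cohomology isomorphism with finite $L$-torsion coefficients, split $A$ via the structure theorem for finitely generated abelian groups and apply K\"unneth, then handle $A=\Z/q$ trivially and $A=\Z$ by the classical fact that $K(\Z,n)\to K(\Z_p,n)$ is a $\bF_p$-cohomology equivalence (the paper's citation to \cite[Theorem 1.5]{Qpf}). The opening paragraph of your proposal is redundant --- the isomorphism $\H^*(K(A,n)^{\wedge_L},M)\cong \H^*(K(A,n),M)$ you derive there is just Proposition~\ref{cohochar} and is not used in the subsequent argument --- and the paper's reduction to $\bF_p$ coefficients via a filtration is a little cleaner than your reduction to $\Z/p^r$, but these are cosmetic differences.
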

\begin{proof}
By Proposition \ref{Lcohoweak}, we need to show that the maps 
$$
\H^*(K(A^{\wedge_L},n),M) \to \H^*(K(A,n),M)
$$
are isomorphisms for all abelian $L$-groups $M$. By considering the spectral sequence associated to a filtration, it suffices to consider only the cases $M=\bF_p$, for $p \in L$.

If $A= A'\by A''$, then $K(A,n)=K(A',n)\by K(A'',n)$, so $\H^*(K(A,n), \bF_p)=\H^*(K(A',n),\bF_p)\ten\H^*( K(A'',n),\bF_p)$. The structure theorem for finitely generated abelian groups therefore allows us to assume that $A=\Z/q$, for $q$ a prime power or $0$. 

Now, if $q$ is neither zero nor a power of $p$, then $\H^r(K(A,n),\bF_p)=0$ for $r>0$; since $A^{\wedge_L}$ is a quotient of $A$, we also get $\H^r(K(A^{\wedge_L},n),\bF_p)=0$. If $q=p^s$, then $A^{\wedge_L}=A$, making isomorphism automatic. 

If $q=0$, then $A=\Z, A^{\wedge_L}=\prod_{\ell \in L} \Zl$, and  $\H^r(K(\Zl,n),\bF_p)=0$ for $r>0$ and $\ell \ne p$. We need to show that 
$$
\H^*(K(\Z_p,n),\bF_p) \to \H^*(K(\Z,n),\bF_p)
$$
is an isomorphism, or equivalently that $K(\Z,n)^{ \wedge_p}=K(\Z_p,n)$. This follows from \cite[Theorem 1.5]{Qpf}.
\end{proof}

\begin{proposition}\label{Lcohoweak2}
Take a morphism
$f\co X \to Y$  in $\pro(\bS)_{\delta}$ such that $(\pi_fX)^{\wedge_L} \to (\pi_fY)^{\wedge_L}$ is a pro-equivalence of pro-groupoids. Then the following are equivalent:
\begin{enumerate}
 \item For all  abelian  $(\pi_fY)^{\wedge_L}$-representations $M$  in $L$-groups, the map
$$
\H^n(f)\co  \H^n(Y, M)\to \H^n(X,M)
$$
is an isomorphism for all $n \le N$ and injective for $n=N+1$.

\item The map
\[
 \pi_n(f): \pi_n(X^{\wedge_L},x) \to \pi_n(Y^{\wedge_L},fy) 
\]
is a  pro-isomorphism for $n \le N$ and a pro-surjection for $n=N+1$.
\end{enumerate}

In particular, a pro-groupoid $G$ with discrete object set is $(L,N)$-good if and only if
\[
 \pi_n( (BG)^{\wedge_L})=0
\]
for all $2 \le n \le N$.
\end{proposition}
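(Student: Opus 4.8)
The plan is to prove the biconditional Proposition~\ref{Lcohoweak2} first, and then extract the characterisation of $(L,N)$-goodness as a special case. I would apply the preceding results (Proposition~\ref{cohochar}, Proposition~\ref{Lcohoweak} and Lemma~\ref{kpn}) to reduce everything to statements about Moore--Postnikov towers of the $L$-completions $X^{\wedge_L}$ and $Y^{\wedge_L}$.

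\begin{proof}
The implication (1)$\Rightarrow$(2) is essentially contained in the proof of Proposition~\ref{Lcohoweak}: there we analysed the Moore--Postnikov tower $P_nZ$ of a fibrant replacement for $Z \in \bS^{\wedge_L}$ and showed that the cohomology comparison forces the lifting obstructions and the sets of lifts to agree through the relevant range. Applying this with $Z = Y^{\wedge_L}$, the identity map $Y^{\wedge_L} \to Y^{\wedge_L}$ pulls back through $f$ to a map $X^{\wedge_L} \to Y^{\wedge_L}$, and the inductive argument shows that on each Postnikov stage the induced map on $\pi_n$ is a pro-isomorphism for $n \le N$ and a pro-surjection for $n = N+1$; since completion commutes with quotients, the case $n=1$ is the hypothesised pro-equivalence on $\pi_f$, and the higher stages follow because $\pi_n(Y^{\wedge_L})$ is a pro-$L$-group. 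Passing to the inverse limit of Postnikov towers gives the statement on $\pi_n(X^{\wedge_L}) \to \pi_n(Y^{\wedge_L})$.

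For (2)$\Rightarrow$(1), I would run the same Postnikov-tower induction in reverse, building the cohomology isomorphism from the homotopy-group data. Concretely, for each $x$ the spaces $(X^{\wedge_L},x)$ and $(Y^{\wedge_L},fx)$ have Postnikov stages that differ only in degrees $>N$, so their Moore--Postnikov towers agree up to stage $N$ and the map $P_{N+1}(X^{\wedge_L}) \to P_{N+1}(Y^{\wedge_L})$ is surjective on $\pi_{N+1}$. Each stage $P_{n}\to P_{n-1}$ is a principal fibration with fibre $K(\pi_n,n)$, so one obtains a comparison of the associated cohomology spectral sequences (with coefficients in an $L$-group representation $M$); Lemma~\ref{kpn} identifies the $E_2$-terms contributed by the fibres, and one runs the induction on $n$ from $1$ to $N$ to conclude that $\H^n(f)$ is an isomorphism for $n \le N$ and injective for $n=N+1$. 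The $\oR\Lim$ argument at the start of the proof of Proposition~\ref{Lcohoweak} lets one pass between pro-$L$ coefficient systems and honest $L$-group coefficients, so this suffices.

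Finally, the special case: take $f$ to be the canonical map $BG \to (BG)^{\wedge_L}$, or rather apply the equivalence to $X = BG$ and $Y = BG$ with the completion map. By definition, $G$ is $(L,N)$-good exactly when $\phi_M : \H^i(G^{\wedge_L},M) \to \H^i(G,M)$ is an isomorphism for $i \le N$ and an injection for $i=N+1$, for all $G^{\wedge_L}$-representations $M$ in abelian $L$-groups; since $\H^*(G,M) = \H^*(BG,M)$ and $\H^*(G^{\wedge_L},M) = \H^*((BG)^{\wedge_L},M)$ by Definition~\ref{prospacecohodef}, this is precisely condition~(1) for the completion map. By the biconditional, it is equivalent to condition~(2), namely that $\pi_n((BG)^{\wedge_L}) \to \pi_n((BG)^{\wedge_L}{}^{\wedge_L}) = \pi_n((BG)^{\wedge_L})$ is a pro-isomorphism for $n \le N$ and pro-surjection for $n=N+1$ --- but the target here is the completion of the completion, which by Corollary~\ref{Lloc} is just $(BG)^{\wedge_L}$ again, while the source before completion is $BG$, which has $\pi_1 = G$ and $\pi_n = 0$ for $n \ge 2$. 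Thus $\pi_n((BG)^{\wedge_L}) = 0$ automatically for $n > N$ would be the content; unwinding, condition~(2) for this $f$ says exactly that $\pi_n((BG)^{\wedge_L}) = 0$ for $2 \le n \le N$, which is the claimed characterisation.
\end{proof}

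I expect the main obstacle to be the (2)$\Rightarrow$(1) direction: one must carefully control the coefficient systems as one climbs the Postnikov tower --- the relevant $M$ is a representation of $\pi_f$, which is unchanged by completion, but the fibres contribute $K(\pi_n,n)$-cohomology that must be handled via Lemma~\ref{kpn}, and the bookkeeping of which degrees are isomorphisms versus injections through the spectral sequence requires care, especially at the boundary degree $n = N+1$. The $\oR\Lim$ / Mittag--Leffler issues (Remark~\ref{mlworks}) in passing between pro-$L$ and finite $L$ coefficients are routine but need to be invoked cleanly.
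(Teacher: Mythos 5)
Your overall plan — Moore–Postnikov towers, the results of Proposition~\ref{cohochar}, Proposition~\ref{Lcohoweak} and Lemma~\ref{kpn} — is sensible for $(1)\Rightarrow(2)$ and is indeed the direction the paper takes there, modulo the fact that the paper nails it down via the observation $\Hom_{\Ho(\pro(\bS)_{\delta})}(Y,P_nZ)\cong\Hom_{\Ho(\pro(\bS)_{\delta})}(P_nY,P_nZ)$ and a Yoneda-type argument to conclude $P_N(X^{\wedge_L})\cong P_N(Y^{\wedge_L})$, whereas your phrasing remains at the level of ``the inductive argument shows.'' However, there are two real problems.

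For $(2)\Rightarrow(1)$ your proposal to compare the cohomology spectral sequences of the two Postnikov towers, stage by stage, does not obviously close at the boundary degree. Knowing $P_N(X^{\wedge_L})\simeq P_N(Y^{\wedge_L})$ gives you $\H^{N+1}(P_NY^{\wedge_L},M)\cong\H^{N+1}(P_NX^{\wedge_L},M)$, and each of the truncation maps $\H^{N+1}(P_N(-)^{\wedge_L},M)\to\H^{N+1}((-)^{\wedge_L},M)$ is merely injective; chasing the resulting commutative square does not yield injectivity of $\H^{N+1}(Y,M)\to\H^{N+1}(X,M)$ without a further input which must come from the pro-surjectivity of $\pi_{N+1}(f)$. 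You note this surjectivity in passing but never feed it into the argument, and Lemma~\ref{kpn} (which concerns $L$-completions of finitely generated abelian groups) plays no role here. The paper's route is both shorter and gap-free: the hypothesis in $(2)$ is precisely the statement that the homotopy fibre $F$ of $f^{\wedge_L}\co X^{\wedge_L}\to Y^{\wedge_L}$ is $N$-connected, hence $\H^j(F,A)=0$ for $0<j\le N$, and a single application of the Leray spectral sequence $\H^i(Y^{\wedge_L},\H^j(F,M))\abuts\H^{i+j}(X^{\wedge_L},M)$ gives the isomorphism for $i\le N$ and the injection at $i=N+1$ directly from the shape of the $E_2$-page.

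The deduction of the $(L,N)$-goodness statement is also incorrect as you have set it up. You take $Y=(BG)^{\wedge_L}$, but the correct choice is $Y=B(G^{\wedge_L})$; these are genuinely different objects. The pro-space $B(G^{\wedge_L})$ is the classifying space of the pro-groupoid $G^{\wedge_L}$, with $\pi_1=G^{\wedge_L}$ and $\pi_n=0$ for $n\ge 2$, so by Definition~\ref{prospacecohodef} its cohomology is $\H^*(G^{\wedge_L},M)$. By contrast, $(BG)^{\wedge_L}=\bar W(G(BG)^{\wedge_L})$ may have nontrivial higher homotopy, and Proposition~\ref{cohochar} says its cohomology with $L$-group coefficients is $\H^*(BG,M)=\H^*(G,M)$, not $\H^*(G^{\wedge_L},M)$. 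With your choice, condition~(1) becomes $\H^*((BG)^{\wedge_L},M)\to\H^*(BG,M)$, which is always an isomorphism, and condition~(2) compares $\pi_n((BG)^{\wedge_L})$ with $\pi_n((BG)^{\wedge_L})$, the identity; the equivalence is thus vacuous and says nothing about goodness. Taking $Y=B(G^{\wedge_L})$ as in the paper, condition~(1) is exactly the definition of $(L,N)$-good, and since $\pi_n(B(G^{\wedge_L})^{\wedge_L})=0$ for $n\ge 2$, condition~(2) reduces to $\pi_n((BG)^{\wedge_L})=0$ for $2\le n\le N$, as required.
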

\begin{proof}
 The key observation is that $\Hom_{\Ho(\pro(\bS)_{\delta}) }(Y,P_nZ) \cong \Hom_{\Ho(\pro(\bS)_{\delta}) }(P_nY,P_nZ) $, which is deduced from the corresponding result for $\bS$. Thus   Proposition \ref{Lcohoweak} implies that
\[
 P_N(X^{\wedge_L}) \to P_N(Y^{\wedge_L})
\]
becomes an isomorphism in $\Ho(\pro(\bS)_{\delta})$, while 
\[
 P_{N+1}(X^{\wedge_L}) \to P_{N+1}(Y^{\wedge_L})
\]
is an epimorphism. Since isomorphisms in $\Ho(\pro(\bS)_{\delta})$ are just Artin--Mazur weak equivalences, this completes the ``only if'' part.

For the converse, note that the hypothesis is equivalent to saying that the homotopy fibre $F$ of $f^{\wedge_L}\co X^{\wedge_L}\to Y^{\wedge_L}$ is $N$-connected, by looking at the long exact sequence of homotopy groups. Thus $\H^j(F, A)=0$ for all $0<j\le N$ and all abelian $L$-groups $A$. For any $\pi_fY^{\wedge_L}$-representation $M$ in abelian $L$-groups, the Leray spectral sequence 
\[
 \H^i(Y^{\wedge_L}, \H^j(F,M)) \abuts \H^{i+j}(X^{\wedge_L},f^{-1}M)
\]
 forces the maps  $\H^i(Y^{\wedge_L},M) \to \H^{i+j}(X^{\wedge_L},M) $ to be isomorphisms for $i\le N$ and injective for $i=N+1$, as required.

The final statement is given by taking $X=BG$ and $Y= B(G^{\wedge_L})$.
\end{proof}

\begin{lemma}\label{piIMlemma}
 If $f\co X \to Y$ is a morphism in $\pro(\bS)_{\delta}$ for which the map
\[
 \pi_n(f): \pi_n(X,x) \to \pi_n(Y,fy) 
\]
is a  pro-isomorphism for $n \le N$ and a pro-surjection for $n=N+1$, then the map
\[
 \pi_n(f): \pi_n(X^{\wedge_L},x) \to \pi_n(Y^{\wedge_L},fy) 
\]
is a  pro-isomorphism for $n \le N$ and a pro-surjection for $n=N+1$.
\end{lemma}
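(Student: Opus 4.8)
The plan is to reduce the statement to the cohomological criterion of Proposition \ref{Lcohoweak2} and then verify that criterion directly. First I would observe that the hypothesis already makes $\pi_fX\to\pi_fY$ a pro-equivalence of pro-groupoids (it is an isomorphism on $\pi_0$ and a pro-isomorphism on each vertex group $\pi_1$), so its pro-$L$ completion $(\pi_fX)^{\wedge_L}\to(\pi_fY)^{\wedge_L}$ is a pro-equivalence as well, and Proposition \ref{Lcohoweak2} applies to $f$. It then suffices to establish condition (1) of that proposition for $f$: for every abelian $(\pi_fY)^{\wedge_L}$-representation $M$ in $L$-groups, $\H^n(f)\co\H^n(Y,M)\to\H^n(X,M)$ should be an isomorphism for $n\le N$ and injective for $n=N+1$. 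Granting this, condition (2) of the proposition is precisely the conclusion of the lemma.

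To prove condition (1) I would imitate the converse half of the proof of Proposition \ref{Lcohoweak2}, but applied to $f\co X\to Y$ instead of to $f^{\wedge_L}$. After replacing $f$ by a fibration, let $F$ denote its homotopy fibre over a vertex of $Y$. The hypothesis, fed into the long exact sequence of homotopy groups, shows that $F$ is $N$-connected (hence simply connected, since $N\ge 1$), so $\H^j(F,A)=0$ for all $0<j\le N$ and all abelian $L$-groups $A$. Consequently, for a $(\pi_fY)^{\wedge_L}$-representation $M$ in abelian $L$-groups, the Leray spectral sequence
\[
\H^i(Y,\H^j(F,M))\abuts\H^{i+j}(X,f^{-1}M)
\]
has vanishing $j$-rows for $1\le j\le N$ and $\H^i(Y,M)$ along $j=0$, and the usual edge-homomorphism argument forces $\H^i(Y,M)\to\H^i(X,M)$ to be an isomorphism for $i\le N$ and an inclusion for $i=N+1$.

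The one place that needs care --- and is the main (though modest) obstacle --- is the pro-theoretic bookkeeping: forming the homotopy fibre $F$ inside $\pro(\bS)_{\delta}$ and making sense of its $N$-connectivity, and knowing that the Leray spectral sequence of a fibration survives the passage $X=\{X_\alpha\}\mapsto\LLim_\alpha\H^*(X_\alpha,-)$. As in the earlier proofs in this section, both reduce to statements about ordinary simplicial sets after a cofinal reindexing: the $N$-connectivity of $F$ is the long exact sequence read levelwise, and the spectral sequence is natural, hence commutes with the filtered colimit defining cohomology of a pro-space. Beyond this, no new homotopical input is required --- everything is Proposition \ref{Lcohoweak2} together with the classical link between the connectivity of a map and its effect on cohomology with local coefficients.
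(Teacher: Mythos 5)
Your proposal is correct and follows exactly the same route as the paper: the paper's one-line proof says ``The proof of Proposition \ref{Lcohoweak2} adapts to show that for any $\pi_fY$-representation $M$, the maps $\H^i(Y,M)\to\H^i(X,M)$ are isomorphisms for $i\le N$ and injective for $i=N+1$; thus the hypotheses of Proposition \ref{Lcohoweak2} are satisfied.'' Your write-up simply unpacks what ``adapts'' means --- run the converse half of Proposition \ref{Lcohoweak2} (fibre $F$ is $N$-connected, Leray spectral sequence) on $f$ itself rather than on $f^{\wedge_L}$, then feed condition (1) into that proposition --- which is precisely the intended argument.
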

\begin{proof}
The proof of Proposition \ref{Lcohoweak2} adapts to show that for any $\pi_fY$-representation $M$,  the maps  $\H^i(Y,M) \to \H^{i}(X,M) $ are isomorphisms for $i\le N$ and injective for $i=N+1$. Thus the hypotheses of Proposition \ref{Lcohoweak2} are satisfied, giving the required results.
\end{proof}

\begin{definition}\label{semidirect}
Given a group-valued representation $H$ of a groupoid $\Gamma$ (i.e. a functor from $\Gamma$ to the category of groups), recall from \cite[Definition \ref{htpy-semidirect}]{htpy}  that the semi-direct product $H\rtimes \Gamma$ is a groupoid with objects $\Ob(H\rtimes \Gamma)=\Ob(\Gamma)$ and has $(H\rtimes \Gamma)(x,y)= H_x \rtimes \Gamma(x,y)$.
\end{definition}

\begin{proposition}\label{piclasstohat}
Fix $X \in \bS$. If  $\pi_n(X,x)$  is  finitely generated for all $n \le N$, 
and  if the image of $\pi_1(X,x)\to \Aut( \pi_n(X,x)\ten \bF_p)$ is $L$-torsion for all $n\le N$,  all $p \in L$, and all $x \in X$, then 
there is an exact sequence
$$
\xymatrix{&&\pi_{N+1}(X^{\wedge_L},x) \ar[r] &\pi_{N+1}((B\pi_1(X,x))^{\wedge_L})\ar[dll] &\\
 &\pi_N(X,x)^{\wedge_L}\ar[r] &\pi_N(X^{\wedge_L},x) \ar[r] &\pi_N((B\pi_1(X,x))^{\wedge_L})\ar[r] &\ldots\\
\ldots \ar[r] &\pi_2(X,x)^{\wedge_L}\ar[r] &\pi_2(X^{\wedge_L},x) \ar[r] &\pi_2((B\pi_1(X,x))^{\wedge_L})\ar[r] & 0.
}
$$

Hence if in addition $\pi_fX$ is $(L,N+1)$-good (resp. $(L,N)$-good),  then the natural map
$$
\pi_n(X)^{\wedge_L}\to \pi_n(X^{\wedge_L})
$$
is a pro-isomorphism for all $n\le N$ (resp. a pro-isomorphism for all $n< N$ and a pro-surjection for $n=N$).
\end{proposition}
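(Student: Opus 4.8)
The plan is to exhibit the displayed sequence as the long exact homotopy sequence of the $L$-completion of the universal-cover fibration $\widetilde X\to X\to B\pi_1(X,x)$. Since homotopy groups, cohomology and $(-)^{\wedge_L}$ are all computed componentwise, I will assume $X$ connected and write $\pi:=\pi_1(X,x)$, so the fundamental groupoid is $B\pi$; let $\widetilde X\to X$ be the universal cover, giving a homotopy fibre sequence $\widetilde X\to X\to B\pi$ in $\bS$ with $\pi_n\widetilde X=\pi_nX$ for $n\ge 2$ and $\pi_1\widetilde X=0$. The argument has two ingredients: identifying the completion of the fibre, and a fibre lemma for $(-)^{\wedge_L}$.

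\emph{The fibre.} First I would show $\pi_1(\widetilde X^{\wedge_L})=0$ and $\pi_n(\widetilde X^{\wedge_L})\cong(\pi_nX)^{\wedge_L}$ for $2\le n\le N$. The statement on fundamental groupoids is immediate from Proposition \ref{cohochar}. For the higher groups I would climb the Postnikov tower $\{\widetilde X\langle m\rangle\}$: the base $\widetilde X\langle 2\rangle=K(\pi_2X,2)$ completes to $K((\pi_2X)^{\wedge_L},2)$ by Lemma \ref{kpn}, while for the inductive step the $k$-invariant, composed with $K(\pi_mX,m+1)\to K((\pi_mX)^{\wedge_L},m+1)$ (whose target lies in $\bS^{\wedge_L}$ as $\pi_mX$ is finitely generated), factors through $\widetilde X\langle m-1\rangle^{\wedge_L}$ by the homotopy-adjunction of Corollary \ref{Lloc}; taking the homotopy fibre of this completed $k$-invariant and comparing Serre spectral sequences via Proposition \ref{cohochar} and Lemma \ref{kpn} identifies it, using Proposition \ref{Lcohoweak2}, with $\widetilde X\langle m\rangle^{\wedge_L}$, a principal fibration over $\widetilde X\langle m-1\rangle^{\wedge_L}$ with fibre $K((\pi_mX)^{\wedge_L},m)$. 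An induction on the long exact sequences of these fibrations yields $\pi_n(\widetilde X\langle m\rangle^{\wedge_L})=(\pi_nX)^{\wedge_L}$ for $n\le m$; since $\widetilde X\to\widetilde X\langle N\rangle$ is $(N+1)$-connected, Lemma \ref{piIMlemma} transfers this back to $\widetilde X^{\wedge_L}$ in degrees $\le N$.

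\emph{The fibre lemma} is the step I expect to be hardest. I would prove that the homotopy fibre of $X^{\wedge_L}\to(B\pi)^{\wedge_L}$ in $\Ho(\pro(\bS)_{\delta})$ is $\widetilde X^{\wedge_L}$ through degree $N+1$, by running the Moore--Postnikov tower $X=\holim_m X_m$ of $X\to B\pi$, with $X_1=B\pi$ and $X_m\to X_{m-1}$ a principal fibration with fibre the Eilenberg--MacLane local system $K(\pi_mX,m)$. The $L$-torsion hypothesis enters exactly here: since $\GL_r(\Z/\ell^k)\to\GL_r(\bF_\ell)$ has $\ell$-group kernel, the hypothesis that $\pi$ acts on $\pi_mX\ten\bF_p$ through an $L$-group forces $\pi$ to act on $(\pi_mX)^{\wedge_L}$ through $\pi^{\wedge_L}$, and then by naturality of Lemma \ref{kpn} also on $\H^*(K(\pi_mX,m);\bF_p)$ through $\pi^{\wedge_L}$; so every stage can be completed fibrewise over $B\pi$ exactly as in the previous paragraph, giving that $X_m^{\wedge_L}\to X_{m-1}^{\wedge_L}$ is a principal fibration with fibre $K((\pi_mX)^{\wedge_L},m)$. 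Since $X\to X_m$ is $(m+1)$-connected, Lemma \ref{piIMlemma} identifies $\pi_n(X^{\wedge_L})$ with $\pi_n(X_m^{\wedge_L})$ for $n\le m$, so $X^{\wedge_L}\simeq\holim_m X_m^{\wedge_L}$; taking fibrewise homotopy fibres over $(B\pi)^{\wedge_L}$ then exhibits this homotopy fibre as the limit of the completed Postnikov tower of $\widetilde X$, i.e.\ as $\widetilde X^{\wedge_L}$ through degree $N$. The two delicate points are (i) that the $L$-torsion condition on the $\pi_mX\ten\bF_p$ really does descend all the module structures in sight to $(B\pi)^{\wedge_L}$, and (ii) the passage to the inverse limit, since $(-)^{\wedge_L}$ does not commute with $\holim$ in general — one controls this degree by degree using the connectivity of $X\to X_m$ and Lemma \ref{piIMlemma}.

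\emph{Conclusion.} The fibre sequence $\widetilde X^{\wedge_L}\to X^{\wedge_L}\to(B\pi)^{\wedge_L}$ has long exact homotopy sequence
\[
\cdots\to\pi_n(\widetilde X^{\wedge_L})\to\pi_n(X^{\wedge_L})\to\pi_n((B\pi)^{\wedge_L})\to\pi_{n-1}(\widetilde X^{\wedge_L})\to\cdots ;
\]
here $\pi_1(X^{\wedge_L})\to\pi_1((B\pi)^{\wedge_L})$ is the isomorphism $\pi^{\wedge_L}\xrightarrow{\sim}\pi^{\wedge_L}$ (Proposition \ref{cohochar}) and $\pi_1(\widetilde X^{\wedge_L})=0$, so the sequence terminates with $\pi_2(X^{\wedge_L})\to\pi_2((B\pi)^{\wedge_L})\to 0$. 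Substituting $\pi_n(\widetilde X^{\wedge_L})=(\pi_nX)^{\wedge_L}$ for $2\le n\le N$ and discarding the terms above $\pi_{N+1}(X^{\wedge_L})$ gives the displayed exact sequence, and naturality of $(-)^{\wedge_L}$ shows the first map in each row is the stated natural one. Finally, if $\pi_fX$ is $(L,N+1)$-good then the last clause of Proposition \ref{Lcohoweak2} gives $\pi_n((B\pi)^{\wedge_L})=0$ for $2\le n\le N+1$, and the exact sequence then forces $\pi_n(X)^{\wedge_L}\to\pi_n(X^{\wedge_L})$ to be a pro-isomorphism for all $n\le N$; if $\pi_fX$ is only $(L,N)$-good the same vanishing holds for $2\le n\le N$, so one gets pro-isomorphisms for $n<N$ and, for $n=N$, only a pro-surjection, the term $\pi_N((B\pi)^{\wedge_L})$ on the right vanishing but $\pi_{N+1}((B\pi)^{\wedge_L})$ on the left not.
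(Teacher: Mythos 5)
Your proof follows essentially the same route as the paper's: both climb the Moore--Postnikov tower of $X$ (yours packaged via the universal-cover fibration $\widetilde X\to X\to B\pi$ plus a ``fibre lemma'', the paper's as a direct induction on $X(n)\to X(n-1)$ carried out at the level of simplicial groupoids, where the completed stage is constructed explicitly as $\bar W\cG$), both use Lemma \ref{kpn} on the Eilenberg--MacLane fibres, both use the $L$-torsion hypothesis to conclude that $(\pi_nX)^{\wedge_L}$ is an inverse limit of finite $(\pi_fX)^{\wedge_L}$-representations so that the $k$-invariants descend to the completed base, and both identify each completed stage by comparing (Hochschild--)Serre spectral sequences. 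The organizational difference is cosmetic.
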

\begin{proof}
We adapt the argument of \cite[Theorem \ref{htpy-classicalpi}]{htpy}. Let $\{X(n)\}_n$ be the Postnikov tower for $X$. We will prove the proposition inductively for the groups $X(n)$. Thanks to Lemma \ref{piIMlemma}, we may replace $X$ with $X(N)$, so may assume that the groups $\pi_n(X,x)$ are  finitely generated for all $n$.
Write $\Gamma:=\pi_fX$.

For $n=1$, $X(1)$ is weakly equivalent to $B\pi_fX$, so $(B\pi_fX)^{\wedge_L}\simeq X(1)^{\wedge_L}$ and $\pi_n(X(1),x)=0$ for all $n \ge 2$, making the  exact sequence above immediate.

Now assume that $X(n-1)$ satisfies the inductive hypothesis, and consider the fibration $X(n) \to X(n-1)$. This is determined up to homotopy  by a k-invariant (\cite[\S VI.5]{sht}) $\kappa \in \H^{n+1}(X(n-1), \pi_{n}(X))$. Since $\pi_{n}(X)\ten \bF_p$ is a finite-dimensional $\Gamma^{\wedge_L}$-representation for all $p \in L$, the group $A:=\pi_{n}(X)^{\wedge_L}$ is an inverse limit of finite $\Gamma^{\wedge_L}$-representations.
Now, the element 
$$
\kappa \in \H^{n+1}(X(n-1), A) \cong \H^{n+1}(X(n-1)^{\wedge_L}, A)
$$
comes from a map
$$
G(X(n-1))^{\wedge_L}\to (N^{-1}A[-n])\rtimes \Gamma,
$$
where $N^{-1}$ denotes the denormalisation functor (\cite[8.4.4]{W}) from chain complexes to simplicial complexes (the Dold-Kan correspondence).

Let $LA$ be the chain complex with $A$ concentrated in degrees $n,n-1$, and $d\co (LA)_n \to (LA)_{n-1}$ the identity, and define $\cG$ to be the pullback of this map along the surjection
$ N^{-1}LA \rtimes \Gamma \to(N^{-1}A[-n])\rtimes \Gamma$ of simplicial locally pro-finite $L$-torsion groupoids. This gives an extension 
$$
N^{-1}A[1-n]  \to \cG \to  G(X(n-1))^{\wedge_L}.
$$

Applying $\bar{W}$ gives the fibration
$$
\bar{W}N^{-1}A[1-n] \to \bar{W}\cG \to X(n-1)^{\wedge_L}
$$
in $\pro(\bS)$, corresponding to the k-invariant $f^*\kappa \in \H^n(X(n-1)^{\wedge_L},A)$ for $f\co X(n-1) \to X(n-1)^{\wedge_L}$. This in turn gives a map $X(n) \to \bar{W}\cG$, compatible with the fibrations. 

The long exact sequence of homotopy applied to the map $\bar{W}\cG\to X(n-1)^{\wedge_L}$ shows that $\pi_m(\bar{W}\cG,x)= \pi_m( X(n-1)^{\wedge_L})$ for all $m \ne n, n+1$, and gives an exact sequence
$$
0 \to \pi_{n+1}(\bar{W}\cG,x)  \to \pi_{n+1}( X(n-1)^{\wedge_L})\to  A(x) \to \pi_{n}(\bar{W}\cG,x)  \to \pi_{n}( X(n-1)^{\wedge_L})\to 0.
$$ 
The inductive hypothesis shows that $\pi_m( X(n-1)^{\wedge_L})= \pi_m((B\pi_1(X,x))^{\wedge_L})$ for $m \ge n+1$, so we deduce that there is a long exact sequence
$$
\xymatrix{
\ldots \ar[r] &\pi_m(X(n),x)^{\wedge_L}\ar[r] &\pi_m(\bar{W}\cG,x) \ar[r] &\pi_m((B\pi_1(X,x))^{\wedge_L})\ar[r] &\ldots\\
\ldots \ar[r] &\pi_2(X(n),x)^{\wedge_L}\ar[r] &\pi_2(\bar{W}\cG,x) \ar[r] &\pi_2((B\pi_1(X,x))^{\wedge_L})\ar[r] & 0.
}
$$

As $\bar{W}\cG \in \bS^{\wedge_L}$, it will therefore suffice to show that $F\co G(X(n))^{\wedge_L} \to \cG$ is a weak equivalence. We now apply the Hochschild--Serre spectral sequence, giving
$$
\H^p(X(n-1), \H^q( N^{-1}A[1-n],M))= \H^p(G(X(n-1))^{\wedge_L},\H^q(N^{-1}A[1-n],M))\abuts \H^{p+q}(\cG,M).
$$
Similarly
$$
\H^p(X(n-1),\H^q(E(n),V))\abuts \H^{p+q}(X(n),V),
$$
for all  $\Gamma^{\wedge_L}$-representations $M$ in abelian $L$-groups, where $E(n)$ is the fibre of $X(n) \to X(n-1)$.

Now, $E(n)$ is a $K(\pi_n(X), n)$-space, and $\bar{W}N^{-1}A[1-n]$ is a $K(A,n)$-space. By Lemma \ref{kpn}, it follows that $E(n) \to \bar{W}N^{-1}A[1-n]$ is pro-$L$ completion, giving an isomorphism of cohomology with coefficients in $M$. Thus $F$ induces isomorphisms on homology groups, hence must be a weak equivalence by Proposition \ref{Lcohoweak}.

Finally, if $\Gamma$ is $(L,m)$-good,  Corollary \ref{Lcohoweak2} shows that $\pi_n((B\Gamma)^{\wedge_L},x)=0$ for all $1< n\le m$.
\end{proof}

\section{Review of pro-algebraic homotopy types}\label{review}
Here we give a summary of the results from \cite{htpy} and \cite{mhs}.  The motivation for these is that they provide a framework to transfer information about local systems and their cohomology to statements about homotopy types. 
Fix a field $k$ of characteristic zero.

\subsection{Pro-algebraic groupoids}
Given a local system $\vv$ of finite-dimensional $k$-vector spaces on a topological space $X$, we can form the affine $k$-scheme $\Iso(\vv_x, \vv_y)$ of isomorphisms of stalks, for each pair of points $x,y \in X$. These combine to form a kind of groupoid $G$  whose  objects  are the points of $X$.
This is the motivating example of a pro-algebraic groupoid; in this case it comes equipped with a canonical groupoid homomorphism  $\pi_fX \to G(k)$.

For the general case, we now recall some definitions from \cite[\S\S \ref{htpy-gpdsn}--\ref{htpy-levisn}]{htpy}.
\begin{definition}\label{alggpddef}
Define a \emph{pro-algebraic groupoid} $G$ over a field $k$ to consist of the following data:
\begin{enumerate}
\item A discrete set $\Ob(G)$.
\item For all $x,y \in \Ob(G)$, an affine scheme $G(x,y)$ (possibly empty) over $k$.
\item A groupoid structure on $G$, consisting of a multiplication  morphism $m\co G(x,y)\by G(y,z) \to G(x,z)$, identities $\Spec k \to G(x,x)$ and inverses $G(x,y) \to G(y,x)$, satisfying associativity, identity and inverse axioms.
 \end{enumerate}
Note that a pro-algebraic group is just a pro-algebraic groupoid on one object.
We say that a pro-algebraic groupoid is reductive (resp. pro-unipotent) if the pro-algebraic groups $G(x,x)$ are so for all $x \in \Ob(G)$. An algebraic groupoid is a pro-algebraic groupoid for which the $G(x,y)$ are all of finite type.
\end{definition}
If $G$ is a pro-algebraic groupoid, let $O(G(x,y))$ denote the global sections of the structure sheaf of $G(x,y)$.

\begin{remark}
The terminology ``pro-algebraic groupoid'' follows the characterisation of pro-algebraic groups in \cite[Ch. II]{tannaka}. A linear algebraic group is an affine group scheme of finite type, and there is an equivalence of categories between affine group schemes and pro-objects in linear algebraic groups. A more accurate term for  pro-algebraic groupoids would thus be ``linear pro-algebraically enriched groupoids''.
\end{remark}

\begin{definition}
Given morphisms $f,g\co G \to H$ of pro-algebraic groupoids, define a \emph{natural isomorphism} $\eta$ between $f$ and $g$ to consist of morphisms
$$
\eta_x\co  \Spec k \to H(f(x),g(x))
$$
for all $ x\in \Ob(G)$, such that the following diagram commutes, for all $x,y \in \Ob(G)$:
$$
\begin{CD}
G(x,y) @>f(x,y)>> H(f(x),f(y))\\
@Vg(x,y)VV  @VV{\cdot\eta_y}V \\
 H(g(x),g(y)) @>{\eta_x\cdot}>>  H(f(x),g(y)).
\end{CD}
$$
[If we reversed our order of composition in Definition \ref{alggpddef}, this would be the same as a natural transformation of functors of categories enriched in affine $k$-schemes.]
 
A morphism $f\co G \to H$ of pro-algebraic groupoids is said to be an equivalence if there exists a morphism $g\co H \to G$ such that $fg$ and $gf$ are both naturally isomorphic to identity morphisms. This is the same as saying that for all $y \in \Ob(H)$, there exists $x \in \Ob(G)$ such that $H(f(x),y)(k)$ is non-empty (essential surjectivity), and that for all $x_1,x_2 \in \Ob(G)$, $G(x_1,x_2) \to H(f(x_1),f(x_2) )$ is an isomorphism.
\end{definition}

\begin{definition}\label{gpdrep}
Given a pro-algebraic groupoid $G$, define a \emph{finite-dimensional linear} $G$-\emph{representation} to be a functor $\rho$ from $G$ to the category of finite-dimensional $k$-vector spaces,  respecting the algebraic structure. Explicitly, this  consists of a set $\{V_x\}_{x \in \Ob(G)}$ of finite-dimensional $k$-vector spaces, together with morphisms $\rho_{xy}\co G(x,y) \to \Hom(V_y,V_x)$ of affine schemes, respecting the multiplication and identities. 

A morphism $f\co (V,\rho)\to (W,\varrho)$ of $G$-representations consists of $f_x \in \Hom(V_x,W_x)$ such that 
$$
f_x\circ\varrho_{xy}=\rho_{xy}\circ f_y\co G(x,y) \to \Hom(V_x,W_y).
$$
\end{definition}

\begin{definition}
Given a pro-algebraic groupoid $G$, define the \emph{reductive quotient} $G^{\red}$ of $G$ by setting $\Ob(G^{\red})=\Ob(G)$, and
$$
G^{\red}(x,y)=G(x,y)/\Ru(G(y,y))= \Ru(G(x,x))\backslash G(x,y),
$$
where $\Ru(G(x,x))$ is the pro-unipotent radical of the pro-algebraic group $G(x,x)$.
The equality arises since if $f\in G(x,y),\, g \in \Ru(G(y,y))$, then $f gf^{-1}\in \Ru(G(x,x))$, so both equivalence relations are the same. Multiplication and inversion descend similarly. Observe that $G^{\red}$ is then a reductive pro-algebraic groupoid. Representations of $G^{\red}$ correspond to semisimple representations of $G$, since $k$ is of characteristic $0$.
\end{definition}

\begin{definition}\label{tann}
Recall from \cite[Definition  II.1.7]{tannaka} that a tensor category $\cC$ is said to be \emph{rigid} if it has an internal $\Hom$-functor $\cHom$, satisfying 
\begin{itemize}
\item $\cHom(X,Y)\ten \cHom(X',Y') \cong \cHom(X\ten X', Y\ten Y')$ and 

\item $(X^{\vee})^{\vee}\cong X$ for all $X \in \cC$,
\end{itemize}
where $X^{\vee}= \cHom(X,1)$, with $1$ the unit for $\ten$.
\end{definition}


\begin{definition}\label{tannaka}
Recall from  \cite[\S II.2]{tannaka}  that a \emph{neutral Tannakian category} over $k$ is a $k$-linear rigid abelian tensor category $\cC$, equipped with a faithful exact tensor functor $\omega$ (the fibre functor) from $\cC$ to the category of finite-dimensional $k$-vector spaces.

In \cite[\S \ref{htpy-gpdsn}]{htpy}, this was extended to \emph{multifibred Tannakian categories}, which have several exact tensor functors $\{\omega_x\}_{x \in S}$, jointly faithful in the sense that $\Hom(U, V) \into \prod_{x \in S}\Hom(\omega_xU, \omega_xV)$.

A \emph{Tannakian subcategory} $\cD \subset \cC$ is a full subcategory closed under the formation of subquotients, direct sums, tensor products, and
duals.
\end{definition}

Tannakian duality (\cite[Theorem II.2.11]{tannaka}) then states that for any neutral Tannakian category $(\cC, \omega)$ over a field, there is a canonical equivalence between $\cC$ and the category of finite-dimensional representations of a unique affine group scheme $G$. Explicitly, $G$ is the scheme of tensor automorphisms of $\omega$. 

If $\cC$ is multifibred, with a  set $S$ of  fibre functors, we form a pro-algebraic groupoid $G$ on objects $S$ by setting $G(x,y)$ to be the affine scheme of tensor isomorphisms from $\omega_x$ to $\omega_y$. This gives a canonical equivalence between $\cC$ and the category of finite-dimensional $G$-representations, with $\omega_x$ being pullback along the inclusion $\{x\} \into G$.

\begin{definition}
Let $\agpd$ denote the category of pro-algebraic groupoids over $k$, and observe that this category contains all  limits. 
\end{definition}

\begin{lemma}
Consider the functor $G \mapsto G(k)$ from $\agpd$ to $\gpd$, the category of abstract groupoids. This has a left adjoint,  the \emph{algebraisation functor}, denoted $\Gamma \mapsto \Gamma^{\alg}$, which is determined by  the finite-dimensional linear representations of $\Gamma$.
\end{lemma}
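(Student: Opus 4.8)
The plan is to construct $\Gamma^{\alg}$ by Tannakian reconstruction from $\Rep(\Gamma)$, the category of finite-dimensional $k$-linear representations of the groupoid $\Gamma$, and then to read off the adjunction directly from the reconstruction theorem.

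First I would record that $\Rep(\Gamma)$ --- functors $\rho\co \Gamma \to \Vect_k$ to finite-dimensional vector spaces, with natural transformations as morphisms --- is a $k$-linear abelian tensor category: kernels, cokernels, direct sums, tensor products and duals are all computed objectwise, and finite-dimensionality is preserved, so the category is rigid with unit the trivial one-dimensional representation. For each $x \in \Ob\Gamma$ the evaluation functor $\omega_x\co \rho \mapsto \rho(x)$ is exact, faithful and tensor-preserving, and the family $\{\omega_x\}_{x\in\Ob\Gamma}$ is jointly faithful because a natural transformation vanishing at every object vanishes. Thus $(\Rep(\Gamma),\{\omega_x\})$ is a multifibred Tannakian category in the sense of \cite[\S \ref{htpy-gpdsn}]{htpy}, and I define $\Gamma^{\alg}$ to be the associated pro-algebraic groupoid: $\Ob(\Gamma^{\alg})=\Ob\Gamma$, and $\Gamma^{\alg}(x,y)$ is the affine scheme of tensor isomorphisms $\omega_x \xrightarrow{\sim} \omega_y$. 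Each $\gamma \in \Gamma(x,y)$ gives the tensor isomorphism $(\rho(\gamma))_{\rho}$, defining a functorial groupoid homomorphism $u\co \Gamma \to \Gamma^{\alg}(k)$ which is the identity on objects, and by the reconstruction part of Tannakian duality, pullback along $u$ is an equivalence $\Rep(\Gamma^{\alg})\simeq \Rep(\Gamma)$ compatible with the fibre functors. (Equivalently, one can give $\Gamma^{\alg}$ hands-on by letting $O(\Gamma^{\alg}(x,y))$ be the span of the matrix-coefficient functions $\gamma \mapsto \lambda(\rho(\gamma)v)$ for $\rho \in \Rep(\Gamma)$, $v \in \rho(y)$, $\lambda \in \rho(x)^{\vee}$, but the Tannakian description is cleaner for the universal property.)

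Next I would verify the universal property. Let $H \in \agpd$. Restriction of representations sends a homomorphism $f\co \Gamma \to H(k)$ to a $k$-linear exact tensor functor $f^{*}\co \Rep(H) \to \Rep(\Gamma)$ with $\omega^{\Gamma}_{x}\circ f^{*} = \omega^{H}_{f(x)}$. Conversely, since $H$ is pro-algebraic it is itself the Tannakian dual of $(\Rep(H),\{\omega^{H}_{y}\})$ --- every affine group scheme being a pro-object in linear algebraic groups, hence every pro-algebraic groupoid being reconstructed from its representation category --- and therefore a tensor functor $\Rep(H)\to\Rep(\Gamma^{\alg})$ compatible with fibre functors is the same datum as a morphism $\Gamma^{\alg}\to H$ in $\agpd$. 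Composing with the equivalence $\Rep(\Gamma^{\alg})\simeq\Rep(\Gamma)$ turns $f^{*}$ into such a morphism, and unwinding the constructions shows that this assignment and ``apply $(-)(k)$ and precompose with $u$'' are mutually inverse, natural in $H$, and natural in $\Gamma$ since $u$ is functorial. Hence
$$
\Hom_{\agpd}(\Gamma^{\alg}, H)\ \xrightarrow{\sim}\ \Hom_{\gpd}(\Gamma, H(k)),
$$
which exhibits the claimed left adjoint, manifestly built from the finite-dimensional representations of $\Gamma$.

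The hard part is really the input from multifibred Tannakian duality \cite[\S \ref{htpy-gpdsn}]{htpy}, in particular the reconstruction statement that a pro-algebraic groupoid $H$, and morphisms into it, are determined by $\Rep(H)$ together with its fibre functors; granting that, the remaining work is the routine check that $\Rep(\Gamma)$ meets the multifibred Tannakian axioms (chiefly abelianness and rigidity) together with the naturality bookkeeping for the bijection. One point to keep in mind is that if $x$ and $y$ lie in different connected components of $\Gamma$ there may be no tensor isomorphism $\omega_x\to\omega_y$, so $\Gamma^{\alg}(x,y)$ can be empty, exactly mirroring $\Gamma$ itself; this causes no trouble for the argument.
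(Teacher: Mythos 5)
Your argument is correct and matches the Tannakian route that the paper itself uses as its "Alternatively" proof of this lemma; you simply fill in the reconstruction and adjunction bookkeeping that the paper leaves terse. One small inaccuracy to drop: a single evaluation functor $\omega_x$ need not be faithful when $\Gamma$ is disconnected (a representation concentrated on a component not containing $x$ is killed by $\omega_x$), but only the joint faithfulness of $\{\omega_x\}_{x\in\Ob\Gamma}$, which you do state and which is all Definition \ref{tannaka} requires, is used. The paper also records the explicit formula $\Gamma^{\alg}(x,y)=\Gamma(x,x)^{\alg}\by^{\Gamma(x,x)}\Gamma(x,y)$ in terms of the Hochschild--Mostow completion of the vertex groups; this is not needed for the universal property but is sometimes more convenient for computations.
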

\begin{proof}
 The algebraisation functor can be given explicitly by setting $\Ob(\Gamma)^{\alg}=\Ob(\Gamma)$,  and 
$$
\Gamma^{\alg}(x,y)=\Gamma(x,x)^{\alg}\by^{\Gamma(x,x)}\Gamma(x,y),
$$
where $\Gamma(x,x)^{\alg}$ is the pro-algebraic (or Hochschild--Mostow) completion of the group $\Gamma(x,x)$ (\cite{Levi}), and $X\by^GY$ is the quotient of $X\by Y$ by the relation $(gx,y) \sim (x,gy)$ for $g \in G$.

Alternatively, 
the finite-dimensional linear representations of $\Gamma$ (as in Definition \ref{gpdrep}) correspond to those of $\Gamma^{\alg}$ (if the latter exists). These form a multifibred Tannakian category (with one fibre functor for each object of $\Gamma$), so Tannakian duality provides unique pro-algebraic groupoid $G$ with the same finite-dimensional  representations as $\Gamma$. For any pro-algebraic groupoid $H$ and any groupoid homomorphism $\Gamma \to H(k)$, we then have a functor from $H$-representations to $\Gamma$ representations, and thus a unique compatible morphism $G\to H$, so  
 $\Gamma^{\alg}\cong G$.
\end{proof}

\begin{example}\label{algeg}
 The motivating example for this setup is when $\Gamma=\pi_fX$, the fundamental groupoid of a topological space. Then $(\pi_fX)^{\alg}$ is the pro-algebraic groupoid corresponding to the multifibred Tannakian category of local systems of finite-dimensional $k$-vector spaces on $X$. The fibre functors are given by $\vv \mapsto \vv_x$. Likewise, $(\pi_fX)^{\red}$ is the object corresponding to the Tannakian category of semisimple local systems.
\end{example}

\begin{definition}
Given a pro-algebraic groupoid $G$, and $U=\{U_x\}_{x \in \Ob(G)}$ a collection of pro-algebraic groups parametrised by $\Ob(G)$, we say that $G$ \emph{acts on} $U$ if there are morphisms $ U_x\by G(x,y) \xra{*} U_y$ of affine schemes, satisfying the following conditions:
\begin{enumerate}
\item $(uv)*g= (u*g)(v*g)$, $1*g=1$ and $(u^{-1})*g= (u*g)^{-1}$,  for $g \in G(x,y)$ and $u,v \in U_x$.

\item $u*(gh)=(u*g)*h$ and  $u*1=u$, for $g \in G(x,y), h \in G(y,z)$ and $u \in U_x$. 
\end{enumerate}

If $G$ acts on $U$, we construct $G \ltimes U $ as in Definition \ref{semidirect}.
\end{definition}

\begin{definition}
Given a pro-algebraic groupoid $G$, define the  \emph{pro-unipotent radical} $\Ru(G)$ to be the collection $\Ru(G)_x=\Ru(G(x,x))$ of pro-unipotent pro-algebraic groups, for $x \in \Ob(G)$. $G$ then acts on $\Ru(G)$ by conjugation, i.e.
$$
u*g:= g^{-1}u g,
$$
for $u \in \Ru(G)_x$, $g \in G(x,y)$.
\end{definition}

Now assume that the field $k$ is of characteristic $0$.
\begin{proposition}\label{leviprop}
For any pro-algebraic groupoid $G$, there is a Levi decomposition $G=G^{\red} \ltimes \Ru(G)$, unique up to conjugation by $\Ru(G)$.
\end{proposition}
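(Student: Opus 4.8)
The plan is to produce a section $s\co G^{\red}\to G$ of the canonical surjection $q\co G\to G^{\red}$, unique up to conjugation by $\Ru(G)$. Given such an $s$, pulling back the conjugation action of $G$ on $\Ru(G)$ along $s$ makes $G^{\red}$ act on $\Ru(G)$, and the resulting morphism $G^{\red}\ltimes\Ru(G)\to G$ (the identity on objects, $(u,l)\mapsto u\cdot s(l)$ on morphisms) is an isomorphism: on each affine scheme it is the multiplication map $\Ru(G(x,x))\by s(G^{\red}(x,y))\to G(x,y)$, which is an isomorphism because left multiplication by $\Ru(G(x,x))$ acts simply transitively on the fibres of $G(x,y)\to G^{\red}(x,y)=\Ru(G(x,x))\backslash G(x,y)$. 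Since both the construction and the notion of $\Ru(G)$-conjugacy are compatible with decomposing $G$ into the full subgroupoids spanned by the equivalence classes of the relation ``$G(x,y)\neq\emptyset$'', I may assume $G$ connected.

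For existence, first apply the classical Levi decomposition for pro-algebraic groups (Mostow; see \cite{Levi}) to each pro-algebraic group $G(x,x)$, obtaining a reductive Levi subgroup $L_x\le G(x,x)$ with $q\co L_x\xrightarrow{\sim} G^{\red}(x,x)$ and $G(x,x)=L_x\ltimes\Ru(G(x,x))$; this defines $s$ on the automorphism schemes. To extend it to the schemes $G(x,y)$ compatibly with composition and identities, I use that $G(x,y)\to G^{\red}(x,y)$ is a torsor, over the affine scheme $G^{\red}(x,y)$, under the pro-unipotent group $\Ru(G(y,y))$; since $\H^1$ of an affine scheme with quasi-coherent coefficients vanishes, a filtration argument along a central series of $\Ru(G(y,y))$ shows each such torsor is trivial, so sections exist. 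Choosing them coherently is a lifting problem for the extension $\Ru(G)\to G\to G^{\red}$ of pro-algebraic groupoids, which I would solve by running Mostow's inductive argument with parameters: lift step by step along a central filtration of $\Ru(G)$, the obstruction and the indeterminacy at each stage lying in cohomology of the reductive groupoid $G^{\red}$ with coefficients in unipotent (affine-scheme-valued) representations, which vanishes in characteristic zero. The pro-structure is handled by an inverse limit, the schemes of partial choices being non-empty with surjective transition maps.

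For uniqueness, given sections $s,s'$, classical uniqueness yields for each $x$ an element $u_x\in\Ru(G(x,x))$ with $s'(G^{\red}(x,x))=u_x^{-1}s(G^{\red}(x,x))u_x$, and the same central-filtration-plus-vanishing argument patches the family $(u_x)$ into a single element of $\Ru(G)$ conjugating $s$ to $s'$. I expect the main obstacle to be not any single cohomology vanishing — these are routine in characteristic zero — but the coherence bookkeeping: making all the choices, and all the conjugators, simultaneously over the whole object set $\Ob(G)$ while respecting the bitorsor structure of the schemes $G(x,y)$. This is precisely where the groupoid statement goes beyond the classical group case, since $G^{\red}(x,y)$ need not have a $k$-point, so one cannot simply reduce to a basepoint by transporting a single Levi subgroup around the groupoid.
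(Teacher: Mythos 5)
The paper's own proof is just a citation to Proposition \ref{htpy-leviprop} in the companion paper \cite{htpy}, so there is no in-text argument to compare against; I can only assess your proposal on its own terms.

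Your reduction (section $s\co G^{\red}\to G$ gives the semidirect product decomposition; reduce to scheme-theoretically connected $G$; Mostow for each vertex group $G(x,x)$) is sound, and you are right that the point at which the groupoid case genuinely departs from the group case is that $G^{\red}(x,y)$ need not have a $k$-point, so one cannot transport a single Levi subgroup around. The difficulty is that having correctly located this obstacle, you do not actually get past it. The crux of your argument is the sentence asserting that the obstruction and indeterminacy lie ``in cohomology of the reductive groupoid $G^{\red}$ with coefficients in unipotent (affine-scheme-valued) representations, which vanishes in characteristic zero.'' That vanishing is precisely where the lack of $k$-points bites: the standard route to $\H^{>0}(H,V)=0$ for reductive $H$ is via semisimplicity of $\Rep(H)$, and semisimplicity of $\Rep(G^{\red})$ when $G^{\red}(x,y)(k)=\emptyset$ is a statement of the same depth as the one you are trying to prove. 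You call these vanishings ``routine in characteristic zero,'' but as stated that is a restatement of the obstacle rather than its resolution; you have not set up the groupoid Hochschild complex, identified the extension class in it, or justified that it vanishes.

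The missing ingredient, which would close the gap and which your writeup explicitly steers away from, is a base change to $\bar{k}$. Over $\bar{k}$ the scheme-theoretic components of $G$ do have rational points, so $G_{\bar{k}}$ is equivalent to a disjoint union of pro-algebraic groups; Mostow then gives a Levi decomposition $s_{\bar{k}}$ by transporting a single Levi subgroup, and uniqueness over $\bar{k}$ is classical. That same base change also proves the semisimplicity of $\Rep(G^{\red})$ over $k$ (since $\Ext^1$ in representation categories commutes with flat base change), which is exactly the cohomological vanishing you invoked. One then descends: the Galois twists of $s_{\bar{k}}$ differ from $s_{\bar{k}}$ by elements of $\Ru(G)(\bar{k})$, so the presheaf of Levi decompositions is a homogeneous space under the pro-unipotent $\prod_{x}\Ru(G)_x$, and pro-unipotent homogeneous spaces over a characteristic-zero field have rational points (filter by $\bG_a$'s and use additive Hilbert 90). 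Uniqueness over $k$ follows similarly. Either your cohomological inductive argument or this descent can be made to work, but both hinge on the passage to $\bar{k}$; as written, your proposal identifies the difficulty but does not supply the mechanism that dissolves it, so there is a genuine gap rather than merely an omitted routine verification.
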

\begin{proof}
\cite[Proposition \ref{htpy-leviprop}]{htpy}.
\end{proof}

\subsection{The pro-algebraic homotopy type of a topological space}\label{sagpdsn}

We now recall the results from \cite[\S \ref{htpy-sagpdsn}]{htpy}. The motivation here is that we wish to study the whole homotopy type, not just fundamental groupoids. This will involve working with the loop groupoid, which is a simplicial groupoid, so we need a simplicial framework. 

\begin{definition}
Given a simplicial object $G_{\bullet}$ in the category of pro-algebraic groupoids,   with $\Ob(G_{\bt})$ constant, define the  \emph{fundamental groupoid} $\pi_0(G_{\bullet})$ of $G_{\bt}$ to have objects $\Ob(G)$, and for $x,y \in \Ob(G)$, set 
$\pi_0(G)(x,y)$ to be the coequaliser 
$$
\xymatrix@1{G_1(x,y) \ar@<1ex>[r]^{\pd_1} \ar@<-1ex>[r]_{\pd_0}& G_0(x,y) \ar[r] &\pi_0(G)(x,y) }
$$ 
in the category of affine schemes. Thus $\pi_0(G)$ is a pro-algebraic groupoid on objects $\Ob(G)$, with multiplication inherited from $G_0$.
\end{definition}

\begin{definition}
Define a \emph{ pro-algebraic simplicial  groupoid}  to consist of a simplicial complex $G_{\bullet}$ of pro-algebraic groupoids, such that 
\begin{enumerate}
 \item $\Ob(G_{\bt})$ is constant,  and
\item for all $x \in \Ob(G)$, $G(x,x)_{\bt} \in s\agp$, i.e. the maps $G_n(x,x) \to \pi_0(G)(x,x)$ are pro-unipotent extensions of pro-algebraic groups.
\end{enumerate}
  We denote the category of pro-algebraic simplicial groupoids by $s\agpd$. 
\end{definition}

For any $G_{\bt} \in s\agpd$ and $x \in \Ob (G_{\bt})$, observe that $G_{\bt}(x,x)$ is a simplicial affine group scheme, so has homotopy groups $\pi_n(G_{\bt}(x,x))$. That these are also affine group schemes follows from the standard characterisation
\[
 \pi_n(G_{\bt}(x,x))= \H_n(NG_{\bt}(x,x), \pd_0)
\]
of homotopy groups of simplicial groups.

\begin{lemma}\label{cmsagpd}
There is a model structure on  $s\agpd$ in which a  morphism $f\co G_{\bullet} \to H_{\bullet}$ is:
\begin{enumerate}
\item a  weak equivalence if the map  $\pi_0(f)\co \pi_0(G_{\bullet})\to \pi_0(H_{\bullet})$ is an equivalence of pro-algebraic groupoids, and the maps  $\pi_n(f,x)\co \pi_n(G_{\bullet}(x,x)) \to \pi_n(H_{\bullet}(fx,fx))$ are isomorphisms for all $n$ and for all $x \in \Ob(G)$;

\item a fibration if 
the morphism  $N_n(f)\co  N(G(x,x))_n \to N(H(x,x))_n$ of normalised groups is surjective  for all $n>0$ and  all $x \in \Ob(G)$, and $f$ satisfies the path-lifting condition that for all $x \in \Ob(G), y \in \Ob(H)$, and $h \in H_0(fx,y)(k)$, there exists $z \in \Ob(G)$, $g \in G_0(x,z)(k)$ with $fg=h$.   Equivalently, this says that $G(k) \to H(k)$ is a fibration in the category of simplicial groupoids.
\end{enumerate}
\end{lemma}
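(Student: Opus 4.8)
The plan is to build this model structure in three steps: first a model structure on the category $s\agp$ of simplicial pro-algebraic groups (with the pro-unipotence condition of the definition); then a bootstrap from the group case to arbitrary object sets, copying the Dwyer--Kan construction of the model structure on $s\gpd$; and finally a verification of the reformulation of the fibration condition via $k$-points. One should first note that $s\agpd$ is complete and cocomplete: limits are computed objectwise because $\agpd$ has all limits, while colimits exist because amalgamated free products of affine group schemes exist (a general colimit reduces to coproducts, coequalisers, and disjoint unions of object sets), the pro-unipotence condition being preserved throughout.

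\emph{Step one.} When $\Ob(G)$ is a point, $s\agpd$ is $s\agp$, and here I would invoke the general machinery for simplicial objects in an algebraic category of \cite{monad} (or the model structure already recorded in \cite{htpy}): it yields a cofibrantly generated model structure in which $f$ is a weak equivalence iff it induces isomorphisms on the homotopy group schemes $\pi_n(-)= \H_n(N(-),\pd_0)$, and a fibration iff each $N_n(f)$ is surjective for $n>0$. The feature that makes this work, over and above the classical case of simplicial groups, is the Levi decomposition of Proposition \ref{leviprop}: a simplicial pro-algebraic group is, up to the reductive part of $\pi_0$, a simplicial pro-unipotent group, hence (Malcev) equivalent to a simplicial pro-nilpotent Lie algebra, where the homotopy theory is $k$-linear and pushouts along cofibrations are governed by ordinary long exact sequences; the model-category axioms --- in particular the assertion that the generating trivial cofibrations are weak equivalences --- are then checked by transporting through the Levi decomposition and the exponential.

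\emph{Steps two and three.} To pass from one object to arbitrary $\Ob(G)$ I would follow \cite[Ch.~V]{sht} and \cite{pathgpd} essentially verbatim: since $\Ob$ carries no algebraic structure, the path-lifting clause for objects in the definition of fibration and the essential-surjectivity clause in the definition of weak equivalence are handled exactly as for discrete simplicial groupoids, the extra generating (trivial) cofibrations being the evident ``free object'' maps, and the lifting and factorisation axioms reduce component by component --- along a choice of maximal tree in each component --- to Step one. It then remains to identify the fibrations with the maps $f$ for which $f(k)\colon G(k)\to H(k)$ is a fibration in $s\gpd$. A map of simplicial groupoids is a fibration iff each vertex map is a fibration of simplicial groups and object path-lifting holds, and a map of simplicial groups is a fibration iff its normalised map is surjective in positive degrees; so one must compare surjectivity of $N_n(f)(x,x)$ as a map of pro-algebraic groups with surjectivity on $k$-points, for $n>0$. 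Here characteristic $0$ and pro-unipotence are essential: for $n\ge 1$ the group $N_n(G(x,x))$ maps to the identity in $\pi_0(G)(x,x)$, hence is a closed subgroup of a pro-unipotent group and therefore itself pro-unipotent, and a surjection of pro-unipotent pro-algebraic groups over a characteristic-$0$ field is surjective on $k$-points via the Malcev correspondence with pro-nilpotent Lie algebras; the object path-lifting on $k$-points then matches the stated condition on the nose.

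The main obstacle I expect is Step one: controlling amalgamated free products in $s\agp$ and proving that the generating trivial cofibrations are weak equivalences, since free products of affine group schemes are considerably less transparent than free products of abstract groups. If one is willing to cite \cite{monad} (or \cite{htpy}) as a black box for this, the remaining effort is the bookkeeping of Step two and the characteristic-$0$ argument identifying the two descriptions of fibrations.
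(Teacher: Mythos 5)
The paper's own proof is a one-line citation, so there is no direct argument to compare against; but your sketch --- Levi decomposition reducing the pro-unipotent part to simplicial pro-nilpotent Lie algebras, a Dwyer--Kan-style bootstrap to handle non-trivial object sets, and the observation that $N_n(G(x,x))$ lies in the pro-unipotent kernel of $G_n(x,x)\to\pi_0(G)(x,x)$ for $n\ge 1$ so that surjectivity of pro-algebraic group schemes matches surjectivity on $k$-points in characteristic $0$ --- matches the strategy of the cited theorem and is consistent with how the Levi decomposition is exploited in the nearby Proposition~\ref{meequiv}. Treating Step one as a black box to be cited rather than reproved is also in keeping with the present paper's own treatment.
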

\begin{proof}
This is \cite[Theorem \ref{htpy-cmsagpd}]{htpy}.
\end{proof}

We define $\Ho(s\agpd)$ to be the localisation of $s\agpd$ at weak equivalences.

There is a forgetful functor $(k)\co s\agpd \to s\gpd$, given by sending $G_{\bt}$ to $G_{\bt}(k)$. This functor  has a left adjoint $G_{\bt} \mapsto (G_{\bt})^{\alg}$. We can describe $(G_{\bt})^{\alg}$ explicitly. First let $(\pi_0(G))^{\alg}$ be the pro-algebraic completion of the abstract groupoid $\pi_0(G)$, then let $(G^{\alg})_n$ be the relative Malcev completion  (defined in \cite{hainrelative} for pro-algebraic groups) of the morphism
$$
G_n \to (\pi_0(G))^{\alg}.
$$
In other words, $G_n \to (G^{\alg})_n \xra{f} (\pi_0(G))^{\alg}$ is the universal diagram with $f$ a pro-unipotent extension.

\begin{proposition}\label{algqd}
The functors $(k)$ and $(-)^{\alg}$ give rise to a pair of adjoint functors 
$$
\xymatrix@1{\Ho(s\gpd) \ar@<1ex>[r]^{\bL^{\alg}} & \Ho(s\agpd) \ar@<1ex>[l]^{(k)}_{\bot} },
$$
with $\bL^{\alg}G(X)=G(X)^{\alg}$, for any $X \in \bS$ and $G$ as in Definition \ref{barwdef}.
\end{proposition}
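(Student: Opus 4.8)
The plan is to upgrade the adjunction $(-)^{\alg}\dashv(k)$ of the preceding paragraph to a Quillen adjunction, for the model structure on $s\gpd$ (the one making $G\dashv\bar{W}$ a Quillen equivalence, Definition~\ref{barwdef}) and the model structure of Lemma~\ref{cmsagpd} on $s\agpd$, and then to read off the derived functors. Clause~(2) of Lemma~\ref{cmsagpd} declares $f$ to be a fibration in $s\agpd$ exactly when $f(k)$ is a fibration in $s\gpd$, so $(k)$ preserves fibrations for free; it therefore suffices to prove that $(-)^{\alg}$ is left Quillen, i.e.\ that it preserves cofibrations and trivial cofibrations. Since $(-)^{\alg}$ is a left adjoint (hence colimit-preserving) and $s\gpd$ is cofibrantly generated, it is enough to check this on generators; through the Quillen pair $G\dashv\bar{W}$ one may take the generating cofibrations to be the loop groupoids $G(\partial\Delta^n\hookrightarrow\Delta^n)$ and the generating trivial cofibrations the $G(\Lambda^n_k\hookrightarrow\Delta^n)$. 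In each simplicial degree these maps freely adjoin generators, and $(-)^{\alg}$ turns the free product of a (pro-algebraic) group with $\Z$ into its free product with the pro-algebraic completion $\Z^{\alg}$; hence $(-)^{\alg}$ carries both families to ``relatively free'' morphisms, which are cofibrations of $s\agpd$. This disposes of the cofibration halves of both requirements.

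The substantive point is that $(-)^{\alg}$ sends the trivial cofibration $G(\Lambda^n_k\hookrightarrow\Delta^n)$ to a \emph{weak equivalence} in $s\agpd$ --- more generally, that $(-)^{\alg}$ preserves weak equivalences between cofibrant (hence free) simplicial groupoids. On fundamental groupoids this is clear, since $\pi_0(G^{\alg})\cong(\pi_0 G)^{\alg}$ and algebraisation carries an equivalence of groupoids to an equivalence of pro-algebraic groupoids. For the higher homotopy one argues cohomologically: if $f$ is a weak equivalence in $s\gpd$ then $\bar{W}f$ is one in $\bS$, so $f$ induces isomorphisms on cohomology with coefficients in every local system, in particular in every finite-dimensional linear representation $V$ of $(\pi_0 G)^{\alg}$. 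Relative Malcev completion leaves cohomology with coefficients in such representations unchanged (a comparison theorem of \cite{htpy}), so $\H^*(H^{\alg},V)\to\H^*(G^{\alg},V)$ is an isomorphism for every such $V$. Finally, a morphism of pro-algebraic simplicial groupoids which are pro-unipotent extensions of one common reductive quotient, and which induces isomorphisms on cohomology with coefficients in all finite-dimensional representations of that quotient, is a weak equivalence --- an obstruction-theoretic argument up the Postnikov towers, using that the fibres there have pro-unipotent and hence cohomologically accessible homotopy groups (the same mechanism underlying Lemma~\ref{cmsagpd}). Thus $f^{\alg}$ is a weak equivalence, and $(-)^{\alg}$ is left Quillen.

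It follows that there is a derived adjunction $\bL^{\alg}\dashv\oR(k)$ between $\Ho(s\gpd)$ and $\Ho(s\agpd)$. Every object of $s\agpd$ is moreover fibrant: the map to the terminal groupoid meets the surjectivity condition of Lemma~\ref{cmsagpd}(2) vacuously, and the path-lifting condition by choosing $z=x$, $g=\id_x$. By Ken Brown's lemma $(k)$ therefore preserves all weak equivalences, so $\oR(k)$ is just the functor induced by $(k)$ on homotopy categories. On the left, for every $X\in\bS$ the loop groupoid $G(X)$ is cofibrant in $s\gpd$ --- it is $G$ applied to the cofibrant object $X$, and $G$ is left Quillen --- so $\bL^{\alg}$ may be evaluated on it without cofibrant replacement, giving $\bL^{\alg}G(X)=G(X)^{\alg}$. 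The crux of the argument is the middle paragraph: the invariance of representation-cohomology under relative Malcev completion, together with the fact that this invariance detects weak equivalences of pro-algebraic simplicial groupoids; both are the inputs drawn from \cite{htpy}.
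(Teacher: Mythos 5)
Your proof unwinds what the paper delegates to a one-line citation: the paper's own proof simply quotes \cite[Proposition \ref{htpy-algqd}]{htpy} for the Quillen pair and then makes the same two closing observations you do (every object of $s\agpd$ is fibrant, so $(k)$ is its own right derived functor; $G(X)$ is cofibrant in $s\gpd$, so $\bL^{\alg}G(X)=G(X)^{\alg}$). The tail of your argument therefore matches the paper; the middle is your own reconstruction of the cited Quillen pair, and there are two things to flag there.

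First, your second paragraph is logically redundant. Once you have observed that $(k)$ preserves fibrations (immediate from Lemma~\ref{cmsagpd}(2)) and argued that $(-)^{\alg}$ sends generating cofibrations to cofibrations of $s\agpd$, the Quillen pair follows formally by the standard characterisation (Hovey, \emph{Model Categories}, Lemma~1.3.4): if $F\dashv U$ with $F$ preserving cofibrations and $U$ preserving fibrations, then for any trivial cofibration $i$ and any fibration $p$ downstairs, $U(p)$ is a fibration, so $i$ has LLP against $U(p)$; by adjunction $F(i)$ has LLP against $p$, and since this holds against all fibrations, $F(i)$ is a trivial cofibration. The cohomological detour is not needed.

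Second, the cohomological argument as written is circular in the logic of this paper. The ``comparison theorem'' you invoke --- that for cofibrant $\Gamma_{\bt}$ one has $\H^*(\Gamma_{\bt}^{R,\mal},V)\cong\H^*(\Gamma_{\bt},V)$ --- is Lemma~\ref{cohomalworks}, whose proof here uses the Quillen adjunction of Proposition~\ref{algqd}. There is an independent proof (for $\Gamma_{\bt}$ with free levels, reduce via the Hochschild--Serre spectral sequence to the free-group case, exactly as in the proof of Proposition~\ref{cohochar} for pro-finite completion), but you would need to run that argument rather than cite its corollary. Conversely, the load-bearing step in your first paragraph --- that levelwise-free morphisms of $s\agpd$ are cofibrations --- is not free; it is part of the content of \cite[Theorem \ref{htpy-cmsagpd}]{htpy} underlying Lemma~\ref{cmsagpd} and should be cited as such. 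With those adjustments, your route is a correct, if more laboured, reconstruction of what the paper treats as a reference.
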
 
\begin{proof}
\cite[Proposition \ref{htpy-algqd}]{htpy}  shows that the functors are a Quillen pair, so the statement follows from the observation that all objects in $s\agpd$ are fibrant, making $(k)$ its own derived right Quillen functor. Since $G(X)$ is cofibrant, $\bL^{\alg}G(X)=G(X)^{\alg}$.
\end{proof}

The reason that we need to take $\bL^{\alg}$ in the Proposition is that $(-)^{\alg}$ is not an exact functor, so only preserves weak equivalences between cofibrant objects (which roughly correspond to free simplicial groupoids). In Examples \ref{goodexamples}, we will see examples of discrete groups $\Gamma$ for which the map $\bL^{\alg}\Gamma \to \Gamma^{\alg}$ is not a weak equivalence. 

\begin{definition}
Given a  simplicial set (or equivalently a  topological space), define the  \emph{pro-algebraic homotopy type} of $X$ over $k$ to be the object
$$
G(X)^{\alg}
$$ 
in $\Ho(s\agpd)$, where $G(X)$ is the loop groupoid of Definition \ref{barwdef}. Define the  \emph{pro-algebraic fundamental groupoid} by $\varpi_f(X):=\pi_0(G(X)^{\alg})$. Note that  $\pi_0(G^{\alg})$ is the pro-algebraic completion of the fundamental groupoid $\pi_0(G)$.

We then define the higher  \emph{pro-algebraic homotopy groups} $\varpi_n(X)$  (as $\varpi_fX$-representations) by 
$$
\varpi_n(X):=\pi_{n-1}(G(X)^{\alg}),
$$
where $\pi_n(G)$ is the representation $x \mapsto \pi_n(G(x,x))$, for $x \in \Ob(G)$.
\end{definition}

\begin{remark}\label{nonab} 
We can interpret $G(X)^{\alg} $ as the classifying object for non-abelian cohomology. Given $G \in s\agpd$, we can define $\H^1(X,G)$ to be the homotopy class of maps $G(X)^{\alg} \to G$, which is just $[X, \bar{W}G(k)]$. When $G$ is just a linear algebraic group, this recovers the usual definition of the set $\H^1(X,G)$ of classes of $G$-torsors on $X$. When $A$ is a simplicial finite-dimensional vector space (regarded as a simplicial algebraic group), this definition gives
\[
 \H^1(X,A)= \bH^1(X,NA),
\]
hypercohomology of the normalised complex associated to $A$.
\end{remark}

\subsection{Relative Malcev homotopy types}\label{malcevsn}

\begin{definition}\label{malcevdef}
Assume we have an abstract groupoid $G$, a reductive pro-algebraic groupoid $R$, and a representation $\rho\co G \to R(k)$ which is an isomorphism on objects and Zariski-dense on morphisms (i.e. $\rho\co G(x,y) \to R(k)(\rho x, \rho y)$ is Zariski-dense for all $x,y \in \Ob G$). Define the  \emph{Malcev completion} $(G,\rho)^{\mal}$ (or $G^{\rho, \mal}$, or $G^{R, \mal}$)   of $G$  \emph{relative to} $\rho$  to be the universal diagram
$$
G \to (G,\rho)^{\mal} \xra{p} R,
$$
with $p$  a pro-unipotent extension, and the composition equal to $\rho$. Explicitly, $\Ob(G,\rho)^{\mal}=\Ob G$ and 
$$
(G,\rho)^{\mal}(x,y)=(G(x,x), \rho)^{\mal}\by^{G(x,x)}G(x,y).
$$
If $G$ and  $R$ are groups, observe that this agrees with the usual definition (of \cite{hainrelative}).  

If $\varrho\co G \to R(k)$  is any any  Zariski-dense representation (i.e. essentially surjective on objects and Zariski-dense on morphisms) to a reductive pro-algebraic groupoid (in most examples, we take $R$ to be a group), we can define another reductive groupoid $\tilde{R}$ by setting $\Ob \tilde{R}=\Ob G$, and $ \tilde{R}(x,y)=R(\varrho x, \varrho y)$. This gives a representation $\rho\co \pi_fX \xra{\rho} \tilde{R}$ satisfying the above hypotheses, and we define the Malcev completion of $G$ relative to $\varrho$ to be the Malcev completion of $G$ relative to $\rho$. Note that $\tilde{R} \to R$ is an equivalence of pro-algebraic groupoids.
\end{definition}

\begin{definition}
Given a Zariski-dense morphism $\rho\co \pi_fX \to R(k)$, let the  \emph{Malcev completion} $G(X,\rho)^{\mal}$ of $X$  \emph{relative to} $\rho$ be the pro-algebraic simplicial group $(G(X), \rho)^{\mal}$.  Observe that the Malcev completion of $X$ relative to $(\pi_fX)^{\red}$ is just $G(X)^{\alg}$. Let $\varpi_f(X,\rho)^{\mal}=\pi_0G(X,\rho)^{\mal}$ and $\varpi_n(X,\rho)^{\mal}=\pi_{n-1}G(X,\rho)^{\mal}$. Note that $\pi_f((X,\rho)^{\mal})$ is the relative Malcev completion of $\rho\co \pi_fX \to R(k)$.  
\end{definition} 

Beware that the relative Malcev completion of $X$ is defined by completing a loop space for $X$, rather than $X$ itself. However, Theorem \ref{bigequiv} will give other equivalent formulations of the homotopy type, effectively by completing a covering space for $X$. 

\begin{lemma}\label{algpiIMlemma}
 Let $f\co X \to Y$ be a morphism in $\bS$ for which the map
\[
 \pi_n(f): \pi_n(X) \to \pi_n(Y) 
\]
is an isomorphism for $n \le N$ and a surjection for $n=N+1$, and take a Zariski-dense morphism $\rho\co \pi_fY \to R(k)$.
Then the map
\[
 \varpi_n(f): \varpi_n(X,\rho\circ f)^{\mal}  \to \varpi_n(Y, \rho )^{\mal}
\]
is an  isomorphism for $n \le N$ and a surjection for $n=N+1$.
\end{lemma}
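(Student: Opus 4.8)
The plan is to follow the proof of Lemma \ref{piIMlemma}, with relative Malcev completion in place of pro-$L$ completion. The first step is to reduce the assertion about homotopy groups to one about cohomology with coefficients in $R$-representations. We may assume $N\ge 1$, the case $N=0$ amounting to right-exactness of relative Malcev completion of groups. Replace $f$ by a fibration $\tilde{f}\co \tilde{X}\to Y$ in $\bS$ with the same homotopy fibre, and let $F$ denote that fibre. The long exact sequence of homotopy groups for $\tilde{f}$, together with the hypothesis on $\pi_n(f)$, shows that $F$ is $N$-connected, hence simply connected. Thus for any local system $V$ on $Y$ obtained from an $R$-representation via $\rho$, we have $\H^q(F,V|_F)=0$ for $0<q\le N$ and $\H^0(F,V|_F)=V$, so the Serre spectral sequence
\[
 \H^p(Y, \H^q(F, V|_F)) \abuts \H^{p+q}(\tilde{X}, V)
\]
and the standard edge-map estimates force $\H^n(f)\co \H^n(Y, V)\to \H^n(X,V)$ to be an isomorphism for $n\le N$ and injective for $n=N+1$.

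Next, since $N\ge 1$ the map $\pi_f(f)\co \pi_fX \to \pi_fY$ is an equivalence of groupoids, and the representation $\rho\circ f$ is by definition the composite of $\pi_f(f)$ with $\rho$, so the induced map $\varpi_f(X,\rho\circ f)^{\mal}\to \varpi_f(Y,\rho)^{\mal}$ of relative Malcev fundamental groupoids is an isomorphism. Hence $f$ satisfies the hypotheses of the relative Malcev analogue of Proposition \ref{Lcohoweak2}: an equivalence on relative Malcev fundamental groupoids, together with the cohomological comparison above for all $R$-representations $V$. That criterion is proved by exactly the obstruction-theoretic argument of Proposition \ref{Lcohoweak2}, now carried out over $\Ho(s\agpd)$ using the Postnikov tower of $G(Y,\rho)^{\mal}$ as in \cite{htpy}: the obstruction to lifting a homotopy class of maps through the $n$-th stage lies in $\H^{n+2}(-,\varpi_{n+1})$, and when non-empty the set of such lifts is a principal $\H^{n+1}(-,\varpi_{n+1})$-space, where each $\varpi_{n+1}$ is an $R$-representation. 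The conclusion is that $\varpi_n(f)$ is an isomorphism for $n\le N$ and a surjection for $n=N+1$.

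The main obstacle is the second step: formulating and justifying the relative Malcev cohomological criterion. Unlike the pro-$L$ setting, one must know that cohomology with coefficients in $R$-representations alone suffices to control the relative Malcev homotopy type in a given range; this is precisely what the Tannakian description of $R$ and the construction of $G(X,\rho)^{\mal}$ provide, and it is the content of the comparison underlying Theorem \ref{bigequiv}. A routine further point is that an $R$-representation $V$ may be a pro-object: one expresses it as an inverse system of finite-dimensional $R$-representations and passes to the limit, handling the resulting ${\Lim}^1$-terms exactly as in the opening paragraph of the proof of Proposition \ref{Lcohoweak2}.
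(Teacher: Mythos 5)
Your proposal agrees with the paper on the first step: both reduce to showing that $\H^i(Y,V)\to\H^i(X,V)$ is an isomorphism for $i\le N$ and injective for $i=N+1$, via the Serre/Leray spectral sequence of the homotopy fibre of $f$. The second step is where you diverge. The paper's actual proof does not run an obstruction-theoretic argument through the Postnikov tower of $G(Y,\rho)^{\mal}$; instead it invokes the convergent Adams spectral sequence of \cite[Proposition \ref{htpy-spectralh}]{htpy},
\[
E^1_{pq}(X)=\bigl(\Lie_{-p}\,(\tilde{\H}^{*+1}(X,\bO(R))^{\vee})\bigr)_{p+q}\;\abuts\;\varpi_{p+q+1}(X,\rho\circ f)^{\mal},
\]
in the category of pro-finite-dimensional vector spaces. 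Dualising the cohomological comparison and applying the free graded Lie algebra functor immediately gives that $E^1_{pq}(X)\to E^1_{pq}(Y)$ is an isomorphism for $p+q<N$ and surjective for $p+q=N$, from which the conclusion drops out of the spectral-sequence comparison lemma. This is both shorter and avoids having to set up and verify a relative Malcev analogue of Proposition \ref{Lcohoweak2}.

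Your route can be made to work, but it genuinely needs the scaffolding you flag as "the main obstacle" — one must verify an obstruction theory in $\Ho(s\agpd)$ (or equivalently in $s\cP(R)$) for Postnikov towers of simplicial pro-algebraic groupoids, and this is not a drop-in replacement for the argument over $\pro(\bS)_{\delta}$. Two smaller points to tighten if you pursue this: (i) $\varpi_{n+1}(Y,\rho)^{\mal}$ is in general only a finite-dimensional $\varpi_f(Y)^{\rho,\mal}$-representation, i.e.\ an Artinian extension of $R$-representations, not an $R$-representation itself; your cohomological comparison for $R$-representations extends to such coefficients by d\'evissage, but this should be said. (ii) The passage via Theorem \ref{bigequiv} is not quite what is needed here — the relevant input is Lemma \ref{cohomalworks} (identifying $\H^*(G(X)^{\rho,\mal},V)$ with $\H^*(X,V)$) together with Proposition \ref{detectweak} (weak equivalences in $s\agpd$ are detected by cohomology with $R$-representation coefficients). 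With those in place, your Postnikov induction goes through, but the paper's Adams spectral sequence argument buys the same conclusion at a fraction of the cost, precisely because the $E^1$-page is already expressed in terms of $\H^*(X,\bO(R))$ and nothing finer.
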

\begin{proof}
As in the proof of Lemma \ref{piIMlemma}, for any $\pi_fY$-representation $M$,  the maps  $\H^i(Y,M) \to \H^{i}(X,M) $ are isomorphisms for $i\le N$ and injective for $i=N+1$. 

Now,  \cite[Proposition \ref{htpy-spectralh}]{htpy} gives a convergent  Adams spectral sequence 
$$
E^1_{pq}(X)= (\Lie_{-p} (\tilde{\H}^{*+1}(X,\bO(R) )^{\vee}))_{p+q} \abuts \varpi_{p+q+1}(X,\rho\circ f)^{\mal} ,
$$
in the category of pro-finite-dimensional vector spaces,
where  $\tilde{\H}$ denotes reduced cohomology, $\Lie_*$ is the free graded Lie algebra functor, and $\bO(R)$ is the local system of Definition \ref{OR}. Since $E^1_{pq}(X)\to E^1_{pq}(Y)$ is an isomorphism for $p+q< N$ and  surjective for $p+q=N$, the result follows.
\end{proof}

\begin{definition}\label{relgood}
Say that a groupoid $\Gamma$ is  $n$-\emph{good} with respect to a Zariski-dense representation $\rho\co  \Gamma \to R(k)$ to a reductive pro-algebraic groupoid if for all finite-dimensional $\Gamma^{\rho, \mal}$-representations $V$, the map
$$
\H^i(\Gamma^{\rho, \mal}, V) \to \H^i(\Gamma, V)
$$  
is an isomorphism for all $i\le n$ and an inclusion for $i=n+1$. Say that $\Gamma$ is \emph{good} with respect to $\rho$ if it is $n$-good for all $n$.
\end{definition}

See Lemma \ref{goodh} for alternative criteria to determine when a groupoid is $n$-good.

\begin{examples}\label{goodexamples}
By \cite[Examples \ref{goodexamples}]{htpy}, finite groups, free groups, finitely generated  nilpotent groups and fundamental groups of compact Riemann surfaces are all good with respect to all Zariski-dense representations. Super rigid  groups (such as $\SL_3(\Z)$) give examples of groups which are not good  with respect to any real (or complex) representations. This is because  $\Gamma^{R, \mal}=R$ in these cases, but $\H^*(\Gamma, \R)\ne \R$.
\end{examples}

\begin{theorem}\label{classicalpimal}
If $X$ is a  topological space with fundamental groupoid $\Gamma$, equipped with a Zariski-dense representation $\rho\co  \Gamma \to R(k)$ to a reductive pro-algebraic groupoid  for which: 
\begin{enumerate}
\item $\Gamma$ is  $(N+1)$-good with respect to $\rho$,
\item $\pi_n(X,-)$ is of finite rank for all $1<n \le N$, and
\item  the $\Gamma$-representation  $\pi_n(X,-)\ten_{\Z} k$ is an extension of $R$-representations (i.e. a $\Gamma^{\rho, \mal}$-representation) for all $1<n \le N$,
\end{enumerate}
then the canonical map
$$
  \pi_n(X,-)\ten_{\Z} k \to \varpi_{n}(X^{\rho, \mal},-) 
$$
is an isomorphism for all $1<n \le N$.
\end{theorem}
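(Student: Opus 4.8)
The plan is to mimic the proof of Proposition \ref{piclasstohat} (itself modelled on \cite[Theorem \ref{htpy-classicalpi}]{htpy}), replacing the pro-$L$ completion $(-)^{\wedge_L}$ throughout by the relative Malcev completion $(-)^{\rho,\mal}$, and the coefficient category of abelian $L$-groups by that of finite-dimensional $\Gamma^{\rho,\mal}$-representations (equivalently, $R$-representation extensions), where $\Gamma=\pi_fX$. Two things simplify matters relative to Proposition \ref{piclasstohat}: since $k$ has characteristic $0$ there is no reduction to $\bF_p$-coefficients, and hypothesis (3) supplies directly the $\Gamma^{\rho,\mal}$-structure on $A_n:=\pi_n(X,-)\ten_\Z k$ that the $L$-torsion condition encoded there; moreover hypothesis (1), $(N+1)$-goodness, forces the relative-Malcev analogue of the last clause of Proposition \ref{Lcohoweak2}, namely $\varpi_m((B\Gamma)^{\rho,\mal})=0$ for $2\le m\le N+1$, so that the exact sequence of Proposition \ref{piclasstohat} collapses to the asserted isomorphisms. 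First I would use Lemma \ref{algpiIMlemma} to reduce to $X=X(N)$, the $N$th Postnikov section, after which $\pi_n(X,-)$ has finite rank and $A_n$ is a $\Gamma^{\rho,\mal}$-representation for every $n$; one then argues by induction up the Postnikov tower $\{X(n)\}_n$. The case $n=1$ is immediate: $X(1)\simeq B\Gamma$, so $G(X(1))^{\rho,\mal}\simeq\Gamma^{\rho,\mal}$ (goodness of $\Gamma$ being used to see it acquires no higher homotopy in the range $\le N+1$), with $\pi_0=\varpi_f(X,\rho)^{\mal}$ by Definition \ref{malcevdef}.

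For the inductive step, take the principal fibration $X(n)\to X(n-1)$ with fibre $E(n)=K(\pi_n(X),n)$ and $k$-invariant $\kappa\in\H^{n+1}(X(n-1),\pi_n(X))$, and put $A=A_n$. The inductive hypothesis should be maintained in the form that $X(n-1)\to\bar{W}G(X(n-1))^{\rho,\mal}$ induces an isomorphism on $\H^{\le N+1}(-,V)$ for all finite-dimensional $\Gamma^{\rho,\mal}$-representations $V$; this lets one transfer the image of $\kappa$ in $\H^{n+1}(X(n-1),A)$ to a morphism $G(X(n-1))^{\rho,\mal}\to(N^{-1}A[-n])\rtimes\varpi_f(X,\rho)^{\mal}$ of pro-algebraic simplicial groupoids. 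Exactly as in Proposition \ref{piclasstohat}, let $\cG$ be the pullback of the surjection $N^{-1}LA\rtimes\varpi_f(X,\rho)^{\mal}\onto(N^{-1}A[-n])\rtimes\varpi_f(X,\rho)^{\mal}$ along this map, $LA$ being $A$ placed in degrees $n,n-1$ with identity differential; then $N^{-1}A[1-n]\to\cG\to G(X(n-1))^{\rho,\mal}$ is an extension, and applying $\bar{W}$ gives a fibration $\bar{W}N^{-1}A[1-n]\to\bar{W}\cG\to\bar{W}G(X(n-1))^{\rho,\mal}$ whose $k$-invariant is the image of $\kappa$, hence a compatible map $X(n)\to\bar{W}\cG$. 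The long exact homotopy sequence, combined with the inductive hypothesis on homotopy groups, then equips $\cG$ with $\varpi_m=A_m$ for $2\le m\le n$ and $\varpi_m=0$ for $n<m\le N+1$.

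It remains to check that $F\co G(X(n))^{\rho,\mal}\to\cG$ is a weak equivalence (note $G(X(n))$ is cofibrant, so $\bL G(X(n))^{\rho,\mal}=G(X(n))^{\rho,\mal}$). By the convergent Adams spectral sequence of \cite[Proposition \ref{htpy-spectralh}]{htpy}, or the relative-Malcev analogue of Proposition \ref{Lcohoweak}, this follows once $X(n)\to\bar{W}\cG$ induces isomorphisms on $\H^*(-,V)$ for every finite-dimensional $\Gamma^{\rho,\mal}$-representation $V$. Comparing the two Hochschild--Serre spectral sequences
\[
\H^p(X(n-1),\H^q(E(n),V))\abuts\H^{p+q}(X(n),V),\qquad \H^p(X(n-1),\H^q(\bar{W}N^{-1}A[1-n],V))\abuts\H^{p+q}(\bar{W}\cG,V),
\]
and using that $\bar{W}N^{-1}A[1-n]$ is a $K(A,n)$-space, the problem reduces to showing that $E(n)=K(\pi_n(X),n)\to K(A,n)$ induces an isomorphism on cohomology with coefficients in any $\Gamma^{\rho,\mal}$-representation --- the relative-Malcev substitute for Lemma \ref{kpn}. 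As both spaces are simply connected this is the local-coefficient form of the classical fact that $\H^*(K(B,n),k)\cong\H^*(K(B\ten_\Z k,n),k)$ for a finitely generated abelian group $B$; the $\Gamma$-equivariance is automatic, since the cohomology of $K(\pi_n(X),n)$ is assembled from $A$ by tensor, symmetric and exterior powers and subquotients, hence lies in the Tannakian category of $\Gamma^{\rho,\mal}$-representations.

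The step I expect to be the main obstacle is the $k$-invariant transfer, i.e. establishing $\H^{n+1}(X(n-1),A)\cong\H^{n+1}(\bar{W}G(X(n-1))^{\rho,\mal},A)$: this lies one degree above the range directly governed by the homotopy-group comparison, so one must carry the cohomological statement --- not merely the homotopy-group statement --- through the Postnikov induction, and it is exactly here that $(N+1)$-goodness, rather than $N$-goodness, of $\Gamma$ is indispensable. The Eilenberg--MacLane comparison is by contrast routine rational homotopy theory once phrased with local coefficients.
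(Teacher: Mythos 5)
Your overall strategy --- reduce to the $N$th Postnikov section $X(N)$ via Lemma \ref{algpiIMlemma}, then induct up the Postnikov tower by mimicking Proposition \ref{piclasstohat} with the Malcev completion replacing pro-$L$ completion --- is the right one, and it is essentially what the paper's proof does (the paper simply cites \cite[Theorem \ref{htpy-classicalpimal}]{htpy} for the $X(N)$ step rather than rewriting the Postnikov argument; the referenced proof actually passes through the universal cover and Theorem \ref{fibrations} first, but the direct Postnikov induction you describe works equally well). Your earlier remark that $(N+1)$-goodness kills $\varpi_m((B\Gamma)^{\rho,\mal})$ for $2\le m\le N+1$, so the exact sequence collapses to isomorphisms, is exactly right.

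Your closing diagnosis, however, misplaces where the goodness hypothesis is used. You identify the $k$-invariant transfer $\H^{n+1}(X(n-1),A)\cong\H^{n+1}(\bar{W}G(X(n-1))^{\rho,\mal},A)$ as ``the main obstacle'' and claim this is where $(N+1)$-goodness is indispensable. In fact this comparison holds \emph{unconditionally, in all cohomological degrees and at every stage $n$}, because $G(X(n-1))$ is a cofibrant simplicial groupoid and so Lemma \ref{cohomalworks} gives $\H^*(G(X(n-1))^{R,\mal}, V)\cong\H^*(G(X(n-1)),V)=\H^*(X(n-1),V)$ for every finite-dimensional $\Gamma^{\rho,\mal}$-representation $V$. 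This is the Malcev analogue of Proposition \ref{cohochar}, and like that proposition it requires no goodness --- it exploits only the freeness of the loop groupoid $G(X(n-1))$. Thus there is no cohomological statement to ``carry through the Postnikov induction'': you can cite Lemma \ref{cohomalworks} once and for all. The \emph{only} place $(N+1)$-goodness enters is in the passage you already noted, namely that it forces $\varpi_m((B\Gamma)^{\rho,\mal})=0$ for $1<m\le N+1$, which in turn makes the terms $\varpi_m((B\Gamma)^{\rho,\mal})$ vanish from the long exact sequence through the range needed to conclude isomorphism (not merely surjection) at $n=N$. Your argument does not actually break --- the inductive cohomological bookkeeping you propose is simply unnecessary --- but the stated intuition about the role of the goodness hypothesis is wrong, and worth getting straight.
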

\begin{proof}
When $N=\infty$, this is \cite[Theorem \ref{htpy-classicalpimal}]{htpy}, but the same proof gives the conclusion above if we only assume that $\Gamma$ is  $(N+1)$-good (while still requiring the other conditions to hold for all $n$). For arbitrary $N$, and $X$ as above, this means that the $N$th stage $X(N)$ in the Postnikov tower for $X$ gives isomorphisms
\[
  \pi_n(X,-)\ten_{\Z} k \to \varpi_{n}(X(N)^{\rho, \mal},-)
\]
for all $1<n \le N$, since $\pi_iX(N)=0$ for $i>N$, while $\pi_iX(N)=\pi_iX$ for $i\le N$.

Applying Lemma \ref{algpiIMlemma} to the morphism $X \to X(N)$ now completes the proof.
\end{proof}

\subsection{Cohomology and hypercohomology}\label{gpcoho}

\subsubsection{Simplicial groupoids}

\begin{definition}\label{crepdef}
For a simplicial groupoid $\Gamma_{\bt}$, a \emph{cosimplicial $\Gamma_{\bt}$-representation} consists of the following
\begin{enumerate}

\item
a $\Gamma_n$-representation $V^n$ for all $n$, with $g\cdot\pd^iv = \pd^i((\pd_ig)\cdot v)$, for $g \in \Gamma_{n+1}$, $v \in V^n$;

\item operations $\pd^i, \sigma^i$  making  $V^{\bt}(x)$ into  a cosimplicial complex for each $x \in \Ob \Gamma_{\bt}$, satisfying the additional conditions that
 $$
g\cdot(\pd^iv) = \pd^i((\pd_ig)\cdot v) \quad h\cdot(\sigma^iv) = \sigma^i((\sigma_ig)\cdot v)
$$
 for $g \in \Gamma_{n+1}(x,y)$, $h \in \Gamma_{n-1}(x,y)$,  $v \in V^n(y)$.
\end{enumerate}
\end{definition}

\begin{remark}
 If $\Gamma_{\bt}= G(X)$, then we can think of a cosimplicial $\Gamma_{\bt}$-representation as being a kind of hyper-local system on $X$. As we will see below, these give a sufficiently large category to recover cohomology, but objects with constant cosimplicial structure are still just local systems.
\end{remark}

\begin{definition}
Given a simplicial groupoid $\Gamma_{\bt}$ and a  cosimplicial $\Gamma_{\bt}$-representation $V$, define the cosimplicial complex $\CC^{\bt}(\Gamma_{\bt},V)$ by
$$
\CC^n(\Gamma_{\bt},V)= \Hom_{\Gamma_n}((W\Gamma_{\bt})_n, V^n),
$$
for the functor $W$ from Definition \ref{wdef},
with operations $(\pd^if)(x)= \pd^i_V (f(\pd_ix))$ for $x \in (W\Gamma_{\bt})_{n+1}$, and $(\sigma^if)(x)= \sigma^i_V (f(\sigma_ix))$ for $x \in (W\Gamma_{\bt})_{n-1}$.

Then define hypercohomology groups $\bH^i(\Gamma_{\bt},V)$ by $\bH^i(\Gamma_{\bt},V)= \H^i\CC(\Gamma_{\bt},V)$. If $V$ is a $\pi_0\Gamma_{\bt}$-representation, regard $V$ as a  cosimplicial  $\Gamma_{\bt}$-representation (with constant cosimplicial structure) and write $\H^i(\Gamma_{\bt},V):= \bH^i(\Gamma_{\bt},V)$. 
\end{definition}

\begin{lemma}\label{whypercohocoho}
If $\Gamma_{\bt}$ is a simplicial groupoid and  $V$  a $\pi_0\Gamma_{\bt}$-representation, then 
$$
\H^i(\Gamma_{\bt},V) = \H^i(\bar{W}\Gamma_{\bt},V).
$$
\end{lemma}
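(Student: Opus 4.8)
The statement is the simplicial-groupoid, local-coefficient version of the classical fact that $\H^*(BG)\cong \H^*(G)$, and I would prove it by comparing cochain complexes directly. By Definition \ref{prospacecohodef} (and the remark following it, extending \cite[\S VI.4]{sht}), $\H^i(\bar W\Gamma_\bt, V)$ is the $i$-th cohomology of the local-coefficient cochain complex $\CC^\bt(\bar W\Gamma_\bt, V)$, while by definition $\H^i(\Gamma_\bt, V)=\bH^i(\Gamma_\bt, V)$ is the $i$-th cohomology of the cosimplicial complex with $\CC^n(\Gamma_\bt, V)=\Hom_{\Gamma_n}((W\Gamma_\bt)_n, V^n)$; since $V$ is given the constant cosimplicial structure we have $V^n=V$ and the cosimplicial operators reduce to $(\partial^i f)(x)=f(\partial_i x)$ and $(\sigma^i f)(x)=f(\sigma_i x)$. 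So it suffices to produce an isomorphism $\CC^\bt(\bar W\Gamma_\bt, V)\cong \CC^\bt(\Gamma_\bt, V)$ of cosimplicial abelian groups.

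First I would record the structural facts about $W\Gamma_\bt$. In each simplicial degree $n$ the $\Gamma_n$-action on $(W\Gamma_\bt)_n$ is by composition on the leading factor $\Gamma_n(x, y_n)$ (Definition \ref{wdef}), hence is free, with quotient $(W\Gamma_\bt)_n/\Gamma_n = (\bar W\Gamma_\bt)_n$; this is just the coinvariants description of $\bar W$. Because $W\Gamma_\bt(x)$ is contractible for each $x$ and $\bar W\Gamma_\bt=\Gamma_\bt\backslash W\Gamma_\bt$, the space $\bar W\Gamma_\bt$ is a classifying object for $\Gamma_\bt$ with $\pi_f\bar W\Gamma_\bt\cong\pi_0\Gamma_\bt$, so $V$ is genuinely a local system on $\bar W\Gamma_\bt$. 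Now pull $V$ back along $\Gamma_n\to\pi_0\Gamma_\bt$; using freeness of the action, restriction along the quotient map $(W\Gamma_\bt)_n\to(\bar W\Gamma_\bt)_n$ identifies $\Gamma_n$-equivariant maps $(W\Gamma_\bt)_n\to V$ with those functions on $(\bar W\Gamma_\bt)_n$ that are suitably twisted by $V$ along the leading vertex of each simplex. Unwinding how the leading vertex of an $n$-simplex of $\bar W\Gamma_\bt$ lifts to $(W\Gamma_\bt)_n$, one sees that these twisted functions are exactly the cochains $\CC^n(\bar W\Gamma_\bt, V)$, giving the degreewise isomorphism $\CC^n(\bar W\Gamma_\bt, V)\cong\Hom_{\Gamma_n}((W\Gamma_\bt)_n, V)=\CC^n(\Gamma_\bt,V)$.

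It then remains to check compatibility with the cosimplicial operators. The degeneracies are immediate, both sides being induced by $\sigma_i$ on $W\Gamma_\bt$ and $\bar W\Gamma_\bt$. All the cofaces $\partial^0,\ldots,\partial^n$ of $\CC^\bt(\Gamma_\bt,V)$ are induced by the face maps $\partial_i$ of $(W\Gamma_\bt)_\bt$, whereas on $\CC^\bt(\bar W\Gamma_\bt, V)$ only $\partial^0$ carries an extra factor, namely parallel transport of $V$ along the leading edge of the simplex. The crucial — and essentially the only non-formal — point is that this leading edge acts on $V$ only through its image in $\pi_0\Gamma_\bt$, which is precisely the twist already absorbed into the $\Gamma_n$-equivariance defining $\CC^n(\Gamma_\bt,V)$; matching the explicit formula for $\partial_0$ on $(W\Gamma_\bt)_n$ from Definition \ref{wdef} against the local-coefficient coboundary confirms that the two coboundaries coincide under the identification. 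Hence $\CC^\bt(\bar W\Gamma_\bt, V)\cong\CC^\bt(\Gamma_\bt, V)$ as cosimplicial complexes, and passing to $\H^i$ gives $\H^i(\bar W\Gamma_\bt,V)=\H^i(\Gamma_\bt,V)$. The main obstacle is exactly this bookkeeping with the $\partial^0$-twist and the lift of the leading vertex; the rest is unwinding of definitions.
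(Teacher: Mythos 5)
Your proof is correct in outline and attacks the statement the same way the paper does: both reduce the lemma to an identification of the two cosimplicial cochain complexes $\CC^\bt(\Gamma_\bt,V)$ and $\CC^\bt(\bar W\Gamma_\bt,V)$ degree by degree. The difference is in how the degreewise identification is set up, and here the paper's route is considerably cleaner. The paper observes that $\pi_0\Gamma_\bt\times^{\Gamma_\bt}W\Gamma_\bt$ is precisely the universal covering system $\widetilde{\bar W\Gamma_\bt}$ of Definition \ref{widetildedef}, so that
\[
\Hom_{\Gamma_n}\bigl((W\Gamma_\bt)_n, V\bigr)\;\cong\;\Hom_{\pi_0\Gamma_\bt}\bigl(\pi_0\Gamma_\bt\times^{\Gamma_n}(W\Gamma_\bt)_n, V\bigr)\;\cong\;\Hom_{\pi_f\bar W\Gamma_\bt}\bigl(\widetilde{(\bar W\Gamma_\bt)}_n, V\bigr)
\]
is a one-line induction/restriction adjunction (using that $V$ is a $\pi_0\Gamma_\bt$-representation), and this isomorphism is manifestly natural in $[n]\in\Delta$, so compatibility with cosimplicial operators is automatic. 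You instead try to pass directly from $\Gamma_n$-equivariant maps on $(W\Gamma_\bt)_n$ to ``twisted functions on $(\bar W\Gamma_\bt)_n$,'' which forces you to choose a lift of the leading vertex and then verify by hand that your identification commutes with $\partial^0$ (the only coface that carries a parallel-transport twist). That bookkeeping is exactly what the covering-system/adjunction formulation absorbs for free. A further point worth flagging: Definition \ref{CCdef} in this paper \emph{defines} local-coefficient cochains via the covering system, not as twisted functions on the base, so your intermediate assertion that ``these twisted functions are exactly the cochains $\CC^n(\bar W\Gamma_\bt,V)$'' is itself a small lemma you would need to state (it's the usual equivalence of the twisted complex with the equivariant complex on the universal cover), rather than a tautology. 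None of this is a gap in the mathematics — the argument does go through — but if you recast the identification as the paper does, both the section-choice and the $\partial^0$ check disappear.
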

\begin{proof}
Observe that  $\pi_0(\Gamma_{\bt})\by^{\Gamma_{\bt}} (W\Gamma_{\bt})$ is the universal covering system of $\bar{W}\Gamma_{\bt}$.
Since $V$ is a $\pi_0\Gamma_{\bt}$-representation, 
$$
\Hom_{\Gamma_n}((W\Gamma_{\bt})_n, V)= \Hom_{\pi_0\Gamma_{\bt}}(  (\pi_0\Gamma_{\bt})\by^{\Gamma_n}(W\Gamma_{\bt})_n, V)= \Hom_{\pi_f\bar{W}\Gamma}(\widetilde{(\bar{W}\Gamma_{\bt})}_n,V),
$$
so $\CC^{\bt}(\Gamma_{\bt},V)= \CC^{\bt}(\bar{W}\Gamma_{\bt},V)$ (as defined in Definition \ref{CCdef}), giving the required result.
\end{proof}

\begin{lemma}\label{cohohypercoho}
If $\Gamma_{\bt}$ is a simplicial groupoid and  $V$  a cosimplicial $\Gamma_{\bt}$-representation, then there is a 
convergent spectral sequence
$$
\H^{i}(\Gamma_{\bt}, \H^j(V)) \abuts \bH^{i+j}(\Gamma_{\bt},V),
$$
where $H^j(V)$ is the $\pi_0\Gamma_{\bt}$-representation given by setting $\H^*(V)(x)$ to be cohomology of the cosimplicial complex $V(x)$, for all $x \in \Ob \Gamma_{\bt}$. 
\end{lemma}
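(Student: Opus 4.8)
The plan is to obtain the spectral sequence as the one associated to a canonical filtration of the cosimplicial cochain complex $\CC^\bt(\Gamma_\bt, V)$ computing $\bH^*(\Gamma_\bt, V)$, the filtration being by the ``internal'' cohomological degree coming from the cosimplicial structure of $V$.

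The key elementary observation is that $V \mapsto \CC^\bt(\Gamma_\bt, V)$ is an exact functor in each cosimplicial degree: by the formula of Definition \ref{wdef} the action $h\cdot(g_n,\dots,g_0) = (hg_n, g_{n-1},\dots, g_0)$ is free, so each $(W\Gamma_\bt)_n$ is a free $\Gamma_n$-set and $\Hom_{\Gamma_n}((W\Gamma_\bt)_n, -)$ is exact on $\Gamma_n$-modules. Hence any filtration of $V$ in the category of cosimplicial $\Gamma_\bt$-representations is carried term by term to a filtration of $\CC^\bt(\Gamma_\bt, V)$, and thence to a spectral sequence converging to $\bH^*(\Gamma_\bt,V)$.

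Now I would take the filtration $\{\tau_{\le j}V\}_{j\ge 0}$ given, via Dold--Kan, by applying canonical truncation to the underlying cosimplicial abelian group of $V$. Since canonical truncation is functorial, a short computation --- iterating condition (1) of Definition \ref{crepdef} --- shows that each $\tau_{\le j}V$ is in fact a sub-cosimplicial-$\Gamma_\bt$-representation of $V$, that the tower is exhaustive, and that the graded quotient $\tau_{\le j}V/\tau_{\le j-1}V$ is internally quasi-isomorphic to the Eilenberg--MacLane object $K(\H^j(V), j)$; here $\H^j(V)$ is genuinely a $\pi_0\Gamma_\bt$-representation (the two faces $\pd_0,\pd_1\co\Gamma_1\rightrightarrows\Gamma_0$ induce the same operator on internal cohomology, again by condition (1)), namely the one in the statement. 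Applying the exact functor $\CC^\bt(\Gamma_\bt, -)$ then produces a bounded-below exhaustive filtration of $\CC^\bt(\Gamma_\bt, V)$, hence a convergent (first-quadrant) spectral sequence abutting to $\bH^{*}(\Gamma_\bt, V)$. For the $E_1$-page, the $j$-th graded complex is quasi-isomorphic to $\CC^\bt(\Gamma_\bt, K(\H^j(V),j))$, which computes $\H^{*-j}(\Gamma_\bt, \H^j(V))$ --- by a dimension shift reducing to the case $j=0$, which is Lemma \ref{whypercohocoho}. Reindexing gives $E_1^{j,i} = \H^i(\Gamma_\bt, \H^j(V))$ and the stated abutment.

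The main obstacle is that the two indices appearing in $\CC^n(\Gamma_\bt, V) = \Hom_{\Gamma_n}((W\Gamma_\bt)_n, V^n)$ --- the simplicial degree of $W\Gamma_\bt$ and the cosimplicial degree of $V$ --- are entangled as a single index, so one cannot naively present $\CC^\bt(\Gamma_\bt, V)$ as the total complex of a double complex. The substance of the proof is therefore in checking that the internal truncation filtration is $\Gamma_\bt$-equivariant and that $\CC^\bt(\Gamma_\bt, -)$ sends an internally acyclic cosimplicial $\Gamma_\bt$-representation to an acyclic complex (a twisted form of the generalised Eilenberg--Zilber theorem). Once that is in place, the rest is the standard spectral sequence of a filtered complex together with Lemma \ref{whypercohocoho}.
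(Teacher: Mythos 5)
Your plan --- filter $V$ by the ``internal'' cosimplicial degree, apply the (cosimplicial-degreewise exact) functor $\CC^{\bt}(\Gamma_{\bt},-)$, and then massage the resulting spectral sequence via an Eilenberg--Zilber-type argument --- is indeed the paper's strategy; you even correctly isolate the entanglement of the two indices and the ``twisted Eilenberg--Zilber'' step as the real content. But the specific filtration you choose does not work, and this is a genuine gap.

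You propose to filter by \emph{canonical truncation} $\tau_{\le j}V$, and you assert that ``a short computation --- iterating condition (1) of Definition \ref{crepdef} --- shows that each $\tau_{\le j}V$ is in fact a sub-cosimplicial-$\Gamma_{\bt}$-representation.'' This is not so. The canonical truncation in cosimplicial degree $j$ is, via Dold--Kan, cut out by the condition that the normalised piece in degree $j$ lies in $\ker\bigl(d\co N^jV \to N^{j+1}V\bigr)$. Neither the Moore normalisation $N^jV=\bigcap_k\ker\sigma^k$ nor the kernel of $d=\sum(-1)^i\pd^i$ is preserved by the $\Gamma_j$-action: the cosimplicial $\Gamma_{\bt}$-representation axioms give only the \emph{twisted} equivariances $g\cdot\pd^iv=\pd^i((\pd_ig)v)$ and $h\cdot\sigma^iv=\sigma^i((\sigma_ih)v)$, so $\sigma^k$ is equivariant only along the degeneracy $\sigma_k\co\Gamma_{n-1}\to\Gamma_n$ and $d$ shifts the simplicial level on which $\Gamma_{\bt}$ acts. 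Condition (1) is a statement about the \emph{cofaces} $\pd^i$; it tells you nothing about preservation of kernels of codegeneracies or of the internal differential. So $\tau_{\le j}V$ need not be a subobject in the category of cosimplicial $\Gamma_{\bt}$-representations, and the resulting filtration of $\CC^{\bt}(\Gamma_{\bt},V)$ does not exist.

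The fix is to use the \emph{skeletal} filtration $F_nV:=\mathrm{im}(\sk^nV\to V)$ instead: $F_nV$ is generated under the operations $\pd^i$ by $V^{\le n}$, so its equivariance follows precisely by iterating condition (1), as you intended. Its normalisation is $N^iV$ for $i\le n$, $dN^nV$ for $i=n+1$, and $0$ above, so $F_nV/F_{n-1}V$ is quasi-isomorphic to $D\H^n(V)[-n]$ and the graded pieces are still what you want. With this change, the remainder of your outline (exactness of $\CC^{\bt}(\Gamma_{\bt},-)$ coming from freeness of $(W\Gamma_{\bt})_n$ as a $\Gamma_n$-set, followed by the Eilenberg--Zilber argument identifying $\bH^{i+j}(\Gamma_{\bt}, D\H^j(V)[-j])$ with $\H^i(\Gamma_{\bt},\H^j(V))$) lines up with the paper. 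One further caution: the ``dimension shift reducing to $j=0$'' is not a one-line consequence of Lemma \ref{whypercohocoho}; it is exactly the twisted Eilenberg--Zilber computation you flag as the obstacle, and it deserves to be carried out (as the paper does, by introducing the bicosimplicial complex $\Hom_{\pi_0\Gamma_{\bt}}(K_a\backslash (W\Gamma_{\bt})_a, D^b\H^j(V)[-j])$ and applying Eilenberg--Zilber to its diagonal).
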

\begin{proof}
Form the filtration $\{F_nV\}_n$ of $V$ by setting $F_nV$ to be the image of the  $n$-skeleton $\sk^nV \to V$; $F_nV$ is the subcomplex of $V$ generated under the operations $\pd^i$ by $V^{\le n}$, and its Dold-Kan normalisation is given by 
$$
N(F_nV)^i= \left\{\begin{matrix} N^iV & i \le n \\ d N^nV & i=n+1 \\ 0 & i \ge n+2. \end{matrix} \right.
$$

Note that the condition $g\pd^iv= \pd^i((\pd_ig)v$ implies that $F_nV$ is $\Gamma_{\bt}$-equivariant. Also note that $F_nV/ F_{n-1}V$ is quasi-isomorphic to the denormalisation $D H^n(V)[-n]$. The spectral sequence associated to this filtration is thus
$$
\bH^{i+j}(\Gamma_{\bt}, D\H^j(V)[-j]) \abuts \bH^{i+j}(\Gamma_{\bt},V).
$$

Let $K_{\bt}:= \ker(\Gamma_{\bt} \to \pi_0\Gamma_{\bt})$; since  $\H^jV$ is a $\pi_0\Gamma_{\bt}$-representation, there is a bicosimplicial complex  
 $
 C^{a,b}:= \Hom_{\pi_0(\Gamma_{\bt})}(  K_a\backslash (W\Gamma_{\bt})_a, D^b\H^j(V)[-j]),
 $  
 with $ \bH^n(\Gamma_{\bt}, D\H^j(V)[-j])= \H^n(\diag C)$. By the Eilenberg-Zilber Theorem (\cite[Theorem 8.5.1]{W}), $N\diag C$  is quasi-isomorphic to the total complex of $\Hom_{\pi_0\Gamma_{\bt}}(N\Z(K_{\bt}\backslash W\Gamma_{\bt}),   \H^j(V)[-j])$, so $\H^n(C)= \H^{n-j}(G, \H^j(V))$, and the spectral sequence becomes
$$
\bH^{i}(\Gamma_{\bt}, H^j(V)) \abuts\bH^{i+j}(\Gamma_{\bt},V).
$$
\end{proof}

\begin{lemma}\label{hypercohowell}
 Given a weak equivalence $f:\Gamma_{\bt}\to \Delta_{\bt}$ of simplicial groupoids,  and a  cosimplicial $\Gamma_{\bt}$-representation   $V$, the map
\[
 f^*\co      \bH^{*}(\Delta_{\bt},V)  \to \bH^{*}(\Gamma_{\bt},f^{-1}V)
\]
   is an isomorphism. 
\end{lemma}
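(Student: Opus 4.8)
The plan is to deduce this from homotopy invariance of ordinary local-coefficient cohomology, after first stripping away the extra cosimplicial structure. Note first that for the right-hand side $\bH^{*}(\Delta_{\bt},V)$ to make sense, $V$ must be read as a cosimplicial $\Delta_{\bt}$-representation, with $f^{-1}V$ its restriction along $f$; since restriction along $f$ merely restricts the groupoid action at each simplicial level, it is exact, and in particular commutes with forming the cohomology $\pi_0$-representations, so $\H^j(f^{-1}V)=f^{-1}\H^j(V)$ in the notation of Lemma \ref{cohohypercoho}.

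First I would invoke the spectral sequence of Lemma \ref{cohohypercoho}. The skeletal filtration $\{F_nV\}_n$ used to construct it is defined purely in terms of the cosimplicial structure of $V$, hence is preserved by $f^{-1}$, and the quasi-isomorphisms $F_nV/F_{n-1}V\simeq D\H^n(V)[-n]$ together with the Eilenberg--Zilber comparison used to put the spectral sequence into the stated form are natural in $(\Gamma_{\bt},V)$. Thus $f$ induces a morphism of convergent spectral sequences from $\H^i(\Delta_{\bt},\H^j(V))\abuts\bH^{i+j}(\Delta_{\bt},V)$ to $\H^i(\Gamma_{\bt},f^{-1}\H^j(V))\abuts\bH^{i+j}(\Gamma_{\bt},f^{-1}V)$, and it therefore suffices to show that for every $\pi_0\Delta_{\bt}$-representation $W$ the map $\H^i(\Delta_{\bt},W)\to\H^i(\Gamma_{\bt},f^{-1}W)$ is an isomorphism for all $i$.

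For this last point I would pass to spaces via Lemma \ref{whypercohocoho}, which identifies these groups with $\H^i(\bar W\Delta_{\bt},W)$ and $\H^i(\bar W\Gamma_{\bt},f^{-1}W)$, cohomology with local coefficients. Since every object of $s\gpd$ is fibrant and $\bar W$ is a right Quillen functor (Definition \ref{barwdef}), it preserves weak equivalences, so $\bar Wf$ is a weak equivalence of simplicial sets; under the resulting isomorphism $\pi_f\bar W\Gamma_{\bt}\xra{\sim}\pi_f\bar W\Delta_{\bt}$, compatibly with $\pi_f\bar W\Delta_{\bt}\cong\pi_0\Delta_{\bt}$, the local system $f^{-1}W$ is the pullback of $W$. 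As cohomology with local coefficients is a weak-homotopy invariant, the desired isomorphism follows, which completes the argument.

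The one point needing genuine care, rather than a direct appeal to an earlier result, is the naturality of the spectral sequence of Lemma \ref{cohohypercoho} with respect to the pair consisting of a morphism of simplicial groupoids and restriction of the cosimplicial representation --- that is, that the skeletal filtration and the Eilenberg--Zilber identification are functorial. This is routine but is where essentially all the bookkeeping lives; everything else reduces to Lemmas \ref{cohohypercoho} and \ref{whypercohocoho} and to homotopy invariance of local-coefficient cohomology.
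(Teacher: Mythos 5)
Your argument is correct and is exactly the paper's proof: reduce via the spectral sequence of Lemma~\ref{cohohypercoho} to the case of a $\pi_0\Delta_{\bt}$-representation, then conclude by Lemma~\ref{whypercohocoho} together with the fact that $\bar{W}f$ is a weak equivalence of simplicial sets. The paper states the naturality of the spectral sequence without comment; your flagging of it as the place where the bookkeeping lives is a fair observation but not a divergence.
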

\begin{proof}
 Lemma \ref{cohohypercoho} gives a morphism of convergent spectral sequences, so we may assume that $V$ is a $\pi_0\Delta_{\bt}$-representation. Since $\bar{W}f\co \bar{W} \Gamma_{\bt}\to \bar{W}\Delta_{\bt}$  is   a weak equivalence  of simplicial sets, Lemma \ref{whypercohocoho} completes the proof.
\end{proof}

\begin{lemma}\label{leraylemma}
Given a simplicial group $\Gamma_{\bt}$, a cosimplicial $\Gamma_{\bt}$-representation $V$ and a simplicial abelian group $A$, the simplicial abelian group 
$$
\Tot(V\ten A)
$$
has a canonical $\Gamma_{\bt}$-action, where $\Tot\co \bS^{\Delta}\to \bS$ is the total space functor of 
\cite[Ch. VIII]{sht}, originally defined in \cite[Ch. X]{bousfieldkan}.
\end{lemma}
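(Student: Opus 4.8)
The plan is to construct the action explicitly, after reformulating $\Tot(V\ten A)$ concretely enough that the simplicial group $\Gamma_{\bt}$ visibly acts. Recall (from \cite[Ch.~X]{bousfieldkan} or \cite[Ch.~VIII]{sht}) that a $k$-simplex of $\Tot(V\ten A)$ is a map $\psi\co \Delta^{\bt}\by\Delta^k\ra V\ten A$ of cosimplicial simplicial sets, that is, a family of maps $\psi_p\co \Delta^p\by\Delta^k\ra V^p\ten A$ ($p\ge 0$) satisfying $(V_\alpha\ten\id)\circ\psi_p=\psi_q\circ(\alpha_*\by\id)$ for every $\alpha\co[p]\ra[q]$ in $\Delta$, where $V_\alpha\co V^p\ra V^q$ is the cosimplicial structure map and $\alpha_*\co\Delta^p\ra\Delta^q$ the induced map. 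Evaluating this relation, with $\alpha$ taken to be an arbitrary simplex $\sigma\co[j]\ra[p]$ of $\Delta^p$, on the $j$-simplex $(\id_{[j]},\tau)$ of $\Delta^j\by\Delta^k$, one gets $\psi_p(\sigma,\tau)=(V_\sigma\ten\id)\Phi_j(\tau)$ with
\[
\Phi_j(\tau):=\psi_j(\id_{[j]},\tau)\in V^j\ten A_j,\qquad \tau\co[j]\ra[k]\text{ in }\Delta .
\]
So $\psi$ is determined by the family $\{\Phi_j\}_{j\ge 0}$; unwinding simpliciality of $\psi_j$ shows that exactly those families with $(V_\beta\ten\id)\Phi_{j'}(\tau\beta)=(\id\ten A(\beta))\Phi_j(\tau)$ for all $\beta\co[j']\ra[j]$ in $\Delta$ occur (the cosimplicial compatibility of $\{\psi_p\}$ being then automatic), so that $\Tot(V\ten A)_k$ is identified with the set of such families.

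Next I would isolate the one input needed from Definition~\ref{crepdef}: viewing $V$ as a cosimplicial object, for each $\alpha\co[m]\ra[n]$ in $\Delta$ the structure map $V_\alpha\co V^m\ra V^n$ is equivariant for the simplicial structure map $\alpha^*\co\Gamma_n\ra\Gamma_m$ of $\Gamma_{\bt}$, i.e.\ $\gamma\cdot V_\alpha(v)=V_\alpha(\alpha^*(\gamma)\cdot v)$ for $\gamma\in\Gamma_n$ and $v\in V^m$. For $\alpha$ a coface or a codegeneracy this is exactly the two displayed identities of Definition~\ref{crepdef}, and the general case follows by composing them.

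Then, for $g\in\Gamma_k$ and $\{\Phi_j\}\in\Tot(V\ten A)_k$, I would set $(g\cdot\Phi)_j(\tau):=\big(\tau^*(g)\big)\cdot\Phi_j(\tau)$, where $\tau^*\co\Gamma_k\ra\Gamma_j$ is the simplicial operation attached to $\tau\co[j]\ra[k]$ and $\Gamma_j$ acts through the $V^j$-factor of $V^j\ten A_j$. The verifications are then: (i) this is a left $\Gamma_k$-action, since $\tau^*$ is a group homomorphism; (ii) $g\cdot\Phi$ again satisfies the compatibility relation of the first paragraph, which follows by combining the equivariance of $V_\beta$ from the second paragraph, the functoriality identity $(\tau\beta)^*=\beta^*\circ\tau^*$, and the fact that $\Gamma_{\bt}$ acts trivially on $A$; (iii) these degreewise actions are compatible with the simplicial operations $\mu^*$ in the $k$-variable, which reduces to $\tau^*\circ\mu^*=(\mu\tau)^*$, so that one genuinely obtains an action of the simplicial group $\Gamma_{\bt}$ on the simplicial abelian group $\Tot(V\ten A)$ (making it an object of $\bS_{\Gamma_{\bt}}$). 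Naturality in $V$ and $A$ is clear, so the action is canonical.

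The main obstacle is the first paragraph: because $\Tot$ a priori blends all cosimplicial degrees of $V\ten A$, it is not at all obvious how a single group $\Gamma_k$ — which only acts on $V^k$ — can act on $\Tot(V\ten A)_k$. The resolution is the reformulation above, which exhibits $\Tot(V\ten A)_k$ as controlled by data $\Phi_j(\tau)\in V^j\ten A_j$ living in matched simplicial and cosimplicial degree $j$ and equipped with an auxiliary simplex $\tau$ of $\Delta^k$; this is exactly what lets $\Gamma_k$ act, by transport along $\tau^*$. Once this is in place, all remaining verifications are routine diagram chases.
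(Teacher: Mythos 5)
Your proof is correct, but it takes a genuinely different route from the paper's. The paper argues categorically: it introduces the cotensor-like construction $e(X,K)^n:=(X^n)^{K_n}$ for $X\in\bS^{\Delta}$ and $K\in\bS$, notes that $\Tot(e(X,K))=\Tot(X)^K$, observes that a cosimplicial $\Gamma_{\bt}$-representation structure on $V$ is precisely a cosimplicial map $V\to e(V,\Gamma_{\bt})$, and then feeds the composite $V\ten A\to e(V,\Gamma_{\bt})\ten A\to e(V\ten A,\Gamma_{\bt})$ through $\Tot$ to get a map $\Tot(V\ten A)\to\Tot(V\ten A)^{\Gamma_{\bt}}$, i.e.\ an action. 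The ``equivariance'' input is thus packaged entirely into the single observation that the action is a cosimplicial map into $e(V,\Gamma_{\bt})$, and associativity is inherited formally. You instead unwind $\Tot(V\ten A)_k$ into families $\{\Phi_j(\tau)\in V^j\ten A_j\}$ indexed by simplices $\tau\co[j]\ra[k]$, and define $(g\cdot\Phi)_j(\tau)=\tau^*(g)\cdot\Phi_j(\tau)$; the same identity (equivariance of $V_\alpha$ against $\alpha^*$) appears, now verified directly. Your version has the advantage of making the action entirely explicit on $k$-simplices, at the cost of more bookkeeping; the paper's version is shorter and automatically yields associativity, but hides the formula. Both proofs hinge on exactly the same structural fact from Definition~\ref{crepdef}, and your identification of that fact (second paragraph) is the right one.
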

\begin{proof}
Given $X \in \bS^{\Delta}$ and $K \in \bS$, define $e(X,K) \in \bS^{\Delta}$ by  $e(X,K)^n:= (X^n)^{K_n}$, with obvious cosimplicial operations. Note that $\Tot(e(X,K))= \Tot(X)^K$.

The $\Gamma_{\bt}$-action on $V$ is the same as a cosimplicial map $f\co V \to e(V, \Gamma_{\bt})$, so we have maps
$$
V\ten A \xra{f} e(V, \Gamma_{\bt})\ten A \to e(V\ten A, \Gamma_{\bt}), 
$$
and hence a map $\Tot(V\ten A) \to \Tot(V\ten A)^{\Gamma_{\bt}}$. this is equivalent to a map $\Gamma_{\bt} \by \Tot(V\ten A) \to \Tot(V\ten A)$ of simplicial sets, and the argument above adapts to show that this action is associative. 
\end{proof}

In order to simplify the definitions and exposition, we will now take  $\Gamma_{\bt}$ to be simplicial group, although everything can be extended to simplicial groupoids.

\begin{definition}
For a simplicial group $\Gamma_{\bt}$, a \emph{simplicial $\Gamma_{\bt}$-representation} consists of a simplicial abelian group $A$, together with a $\Gamma_n$-action on $A_n$ for all $n$, compatible with the simplicial operations. Let $s\Rep(\Gamma_{\bt})$ be the category of simplicial $\Gamma_{\bt}$-representations.
\end{definition}

Note that Lemma \ref{leraylemma}  provides us with examples of simplicial $\Gamma_{\bt}$-representations constructed from cosimplicial $\Gamma_{\bt}$-representations. Also note that for any simplicial $\Gamma_{\bt}$-representation $V$, taking  duals levelwise gives a cosimplicial  $\Gamma_{\bt}$-representation $V^{\vee}$ given by  $(V^{\vee})^n= (V_n)^{\vee}$.

\begin{lemma}
Given a simplicial group $\Gamma_{\bt}$, there is a cofibrantly generated  model structure on $s\Rep(\Gamma_{\bt})$, in which a morphism $f\co A \to B$ is:
\begin{enumerate}
\item a weak equivalence if the maps $\pi_i(f)\co  \pi_i(A) \to \pi_i(B)$ are isomorphisms for all $i$;
\item a fibration if the underlying map in $\bS$ is a fibration, or equivalently if the maps $N_i(f)\co  N_i(A) \to N_i(B)$ on the Dold-Kan normalisation are surjective for all $i>0$.
\end{enumerate}
 \end{lemma}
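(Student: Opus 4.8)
The plan is to obtain the model structure by transferring the standard cofibrantly generated model structure on $\bS$ along the free--forgetful adjunction $F \dashv U$, where $U\co s\Rep(\Gamma_{\bt}) \to \bS$ forgets the additive and module structure and $F\co \bS \to s\Rep(\Gamma_{\bt})$ is given by $F(K)= \Z[\Gamma_{\bt}\times K]$, i.e. $F(K)_n= \Z[\Gamma_n\times K_n]$ with $\Gamma_n$ acting on the first factor and all simplicial operations acting diagonally; the adjunction $\Hom_{s\Rep(\Gamma_{\bt})}(F(K),A)\cong \Hom_{\bS}(K,UA)$ is immediate levelwise, since $\Z[\Gamma_n\times K_n]$ is the free $\Z[\Gamma_n]$-module on $K_n$. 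The category $s\Rep(\Gamma_{\bt})$ is a Grothendieck abelian category --- all limits and colimits are computed levelwise in $\Z[\Gamma_n]$-modules --- and in particular locally presentable, so the set-theoretic hypotheses of the standard transfer criterion (Kan's lifting lemma) for cofibrantly generated model categories are automatic. Taking $I=\{\pd\Delta^n\into\Delta^n\}$ and $J=\{\Lambda^n_k\into\Delta^n\}$ as the usual generators for $\bS$, the transferred structure on $s\Rep(\Gamma_{\bt})$ then has $FI$ and $FJ$ as generating cofibrations and generating trivial cofibrations, with fibrations (resp. weak equivalences) the maps $f$ such that $Uf$ is a fibration (resp. weak equivalence) in $\bS$ --- provided we verify the one non-formal hypothesis, namely that $U$ carries every relative $FJ$-cell complex to a weak equivalence of simplicial sets.

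This verification is the crux, and the subtlety is that $U$ does not preserve the colimits used to build cell complexes, so left properness of $\bS$ cannot be applied directly. Instead I would argue inside the abelian category $s\Rep(\Gamma_{\bt})$ by controlling cokernels. Each generator $F(\Lambda^n_k)\into F(\Delta^n)$ is a levelwise split monomorphism whose cokernel, as a simplicial abelian group, is the levelwise tensor product $\Z[\Gamma_{\bt}]\ten_{\Z}\tilde{\Z}[\Delta^n/\Lambda^n_k]$, with $\tilde{\Z}[-]$ reduced chains. Since $\Lambda^n_k\into\Delta^n$ is anodyne, $\Delta^n/\Lambda^n_k$ is weakly contractible, so the normalised complex $N\tilde{\Z}[\Delta^n/\Lambda^n_k]$ is a bounded-below acyclic complex of free abelian groups, hence contractible; tensoring with the levelwise-free complex $N\Z[\Gamma_{\bt}]$ and applying the Eilenberg--Zilber theorem shows the cokernel is acyclic. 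Now monomorphisms with acyclic cokernel are closed in $s\Rep(\Gamma_{\bt})$ under pushout along arbitrary maps (the cokernel is unchanged), under coproducts, and under transfinite composition (the cokernel of a composite is a filtered colimit of iterated extensions of acyclic simplicial abelian groups, hence acyclic, since homotopy groups commute with filtered colimits and extensions of acyclics are acyclic by the long exact sequence). Consequently any relative $FJ$-cell complex $A\into A'$ sits in a short exact sequence $0\to A\to A'\to A'/A\to 0$ of simplicial abelian groups with $A'/A$ acyclic, and the long exact homotopy sequence forces $\pi_i(UA)\to\pi_i(UA')$ to be an isomorphism for all $i$, i.e. $Uf$ is a weak equivalence.

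With this, the transfer criterion produces the desired cofibrantly generated model structure on $s\Rep(\Gamma_{\bt})$; since a simplicial abelian group is a Kan complex whose homotopy groups at the zero vertex are the homology groups of its normalised complex, ``$Uf$ a weak equivalence'' is precisely ``$\pi_i(f)$ an isomorphism for all $i$'', giving the first description. For the second, one invokes the classical fact that a homomorphism of simplicial abelian groups is a Kan fibration of underlying simplicial sets exactly when the induced map on normalised (Moore) complexes is surjective in every positive degree (see \cite{sht}); applied to $f$ this identifies the fibrations of the transferred structure with the maps for which $N_i(f)$ is surjective for all $i>0$, as claimed. I expect the acyclicity-of-cell-complexes step to be the only real obstacle: everything else is formal once $U$ of the generating trivial cofibrations is analysed via cokernels in the ambient abelian category rather than through $U$ itself.
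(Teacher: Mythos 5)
Your strategy is exactly the one the paper uses: transfer the cofibrantly generated model structure on $\bS$ along the free--forgetful adjunction $F\dashv U$, with $F(K)=\Z[\Gamma_{\bt}\times K]$, invoking the Kan lifting theorem (\cite[Theorem 11.3.2]{Hirschhorn}). The paper's proof only explicitly checks the smallness hypothesis (noting that $U$ preserves filtered colimits, so the images of finite generators are still finite); it leaves the acyclicity hypothesis --- that $U$ carries relative $FJ$-cell complexes to weak equivalences --- as implicit. Your write-up supplies that verification carefully, via the correct observation that the generating trivial cofibrations have levelwise-split image with acyclic cokernel $\Z[\Gamma_{\bt}]\ten\tilde{\Z}[\Delta^n/\Lambda^n_k]$, and that monomorphisms of simplicial abelian groups with acyclic cokernel are stable under pushout, coproduct, and transfinite composition, so the long exact homotopy sequence finishes the job. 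That is the right way to fill the gap, and is in fact the nontrivial content of applying the transfer theorem here, so your proof is a more complete version of the same argument.
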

\begin{proof}
The forgetful functor from $s\Rep(\Gamma_{\bt})$ to simplicial sets  preserves filtered direct limits  and has a  left adjoint $F(S) = \Z(\Gamma_{\bt}\by S)$. Thus  for any finite object $I \in \bS$, the object $FI$ is finite in $s\Rep(\Gamma_{\bt})$, so \emph{a fortiori} permits the small object argument.
The model structure on $\bS$ is cofibrantly generated by finite objects, so  \cite[Theorem 11.3.2]{Hirschhorn}  gives the required model structure on $s\Rep(\Gamma_{\bt})$.
\end{proof}

\begin{lemma}\label{leraylemma2}
We may characterise hypercohomology groups by
$$
\bH^i(\Gamma_{\bt},V) = \Hom_{\Ho(s\Ab(\Gamma_{\bt}))}(\Z, \Tot(V\ten_{\Z} N^{-1}\Z[-i])).
$$
\end{lemma}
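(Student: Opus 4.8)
The plan is to identify $\Hom_{\Ho(s\Rep(\Gamma_{\bt}))}(\Z,-)$ with the derived functor of $\Gamma_{\bt}$-invariants, and to see that applying it to $\Tot(-\ten_{\Z}N^{-1}\Z[-i])$ reassembles the cochain complex $\CC^{\bt}(\Gamma_{\bt},V)$ in degree $i$. The key input is that the free simplicial abelian group $\Z(W\Gamma_{\bt})$ on the $\Gamma_{\bt}$-space $W\Gamma_{\bt}$ of Definition \ref{wdef} is a cofibrant model for the trivial representation $\Z\in s\Rep(\Gamma_{\bt})$: the $\Gamma_{\bt}$-action is levelwise free, so $\Z(W\Gamma_{\bt})_{n}$ is $\Z[\Gamma_{n}]$-free (on the orbit set $(W\Gamma_{\bt})_{n}/\Gamma_{n}$), while $W\Gamma_{\bt}\to\ast$ is a weak equivalence since each $W\Gamma_{\bt}(x)$ is contractible. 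Cofibrancy is obtained by pulling back the skeletal filtration of $\bar{W}\Gamma_{\bt}$, exhibiting $\Z(W\Gamma_{\bt})$ as a cell object built from the generating cofibrations $F(\partial\Delta^{n})\to F(\Delta^{n})$ with $F(S)=\Z(\Gamma_{\bt}\by S)$ of the preceding lemma. Since every simplicial abelian group is fibrant, it then follows for any $B\in s\Rep(\Gamma_{\bt})$ that the derived mapping space is $\underline{\Hom}_{s\Rep(\Gamma_{\bt})}(\Z(W\Gamma_{\bt}),B)$, and $\Hom_{\Ho(s\Rep(\Gamma_{\bt}))}(\Z,B)$ is its $\pi_{0}$.

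I would first treat the case in which $V$ is a $\pi_{0}\Gamma_{\bt}$-representation, with constant cosimplicial structure. Then $\Tot(V\ten_{\Z}N^{-1}\Z[-i])=V\ten_{\Z}N^{-1}\Z[-i]$ is the Eilenberg--MacLane object $K(V,i)$, and, using the $\Z[\Gamma_{n}]$-freeness of $\Z(W\Gamma_{\bt})_{n}$, one computes that $\underline{\Hom}_{s\Rep(\Gamma_{\bt})}(\Z(W\Gamma_{\bt}),K(V,i))$ has normalisation the $i$-fold shift of $\CC^{\bt}(\bar{W}\Gamma_{\bt},V)$; hence its $\pi_{0}$ is $\H^{i}(\CC^{\bt}(\bar{W}\Gamma_{\bt},V))=\H^{i}(\bar{W}\Gamma_{\bt},V)=\H^{i}(\Gamma_{\bt},V)=\bH^{i}(\Gamma_{\bt},V)$, by Lemma \ref{whypercohocoho} and the definition of $\bH^{*}$ for $\pi_{0}\Gamma_{\bt}$-representations (the exact analogue of the computation in Remark \ref{nonab}). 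For a general cosimplicial $V$ I would then apply the skeletal filtration $F_{\bullet}V$ as in the proof of Lemma \ref{cohohypercoho}, whose graded pieces are quasi-isomorphic to the denormalisations $D\H^{q}(V)[-q]$. On the left this yields the spectral sequence $\H^{p}(\Gamma_{\bt},\H^{q}(V))\abuts\bH^{p+q}(\Gamma_{\bt},V)$ of Lemma \ref{cohohypercoho}. On the right, $\Tot\bigl((D\H^{q}(V)[-q])\ten_{\Z}N^{-1}\Z[-i]\bigr)$ is, by Dold--Kan, the Eilenberg--MacLane object $K(\H^{q}(V),i-q)$, so the case already done gives $\Hom_{\Ho(s\Rep(\Gamma_{\bt}))}\bigl(\Z,\Tot((D\H^{q}(V)[-q])\ten_{\Z}N^{-1}\Z[-i])\bigr)=\H^{i-q}(\Gamma_{\bt},\H^{q}(V))$, i.e. the same terms in total degree $i$. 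A comparison of these two filtered objects, coming from the resolution $\Z(W\Gamma_{\bt})$ together with Lemma \ref{leraylemma} and the identity $\Tot(e(X,K))=\Tot(X)^{K}$ from its proof, induces an isomorphism on $E_{2}$, hence on the abutment, giving the assertion.

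The main obstacle is making this comparison honest, and there are two delicate points. First, one must genuinely verify the cofibrancy of $\Z(W\Gamma_{\bt})$ (not merely its levelwise projectivity), since the identification of $\Hom_{\Ho(s\Rep(\Gamma_{\bt}))}(\Z,-)$ with $\pi_{0}\underline{\Hom}_{s\Rep(\Gamma_{\bt})}(\Z(W\Gamma_{\bt}),-)$ rests on it; this is exactly where the cell decomposition coming from the skeleton of $\bar{W}\Gamma_{\bt}$ is needed. Second, and more seriously, one must check that the Dold--Kan/Eilenberg--Zilber identification of $\Tot(V\ten_{\Z}N^{-1}\Z[-i])$ with the simplicial abelian group whose normalisation is $\CC^{\bt}(\Gamma_{\bt},V)$, reindexed into homological degrees $\le i$, is compatible with the somewhat indirect $\Gamma_{\bt}$-action of Lemma \ref{leraylemma}, so that the twisting which prevents $\CC^{\bt}(\Gamma_{\bt},V)$ from being an honest complex of $\Gamma_{n}$-modules is matched on both sides (this is why the reduction to $\pi_{0}\Gamma_{\bt}$-representations, where the coefficient objects are honest $\Gamma_{\bt}$-representations, is convenient). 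Once these are settled, the remainder is the spectral-sequence comparison above together with the formal manipulations already available from Lemmas \ref{whypercohocoho}, \ref{cohohypercoho} and \ref{leraylemma}.
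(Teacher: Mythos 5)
Your outline uses the same key ingredients as the paper (the cofibrant model $\Z[W\Gamma_{\bt}]$, Lemma \ref{leraylemma}, and the identity $\Tot(e(X,K))=\Tot(X)^{K}$), but it takes a significantly longer route and, as written, leaves the decisive step open. The paper computes the mapping object directly for an arbitrary cosimplicial $V$: from the proof of Lemma \ref{leraylemma},
$$\HHom_{s\Ab(\Gamma_{\bt})}(\Z[W\Gamma_{\bt}],\,\Tot(V\ten A))\cong \Tot\bigl(e(V\ten A,\,W\Gamma_{\bt})^{\Gamma_{\bt}}\bigr),$$
and the new observation is that in simplicial level $n$ the right-hand side is $\CC^{\bt}(\Gamma_{\bt},V\ten A_{n})$, which for $A=N^{-1}\Z[-i]$ (so $A_{n}$ finite free) equals $\CC^{\bt}(\Gamma_{\bt},V)\ten A_{n}$. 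This identifies the whole mapping object with $\Tot\bigl(\CC^{\bt}(\Gamma_{\bt},V)\ten N^{-1}\Z[-i]\bigr)$; one then reads off $\pi_{0}$ because $N\Tot$ of a cosimplicial simplicial abelian group is the good truncation of the product total complex of the binormalisation, giving $\H^{i}N_{c}\CC^{\bt}(\Gamma_{\bt},V)=\bH^{i}(\Gamma_{\bt},V)$. No reduction to the constant case and no spectral sequence are needed.

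In your proposal the gap sits in the spectral-sequence comparison. You correctly compute that both candidate $E_{1}$-pages are $\H^{i-q}(\Gamma_{\bt},\H^{q}(V))$, but matching terms is not enough; you must exhibit a map of filtered objects inducing those identifications, and then compare. The only natural such map comes from the very identification $\HHom(\Z[W\Gamma_{\bt}],\Tot(V\ten A))\cong\Tot(\CC^{\bt}(\Gamma_{\bt},V)\ten A)$ — i.e.\ from the observation that $e(V\ten A,W\Gamma_{\bt})^{\Gamma_{\bt}}_{n}\cong\CC^{\bt}(\Gamma_{\bt},V)\ten A_{n}$ using finiteness of $A_{n}$. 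Once you have that, the two filtrations are literally two readings of the same filtered double complex and the spectral sequence is redundant; without it, there is no morphism to compare. This is exactly your second ``delicate point'' about compatibility with the twisted $\Gamma_{\bt}$-action of Lemma \ref{leraylemma}, which you flag but do not resolve. Your cofibrancy worry is not a real obstruction — the paper also takes $\Z[W\Gamma_{\bt}]$ to be cofibrant without comment, and your cell-decomposition sketch is a reasonable way to justify it — but the compatibility issue is a genuine missing step, and the shortest way to fill it is precisely the paper's direct computation, after which your constant-case reduction becomes unnecessary.
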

\begin{proof}
We first note that $\Z[W\Gamma_{\bt}]$ is a cofibrant replacement for $\Z$, so for a simplicial abelian group $A$,
$$
\oR\HHom_{s\Ab(\Gamma_{\bt})}(\Z, \Tot(V\ten_{\Z} A))\simeq \HHom_{s\Ab(\Gamma_{\bt})}(\Z[W\Gamma_{\bt}], \Tot(V\ten_{\Z} A));
$$
as observed in the proof of Lemma \ref{leraylemma}, 
$$ 
\HHom_{s\Ab}(\Z[W\Gamma_{\bt}], \Tot(V\ten_{\Z} A))\cong \Tot( e(V\ten_{\Z}A, W\Gamma_{\bt})),
$$ 
so
$$
\HHom_{s\Ab(\Gamma_{\bt})}(\Z[W\Gamma_{\bt}], \Tot(V\ten_{\Z} A))\cong \Tot( e(V\ten_{\Z}A, W\Gamma_{\bt})^{\Gamma_{\bt}}).
$$

Now, $e(V\ten_{\Z}A, W\Gamma_{\bt})^{\Gamma_{\bt}}$ is given in simplicial level $n$ by $\CC^{\bt}(\Gamma_{\bt}, V\ten_{\Z} A_n)$. When $A_n$ is free and finitely generated, this becomes  $\CC^{\bt}(\Gamma_{\bt}, V)\ten_{\Z} A_n$. Taking  $A= N^{-1}\Z[-i]$ thus gives
\begin{eqnarray*}
\Hom_{\Ho(s\Ab(\Gamma_{\bt}))}(\Z, \Tot(V\ten_{\Z} N^{-1}\Z[-i]))&\cong& \pi_0 \oR\HHom_{s\Ab(\Gamma_{\bt})}(\Z, \Tot(V\ten_{\Z}N^{-1}\Z[-i] ))\\
&\cong& \Tot( \CC^{\bt}(\Gamma_{\bt}, V)\ten_{\Z} N^{-1}\Z[-i]).
\end{eqnarray*}
Given  a cosimplicial simplicial abelian group $B$, the normalisation $N\Tot B$ is equivalent to the good truncation in non-negative chain degrees of the product total complex $\Tot^{\prod}N_cNB$ of the binormalisation of $B$ (which is a cochain chain complex). Thus
$$
\pi_0 \Tot( \CC^{\bt}(\Gamma_{\bt}, V)\ten_{\Z} N^{-1}\Z[-i]) \cong \H_0 \Tot^{\prod} ((N_c\CC^{\bt}(\Gamma_{\bt}, V))\ten_{\Z}\Z[-i]),
$$
and $\Tot^{\prod} ((N_c\CC^{\bt}(\Gamma_{\bt}, V))\ten_{\Z}\Z[-i])$ is just the complex $ N_c\CC^{\bt}(\Gamma_{\bt}, V)$ turned upside down and shifted $i$ places, so
$$
\H_0 \Tot^{\prod} ((N_c\CC^{\bt}(\Gamma_{\bt}, V))\ten_{\Z}\Z[-i])= \H^iN_c\CC^{\bt}(\Gamma_{\bt}, V)= \bH^i(\Gamma_{\bt}, V),
$$
as required.
\end{proof}

The following is an analogue of the Leray spectral sequence, and will play a key r\^ole in Theorem \ref{lfibrations}.

\begin{proposition}\label{lerayserre}
Given a surjection $\Gamma_{\bt}\to \Delta_{\bt}$ of simplicial groups with kernel $B_{\bt}$, and a cosimplicial $\Gamma_{\bt}$-representation $V$, there is a canonical convergent spectral sequence
$$
\H^i(\Delta_{\bt}, \bH^j(B_{\bt},V))\abuts  \bH^{i+j}( \Gamma_{\bt},V),
$$
which we refer to as the Hochschild--Serre spectral sequence.
\end{proposition}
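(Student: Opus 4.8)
The plan is to realise this as the composite-functor spectral sequence obtained by factoring the ``$\Gamma_\bt$-invariants'' functor through ``$B_\bt$-invariants'' followed by ``$\Delta_\bt$-invariants'', using the derived-functor description of hypercohomology from Lemma \ref{leraylemma2}. Write $q\co \Gamma_\bt \onto \Delta_\bt$ for the given surjection, so that $B_\bt = \ker q$ and $\Delta_\bt = \Gamma_\bt/B_\bt$. Restriction along $q$ is an exact inflation functor $q^*\co s\Ab(\Delta_\bt) \to s\Ab(\Gamma_\bt)$; for any $A \in s\Ab(\Gamma_\bt)$ the $B_\bt$-invariants $A^{B_\bt}$ carry a residual action of $\Gamma_\bt/B_\bt = \Delta_\bt$, and since $q^*\Z = \Z$ the functor $\HHom_{s\Ab(\Gamma_\bt)}(\Z,-)$ is canonically the composite $\HHom_{s\Ab(\Delta_\bt)}(\Z,-)\circ(-)^{B_\bt}$. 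By Lemma \ref{leraylemma2}, $\bH^i(\Gamma_\bt,V)$ is the $i$-th cohomology of $\oR\HHom_{s\Ab(\Gamma_\bt)}(\Z,C)$ where $C := \Tot(V\ten_\Z N^{-1}\Z) \in s\Ab(\Gamma_\bt)$, so it suffices to derive this composite; the cosimplicial structure of $V$ then plays no further role, having been absorbed into the single representation $C$.

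To carry this out, I would use that $\Z[W\Gamma_\bt]$ is a cofibrant replacement of $\Z$ in $s\Ab(\Gamma_\bt)$, as in the proof of Lemma \ref{leraylemma2}, so $\HHom_{s\Ab(\Gamma_\bt)}(\Z[W\Gamma_\bt],C)$ computes $\oR\HHom_{s\Ab(\Gamma_\bt)}(\Z,C)$. The key point is that by Definition \ref{wdef} each $(W\Gamma_\bt)_n$ is a \emph{free} $\Gamma_n$-set, hence also a free $B_n$-set, with $B_n\backslash (W\Gamma_\bt)_n$ a free $\Delta_n$-set; thus $\Z[W\Gamma_\bt]$ restricted along $B_\bt \into \Gamma_\bt$ is still a cofibrant resolution of $\Z$, the simplicial abelian group $\HHom_{s\Ab(B_\bt)}(\Z[W\Gamma_\bt],C)$ carries a residual $\Delta_\bt$-action and computes $\oR\HHom_{s\Ab(B_\bt)}(\Z,C)$, and there is a natural isomorphism $\HHom_{s\Ab(\Gamma_\bt)}(\Z[W\Gamma_\bt],C) \cong \HHom_{s\Ab(\Delta_\bt)}\bigl(\Z,\,\HHom_{s\Ab(B_\bt)}(\Z[W\Gamma_\bt],C)\bigr)$. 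Passing to a $\Delta_\bt$-fibrant replacement of the inner object and filtering by the cohomological degree of the inner complex yields a first-quadrant double complex, and hence a convergent spectral sequence with $E_2^{ij} = \H^i(\Delta_\bt, \bH^j(B_\bt, C))$; functoriality in the resolution (any two being connected by weak equivalences) makes it canonical. Lemma \ref{whypercohocoho} applied to $\Delta_\bt$, together with the identification $\bH^*(B_\bt,C) \cong \bH^*(B_\bt, V)$ from Lemma \ref{leraylemma2}, then rewrites the $E_2$-term as $\H^i(\Delta_\bt, \bH^j(B_\bt,V))$, where $\bH^j(B_\bt,V)$ carries its natural $\pi_0\Delta_\bt$-structure --- the cohomology of a cosimplicial $\pi_0\Delta_\bt$-representation, exactly as in Lemma \ref{cohohypercoho}. (When $V$ is merely a $\pi_0\Gamma_\bt$-representation one could alternatively invoke the Serre spectral sequence of the Kan fibration $\bar{W}\Gamma_\bt \to \bar{W}\Delta_\bt$ with fibre $\bar{W}B_\bt$, via Lemma \ref{whypercohocoho}.)

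I expect the main obstacle to be the middle step: checking that the restriction of $\Z[W\Gamma_\bt]$ to $B_\bt$ genuinely computes $B_\bt$-hypercohomology, and --- more delicately --- that the residual $\Delta_\bt$-action on $\HHom_{s\Ab(B_\bt)}(\Z[W\Gamma_\bt],C)$ induces the \emph{correct} $\pi_0\Delta_\bt$-module structure on $\bH^j(B_\bt,V)$, i.e. that this construction agrees $\Delta_\bt$-equivariantly, up to quasi-isomorphism, with the one coming from a fibrant replacement of $C$ in cosimplicial $B_\bt$-representations. This is a Shapiro-type computation using the explicit decomposition of $(W\Gamma_\bt)_n$ as a $B_n$-set over $\Delta_n$ from Definition \ref{wdef}, closely parallel to the Eilenberg--Zilber argument in the proof of Lemma \ref{cohohypercoho}; once it is in place, assembling the double complex and reading off its two filtrations is routine, with strict first-quadrantness taking care of convergence.
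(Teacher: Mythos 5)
Your proposal is correct and follows essentially the same route as the paper: factor $\oR\HHom_{s\Ab(\Gamma_\bt)}(\Z,-)$ through $B_\bt$-invariants followed by $\Delta_\bt$-invariants, using that cofibrant $\Gamma_\bt$-representations remain cofibrant over $B_\bt$, and read off the resulting composite-functor spectral sequence. The paper packages this as a derived Quillen adjunction and extracts the $E_2$-term via Lemma \ref{cohohypercoho} applied to the cosimplicial $\Delta_\bt$-representation $\CC^{\bt}(B_\bt,V)$, whereas you unpack the same identification at the level of the explicit resolution $\Z[W\Gamma_\bt]$; these are the same argument.
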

\begin{proof}
Given $T \in s\Ab(\Delta_{\bt})$ and $U,W \in s\Ab(\Gamma_{\bt})$, we have an isomorphism
\[
 \HHom_{s\Ab(\Delta_{\bt})}(T, \HHom_{s\Ab(B_{\bt})}(U,W)) \cong  \HHom_{s\Ab(\Gamma_{\bt})}(T\ten U,W).
\]
This defines a right Quillen functor $ s\Ab(\Delta_{\bt})^{\op} \by s\Ab(\Gamma_{\bt})^{\op}\by s\Ab(\Gamma_{\bt}) \to \bS$; since any cofibrant $\Gamma_{\bt}$-representation is cofibrant as a $B_{\bt}$-representation, the isomorphism above gives an equivalence
\[
 \oR\Hom_{s\Ab(\Delta_{\bt})}( T, \oR\HHom_{s\Ab(B_{\bt})}(U, W)) \simeq \oR\HHom_{s\Ab(\Gamma_{\bt})}(T\ten^{\oL}U, W).
\]
In particular,
$$
\oR\HHom_{s\Ab(\Gamma_{\bt})}(\Z, W)\simeq \oR\Hom_{s\Ab(\Delta_{\bt})}( \Z, \oR\HHom_{s\Ab(B_{\bt})}(\Z, W)).
$$

Setting $W=\Tot(V\ten_{\Z} N^{-1}\Z[-n])$, this gives an isomorphism
$$
\bH^n(\Gamma_{\bt}, V) \cong \H^n\CC^{\bt}(\Delta_{\bt}, \CC^{\bt}(B_{\bt}, V)),
$$ 
so the morphism $ \CC^{\bt}(\Gamma_{\bt},V) \to \CC^{\bt}(\Delta_{\bt}, \CC^{\bt}(B_{\bt}, V))$ is a quasi-isomorphism, and the result now follows from Lemma \ref{cohohypercoho}.
\end{proof}

\subsubsection{Simplicial pro-algebraic groupoids}

\begin{definition}
Given $G \in s\agpd$, define a \emph{cosimplicial $G$-representation} to be an $O(G)$-comodule $V$ in cosimplicial $k$-vector spaces. Thus we have cosimplicial complexes $V(x)$ for all $x \in \Ob G$, together with a coassociative coaction
$V(x) \to O(G)(x,y)\ten V(y)$.
\end{definition}

Note that the category of cosimplicial $G$-representations is opposite to the category $s\widehat{\FD\Rep}(G)$ of  pro-finite-dimensional simplicial $G$-representations from \cite[\S 1.5]{htpy}.

\begin{definition}
Given  $G \in s\agpd$ and a cosimplicial $G$-representation $V$, define the cosimplicial complex $\CC^{\bt}(G,V)$ by
$$
\CC^n(G,V)= O((WG)_n)\ten^{G_n} V^n,
$$
for the functor $W$ from Definition \ref{wdef}, with operations $\pd^i\ten \pd^i$ and $\sigma^i\ten \sigma^i$. 

Then define hypercohomology groups $\bH^i(G,V)$ by $\bH^i(G,V)= \H^i\CC(G,V)$. If $V$ is a $\pi_0G$-representation, regard $V$ as a  cosimplicial  $G$-representation (with constant cosimplicial structure) and write $\H^i(G,V):= \bH^i(G,V)$. 
\end{definition}

Now, \cite[Example \ref{htpy-owres}]{htpy}  ensures that $\bH^i(G,V)^{\vee}= \H_i(G,V^{\vee})$ in the notation of \cite[Definition 1.48]{htpy}. In particular, this means that hypercohomology groups of $G$ are an invariant of the homotopy type of $G$. 

\begin{proposition}\label{detectweak}
A morphism $G \xra{f} K$ of  pro-algebraic simplicial groupoids is a weak equivalence if and only if 
\begin{enumerate}
\item
$f(\Ru(G))\le \Ru(K)$,  with the quotient map
$$
G^{\red} \to K^{\red}
$$
 an equivalence, and 
 
\item for all finite-dimensional irreducible $K$-representations $V$, the maps
$$
\H^i(f)\co  \H^i(K,V) \to \H^i(G,f^*V)
$$
are isomorphisms for all $i>0$.
\end{enumerate}
\end{proposition}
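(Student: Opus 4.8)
The plan is to treat the two implications separately; the forward one is essentially formal, and the converse carries the content.

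For the ``only if'' direction, suppose $f$ is a weak equivalence. Then $\pi_0(f)$ is an equivalence of pro-algebraic groupoids; since each $G_n\to\pi_0(G)$ is a pro-unipotent extension we have $G^{\red}=\pi_0(G)^{\red}$ and $\Ru(G)_n=\ker(G_n\to G_n^{\red})$, so naturality of the reductive-quotient functor forces $f(\Ru(G))\le\Ru(K)$ and makes $G^{\red}\to K^{\red}$ an equivalence, which is (1). Condition (2) is then immediate from the homotopy-invariance of hypercohomology noted just after its definition (via $\bH^i(G,V)^{\vee}=\H_i(G,V^{\vee})$ and \cite{htpy}), since a weak equivalence $f$ in particular makes $f^{\red}$ an equivalence, so $K$-representations pull back to $G$-representations.

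For ``if'', assume (1) and (2). First I would apply the Levi decomposition of Proposition \ref{leviprop} in each simplicial degree (with the compatibility along face and degeneracy maps established in \cite{htpy}) to write $G\simeq G^{\red}\ltimes\Ru(G)$ and $K\simeq K^{\red}\ltimes\Ru(K)$; by (1) these can be chosen so that $f=f^{\red}\ltimes\Ru(f)$, after replacing $f$ by its composite with an inner automorphism of $\Ru(K)$, which affects neither the conclusion nor hypothesis (2). Set $R:=G^{\red}$, so that $f^{\red}\colon R\to K^{\red}$ is an equivalence by (1). Since $\pi_0(G)=R\ltimes\pi_0(\Ru(G))$ while $\pi_n(G(x,x))=\pi_n(\Ru(G)(x))$ for $n\ge1$, and likewise for $K$, and since an equivalence on reductive quotients already yields essential surjectivity of $\pi_0(f)$ (torsors under pro-unipotent groups over $\Spec k$ being trivial), it suffices to show that $\Ru(f)\colon\Ru(G)\to\Ru(K)$ is a weak equivalence of $R$-equivariant simplicial pro-unipotent pro-algebraic groups. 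By the exponential correspondence in characteristic zero, applied levelwise, this is equivalent to showing that the induced map $\mathfrak{u}(G)\to\mathfrak{u}(K)$ of $R$-equivariant simplicial pro-nilpotent Lie algebras is a weak equivalence, where $\mathfrak{u}(G):=\Lie\Ru(G)$.

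The next step is to translate (2) into a cohomological statement about these Lie algebras. Because $R$ is reductive and constant in the simplicial direction, $\H^i(R,-)=0$ for $i>0$ and $\H^0(R,-)=(-)^R$, so a Hochschild--Serre argument for $\Ru(G)\to G\to R$ (the pro-algebraic analogue of Proposition \ref{lerayserre}) gives $\bH^i(G,V)\cong(\bH^i(\Ru(G),k)\ten V)^R$ for every $R$-representation $V$, and likewise for $K$; letting $V$ run over finite-dimensional irreducibles, which are exactly the irreducible $K$-representations, recovers $\bH^*(\Ru(G),k)$ as an $R$-representation. Hence (2), together with the trivial case $i=0$, says that $\Ru(f)^*$ is an isomorphism $\bH^*(\Ru(K),k)\to\bH^*(\Ru(G),k)$ of $R$-representations in all degrees; and via the characteristic-zero comparison between algebraic-group cohomology of a pro-unipotent group and the Chevalley--Eilenberg cohomology of its Lie algebra --- applied degreewise and assembled through $\Tot$ --- this says exactly that $\mathfrak{u}(G)\to\mathfrak{u}(K)$ induces isomorphisms on Lie algebra hypercohomology with all finite-dimensional coefficients. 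It then remains to prove the Whitehead-type statement that such a morphism of simplicial pro-nilpotent Lie algebras is a weak equivalence; I would argue by the obstruction-theoretic method of Propositions \ref{Lcohoweak}--\ref{Lcohoweak2}, comparing Postnikov towers, observing that the $k$-invariant of each stage lies in a cohomology group which for a pro-nilpotent Lie algebra is computed by Lie algebra cohomology with coefficients in the homotopy module $\pi_m$, and using pro-nilpotence of $\pi_0$ to reduce every coefficient module to a trivial one by d\'evissage so that the hypothesised isomorphism propagates up the tower (the reductive $R$-action survives because taking $R$-invariants is exact). I expect this last Whitehead-type step --- in particular the precise identification of the $k$-invariants and the control of the homotopy modules --- to be the main obstacle, with the levelwise Levi decomposition in $s\agpd$ a secondary technical point; both ingredients are, however, essentially supplied by the constructions of \cite{htpy}.
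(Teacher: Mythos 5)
The paper's own ``proof'' is a one-line citation to the corresponding statement for pro-algebraic simplicial groups in \cite{htpy}, so there is nothing internal to compare against; the relevant question is whether your reduction is sound and consistent with the strategy of the cited result, and I think it is. The easy direction and the scaffolding of the converse are all correct and essentially forced: the simplicial Levi splitting works exactly as you gesture (lift $R=G^{\red}$ into $G_0$ by Proposition \ref{leviprop} and push out along iterated degeneracies, so the simplicial identities make the face maps restrict to the identity on $R$), the Hochschild--Serre collapse $\H^i(G,V)\cong(\H^i(\Ru G,k)\ten V)^R$ for semisimple $V$ is right, irreducible $K$-representations are exactly the irreducible $R$-representations so hypothesis (2) does recover $\H^*(\Ru G,k)$ as an $R$-representation, and the passage to the Lie algebra $\fu(G)=\Lie\Ru(G)$ with its Chevalley--Eilenberg cohomology $\H^*(\bar W\fu(G))$ is the standard translation.

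The place I would push back is the final Whitehead step, which you rightly identify as the crux. Your Postnikov/obstruction sketch has a wrinkle you do not address: before any $k$-invariant can be compared one must know $\pi_0\fu(G)\to\pi_0\fu(K)$ is an isomorphism of (non-abelian) pro-nilpotent Lie algebras, and an isomorphism on $\H^1$ only controls the abelianisations; closing this gap already requires a lower-central-series devissage of a different flavour from the coefficient-module one you describe for $n\ge1$. The route the paper's own toolkit is set up for is cleaner: pass to minimal models $\m_G,\m_K$ via Proposition \ref{fsdgminimal} (or Proposition \ref{dgminimal}), lift the map to $\m_G\to\m_K$, and use the remark after Proposition \ref{dgminimal} that $(\m/[\m,\m])_n\cong\H^{n+1}(\bar W\m,k)^{\vee}$; hypothesis (2) then gives an isomorphism on indecomposables, and a morphism of free pro-nilpotent graded Lie algebras that is an isomorphism on indecomposables is an isomorphism, by the bracket-length filtration. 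Either route ultimately rests on the nontrivial Koszul-duality input that $\bar W\colon dg\hat{\cN}(R)\to DG\Alg(R)$ reflects weak equivalences, which is exactly what \cite{htpy} supplies; deferring to it is legitimate, but it should be flagged as the substantive step rather than folded into ``obstruction theory.''
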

\begin{proof}
This is \cite[Corollary \ref{htpy-detectweak}]{htpy}, adapted from groups to groupoids.
\end{proof}

Note that the analogue of Lemma \ref{leraylemma} for pro-algebraic simplicial groupoids thus ensures that weak equivalences induce isomorphisms on hypercohomology.

\begin{lemma}\label{cohomaps}
For  a cofibrant pro-algebraic simplicial group $G$ (for the model structure of Lemma \ref{cmsagpd}), and a  finite-dimensional $\pi_0G$-representation $V$, the cohomology group $\H^i(G,V)$ is isomorphic to the  homotopy class of maps $G \to G \ltimes (N^{-1}V[1-i])$ in the model category $s\agpd \da  G$.
\end{lemma}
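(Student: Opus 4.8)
The plan is to identify both sides of the claimed isomorphism with the set of homotopy classes of sections of the canonical split projection $p\colon G\ltimes M\to G$, where $M:=N^{-1}V[1-i]$ is the denormalisation of the complex with $V$ placed in chain degree $i-1$, regarded as a simplicial $G$-representation through the map $G\to\pi_0G$ (so that $\pi_{i-1}M\cong V$ and $\pi_jM=0$ otherwise). Note that $G\ltimes M$ lies in $s\agpd$: its kernel over $\pi_0(G\ltimes M)$ is a pro-unipotent extension, being an extension of $\Ru$ of a pro-algebraic group by a vector group. First I would observe that $(G\ltimes M,p)$ is fibrant and $(G,\id_G)$ is cofibrant in $s\agpd\da G$: the map $p$ is levelwise a split surjection of pro-algebraic groups, so $N_n(p)$ is surjective for $n>0$ and the path-lifting condition of Lemma \ref{cmsagpd} holds trivially (as $p$ is the identity on objects), whence $p$ is a fibration in $s\agpd$ and $(G\ltimes M,p)$ is fibrant in the slice; and $(G,\id_G)$ is cofibrant there because $G$ is cofibrant. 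Consequently the homotopy class set $\Hom_{\Ho(s\agpd\da G)}((G,\id_G),(G\ltimes M,p))$ is computed as $\pi_0$ of the mapping space of sections of $p$.

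Next, a section $s$ of $p$ is exactly a crossed homomorphism $\phi\colon G\to M$ of pro-algebraic simplicial groups via $s(g)=(g,\phi(g))$, and such $\phi$ are precisely the $1$-cocycles of the total cochain complex of the $W$-resolution with coefficients in the simplicial representation $M$, that is, of a complex computing $\bH^\bt(G,M)$ in the sense of the hypercohomology defined above for simplicial pro-algebraic groups. I would then check that two sections are homotopic over $G$ if and only if the corresponding cocycles differ by a coboundary, by using a path object $G\ltimes M^{\Delta^1}$ for $(G\ltimes M,p)$ (equivalently a cylinder object for $(G,\id_G)$) in $s\agpd\da G$ and unwinding, at the cochain level, what a homotopy of sections records; the pro-finite-dimensional bookkeeping is kept honest by the duality $\bH^i(G,V)^{\vee}=\H_i(G,V^{\vee})$ of \cite[Example \ref{htpy-owres}]{htpy}. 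This gives $\Hom_{\Ho(s\agpd\da G)}((G,\id_G),(G\ltimes M,p))\cong\bH^1(G,M)$. Finally, since $V$ is a $\pi_0G$-representation with constant cosimplicial structure and $M$ places $V$ in simplicial degree $i-1$, the bicomplex computing $\bH^\bt(G,M)$ is $\CC^{\bt}(G,V)$ shifted by $i-1$, so a dimension shift yields $\bH^1(G,M)\cong\bH^i(G,V)=\H^i(G,V)$, which is the lemma.

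The main obstacle is the penultimate step: carefully matching the model-categorical homotopy relation on sections of $p$ with cochain-level coboundaries. Since $M$ is a genuine simplicial object, once the $W$-resolution is introduced ``cocycle'' must be read in a total complex and the path object chosen compatibly, so the combinatorics require some care. Conceptually this is just the standard principle that cohomology classifies split extensions by Eilenberg--Mac Lane objects; the only content over \cite{htpy} is the passage from discrete to pro-algebraic (pro-finite-dimensional comodule) coefficients and, in the more general form, from groups to groupoids, both of which are routine given the structural results already in place.
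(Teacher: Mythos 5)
Your proposal is correct in substance, and the overall blueprint — maps over $G$ into the split extension $G\ltimes N^{-1}V[1-i]$ correspond to derivations, the homotopy relation is controlled by a path object, and the result is a cohomology group — is exactly the conceptual content of the paper's proof, which closes by saying ``derivations $G\to V$ are just morphisms $G\to G\ltimes V$ over $G$'' and then invoking the path object from \cite[Lemma 2.29]{htpy}.

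Where you diverge is in how the cohomology group is produced. The paper works directly with the definition $\CC^n(G,V)=O((WG)_n)\ten^{G_n}V$: it resolves by the cone of $k\to O(G)$ (with cokernel $C$), dualises to $E:=O(G)^{\vee}$, and unwinds cochains in $V\ten^G C$ explicitly as derivations $\theta'\colon G_n\to V$. No degree shift happens at that level: the shift by $1-i$ is already encoded in the definition of $\H^i$ via the cone complex. You instead package the whole thing as $\bH^1(G,N^{-1}V[1-i])$ and invoke a dimension shift to land on $\H^i(G,V)$. That is a sound strategy, but it carries a mild overhead that the paper avoids: the paper's hypercohomology $\bH^\bt(G,-)$ is defined on \emph{cosimplicial} $G$-representations, whereas $N^{-1}V[1-i]$ is simplicial, so your ``$\bH^1(G,M)$'' should strictly be read through the duality $\bH^i(G,V)^{\vee}\cong\H_i(G,V^{\vee})$ (as you flag). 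Your step identifying sections with $1$-cocycles of the total complex, and homotopies of sections with coboundaries, also needs the explicit path object computation — the step you yourself flag as the obstacle — and that is precisely what the citation to \cite[Lemma 2.29]{htpy} in the paper handles. So: same idea, different packaging; the paper's route is shorter because it stays on the cosimplicial side of the duality and computes cochains directly as derivations, while yours is cleaner conceptually but requires one extra translation (simplicial $\leftrightarrow$ cosimplicial, dimension shift) that you should make explicit.
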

\begin{proof}
Consider the morphism $k \to O(G)$, and let the cokernel be $C$. As in the proof of \cite[Proposition 1.50]{htpy}, $C$ is fibrant as a cosimplicial $G$-representation. Likewise, $ V\ten  O(G)$ and  $V\ten C$ are both fibrant, so $\H^*(G,V)$ is cohomology of the cone complex of 
$$
V\ten^G  O(G) \to V\ten^G C.
$$
Now, $V\ten^G  O(G) $ is just $ V$, so we need to describe $ V\ten^G C$.

Letting $E:=  O(G)^{\vee}$, we see that $C^{\vee}$ is the kernel of $E \to k$. Elements $\theta$ of $ V\ten^G_n C^n$ are then just morphisms $\theta\co  (C^{\vee})_n \to V$ satisfying $\alpha (gc) = g\alpha(c)$, for $g\in G_n, c \in (C^{\vee})_n$. There is a map $E \to C^{\vee}$ given by $a \mapsto a-1$, so $\theta$ composed with this gives a linear morphism $\theta'\co  E_n \to V$, satisfying $\theta'(ga)=g\theta'(a)+ \theta'(g)$ for $g \in G_n$. 

Regarding $E_n$ as an affine scheme, there is a morphism  $G_n \to E_n$, so we see that $\theta$ corresponds to a derivation $\theta'\co  G_n \to V$.  Since derivations $G \to V$ are just morphisms $G \to G \ltimes V$ over $G$, the statement now follows from the description of the path object in $s\agpd$ from \cite[Lemma 2.29]{htpy}.
\end{proof}

\begin{lemma}\label{cohomalworks}
If $\Gamma_{\bt}$ is a cofibrant
simplicial groupoid (e.g. $G(X)$ for $X \in \bS$), and  $V$ is a finite-dimensional $\pi_0\Gamma_{\bt}^{R, \mal}$-representation, then the map
$$
\H^*(\Gamma_{\bt}^{R, \mal}, V) \to \H^*(\Gamma_{\bt}, V)
$$
is an isomorphism.
\end{lemma}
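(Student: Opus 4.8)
The plan is to reduce the comparison to the level-wise structure of $\Gamma_\bt$ and of its relative Malcev completion, and then to invoke the goodness of free groups.

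Since $\Gamma_\bt$ is cofibrant, its relative Malcev completion agrees with the level-wise one (compare the description of $(-)^{\alg}$ preceding Proposition \ref{algqd}): writing $\rho\co \pi_0\Gamma_\bt \to R(k)$ for the defining Zariski-dense representation, the $n$th level $(\Gamma_\bt^{R,\mal})_n$ is the Malcev completion of $\Gamma_n$ relative to the composite $\rho_n\co \Gamma_n \to \pi_0\Gamma_\bt \xra{\rho} R(k)$. The map $\Gamma_n \to \pi_0\Gamma_\bt$ is a split epimorphism, being split by a composite of degeneracies followed by the canonical surjection $\Gamma_0 \to \pi_0\Gamma_\bt$; hence $\rho_n$ is again Zariski-dense and Definition \ref{malcevdef} applies. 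Moreover each $\Gamma_n$ is a free groupoid.

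As in the proof of Proposition \ref{cohochar}, filtering $\bar W\Gamma_\bt$ by bar degree produces a convergent first-quadrant spectral sequence
$$E_1^{pq} = \H^q(\Gamma_p, V) \abuts \H^{p+q}(\Gamma_\bt, V),$$
with $V$ restricted to $\Gamma_p$ along $\Gamma_p \to \pi_0\Gamma_\bt \to \GL(V)$. The identical construction applied to the cosimplicial complex $\CC^{\bt}(\Gamma_\bt^{R,\mal}, V)$ yields a convergent spectral sequence
$$\tilde E_1^{pq} = \H^q\bigl((\Gamma_\bt^{R,\mal})_p, V\bigr) \abuts \H^{p+q}(\Gamma_\bt^{R,\mal}, V),$$
where $V$ is regarded as a $(\Gamma_\bt^{R,\mal})_p$-representation through $(\Gamma_\bt^{R,\mal})_p \to \pi_0\Gamma_\bt^{R,\mal}$. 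The canonical morphism $\Gamma_\bt \to \Gamma_\bt^{R,\mal}$ induces a map $\tilde E_\bullet \to E_\bullet$ of spectral sequences compatible with the map of the lemma on abutments.

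It therefore suffices to show that $\tilde E_1 \to E_1$ is an isomorphism. But $(\Gamma_\bt^{R,\mal})_p = \Gamma_p^{\rho_p,\mal}$ with $\Gamma_p$ free, and by Examples \ref{goodexamples} free groups --- hence free groupoids --- are good with respect to every Zariski-dense representation; so by Definition \ref{relgood} the map $\H^q(\Gamma_p^{\rho_p,\mal}, V) \to \H^q(\Gamma_p, V)$ is an isomorphism for all $p$ and $q$ (both sides vanishing for $q \ge 2$). Hence $\tilde E_1 \cong E_1$, and the induced map $\H^*(\Gamma_\bt^{R,\mal}, V) \to \H^*(\Gamma_\bt, V)$ on abutments is an isomorphism. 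The main obstacle is purely bookkeeping: checking that the relative Malcev completion and the coefficient system restrict level-wise in compatible fashion, and setting up the pro-algebraic bar spectral sequence together with its comparison to the topological one; once that is done, the goodness of free groups finishes the proof at once.
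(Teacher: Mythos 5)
Your approach is genuinely different from the paper's and is sound in outline, but one of the steps you dismiss as bookkeeping is the real content.

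The paper avoids spectral sequences entirely: by Lemma \ref{cohomaps} it reinterprets $\H^i(\Gamma_\bt^{R,\mal},V)$ as the homotopy class of maps $\Gamma_\bt^{R,\mal}\to \Gamma_\bt^{R,\mal}\ltimes(N^{-1}V[1-i])$ over $\Gamma_\bt^{R,\mal}$, replaces the source by $\Gamma_\bt^{\alg}$ using the universal property of relative Malcev completion, and then uses the Quillen adjunction of Proposition \ref{algqd} to carry this across to the homotopy class of maps $\Gamma_\bt\to \Gamma_\bt\ltimes(N^{-1}V[1-i])$ in $s\gpd\da\Gamma_\bt$, which is $\H^i(\Gamma_\bt,V)$. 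This argument uses cofibrancy of $\Gamma_\bt$ to deploy the derived adjunction, but never touches the internal levelwise structure of $\Gamma_\bt$.

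Your route instead reduces to levels. Most of the supporting claims are correct and verifiable: for a cofibrant $\Gamma_\bt$ (retract of a levelwise free object) each vertex group $\Gamma_n(x,x)$ is a retract of a free group, hence a subgroup of a free group, hence free; $\Gamma_n\to\pi_0\Gamma_\bt$ is full (not split, as you wrote, but full --- precompose with the iterated degeneracy $\Gamma_0\to\Gamma_n$ to see this), so $\rho_n$ is Zariski-dense; and the levelwise description $(\Gamma_\bt^{R,\mal})_n=\Gamma_n^{\rho_n,\mal}$ follows from the universal property of $s\cE(R)$ together with the fact that ``pro-unipotent extension'' is a levelwise condition in $s\agpd$. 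The goodness of free groups (Examples \ref{goodexamples}) then does the work on the $E_1$ page, once the two spectral sequences are in hand.

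The gap is the pro-algebraic spectral sequence $\H^q((\Gamma_\bt^{R,\mal})_p,V)\abuts\H^{p+q}(\Gamma_\bt^{R,\mal},V)$. You call it the ``identical construction,'' but it is not. The topological one cited in Proposition \ref{cohochar} comes, in effect, from the bisimplicial nerve $NG_{p,q}=(G_q)^p$ and the equivalence $\diag NG\simeq\bar W G$, which makes $\CC^{\bt,\bt}$ a genuine double complex whose filtration by the $G$-simplicial direction gives $E_1^{p,q}=\H^q(G_p,V)$. The paper's complex $\CC^{\bt}(G,V)$ for $G\in s\agpd$, however, is built from the $W$-construction: $\CC^n(G,V)=O((WG)_n)\ten^{G_n}V\cong V\ten O(G_{n-1})\ten\cdots\ten O(G_0)$ in the group case, and this is not a double complex --- the face maps mix the bar and simplicial directions (see the formulas in Definition \ref{wdef}). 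To get your $\tilde E$, one needs the pro-algebraic analogue of $\diag NG\simeq\bar W G$ together with the analogue of Lemma \ref{whypercohocoho} identifying $\H^*(G,V)$ with $\H^*(\bar W G,V)$, and then a comparison of the resulting spectral sequence with the topological one under $\Gamma_\bt\to\Gamma_\bt^{R,\mal}(k)$. None of that appears in the paper, and while I expect it to be true, establishing it is roughly as much work as the lemma itself; the paper's route via Lemma \ref{cohomaps} and the Quillen adjunction is precisely the device that sidesteps this.

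So: correct strategy, essentially sound levelwise reduction, but the pro-algebraic bar spectral sequence should be treated as a lemma in its own right rather than a remark, or else you should simply use the Quillen adjunction as the paper does.
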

\begin{proof}
This is implicit in \cite[\S 1.5.3]{htpy}. Replacing $\Gamma_{\bt}$ with a disjoint union of simplicial groups,   Lemma \ref{cohomaps} gives that $\H^*(\Gamma_{\bt}^{R, \mal}, V)$ 
is the homotopy class of maps from $\Gamma_{\bt}^{R, \mal}$  to $\Gamma_{\bt}^{R, \mal}\ltimes (N^{-1}V[1-i])$ over $\Gamma_{\bt}^{R, \mal}$. Since any map from $ \Gamma_{\bt}^{\alg}$ to a pro-unipotent extension of $R$ factors through  $\Gamma_{\bt}^{R, \mal}$, this is the same as the   homotopy class of maps from $\Gamma_{\bt}^{\alg}$  to $\Gamma_{\bt}^{R, \mal}\ltimes (N^{-1}V[1-i])$ over $\Gamma_{\bt}^{R, \mal}$.

The Quillen adjunction of Proposition \ref{algqd} then shows that this is equivalent to the homotopy class of maps from $\Gamma_{\bt}$ to $ \Gamma_{\bt} \ltimes  (N^{-1}V[1-i])$ in the slice category $s\gpd\da \Gamma_{\bt}$, which is just $\H^i(\Gamma_{\bt}, V)$.
\end{proof}

Note that if we have  $\Gamma_{\bt} \in s\gpd$ and $G \in s\agpd$ together with a morphism $f\co \Gamma_{\bt} \to G(k)$ of simplicial groupoids, then every cosimplicial $G$-representation $V$ naturally gives rise to a cosimplicial $\Gamma_{\bt}$-representation $f^*V$. For any coalgebra $C$, every $C$-comodule is a nested union of finite-dimensional comodules. Thus every cosimplicial $G$-representation $V$ is a filtered direct limit $\LLim_{\alpha} V_{\alpha}$ of levelwise finite-dimensional cosimplicial $G$-representations,  and we tweak the construction of pullbacks slightly by regarding $f^*V$ as the ind-object (i.e. filtered direct system) $\{f^*V_{\alpha}\}$ of levelwise finite-dimensional cosimplicial $\Gamma_{\bt}$-representations. We then define $\CC^{\bt}(\Gamma_{\bt}, f^*V):= \LLim_{\alpha}\CC^{\bt}(\Gamma_{\bt}, f^*V_{\alpha})$, and $\bH^*(\Gamma_{\bt},f^*V):= \H^*\CC^{\bt}(\Gamma_{\bt}, f^*V)= \LLim_{\alpha}\bH^*(\Gamma_{\bt},f^*V_{\alpha})$.

Also note that the category of cosimplicial $G$-representations is opposite to the category $s\widehat{\FD\Rep}(G)$ of \cite[\S 1.5]{htpy}.

\begin{lemma}\label{hypercohogood}
Given a cofibrant simplicial groupoid $\Gamma_{\bt}$
and a cosimplicial $O(\Gamma_{\bt}^{R, \mal})$-comodule $V$, 
the canonical map
$$
\bH^*(\Gamma_{\bt}^{R, \mal},V) \to \bH^*(\Gamma_{\bt},V)
$$
(induced by the morphism $W\Gamma_{\bt}\to W(\Gamma_{\bt}^{R, \mal})$)
is an isomorphism.
\end{lemma}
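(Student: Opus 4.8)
The plan is to reduce, via the spectral sequence of Lemma \ref{cohohypercoho}, to the case of a $\pi_0$-representation, where the statement is Lemma \ref{cohomalworks}.

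\textbf{Reduction to the levelwise finite-dimensional case.} Every cosimplicial $O(\Gamma_{\bt}^{R,\mal})$-comodule $V$ is a filtered colimit $\LLim_{\alpha} V_{\alpha}$ of levelwise finite-dimensional such comodules, since any comodule over a coalgebra is the union of its finite-dimensional subcomodules. Both functors in sight commute with filtered colimits: on the pro-algebraic side because $\CC^n(\Gamma_{\bt}^{R,\mal},V)=O((W\Gamma_{\bt}^{R,\mal})_n)\ten^{\Gamma^{R,\mal}_n} V^n$ is built from colimits in $V^n$, and on the simplicial-groupoid side because $\CC^{\bt}(\Gamma_{\bt},f^{\ast}V)$ was defined as a filtered colimit; cohomology of cochain complexes then commutes with filtered colimits, and a filtered colimit of isomorphisms is an isomorphism. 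So I may assume $V$ is levelwise finite-dimensional. In that case, for each $j$, the cohomology $\H^j(V)$ of the underlying cosimplicial vector space of $V$ is a finite-dimensional $\pi_0(\Gamma_{\bt}^{R,\mal})$-representation, being a subquotient of $V^j$.

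\textbf{Comparison of spectral sequences.} Lemma \ref{cohohypercoho}, together with its evident analogue for simplicial pro-algebraic groupoids (same proof, via the skeletal filtration $F_{\bullet}V$ and the fact that $F_nV/F_{n-1}V$ is quasi-isomorphic to the denormalisation of $\H^n(V)[-n]$), furnishes convergent first-quadrant spectral sequences
$$
E_2^{ij}=\H^i(\Gamma_{\bt}^{R,\mal},\H^j(V))\abuts \bH^{i+j}(\Gamma_{\bt}^{R,\mal},V), \qquad {}'E_2^{ij}=\H^i(\Gamma_{\bt},f^{\ast}\H^j(V))\abuts \bH^{i+j}(\Gamma_{\bt},f^{\ast}V).
$$
Since pullback along $\Gamma_{\bt}\to \Gamma_{\bt}^{R,\mal}(k)$ is exact and commutes with skeleta and with the cosimplicial operations $\pd^i,\sigma^i$, it carries $F_{\bullet}V$ to the corresponding filtration on $f^{\ast}V$ and satisfies $\H^j(f^{\ast}V)=f^{\ast}\H^j(V)$; hence the canonical comparison map $\CC^{\bt}(\Gamma_{\bt}^{R,\mal},V)\to\CC^{\bt}(\Gamma_{\bt},f^{\ast}V)$ respects the two filtrations and induces a morphism from the first spectral sequence to the second. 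On $E_2$-pages this is the map $\H^i(\Gamma_{\bt}^{R,\mal},\H^j(V))\to\H^i(\Gamma_{\bt},f^{\ast}\H^j(V))$, which is an isomorphism by Lemma \ref{cohomalworks} (applicable because $\Gamma_{\bt}$ is cofibrant and $\H^j(V)$ is a finite-dimensional $\pi_0(\Gamma_{\bt}^{R,\mal})$-representation). As both spectral sequences are first-quadrant, hence convergent, the comparison theorem for spectral sequences shows the map on abutments $\bH^{\ast}(\Gamma_{\bt}^{R,\mal},V)\to\bH^{\ast}(\Gamma_{\bt},V)$ is an isomorphism, as required.

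\textbf{The main obstacle.} The only delicate point is the opening of the second step: verifying that the skeletal-filtration argument of Lemma \ref{cohohypercoho} really does transfer to the pro-algebraic setting, and---more importantly---that the canonical comparison map $\CC^{\bt}(\Gamma_{\bt}^{R,\mal},V)\to\CC^{\bt}(\Gamma_{\bt},f^{\ast}V)$ is \emph{strictly} compatible with both skeletal filtrations, so that one obtains an honest morphism of spectral sequences and not merely a map on abutments. Once that strict compatibility is established, the rest is formal, resting only on Lemma \ref{cohomalworks} and first-quadrant convergence.
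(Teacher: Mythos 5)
Your proof is correct and takes essentially the same route as the paper: both compare the Hochschild–Serre-type spectral sequences of Lemma \ref{cohohypercoho} (and its $s\agpd$ analogue), reduce to ind-finite-dimensional $\pi_0$-representations, and apply Lemma \ref{cohomalworks} on $E_2$-pages. One small remark on your closing paragraph: to obtain a morphism of spectral sequences you do not need the comparison map $\CC^{\bt}(\Gamma_{\bt}^{R,\mal},V)\to\CC^{\bt}(\Gamma_{\bt},f^{\ast}V)$ to be \emph{strictly} compatible with the skeletal filtrations, only filtration-preserving (i.e. carrying $F_n$ into $F_n$), which is immediate since pullback is exact and commutes with skeleta; strictness would be needed for degeneration arguments, but not for the convergent first-quadrant comparison you invoke, so the obstacle you flag is not actually present.
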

\begin{proof}
By Lemma \ref{cohohypercoho} and its analogue for $s\agpd$, 
we have convergent 
spectral sequences 
\begin{eqnarray*}
\bH^{i}(\Gamma, H^j(V)) &\abuts& \bH^{i+j}(\Gamma_{\bt},V)\\
\bH^{i}(G, \H^j(V)) &\abuts& \bH^{i+j}(G,V).
\end{eqnarray*}

For ind-finite-dimensional $\pi_0G$-representations $U$, the maps $\H^i(G,U) \to\H^i(\Gamma_{\bt}, U)$ are isomorphisms by  Lemma \ref{cohomalworks}, so  the maps $\H^i(G,\H^j(V)) \to\H^i(\Gamma_{\bt}, \H^j(V))$ are isomorphisms, making the morphism of spectral sequences an isomorphism.
\end{proof}

\begin{theorem}\label{fibrations}
Take a fibration $f\co (X,x) \to (Y,y)$ (of pointed connected  topological spaces) with connected fibres,  and set $F:= f^{-1}(y)$.
Take a Zariski-dense representation $\rho\co  \pi_1(X,x) \to R(k)$ to a  reductive pro-algebraic group $R$, let $K$ be the closure of $\rho(\pi_1(F,x))$, and set $T:= R/K$. If the monodromy action of $\pi_1(Y,y)$ on $\H^*(F, V)$ factors through $\varpi_1(Y,y)^{T, \mal}$ for all $K$-representations $V$, then $G(F,x)^{K,\mal}$ is the homotopy fibre of $ G(X,x)^{R, \mal} \to G(Y,y)^{T, \mal}$.
\end{theorem}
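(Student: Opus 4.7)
The plan is to apply Proposition \ref{detectweak} to a natural comparison map from $G(F,x)^{K,\mal}$ to the homotopy fibre $F'$ of $G(X,x)^{R,\mal}\to G(Y,y)^{T,\mal}$. First I would construct this map. Since $F\hookrightarrow X \xra{f} Y$ is the constant map to $y$, applying the loop groupoid functor, taking relative Malcev completions and using the universal property yields a canonical factorisation $\phi\co G(F,x)^{K,\mal}\to F'$ over $G(X,x)^{R,\mal}$, because the Malcev completion of the constant simplicial groupoid at $y$ (relative to $T$) is contractible. The task is then to show $\phi$ is a weak equivalence.

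For the reductive-quotient check of Proposition \ref{detectweak}, I would first observe that normality of $K$ in $R$ comes for free: the topological long exact sequence makes $\pi_1(F,x)\lhd\pi_1(X,x)$, so its Zariski closure $K$ is normal in $R=\overline{\rho(\pi_1(X,x))}$, and hence $T=R/K$ really is a reductive quotient group. The homotopy long exact sequence of $F'\to G(X,x)^{R,\mal}\to G(Y,y)^{T,\mal}$, together with pro-unipotence of $\pi_i$ for $i\ge 1$ in $s\agpd$, then identifies the reductive quotient of $\pi_0 F'$ with $\ker(R\to T)=K$. This matches $(G(F,x)^{K,\mal})^{\red}=K$, so $\phi^{\red}$ is an equivalence.

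For the cohomology check, I would take a finite-dimensional irreducible $F'$-representation $V$; since $(F')^{\red}=K$, such a $V$ is precisely an irreducible $K$-representation. Lemma \ref{hypercohogood} applied to the cofibrant $G(F,x)$ gives $\H^*(G(F,x)^{K,\mal},V)\cong \H^*(F,V)$, so it remains to produce a natural isomorphism $\H^*(F',V)\cong \H^*(F,V)$ compatible with $\phi$. I would obtain this by comparing two spectral sequences: the classical Leray sequence $\H^i(Y,\H^j(F,V))\Rightarrow \H^{i+j}(X,V)$ of the topological fibration, and the Hochschild--Serre sequence of Proposition \ref{lerayserre} for the algebraic fibration $F'\to G(X,x)^{R,\mal}\to G(Y,y)^{T,\mal}$. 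The standing hypothesis that $\pi_1(Y)$ acts on $\H^*(F,V)$ through $\varpi_1(Y,y)^{T,\mal}$, combined with Lemma \ref{cohomalworks}, identifies $\H^i(G(Y,y)^{T,\mal},\H^j(F,V))$ with $\H^i(Y,\H^j(F,V))$, so the $E_2$-pages on the base agree once the fibre cohomologies do.

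The main obstacle will be that Proposition \ref{lerayserre} wants $V$ to be an $G(X,x)^{R,\mal}$-representation, whereas $V$ is only a $K$-representation. I would handle this by exploiting the normality $K\lhd R$: one replaces $V$ by its coinduction $\Coind_K^R V=(\bO(R)\ten V)^K$, an ind-finite-dimensional $R$-representation whose restriction to $K$ recovers $V$ as a direct summand (via Frobenius reciprocity / Shapiro's lemma $\H^*(G(X)^{R,\mal},\Coind V)\cong \H^*(G(F)^{K,\mal},V)$, and similarly for $F'$). Applying Proposition \ref{lerayserre} to these coinduced coefficients, together with the Postnikov-tower/induction argument familiar from Proposition \ref{cohochar} and used throughout \cite{htpy}, reduces the cohomology-matching to the already-established isomorphism on $E_2$-pages, yielding $\H^*(F',V)\cong \H^*(F,V)$ and thereby completing the verification of Proposition \ref{detectweak}.
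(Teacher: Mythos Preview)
Your overall strategy --- construct the comparison map, check reductive quotients, then verify cohomology via Proposition~\ref{detectweak} --- is exactly the paper's. The reductive-quotient step is fine (though you should also note that $K$ itself is reductive: $\Ru(K)$ is normal in $R$, hence trivial, which is needed for $(F,x)^{K,\mal}$ to make sense).

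The gap is in the cohomology step. Your plan to ``compare two spectral sequences'' is circular as written: the $E_2$-page of the algebraic Hochschild--Serre sequence for $F'\to G(X)^{R,\mal}\to G(Y)^{T,\mal}$ has $\H^j(F',V)$ in the fibre slot, which is precisely the unknown. Knowing that the abutments and the base-direction cohomology agree does not pin down the fibre cohomology; two spectral sequences can converge to the same thing with different $E_2$-pages. Your appeal to coinduction and an unspecified ``Postnikov-tower/induction argument'' does not close this; Shapiro-type statements relate cohomology of $X$ and $F$, not of $F'$ and $F$.

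The paper's fix is a specific choice of coefficient complex that forces \emph{both} spectral sequences to collapse. One takes the cosimplicial $G(X,x)^{R,\mal}$-representation
\[
O(R)\ten_{O(T)} O\bigl(G(Y,y)^{T,\mal}\bigr),
\]
and runs Hochschild--Serre (Proposition~\ref{lerayserre}) for both the topological fibration $G(F)\to G(X)\to G(Y)$ and the algebraic one $\cF\to G(X)^{R,\mal}\to G(Y)^{T,\mal}$. The monodromy hypothesis plus Lemma~\ref{hypercohogood} lets you replace $G(Y)$ by $G(Y)^{T,\mal}$ on the base; then the factor $O(G(Y,y)^{T,\mal})$ makes the coefficient fibrant over the base, so $\bH^i(G(Y)^{T,\mal},\,-)=0$ for $i>0$ and the sequence degenerates. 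The abutment of each is therefore equal to its own fibre cohomology, and both abutments are $\bH^*(G(X,x),\,O(R)\ten_{O(T)}O(G(Y,y)^{T,\mal}))$. Taking $T$-coinvariants kills the $O(G(Y,y)^{T,\mal})$ factor and leaves $\H^*(F,O(K))\cong \H^*(\cF,O(K))$, which suffices for Proposition~\ref{detectweak}. Your coinduction-to-$R$ idea is a piece of this, but the essential missing ingredient is the further tensor with $O(G(Y,y)^{T,\mal})$ that trivialises the base.
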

\begin{proof}
This is \cite[Theorem \ref{mhs-fibrations}]{mhs}, which uses Lemma \ref{hypercohogood} to show that  $\H^*(G(F,x)^{K,\mal},O(K))$  and cohomology $\H^*(\cF,O(K))$ of the homotopy fibre $\cF$ are both   $\bH^*(G(X,x), O(R)\ten_{O(T)}O(G(Y,y)^{T, \mal}))\cong \H^j(F, O(K))$. 
\end{proof}

\subsection{Equivalent formulations}

Fix a reductive pro-algebraic groupoid $R$.


\subsubsection{Lie algebras}

\begin{definition}\label{indcon}
Recall that a Lie coalgebra $C$ is said to be \emph{conilpotent} if the iterated cobracket $\Delta_n\co  C \to C^{\otimes n}$ is $0$ for sufficiently large $n$. A Lie coalgebra $C$ is \emph{ind-conilpotent} if it is a filtered direct limit (or, equivalently, a nested union) of conilpotent Lie coalgebras.
\end{definition}

\begin{definition}
 Recall from \cite[Definition \ref{htpy-cna}]{htpy}  that for any $k$-algebra $A$, we define $\hat{\cN}_A(R)$  to be opposite to the category of $R$-representations in ind-conilpotent Lie coalgebras over $A$, and denote the contravariant equivalence by $C \mapsto C^{\vee}$.
\end{definition}

Note that  there is a continuous functor $\hat{\cN}_k(R)\to \hat{\cN}_A(R)$ given by $C^{\vee} \mapsto (C\ten_k A)^{\vee}$. We denote this by $\g \mapsto \g \hat{\ten} A$.

\begin{remark}
Observe that $\g \in  \hat{\cN}_A(R)$ can be regarded as an object of the category $\Aff_A(R)$ of $R$-representations in affine $A$-schemes, by regarding it as the functor 
$$
\g(B):= \Hom_{A,R}(\g^{\vee},B),
$$
for $B \in \Alg_A(R):= A \da \Alg(R)$. In fact, $\g(B)$ is then a Lie algebra over $B$, so the Campbell--Baker--Hausdorff formula defines a group structure on $\g(B)$, and the resulting group is denoted by $\exp(\g)(B)$. Thus $\exp(\g)$ is an $R$-representation in affine group schemes over $A$ (i.e. a group object of $\Aff_A(R)$).
\end{remark}

\begin{definition}\label{cndef}
 Write $s\hat{\cN}_A(R)$ for the category of simplicial objects in $\hat{\cN}_A(R)$. A weak equivalence in $s\hat{\cN}_A(R)$ is a map which gives isomorphisms on cohomology groups of the duals  (which are just $A$-modules). We denote by $\Ho(s\hat{\cN}_A(R))$ the localisation of  $s\hat{\cN}_A(R)$ at weak equivalences. 
\end{definition}

For $k=A$, we will usually drop the subscript, so $\hat{\cN}(R):=\hat{\cN}_k(R)$, and so on.

\begin{definition}\label{cedef}
Define $\cE(R)$ to be the full subcategory of $\agpd\da R$ consisting of those morphisms $\rho\co G\to  R$ of  pro-algebraic groupoids which are pro-unipotent extensions. Similarly, define $s\cE(R)$ to consist of the pro-unipotent extensions in $s\agpd\da R$, 
and $\Ho(s\cE(R)_*)$ to be full subcategory of $\Ho(\Ob R \da s\agpd)$ on objects $s\cE(R)$.
\end{definition}

\begin{definition}
Given a pro-algebraic groupoid $R$, define the category $s\cP_A(R)$ to have the same objects as $s\hat{\cN}_A(R)$, with morphisms given by
$$
\Hom_{s\cP(R)}(\g,\fh)= \exp(\prod_{x \in \Ob R}\pi_0\fh(x))\by^{\exp(\fh^R_0)} \Hom_{\Ho(s\hat{\cN}(R))}(\g,\fh),
$$
where $\fh_0^R$ (the Lie subalgebra of $R$-invariants in $\fh_0$)  acts by conjugation on the set of homomorphisms. Composition of morphisms is given by $(u,f) \circ (v,g)= (u\circ f(v), f\circ g)$.
\end{definition}

The following is a key comparison result, which will be used in  Proposition \ref{eqhtpy} and Theorem \ref{qleqhtpy} as a step towards reformulating Malcev homotopy types in terms of Godement resolutions.

\begin{proposition}\label{meequiv}
For any reductive pro-algebraic groupoid $R$, the categories $\Ho(s\cE(R)_*)$ and $s\cP(R)$ are equivalent.
\end{proposition}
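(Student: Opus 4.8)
The plan is to exhibit both $s\cP(R)$ and $\Ho(s\cE(R)_*)$ as the same enlargement of the homotopy category $\Ho(s\hat{\cN}(R))$, using the dictionary between pro-unipotent groups and pro-nilpotent Lie algebras.

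\textbf{Step 1: a strict $\exp/\log$ equivalence.} By the Levi decomposition (Proposition~\ref{leviprop}) applied in each simplicial degree, any $G\in s\cE(R)$ is canonically $R\ltimes\Ru(G)$, with $\Ru(G)$ a simplicial $R$-representation in pro-unipotent pro-algebraic groups. Since $\Char k=0$, the Campbell--Baker--Hausdorff formula gives an equivalence between pro-unipotent pro-algebraic groups carrying an $R$-action and $R$-representations in ind-conilpotent Lie coalgebras, sending $\Ru(G)$ to $\Lie(\Ru(G))^{\vee}$ and, conversely, $\g^{\vee}$ to $\exp(\g)$ (cf.\ \cite{htpy}). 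Applying this degreewise, with morphisms taken \emph{over} $R$, produces an equivalence of categories
$$
s\cE(R)\ \xra{\ \sim\ }\ s\hat{\cN}(R),\qquad G\longmapsto \g_G:=\Lie(\Ru(G)),
$$
under which the weak equivalences of Lemma~\ref{cmsagpd} (detected by $\pi_0$ and the groups $\pi_n(\Ru(G)(x,x))$) correspond to those of Definition~\ref{cndef} (detected by the cohomology of $\g_G^{\vee}$). Hence it descends to an equivalence $\Ho(s\cE(R))\simeq\Ho(s\hat{\cN}(R))$ between the localisations at weak equivalences formed using morphisms over $R$; this accounts for the second factor $\Hom_{\Ho(s\hat{\cN}(R))}(\g,\fh)$ in the definition of $s\cP(R)$.

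\textbf{Step 2: comparing the two enlargements.} It remains to see that passing from maps \emph{over} $R$ to arbitrary maps in $\Ob R\da s\agpd$ enlarges hom-sets exactly by the $\exp(\prod_{x}\pi_0\fh(x))\by^{\exp(\fh_0^R)}$ twist. Fix $G,G'\in s\cE(R)$ with associated Lie coalgebras $\g,\fh$. A morphism $\phi\co G\to G'$ in $\Ob R\da s\agpd$ need only fix $\Ob R$, not commute with the projections to $R$. Composing $\phi$ with $G'\to R$ gives a simplicial map $G\to R$ which, as it kills the pro-unipotent $\Ru(G)$ and $R$ is reductive, factors through $G^{\red}$ — the constant object $R$; comparing with the structure map $G\to R$ shows that, after post-composing $\phi$ with a levelwise inner automorphism of $G'$ valued in $\prod_{x}\Ru(G'(x,x))$, one obtains a map over $R$. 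Passing to homotopy classes, this inner twist is recorded by $\exp(\prod_{x}\pi_0\fh(x))$, while the ambiguity in the straightening (equivalently, in the choice of path between basepoints) is precisely the conjugation action of $\exp(\fh_0^R)$. This yields a natural bijection
$$
\Hom_{\Ho(s\cE(R)_*)}(G,G')\ \cong\ \exp\!\Bigl(\prod_{x\in\Ob R}\pi_0\fh(x)\Bigr)\by^{\exp(\fh_0^R)}\Hom_{\Ho(s\hat{\cN}(R))}(\g,\fh)\ =\ \Hom_{s\cP(R)}(\g,\fh).
$$

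\textbf{Step 3: functoriality, and the main obstacle.} One then checks that these bijections respect identities and composition: straightening the composite of representatives of $(u,f)$ and $(v,g)$ produces $(u\circ f(v),\,f\circ g)$, matching the composition law defining $s\cP(R)$; this is a direct Campbell--Baker--Hausdorff computation tracking how the inner twist $v$ on the outer object is transported through $f$. Steps~1 and~3 are formal once the dictionary is fixed; the real work is Step~2 — showing that dropping compatibility with the map to $R$ enlarges the homotopy category of $s\cE(R)$ in \emph{exactly} the prescribed way, with no further identifications and no extra morphisms, and that the straightening is well defined up to the stated $\exp(\fh_0^R)$-ambiguity. This is where reductivity of $R$ (forcing $G^{\red}\cong R$, so that the discrepancy is purely pro-unipotent) and the rigidity of the Levi decomposition are indispensable.
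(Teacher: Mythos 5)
The overall shape of your argument is close to the right one, but you have located the source of the $\exp(\prod_x\pi_0\fh(x))\times^{\exp(\fh_0^R)}$ twist in the wrong place, and as a result Step~1 is stated incorrectly and the key claim in Step~2 has a gap.

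In Step~1 you assert that, with morphisms taken over $R$, the functor $G\mapsto\Lie(\Ru(G))$ is a \emph{strict} equivalence $s\cE(R)\to s\hat\cN(R)$. This is not true: the functor is not full. A morphism $\phi\co R\ltimes\exp(\g)\to R\ltimes\exp(\fh)$ over $R$ need not carry the Levi section of the source onto the Levi section of the target; it carries the one into a conjugate of the other. Concretely, $\phi\circ i$ and $i'$ are two splittings of $R\ltimes\exp(\fh)\onto R$, hence differ by conjugation by some $\tilde u\in\prod_x\exp(\fh_0(x))$, so that $\ad_{\tilde u}^{-1}\circ\phi$ (and not $\phi$ itself) restricts to an $R$-equivariant map $\exp(\g)\to\exp(\fh)$. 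Thus already at the level of over-$R$ morphisms one has
$$
\Hom_{s\cE(R)}(R\ltimes\exp\g,\,R\ltimes\exp\fh)\ \cong\ \exp\!\Bigl(\prod_x\fh_0(x)\Bigr)\by^{\exp(\fh_0^R)}\Hom_{s\hat\cN(R)}(\g,\fh),
$$
with the $\by^{\exp(\fh_0^R)}$ coming from the centralisers of the Levi. The twist is the shadow of the non-uniqueness of the Levi decomposition (Proposition~\ref{leviprop}, ``unique up to conjugation by $\Ru$''), and passing to homotopy turns $\exp(\fh_0)$ into $\exp(\pi_0\fh)$; this, not the enlargement of the ambient comma category, produces $s\cP(R)$. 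This is exactly why the paper's explicit description of the functor builds $\ad_{\tilde u}\circ(R\ltimes\exp(f))$ as a morphism \emph{in} $s\cE(R)$ already.

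Consequently Step~2 attacks a non-issue and, worse, its central assertion is not correct. You claim that for $\phi\co G\to G'$ fixing $\Ob R$, ``after post-composing $\phi$ with a levelwise inner automorphism of $G'$ valued in $\prod_x\Ru(G'(x,x))$, one obtains a map over $R$.'' But the composite $G\to G'\to R$ kills $\Ru(G)$ and hence factors as $\psi$ composed with the structure map $G\to R$, where $\psi\co R\to R$ is an arbitrary endomorphism of pro-algebraic groupoids fixing objects. Conjugation by $\Ru(G')$ is invisible on $R$ (it is normal), so $\ad_{\Ru}$-twisting $\phi$ cannot change $\psi$; if $\psi$ is not the identity, no such straightening exists. (This is precisely the point of Lemma~\ref{outdef}: the group of self-maps in $\Ho(\Ob G\da s\agpd)$ surjects onto $\Aut(R)$, with kernel $\RAut(G)\cong\Aut_{s\cP(R)}(\g)$.) So the hom-sets you would be comparing in Step~2 differ by $\Aut(R)$-data that $s\cP(R)$ does not contain, and the route ``over-$R$ $\leadsto$ $\Ob R$-pointed = add the $\exp$ twist'' does not close up as stated.

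To repair the argument you should: (i) drop the claim of a strict equivalence $s\cE(R)\simeq s\hat\cN(R)$ and instead prove the twisted bijection displayed above for over-$R$ morphisms, using conjugacy of Levi sections and identifying $\exp(\fh_0^R)$ as the stabiliser of the canonical section; (ii) verify that the composition law $(u,f)\circ(v,g)=(u\circ f(v),\,f\circ g)$ matches under this bijection (your Step~3 computation is the right idea, but it should be done at this stage); and (iii) then pass to homotopy categories on both sides, showing that inverting weak equivalences replaces $\exp(\fh_0)$ by $\exp(\pi_0\fh)$ and $\Hom_{s\hat\cN(R)}$ by $\Hom_{\Ho(s\hat\cN(R))}$. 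This matches the route the paper credits to \cite{htpy} and \cite{mhs}, and is the only place the Levi decomposition is really needed.
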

\begin{proof}
This is part of \cite[Theorem \ref{bigequiv}]{htpy}, adapting  \cite[Proposition \ref{meequiv}]{htpy}  to the unpointed case. 
The proof just exploits the Levi decomposition of Proposition \ref{leviprop}. 

Explicitly, the functor  maps $\g \in s\cP(R)$ to the simplicial pro-algebraic group given in level $n$ by $R \ltimes \exp(\g_n)$.  Given a morphism 
$$
( u,f) \in \exp(\prod_{x \in \Ob R}\pi_0\fh(x))\by^{\exp(\fh^R)}\Hom_{\Ho(s\cN(R))}(\g, \fh),
$$ 
lift $u$ to $\tilde{u} \in \prod_{x \in \Ob R} \exp(\fh_0(x))$, and construct the morphism 
$$
 \ad_{\tilde{u}}\circ(R \ltimes \exp(f)) \co  R \ltimes \exp(\g) \to R \ltimes \exp(\fh)
$$
in  $s\cE(R)$, where for $a \in  (R \ltimes \exp(\fh))(x,y)$, we set  $\ad_{\tilde{u}} (a)= \tilde{u}(x)\cdot  a \cdot \tilde{u}(y)^{-1}$.
\end{proof}

\begin{definition}\label{hrelmaldef}
We can now define the \emph{multipointed Malcev homotopy type} of $X$ relative to $\rho$ to be the image of   $G(X,\rho)^{\mal}$ in $\Ho(s\cE(\tilde{R})_*)$, or equivalently $\Ru G(X,\rho)^{\mal}$ in  $s\cP(\tilde{R})$. Define the \emph{unpointed 
Malcev homotopy type} of $X$ relative to $\rho$ to be the image of   $G(X,\rho)^{\mal}$ in $\Ho(s\cE(\tilde{R}))$.

Since $\tilde{R} \to R$ is an equivalence of groupoids, 
there is an equivalence $\Ho(s\cE(R)) \to \Ho(s\cE(\tilde{R}))$, so may discard some basepoints to give  an object of $s\cP(R)$ (or equivalently of $\Ho(s\cE(R)_*)$) whenever $\rho$ is surjective on objects. 
\end{definition}

\subsubsection{Chain Lie algebras}

\begin{definition}Let  $dg\hat{\cN}_A$  be opposite to the category of non-negatively graded ind-conilpotent cochain Lie  coalgebras over $A$. Define $dg\hat{\cN}_A(R)$ to be the category of $R$-representations in $dg\hat{\cN}_A$.
 For $k=A$, we will usually drop the subscript, so $dg\hat{\cN}(R):=dg\hat{\cN}_k(R)$, and so on.
\end{definition}

The following is \cite[Lemma \ref{htpy-cnamod}]{htpy} :
\begin{lemma}
There is a closed model structure on $dg\hat{\cN}_A(R)$  in which a morphism $f\co \g \to \fh$ is 
\begin{enumerate}
\item
a fibration whenever  the underlying map $f^{\vee}\co  \fh^{\vee} \to \g^{\vee}$ of cochain complexes over $A$ is injective in strictly positive degrees;

\item 
a weak equivalence whenever the maps 
$\H^i(f^{\vee})\co  \H^i(\fh^{\vee}) \to \H^i(\g^{\vee})$ are isomorphisms for all $i$. 
\end{enumerate}
\end{lemma}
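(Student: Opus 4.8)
The plan is to obtain this model structure by \emph{transferring} the standard projective model structure along a free--forgetful adjunction. Let $\cM$ be the category of underlying objects of $dg\hat{\cN}_A(R)$ --- that is, $R$-representations in non-negatively graded cochain complexes over $A$, obtained by forgetting the (co)Lie structure of $\g^{\vee}$ --- and let $U\co dg\hat{\cN}_A(R) \to \cM$ be the forgetful functor. Because $k$ has characteristic $0$, the degree-$m$ part of the free Lie coalgebra functor is a retract of the $m$-fold tensor power (via the Dynkin idempotent), and because $R$ is reductive the category of $R$-representations is semisimple; together these guarantee that $U$ has a left adjoint $\hat{L}$, the cofree ind-conilpotent $R$-equivariant dg Lie coalgebra functor, realising $dg\hat{\cN}_A(R)$ as the category of algebras for the resulting monad on $\cM$. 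The category $\cM$ carries its usual cofibrantly generated projective structure --- weak equivalences the quasi-isomorphisms, fibrations the maps surjective in strictly positive degrees --- by \cite[Theorem 11.3.2]{Hirschhorn}, with generating cofibrations and trivial cofibrations built from the elementary sphere and disc complexes $S^{n-1}\into D^n$ and $0 \to D^n$ (with $D^n$ the contractible complex $A\xra{\id}A$ in degrees $n-1,n$), tensored over finite-dimensional $R$-representations.

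I would then lift this structure along $(\hat{L},U)$ using the transfer theorem in the form of \cite[Theorem 11.3.2]{Hirschhorn}. The two points to check are: (i) $U$ permits the small object argument, which holds since $U$ preserves filtered colimits (the free Lie coalgebra functor is assembled from finite tensor powers); and (ii) the acyclicity condition, that every relative $\hat{L}(J)$-cell complex is a weak equivalence, for $J$ the generating trivial cofibrations. For (ii) I would use Quillen's path-object argument: every object of $dg\hat{\cN}_A(R)$ is fibrant (the terminal object is $0$, and $\g \to 0$ is vacuously surjective in positive degrees), so it suffices to produce a functorial path object. Here one takes the dual of the classical cylinder for dg Lie coalgebras, built by tensoring with the polynomial de Rham algebra $k[t,dt]$ of $\Delta^1$ --- completed to respect the pro-structure and the grading, and kept $R$-equivariant --- and checks that $\g \to \g\,\hat{\ten}\,k[t,dt] \to \g\times\g$ is a weak equivalence followed by a fibration, using that $k[t,dt]$ is contractible and surjects onto $k\oplus k$ and that Künneth applies in each finite-dimensional layer since $k\supseteq\Q$. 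Equivalently, one argues directly that $\hat{L}(D^n)$ is contractible --- each homogeneous layer being a retract of the contractible complex $(D^n)^{\ten m}$ --- so that $\hat{L}$ sends $0\to D^n$ to a trivial cofibration, and then propagates this through pushouts and transfinite compositions using that $U$ detects weak equivalences.

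The transfer then yields a cofibrantly generated model structure on $dg\hat{\cN}_A(R)$ whose weak equivalences and fibrations are created by $U$: namely the maps $f$ with $U(f)$ a quasi-isomorphism, respectively surjective in strictly positive degrees. Dualising $U(f)$ back to $f^{\vee}$ turns these into exactly the conditions in the statement. The main obstacle is the acyclicity step of the previous paragraph: the one genuinely delicate point is reconciling the free Lie coalgebra functor, the pro-finite-dimensional completion, the $R$-action and the ground ring $A$ all at once, so that ``the free Lie object on a contractible complex is contractible'' --- equivalently, the existence of the tensor path object --- really holds in $dg\hat{\cN}_A(R)$. Everything else is routine, and closely parallels the construction of the model structure on $s\hat{\cN}_A(R)$ used earlier in \cite{htpy}.
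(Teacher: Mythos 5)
You have the right overall strategy — pass the model structure from the module level to the Lie level along a free--forgetful adjunction, and verify acyclicity via "free on contractible is contractible", using the Dynkin idempotent and char-$0$ Künneth to control the weight pieces of the free pro-nilpotent Lie algebra, and tensoring with $k[t,dt]$ for the path object. That part of the plan closely matches what happens in the cited source, and you correctly isolate acyclicity as the delicate point. However, there is a dualisation gap in the way you set up the transfer, and it is not merely notational.

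Recall $dg\hat{\cN}_A(R)$ is \emph{defined} as the opposite of a category of ind-conilpotent cochain Lie coalgebras, and a morphism $f\colon\g\to\fh$ \emph{is} a coalgebra morphism $f^{\vee}\colon\fh^{\vee}\to\g^{\vee}$. Forgetting the co-Lie structure therefore gives a \emph{contravariant} functor to $DG\Mod_A(R)$, not the covariant $U$ you write. If you insist on a covariant forgetful $U\colon dg\hat{\cN}_A(R)\to\cM$ so that $\hat{L}\dashv U$ makes sense, then $\cM$ is forced to be the \emph{opposite} of $DG\Mod_A(R)$ — a pro-category — rather than $DG\Mod_A(R)$ itself. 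That category is not locally presentable, hence not cofibrantly generated in the sense Hirschhorn's Theorem 11.3.2 requires, so the right-transfer machinery you cite does not apply to it directly. Equivalently: when read on the Lie-coalgebra side, what you want is a \emph{left-induced} (fibrantly cogenerated) model structure — fibrations in $dg\hat{\cN}_A(R)$ are by definition cofibrations of the underlying complexes, exactly as the filtered analogue (Proposition \ref{filliemod}) in this paper makes explicit, where fibrations in $Fdg\hat{\cN}_A(R)$ are characterised by $f^{\vee}$ being a \emph{cofibration} in $FDG\Mod_A(R)$. Left-induced structures need a different existence argument (or a bespoke construction of factorisations in the fibrantly cogenerated setting) rather than the Kan/Hirschhorn right-transfer.

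So the concrete fix is: either (a) state the transfer dually — take the cofibrantly generated structure on $DG\Mod_A(R)$ that the paper constructs (Lemma \ref{fdgmod}), left-induce it along the forgetful functor from ind-conilpotent Lie coalgebras to $DG\Mod_A(R)$ (whose right adjoint is the cofree conilpotent Lie coalgebra functor), and then pass to opposites; the acyclicity you need is then precisely that the cofree coalgebra on an injective resolution is still a weak equivalence, which your path-object / contractibility argument supplies after dualisation — or (b) do not cite a transfer theorem at all, and instead build the two factorisation systems by hand in the pro/opposite category, using the explicit (co)generating maps dual to the $S_{n,m}\to D_{n,m}$ of Lemma \ref{fdgmod}. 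Your argument that $\hat{L}(D^n)$ is contractible — each weight-$m$ piece is a Dynkin retract of $(D^n)^{\ten m}$, and in each fixed degree only finitely many $m$ contribute, so the pro-product causes no $\varprojlim^1$ trouble — is the right content either way; it just needs to sit inside the correct formal framework.
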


\begin{remark}\label{einfty}
It follows from the construction in \cite[Lemma \ref{htpy-cnamod}]{htpy}  that for  cofibrant objects $\g \in dg\hat{\cN}(R)$ (taking $A$ to be a field), $\g^{\vee}$ is freely cogenerated as a graded Lie coalgebra. 
Thus $\g^{\vee}[-1]$ is a positively graded strong homotopy commutative algebra without unit (in the sense of \cite[Lectures 8 and 15]{Kon}), and  a choice of cogenerators on $\g^{\vee}$ is the same as a positively graded $E_{\infty}$ (a.k.a. $C_{\infty}$) algebra --- this is an aspect of Koszul duality.
\end{remark}

\begin{definition}
We say that a morphism $f\co \g \to \fh$ in $dg\hat{\cN}(R)$ is \emph{free} if there exists a (pro-finite-dimensional)  sub-$R$-representation $V \subset \fh$ such that  $\fh$ is the free pro-nilpotent graded Lie algebra over $\g$ on generators $V$.
\end{definition}

\begin{proposition}[Minimal models]\label{dgminimal}
For every object $\g$ of   $dg\hat{\cN}(R)$, there exists a free chain Lie algebra $\m$  with $d=0$ on  the abelianisation $\m/[\m,\m]$,  unique up to non-unique isomorphism, together with a weak equivalence  $\m \to \g$.
\end{proposition}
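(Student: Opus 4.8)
The plan is to run the classical Sullivan--Quillen minimal model algorithm in the dual (ind-conilpotent cochain Lie coalgebra) setting, using that $\Char k = 0$ and $R$ is reductive, so that $\Rep(R)$ is semisimple and every subquotient can be split off $R$-equivariantly. Existence is an inductive ``attach free generators'' construction; uniqueness combines a model-category lifting argument with the rigidity of minimal models.

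\textbf{Existence.} I would construct $\m$ as an increasing union $\m=\varinjlim_n\m^{(n)}$, where $\m^{(n)}$ is the free pro-nilpotent graded Lie algebra (viewed in $dg\hat{\cN}(R)$ via its cofree conilpotent dual, as in Remark~\ref{einfty}) on pro-finite-dimensional $R$-subrepresentations $V_0,\dots,V_n$ in degrees $0,\dots,n$, carrying a differential subject to the minimality constraint $dV_i\subseteq[\m^{(i-1)},\m^{(i-1)}]$, together with compatible maps $\phi^{(n)}\co\m^{(n)}\to\g$ such that $\H_i(\phi^{(n)})$ is an isomorphism for $i<n$ and a surjection for $i=n$. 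One starts with $V_0$ a set of Lie generators of the pro-nilpotent Lie algebra $\H_0(\g)$, so $\m^{(0)}=\hat{\mathfrak{L}}(V_0)\twoheadrightarrow\H_0(\g)$. In the inductive step one chooses $V_n$, its differential into $[\m^{(n-1)},\m^{(n-1)}]$, and its image in $\g$ so that one summand of $V_n$ (with $d=0$ on it) maps isomorphically onto $\coker\H_n(\phi^{(n-1)})$ while a second summand carries a differential realising an $R$-equivariant splitting of $\ker\H_{n-1}(\phi^{(n-1)})$; all these lifts exist because $\Rep(R)$ is semisimple, and the standard lemma on the effect of free extensions on homology (the Lie analogue of attaching cells) shows the numerical conditions persist. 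Passing to the colimit gives $\phi\co\m\to\g$ inducing isomorphisms on $\H_*$ in every degree, i.e.\ a weak equivalence, with $\m$ free as a graded Lie algebra and $d=0$ on $\m/[\m,\m]$ by construction.

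\textbf{Uniqueness.} Since the dual of the terminal object of $dg\hat{\cN}(R)$ is $0$, the fibration criterion is satisfied by every object, so all objects are fibrant; as $\m$ is cofibrant, $\Ho(dg\hat{\cN}(R))$-morphisms out of $\m$ are represented by genuine morphisms up to homotopy. Given a second minimal model $\phi'\co\m'\to\g$, the weak equivalence $\phi'$ gives $[\m,\m']\cong[\m,\g]$, whence a morphism $f\co\m\to\m'$ with $\phi'f\simeq\phi$, and $f$ is a weak equivalence by two-out-of-three. It remains to see that a weak equivalence between minimal objects is an isomorphism: filtering both sides by the lower central series (equivalently filtering the duals by the coradical filtration), the minimality condition $dV\subseteq[\m,\m]$ forces $d$ to raise filtration strictly, so on the associated graded one recovers the free graded Lie algebra on the indecomposables with zero differential; a degree-by-degree induction then shows $f$ restricts to isomorphisms $V_n\to V'_n$, and completeness of the pro-nilpotent filtration (a Nakayama-type argument) upgrades this to $f$ being an isomorphism. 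Hence $\m\cong\m'$, compatibly with the structure maps to $\g$ up to homotopy.

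\textbf{Main obstacle.} The homotopy-theoretic bookkeeping is routine; the real care is in staying inside the ind-conilpotent, pro-finite-dimensional category $\hat{\cN}(R)$ throughout. The free Lie algebra on an (in general infinite-dimensional) generating representation must be taken in the completed pro-nilpotent sense so that the dual stays ind-conilpotent, each $V_n$ must be verified to be pro-finite-dimensional, and the spectral-sequence and completeness arguments in the uniqueness part must be carried out in this complete pro-setting rather than for ordinary finite-dimensional Lie algebras. Once these points are handled, $R$-equivariance is free from semisimplicity of $\Rep(R)$ and the argument parallels the classical case.
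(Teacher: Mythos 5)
Your proposal follows the standard Sullivan--Quillen minimal model algorithm (inductive attachment of free generators with decomposable differential, then a lifting argument plus rigidity of minimal objects for uniqueness), which is exactly the approach of \cite[Proposition 4.32]{htpy} that the paper cites for this statement. The two places where you correctly gesture at needed care---ensuring that the cycles representing $\ker\H_{n-1}(\phi^{(n-1)})$ can be chosen in $[\m^{(n-1)},\m^{(n-1)}]$ so the new differentials stay decomposable, and the ``degree-by-degree induction'' in uniqueness, which ultimately rests on the identification of $(\m/[\m,\m])_n$ with $\H^{n+1}(\bar{W}\g)^{\vee}$---are indeed the substantive parts of the detailed argument, but they are handled exactly as in the classical case once one works with the lower-central-series/coradical filtrations in the pro-nilpotent setting.
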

\begin{proof}
 \cite[Proposition \ref{htpy-dgminimal}]{htpy}. 
\end{proof}
The significance of this result is that, together with Proposition \ref{meequiv}, it allows us to reformulate Malcev homotopy types in terms of extra structure on cohomology groups, since $(\m/[\m,\m])_n$ is dual to $\H^{n+1}(\g, k)$.

\begin{definition}
Let $dg\cP(R)$ be the category with the same objects as $dg\hat{\cN}_A(R)$, and morphisms given by
$$
\Hom_{dg\cP(R)}(\g,\fh)= \exp(\prod_{x \in \Ob R}\H_0\fh(x))\by^{\exp(\fh^R_0)}\Hom_{\Ho(dg\hat{\cN}_A(R))}(\g,\fh),
$$
where $\fh_0^R$ (the Lie subalgebra of $R$-invariants in $\fh_0$)  acts by conjugation on the set of homomorphisms. Composition of morphisms is given by $(u,f) \circ (v,g)= (u\circ f(v), f\circ g)$.
\end{definition}

\begin{proposition}\label{nequiv}
There is a normalisation functor $N\co s\hat{\cN}_A(R) \to dg\hat{\cN}_A(R)$ such that
$$
\H_i(N\g) \cong \pi_i(\g),
$$
giving equivalences $\Ho(s\hat{\cN}_A(R))\simeq \Ho(dg\hat{\cN}_A(R))$, and $s\cP_A(R) \simeq dg\cP_A(R)$.
\end{proposition}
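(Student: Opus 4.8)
The plan is to build $N$ from the Dold--Kan conormalisation, applied to the cosimplicial ind-conilpotent Lie coalgebras dual to objects of $s\hat{\cN}_A(R)$. Recall that $s\hat{\cN}_A(R)$ is opposite to the category of $R$-representations in cosimplicial ind-conilpotent Lie coalgebras over $A$, so an object $\g$ corresponds to such a gadget $\g^{\vee}$. Set $(N\g)^{\vee} := N_c(\g^{\vee})$, the conormalised cochain complex (degreewise the intersection of the kernels of the codegeneracies, with differential the alternating sum of the cofaces). Dualising the Eilenberg--Zilber shuffle map and passing to filtered colimits yields a natural, symmetric, lax comonoidal transformation $N_c(C\ten D)\to N_c(C)\ten N_c(D)$; composing the cobracket of $\g^{\vee}$ with this makes $N_c(\g^{\vee})$ a graded Lie coalgebra compatibly with the differential, and visibly $R$-equivariant. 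Since each finite-dimensional sub-comodule of $N_c(\g^{\vee})$ maps into a finite-dimensional sub-comodule of $\g^{\vee}$ on which the iterated cobracket eventually vanishes, ind-conilpotence is preserved; as $N_c$ lives in non-negative degrees, $(N\g)^{\vee}$ is a non-negatively graded ind-conilpotent cochain Lie coalgebra, i.e. $N\g\in dg\hat{\cN}_A(R)$.

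The isomorphism $\H_i(N\g)\cong\pi_i(\g)$ is then immediate: the underlying cochain complex of $(N\g)^{\vee}$ is $N_c(\g^{\vee})$, whose cohomology is by definition the cohomology of the cosimplicial complex $\g^{\vee}$, and dualising this recovers $\pi_*(\g)$ as used in Definition \ref{cndef}. In particular $N$ both preserves and reflects weak equivalences, hence descends to a functor $\Ho(s\hat{\cN}_A(R))\to\Ho(dg\hat{\cN}_A(R))$ which reflects isomorphisms.

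For the equivalence of homotopy categories, the idea is to use the denormalisation $N^{-1}$ as a homotopy inverse. The classical Dold--Kan correspondence is an equivalence between cochain complexes and cosimplicial vector spaces, and the relevant model structures (from \cite{htpy}) have their weak equivalences and fibrations detected on these underlying complexes of duals; so it suffices to upgrade that equivalence to one compatible with the Lie-coalgebra structure. Using that cofibrant objects of $dg\hat{\cN}(R)$ are freely cogenerated (Remark \ref{einfty}), one denormalises a choice of cogenerators and checks that $(N,N^{-1})$ is a Quillen pair with $N$ preserving and reflecting all weak equivalences, hence a Quillen equivalence; equivalently one transcribes the corresponding statement for Lie algebras from \cite{htpy} along the duality $C\mapsto C^{\vee}$ (compare Proposition \ref{meequiv}). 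This is the step I expect to be the main obstacle, since $N^{-1}$ is not strictly comonoidal, so the inverse cannot be produced on the nose and one must argue within the model categories.

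Finally, $s\cP_A(R)\simeq dg\cP_A(R)$ follows formally. Both categories have the same objects as $s\hat{\cN}_A(R)$, resp. $dg\hat{\cN}_A(R)$, and $N$ is bijective on objects up to the isomorphisms just obtained; on $\Hom$-sets, the factor $\exp(\prod_{x}\pi_0\fh(x))$ matches $\exp(\prod_{x}\H_0(N\fh)(x))$ by the case $i=0$ of the homotopy-group identity, the factor $\exp(\fh_0^R)$ matches $\exp((N\fh)_0^R)$ because $N_c$ is the identity in degree $0$, and the remaining factor $\Hom_{\Ho(s\hat{\cN}(R))}(\g,\fh)$ matches $\Hom_{\Ho(dg\hat{\cN}(R))}(N\g,N\fh)$ by the equivalence of homotopy categories; as the composition law is given by the same formula on both sides, $N$ induces the desired equivalence of categories.
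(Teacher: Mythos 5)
Your construction of $N$ via conormalisation of the dual cosimplicial Lie coalgebra, with the Lie cobracket transported along the dual shuffle map, is exactly the right idea, and matches what the cited references \cite[Propositions \ref{htpy-nequiv} and \ref{htpy-anequiv}]{htpy} do (the paper itself offers no proof beyond that citation, adapted as in \cite[Theorem \ref{mhs-bigequiv}]{mhs} for the pointed case). The identification $\H_i(N\g)\cong \pi_i(\g)$ is immediate as you say, and the passage from the homotopy-category equivalence to $s\cP_A(R)\simeq dg\cP_A(R)$ is purely formal, also as you say.

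The step you flag as the obstacle is indeed where the content lies, and your sketch of it leaves two issues. First, the phrase ``check that $(N,N^{-1})$ is a Quillen pair with $N$ preserving and reflecting all weak equivalences, hence a Quillen equivalence'' is not an argument: preserving and reflecting weak equivalences is far from sufficient for a Quillen equivalence, and $N^{-1}$ must first be exhibited as an honest adjoint to $N$ in the Lie setting, which is not automatic since the Alexander--Whitney direction of Dold--Kan is not symmetric. The abstract route is to observe that $N$ preserves all colimits (because $N_c$ on the coalgebra side is a right adjoint at the level of underlying modules and cofree Lie coalgebras), produce a right adjoint by the adjoint functor theorem, and then verify the derived unit and counit are equivalences --- and the second verification is precisely where the cogenerator argument is needed. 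Second, your appeal to Remark \ref{einfty} is not available verbatim: that remark is explicitly stated for $A$ a field, whereas the proposition is over a general $\Q_\ell$-algebra $A$. Over a field, cofibrant objects of $dg\hat{\cN}(R)$ are cofreely cogenerated, so the denormalisation-of-cogenerators argument works; for general $A$ you would need either to reduce to the field case (e.g.\ by observing the model structures and the normalisation functor are obtained by base change of the ones over $\Q_\ell$) or to argue via the commutative-algebra side using $D$ and $\Th$ as in Theorem \ref{bigequiv}, where the Dold--Kan/Thom--Sullivan pair already gives a Quillen equivalence and the Lie-algebra statement is deduced through the $\bar W$/$\bar G$ adjunction. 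As written, your sketch silently assumes the field case.

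Minor point: when you invoke the ``dual shuffle'' you should make explicit that its strict associativity and graded symmetry are what guarantee the resulting cobracket on $N_c(\g^{\vee})$ satisfies the co-Jacobi identity strictly (not merely up to homotopy); this is the reason one must use the shuffle rather than the Alexander--Whitney direction, and is worth a sentence.
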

\begin{proof}
This is essentially \cite[Propositions \ref{htpy-nequiv} and \ref{htpy-anequiv}]{htpy}, adapted as in \cite[Theorem \ref{mhs-bigequiv}]{mhs}.
\end{proof}

\subsubsection{Cosimplicial algebras}

\begin{definition}
 Let $c\Alg(R)$ be the category of of $R$-representations in cosimplicial $k$-algebras. 
\end{definition}

\begin{proposition}\label{calgmodel}
There is a   simplicial model category structure on   $c\Alg(R)$, in which a map $f\co A \to B$ is 
\begin{enumerate}
\item a  weak equivalence if  $\H^i(f)\co \H^i(A) \to \H^i(B)$ is an isomorphism in $\Rep(R)$ for all $i$; 
\item a fibration if $f^i(x)\co A^i(x) \to B^i(x)$ is a surjection for all $x \in \Ob(R)$ and all $i$.
\end{enumerate}
\end{proposition}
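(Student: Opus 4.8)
The plan is to get the structure by transfer along the free--forgetful adjunction
$$
\mathrm{Sym}\co c\Rep(R)\rightleftarrows c\Alg(R)\co U,
$$
where $c\Rep(R)$ is the category of $R$-representations in cosimplicial $k$-vector spaces, $U$ forgets multiplication, and $\mathrm{Sym}(V)=\bigoplus_{m\ge 0}\mathrm{Sym}^m(V)$ is the levelwise symmetric algebra. On $c\Rep(R)$ I would use the standard cofibrantly generated model structure whose weak equivalences are the cohomology isomorphisms and whose fibrations are the levelwise surjections; it exists because $\Rep(R)$ is a semisimple $k$-linear Grothendieck category ($\Char k=0$, $R$ reductive), with generating cofibrations $I$ and generating trivial cofibrations $J=\{0\to D\}$, where $D$ runs over the acyclic ``disk'' objects $V\xra{\id}V$ (in two adjacent cochain degrees), $V$ a generator of $\Rep(R)$. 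The candidate structure on $c\Alg(R)$ is the one cofibrantly generated by $\mathrm{Sym}(I)$ and $\mathrm{Sym}(J)$; in the transfer $U$ creates fibrations and weak equivalences, and since the cohomology of a cosimplicial algebra is the same computed before or after forgetting the multiplication, and an epimorphism in $\Rep(R)$ is just a map surjective at each object, these agree with the fibrations and weak equivalences in the statement.

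By Kan's recognition theorem (\cite[Theorem 11.3.2]{Hirschhorn}) it then suffices to check two conditions. The first, that $U$ preserves filtered colimits so that $\mathrm{Sym}(I)$ and $\mathrm{Sym}(J)$ admit the small object argument, is immediate. The second, that every relative $\mathrm{Sym}(J)$-cell complex is a weak equivalence, is the real content. Since $\mathrm{Sym}(0)=k$ is the initial object of $c\Alg(R)$ and a coproduct of commutative algebras is the levelwise tensor product, a single cell attachment has the form $A'\to A'\ten_k\mathrm{Sym}(D)$; so everything reduces to the claim that $\mathrm{Sym}(D)$ is acyclic, i.e. $\H^*(\mathrm{Sym}(D))=k$. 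Granting this, the K\"unneth (Eilenberg--Zilber) theorem for cosimplicial vector spaces gives $\H^*(A'\ten_k\mathrm{Sym}(D))\cong\H^*(A')$, so each attachment is a weak equivalence; moreover the attachments are levelwise split monomorphisms, so a transfinite composite (a filtered colimit of monomorphisms) or a retract of them is again a cohomology isomorphism, as $\H^*$ commutes with filtered colimits.

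Acyclicity of $\mathrm{Sym}(D)$ is precisely where characteristic zero enters, and I expect this to be the main obstacle. Write $\mathrm{Sym}^m$ as the composite of the $m$-fold levelwise tensor power $(-)^{\ten m}$ with the functor of $\Sigma_m$-invariants; the latter is exact, being the image of the averaging idempotent $\frac{1}{m!}\sum_{\sigma\in\Sigma_m}\sigma$. Since $D$ is acyclic, K\"unneth gives $\H^*(D^{\ten m})=0$ for $m\ge 1$, whence $\H^*(\mathrm{Sym}^m(D))=0$ for $m\ge 1$ by exactness, while $\mathrm{Sym}^0=k$; summing over $m$ gives $\H^*(\mathrm{Sym}(D))=k$. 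All of this is manifestly $R$-equivariant, since the $\Sigma_m$-action and the averaging idempotent commute with the $R$-action. This completes the construction of the model structure on $c\Alg(R)$.

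For the simplicial enrichment I would define the cotensor levelwise by $(A^K)^n:=\prod_{K_n}A^n$ (a product of algebras, hence again an object of $c\Alg(R)$, with diagonal $R$-action), take the tensor $A\ten K$ to be its left adjoint (which exists, $c\Alg(R)$ being locally presentable), and note that $U$ preserves limits, so $U(A^K)=(UA)^K$. Since $U$ creates fibrations and trivial fibrations (a trivial fibration being a map whose underlying map is a levelwise-surjective cohomology isomorphism), axiom SM7 --- equivalently, that $X^L\to X^K\times_{Y^K}Y^L$ is a fibration, trivial if either $X\to Y$ or $K\to L$ is, whenever $K\to L$ is a cofibration of simplicial sets and $X\to Y$ a fibration in $c\Alg(R)$ --- transports along $U$ to the corresponding statement in $c\Rep(R)$, which is a simplicial model category by the standard theory of cosimplicial objects over an abelian category. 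Hence $c\Alg(R)$ is a simplicial model category, as claimed.
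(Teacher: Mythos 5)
Your proof is correct, and its strategy is the standard one and matches what underlies the paper's citation. The paper discharges this proposition by reference (to \cite{htpy}, adapting \cite[\S 2.1]{chaff}), and the argument there is exactly the transfer along $\mathrm{Sym}\dashv U$ that you give, with the characteristic-zero/reductivity hypotheses entering precisely where you locate them: semisimplicity of $\Rep(R)$ to get the model structure on $c\Rep(R)$ with fibrations the levelwise surjections, and the $\Sigma_m$-averaging idempotent to show $\mathrm{Sym}(D)$ is acyclic so that relative $\mathrm{Sym}(J)$-cells are weak equivalences.
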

\begin{proof}
This is \cite[Proposition \ref{calgmodel}]{htpy}, adapting \cite[\S 2.1]{chaff}.  
\end{proof}

\begin{definition}
 Let $c\Alg(R)_*$ be the category of of $R$-representations in cosimplicial $k$-algebras, equipped with an augmentation to $\prod_{x \in \Ob R} O(R)(x,-)$. This inherits a model structure from $c\Alg(R)$.
 Denote the   opposite category by $s\Aff(R)_*=  \coprod_{x \in \Ob R} R(x,-) \da s\Aff(R)$, where the coproduct is taken in the category of affine schemes.
\end{definition}

\begin{definition}
Given representations $V,W \in \Rep(R)$, define $V\ten^RW:=\Hom_{\Rep(R)}(k,V\ten W)$.
\end{definition}

\begin{definition} Given $A \in c\Alg(R)$ and $\g \in s\hat{\cN}(R)$, define the 
 Maurer-Cartan space $\mc(A,G)$ to consist of sets $\{\omega_n\}_{n\ge 0}$, with $\omega_n \in \exp(A^{n+1}\hat{\ten}^R\g_n)$, such that 
\begin{eqnarray*}
\pd_i\omega_n &=& \left\{\begin{matrix} \pd^{i+1}\omega_{n-1}  & i>0 \\ (\pd^1\omega_{n-1})\cdot(\pd^0\omega_{n-1})^{-1} & i=0,\end{matrix} \right.\\
\sigma_i\omega_n &=& \sigma^{i+1}\omega_{n+1},\\
\sigma^0\omega_n&=& 1,
\end{eqnarray*}
where  $\exp(A^{n+1}\hat{\ten}^R\g_n)$ is the group  whose underlying set is the Lie algebra $A^{n+1}\hat{\ten}\g_{n-1}$, with multiplication given by the Campbell--Baker--Hausdorff formula.
\end{definition}

\begin{definition}
Given $A \in c\Alg(R)$ and $\g \in s\hat{\cN}(R)$, define the \emph{gauge group} $\Gg(A,\g)\le \prod_n \exp(A^n\hat{\ten}^R\g_n)$
to consist of those $g$ satisfying
\begin{eqnarray*}
\pd_ig_n &=& \pd^{i}g_{n-1}  \quad \forall i>0, \\
\sigma_ig_n &=& \sigma^{i}g_{n+1} \quad \forall i.
\end{eqnarray*}
 This has an action on $\mc(A,\g)$ given by  
$$
(g*\omega)_n= (\pd_0g_{n+1}) \cdot \omega_n \cdot (\pd^0g_n^{-1}).
$$
\end{definition}

\begin{definition}
Let $c\Alg(R)_{0*}$ be the full subcategory of $c\Alg(R)_*$ whose objects satisfy $\H^0(A) \cong k$. Let $\Ho(c\Alg(R)_{0*})$ be the full subcategory of $\Ho(c\Alg(R)_{0*})$ with objects in $c\Alg(R)_{0*}$. Let $s\Aff(R)_{0*}$ be the category opposite to $c\Alg(R)_{0*}$, and   $\Ho(s\Aff(R)_{0*})$ opposite to $\Ho(c\Alg(R)_{0*})$.
\end{definition}

\begin{definition}
Given a topological space $X$, and a sheaf $\sF$ on $X$, define 
$$
\CC^n(X,\sF):= \prod_{f\co |\Delta^n| \to X} \Gamma(|\Delta^n|, f^{-1}\sF).
$$
Together, these form a cosimplicial complex $\CC^{\bt}(X,\sF)$.
\end{definition}

\subsubsection{Cochain algebras}

\begin{definition}
Define $DG\Alg(R)$ to be the category of $R$-representations in  non-negatively graded cochain $k$-algebras, and let 
 $dg\Aff(R)$  be the opposite  category. 
\end{definition}

\begin{lemma}\label{dgalgmodel}
There is a closed model structure on $DG\Alg(R)$ in which a morphism $f\co A \to B$ is:
\begin{enumerate}
\item a weak equivalence if $\H^i(f)\co \H^i(A) \to \H^i(B)$ is an isomorphism in $\Rep(R)$ for all $i$;

\item a fibration if $f^i\co A^i \to B^i$ is a surjection for all $i$;

\item a cofibration if it has LLP with respect to all trivial fibrations.
\end{enumerate}
\end{lemma}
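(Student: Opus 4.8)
The plan is to obtain this model structure by transfer along a free--forgetful adjunction, as in the proof of Proposition \ref{calgmodel}. Since $k$ has characteristic zero and $R$ is reductive, the category $DG\Vect(R)$ of $R$-representations in non-negatively graded cochain complexes of $k$-vector spaces carries the standard cofibrantly generated projective model structure, with quasi-isomorphisms as weak equivalences, degreewise surjections as fibrations, and generating (trivial) cofibrations built out of the disc and sphere complexes on the irreducible $R$-representations (these form a set, since $\Ob R$ is a set and each $R(x,x)$ is a pro-algebraic group). The forgetful functor $U\co DG\Alg(R) \to DG\Vect(R)$ has a left adjoint $T$, the free graded-commutative $k$-algebra functor, which is well defined on representations because symmetric powers of $R$-representations are again $R$-representations. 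Moreover $T$ preserves filtered colimits, so $U$ creates filtered colimits and $DG\Alg(R)$ is complete and cocomplete.

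I would then apply the transfer principle \cite[Theorem 11.3.2]{Hirschhorn}: since the smallness hypotheses are clear from the previous paragraph, it remains only to check that every relative $TJ$-cell complex is a weak equivalence, where $J$ is the chosen set of generating trivial cofibrations of $DG\Vect(R)$. A map in $J$ has the form $k = T(0) \to T(E)$ for $E$ an acyclic disc complex on an irreducible representation, and a pushout of such a map along $k \to A$ is the inclusion $A \to A \ten_k T(E)$. Here the characteristic-zero hypothesis enters, through the Poincar\'e lemma for free graded-commutative algebras, which gives $\H^*(T(E)) = k$; hence $A \to A \ten_k T(E)$ is a quasi-isomorphism by the K\"unneth formula over the field $k$. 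As quasi-isomorphisms of non-negatively graded cochain complexes are stable under transfinite composition (a filtered colimit, hence exact over $k$) and under retracts, the acyclicity condition holds.

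The transfer principle then yields a cofibrantly generated model structure on $DG\Alg(R)$ in which $U$ creates the weak equivalences and fibrations; these are therefore exactly the quasi-isomorphisms --- equivalently, the maps inducing an isomorphism on each $\H^i$ in $\Rep(R)$, since $R$ reductive makes $\Rep(R)$ abelian with exact invariants --- and the degreewise surjections, while the cofibrations are precisely the maps with the left lifting property against trivial fibrations. This is exactly the statement of the lemma; the remaining axioms (two-out-of-three and retracts) are then formal. The only real obstacle is the acyclicity condition of the second paragraph, which fails in positive characteristic but is handled here, exactly as in \cite{chaff}, by the Poincar\'e lemma for the free graded-commutative algebra.
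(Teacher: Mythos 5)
Your proposal is correct, and it supplies in full the argument that the paper only gestures at: the paper's ``proof'' here is simply the remark ``This is standard (see e.g.\ \cite[Proposition 4.1]{schematicv2})''. Your route --- put the model structure on non-negatively graded $R$-equivariant cochain complexes with quasi-isomorphisms as weak equivalences and degreewise surjections as fibrations, then lift along the free graded-commutative algebra/forgetful adjunction via Hirschhorn's transfer theorem, verifying the acyclicity condition by the characteristic-zero fact that $\Symm$ of an acyclic complex is acyclic --- is exactly the standard argument used in \cite{chaff}, and it is the same strategy the paper itself employs for the filtered analogue in Proposition~\ref{filalgmod} (which also invokes Hirschhorn 11.3.2 after noting that the free-algebra functor sends generating trivial cofibrations to weak equivalences). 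The key point where characteristic zero enters, namely $\H^*(\Symm^k E)=\Symm^k(\H^* E)$ via the $\Sigma_k$-averaging idempotent and the K\"unneth isomorphism over a field, is exactly the right place to put the weight of the argument.

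Two small points on the write-up. First, the sentence ``$T$ preserves filtered colimits, so $U$ creates filtered colimits'' has the implication pointing the wrong way: $T$ is a left adjoint and so preserves \emph{all} colimits automatically, which tells you nothing about $U$. The fact you actually need is that $U$ preserves filtered colimits (the underlying complex of a filtered colimit of commutative algebras is the filtered colimit of the underlying complexes); it is this that makes the domains $T(X)$ of the transferred generating maps $\omega$-small, and that is the smallness input for the small-object argument. Second, in the acyclicity check you mention stability under retracts, but this is not required: Hirschhorn's transfer theorem asks only that relative $TJ$-cell complexes be weak equivalences, so the pushout computation plus stability under transfinite composition already suffices.
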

\begin{proof}
This is standard (see e.g. \cite[Proposition 4.1]{schematicv2}).
\end{proof}

\begin{definition}
Define $DG\Alg(R)_*$ to be the category of $R$-representations in  non-negatively graded cochain $k$-algebras, equipped with an augmentation to $\prod_{x \in \Ob R} O(R)(x,-)$. This inherits a model structure from $DG\Alg(R)$.
Define $ dg\Aff(R)_*$ to be the category opposite to $DG\Alg(R)_*$. 

Let $DG\Alg(R)_{0*}$ be the full subcategory of $DG\Alg(R)_*$ whose objects $A$ satisfy $\H^0(A)=k$. Let $\Ho(DG\Alg(R)_*)_0$ be the full subcategory of $\Ho(DG\Alg(R)_*)$ on the objects of  $DG\Alg(R)_0$. Let $ dg\Aff(R)_{0*}$ and $\Ho( dg\Aff(R)_*)_0$ be the opposite categories to $DG\Alg(R)_{0*}$ and $\Ho(DG\Alg(R)_*)_0$, respectively. 
\end{definition}

\begin{proposition}\label{affequiv}
There is a denormalisation functor $D\co DG\Alg(R) \to c\Alg(R)$ such that
$$
\H^i(DA) \cong \H^i(A).
$$
This is a right Quillen equivalence, with left adjoint $D^*$, so gives an equivalence  $\Ho(c\Alg(R))\simeq \Ho(DG\Alg(R))$.
\end{proposition}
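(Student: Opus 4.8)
The plan is to derive the statement from the Dold--Kan correspondence applied to the $k$-linear abelian category $\Rep(R)$, using the Eilenberg--Zilber theorem (\cite[Theorem 8.5.1]{W}) to control the multiplicative structure.

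First I would construct $D$ together with its adjoint. Since $\Rep(R)$ is $k$-linear abelian, Dold--Kan gives quasi-inverse equivalences between cosimplicial objects $c(\Rep R)$ and non-negatively graded cochain complexes $DG^{\ge 0}(\Rep R)$; write $N$ for conormalisation and $D$ for denormalisation, so that on underlying objects $D(C)^n=\bigoplus_{[n]\twoheadrightarrow[m]}C^m$, $D$ is exact, and $\H^i(DC)\cong\H^i(C)$ naturally. The functor $D$ carries a lax symmetric monoidal structure given by the shuffle (Eilenberg--Zilber) maps $DC\ten DC'\to D(C\ten C')$, so it sends commutative monoids to commutative monoids and hence restricts to $D\co DG\Alg(R)\to c\Alg(R)$, compatibly with augmentations to $\prod_{x\in\Ob R}O(R)(x,-)$ and with the condition $\H^0=k$. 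Limits in $DG\Alg(R)$ and $c\Alg(R)$ are created by the forgetful functors to the underlying categories, on which $D$ is an equivalence, so $D$ preserves all limits; as both algebra categories are locally presentable, a left adjoint $D^*$ exists by the adjoint functor theorem.

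Next I would check that $D$ is right Quillen for the model structures of Propositions \ref{calgmodel} and \ref{dgalgmodel}. In both, weak equivalences are the $\H^*$-isomorphisms and fibrations are the degreewise (and objectwise over $\Ob R$) surjections, all detected on underlying objects. Then $D$ preserves weak equivalences because $\H^i(DA)\cong\H^i(A)$ naturally, and it preserves fibrations because the direct-sum formula shows $D$ takes degreewise surjections to degreewise surjections; hence $D$ preserves fibrations and trivial fibrations. For the Quillen equivalence, note that every object of both categories is fibrant (as $A\to 0$ is a surjection) and that $D$ reflects weak equivalences (since $\H^i(Df)$ is identified with $\H^i(f)$); by the usual criterion it then suffices to show the unit $A\to DD^*A$ is a weak equivalence for every cofibrant $A\in c\Alg(R)$.

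This last step is the main obstacle. I would first treat a free algebra $A=\Symm_c(V)$ on a cofibrant $V\in c(\Rep R)$: since $D$ commutes with the forgetful functors to the underlying categories, comparing representable functors identifies $D^*\Symm_c(V)$ with $\Symm_{dg}(NV)$, and the unit $\Symm_c(V)\to D\Symm_{dg}(NV)$ is then a weak equivalence by a Künneth computation --- in characteristic zero $\Symm$ commutes with $\H^*$ on both sides (the symmetric (co)invariants being exact), so both sides have cohomology $\Symm(\H^*V)$, and the Eilenberg--Zilber quasi-isomorphisms show that the unit realises this identification. One then propagates this to all cofibrant objects, which (the model structures being cofibrantly generated by applying $\Symm$ to the generators of the underlying categories) are retracts of transfinite compositions of pushouts of free maps. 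The genuine difficulty lies here: $D$ is only \emph{lax} monoidal, so it preserves pushouts of commutative algebras (relative tensor products) only up to the Eilenberg--Zilber quasi-isomorphisms, and the same is true of $D^*$; making these quasi-isomorphisms assemble coherently along the cell presentation --- so that the unit remains a weak equivalence at every stage and in the colimit --- is the technical heart of the argument, and is exactly where the characteristic-zero hypothesis (exactness of $\Symm$, homotopy-invariance of the free algebra functor) is indispensable.
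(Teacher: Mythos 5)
Your overall framework is sound: Dold--Kan with the Eilenberg--Zilber lax symmetric monoidal structure on $D$, the observation that $D$ is right Quillen because it visibly preserves $\H^*$-isomorphisms and degreewise surjections, and the reduction (all objects fibrant, $D$ reflects weak equivalences) to showing the unit $A \to DD^*A$ is a weak equivalence for cofibrant $A$. The identification $D^*\Symm_c(V)\cong\Symm_{dg}(NV)$ and the cohomology computation $\H^*\Symm_c(V)\cong\Symm(\H^*V)\cong\H^*\Symm_{dg}(NV)$ in characteristic zero are both correct, and the free-algebra base case goes through.

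The problem is the propagation step, which you correctly flag as ``the technical heart of the argument'' but then leave undone. Acknowledging where the difficulty lies is not the same as resolving it; as written, the proof stops precisely where the work begins. Concretely, one would need to argue that for a pushout $A'=A\otimes_{\Symm_c(U)}\Symm_c(V)$ along a generating cofibration, the induced map $\H^*(A')\to\H^*(D^*A')$ is an isomorphism given the inductive hypothesis on $A$ and the base case on the free corners. This does follow --- the map is a morphism of bar (or Eilenberg--Moore) constructions for the two pushouts, levelwise a weak equivalence by K\"unneth and induction --- but that argument, together with the passage to transfinite composites and retracts, is exactly what has to be written out, and you haven't done it. A secondary slip: $D^*$, being a left adjoint, preserves pushouts \emph{exactly}, not merely up to Eilenberg--Zilber quasi-isomorphism; only $D$ has that defect. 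This error is benign (it makes the situation better than you feared), but it suggests you hadn't pushed on the induction far enough to notice.

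For comparison, the paper does not prove this proposition directly but cites an earlier result, and the surrounding text (the proof of Theorem \ref{bigequiv}, which invokes \cite[4.1]{HinSch} to assert that $D$ and the Thom--Sullivan functor $\Th$ are homotopy inverse) indicates the intended route: once one knows $\bR D$ induces an equivalence on homotopy categories --- which Hinich--Schechtman's comparison of $D$ with $\Th$ supplies --- the adjunction $(D^*,D)$ is automatically a Quillen equivalence, with no cellular induction needed. Your approach, if completed, would be more self-contained, but as it stands it has a real gap where the cited approach has a theorem.
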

\begin{proof}
This is \cite[Proposition \ref{htpy-affequiv}]{htpy}.
\end{proof}

\begin{definition}
Given a cochain algebra $A \in DG\Alg(R)$, and a chain Lie algebra $\g \in dg\hat{\cN}(R)$, define the Maurer-Cartan space by 
$$
\mc(A,\g):=\{\omega \in \bigoplus_n A^{n+1}\hat{\ten}^R \g_n \,|\,d\omega+\half[\omega,\omega]=0\}.
$$
\end{definition}

\begin{definition}\label{dgdef}\label{dgdefgauge}
Given $A \in DG\Alg(R)$ and $\g \in dg\hat{\cN}(R)$, we define the gauge group by
$$
\Gg(A,\g):= \exp(\prod_n A^n\hat{\ten}^R\g_n).
$$ 
Define a gauge action of $\Gg(A,\g)$ on $\mc(A,\g)$ by 
$$
g(\omega):= g\cdot \omega \cdot g^{-1} -(dg)\cdot g^{-1}.
$$
\end{definition}

\begin{definition}\label{Th}
Recall that the Thom-Sullivan (or Thom-Whitney) functor $\Th$ from cosimplicial algebras to DG algebras is defined as follows. Let $\Omega(|\Delta^n|)$ be the DG algebra of rational polynomial forms on the $n$-simplex, so
$$
\Omega(|\Delta^n|)=\Q[t_0, \ldots, t_n, dt_0, \ldots, dt_n]/(1-\sum_i t_i, \sum_i dt_i),
$$ for $t_i$ of degree $0$. The usual face and degeneracy maps for simplices yield $\pd_i\co    \Omega(|\Delta^n|) \to \Omega(|\Delta^{n-1}|)$ and  $\sigma_i\co    \Omega(|\Delta^n|) \to \Omega(|\Delta^{n-1}|)$, giving a simplicial complex of DGAs. Given a cosimplicial algebra $A$, we then set
$$
\Th(A):= \{a \in  \prod_n A^n\ten\Omega(|\Delta^n|)\,:\, \pd^i_A a_n = \pd_ia_{n+1},\, \sigma^j_Aa_n= \sigma_ja_{n-1}\, \forall i,j\}.
$$
\end{definition}

The following is a major comparison result, which will be used in  Theorem \ref{qleqhtpy} as the main  step towards reformulating Malcev homotopy types in terms of Godement resolutions.
\begin{theorem}\label{bigequiv}
We have the following commutative diagram of equivalences of categories:
$$
\xymatrix{
\Ho(dg\Aff(R)_*)_0 \ar@<1ex>[r]^{\Spec D} \ar@<1ex>[d]^{\bar{G}}& Ho(s\Aff(R)_*)_0 \ar@<1ex>[l]^{\Spec \Th} \ar@<1ex>[d]^{\bar{G}} \\
dg\cP(R) \ar@<1ex>[u]^{\bar{W}}  & s\cP(R) \ar@<1ex>[u]^{\bar{W}} \ar@<1ex>[l]^{N},
}
$$
with the pair
$$
\xymatrix@1{ \Ho(dg\Aff(R)_*)_0  \ar@<1ex>[r]^-{\bar{G}}  & dg\cP(R) \ar@<1ex>[l]^-{\bar{W}}},
$$
characterised by the property that 
$$
\Hom_{\Ho(dg\Aff(R)_*)}(\Spec A,\bar{W}\g)=\Hom_{dg\cP(R)}(\bar{G}(A), \g)=\mc(A,\g)\by^{\Gg(A,\g)}\prod_{x \in \Ob R}\exp(\H_0\g(x)).
$$
\end{theorem}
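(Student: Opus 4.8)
The plan is to reduce the statement to the single-object case --- the version of this theorem proved in \cite{htpy} for a reductive pro-algebraic \emph{group} $R$ --- and to check that the groupoid structure of $R$ intervenes only through the Levi decomposition of Proposition \ref{leviprop}, exactly as in the passage from \cite{htpy} to Propositions \ref{meequiv} and \ref{nequiv} above. Since adjoint functors are determined up to canonical isomorphism, it is enough to exhibit functors $\bar{W}$ and $\bar{G}$ on each side satisfying the displayed mapping-space identity, to see that they are homotopy equivalences, and to check that the square commutes up to natural isomorphism; the characterisation then pins down the pair $(\bar{G},\bar{W})$.

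First I would dispose of the horizontal arrows. The functors $\Spec D$ and $\Spec\Th$ of Proposition \ref{affequiv} are mutually quasi-inverse equivalences, and since $D$ and the Thom--Sullivan functor $\Th$ (Definition \ref{Th}) preserve cohomology in $\Rep(R)$ and carry augmentations to augmentations, they restrict to equivalences between the augmented subcategories $\Ho(dg\Aff(R)_*)_0$ and $\Ho(s\Aff(R)_*)_0$; the lower arrow $s\cP(R)\xra{N}dg\cP(R)$ is the equivalence of Proposition \ref{nequiv}. Next I would construct $\bar{W}$. On the simplicial side, $\g\mapsto R\ltimes\exp(\g)$ sends $s\hat{\cN}(R)$ into the category $s\cE(R)_*$ of pro-unipotent extensions of $R$ (Definition \ref{cedef}); composing with the algebraic classifying-space functor (the analogue for $s\agpd$ of $\bar{W}$ in Definition \ref{barwdef}), pointed by the zero section $R\into R\ltimes\exp(\g)$, yields an object $\bar{W}\g\in s\Aff(R)_{0*}$ --- connectedness of $\g$ forces $\H^0=k$ --- which descends to $s\cP(R)$ by the equivalence of Proposition \ref{meequiv}. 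On the cochain side I would take $\bar{W}\g$ to be $\Spec$ of the Chevalley--Eilenberg cochain algebra of $\g$ (an $R$-equivariant object of $DG\Alg(R)_{0*}$), and define $\bar{W}_{dg}$ by the requirement that $\bar{W}_{dg}\circ N=\Spec\Th\circ\bar{W}_s$; this makes the upper-left square commute by construction and reduces ``$\bar{W}_{dg}$ is an equivalence'' to ``$\bar{W}_s$ is an equivalence'', which is the group-case statement of \cite[Theorem \ref{bigequiv}]{htpy} extended over the groupoid $R$ using Proposition \ref{leviprop}.

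The functor $\bar{G}$ is then the homotopy left adjoint of $\bar{W}$; concretely it is the bar/cobar (Koszul-dual) construction alluded to in Remark \ref{einfty}, sending a pointed $R$-equivariant cochain algebra $A$ with $\H^0(A)=k$ to the free pro-conilpotent graded Lie coalgebra cogenerated by the reduced part $\bar{A}[-1]$, with cobracket and differential assembled from the product and differential of $A$. The key computation is the mapping-space identity: unwinding these definitions identifies an $R$-equivariant chain-Lie-algebra map $\bar{G}(A)\to\g$ with a twisting cochain $\omega\in\bigoplus_n A^{n+1}\hat{\ten}^R\g_n$ satisfying $d\omega+\half[\omega,\omega]=0$, hence with a point of $\mc(A,\g)$; a homotopy of two such maps corresponds to a gauge transformation in $\Gg(A,\g)$ (Definition \ref{dgdef}), while the remaining factor $\prod_{x\in\Ob R}\exp(\H_0\g(x))$ is precisely the basepoint datum built into $\Hom_{dg\cP(R)}$. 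This gives $\Hom_{dg\cP(R)}(\bar{G}(A),\g)=\mc(A,\g)\by^{\Gg(A,\g)}\prod_{x}\exp(\H_0\g(x))$; and since $\Spec A$ has a cofibrant model and $\bar{W}\g$ is fibrant for the model structure of Lemma \ref{dgalgmodel}, this set also computes $\Hom_{\Ho(dg\Aff(R)_*)}(\Spec A,\bar{W}\g)$. The remaining compatibility $\Spec\Th\circ\bar{G}_s\cong\bar{G}_{dg}\circ N$ follows formally, by adjunction, from the one already checked for $\bar{W}$, and $\bar{G}$ is an equivalence because $\bar{W}$ is.

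I expect the main obstacle to be not any single step but the bookkeeping behind these reductions: checking that each construction of \cite{htpy} --- the model structures (Lemma \ref{cmsagpd}, Proposition \ref{calgmodel}, Lemma \ref{dgalgmodel}), the minimal models (Proposition \ref{dgminimal}), and the equivalences of Propositions \ref{affequiv}, \ref{meequiv} and \ref{nequiv} --- goes through verbatim when the reductive group $R$ is replaced by a reductive groupoid, the only genuinely new ingredient being the groupoid form of the Levi decomposition (Proposition \ref{leviprop}). Keeping track of the basepoints throughout --- the subscripts $*$ and $0$, and the $\prod_x\exp(\H_0\g(x))$-torsors appearing in $\Hom_{dg\cP(R)}$ and $\Hom_{s\cP(R)}$ --- is what requires the most care; everything else is a transcription of the group-case argument.
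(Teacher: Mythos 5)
Your proposal reproduces the paper's own route: Theorem \ref{bigequiv} is quoted there as the pointed (multi-basepoint) adaptation of the single-object result of \cite{htpy}, with the horizontal arrows assembled from the Dold--Kan Quillen equivalence of Proposition \ref{affequiv} together with the fact (via \cite[4.1]{HinSch}, not \ref{affequiv} itself, which only treats the adjoint $D^*$) that $D$ and $\Th$ are homotopy inverse, the vertical $(\bar G,\bar W)$ equivalence coming from the Maurer--Cartan/gauge adjunction, and the groupoid Levi decomposition (Proposition \ref{leviprop}) carrying the extra bookkeeping, exactly as you outline. One point to tighten: on the simplicial side $\bar W\g$ is not $\bar W$ applied to the simplicial groupoid $R\ltimes\exp(\g)$ --- that nerve would quotient away the $R$-action you need in order to land in $s\Aff(R)_*$ --- but rather the simplicial affine $R$-scheme $B\mapsto\bar W(\exp(\g\hat\ten^R B))$ (as spelled out in the filtered analogue just before Theorem \ref{fbigequiv}); with that correction the rest of your sketch, in particular the $\CoLie$/cobar description of $\bar G$, the identification of $\Hom(\bar G(A),\g)$ with twisting cochains modulo gauge, and the $\prod_{x}\exp(\H_0\g(x))$-factor accounting for the basepoint data, matches the paper's argument.
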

\begin{proof}
This is \cite[Theorem \ref{mhs-bigequiv}]{mhs}, which adapts \cite[Corollary \ref{htpy-bigequiv}]{htpy}  to the pointed case. The vertical equivalences come from \cite[Proposition \ref{htpy-wequiv}]{htpy}, while the horizontal equivalences are from \cite[Theorems \ref{htpy-qs}]{htpy}   and Theorem \ref{htpy-defqs} or, for a shorter and more conceptual proof, \cite[Theorem \ref{monad-cfexp}]{monad}.
  The results  of \cite[4.1]{HinSch}  imply that $D$ and $\Th$ are homotopy inverses.
\end{proof}

\begin{definition}\label{OR}
Recall that  $O(R)$ has the natural structure of an $R\by R$-representation. Since  every $R$-representation has an associated semisimple local system on $|BR(k)|$, we will also write $O(R)$ for the $R$-representation in semisimple local systems on $|BR(k)|$  corresponding to the $R\by R$-representation $O(R)$.
We then define the $R$-representation $\bO(R)$ in semisimple local systems on $X$ by $\bO(R):=\rho^{-1}O(R)$.
\end{definition}

\begin{proposition}\label{eqhtpy}
Under the equivalences of Theorem \ref{bigequiv}, the relative Malcev homotopy type $G(X)^{\rho, \mal}$ of a topological space $X$ corresponds to 
$$
\CC^{\bt}(X,\bO(R))\in c\Alg(R),
$$
equipped with its augmentation to $\prod_{x \in X} \CC^{\bt}(x,\bO(R)) \cong \prod_{x \in X} O(R)(x,-)$.
\end{proposition}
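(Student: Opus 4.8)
The plan is to use Theorem~\ref{bigequiv} to trade the geometric object $G(X)^{\rho,\mal}$ for the functor it represents, and then match that functor with the one represented by $\Spec\CC^{\bt}(X,\bO(R))$. By Theorem~\ref{bigequiv}, Proposition~\ref{meequiv} and Definition~\ref{hrelmaldef}, it is enough to produce, for every $\g\in s\cP(R)$, a bijection
\[
\Hom_{\Ho(s\Aff(R)_*)}(\Spec\CC^{\bt}(X,\bO(R)),\bar{W}\g)\cong\Hom_{s\cP(R)}(\Ru G(X,\rho)^{\mal},\g),
\]
natural in $\g$ and compatible with the augmentations to $\prod_x O(R)(x,-)$; Yoneda then identifies the two homotopy types. (Equivalently, one may construct the natural comparison map directly and check it is a weak equivalence via Proposition~\ref{detectweak}.)

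First I would unwind the right-hand side. By Definition~\ref{hrelmaldef} and Proposition~\ref{meequiv}, $\Hom_{s\cP(R)}(\Ru G(X,\rho)^{\mal},\g)$ is the set of homotopy classes of maps $G(X,\rho)^{\mal}\to R\ltimes\exp(\g)$ over $R$ in $\Ho(\Ob R\da s\agpd)$, with the factor $\prod_x\exp(\pi_0\g(x))$ recording trivialisations over the objects of $R$. By the universal property of the relative Malcev completion (Definition~\ref{malcevdef}) and the adjunction of Proposition~\ref{algqd}, such a class is the same datum as a homotopy class of maps of simplicial groupoids $G(X)\to R\ltimes\exp(\g)$ over $R$; by Remark~\ref{nonab} this is exactly the fibre over $[\rho]$ of $\H^1(X,R\ltimes\exp(\g))\to\H^1(X,R)$, i.e.\ the relative non-abelian cohomology of $X$ with coefficients in $R\ltimes\exp(\g)$, pointed as above.

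Next I would unwind the left-hand side. By Theorem~\ref{bigequiv} in its cosimplicial incarnation, $\Hom_{\Ho(s\Aff(R)_*)}(\Spec\CC^{\bt}(X,\bO(R)),\bar{W}\g)$ is the Maurer--Cartan quotient $\mc(\CC^{\bt}(X,\bO(R)),\g)\by^{\Gg(\CC^{\bt}(X,\bO(R)),\g)}\prod_x\exp(\pi_0\g(x))$. Here the Godement/simplicial-cochain description of $\CC^{\bt}(X,-)$ does the work: an element of $\mc(\CC^{\bt}(X,\bO(R)),\g)$ is precisely a $1$-cocycle for the sheaf of groups $R\ltimes\exp(\g)$ whose reductive part is $\rho$---the point of taking coefficients in $\bO(R)=\rho^{-1}O(R)$ of Definition~\ref{OR}, rather than in an arbitrary local system, is exactly to pin the reductive part to $\rho$---while the gauge group acts as the group of $0$-cochains (changes of local trivialisation), so the quotient is the same pointed relative non-abelian cohomology set as in the previous paragraph. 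Matching the two descriptions gives the required bijection, and compatibility with the augmentations is the observation that the base-point factor $\prod_x\exp(\pi_0\g(x))$ is literally the same on both sides.

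The hard part will be the bookkeeping that forces these two descriptions of relative non-abelian cohomology to agree on the nose---in particular, verifying that the condition ``reductive part $=\rho$'' on the Malcev side corresponds exactly to $\bO(R)$-coefficients on the cochain side, with no base-point data lost in passing from a single object to a groupoid. Since the one-object case is \cite{htpy} and its pointed refinement is \cite{mhs}, I would reduce to them: using the Levi decomposition (Proposition~\ref{leviprop}) and a disjoint-union-of-simplicial-groups argument as in the proof of Lemma~\ref{cohomalworks}, one passes to the case where $R$ is a group, where the cited results apply directly. The cohomological input the reduction needs---that $\H^*(\CC^{\bt}(X,\bO(R))\hat{\ten}^RV)\cong\bH^*(G(X)^{\rho,\mal},V)$ for finite-dimensional $R\ltimes\exp(\g)$-representations $V$---is supplied by Lemma~\ref{hypercohogood} together with Lemma~\ref{cohomalworks}.
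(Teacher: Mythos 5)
Your proposal is correct, and it takes essentially the same route as the paper, which here simply writes ``essentially the same as [htpy, Theorem eqhtpy] (which considers the unpointed case)''. What you have done is unfold what that citation must contain: exhibit both sides as representing the same functor on $s\cP(R)$ (via Theorem \ref{bigequiv} and Proposition \ref{meequiv}), unwind the Malcev side through the universal property of relative completion and the Quillen adjunction of Proposition \ref{algqd} to get pointed relative non-abelian $\H^1$, and unwind the cochain side through the Maurer--Cartan/gauge description to get the same set. The technical heart of that matching --- that $\Gg$-orbits of $\mc(\CC^{\bt}(X,\bO(R)),\g)$ equal homotopy classes of $\Gamma$-equivariant maps from the covering system $\widetilde{X}$ into $\bar{W}\exp(\g)$ --- is exactly what the paper makes explicit in the $\ell$-adic analogue, Theorem \ref{qleqhtpy}, via Lemma \ref{wworks}; your citation of Lemma \ref{hypercohogood} and Lemma \ref{cohomalworks} as the cohomological input for the detection step (Proposition \ref{detectweak}) is the correct alternative checkpoint.

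One small misdirection in your last paragraph: the cited theorem in \cite{htpy} already handles groupoids with an arbitrary discrete object set; what it lacks is the augmentation to $\prod_x O(R)(x,-)$, i.e.\ the passage from $\Ho(s\cE(R))$ to $\Ho(s\cE(\tilde{R})_*)$ of Definition \ref{hrelmaldef}. So the reduction you propose ``to the one-object case'' is aimed at the wrong axis --- the groupoid structure is not the new feature here. Instead, once Theorem \ref{bigequiv} and Proposition \ref{meequiv} are in hand (both already multipointed, coming from \cite{mhs}), keeping track of basepoints is formal: the augmentation on $\CC^{\bt}(X,\bO(R))$ restricts along $x \into X$ to the identity $O(R)(x,-)\to O(R)(x,-)$, and this matches the $\prod_x \exp(\pi_0\g(x))$ factor on both sides by construction. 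The disjoint-union trick from Lemma \ref{cohomalworks} is fine for identifying $\H^*$, but it is not needed to handle the pointed structure, and invoking it here slightly obscures where the new content of the pointed statement actually lives.
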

\begin{proof}
This is essentially the same as \cite[Theorem \ref{htpy-eqhtpy}]{htpy} (which considers the unpointed case).
\end{proof}

\begin{corollary}\label{eqtoen}
Pro-algebraic homotopy types are equivalent to the schematic homotopy types of \cite{chaff}, in the sense that the full subcategory of the homotopy category $\Ho(s\mathrm{Pr})$ on objects $X^{\sch}$ is equivalent to the full subcategory of $\Ho(s\agpd)$ on objects $G(X)^{\alg}$. Under this equivalence, $X^{\sch}$ is represented by the simplicial scheme $\bar{W}G(X)^{\alg}$, and
pro-algebraic homotopy groups are isomorphic to schematic homotopy groups.
\end{corollary}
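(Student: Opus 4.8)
The plan is to reduce the statement to a comparison of universal properties, and then to read the homotopy groups off directly. Throughout, take $R=(\pi_fX)^{\red}$, so that (as noted after Definition \ref{hrelmaldef}) the relative Malcev homotopy type $G(X,\rho)^{\mal}$ for the canonical map $\rho\co\pi_fX\to R(k)$ is exactly $G(X)^{\alg}$. By Proposition \ref{eqhtpy}, under the equivalences of Theorem \ref{bigequiv} this homotopy type is the cosimplicial algebra $\CC^{\bt}(X,\bO(R))\in c\Alg(R)$; composing the equivalence of Proposition \ref{meequiv} with the $\bar W$-functors of Theorem \ref{bigequiv} translates this back to the simplicial affine scheme $\bar WG(X)^{\alg}$, now regarded (via its functor of points) as an object of $\Ho(s\mathrm{Pr})$. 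So the first task is to recognise this simplicial scheme as To\"en's $X^{\sch}$.

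For that I would recall from \cite{chaff} that $X\to X^{\sch}$ is the affinization of $X$, i.e. the initial object under $X$ among affine stacks, and that for connected $X$ the affine stacks arising this way are precisely the classifying stacks $\bar WG$ of affine simplicial group(oid) schemes $G$ --- equivalently, they are modelled by the cosimplicial cochain algebra of $X$ with coefficients in the relevant (pro-reductive) local system, which is literally the object $\CC^{\bt}(X,\bO(R))$ appearing above. On our side, Proposition \ref{algqd} together with Remark \ref{nonab} gives, for every $G\in s\agpd$,
\[
\Hom_{\Ho(s\agpd)}(G(X)^{\alg},G)\cong\Hom_{\Ho(s\gpd)}(G(X),G(k))\cong[X,\bar WG(k)]\cong\Hom_{\Ho(s\mathrm{Pr})}(X,\bar WG),
\]
the last isomorphism being the standard computation of maps out of a constant simplicial presheaf into a stack of the form $\bar WG$. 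This says precisely that $\bar WG(X)^{\alg}$ corepresents the same functor on affine stacks as the affinization of $X$, so $X^{\sch}\simeq\bar WG(X)^{\alg}$ in $\Ho(s\mathrm{Pr})$; the non-connected case is handled componentwise, the groupoid $\pi_fX$ (and the multifibred Tannakian formalism of Definition \ref{tannaka}) playing the role of $\pi_1(X,x)$.

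Granting this identification, the equivalence of the two full subcategories is formal: for spaces $X,Y$, since $X^{\sch}$ is the affinization and $Y^{\sch}$ is affine, $\Hom_{\Ho(s\mathrm{Pr})}(X^{\sch},Y^{\sch})=\Hom_{\Ho(s\mathrm{Pr})}(X,Y^{\sch})$, and the displayed chain with $G=G(Y)^{\alg}$ identifies this with $\Hom_{\Ho(s\agpd)}(G(X)^{\alg},G(Y)^{\alg})$, naturally in $X$ and $Y$; so $X\leadsto G(X)^{\alg}$ (equivalently $X\leadsto\bar WG(X)^{\alg}$) induces the asserted equivalence of full subcategories. Finally, the schematic homotopy groups of $X^{\sch}$ are by definition the homotopy group schemes of the affine stack $X^{\sch}=\bar WG(X)^{\alg}$, and the identity $\pi_n(\bar WG)\cong\pi_{n-1}(G)$ from Definition \ref{barwdef} gives $\pi_n(X^{\sch})\cong\pi_{n-1}(G(X)^{\alg})=\varpi_n(X)$, the pro-algebraic homotopy groups.

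The main obstacle is the identification $X^{\sch}\simeq\bar WG(X)^{\alg}$: this requires translating faithfully between To\"en's stack-theoretic ``affinization'' and the simplicial pro-algebraic groupoid language of \cite{htpy}. In particular one must check that $\bar WG(X)^{\alg}$, as a simplicial presheaf, genuinely satisfies To\"en's affineness (``coaffine'') condition --- this is where the pro-algebraic groupoid structure is used --- keep track of the reductive-quotient/Levi bookkeeping implicit in passing between the $R$-equivariant cosimplicial algebra $\CC^{\bt}(X,\bO(R))$ and a plain cosimplicial Hopf algebra, and upgrade the pointed connected case of \cite{chaff} to arbitrary $X$ via groupoids. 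Everything after the identification is a diagram chase through the universal properties already recorded in Propositions \ref{algqd}, \ref{eqhtpy} and \ref{meequiv} and Theorem \ref{bigequiv}.
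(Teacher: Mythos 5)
The overall architecture of your argument is sound, but there is a substantive conflation in the step you yourself flag as ``the main obstacle,'' and that conflation actually matters. In To\"en's framework the \emph{affinization} and the \emph{schematization} are different constructions with different universal properties. The affinization $X\mapsto\mathbf{R}\underline{\Spec}\,\cO(X)$ is indeed initial under $X$ among affine stacks, but an affine stack necessarily has pro-unipotent $\pi_1$; it cannot capture a non-unipotent pro-algebraic completion of $\pi_1(X)$. The schematization $X^{\sch}$ of \cite{chaff} is instead defined as the initial object under $X$ in the (strictly larger) class of \emph{pointed schematic homotopy types} --- pointed connected simplicial presheaves $F$ with $\pi_1 F$ an affine group scheme and $\pi_n F$, $n\ge 2$, affine and suitably $\pi_1$-equivariant. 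So the premise ``$X^{\sch}$ is the initial affine stack under $X$, and affine stacks are the $\bar W G$'' is not correct: $\bar W G$ for $G\in s\agpd$ with a nontrivial reductive quotient is not an affine stack, and $X^{\sch}$ is not the affinization.

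As a consequence, your displayed chain
\[
\Hom_{\Ho(s\agpd)}(G(X)^{\alg},G)\cong\Hom_{\Ho(s\gpd)}(G(X),G(k))\cong[X,\bar WG(k)]\cong\Hom_{\Ho(s\mathrm{Pr})}(X,\bar WG)
\]
(granting the last isomorphism, which is itself a genuine local-to-global comparison statement about $\bar W G$ as a simplicial presheaf and needs an argument, not just an appeal to ``the standard computation'') only identifies maps out of $X$ into objects of the form $\bar W G$. To conclude that $\bar W G(X)^{\alg}$ satisfies the universal property defining $X^{\sch}$, you must further show that the class of pointed schematic homotopy types is, up to equivalence, exactly the essential image of $\bar W\colon s\agpd\to s\mathrm{Pr}$ (or at least that it admits a cofinal system of such objects). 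That step is where the real work of the comparison lies, and it is precisely the content of the result cited in the paper's one-line proof. Your last paragraph gestures at this (``check that $\bar W G(X)^{\alg}$ satisfies To\"en's affineness condition''), but what actually needs checking is the other inclusion: that every schematic homotopy type is a $\bar W G$. Without it the universal-property transfer does not go through, so the proposal has a genuine gap. The homotopy-group identification at the end is fine once the equivalence is in place, though it too quietly relies on the compatibility of the homotopy-sheaf formalism of $s\mathrm{Pr}$ with the simplicial-group-scheme formalism of $s\agpd$ under $\bar W$, which should at least be flagged.
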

\begin{proof}
\cite[Corollary \ref{htpy-eqtoen}]{htpy}.
\end{proof}

\begin{definition}
Given a manifold $X$, denote the sheaf  of real $n$-forms on $X$ by $\sA^n$. Given a real sheaf $\sF$ on $X$, write
$$
A^n(X,\sF):=\Gamma(X,\sF\ten_{\R} \sA^n).
$$ 
\end{definition}

\begin{proposition}\label{propforms}
The real Malcev  homotopy type of a manifold $X$ relative to $\rho\co \pi_fX \to R(\R)$ is given in $DG\Alg(R)$ by the de Rham complex
$
A^{\bt}(X, \bO(R)),
$
equipped with its augmentation to $\prod_{x \in X} A^{\bt}(x,\bO(R)) \cong \prod_{x \in X} O(R)(x,-)$.
\end{proposition}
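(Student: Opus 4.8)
The plan is to reduce to the cosimplicial description of Proposition \ref{eqhtpy} by means of a multiplicative, $R$-equivariant de Rham theorem with local coefficients. By Proposition \ref{eqhtpy}, under the equivalences of Theorem \ref{bigequiv} the relative Malcev homotopy type $G(X)^{\rho,\mal}$ is represented by $\CC^{\bt}(X,\bO(R))\in c\Alg(R)_*$, with its augmentation to $\prod_{x\in X}O(R)(x,-)$. Since $D$ and $\Th$ are homotopy inverse (as used in the proof of Theorem \ref{bigequiv}) and $D\co DG\Alg(R)\to c\Alg(R)$ is a right Quillen equivalence compatible with augmentations (Proposition \ref{affequiv}), that homotopy type is also represented in $DG\Alg(R)_*$ by $\Th\,\CC^{\bt}(X,\bO(R))$. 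It therefore suffices to construct a natural chain of weak equivalences in $DG\Alg(R)_*$ joining $A^{\bt}(X,\bO(R))$ to $\Th\,\CC^{\bt}(X,\bO(R))$ and compatible with the augmentations to $\prod_{x\in X}O(R)(x,-)$.

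To construct it, introduce the cosimplicial cochain algebra $\cD^{\bt}$ in $DG\Alg(R)$ with
$$
\cD^n:=\prod_{f\co |\Delta^n|\to X}A^{\bt}\bigl(|\Delta^n|,\,f^{-1}\bO(R)\bigr),
$$
the product over singular simplices of the smooth de Rham complex of the standard simplex with coefficients in the pulled-back (ind-)local system, the cosimplicial operations coming from the cofaces and codegeneracies of $|\Delta^{\bt}|$. Pullback of forms along singular simplices gives an $R$-equivariant algebra map $A^{\bt}(X,\bO(R))\to\cD^{\bt}$ (source in cosimplicial degree $0$), and restriction of forms to the initial vertex $v_0$ of each $|\Delta^n|$ gives an $R$-equivariant algebra map $\cD^{\bt}\to\CC^{\bt}(X,\bO(R))$ (target in cochain degree $0$). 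Both become quasi-isomorphisms after passing to the total complex, equivalently after applying $\Th$ (extended to cosimplicial cochain algebras in the evident way, so that it models the total complex): for the second map this is the Poincar\'e lemma with local coefficients, since $|\Delta^n|$ is contractible and $f^{-1}\bO(R)$ is locally constant; for the first, filtering the double complex $\cD^{\bt}$ by cochain degree collapses, by that same lemma, to $\CC^{\bt}(X,\bO(R))$ on $E_1$, whose cohomology is $\H^{\bt}(X,\bO(R))$, which the ordinary de Rham theorem identifies with the cohomology of $A^{\bt}(X,\bO(R))$. Applying $\Th$ to $\cD^{\bt}$ therefore yields weak equivalences $A^{\bt}(X,\bO(R))\to\Th\,\cD^{\bt}\leftarrow\Th\,\CC^{\bt}(X,\bO(R))$ in $DG\Alg(R)$. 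Restriction to each point $x\in X$ commutes with all of these, and $A^{\bt}(x,\bO(R))=O(R)(x,-)$ because a point carries no positive-degree forms; so the maps are compatible with the augmentations and the identification holds in $DG\Alg(R)_*$.

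The main obstacle is the first quasi-isomorphism, $A^{\bt}(X,\bO(R))\to\Th\,\cD^{\bt}$: one must run the cohomological-descent/de Rham argument with nontrivial coefficients while keeping the multiplicative structure and the $R$-action, and must accommodate that $\bO(R)$ is only an ind-object of finite-dimensional $R$-representations. One handles the latter by passing to the filtered colimit over finite-dimensional $R$-subrepresentations --- the de Rham complex on a manifold, the cohomology comparison and $\Th$ all commute with such colimits --- thereby reducing to the classical multiplicative de Rham theorem with finite-dimensional local coefficients, which follows from a partition-of-unity argument for the soft sheaves $f^{-1}\bO(R)\ten\sA^{\bt}$. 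The remaining points --- functoriality in $X$, $R$-equivariance, and compatibility with restriction to basepoints --- are then routine. Equivalently, one could simply repeat the proof of Proposition \ref{eqhtpy}, i.e.\ of \cite[Theorem \ref{htpy-eqhtpy}]{htpy}, with the singular complex $\CC^{\bt}(X,-)$ replaced throughout by the de Rham complex $A^{\bt}(X,-)$, using that the latter is a commutative model for cohomology with local coefficients.
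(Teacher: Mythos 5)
Your reduction of the statement, via Propositions \ref{affequiv} and \ref{eqhtpy}, to a multiplicative, $R$-equivariant comparison between $A^{\bt}(X,\bO(R))$ and $\Th\,\CC^{\bt}(X,\bO(R))$ is the right move, and the closing remark (re-run the proof of Proposition \ref{eqhtpy} with the de Rham complex in place of the singular cochain complex) is essentially what the cited \cite[Proposition \ref{htpy-propforms}]{htpy} does. However, the main argument you actually spell out has a genuine gap: the object $\cD^{\bt}$ is not a cosimplicial cochain algebra. The assignment $n \mapsto A^{\bt}(|\Delta^n|,-)$ is \emph{simplicial}, not cosimplicial --- pullback along a coface $|\Delta^{n-1}|\hookrightarrow|\Delta^n|$ carries forms from $|\Delta^n|$ to $|\Delta^{n-1}|$, and there is no natural map the other way --- while the product $\prod_{f\in\Sing(X)_n}$ over singular simplices contributes a cosimplicial direction; these two opposite variances do not combine to make $n\mapsto\cD^n$ a cosimplicial diagram. (This is exactly why the Bousfield--Gugenheim/Sullivan de Rham complex of a simplicial set is defined as an \emph{end}, i.e.\ the subalgebra of compatible families $(\omega_f)$ with $(\pd^i)^*\omega_f=\omega_{\pd_if}$, $(\sigma^i)^*\omega_f=\omega_{\sigma_if}$, rather than as the totalization of a cosimplicial algebra.) Consequently the ``total complex'' / spectral-sequence argument you run for the first quasi-isomorphism is not available as stated. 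The fix is to replace $\cD^{\bt}$ by that end $\tilde{\cD}$, observe that unwinding Definition \ref{Th} identifies $\tilde{\cD}$ (up to the smooth-vs-polynomial-forms variant, which itself requires a comparison) with $\Th\,\CC^{\bt}(X,\bO(R))$, and then show directly that pullback $A^{\bt}(X,\bO(R))\to\tilde{\cD}$ is a quasi-isomorphism, e.g.\ by a \v Cech/partition-of-unity argument.

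There is a second, smaller gap: the pullback map $A^{\bt}(X,\bO(R))\to\tilde{\cD}$ only makes sense for smooth singular simplices $f\co|\Delta^n|\to X$, whereas $\CC^{\bt}(X,\bO(R))$ in Proposition \ref{eqhtpy} is indexed by all continuous ones. Before the chain of quasi-isomorphisms closes you need to insert the standard comparison showing that the smooth singular subcomplex induces a weak equivalence in $c\Alg(R)_*$; you do not mention this.
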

\begin{proof}
\cite[Proposition \ref{htpy-propforms}]{htpy}.
\end{proof}

\section{Pro-$\Ql$-algebraic homotopy types}\label{algtypes}
The purpose if this section is to transfer the framework of Section \ref{review} to an $\ell$-adic setting, replacing topological spaces with pro-finite spaces (and hence \'etale homotopy types of algebraic varieties).

Fix a prime $\ell$. Although all results here will be stated for the local field $\Ql$, they hold for any  of its algebraic extensions. 
\subsection{Algebraisation of locally pro-finite groupoids}

\begin{definition}\label{alg} 
Given a  pro-groupoid $\Gamma$ with $\Ob(\Gamma)$ a discrete set (in the sense of  Remark \ref{discreterk}), we define the \emph{pro-algebraic completion} $\Gamma^{\alg}$ to be the pro-$\Ql$-algebraic groupoid 
pro-representing the functor
\begin{eqnarray*}
\agpd  &\to& \Set\\
H &\mapsto &\Hom_{\Top\gpd}(\Gamma, H(\Ql)),
\end{eqnarray*}
where $\Top\gpd$ denotes the category of topological groupoids, and $H(\Ql)$ is endowed with the topology induced from $\Ql$. Note that this exists by the Special Adjoint Functor Theorem (\cite[Theorem V.8.2]{mac}), with the algebraic groups $\GL_n$ providing the data for the solution set condition (by Tannakian duality). Given a set of primes $L$, define the $L$-algebraic completion $\Gamma^{L,\alg}$ to be $(\Gamma^{\wedge_L})^{\alg}$. If $P$ is the set of all primes, we simply write $\hat{\Gamma}^{\alg}:=\Gamma^{P, \alg}$.
\end{definition}

\begin{remarks}
Since representations with finite monodromy are algebraic there is a canonical retraction $\Gamma^{L,\alg}\to \Gamma^{\wedge_L}$ of pro-algebraic groupoids. 

The motivating example for this definition is when $\Gamma = \pi_f^{\et}(X)$, the \'etale fundamental groupoid of an algebraic variety.
\end{remarks}

The following definition is a slight generalisation of  \cite[Definition 2.1]{weight1}, and extends Definition \ref{malcevdef} to pro-groupoids:
\begin{definition}\label{malcev}
Given a pro-groupoid $\Gamma$ with $\Ob(\Gamma)$  discrete,
a reductive pro-algebraic groupoid $R$ over $\Ql$, and a Zariski-dense (i.e. essentially surjective on objects and Zariski-dense on morphisms) continuous map
$$
\rho\co \Gamma^{\wedge_L} \to R(\Ql),
$$
where the latter is given the $\ell$-adic topology, we define the \emph{relative Malcev completion} $\Gamma^{L,\rho,\mal}$ (or $\Gamma^{L, R, \mal}$)
   to be the universal diagram
$$
\Gamma^{\wedge_L} \xra{g} \Gamma^{L,\rho,\mal}(\Ql) \xra{f} \tilde{R}(\Ql),
$$
where $\tilde{R}$ is the groupoid equivalent to $R$ on objects $\Ob \Gamma$ (as in Definition \ref{malcevdef}),
with $f\co \Gamma^{L,\rho,\mal}\to \tilde{R}$  a pro-unipotent extension of pro-$\Ql$-algebraic groupoids, $g$ a continuous map of topological groupoids, and their composition equal to $\rho$. 

To see  that this universal object exists, we note that this description determines the linear representations of $\Gamma^{L,\rho,\mal}$  (as described in Remarks \ref{savemalcev}). Since these form a multi-fibred tensor category,  Tannakian duality (\cite[Remark 2.6]{htpy}) then gives a construction of    $\Gamma^{L,\rho,\mal}$.
\end{definition}

\begin{remarks}\label{savemalcev} By considering  groupoid homomorphisms $\Gamma^{\wedge_L} \to \coprod_n \GL_n(\Ql)$, observe that finite-dimensional linear representations of $\Gamma^{L,\alg}$ are just continuous $\Ql$-representations of $\Gamma^{\wedge_L}$. 

Finite-dimensional representations of $\Gamma^{L,\rho,\mal}$ are only those continuous $\Ql$-representations whose semisimplifications are $R$-representations. Moreover, if we let $R$ be the reductive quotient $\Gamma^{L,\red}$ of $\Gamma^{L,\alg}$, then  $\Gamma^{L,\alg}=\Gamma^{L,R,\mal}$.
\end{remarks}

\begin{definition}
 Given  an $n$-dimensional $\Ql$-vector space $V$, a \emph{lattice} $\L$ in $V$ is a rank $n$ $\Zl$-submodule $\L \subset V$.
\end{definition}

\begin{lemma}\label{serrelattice}
If $\Gamma$ is a  pro-finite group, $V$ an $n$-dimensional $\Ql$-vector space, and $\rho\co \Gamma \to \GL(V)$ a continuous representation (where the latter is given the $\ell$-adic topology) then there exists a lattice  $\L \subset V$ such that
$\rho$ factors through $\GL(\L)$.
\end{lemma}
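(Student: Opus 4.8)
The plan is to exploit compactness of $\Gamma$ together with the fact that the $\ell$-adic topology on $\GL_n(\Ql)$ has a neighbourhood basis of the identity consisting of open compact subgroups. Choose any lattice $\L_0 \subset V$, so that $\GL(\L_0) \subset \GL(V)$ is an open compact subgroup (identifying $\GL(V) \cong \GL_n(\Ql)$ via a basis of $\L_0$). Since $\rho$ is continuous, the preimage $H := \rho^{-1}(\GL(\L_0))$ is an open subgroup of $\Gamma$; because $\Gamma$ is pro-finite, $H$ has finite index, say with coset representatives $\gamma_1 = 1, \gamma_2, \ldots, \gamma_m$.

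The key step is then to average the lattice over these cosets: set
$$
\L := \sum_{i=1}^m \rho(\gamma_i)\L_0 \subset V.
$$
First I would check $\L$ is a lattice: it is a $\Zl$-submodule, it is finitely generated (a finite sum of finitely generated modules), it spans $V$ over $\Ql$ since it contains $\L_0$, and it is torsion-free hence free; so it has rank exactly $n$. Next I would verify $\L$ is $\rho(\Gamma)$-stable. Given $g \in \Gamma$ and an index $i$, write $g\gamma_i = \gamma_{j} h$ for some $j$ and some $h \in H$; then $\rho(g)\rho(\gamma_i)\L_0 = \rho(\gamma_j)\rho(h)\L_0 = \rho(\gamma_j)\L_0$, using that $\rho(h) \in \GL(\L_0)$ preserves $\L_0$. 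Summing over $i$ (and noting $g$ permutes the data suitably) gives $\rho(g)\L = \L$, so $\rho$ factors through $\GL(\L)$.

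The only point requiring a little care — and the closest thing to an obstacle, though it is mild — is justifying that $H$ has finite index: this is exactly where pro-finiteness of $\Gamma$ is used, since an open subgroup of a pro-finite (equivalently, compact totally disconnected) group is automatically of finite index, the cosets forming an open cover of the compact space $\Gamma$. Everything else is routine linear algebra over $\Zl$. This argument is the groupoid-free prototype of \cite[I\S1.1]{galoisienne}, and is presumably the "Serre lattice" referenced in the lemma's name; I would cite that source for the averaging trick if a reference is wanted, but the proof above is self-contained.
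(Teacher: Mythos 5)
Your proof is correct and matches the paper's explicit construction: the paper likewise takes $\L = \sum_{\gamma \in \Gamma}\rho(\gamma)\L_0$ and invokes compactness to make the sum finite, additionally citing \cite[LG 4 Appendix 1 Theorems 1 and 2]{Se} for the characterisation of maximal compact subgroups of $\GL(V)$. You have simply spelled out the finiteness step (open subgroups of a pro-finite group have finite index) and the stability check, making the argument self-contained, but the route is the same.
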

\begin{proof}
Since $\Gamma$ is pro-finite, it is compact, and hence $\rho(\Gamma)\le \GL(V)$ must be compact. 
\cite[LG 4 Appendix 1 Theorems 1 and 2]{Se}  show that every compact subgroup of $\GL(V)$ is contained in a maximal compact subgroup, and that the maximal compact subgroups are of the form $\GL(\L)$. Explicitly, we choose a lattice $\L_0 \subset V$, then set $\L= \sum_{\gamma \in \Gamma} \rho(\gamma) \L_0$ (with compactness ensuring the sum is finite).
\end{proof}

\begin{remark}\label{locsyslrk}        
In particular, when $\Gamma = \pi_f^{\et}(X)$, this means that finite-dimensional  representations of $\Gamma^{\alg}$ are smooth $\Ql$-sheaves on $X$, while finite-dimensional  representations of $\Gamma^{\red}$ are semisimple $\Ql$-sheaves. The Zariski-dense map $\rho\co  \Gamma \to R(\Ql)$ identifies $R$-representations with a full tensor subcategory of semisimple $\Ql$-sheaves, and $\Gamma^{\rho,\mal}$-representations are Artinian extensions of these semisimple sheaves.
\end{remark}

\begin{proposition}\label{gzl}
Given a locally pro-finite groupoid $\Gamma$ with discrete objects (as in Remark \ref{discreterk}), and a Zariski-dense continuous map
$$
\rho\co \Gamma^{\wedge_L} \to G(\Ql)
$$
to a pro-$\Ql$-algebraic groupoid, there is a canonical model $G_{\Zl}$ for $G$ over $\Zl$ for which  $\rho$ factors through a Zariski-dense map
$$
\rho_{\Zl}\co \Gamma^{\wedge_L} \to G_{\Zl}(\Zl).
$$
\end{proposition}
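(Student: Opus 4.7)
The plan is to construct $G_{\Zl}$ via Tannakian duality, using $\Gamma$-stable lattices to define an integral structure on the coordinate ring $\bO(G)$. The key input is Lemma \ref{serrelattice}, which guarantees that every continuous finite-dimensional $\Ql$-representation of a pro-finite group admits an invariant lattice.

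First I would define $\bO(G_{\Zl})(x,y) \subset \bO(G)(x,y)$ as the $\Zl$-submodule spanned by all matrix coefficients coming from pairs $(V,\L)$, where $V$ is a finite-dimensional $G$-representation and $\L=(\L(z))_{z\in\Ob G}$ is a family of $\Gamma^{\wedge_L}$-stable $\Zl$-lattices $\L(z) \subset V(z)$. Explicitly, using the Peter--Weyl type decomposition $\bO(G)(x,y)=\bigcup_V V(y)^{\vee} \ten_{\Ql} V(x)$, we include the $\Zl$-image of $\L(y)^{\vee}\ten_{\Zl} \L(x)$ for each such $(V,\L)$. The definition is intrinsic (it uses all lattices at once) and hence canonical.

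Next I would verify that $\bO(G_{\Zl})$ inherits the structure of a $\Zl$-Hopf algebroid from $\bO(G)$. Closure under multiplication follows because $\L_{V_1}\ten_{\Zl}\L_{V_2}$ is a $\Gamma$-stable lattice in $V_1\ten V_2$; closure under comultiplication $\Delta\co \bO(G)(x,z)\to \bO(G)(x,y)\ten \bO(G)(y,z)$ is immediate since for any $\Zl$-basis of $\L(y)$, matrix coefficients comultiply to sums of products of matrix coefficients with respect to this basis; closure under antipode uses that $\L^{\vee}$ is a $\Gamma$-stable lattice in $V^{\vee}$. Existence of such lattices for every $V$ is ensured by Lemma \ref{serrelattice} applied componentwise to the continuous maps $\rho_V\co \Gamma^{\wedge_L}(z,z) \to \GL(V(z))(\Ql)$. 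Flatness of $\bO(G_{\Zl})$ over $\Zl$ is automatic from torsion-freeness, and $\bO(G_{\Zl})\ten_{\Zl}\Ql = \bO(G)$ because every element of $\bO(G)$ is a matrix coefficient and any $\Ql$-rational matrix coefficient becomes integral after rescaling.

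Finally, I would construct $\rho_{\Zl}$ and verify Zariski-density. For $\gamma \in \Gamma(x,y)$, evaluation at $\rho(\gamma)$ sends a matrix coefficient of $(V,\L)$ to an entry of $\rho_V(\gamma)\in \GL(\L)(\Zl)$, hence to $\Zl$; this gives $\rho_{\Zl}(\gamma)\in G_{\Zl}(x,y)(\Zl)$, and continuity follows by testing on matrix coefficients, which are continuous functions on $\Gamma^{\wedge_L}$ by hypothesis. Zariski-density of $\rho_{\Zl}$ is a consequence of Zariski-density of $\rho$: any $f \in \bO(G_{\Zl})(x,y)$ vanishing on $\rho_{\Zl}(\Gamma(x,y))$ also vanishes on $\rho(\Gamma(x,y))$, hence vanishes in $\bO(G)(x,y)$ (using that  $\bO(G)$ is reduced in characteristic zero), and so vanishes in $\bO(G_{\Zl})(x,y)$.

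The main subtlety is the groupoid (as opposed to group) case, where one must ensure that families of lattices $\{\L(z)\}_{z\in\Ob G}$ are well-behaved under the groupoid comultiplication; this is handled by insisting the lattices be $G$-equivariant in the weak sense that $\L$ is stabilised by $\Gamma^{\wedge_L}(z,z)$ for each $z$ and then checking that the comultiplication formula only involves dual bases at a single object. No canonicity is lost because we sum over \emph{all} such compatible families.
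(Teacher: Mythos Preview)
Your approach is essentially the same as the paper's: construct $G_{\Zl}$ via Tannakian duality from the category of $\Gamma$-stable $\Zl$-lattices in $G$-representations, invoking Lemma~\ref{serrelattice} for existence. The paper phrases this as $G_{\Zl}(x,y)(A):= \Iso^{\ten}(\omega_x|_{\cC(G)},\omega_y|_{\cC(G)})(A)$ where $\cC(G)$ is the category of continuous $\Gamma$-representations in finite free $\Zl$-modules whose rationalisations are $G$-representations, and also records the slick equivalent description $O(G_{\Zl})=\{f\in O(G):f(\rho(\gamma))\in\Zl\ \forall\gamma\in\Gamma\}$.

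There is, however, a genuine gap in your handling of the groupoid case. You propose to use lattice families $\{\L(z)\}$ that are only \emph{vertex}-stable, i.e.\ each $\L(z)$ is preserved by $\Gamma^{\wedge_L}(z,z)$. This is enough for the Hopf algebroid structure (as you correctly observe, comultiplication only involves a dual basis at a single object), but it is \emph{not} enough to ensure that $\rho$ lands in $G_{\Zl}(\Zl)$. For $\gamma\in\Gamma(x,y)$ with $x\ne y$, the evaluation $c_{\xi,v}(\rho(\gamma))=\xi(\rho_V(\gamma)v)$ lies in $\Zl$ only if $\rho_V(\gamma)$ carries $\L(y)$ into $\L(x)$, which vertex-stability does not guarantee. (A two-object groupoid with trivial vertex groups and a single isomorphism acting by $\ell^{-1}$ already gives a counterexample.) The fix is to require full $\Gamma$-equivariance of the lattice family, i.e.\ that $\L$ be a $\Gamma$-subrepresentation in finite free $\Zl$-modules --- precisely the objects of the paper's category $\cC(G)$. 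Existence of such lattices follows by choosing a vertex-stable lattice at one object per connected component (via Lemma~\ref{serrelattice}) and transporting along $\Gamma$.
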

\begin{proof}
Assume that $\rho$ is an isomorphism on objects (replacing $G$ by an equivalent groupoid). Let $\cC$ be the category of continuous $\Gamma$-representations in finite free $\Zl$-modules. For each $x \in \Ob\Gamma$, this gives a fibre functor $\omega_x$ from $\cC$ to finite free $\Zl$-modules. 

If we let $\cD$ be the category of  $\Gamma$-representations in finite-dimensional $\Ql$-vector spaces, with the fibre functors also denoted by $\omega_x$, then the category of $G$-representations is equivalent to a full subcategory $\cD(G)$ of $\cD$, since $\rho$ is Zariski-dense. By Tannakian duality (as in \cite[\S \ref{htpy-gpdsn}]{htpy}), there are isomorphisms
$$
G(x,y)(A) \cong \Iso^{\ten}(\omega_x|_{\cD(G)},\omega_y|_{\cD(G)})(A),
$$
where $\Iso^{\ten}$ is the set of natural isomorphisms of tensor functors.

Now, by Lemma \ref{serrelattice}, the functor $\ten \Ql\co \cC \to \cD$ is essentially surjective.
 Let $\cC(G)$ be the full subcategory of $\cC$ whose objects are those $\L$ for which $\L\ten \Ql$ is isomorphic to an object of $\cD(G)$; these are $\Gamma$-lattices in $G$-representations. Define
$$
G_{\Zl}(x,y)(A):= \Iso^{\ten}(\omega_x|_{\cC(G)},\omega_y|_{\cC(G)})(A),
$$
observing that this is an affine scheme (since it preserves all  limits), with $G_{\Zl}\ten \Ql =G$.

Equivalently, we could set $O(G_{\Zl})\subset O(G)$ to be $\{f\,:\, f(\rho(\gamma)) \in \Zl \quad \forall \gamma \in \Gamma\}$.
\end{proof}

\begin{definition}\label{admiss}
Given a finite-dimensional nilpotent Lie algebra $\fu$ over $\Ql$, equipped with the continuous action of a pro-finite group $\Gamma$ (respecting the Lie algebra structure), we say that a lattice $\L \subset \fu$ is \emph{admissible} if it satisfies the following:
\begin{enumerate}

\item $\L$ is a $\Gamma$-subrepresentation;

\item $\L$ is closed under all the monomials in the Campbell--Baker--Hausdorff formula 
\[ 
 \log(e^a\cdot e^b) = 
\sum_{n>0}\frac {(-1)^{n-1}}{n} 
\sum_{ \substack{r_i + s_i > 0 \\ 1\le i \le n}}
\frac{(\sum_{i=1}^n (r_i+s_i))^{-1}}{r_1!s_1!\cdots r_n!s_n!} 
[a^{r_1} b^{s_1} a^{r_2} b^{s_2} \ldots a^{r_n} b^{s_n} ],
\]
where
\[
[ a^{r_1} b^{s_1} \ldots a^{r_n} b^{s_n} ] = [ \overbrace{a,[a,\ldots[a}^{r_1} ,[ \overbrace{b,[b,\ldots[b}^{s_1} ,\,\ldots\, [ \overbrace{a,[a,\ldots[a}^{r_n} ,[ \overbrace{b,[b,\ldots b}^{s_n} ]]\ldots]],
\] 
understood to be $0$ if $s_n > 1$ or if $s_n = 0$ and $r_n > 1$
\end{enumerate}
\end{definition}

\begin{lemma}\label{admissprop}
If $\L \subset \fu$ is an admissible lattice and $\fu \in \cN$, then the image of $\L$ under the exponential map
$$
\exp\co  \fu \to \exp(\fu)
$$
is a pro-finite subgroup.
\end{lemma}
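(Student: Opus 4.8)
The plan is to show first that $\exp(\L)$ is a subgroup of $\exp(\fu)$, and then that, with the topology inherited from the $\ell$-adic topology on $\fu$, it is a compact, Hausdorff, totally disconnected topological group --- and hence pro-finite.

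For the subgroup property I would use that $\fu\in\cN$ is nilpotent, of class $c$ say: every iterated bracket of length exceeding $c$ vanishes, so for $a,b\in\fu$ the Campbell--Baker--Hausdorff series for $\log(e^{a}\cdot e^{b})$ reduces to a \emph{finite} sum $\sum_{M}M(a,b)$ of the monomials $M$ appearing in Definition \ref{admiss}(2), while the inverse of $x$ in $\exp(\fu)$ is simply $-x$ (by CBH, $x$ and $-x$ compose to $0$). Admissibility condition~(2) says precisely that each monomial $M$, \emph{together with its rational coefficient}, carries $\L$ into $\L$; since the sum is finite this gives $\log(e^{a}e^{b})\in\L$ for all $a,b\in\L$, so $\exp(\L)$ is closed under multiplication. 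It is closed under inversion because $\L$ is a $\Zl$-submodule of $\fu$, hence stable under negation, and it contains the identity $0$.

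For pro-finiteness I would note that as a $\Zl$-lattice of rank $n=\dim_{\Ql}\fu$ the set $\L$, with its subspace topology, is homeomorphic to $\Zl^{n}$, which is compact, Hausdorff and totally disconnected. The multiplication and inversion maps of $\exp(\fu)$ are polynomial --- with finitely many terms, by nilpotence --- hence continuous, so their restrictions make $\exp(\L)$ a topological group whose underlying space is compact, Hausdorff and totally disconnected. It is standard that such a group is pro-finite: its open normal subgroups form a neighbourhood basis of the identity and $\exp(\L)\cong\Lim_{U}\exp(\L)/U$ over these. This would finish the proof.

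The argument is short; the one point that needs care is that Definition \ref{admiss}(2) is engineered so that closure under multiplication survives the $\ell$-adic denominators in the CBH coefficients --- so one must use that each summand \emph{including its coefficient} (rather than merely the underlying bracket expression) preserves $\L$, together with the finiteness of the CBH sum coming from nilpotence. Condition~(1) of admissibility plays no role in this lemma; it is only needed afterwards, to transport a $\Gamma$-action onto $\exp(\L)$.
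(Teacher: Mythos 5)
Your proof is correct and follows essentially the same route as the paper's: use nilpotence to truncate the Campbell--Baker--Hausdorff series, use admissibility condition (2) to get closure of $\L$ under the group law, and then appeal to compactness via the homeomorphism $\exp$. You are slightly more careful than the paper in spelling out that compactness alone does not make a topological group pro-finite --- one also needs Hausdorff and totally disconnected, both of which hold here since $\L\cong\Zl^{n}$ --- and your closing remark that condition (1) of Definition \ref{admiss} is not used in this lemma is a correct and useful observation.
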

\begin{proof}
We may regard $\exp(\fu)$ as being the set $\fu$, with multiplication given by the Campbell-Baker-Hausdorff formula (which has only finitely many terms in this case, since $\fu$ is nilpotent). Since $\L$ is closed under all the operations in the  formula, it is closed under multiplication. As $\exp$ is a homeomorphism, $\exp(\L)$ is compact and thus pro-finite.
\end{proof}

\subsection{Pro-$\Ql$-algebraic homotopy types}
We now proceed as in \S \ref{sagpdsn}, extending to a simplicial framework in order to study the loop groupoid (and hence the whole homotopy type), rather than just the fundamental groupoid.

\begin{definition}\label{salg} 
Given a pro-simplicial groupoid $G$ with $\Ob(G)$ a discrete set, we define the \emph{pro-algebraic completion} $G^{L,\alg} \in s\agpd$ to represent the functor
\begin{eqnarray*}
s\agpd  &\to& \Set\\
H &\mapsto &\Hom_{s\Top\gpd}(G^{\wedge_L}, H(\Ql)),
\end{eqnarray*}
where $\Top\gpd$ denotes the category of topological groupoids. Note that Lemma \ref{levelwiseproL} implies that we can compute this levelwise by $(G^{L,\alg})_n = (G_n)^{L,\alg}$.
\end{definition}

\begin{remark}\label{nomodel}
It is natural to ask whether $G \mapsto G^{L, \alg}$ is left Quillen for any suitable model structure on pro-$L$ simplicial groupoids. This cannot be the case, since the functor is not even a left adjoint, essentially because $\Ql$ is not pro-finite. 
\end{remark}

\begin{definition}\label{smalcev}
Given a pro-simplicial groupoid $G$ with $\Ob(G)$  discrete,
a reductive pro-algebraic groupoid $R$ over $\Ql$, and a Zariski-dense continuous map
$$
\rho\co \pi_0(G)^{\wedge_L} \to R(\Ql),
$$
where the latter is given the $\ell$-adic topology, we define the \emph{relative Malcev completion} $G^{L,\rho,\mal} \in s\cE(R) \subset s\agpd \da R$ by $(G^{L,\rho,\mal})_n := (G_n)^{L,\rho\circ a_n,\mal}$, for $a_n\co G_n \to \pi_0G$ the canonical map.

Note that $\pi_0(G^{L,\rho,\mal})= \pi_0(G)^{L,\rho,\mal}$.
\end{definition}

\begin{lemma}\label{admissiblelattice}
If the continuous action of a pro-finite group $\Gamma$ on $\fu_{\bt} \in s\cN_{\Ql}$ is semisimple, then $\fu$ is the union of its $\Gamma$-equivariant simplicial admissible sublattices.
\end{lemma}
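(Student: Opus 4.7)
The plan is to show that every finite subset $F \subset \bigsqcup_n \fu_n$ is contained in a $\Gamma$-equivariant simplicial admissible sublattice of $\fu$; since such sublattices then form a filtered system under inclusion, this exhibits $\fu$ as their union. I proceed in two stages: first I construct a $\Gamma$-stable simplicial Lie sub-$\Ql$-algebra $V \subset \fu$ with each $V_n$ finite-dimensional and containing $F \cap \fu_n$, and then I drop an admissible $\Gamma$-stable simplicial lattice into $V$.

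For the first stage, I take the $\Ql$-linear simplicial closure of $F$ inside $\fu$. Face maps reduce simplicial degree, so only finitely many iterates arise from a finite set, and the simplicial identities let one write every element uniquely as an iterated degeneracy of a non-degenerate one, so degeneracies of a finite set contribute only finitely many elements per level; hence this closure has finite-dimensional levels. Using that the continuous semisimple $\Gamma$-action on each (finite-dimensional) $\fu_n$ puts any finite-dimensional subspace inside a finite-dimensional $\Gamma$-stable one (a finite sum of isotypic components), then alternately closing under the Lie bracket (finite-dimensional by nilpotency of $\fu_n$) and re-imposing $\Gamma$-stability, the process terminates level-by-level because each enlargement sits in the finite-dimensional $\fu_n$. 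The outcome $V$ is simplicial since face and degeneracy maps are $\Gamma$-equivariant Lie homomorphisms.

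For the second stage, Lemma~\ref{serrelattice} applied to the continuous $\Gamma$-action on each $V_n$ provides a $\Gamma$-stable $\Zl$-lattice $\L_n^{(0)}$; the $\Zl$-submodule generated by all CBH-monomial images of $\L_n^{(0)}$ remains a lattice (contained in $\ell^{-N}\L_n^{(0)}$ for some $N$) because nilpotency of $V_n$ leaves only finitely many nonzero monomials, each with bounded denominator, so we obtain a $\Gamma$-stable admissible lattice at each level. The main obstacle is coordinating these lattices across simplicial levels so that $\pd_i\L_n \subset \L_{n-1}$ and $\sigma_j\L_n \subset \L_{n+1}$: I would resolve this by first choosing the initial lattices coherently and then passing to a common scalar multiple $\ell^k\L$ large enough that the finitely many simplicial maps between the finitely many levels meeting $F$ all carry lattice into lattice. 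Nilpotency uniformly bounds the CBH denominators and finite-dimensionality uniformly bounds the scaling required, so one such $k$ suffices and yields the desired admissible simplicial sublattice containing $F$.
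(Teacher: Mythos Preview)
Your Stage 2 has a genuine gap in the simplicial coordination. You propose to achieve $\pd_i\L_n \subset \L_{n-1}$ and $\sigma_j\L_n \subset \L_{n+1}$ by ``passing to a common scalar multiple $\ell^k\L$,'' but uniform scaling of all levels does nothing for these inclusions (they hold for $\ell^k\L$ iff they already hold for $\L$), and non-uniform scaling cannot be arranged across the \emph{infinitely many} simplicial levels: requiring $k_n-k_{n-1}$ large for the face maps and $k_n-k_{n+1}$ large for the degeneracies is contradictory. Your appeal to ``the finitely many levels meeting $F$'' is mistaken: a simplicial sublattice must live and be compatible at every level $n\ge 0$, and $V_n\ne 0$ for all $n$ once $V_0\ne 0$. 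You also never explain why the final lattice contains $F$; scaling down to gain compatibility loses $F$, while scaling up and re-closing under CBH destroys any compatibility you had. Incidentally, Stage 1 is largely superfluous: since each $\fu_n$ is already finite-dimensional (this is part of $\fu\in s\cN_{\Ql}$), you could take $V=\fu$ from the start.

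The paper's argument sidesteps all of this by using semisimplicity differently. It chooses once and for all a $\Gamma$-equivariant simplicial complement $V_{\bt}$ of $[\fu_{\bt},\fu_{\bt}]$, finds a single $\Gamma$-equivariant \emph{simplicial} lattice $\L_{\bt}\subset V_{\bt}$ (the normalisation $NV$ is bounded since $N\fu$ is, so via Dold--Kan one need only arrange $\Gamma$-stable lattices $M_i\subset (NV)_i$ in finitely many degrees with $dM_i\subset M_{i-1}$, which is easy by descending induction), and then forms the CBH-closures $g(\ell^{-n}\L_{\bt})$. These are automatically simplicial because the bracket commutes with $\pd_i,\sigma_j$, automatically admissible and $\Gamma$-stable, and the family exhausts $\fu$ since $\bigcup_n\ell^{-n}\L_{\bt}=V_{\bt}$ surjects onto $\fu/[\fu,\fu]$. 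The economy is that one fixed simplicial lattice in the abelianisation, scaled by powers of $\ell$ and CBH-closed, generates the entire exhausting family; no per-level or per-$F$ bookkeeping is needed.
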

\begin{proof}
Since the action of $\Gamma$ is semisimple, we may take a complement $V_{\bt} \subset \fu_{\bt}$ of $[\fu_{\bt},\fu_{\bt}]$ as a simplicial $\Gamma$-representation. Given a lattice $M \subset V$, let $g(M)\subset \fu$ denote the $\Zl$-submodule generated by $M$ and the operations in the Campbell-Baker-Hausdorff formula. Since $\fu$ is nilpotent, it follows that $g(M)$ is a finitely generated $\Zl$-module, and hence a lattice in $\fu$. By semisimplicity  and Lemma \ref{serrelattice}, there exists a $\Gamma$-equivariant lattice $\L_{\bt} \subset V_{\bt}$. The lattices $\ell^{-n}\L_{\bt} \subset V_{\bt}$ are also then $\Gamma$-equivariant for $n\ge 0$, so the lattices $g(\ell^{-n}\L_{\bt})\subset \fu_{\bt}$ are all admissible.

It only remains to show that $\bigcup  g(\ell^{-n}\L) \to \fu$ is a surjective map of Lie algebras. This follows since $ \bigcup  \ell^{-n}\L \to \fu/[\fu,\fu]$ is  surjective. 
\end{proof}

\begin{lemma}\label{cladic} 
Given a compact topological 
space $K$  and a finite-dimensional nilpotent $\Ql$-Lie algebra $\fu$, the map
$$
\Hom_{\cts}(K, \Zl)\ten_{\Zl} \fu \to \Hom_{\cts}(K, \fu)
$$
is an isomorphism.
\end{lemma}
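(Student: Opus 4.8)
The plan is to reduce to the case of $\Zl$-coefficients and then exploit that $\fu$, being a finite-dimensional $\Ql$-vector space, has a cofinal system of lattices. First I would fix a basis $e_1,\dots,e_d$ of $\fu$ over $\Ql$, inducing an isomorphism $\fu \cong \Ql^d$ of $\Ql$-vector spaces (ignoring the Lie structure, which plays no role in the statement). Under this identification the map in question becomes the natural map
\[
 \Hom_{\cts}(K,\Zl)\ten_{\Zl}\Ql^d \to \Hom_{\cts}(K,\Ql^d),
\]
so it suffices to prove the statement with $\fu$ replaced by $\Ql$, i.e. that $\Hom_{\cts}(K,\Zl)\ten_{\Zl}\Ql \to \Hom_{\cts}(K,\Ql)$ is an isomorphism (then tensor with the finite-dimensional space $\Ql^d$, which commutes with everything in sight since $\Hom_{\cts}(K,-)$ preserves finite products and $\ten$ is additive).

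Next I would observe that $\Hom_{\cts}(K,\Zl)\ten_{\Zl}\Ql = \Hom_{\cts}(K,\Zl)[\tfrac1\ell] = \LLim_n \ell^{-n}\Hom_{\cts}(K,\Zl)$, the colimit being along multiplication-by-$\ell$ maps. So the claim is that every continuous map $f\co K \to \Ql$ has image in $\ell^{-n}\Zl$ for some $n$, and that a map $K\to\Zl$ becomes zero in $\Hom_{\cts}(K,\Ql)$ only if it is already torsion (which is automatic, as $\Hom_{\cts}(K,\Zl)$ is torsion-free). Injectivity is therefore immediate. For surjectivity: since $K$ is compact and $f$ is continuous, $f(K)\subset\Ql$ is compact, hence bounded in the $\ell$-adic metric, hence contained in $\ell^{-n}\Zl$ for some $n\ge 0$; then $\ell^n f$ is a continuous map $K\to\Zl$, and $f = \ell^{-n}\cdot(\ell^n f)$ lies in the image of $\ell^{-n}\Hom_{\cts}(K,\Zl)$. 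This proves the $d=1$ case, and hence the general case.

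I do not expect a serious obstacle here; the only point requiring a little care is the commutation of the various functors, namely that $\Hom_{\cts}(K,-)$ and $-\ten_{\Zl}\Ql = -[\tfrac1\ell]$ both commute with finite direct sums (equivalently, that $\Hom_{\cts}(K,\fu)\cong\Hom_{\cts}(K,\Zl^d)\ten\Ql\cong\bigoplus^d\Hom_{\cts}(K,\Zl)\ten\Ql$), together with the elementary topological fact that a continuous map from a compact space to $\Ql^d$ has bounded (hence lattice-contained) image. If one prefers a coordinate-free formulation, one can replace the choice of basis by the observation that $\fu = \LLim_n \L_n$ for $\{\L_n\}$ any cofinal system of lattices and that $\Hom_{\cts}(K,-)$ commutes with this filtered colimit because $K$ is compact (so any continuous map to $\fu$ factors through some $\L_n$); this is really the same argument repackaged.
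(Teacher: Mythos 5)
Your proof is correct and follows essentially the same line as the paper's: both reduce surjectivity to the fact that a continuous map from a compact $K$ has image in some lattice $\L\subset\fu$, and then invoke $\Hom_{\cts}(K,\L)\cong\Hom_{\cts}(K,\Zl)\ten_{\Zl}\L$ for $\L$ finite free over $\Zl$. The only cosmetic difference is that the paper phrases the lattice as an \emph{admissible} sublattice (citing Lemma \ref{admissiblelattice}) for consistency with its surrounding arguments, whereas you correctly observe that the Lie structure plays no role here and any lattice will do.
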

\begin{proof}
First observe that the map is clearly injective, since $\fu$ is a flat $\Zl$-module. For surjectivity, note that the image of $f\co  K \to \fu$ must be contained in an admissible sublattice $\L \subset \fu$ (by compactness and Lemma \ref{admissiblelattice}). Now,
$$
\Hom_{\cts}(K, \L)\cong \Hom_{\cts}(K, \Zl)\ten_{\Zl}\L,
$$
since $\L$ is a finite free $\Zl$-module. 
\end{proof}

\begin{definition}
Given a continuous representation $V$ of $\widehat{\pi_fX}$ in $\Ql$-vector spaces,  recall the standard definition that 
$$
\H^*(X,V):= \H^*(X,\L)\ten_{\Zl}\Ql,
$$
for any $\pi_fX$-equivariant  $\Zl$-lattice $\L \subset V$ as in Lemma \ref{serrelattice}, and  $\H^*(X,\L)$ as in Definition \ref{prospaceprocohodef}.
\end{definition}

\begin{remark}\label{discretecoho}
If $X$ is discrete, note that this is not in general the same as cohomology $\H^n(X,V^{\delta})$ of the discrete $\pi_fX$-representation $V^{\delta}$ underlying $V$. However, both will coincide if   $\H_n(G,\L^{\vee})$ has finite rank, by the Universal Coefficient Theorem and Lemma \ref{ctscohodiscrete}. 
\end{remark}

\begin{example}\label{ethtpy}
If $X$ is a locally Noetherian simplicial scheme, we may consider the \'etale topological type $X_{\et} \in \pro(\bS)$, as defined in \cite[Definition 4.4]{fried}. Since $(X_{\et})_0$ is the set of geometric points of $X_0$, we may then apply the constructions of this section. For a finite local system $M$ on $X$, we have
$$
\H^*(X_{\et}, M)\cong \H^*_{\et}(X,M),
$$
by \cite[Proposition 5.9]{fried}. For an inverse system $M=\{M_i\}$ of local systems, we have
$$
\H^*(X_{\et}, M)= \H^*(\Lim_i\CC^{\bt}_{\et}(X,M_i))= \H^*_{\et}(X, (M)),
$$
where $\CC^{\bt}_{\et}$ is a variant of the Godement resolution and $ \H^*_{\et}(X, (M))$ is Jannsen's continuous \'etale cohomology (\cite{jancts}). If the groups  $\H^*_{\et}(X, M_i)$ satisfy the Mittag-Leffler condition (in particular, if they are finite), then 
$$
\H^*(X_{\et}, M)\cong\Lim_i \H^*_{\et}(X,M_i).
$$

\cite[Theorem 7.3]{fried}  shows that $X_{\et}\in \hat{\bS}$ whenever the schemes $X_n$ are connected and geometrically unibranched. It seems that this result can be extended to simplicial schemes (or even simplicial algebraic spaces) for which the homotopy groups $\pi_m^{\et}(X_n)$ satisfy the $\pi_*$-Kan condition (\cite[\S IV.4]{sht}), provided the simplicial set $\pi(X)_{\bt}$, given by $\pi(X)_n:= \pi(X_n)$, the set of connected components of $X_n$,  has finite homotopy groups.  
\end{example}

\begin{proposition}\label{algcoho}
Take $X \in \pro(\bS)$ with $X_0$ discrete, and a Zariski-dense continuous map
$$
\rho\co \pi_f(X)^{\wedge_L} \to R(\Ql),
$$
for $\ell \in L$, with $\Ob R  = \Ob \pi_f(X)$. Then $G(X)^{L, \rho, \mal}$ is cofibrant (for the model structure of Lemma \ref{cmsagpd}), the map  $G(X)^{L', \rho, \mal}\to G(X)^{L, \rho, \mal}$ is an isomorphism for all $L \subset L'$, and 
$$
\H^*(G(X)^{L, \rho, \mal},V) \cong \H^*(X, \rho^*V).
$$
\end{proposition}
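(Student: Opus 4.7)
The plan is to prove the three assertions in order, leveraging the free structure of $G(X)$ throughout. For cofibrancy, observe that $G(X)$ is levelwise free in $s\gpd$, with $G(X)_n$ a free groupoid on generators arising from the edges of $X$. The relative Malcev completion in each simplicial level (Definition \ref{smalcev}) yields $G(X)^{L,\rho,\mal}_n \cong \tilde{R}_n \ltimes \exp(\fu_n)$ with $\fu_n$ a free pro-nilpotent $\tilde{R}_n$-Lie algebra, and this free structure is preserved by the degeneracies. Cofibrancy in $s\agpd$ then follows from Lemma \ref{cmsagpd} by verifying the lifting property against trivial fibrations inductively on simplicial degree and on free generators.

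For the independence of $L$, fix $\ell \in L \subset L'$ and let $U_n$ denote the pro-unipotent radical of $G(X)^{L,\rho,\mal}_n$. By Lemma \ref{admissiblelattice} (whose semisimplicity hypothesis is satisfied because $\tilde{R}_n$ is reductive in characteristic zero), the Lie algebra $\fu_n$ is exhausted by admissible sublattices, and Lemma \ref{admissprop} then implies that every compact subgroup of $U_n(\Ql)$ is pro-$\ell$. Any continuous map from $\pi_f(X)^{\wedge_{L'}}$ into $U_n(\Ql)$ therefore has pro-$\ell$ image and factors through the pro-$\ell$ quotient of $\pi_f(X)^{\wedge_{L'}}$, which coincides with that of $\pi_f(X)^{\wedge_L}$ since $\ell \in L$. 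The universal property of the relative Malcev completion then delivers the required isomorphism $G(X)^{L',\rho,\mal} \xra{\sim} G(X)^{L,\rho,\mal}$.

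For the cohomology isomorphism, cofibrancy permits the invocation of Lemma \ref{cohomaps}, which identifies $\H^i(G(X)^{L,\rho,\mal}, V)$ with the set of homotopy classes of morphisms $G(X)^{L,\rho,\mal} \to G(X)^{L,\rho,\mal} \ltimes (N^{-1}V[1-i])$ in $s\agpd \da G(X)^{L,\rho,\mal}$. Choosing an admissible $\tilde{R}$-equivariant lattice $\L \subset V$ via Lemma \ref{admissiblelattice}, the universal property of the relative Malcev completion together with Lemma \ref{cladic} translates such morphisms into continuous $\Zl$-valued derivations $G(X) \to N^{-1}\L[1-i]$; the homotopy relation corresponds to cochain coboundaries via the explicit path object description in $s\agpd$ from \cite{htpy}, so tensoring with $\Ql$ produces $\H^i(\CC^{\bt}(X,\rho^*\L)) \ten_{\Zl} \Ql = \H^i(X, \rho^*V)$. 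The main obstacle here is this last step: reconciling the simplicial homotopy relation on the algebraic side with the cochain coboundary relation on the continuous side, which requires carefully tracking admissible lattices through the universal property and the path object via Lemma \ref{cladic}, after which the argument parallels the classical case treated in Lemma \ref{cohomalworks}.
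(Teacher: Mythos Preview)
Your approach has a genuine gap in the cofibrancy argument, and it stems from treating $X$ as if it were a simplicial set rather than a pro-simplicial set. When $X=\{X_\alpha\}\in\pro(\bS)$, the loop groupoid $G(X)$ is the pro-object $\{G(X_\alpha)\}$, so $G(X)_n$ is an inverse system of free groupoids, not a free groupoid. Its pro-$L$ completion followed by relative Malcev completion need not have the form $\tilde{R}\ltimes\exp(\fu_n)$ with $\fu_n$ free pro-nilpotent, and there is no evident set of ``free generators'' on which to run your inductive lifting. Remark~\ref{nomodel} flags precisely this difficulty: $(-)^{L,\alg}$ is not a left Quillen functor, so cofibrancy cannot be inherited formally from the cofibrancy of $G(X)$ in any ambient model structure. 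Even in the special case $X\in\bS$, you would still need to argue that the relative Malcev completion of a pro-$L$-completed free group has free pro-unipotent radical, which is not a tautology.

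There is also a slip in your $L$-independence argument: the maps one must analyse are from $G(X)_n^{\wedge_{L'}}$ into $\exp(\fu_n)\rtimes R$, not from $\pi_f(X)^{\wedge_{L'}}$. Your underlying intuition---that compact subgroups of $\exp(\fu)(\Ql)$ land in admissible lattices and are therefore pro-$\ell$---is correct and is indeed the engine of the paper's argument, but it must be applied at each simplicial level and with care about the coskeletal bound on $\fu$.

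The paper avoids all of these structural questions by never invoking freeness. Instead, it tests $G(X)^{L,\rho,\mal}$ against arbitrary $\fu\in s\cN(R)$, uses boundedness of $N\fu$ together with compactness to land in an admissible lattice $\L\subset\fu$, and then applies the adjunction $G\dashv\bar{W}$ to translate into $\Hom_{\pro(\bS)}(X,\bar{W}(\exp(\L)\rtimes\Delta))_{\bar{W}\Delta}$. This expression is manifestly independent of $L$. For cofibrancy, the paper lifts against an acyclic small extension $p\co\fu\to\fv$ by choosing $\L$ so that each $\L/\ell^n\to p(\L)/\ell^n$ is acyclic, and then climbs the tower in $n$; this is where the pro-structure on $X$ is genuinely accommodated. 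The cohomology computation follows the same pattern. Your Lemma~\ref{cohomaps}/lattice strategy for cohomology is close in spirit, but it presupposes cofibrancy, so the argument cannot be completed until that is established by some pro-space method.
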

\begin{proof}
Let $\Delta \le R(\Ql)$ be the image of $\rho$. Write $\{X_{\alpha}\}_{\alpha \in \bI}$ for the inverse system $X$.
For $\fu \in s\cN(R)$, 
$$
\Hom_{s\Top\gpd}(G(X)^{\wedge_L}, \exp(\fu)\rtimes R)_{R}= \Hom_{s\Top\gpd}(G(X)^{\wedge_L}, \exp(\fu)\rtimes \Delta)_{\Delta}.
$$
Since $\fu \in s\cN(R)$, the normalisation $N\fu$ is bounded in degrees $\le n$, say. This implies that $\fu= \cosk_{n+1}\fu$, the $n+1$-coskeleton, or equivalently that any simplicial morphism $Y \to \fu$ is determined by the maps $Y_i \to \fu_i$ for $i \le n+1$.

Thus any  morphism $f\co  G(X)^{\wedge_L}\to \exp(\fu)\rtimes \Delta $ is determined by the maps $f_i \co  G(X)^{\wedge_L}_i\to \exp(\fu_i)\rtimes \Delta$ for $i \le n+1$. 
 Now, by Lemma \ref{admissiblelattice}, $\exp(\fu)\rtimes \Delta$ is the union over all admissible $\Delta$-equivariant sublattices $\L \subset \fu$ of $\exp(\L)\rtimes \Delta $.
Since each $G(X)^{\wedge_L}_i$ is compact, its image in  $\exp(\fu_i)\rtimes \Delta$ must be contained in $\exp(\L_i)\rtimes \Delta$ for some admissible $\L\subset \fu$.  By choosing $\L$ large enough that this holds for all $i \le n+1$, we see that
\begin{eqnarray*}
\Hom_{s\Top\gpd}(G(X)^{\wedge_L}, \exp(\fu)\rtimes R)_{R}&=& \lim_{\substack{\lra \\ \L \subset \fu  \text{ admissible}}}\Hom_{s\Top\gpd}(G(X)^{\wedge_L}, \exp(\L)\rtimes \Delta)_{\Delta}\\
&=& \lim_{\substack{\lra \\ \L \subset \fu  \text{ admissible}}}\Hom_{s\pro(\gpd^L)}(G(X)^{\wedge_L}, \exp(\L)\rtimes \Delta)_{\Delta},
\end{eqnarray*}
because $\pro(\gpd^L)$ is a full subcategory of $\Top\gpd$. Here, $\exp(\L)\rtimes \Delta \in \pro(s\gpd^L) $ denotes the pro-object $\{ (\exp(\L)/\exp(\ell^m\L))\rtimes \Delta\}_m$. From now on, we will abuse notation by writing $\exp(\L/\ell^n\L)$ or even $\exp(\L/\ell^n)$ for the finite group $\exp(\L)/ \exp(\ell^n\L)$.

Now, since $\L= \cosk_{n+1}\L$, any morphism $f\co H \to \exp(\L/\ell^m\L )\rtimes \Delta $ is determined by the maps $f_i$ for $i \le n+1$. As  $\exp(\L/\ell^m\L)$ is levelwise finite, and filtered colimits commute with finite limits, this means that 
$$
\Hom_{s\pro(\gpd^L)}(G(X)^{\wedge_L}, \exp(\L)\rtimes \Delta)_{\Delta}= \Hom_{\pro(s\gpd^L)}(G(X)^{\wedge_L}, \exp(\L)\rtimes \Delta)_{\Delta}.
$$
Hence
$$
\Hom_{s\Top\gpd}(G(X)^{\wedge_L}, \exp(\fu)\rtimes R)_{R}= \lim_{\substack{\lra \\ \L \subset \fu  \text{ admissible}}}\Hom_{\pro(s\gpd^L)}(G(X), \exp(\L)\rtimes \Delta)_{\Delta}.
$$

Under the adjunction $G \dashv \bar{W}$, 
this becomes  
$$
\lim_{\substack{\lra \\ \L \subset \fu  \text{ admissible}}}\Hom_{\pro(\bS)}(X, \bar{W}(\exp(\L)\rtimes \Delta))_{\bar{W}\Delta}.
$$ 
This expression is independent of $L$, so we have shown that $G(X)^{L', \rho, \mal}\to G(X)^{L, \rho, \mal}$ is an isomorphism for all $L \subset L'$.

For $p\co \fu \to \fv$ an acyclic small extension with kernel $I$ in $s\cN(R)$, and an admissible lattice $\L' < \fu$, consider the map $\L' \to p(\L')$. This is surjective, and $\H_*(\L'\cap I)\ten \Ql=0$, since $ (\L'\cap I)\ten \Ql \cong I$. As $\H_*(I)=0$, we may choose a $\Delta$-equivariant lattice $\L'\cap I < M < I$ such that $\H_*(M/\ell M)=0$. Let $\L:=\L'+M$, noting that this is an admissible lattice ($p$ being small), with the maps $\L/\ell^n \to p(\L)/\ell^n$ all acyclic.

In order to show that $G(X)^{L, \rho, \mal}$ is cofibrant, take an arbitrary map 
$f\co G(X)^{\wedge_L}\to \exp(\fv)\rtimes \Delta$ over $\rho$; this must factor through $\exp(p(\L'))$ for some admissible lattice $\L' < \fu$, and we may replace $\L$ by $\L'$ as above. It therefore suffices to show that the corresponding map
$$
f\co  X \to \bar{W}( \exp(p(\L))\rtimes \Delta)
$$ 
in $\pro(\bS)$ lifts to $\bar{W}( \exp(\L)\rtimes \Delta)$. For each $n\in \N$, we have a map
$$
f_n\co  X_{\alpha(n)} \to  \bar{W}( \exp(p(\L)/\ell^n)\rtimes \Delta),
$$
and these are compatible with the structural morphisms. 

We now prove existence of the lift by induction on $n$. Assume we have $g_n\co  X_{\alpha(n)} \to  \bar{W}( \exp(\L/l^n)\rtimes \Delta)$, such that $p \circ g_n =f_n$. This gives a map
$$
(f_{n+1}, g_n)\co  X_{\alpha(n)} \to\bar{W}(\exp((p(\L)/\ell^{n+1})\by_{p(\L)/\ell^n}(\L/\ell^n))\rtimes \Delta).
$$
However, $\L/\ell^{n+1} \to (p(\L)/\ell^{n+1})\by_{p(\L)/\ell^n}(\L/\ell^n)$ is an acyclic small extension, so 
$$
\bar{W}(\exp(\L/\ell^{n+1})\rtimes \Delta)\to   \bar{W}(\exp((p(\L)/\ell^{n+1})\by_{p(\L)/\ell^n}(\L/\ell^n))\rtimes \Delta)
$$
is a trivial fibration, allowing us to construct a lift $g_{n+1}\co  X_{\alpha(n+1)} \to  \bar{W}( \exp(\L/\ell^{n+1})\rtimes \Delta)$. This completes the proof that $G(X)^{L, \rho, \mal}$ is cofibrant.

Finally, if $V$ is an $R$-representation then $\H^{n+1}(G(X)^{L,\rho, \mal},V)$ is  the coequaliser of the diagram
$$
\xymatrix@1{
\Hom_{s\agpd\da R}(G(X)^{L,\rho, \mal}, (N^{-1}V[-n])^{\Delta^1})\ar@<.5ex>[r] \ar@<-.5ex>[r] & \Hom_{s\agpd\da R}(G(X)^{L,\rho, \mal}, N^{-1}V[-n]).
} 
$$
For a $\Delta$-equivariant lattice $\L \subset V$, this is the direct limit over $m$ of 
$$
\xymatrix@1{
\Hom_{\pro(\bS\da \bar{W}R)}(X, \bar{W}((N^{-1}\ell^{-m}\L[-n])^{\Delta^1} \rtimes R)) \ar@<.5ex>[r] \ar@<-.5ex>[r] &  \Hom_{\pro(\bS\da \bar{W}R)}(X, \bar{W}(N^{-1} \ell^{-m}\L[-n] \rtimes R)).
} 
$$

Hence 
$$
\H^{n+1}(G(X)^{L,\rho, \mal},V)\cong \lim_{\substack{\lra \\ m}}\H^{n+1}(X,l^{-m}\L)= \H^{n+1}(X,\L)\ten \Ql= \H^{n+1}(X, V),
$$
as required.
\end{proof}

\begin{definition}\label{relmaldef}
Given $X$ and $\rho$ as above, define the \emph{relative Malcev homotopy type}
$$
X^{\rho, \mal}:= G(X)^{P,\rho, \mal},
$$
where $P$ is the set of all primes, noting that this is isomorphic to $G(X)^{L,\rho, \mal}$ for all $L \ni \ell$, by Proposition \ref{algcoho} and Lemma \ref{detectweak}.

Define
$$
X^{L, \alg}:= G(X)^{L,\alg}.
$$
\end{definition}

\begin{remark}
Note that if $X \in \bS$, this definition of Malcev completion differs slightly  from the Malcev homotopy type $X^{\rho, \mal}$ of Definition \ref{hrelmaldef}, which is given by $G(X)^{\rho, \mal}$. However, the following lemma rectifies the situation.
\end{remark}

\begin{lemma}\label{malagrees}
For $X \in \bS$ and $\rho\co  \pi_f(X)^{\wedge_L} \to R(\Ql)$ Zariski-dense and continuous, there is a canonical map
$$
G(X)^{\rho,\mal} \to G(X)^{L, \rho,\mal};
$$
this is a quasi-isomorphism whenever the groups $\H^n(X, V)$ are finite-dimensional for all finite-dimensional $R$-representations $V$.
\end{lemma}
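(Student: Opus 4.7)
The canonical map arises by universal property. The simplicial pro-$\Ql$-algebraic groupoid $G(X)^{L, \rho, \mal}$ is, level-wise, a pro-unipotent extension of $\tilde R$, and the composition
$$
G(X) \lra G(X)^{\wedge_L} \lra G(X)^{L, \rho, \mal}(\Ql)
$$
is a morphism of simplicial groupoids whose further composition with the projection to $\tilde R(\Ql)$ equals $\rho$. By the universal property of the abstract relative Malcev completion (Definition \ref{malcevdef}, applied level-wise as in \S \ref{malcevsn}), this factors uniquely through a morphism $G(X)^{\rho, \mal} \to G(X)^{L, \rho, \mal}$.

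For the quasi-isomorphism, I would apply Proposition \ref{detectweak}. Both groupoids are pro-unipotent extensions of $\tilde R$, so their reductive quotients are both $\tilde R$ and condition (1) of Proposition \ref{detectweak} is immediate. For condition (2), fix a finite-dimensional irreducible $\tilde R$-representation $V$ and a $\pi_f(X)$-invariant $\Zl$-lattice $\L \subset V$ (Lemma \ref{serrelattice}). By Proposition \ref{algcoho},
$$
\H^i(G(X)^{L, \rho, \mal}, V) \cong \H^i(X, \rho^\ast V) = \H^i(X, \L)_{\cts} \ten_{\Zl} \Ql,
$$
where the right-hand side is the continuous cohomology of Definition \ref{prospaceprocohodef}. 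On the other hand, Lemma \ref{cohomalworks} combined with Lemma \ref{whypercohocoho} identifies $\H^i(G(X)^{\rho, \mal}, V)$ with $\H^i(X, V)$, the classical cohomology of $X$ with coefficients in the local system underlying $V$.

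The main step is thus to compare these two cohomology groups under the finiteness hypothesis. The plan is to show that the pro-system $\{\H^i(X, \L/\ell^k)\}_k$ satisfies Mittag--Leffler, so that Remark \ref{mlworks} yields $\H^i(X, \L)_{\cts} \cong \Lim_k \H^i(X, \L/\ell^k)$, and then to use the Universal Coefficient short exact sequence associated to $0 \to \L \xra{\ell^k} \L \to \L/\ell^k \to 0$ to identify $\Lim_k \H^i(X, \L/\ell^k) \ten \Ql$ with $\H^i(X, \L) \ten \Ql = \H^i(X, V)$. This would verify condition (2) of Proposition \ref{detectweak} and complete the proof.

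The main obstacle is precisely this discrete-versus-continuous comparison: the given finiteness of $\H^\ast(X, V)$ for $\Ql$-coefficient representations $V$ must be propagated, via the Bockstein long exact sequences, to sufficient control over the torsion coefficient cohomology $\H^\ast(X, \L/\ell^k)$ to rule out non-trivial $\ell$-divisible or $\Lim^1$ contributions. This is the same technical issue that governs the comparison between \'etale and Jannsen's continuous \'etale cohomology, as noted in Example \ref{ethtpy}.
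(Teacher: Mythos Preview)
Your overall strategy is correct and matches the paper's: construct the map by universality, then verify the hypotheses of Proposition \ref{detectweak} by comparing cohomology. You also correctly identify the two cohomology theories in play (via Proposition \ref{algcoho} and Lemma \ref{cohomalworks}) and recognise that the crux is a discrete-versus-continuous comparison under the finiteness hypothesis.

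However, your execution of that comparison is more complicated than necessary, and you misplace where the finiteness hypothesis actually bites. Since $X$ is a genuine simplicial set (not a pro-object), Lemma \ref{ctscohodiscrete} applies directly: the cochain complex $\CC^\bt(X,\L)$ computing continuous cohomology is \emph{already} $\Lim_k \CC^\bt(X,\L/\ell^k)$, because $\Hom$ out of a set commutes with limits. So the continuous $\H^*(X,\L)$ equals the ordinary $\H^*(X,\L)$ on the nose, with no Mittag--Leffler or $\Lim^1$ argument required. Your entire first step (proving ML for $\{\H^i(X,\L/\ell^k)\}_k$) is therefore redundant.

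The place where finiteness is genuinely needed is the step you treat as automatic, namely $\H^i(X,\L)\ten_{\Zl}\Ql \cong \H^i(X,V)$. This is \emph{not} formal: for infinite $X$ the cochain functors $\CC^n(X,-)$ do not commute with the filtered colimit $V=\LLim \ell^{-k}\L$, and the two sides can differ. The paper handles this by invoking Remark \ref{discretecoho}, which is exactly the statement that continuous and discrete $\Ql$-cohomology agree once the relevant homology groups have finite rank, via the Universal Coefficient Theorem together with Lemma \ref{ctscohodiscrete}. So the finiteness hypothesis enters through UCT to control the passage $\L\rightsquigarrow V$, not through control of torsion-coefficient cohomology or $\Lim^1$.
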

\begin{proof}
Existence of the map is immediate. To see that it gives a quasi-isomorphism, Lemma \ref{detectweak} shows that we need only look at cohomology groups. Given an $R$-representation $V$ corresponding to a  local system $\vv$ over $\Ql$ on $X$, the map on cohomology groups is
$$
\H^*(X^{\wedge_L}, \vv) \to \H^*(X, \vv);
$$ 
 this is an isomorphism by Remark \ref{discretecoho}.
\end{proof}

\begin{definition}\label{varpi}
Define \emph{pro-algebraic} (or \emph{schematic}) and \emph{relative homotopy groups} by $\varpi_n(X^{\wedge_L}):= \pi_{n-1}(G(X)^{L, \alg})$ and $\varpi_n(X^{\rho, \mal}):= \pi_{n-1}(G(X)^{P,\rho, \mal})$.

Define pro-algebraic (or schematic) and relative fundamental groupoids by
$\varpi_f(X^{\wedge_L}):= \pi_f(X)^{L, \alg}$   and $\varpi_f(X^{\rho, \mal}):= \widehat{\pi_fX}^{\rho, \mal}$.

Define $\varpi_f(\hat{X}), \varpi_n(\hat{X})$ by the convention that $\hat{X}=X^{\wedge_{P}}$, for $P$ the set of all primes.
\end{definition}

Note that Lemma \ref{serrelattice} implies that for a locally Noetherian scheme $X$, finite-dimensional $\varpi_f(X_{\et}^{\wedge_L})$-representations correspond to smooth $\Ql$-sheaves on $X$.

The following now follow immediately from Lemma \ref{detectweak}
\begin{corollary}\label{detectweaket}
A map $f\co X \to Y$ in $\pro(\bS)$, with $X_0,Y_0$ discrete, induces an isomorphism
$$
f^{L, \alg}\co  X^{L, \alg}\to Y^{L,\alg}
$$ 
of homotopy types if and only if the following conditions hold:
\begin{enumerate}
\item $f^*$ induces an equivalence between the categories of finite-dimensional semisimple continuous $\Ql$-representations of $(\pi_fX)^{\wedge_L}$ and $(\pi_fY)^{\wedge_L}$;
\item for all finite-dimensional semisimple continuous $\Ql$-representations $V$ of $\pi_fY$, the maps
$$
f^*\co  \H^*(Y, V)\to \H^*(X,f^*V)
$$
are  isomorphisms.
\end{enumerate}
\end{corollary}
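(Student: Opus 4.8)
The plan is to apply Proposition~\ref{detectweak} to the induced morphism $f^{L,\alg}\co G(X)^{L,\alg}\to G(Y)^{L,\alg}$ of pro-algebraic simplicial groupoids, and to translate its two conditions into conditions~(1) and~(2) above via Tannakian duality and Proposition~\ref{algcoho}. Since every object of $s\agpd$ is fibrant and $G(X)^{L,\alg}$, $G(Y)^{L,\alg}$ are cofibrant by Proposition~\ref{algcoho}, the statement that $f^{L,\alg}$ is an isomorphism of homotopy types is precisely the statement that it is a weak equivalence, so Proposition~\ref{detectweak} is applicable.

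First I would identify the reductive quotients. Put $R_X:=(\pi_fX)^{L,\red}$; then $X^{L,\alg}=G(X)^{L,R_X,\mal}$ by Remarks~\ref{savemalcev}, with $\pi_0(G(X)^{L,\alg})=(\pi_fX)^{L,\alg}$, and since each $G(X)^{L,\alg}_n\to \pi_0$ is a pro-unipotent extension the reductive quotient $(G(X)^{L,\alg})^{\red}$ is the constant simplicial object $R_X$; similarly for $Y$. By Tannakian duality (Remarks~\ref{savemalcev} and~\ref{locsyslrk}), an equivalence $R_X\to R_Y$ of reductive pro-algebraic groupoids is the same datum as an equivalence between the categories of finite-dimensional $R_X$- and $R_Y$-representations, i.e. between finite-dimensional semisimple continuous $\Ql$-representations of $(\pi_fX)^{\wedge_L}$ and those of $(\pi_fY)^{\wedge_L}$, compatibly with $f^*$. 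This identifies the first condition of Proposition~\ref{detectweak} with condition~(1) above; the auxiliary requirement $f^{L,\alg}(\Ru(G(X)^{L,\alg}))\le\Ru(G(Y)^{L,\alg})$ is automatic, since $f^{L,\alg}$ lies over the induced map $R_X\to R_Y$ on Levi quotients and $\Ru$ is the kernel of the projection onto the Levi factor.

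Next I would identify the cohomology. For a finite-dimensional $R_X$-representation $V$, Proposition~\ref{algcoho} gives $\H^*(G(X)^{L,\alg},V)\cong\H^*(X,\rho_X^*V)$, where $\rho_X^*V$ is the associated semisimple $\Ql$-sheaf (Remark~\ref{locsyslrk}), and similarly over $Y$; moreover an irreducible $G(Y)^{L,\alg}$-representation is an irreducible $(\pi_fY)^{L,\alg}$-representation, hence, being irreducible, factors through $R_Y$, and so corresponds to an irreducible finite-dimensional semisimple continuous $\Ql$-representation of $(\pi_fY)^{\wedge_L}$. Thus the second condition of Proposition~\ref{detectweak} becomes: $\H^i(Y,V)\to\H^i(X,f^*V)$ is an isomorphism for all $i>0$ and all such irreducible $V$. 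Since continuous cohomology commutes with the finite direct sums through which every finite-dimensional semisimple representation decomposes, this is equivalent to the corresponding statement for all finite-dimensional semisimple $V$; and the degree-$0$ case is subsumed, because $\H^0(Y,V)=\Hom_{R_Y}(\mathbf{1},V)$ is already determined by the equivalence of categories in condition~(1). Combining the two translations shows that the conjunction of the conditions of Proposition~\ref{detectweak} is equivalent to conditions~(1) and~(2), proving the corollary.

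The step I expect to need the most care is the bookkeeping in the last paragraph: verifying that ``irreducible $G(Y)^{L,\alg}$-representation'' is literally the same notion as ``irreducible semisimple continuous $\Ql$-sheaf on $Y$'', that passing from irreducible to arbitrary semisimple coefficients and from positive degrees to all degrees is harmless, and that the reductive quotient appearing in Proposition~\ref{detectweak} coincides with the normalisation $R_X=(\pi_fX)^{L,\red}$ used in Proposition~\ref{algcoho}. None of this is a genuine obstacle, which is why the result is stated as following immediately.
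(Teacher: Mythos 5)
Your proof is correct and follows precisely the route the paper intends: the paper declares the corollary an immediate consequence of Proposition~\ref{detectweak}, and you spell out the translation via the cohomology identification of Proposition~\ref{algcoho} and Tannakian duality for the reductive quotients, including the useful observations that degree~$0$ and the passage from irreducible to semisimple coefficients come for free. One wording caveat worth flagging: $f^{L,\alg}(\Ru(G(X)^{L,\alg}))\le\Ru(G(Y)^{L,\alg})$ is \emph{not} automatic for an arbitrary morphism in $s\agpd$ (think of $\bG_a\into\SL_2$); rather it is forced by condition~(1), which guarantees that $f^*$ preserves semisimplicity and hence that the map on reductive quotients $R_X\to R_Y$ actually exists---your own parenthetical reasoning shows exactly this, so it is purely a matter of not calling it ``automatic.''
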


\begin{corollary}
Take a map $f\co X \to Y$ in $\pro(\bS)$, with $X_0,Y_0$ discrete, and with a Zariski-dense morphism $\rho\co  (\pi_f Y)^{\wedge_L} \to R(\Ql)$  such that $\rho \circ f \co (\pi_fX)^{\wedge_{L}}  \to R(\Ql)$ is also Zariski-dense.  Then $f$ induces an isomorphism
$$
f^{R, \mal}\co  X^{R, \mal}\to Y^{R, \mal}
$$ 
of homotopy types if and only if  for all $R$-representations $V$,  the maps
$$
f^*\co  \H^*(Y, \rho^*V)\to \H^*(X,f^*\rho^*V)
$$
are  isomorphisms.
\end{corollary}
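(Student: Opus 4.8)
\emph{Proof plan.} The plan is to deduce this from Proposition \ref{detectweak} together with the cohomology computation of Proposition \ref{algcoho}, exactly as Corollary \ref{detectweaket} is deduced from Lemma \ref{detectweak}. Set $G := G(X)^{P,\rho,\mal}$ and $K := G(Y)^{P,\rho,\mal}$, so that $f^{R,\mal}$ is the induced morphism $G \to K$ in $s\agpd$. By Definition \ref{smalcev}, both $G$ and $K$ lie in $s\cE(\tilde R)$, i.e.\ are pro-unipotent extensions of the reductive pro-algebraic groupoids $\tilde R_X$ (on objects $X_0$) and $\tilde R_Y$ (on objects $Y_0$) respectively, and both are cofibrant by Proposition \ref{algcoho}. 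One then verifies the two conditions of Proposition \ref{detectweak} for $f^{R,\mal}$.

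First I would dispose of condition (1). Since $G$ and $K$ are pro-unipotent extensions, $f^{R,\mal}$ automatically carries $\Ru(G)$ into $\Ru(K)$, and the induced map on reductive quotients is the canonical functor $\tilde R_X \to \tilde R_Y$, given by $f_0\co X_0 \to Y_0$ on objects and by the identity $R(\rho f(x),\rho f(x')) = R(\rho(f_0x),\rho(f_0x'))$ on morphisms; this is fully faithful by construction and essentially surjective because both $\rho$ and $\rho\circ f$ are essentially surjective on objects over $R$ (this is the only use of the Zariski-density of $\rho\circ f$), hence an equivalence, so condition (1) is automatic. For condition (2): since $K$ is a pro-unipotent extension of $\tilde R_Y$, every finite-dimensional irreducible $K$-representation has a nonzero space of $\Ru(K)$-invariants and so factors through $\tilde R_Y$, hence corresponds to an irreducible $R$-representation; and as $R$ is reductive and $k$ has characteristic zero, an arbitrary $R$-representation is a filtered colimit of finite direct sums of irreducibles, so a cohomology isomorphism for all irreducible $R$-representations is equivalent to one for all $R$-representations. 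By Proposition \ref{algcoho}, applied functorially in the pro-space, the comparison map $\H^i(K,V)\to\H^i(G,(f^{R,\mal})^*V)$ is canonically identified with $f^*\co \H^i(Y,\rho^*V)\to\H^i(X,f^*\rho^*V)$ for every $R$-representation $V$. Combining these, $f^{R,\mal}$ is a weak equivalence iff condition (2) holds iff $f^*$ is an isomorphism on $\H^i(-,\rho^*V)$ for all $i>0$ and all $R$-representations $V$; the degree-zero statement is subsumed in the equivalence of reductive quotients established above (which identifies $\H^0(Y,\rho^*V)=V^{\tilde R_Y}$ with $\H^0(X,f^*\rho^*V)=V^{\tilde R_X}$), so this is precisely the condition in the corollary. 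For the reverse implication in all degrees, I would invoke the remark following Proposition \ref{detectweak} that a weak equivalence of pro-algebraic simplicial groupoids induces isomorphisms on hypercohomology, again combined with Proposition \ref{algcoho}.

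The only point requiring care is the object bookkeeping: $\tilde R_X$ and $\tilde R_Y$ have different object sets, so one must check that $\tilde R_X \to \tilde R_Y$ really is an equivalence under the stated hypotheses and that it is genuinely the map induced by $f^{R,\mal}$ on reductive quotients. Granting that, the argument is a direct transcription of the proof of Corollary \ref{detectweaket}, with $\H^*(G(-)^{L,\alg},-)$ replaced by $\H^*(G(-)^{P,\rho,\mal},-)$ and ``semisimple'' replaced by ``appearing in an $R$-representation''.
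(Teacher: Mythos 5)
Your proof is correct and takes essentially the same approach the paper intends: the paper states that this corollary (like Corollary \ref{detectweaket}) ``follows immediately from Lemma \ref{detectweak}'' --- i.e.\ Proposition \ref{detectweak} --- and your argument is the careful verification of its two conditions via the cohomology identification of Proposition \ref{algcoho}. Your attention to the object-set bookkeeping for $\tilde R_X \to \tilde R_Y$, to passing between irreducible $K$-representations and arbitrary $R$-representations, and to the degree-zero case, fills in exactly what the paper leaves implicit.
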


\subsection{Equivariant cochains}
Proposition \ref{propforms} showed how the schematic homotopy type of a manifold can be recovered from the de Rham complex with local system coefficients. We will now establish an analogue for algebraic varieties, involving an \'etale Godement resolution with coefficients in smooth $\Ql$-sheaves.

\begin{lemma}\label{wworks}
If $\L$ is a 
$\Gamma$-representation in pro-simplicial groups such that $\L \rtimes \Gamma \in \pro(s\gpd)$, 
then
$$
\Hom_{\Gamma,\pro(\bS)}(\widetilde{X}, \bar{W}\L) \cong \Hom_{\pro(\bS) \da B\Gamma}(X, \bar{W}(\L \rtimes \Gamma)),
$$
for $\widetilde{X} $ as in Definition \ref{widetildedef}.
\end{lemma}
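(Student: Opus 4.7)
The plan is to identify $\bar W(\L\rtimes\Gamma)$ as a twisted product over $B\Gamma$, and then deduce the bijection from the universal property of pullbacks. Concretely, I will establish a natural isomorphism
$$
\bar W(\L\rtimes\Gamma) \;\cong\; (\bar W\L \times W\Gamma)/\Gamma,
$$
compatibly with the projections to $B\Gamma = W\Gamma/\Gamma$, where $\Gamma$ acts diagonally via its given action on $\L$ (hence on $\bar W\L$) and its canonical free right action on $W\Gamma$. This is an object-wise statement in $\Ob\Gamma$, with $W\Gamma$ understood as the collection $\{W\Gamma\}$ indexed by targets.

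To verify this identification, I would unpack the explicit formulae of Definition \ref{wdef}. An $n$-simplex of $W(\L\rtimes\Gamma)$ at $x\in\Ob\Gamma$ is a sequence $(h_n,\ldots,h_0)$ with $h_i\in(\L\rtimes\Gamma)_i$; each $h_i$ decomposes canonically as a pair $(\lambda_i,g_i)$. One then checks directly that the face and degeneracy operators, together with left multiplication by $\L\rtimes\Gamma$, respect this splitting: the twists appearing in $\pd_i$ are precisely the $\Gamma$-action on $\L$ built into the semidirect product structure. This gives a $(\L\rtimes\Gamma)$-equivariant decomposition $W(\L\rtimes\Gamma)\cong W\L\times W\Gamma$ in which $\L$ acts trivially on the $W\Gamma$-factor; passing to $\L$- and then $\Gamma$-coinvariants yields the required description of $\bar W(\L\rtimes\Gamma)$.

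Given this identification, the bijection follows routinely. By Definition \ref{widetildedef}, $\widetilde X$ is object-wise the pullback of the principal $\Gamma$-action $W\Gamma \to B\Gamma$ along the structure map $X \to B\Gamma$, since $B(\Gamma \da a)$ is the fibre of $W\Gamma \to B\Gamma$ over $a\in\Ob\Gamma$. A map $X \to \bar W(\L\rtimes\Gamma)$ over $B\Gamma$ therefore corresponds, by the universal property of the quotient $(\bar W\L \times W\Gamma)/\Gamma$, to a $\Gamma$-equivariant map $\widetilde X \to \bar W\L \times W\Gamma$ covering the identity on $W\Gamma$; projecting onto the first factor, this is exactly a $\Gamma$-equivariant map $\widetilde X \to \bar W\L$. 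Naturality in filtered inverse systems extends the bijection to $\pro(\bS)$, with the hypothesis $\L\rtimes\Gamma\in\pro(s\gpd)$ ensuring that $\bar W(\L\rtimes\Gamma)$ lies in $\pro(\bS)$.

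The main obstacle is the bookkeeping in the groupoid setting: one must carefully track $\Ob\Gamma$-indexing throughout, since $\L$ is not a single simplicial group but a $\Gamma$-representation indexed by objects, and $W\Gamma$ likewise disassembles as $\{W\Gamma\da a\}_{a\in\Ob\Gamma}$. Verifying that the twisted product decomposition is equivariant for the full groupoid action (and not merely for each vertex group $\Gamma(a,a)$) requires patient but essentially routine verification from the simplicial formulae of Definition \ref{wdef}.
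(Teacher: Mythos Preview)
Your proposal is correct and follows the natural approach: the decomposition $W(\L\rtimes\Gamma)\cong W\L\times W\Gamma$ via untwisting by the $\Gamma$-action, followed by the quotient/pullback argument, is exactly the expected calculation. The paper's own proof simply cites the analogous lemma in \cite{htpy}, so there is nothing further to compare; your write-up is a faithful reconstruction of what that citation unpacks to.
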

\begin{proof}
The calculation is essentially the same as for \cite[Lemma \ref{htpy-wworks}]{htpy}.
\end{proof}

\begin{definition}\label{CCdefind}
Given an ind-finite rank  $\Zl$-local system (i.e. a  filtered direct system in the category of finite rank $\Zl$-local systems) $\vv= \{\vv_{\alpha}\}_{\alpha}$, define 
$$
\CC^{\bt}(X, \vv):= \LLim_{\alpha} \CC^{\bt}(X, \vv_{\alpha}),
$$
where the right-hand side is given in Definition \ref{CCdef}.
\end{definition}

\begin{definition}
Given a pro-algebraic groupoid $G$ over $\Zl$, define $O(G)$ to be the $G\by G$-representation given by global sections of the structure sheaf of $G$, equipped with its left and right $G$-actions.

Given  a representation $\rho\co \pi_fX \to G(\Zl)$, let $\bO(G)$ be the $G$-representation in (ind-finite rank)  $\Zl$-local systems on $X$ given by pulling $O(G)$ back along its right $G$-action.
\end{definition}

\begin{definition}
Given $X, L,\rho, R$ as in Proposition \ref{algcoho}, let $R_{\Zl}$ be the $\Zl$-model for $R$ constructed in Proposition \ref{gzl}, and set
$$
\CC^{\bt}(X, \bO(R)):= \CC^{\bt}(X, \bO(R_{\Zl}))\ten_{\Zl} \Ql.
$$
\end{definition}

\begin{theorem}\label{qleqhtpy}
For $X, L,\rho, R$ as in Proposition \ref{algcoho}, the relative Malcev homotopy type
$$
G(X)^{L, \rho, \mal} \in s\agpd \da R
$$
corresponds under the equivalences of Proposition \ref{meequiv} and Theorem \ref{bigequiv} to the  $R$-representation
$$
\CC^{\bt}(X, \bO(R))
$$
in cosimplicial $k$-algebras, equipped with its natural augmentation to $\prod_{x \in X_0}\CC^{\bt}(x, \bO(R)) = \prod_{x \in \Ob R} O(R)(x,-)$.
\end{theorem}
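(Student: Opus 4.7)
The plan is to adapt the proof of Proposition \ref{eqhtpy}, the topological analogue of this theorem, by exploiting the admissible-lattice reduction carried out in Proposition \ref{algcoho} to translate pro-finite data into a filtered colimit of finite data, where the topological Maurer--Cartan arguments apply. First I would check that $A := \CC^{\bt}(X,\bO(R))$ with the stated augmentation defines an object of $c\Alg(R)_{0*}$: at each basepoint $x \in X_0$, the stalk of $\bO(R) = \rho^{-1}O(R)$ is $O(R)(\rho x,-)$, so the augmentation $A \to O(R)(\rho x,-)$ is well-defined, and one gets $\H^0$ constant on components of $\pi_0 X_0$. By Theorem \ref{bigequiv}, $A$ corresponds to a pro-unipotent extension $H \to R$ in $\Ho(s\cE(R)_*)$; the goal is then to produce a weak equivalence $G(X)^{L,\rho,\mal} \simeq H$.

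To identify the two, I would invoke the cohomological detection criterion of Proposition \ref{detectweak}: the reductive quotients of both $H$ and $G(X)^{L,\rho,\mal}$ are $R$ by construction, so it suffices to show that for every finite-dimensional $R$-representation $V$ the cohomology groups $\H^*(H,V)$ and $\H^*(G(X)^{L,\rho,\mal},V)$ agree functorially. The right-hand side equals $\H^*(X,\rho^*V)$ by Proposition \ref{algcoho}. For the left-hand side, the equivalence of Theorem \ref{bigequiv} computes $\H^*(H,V)$ as the cohomology of $A \ten^R V$; using the Tannakian identity $O(R)\ten^R V \cong V$ of $R$-representations, applied fibrewise to $\bO(R) = \rho^{-1}O(R)$, one has $\bO(R)\ten^R V \cong \rho^*V$ as $\Ql$-local systems, so $A\ten^R V = \CC^{\bt}(X,\rho^*V)$, whose cohomology is $\H^*(X,\rho^*V)$.

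The principal obstacle will be justifying the Maurer--Cartan bijection underlying Theorem \ref{bigequiv} in the pro-finite $\ell$-adic setting, since the corresponding topological proof uses finite-dimensionality of cochains more freely. The bridge is precisely the admissible-lattice argument from the proof of Proposition \ref{algcoho}: for each $\fu \in s\cN(R)$, morphisms $G(X)^{L,\rho,\mal} \to \exp(\fu)\rtimes R$ over $R$ are a filtered colimit over admissible $R$-equivariant lattices $\L \subset \fu$ of morphisms $X \to \bar{W}(\exp(\L)\rtimes \tilde{R})$ in $\pro(\bS)\da B\tilde{R}$, which via Lemma \ref{wworks} correspond to equivariant maps from the covering system $\widetilde{X}$ to $\bar{W}\exp(\L)$. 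This is exactly the input needed by the topological Maurer--Cartan/gauge calculus, now performed on $R$-equivariant $\Zl$-cochains; passing to the colimit over $\L$ and tensoring with $\Ql$ produces the Maurer--Cartan description via $\CC^{\bt}(X,\bO(R))$ and matches the computation of $\Hom_{\Ho(s\Aff(R)_{0*})}(\Spec A,\bar{W}\fu)$ given by Theorem \ref{bigequiv}. Multi-pointedness is handled uniformly by keeping track of the augmentations throughout; the final identification of $H$ with $G(X)^{L,\rho,\mal}$ then follows from the cohomological matching, and naturality of the identifications ensures the correspondence holds at the level of $\Ho(s\cE(R)_*)$.
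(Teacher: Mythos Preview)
Your third paragraph is the paper's argument: the admissible-lattice reduction from Proposition \ref{algcoho} together with Lemma \ref{wworks} yields, for every $\fu\in s\cN(R)$, a natural bijection
\[
\Hom_{s\agpd\da R}\bigl(G(X)^{L,\rho,\mal},\,\exp(\fu)\rtimes R\bigr)\;\cong\;\Hom_{s\Aff(R)}\bigl(\Spec\CC^{\bt}(X,\bO(R)),\,\bar{W}\exp(\fu)\bigr).
\]
Since the equivalences of Proposition \ref{meequiv} and Theorem \ref{bigequiv} are characterised by exactly such Hom-sets into test objects, this bijection \emph{is} the claimed correspondence; the paper runs it at the level of strict Hom-sets (legitimate because $G(X)^{L,\rho,\mal}$ is cofibrant), identifying $\Hom_{\pro(\bS)}(\widetilde{X},\bar{W}\exp(\L))$ with $\Hom_{s\Aff_{\Zl}}(\Spec\CC^{\bt}(\widetilde{X},\Zl),\bar{W}\exp(\L))$ via the affine-space structure on $\exp(\L)$, then passing from $\Zl$ to $\Ql$ by noting that every $\ell^{-n}\L$ embeds in some admissible $\L'$. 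Nothing in Theorem \ref{bigequiv} needs to be ``re-justified in the pro-finite setting''; that theorem is category-level and already proved.

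Your second paragraph, by contrast, is a detour with a gap. Proposition \ref{detectweak} tests whether a \emph{given} morphism $f\co G\to K$ is a weak equivalence; it does not say that two objects with matching cohomology are equivalent (they need not be --- higher products obstruct this). You never construct a comparison map $G(X)^{L,\rho,\mal}\to H$ before invoking it, so the argument as written is incomplete. More importantly, once the Hom-set bijection of your third paragraph is in hand, the cohomological step is redundant: that bijection already pins down the object on the nose, via the Yoneda-type characterisation of the equivalences. Reorganise so that the lattice/Lemma \ref{wworks} computation is the main line, and drop the appeal to Proposition \ref{detectweak} entirely.
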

\begin{proof}
We need to show that, for $\fu \in s\cN(R)$, 
$$
\Hom_{s\agpd \da R}(G(X)^{L, \rho, \mal}, \exp(\fu) \rtimes R) \cong \Hom_{s\Aff(R)}(\Spec \CC^{\bt}(X, \bO(R)), \bar{W}(\exp(\fu))).
$$

Adapting the proof of Proposition \ref{algcoho}, we know that
$$
\Hom_{s\agpd \da R}(G(X)^{L, \rho, \mal}, \exp(\fu) \rtimes R)\cong 
\lim_{\substack{\lra \\ \L }}  \Hom_{\pro(\bS)}(X, \bar{W}(\exp(\L)\rtimes R_{\Zl}(\Zl)))_{BR_{\Zl}(\Zl)},
$$
where the limit is taken over $\L\subset \fu  \text{ admissible}$.
By Lemma \ref{wworks},
$$
\Hom_{\pro(\bS)}(X, \bar{W}(\exp(\L)\rtimes R_{\Zl}(\Zl)))_{BR_{\Zl}(\Zl)}\cong \Hom_{R_{\Zl}(\Zl), \pro(\bS)}(\widetilde{X}, \bar{W}\exp(\L)).
$$

If we regard $\exp(\L)$ as the $\Zl$-valued points of the group scheme $\exp(\L)(A):= \exp(\L\ten A)$, then this is an affine space, so
$$
\Hom_{ \pro(\bS)}(\widetilde{X}, \bar{W}\exp(\L))\cong \Hom_{s\Aff_{\Zl}}(\Spec \CC^{\bt}(\widetilde{X}, \Zl), \bar{W}\exp(\L)).
$$
Since  $\L \cong \L \ten^R_{\Zl}O(R_{\Zl})$, we then have
$$
\Hom_{R_{\Zl}(\Zl), \pro(\bS)}(\widetilde{X}, \bar{W}\exp(\L))\cong \Hom_{s\Aff(R_{\Zl})}(\Spec \CC^{\bt}(X, \bO(R_{\Zl})), \bar{W}\exp(\L)).
$$

The map 
\begin{eqnarray*}
\lim_{\substack{\lra \\ \L }}\Hom_{s\Aff(R_{\Zl})}(\Spec \CC^{\bt}(X, \bO(R_{\Zl})), \bar{W}\exp(\L))\\
\to \lim_{\substack{\lra \\ \L }}\Hom_{s\Aff(R_{\Zl})}(\Spec \CC^{\bt}(X, \bO(R_{\Zl}))\ten \Ql, \bar{W}\exp(\L))
\end{eqnarray*}
is clearly injective. However, since there exists an admissible lattice $\L'$ with $l^{-n} \L \subset \L'$, the map must also be surjective. Finally, note that
\begin{eqnarray*}
\Hom_{s\Aff(R_{\Zl})}(\Spec \CC^{\bt}(X, \bO(R_{\Zl}))\ten \Ql, \bar{W}\exp(\L))\\
= \Hom_{s\Aff(R)}(\Spec \CC^{\bt}(X, \bO(R)), \bar{W}\exp(\L\ten\Ql)),
\end{eqnarray*}
as required.
\end{proof}

\begin{remarks}\label{cfolsson}
 We could use  Proposition \ref{affequiv} to replace $\CC^{\bt}(X, \bO(R))$ with a DG algebra, giving a more reassuring  analogue of the de Rham algebra used in Proposition \ref{propforms} to govern relative Malcev homotopy types of manifolds. This is the approach taken in \cite{olssonhodge}, and when $R=1$, it corresponds  to Deligne's $\Ql$-homotopy type (\cite[\S V]{Weil2}). However, in the sequel we will work systematically with cosimplicial  rather than DG objects --- both approaches being equivalent, the transfer can add unnecessary complication.     

Note that if we take a scheme $X$, then Proposition \ref{eqhtpy} adapts to show that  $\CC^{\bt}(X_{\et}, \vv)$ is a Godement resolution for the continuous \'etale cohomology of $\vv$.
Under the comparison  of Corollary \ref{eqtoen}, this shows that for an algebraic variety $X$,  $\widehat{G(X_{\et})}^{\alg}$  agrees with the $\ell$-adic homotopy type discussed in \cite[\S 3.5.3]{chaff}.

Given any morphism $\rho\co \varpi_f(\widehat{X_{\et}})^{\red} \to R$ to a reductive group, there is a forgetful functor $\rho^{\sharp}\co  s\hat{\cN}(R)\to s\hat{\cN}(\varpi_f(\widehat{X_{\et}})^{\red})$. If we write $\bL\rho_{\sharp}$ for the derived left adjoint and $\rho$ is surjective, then $\Ru( \widehat{G(X_{\et})}^{\rho, \mal})=\bL\rho_{\sharp}\Ru( \widehat{G(X_{\et})}^{\alg})$.
Note that for $\cC$ a Tannakian subcategory (see Definition \ref{tann}) of $\FD\Rep(\varpi_f(\widehat{X_{\et}})^{\red})$, with corresponding groupoid $G$,   the homotopy type $X_{\cC_{\et}}$ of  \cite[1.5]{olssonhodge}  is equivalent to   $ \bL\rho_{\sharp}\Ru( \widehat{G(X)}^{\alg}) $, for  $\rho\co   \varpi_f(\widehat{X_{\et}})^{\red}\to G$. 
\end{remarks}

\subsection{Completing fibrations}

Observe that the definitions and results of  \S \ref{gpcoho} extend naturally to pro-groupoids and pro-spaces; we will make use of this extension without further comment.

\begin{theorem}\label{lfibrations}
Take a pro-fibration $f\co (X,x) \to (Y,y)$ of connected objects in $\pro(\bS)$ with connected fibres,  and set $F:= f^{-1}(y)$.
Take a Zariski-dense representation $\rho\co  \pi_1(X,x) \to R(\Ql)$ to a  reductive pro-algebraic group $R$, let $K$ be the Zariski closure of $\rho(\pi_1(F,x))$, and set $T:= R/K$. If the monodromy action of $\pi_1(Y,y)$ on $\H^*(F, V)$ factors through $\varpi_1(Y,y)^{T, \mal}$ for all $K$-representations $V$, then $G(F,x)^{K,\mal}$ is the homotopy fibre of $ G(X,x)^{R, \mal} \to G(Y,y)^{T, \mal}$.

In particular, there is a long exact sequence
\begin{eqnarray*}
\ldots \to \varpi_n(F,x)^{K,\mal} \to \varpi_n(X,x)^{R,\mal}\to \varpi_n(Y,y)^{T,\mal} \to \varpi_{n-1}(F,x)^{K,\mal}\to \\
\ldots \to \varpi_1(F,x)^{K,\mal} \to \varpi_1(X,x)^{R,\mal}\to \varpi_1(Y,y)^{T,\mal} \to 1.
\end{eqnarray*}
\end{theorem}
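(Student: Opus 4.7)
My plan is to mimic the proof of Theorem~\ref{fibrations}, transferring that argument from topological spaces to pro-spaces. The overall strategy is to construct the candidate map $G(F,x)^{K,\mal}\to \cF$, where $\cF$ denotes the homotopy fibre of $G(X,x)^{R,\mal}\to G(Y,y)^{T,\mal}$ taken in $s\agpd$, and then to verify it is a weak equivalence by applying Proposition~\ref{detectweak}.

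First I would form $\cF$ explicitly: replace $G(X,x)^{R,\mal}\to G(Y,y)^{T,\mal}$ by a fibration in $s\agpd$ and take the scheme-theoretic fibre over the basepoint of $G(Y,y)^{T,\mal}$. Since $K=\ker(R\to T)$, the reductive quotient $\cF^{\red}$ will be $K$, so the first criterion of Proposition~\ref{detectweak} will be automatic once we know the map $G(F,x)^{K,\mal}\to \cF$ intertwines the two Levi decompositions. For this it is enough to observe that $\rho|_{\pi_1F}$ lands in $K$ by construction, so the universal property of relative Malcev completion yields the map.

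The key step is the cohomological comparison: for every finite-dimensional $K$-representation $V$ I want
\[
\bH^{*}(\cF,V)\xrightarrow{\sim} \bH^{*}(G(F,x)^{K,\mal},V).
\]
Here I would run two Hochschild--Serre spectral sequences and compare. Applying Proposition~\ref{lerayserre} to the fibration $\cF\to G(X,x)^{R,\mal}\to G(Y,y)^{T,\mal}$ gives
\[
\H^{i}\bigl(G(Y,y)^{T,\mal},\bH^{j}(\cF,V)\bigr) \abuts \bH^{i+j}\bigl(G(X,x)^{R,\mal}, O(R)\otimes_{O(T)}V\bigr).
\]
On the geometric side, the hypothesis that the monodromy on $\H^{*}(F,V)$ factors through $\varpi_{1}(Y,y)^{T,\mal}$ lets me regard $\H^{j}(F,V)$ as a $T$-representation via $\varpi_{1}(Y,y)^{T,\mal}$, and the pro-space Leray spectral sequence (obtained from the Godement-type cosimplicial cochain description of Theorem~\ref{qleqhtpy}) for the pro-fibration $f\co X\to Y$ yields
\[
\H^{i}\bigl(Y,\H^{j}(F,V)\bigr)\abuts \H^{i+j}\bigl(X,\rho^{*}(O(R)\otimes_{O(T)}V)\bigr).
\]
Theorem~\ref{qleqhtpy} together with Lemma~\ref{hypercohogood} identifies these two spectral sequences in every bidegree except $(*, *)=(i,j)$ involving $\bH^{j}(\cF,V)$ versus $\H^{j}(F,V)$; an induction on $j$ (starting from $j=0$, where both sides are $V^{K}$ since $F$ and $\cF$ are connected) then forces $\bH^{j}(\cF,V)\cong \H^{j}(F,V)$ as $T$-representations. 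Finally Lemma~\ref{hypercohogood} (or rather its  $L$-adic reincarnation via Theorem~\ref{qleqhtpy}) identifies $\H^{*}(F,V)$ with $\H^{*}(G(F,x)^{K,\mal},V)$, which is what we wanted.

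The hard part will be the second spectral sequence. To extract a Leray spectral sequence with the correct $E_{2}$ term in the pro-simplicial setting, I need a Godement-style description of $\CC^{\bt}(X,\rho^{*}\bO(R))$ as $\CC^{\bt}(Y,\H^{*}(F,\rho^{*}\bO(R)))$ up to filtration, which requires a careful choice of cofibrant models for $f$ in $\pro(\bS)$ and a version of the Eilenberg--Moore/Serre argument valid at the ind-finite $\Zl$-level before inverting~$\ell$. Granted the comparison of spectral sequences, the criterion of Proposition~\ref{detectweak} is satisfied, so $G(F,x)^{K,\mal}\simeq \cF$; the long exact sequence is then the long exact sequence of the fibration $\cF\to G(X,x)^{R,\mal}\to G(Y,y)^{T,\mal}$ in $s\agpd$ with indexing shifted by one via $\varpi_{n}=\pi_{n-1}$.
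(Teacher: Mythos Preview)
Your overall architecture is right --- construct the map $G(F,x)^{K,\mal}\to\cF$, then verify it is a weak equivalence via Proposition~\ref{detectweak} by comparing two Hochschild--Serre spectral sequences --- and this is exactly what the paper does. But there are two real gaps in the execution that the paper's proof sidesteps with a single trick.

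First, your coefficient handling is off. You want to run Hochschild--Serre for the fibration $\cF\to G(X,x)^{R,\mal}\to G(Y,y)^{T,\mal}$ with coefficients in an arbitrary $K$-representation $V$, but Proposition~\ref{lerayserre} requires a (cosimplicial) representation of the \emph{total space}, and a $K$-representation need not extend to an $R$-representation. The expression $O(R)\otimes_{O(T)}V$ you write for the abutment does not parse: $V$ has no $O(T)$-comodule structure. Second, and more seriously, your ``induction on $j$'' is a Zeeman-type comparison argument for spectral sequences with local coefficients on the base, and the standard Zeeman theorem needs a K\"unneth-type splitting $E_2^{i,j}\cong E_2^{i,0}\otimes E_2^{0,j}$ which fails here since $\H^j(\cF,V)$ is a nontrivial $\varpi_1(Y,y)^{T,\mal}$-representation. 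Having isomorphisms on the abutment and on $E_2^{i,j}$ for $j<k$ does not force an isomorphism on $E_2^{0,k}$ without further input.

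The paper avoids both problems at once by choosing the coefficient system $O(R)\otimes_{O(T)}O(G(Y,y)^{T,\mal})$. This \emph{is} a cosimplicial $G(X,x)^{R,\mal}$-representation, so both Hochschild--Serre spectral sequences are available; and because it is cofree (hence fibrant) as a cosimplicial $G(Y,y)^{T,\mal}$-representation, after applying Lemma~\ref{hypercohogood} the $E_2$-page collapses: $\bH^i(G(Y,y)^{T,\mal},-)$ vanishes for $i>0$ and for $i=0$ gives back $\H^j(F,O(R))\otimes_{O(T)}\Ql=\H^j(F,O(K))$, and similarly $\H^j(\cF,O(K))$. No induction or comparison theorem is needed: one reads off $\H^*(\cF,O(K))\cong\H^*(F,O(K))$ directly, and this suffices for Proposition~\ref{detectweak}.
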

\begin{proof}
We adapt the proof of Theorem \ref{fibrations}.

First observe that  $\rho(\pi_1(F,x))$ is normal in $\pi_1(X,x)$, so $K$ is normal in $R$, and $T$ is therefore a reductive pro-algebraic group, so $(Y,y)^{T, \mal}$ is well-defined. Next, observe that since $K$ is normal in $R$, 
 $\Ru(K)$ is also normal in $R$, and is therefore $1$, ensuring that $K$ is reductive, so $(F,x)^{K,\mal}$ is also well-defined. 

Consider the complex $O(R)\ten_{O(T)}O(G(Y,y)^{T, \mal})$ of $ G(X,x)^{R, \mal}$-representations, regarded as a  cosimplicial $G(X,x)$-representation. Since $G(F,x)\to \ker(G(X,x) \to G(Y,y))$ is a weak equivalence,  
the Hochschild-Serre spectral sequence for $f$ (Proposition \ref{lerayserre})  
 with coefficients in this complex is 
 \begin{eqnarray*}
E_2^{i,j}=&\bH^i(G(Y,y), \H^j(F, O(R))\ten_{O(T)}O(G(Y,y)^{T, \mal}))\\
&\abuts \bH^{i+j}(G(X,x), O(R)\ten_{O(T)}O(G(Y,y)^{T, \mal})).
 \end{eqnarray*}
 
Regarding $O(R)$ as a $K$-representation,   $\H^*(F, O(R))$ is a $\varpi_1(Y,y)^{T, \mal}$-representation by hypothesis. Hence
$
 \H^*(F, O(R))\ten_{O(T)}O(G(Y,y)^{T, \mal})
$
is a cosimplicial $G(Y,y)^{T, \mal}$-representation, so
$$
\bH^i(G(Y,y), \H^j(F, O(R))\ten_{O(T)}O(G(Y,y)^{T, \mal}))\cong \bH^i(G(Y,y)^{T, \mal},  \H^j(F, O(R))\ten_{O(T)}O(G(Y,y)^{T, \mal})),
$$
by Lemma \ref{hypercohogood}.

Now, $\H^*(F, O(R))\ten_{O(T)}O(G(Y,y)^{T, \mal})$ is a fibrant cosimplicial $G(Y,y)^{T, \mal}$-representation, so
\begin{eqnarray*}
&&\bH^i(G(Y,y)^{T, \mal},  \H^j(F, O(R))\ten_{O(T)}O(G(Y,y)^{T, \mal}))\\
&&\cong  \H^i\Gamma(G(Y,y)^{T, \mal},  \H^j(F, O(R))\ten_{O(T)}O(G(Y,y)^{T, \mal}))\\
&&=  \left\{ \begin{matrix} \H^j(F, O(R))\ten_{O(T)}k = \H^j(F, O(K))& i=0 \\ 0 & i \ne 0, \end{matrix} \right.\\
\end{eqnarray*}
so
$$
\bH^j(G(X,x), O(R)\ten_{O(T)}O(G(Y,y)^{T, \mal}))\cong \H^j(F, O(K)).
$$

Now, let $\cF$ be the homotopy fibre of $ G(X,x)^{R, \mal} \to G(Y,y)^{T, \mal}$  (which is just the kernel as this map is surjective), noting that there is a natural map $G(F,x)^{K,\mal} \to \cF$.  Lemma \ref{hypercohogood} implies that 
$$
\bH^j(G(X,x), O(R)\ten_{O(T)}O(G(Y,y)^{T, \mal}))=\bH^j(G(X,x)^{R, \mal}, O(R)\ten_{O(T)}O(G(Y,y)^{T, \mal})), 
$$
and \cite[Theorem 1.51]{htpy}  gives a Hochschild--Serre spectral sequence
\begin{eqnarray*}
& \bH^i(G(Y,y)^{T, \mal}, \H^j(\cF, O(R))\ten_{O(T)}O(G(Y,y)^{T, \mal}))\\ &\abuts \bH^{i+j}(G(X,x)^{R, \mal}, O(R)\ten_{O(T)}O(G(Y,y)^{T, \mal})).
\end{eqnarray*}

The reasoning above adapts to show that this spectral sequence also collapses, yielding
$$
\H^j(\cF, O(K)) = \bH^j(G(X,x), O(R)\ten_{O(T)}O(G(Y,y)^{T, \mal})).
$$

We have therefore shown that the map $G(F,x)^{K,\mal} \to \cF $ gives an isomorphism
$$
\H^*(\cF, O(K)) \to \H^*(G(F,x)^{K,\mal}, O(K)),
$$
and hence isomorphisms $\H^*(\cF, V) \to \H^*(G(F,x)^{K,\mal}, V) $ for all $K$-representations $V$.
Since this is a morphism of simplicial pro-unipotent extensions of $K$, \cite[Corollary \ref{htpy-detectweak}]{htpy}  implies that $G(F,x)^{K,\mal} \to \cF$ is a weak equivalence.
\end{proof}

\begin{examples}\label{fibegs}
 Note that we can apply this theorem to $f_{\et}\co X_{\et} \to Y_{\et}$ whenever  $f\co X \to Y$ is  geometric fibration in the sense of \cite[Definition 11.4]{fried}. This  includes smooth projective morphisms, as well as smooth quasi-projective morphisms where the divisor is transverse to $f$. The fibre of $f_{\et}$ over $y$ will then be equivalent to  $(f^{-1}\{y\})_{\et}$.  

Another source of examples comes from nerves of pro-finite groups. Any surjection $g\co \Gamma \to \Delta$ of pro-finite groups gives a pro-fibration $B\Gamma \to B\Delta$, with fibre $B(\ker g)$.

Of course, even if $f\co  X \to Y$ is not a pro-fibration, we can take a fibrant replacement. This will have connected fibres if and only if $\pi_1(X,x) \to \pi_1(Y,y)$ is surjective, and the theorem then describes the homotopy fibre of $f$.
\end{examples}

\subsection{Comparison with Artin--Mazur homotopy groups}\label{arma}

\begin{lemma}\label{algetpiIMlemma}
 Let $f\co X \to Y$ be a morphism in $\pro(\bS)_{\delta}$ for which the map
\[
 \pi_n(f): \pi_n(X) \to \pi_n(Y) 
\]
is a pro-isomorphism for $n \le N$ and a pro-surjection for $n=N+1$, and take a continuous Zariski-dense morphism $\rho\co \pi_fY \to R(\Ql)$.
Then the map
\[
 \varpi_n(f): \varpi_n(X,\rho\circ f)^{\mal}  \to \varpi_n(Y, \rho )^{\mal}
\]
is an  isomorphism for $n \le N$ and a surjection for $n=N+1$.
\end{lemma}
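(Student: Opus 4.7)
The plan is to mimic verbatim the proof of Lemma \ref{algpiIMlemma}, substituting continuous $\ell$-adic cohomology of pro-spaces (Definition \ref{prospaceprocohodef}) for ordinary cohomology, and invoking the cosimplicial model of Theorem \ref{qleqhtpy} in place of Proposition \ref{eqhtpy}. The proof thus factors into a cohomological comparison followed by an application of the Adams spectral sequence.

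First I would establish that, for every finite-dimensional continuous $\Ql$-representation $V$ of $\varpi_f(Y,\rho)^{\mal}$, the map
\[
 f^* \co \H^i(Y, V) \to \H^i(X, f^{-1}V)
\]
is an isomorphism for $i \le N$ and an injection for $i=N+1$. The argument is parallel to the converse direction of Proposition \ref{Lcohoweak2}: since $\pi_n(f)$ is a pro-isomorphism for $n\le N$ and a pro-surjection for $n=N+1$, after replacing $f$ by a pro-fibration its homotopy fibre $F$ has $\pi_n(F)=0$ (as a pro-group) for $n\le N$, so $\H^j(F,M)=0$ for $0<j\le N$ with any abelian (or $\Zl$) coefficient system $M$.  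The Leray spectral sequence
\[
 \H^i(Y, \H^j(F, f^{-1}V)) \abuts \H^{i+j}(X, f^{-1}V)
\]
(converted to continuous cohomology via the Mittag--Leffler exact sequence of Remark \ref{mlworks}) then forces the claim.

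Secondly, I would invoke the Adams spectral sequence of \cite[Proposition \ref{htpy-spectralh}]{htpy}. By Proposition \ref{algcoho} one has $\H^*(G(X)^{L,\rho,\mal},V)\cong \H^*(X,\rho^*V)$, and Theorem \ref{qleqhtpy} models the relative Malcev homotopy type by the equivariant cochain algebra $\CC^{\bt}(X,\bO(R))$; hence the spectral sequence specialises, in pro-finite-dimensional vector spaces, to
\[
 E^1_{pq}(X) = (\Lie_{-p}(\tilde{\H}^{*+1}(X,\bO(R))^{\vee}))_{p+q} \abuts \varpi_{p+q+1}(X,\rho\circ f)^{\mal},
\]
and similarly for $Y$. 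Decomposing $\bO(R)$ into its finite-dimensional $R$-subrepresentations and applying the cohomological comparison of the previous paragraph, the map $E^1_{pq}(X) \to E^1_{pq}(Y)$ is an isomorphism for $p+q<N$ and surjective for $p+q=N$, whence $\varpi_n(f)$ is an isomorphism for $n\le N$ and a surjection for $n=N+1$.

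The main obstacle is verifying that the two spectral sequences behave well in the pro-$\ell$-adic continuous-cohomology setting: one needs the Leray sequence for a pro-fibration with continuous $\Ql$-coefficients, and one needs the Adams spectral sequence of \cite{htpy} to be available for the cosimplicial algebra $\CC^{\bt}(X,\bO(R))$ produced by Theorem \ref{qleqhtpy} rather than only for de Rham or singular cochain models. Both reductions are formal once the identification in Theorem \ref{qleqhtpy} is in hand, so the heart of the argument is really the $N$-connectivity of the homotopy fibre $F$ and the resulting vanishing of $\H^j(F,M)$ in the pro-category.
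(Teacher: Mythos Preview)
Your proposal is correct and follows essentially the same route as the paper: the paper's proof is literally the single sentence ``The proof of Lemma \ref{algpiIMlemma} carries over to this generality,'' and you have spelled out precisely how that carrying-over works --- first the cohomological comparison via $N$-connectivity of the homotopy fibre, then the Adams spectral sequence with $E^1$-term built from $\tilde{\H}^{*+1}(X,\bO(R))^{\vee}$. Your invocation of Proposition \ref{algcoho} and Theorem \ref{qleqhtpy} to justify the $\ell$-adic input is exactly the extra care the one-line proof leaves implicit.
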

\begin{proof}
 The proof of Lemma \ref{algpiIMlemma} carries over to this generality.
\end{proof}

\begin{definition}\label{relgood2}
By analogy with Definition \ref{relgood}, say that a locally pro-discrete groupoid $\Gamma$ 
is $n$-\emph{good} with respect to a continuous Zariski-dense representation $\rho\co  \Gamma \to R(\Ql)$ to a reductive pro-algebraic groupoid if for all finite-dimensional $\Gamma^{\rho, \mal}$-representations $V$,
the map
$$
\H^i(\Gamma^{\rho, \mal}, V) \to \H^i(\Gamma, V)
$$  
is an isomorphism for all $i\le n$ and and an inclusion for $i=n+1$. Say that $\Gamma$ is \emph{good} with respect to $\rho$ if it is $n$-good for all $n$.

If $\Gamma$ is ($n$-)good relative to $\Gamma^{\red}$, then we say that $\Gamma$ is algebraically ($n$-)good.
\end{definition}

\begin{lemma}\label{goodh}
%
A pro-groupoid $\Gamma$ is $N$-good with respect to $\rho$ if and only if for any finite-dimensional $\Gamma^{\rho, \mal}$-representation $V$, and $\alpha \in \H^n(\Gamma, V)$ for $n\le N$, there exists an injection $f\co V \to W_{\alpha}$ of finite-dimensional $\Gamma^{\rho, \mal}$-representations, with $f(\alpha)=0 \in \H^n(\Gamma, W_{\alpha})$.
\end{lemma}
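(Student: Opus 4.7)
The proof splits into two directions, linked by a coinduction trick: any finite-dimensional $\Gamma^{\rho,\mal}$-representation $V$ embeds into the acyclic $\Gamma^{\rho,\mal}$-representation $V\ten O(\Gamma^{\rho,\mal})$ (the coinduced representation from the trivial subgroup), which is a filtered colimit of its finite-dimensional $\Gamma^{\rho,\mal}$-subrepresentations containing $V$. Since $\H^*(\Gamma^{\rho,\mal},-)$ is computed by derived invariants and thus commutes with filtered colimits of representations, and since it vanishes on $V\ten O(\Gamma^{\rho,\mal})$ in positive degrees, any class $\tilde{\alpha}\in \H^n(\Gamma^{\rho,\mal},V)$ with $n\ge 1$ is killed by some finite-dimensional $\Gamma^{\rho,\mal}$-embedding $V\into W$.

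The forward direction is then immediate. If $\Gamma$ is $N$-good with respect to $\rho$, then for $\alpha\in\H^n(\Gamma,V)$ with $n\le N$, the iso $\phi_n\co \H^n(\Gamma^{\rho,\mal},V)\xra{\sim}\H^n(\Gamma,V)$ produces a lift $\tilde{\alpha}$; the coinduction trick provides a finite-dimensional $\Gamma^{\rho,\mal}$-embedding $V\into W_{\alpha}$ with $\tilde{\alpha}\mapsto 0$, and by naturality of $\phi_n$ the image of $\alpha$ in $\H^n(\Gamma,W_{\alpha})$ is also zero. For the converse, I will induct on $n\le N+1$. The base case $n=0$ follows from Zariski density: $V^{\Gamma}=V^{\Gamma^{\rho,\mal}}$ for any finite-dimensional $\Gamma^{\rho,\mal}$-representation. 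For the inductive step establishing surjectivity of $\phi_n$ with $n\le N$, given $\alpha\in\H^n(\Gamma,V)$, use the hypothesis to find $V\into W$ with $\alpha\mapsto 0$, set $U=W/V$, and compare the long exact sequences on $\Gamma$- and $\Gamma^{\rho,\mal}$-cohomology. Then $\alpha=\delta(\beta)$ for some $\beta\in\H^{n-1}(\Gamma,U)$; the inductive iso $\phi_{n-1}$ for $U$ lifts $\beta$ to $\tilde{\beta}\in\H^{n-1}(\Gamma^{\rho,\mal},U)$, and $\tilde{\delta}(\tilde{\beta})\in\H^n(\Gamma^{\rho,\mal},V)$ maps to $\alpha$. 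For injectivity of $\phi_n$ with $n\le N+1$, start with $\tilde{\alpha}\in\ker\phi_n$, apply the coinduction trick to realise $\tilde{\alpha}=\tilde{\delta}(\tilde{\beta})$ for some $V\into W$ of $\Gamma^{\rho,\mal}$-reps and $\tilde{\beta}\in\H^{n-1}(\Gamma^{\rho,\mal},W/V)$; then $\delta(\phi_{n-1}\tilde{\beta})=\phi_n(\tilde{\alpha})=0$ places $\phi_{n-1}\tilde{\beta}$ in the image of $\H^{n-1}(\Gamma,W)$, and the inductive iso $\phi_{n-1}$ for $W$ together with injectivity of $\phi_{n-1}$ for $W/V$ force $\tilde{\beta}$ itself to lift to $\H^{n-1}(\Gamma^{\rho,\mal},W)$, so $\tilde{\alpha}=\tilde{\delta}(\tilde{\beta})=0$.

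The main technical obstacle is justifying the coinduction argument, in particular the acyclicity of $V\ten O(\Gamma^{\rho,\mal})$ and the compatibility of $\H^*(\Gamma^{\rho,\mal},-)$ with the filtered colimit presenting it as a union of finite-dimensional $\Gamma^{\rho,\mal}$-subrepresentations. Both are standard for affine group schemes but require the groupoid formulation from Definition \ref{OR} and \S \ref{gpcoho}; one works componentwise over $\Ob \Gamma$, and uses that $O(\Gamma^{\rho,\mal})$ is the cofree object in comodules, hence injective in the opposite category of representations. A minor subtlety is that one must track $\Gamma^{\rho,\mal}$-equivariance of the subrepresentations $W\subset V\ten O(\Gamma^{\rho,\mal})$, but these exist in abundance because $O(\Gamma^{\rho,\mal})$ is itself a union of its finite-dimensional $\Gamma^{\rho,\mal}$-subcomodules.
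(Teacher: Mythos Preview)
Your argument is correct and is precisely the classical Serre criterion for goodness (cf.\ \cite[I \S 2.6, Ex.~1]{galoisienne}) transported to the relative Malcev setting: embed into the acyclic coinduced module to kill classes on the $\Gamma^{\rho,\mal}$-side, then run the long exact sequence induction. The paper's own proof is only a citation to \cite{heid}, where the same dimension-shifting argument is carried out for groups and then remarked to extend to groupoids; you have essentially reconstructed that reference.

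One small wrinkle worth flagging: the coinduction trick only kills classes in degree $n\ge 1$, so your forward direction as written does not handle $n=0$. But this is an imprecision in the lemma statement itself rather than in your proof --- an injection $f$ can satisfy $f_*(\alpha)=0$ in $\H^0$ only when $\alpha=0$ --- and the intended range is $1\le n\le N$. Under that reading both halves of your argument are complete; in the converse direction you correctly treat $n=0$ separately via Zariski density and only invoke the killing hypothesis for $n\ge 1$.
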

\begin{proof}
This is a special case of the results of \cite[\S \ref{heid-relgoodsn}]{heid}, which adapt directly from groups to groupoids. 
\end{proof}
  
\begin{lemma}\label{goodtest}
Let $\Gamma$ be a locally finitely presented $(L,N)$-good groupoid and $\rho\co  \Gamma^{\wedge_L} \to R(\Ql)$ a Zariski-dense representation, with $\ell \in L$. Then $\Gamma$ is $N$-good relative to $\rho\co \Gamma \to R(\Ql)$ 
if and only if $\Gamma^{\wedge_L}$ is $N$-good relative to $\rho$.  
\end{lemma}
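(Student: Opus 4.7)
The plan is to factor the comparison map, for any finite-dimensional $\Gamma^{\rho,\mal}$-representation $V$, as
$$
\H^i(\Gamma^{\rho,\mal}, V) \lra \H^i(\Gamma^{\wedge_L}, V) \lra \H^i(\Gamma, V),
$$
and to show that the second arrow is an isomorphism for $i \le N$ and an injection for $i = N+1$. The equivalence asserted in the lemma then follows from a trivial diagram chase applied to this factorisation.

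To establish the claim about the second arrow, I would fix $V$ and use Lemma \ref{serrelattice} to choose a $\Gamma^{\wedge_L}$-invariant $\Zl$-lattice $\L \subset V$, so that by definition $\H^*(-,V) = \H^*(-,\L) \ten_{\Zl} \Ql$. Each quotient $\L/\ell^n$ is a finite $L$-torsion $\Gamma^{\wedge_L}$-representation (since $\ell \in L$), so the $(L,N)$-goodness hypothesis on $\Gamma$ applies to yield
$$
\H^i(\Gamma^{\wedge_L}, \L/\ell^n) \xra{\sim} \H^i(\Gamma, \L/\ell^n) \qquad (i \le N)
$$
for every $n$, together with injectivity for $i = N+1$. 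In particular, since $\Gamma^{\wedge_L}$ is pro-finite and $\L/\ell^n$ is finite, both sides are finite groups for $i \le N$.

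The next step is to pass to the inverse limit. Finiteness in degrees $i \le N$ is more than enough to ensure the Mittag--Leffler condition on the inverse systems $\{\H^i(\Gamma^{\wedge_L}, \L/\ell^n)\}_n$ and $\{\H^i(\Gamma, \L/\ell^n)\}_n$, so the $\Lim^1$ terms in the Milnor exact sequence of Remark \ref{mlworks} vanish in the required range. This produces an isomorphism $\H^i_{\cts}(\Gamma^{\wedge_L}, \L) \xra{\sim} \H^i_{\cts}(\Gamma, \L)$ for $i \le N$, together with an injection at $i = N+1$ (since $\Lim$ preserves injections). Tensoring with $\Ql$ over $\Zl$ concludes the comparison on the second arrow.

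The main obstacle, modest as it is, is one of bookkeeping: $V$ carries no intrinsic topology, so one must check that the $\H^i(\Gamma, V)$ and $\H^i(\Gamma^{\wedge_L}, V)$ appearing in the two relative-goodness conditions are precisely the continuous $\Ql$-cohomologies defined lattice-by-lattice as above, making passage to the limit the correct operation. The local finite presentation of $\Gamma$ enters here to ensure these groups are well-defined and behave compatibly under the factorisation, in the spirit of Remark \ref{discretecoho}. Once this framework is in place, the entire argument reduces to the application of $(L,N)$-goodness followed by a standard Mittag--Leffler calculation.
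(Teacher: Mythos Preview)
Your approach is genuinely different from the paper's. Where you work directly with the cohomological definition and a Mittag--Leffler passage, the paper translates both goodness conditions into the homotopy-theoretic criterion that $\varpi_n(BG)^{\rho,\mal}=0$ for $1<n\le N$, and then compares $\varpi_n(B\Gamma)^{\rho,\mal}$ with $\varpi_n(B(\Gamma^{\wedge_L}))^{L,\rho,\mal}$ by chaining together Lemma~\ref{malagrees} (to identify $(B\Gamma)^{\rho,\mal}\simeq(B\Gamma)^{L,\rho,\mal}$), Proposition~\ref{Lcohoweak2} (to turn $(L,N)$-goodness into $\pi_n((B\Gamma)^{\wedge_L})=0$ for $1<n\le N$), and Lemma~\ref{algetpiIMlemma} (applied to $(B\Gamma)^{\wedge_L}\to B(\Gamma^{\wedge_L})$). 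Your route is more elementary, and the limit argument for the second arrow is sound.

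There is, however, a real step hidden in your factorisation. For the first arrow $\H^i(\Gamma^{\rho,\mal},V)\to\H^i(\Gamma^{\wedge_L},V)$ even to exist, and for your diagram chase to compare the two goodness conditions, you need the discrete and continuous relative Malcev completions to coincide: $\Gamma^{\rho,\mal}\cong(\Gamma^{\wedge_L})^{\rho,\mal}$. Otherwise the source of the $\Gamma^{\wedge_L}$-goodness map is $\H^i((\Gamma^{\wedge_L})^{\rho,\mal},V)$ rather than $\H^i(\Gamma^{\rho,\mal},V)$, and the two conditions quantify over different classes of $V$. This identification is precisely what the paper extracts from Lemma~\ref{malagrees}; it can also be argued directly for locally finitely generated $\Gamma$ (every $\Gamma^{\rho,\mal}$-representation is an iterated extension of $R$-representations, so the image of $\Gamma$ lands in a compact pro-$L$ subgroup of $\GL(V)$ and hence factors through $\Gamma^{\wedge_L}$), but it is not the ``modest bookkeeping'' you describe and should be made explicit. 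The issue you \emph{do} flag --- that Definition~\ref{relgood} uses the ordinary discrete $\H^i(\Gamma,V)$, whereas your limit computes $(\Lim_n\H^i(\Gamma,\L/\ell^n))\ten\Ql$ --- is a second, separate point, and finite presentation alone settles it only through degree~$2$.
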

\begin{proof}
Take a finite-dimensional $R$-representation $V$. By Lemma \ref{malagrees}, $(B\Gamma)^{\rho, \mal} \simeq (B\Gamma)^{L,\rho, \mal}$. Since $\Gamma$ is $L$-good, Lemma \ref{Lcohoweak2} gives that $\pi_n((B\Gamma)^{\wedge_L})= 0$ for all $1<n\le N$. Applying Lemma \ref{algetpiIMlemma} to the morphism $(B\Gamma)^{\wedge_L} \to B(\Gamma^{\wedge_L}) $, the observations above   show that
\[
 \varpi_n(B\Gamma)^{\rho,\mal} \to \varpi_n(B(\Gamma^{\wedge_L}))^{L,\rho,\mal}
\]
is an isomorphism for $n \le N$ and a surjection for $n=N+1$.

Now, \cite[\S \ref{heid-relgoodsn}]{heid} shows that a pro-group $G$ is $N$-good relative to $\rho$ if and only if $\varpi_n(BG)^{ L,\rho,\mal}=0$ for $1<n\le N$, and the same proof adapts to groupoids. Thus $\Gamma$ is $N$-good relative to $\rho$ if and only if 
$\Gamma^{\wedge_L}$ is so.
\end{proof}

\begin{examples}\label{rgoodexamples}
A pro-finite group $\Gamma$ is good with respect to a representation $\rho\co  \Gamma^{\wedge_L} \to R$ whenever any of the following holds:

\begin{enumerate}
\item  $\Gamma$ is finite, or  $\Gamma^{\wedge_L} \cong\Delta^{\wedge_L}$, for $\Delta$ a finitely generated free discrete group. 

\item\label{rttwo} $\Gamma^{\wedge_L} \cong\Delta^{\wedge_L}$, for $\Delta$ a finitely generated nilpotent discrete group.

\item\label{rtthree} $\Gamma^{\wedge_L} \cong\Delta^{\wedge_L}$, for $\Delta$ the  fundamental group of a compact Riemann surface. In particular, this applies if $\Gamma$ is the fundamental group of a smooth projective curve $C/k$, for $k$ a separably closed field whose characteristic is not in $L$.

\item\label{rffour} If $1 \to F \to \Gamma \to \Pi \to 1$ is an exact sequence of groups, with $F$ finite and $F^{\wedge_L} \to \Gamma^{\wedge_L}$ injective,  assume that $\Pi^{\wedge_L}$ is good relative to $R/\overline{\rho(F)}$, where $\overline{\phantom{F}}$ denotes Zariski closure. Then $\Gamma$ is good relative to $\rho$.
 \end{enumerate}
\end{examples}
\begin{proof}
Combine Lemma \ref{goodtest} with Examples \ref{Lgoodegs} and \cite[Examples \ref{htpy-goodexamples}]{htpy}.
\end{proof}

\begin{remark}
For an example of an important pro-finite group which is not good with respect to a representation, note that $\Sp_g(\Zl)$ is not good with respect to the natural map $\rho\co  \Sp_g(\Zl) \to \Sp_g(\Ql)$ for $g \ge 2$. In fact, $\varpi_2((B\Sp_g(\Zl))^{\rho, \mal})\cong \bG_a$. This issue arises  in \cite{hainmatrelative}, considering the pro-finite mapping class group $\Gamma_g$ acting on a genus $g$ curve. The action on cohomology gives a map $\rho\co  \Gamma_g \to \Sp_g(\Zl)$ with kernel $T_g$, the Torelli subgroup,  and the map $T_g^{1,\mal} \to \ker(\Gamma^{\rho, \mal}_g \to \Sp_g)$ has kernel $\bG_a$. Theorem \ref{lfibrations} allows us to interpret this copy of $\bG_a$ as the image of  the connecting homomorphism $\varpi_2((B\Sp_g(\Zl))^{\rho, \mal})\to T_g^{1,\mal}$.
\end{remark}

\begin{theorem}\label{etpimal}
Let $L$ be a set of primes containing $\ell$, and take $X \in \pro(\bS)_{\delta}$ with fundamental groupoid  $\pi_fX=\Gamma$, equipped with a continuous Zariski-dense representation $\rho\co  \Gamma^{\wedge_L} \to R(\Ql)$ to a reductive pro-algebraic groupoid.  If  
\begin{enumerate}
\item $\pi_n(X^{\wedge_L},-)\ten_{\hat{\Z}}\Ql$ is  finite-dimensional for all $1<n\le N$,  and
\item the $\Gamma^{\wedge_L}$-representation  $\pi_n(X^{\wedge_L},-)\ten_{\hat{\Z}} \Ql$ is an extension of $R$-representations (i.e. a $\Gamma^{L,\rho, \mal}$-representation) for all $1<n\le N$,
\end{enumerate}
then for each $x \in X$ there is an exact sequence
$$
\xymatrix{ & &\varpi_{N+1}(X^{L,\rho,\mal},x) \ar[r] &\varpi_{N+1}((B\Gamma)^{L,\rho,\mal })\ar[dll] &\\
 \ar[r] & \pi_N(X^{\wedge_L},x)\ten_{\hat{\Z}}\Ql \ar[r] &\varpi_{N}(X^{L,\rho,\mal},x) \ar[r] &\varpi_N((B\Gamma)^{L,\rho,\mal })\ar[r] &\ldots\\
\ldots \ar[r] &\pi_2(X^{\wedge_L},x)\ten_{\hat{\Z}}\Ql \ar[r] &  \varpi_{2}(X^{L,\rho,\mal},x) \ar[r] &\varpi_2((B\Gamma)^{L,\rho,\mal })\ar[r] & 0.
}
$$

In particular, if in addition $\Gamma^{\wedge_L}$ is  $(N+1)$-good (resp. $N$-good) with respect to $\rho$, then 
the canonical map
$$
  \pi_n(X^{\wedge_L},-)\ten_{\hat{\Z}} \Ql \to \varpi_{n}(X^{L,\rho,\mal}) 
$$
is an isomorphism for all $n\le N$ (resp. an isomorphism for all $n< N$ and a surjection for $n=N$).
\end{theorem}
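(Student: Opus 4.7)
The plan is to apply Theorem~\ref{lfibrations} to the universal-cover fibration over $B\Gamma^{\wedge_L}$, and to identify the homotopy of the simply-connected fibre's Malcev completion by an $\ell$-adic analogue of Lemma~\ref{kpn}. By Proposition~\ref{algcoho}, $X^{L,\rho,\mal}$ depends only on $X^{\wedge_L}$, so I may replace $X$ by $X^{\wedge_L}$; letting $Y_N$ be its $N$th Postnikov section in $\pro(\bS)_\delta$, Lemma~\ref{algetpiIMlemma} applied to $X^{\wedge_L}\to Y_N$ makes $\varpi_m(X^{L,\rho,\mal})\to\varpi_m(Y_N^{L,\rho,\mal})$ an isomorphism for $m\le N$ and a surjection for $m=N+1$, so it suffices to treat $Y_N$. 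Form the fibre sequence $F\to Y_N\to B\Gamma^{\wedge_L}$ with $F$ simply connected and $\pi_mF=\pi_m(X^{\wedge_L})$ for $2\le m\le N$, and zero otherwise. The hypothesis makes each $\pi_mF\otimes\Ql$ a $\Gamma^{L,\rho,\mal}$-representation, and then the Serre spectral sequence of the Postnikov tower of $F$, together with Remark~\ref{mlworks} to control the continuous-cohomology inverse limits, shows that the $\Gamma^{\wedge_L}$-monodromy on $\H^*(F,V)$ factors through $\Gamma^{L,\rho,\mal}$ for all finite-dimensional $\Ql$-vector spaces $V$. Theorem~\ref{lfibrations} with $K=1$, $T=R$ thus yields a homotopy fibre sequence
\[
F^{1,\mal}\to Y_N^{L,\rho,\mal}\to (B\Gamma^{\wedge_L})^{L,\rho,\mal}
\]
in $s\agpd$, giving a long exact sequence of homotopy groups.

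The technical heart is to show $\varpi_m(F^{1,\mal})=\pi_m(X^{\wedge_L})\otimes_{\hat{\Z}}\Ql$ for $2\le m\le N$ and $0$ otherwise; I would prove this by induction up the Postnikov tower of $F$, applying Theorem~\ref{lfibrations} with $R=K=1$ at each stage $F(m)\to F(m-1)$ (the monodromy condition being vacuous since $F$ is simply connected). The base case $F(1)=*$ is trivial, and the inductive step reduces to the $\ell$-adic analogue of Lemma~\ref{kpn}:
\[
\varpi_j(K(A,m)^{1,\mal})=\begin{cases}A\otimes_{\hat{\Z}}\Ql & j=m,\\ 0 & j\ne m,\end{cases}
\]
for pro-finite-$L$ abelian $A$ with $A\otimes\Ql$ finite-dimensional. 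This I would prove by using Proposition~\ref{algcoho} to identify $\H^*(K(A,m)^{1,\mal},\Ql)$ with $\H^*(K(A,m),\Ql)$, computing the latter as $\Symm^*((A\otimes\Ql)^{\vee}[m])$ via a Mittag--Leffler argument (Remark~\ref{mlworks}) reducing to Lemma~\ref{kpn} on finite-rank subgroups of $A$, and matching the result with the Chevalley--Eilenberg cohomology of the abelian chain Lie algebra $(A\otimes\Ql)[m-1]$ via the minimal-model uniqueness of Proposition~\ref{dgminimal}.

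Substituting the identification into the fibre-sequence long exact sequence yields the displayed exact sequence for $Y_N$, and transferring back via the reduction step gives it for $X$ (the $m=N+1$ surjectivity in Lemma~\ref{algetpiIMlemma} ensures that the image of the leftmost arrow is correctly described). For the ``in particular'' assertion, I would invoke the characterization (implicit in the proof of Lemma~\ref{goodtest}, via the cited \cite{heid}) that $\Gamma^{\wedge_L}$ is $M$-good relative to $\rho$ precisely when $\varpi_m((B\Gamma)^{L,\rho,\mal})=0$ for $2\le m\le M$; plugging these vanishings into the exact sequence collapses it to the claimed isomorphism $\pi_m(X^{\wedge_L})\otimes_{\hat{\Z}}\Ql\cong\varpi_m(X^{L,\rho,\mal})$ (with surjectivity instead of isomorphism at $m=N$ in the $N$-good case). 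The main obstacle is the $\ell$-adic $K(A,m)$ lemma: the passage from the classical Lemma~\ref{kpn} (applicable to finitely generated abelian groups) to the pro-finite-$L$ setting needs careful inverse-limit and Mittag--Leffler arguments in continuous cohomology.
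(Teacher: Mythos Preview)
Your proposal is correct and follows essentially the same route as the paper: apply Theorem~\ref{lfibrations} to the universal-cover fibration, then handle the simply-connected fibre by Postnikov induction reducing to the $K(\pi,n)$ case, and finish with the goodness characterisation from \cite{heid}. The only notable difference is in the $K(\pi,n)$ step: rather than your Mittag--Leffler approach, the paper observes that since $\pi\otimes_{\Zl}\Ql$ is finite-dimensional one may write $\pi=\nu^{\hat\ell}$ for a finitely generated discrete abelian group $\nu$, then uses Lemma~\ref{kpn} to identify $\H^*(K(\pi,n),\Ql)\cong\H^*(K(\nu,n),\Ql)$ and invokes Quillen's rational homotopy theory \cite{QRat} directly on $K(\nu,n)$ to see that the composite $\H^*(N^{-1}(\pi\otimes\Ql[1-n]),\Ql)\to\H^*(K(\pi,n),\Ql)\to\H^*(K(\nu,n),\Ql)$ is an isomorphism, avoiding the inverse-limit bookkeeping entirely.
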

\begin{proof}
Without loss of generality, we may assume that $X$ is connected, choose a point $x \in X$, and replace $R$ with the group $R(x,x)$. Let $(\tilde{X},x)$ be the universal cover of $(X,x)$, and note that 
we have a homotopy fibration sequence $(\tilde{X},x) \to (X,x) \to B\pi_1(X,x)$, which means that we can apply Theorem \ref{lfibrations} (after taking a fibrant replacement for $(X,x) \to B\pi_1(X,x)$). This immediately gives the long exact sequence 
\begin{eqnarray*}
\ldots \to \varpi_n(\tilde{X},x) \to \varpi_n(X,x)^{R,\mal}\to \varpi_n(B\pi_1(X,x) )^{R,\mal} \to \varpi_{n-1}(\tilde{X},x)^{K,\mal}\to \\
\ldots \to \varpi_2(\tilde{X},x) \to \varpi_2(X,x)^{R,\mal}\to \varpi_2(B\pi_1(X,x) )^{R,\mal} \to 0.
\end{eqnarray*}

It therefore suffices to show that
$$
  \pi_n(\tilde{X}^{\wedge_L},x)\ten_{\hat{\Z}} \Ql \to \varpi_{n}(\tilde{X}^{L,\alg},x) 
$$
is an isomorphism for $n \le N$. 

We may assume that $\tilde{X}=\{\tilde{X}_{\alpha}\}_{\alpha}$ is an inverse system of fibrant simplicial sets, and then form the tower $\{\tilde{X}(n)\}_n$  by setting $\tilde{X}(n)= \{\tilde{X}_{\alpha}(n)\}_{\alpha} $, where $\{\tilde{X}_{\alpha}(n)\}_n $ the Moore--Postnikov tower  of $\tilde{X}_{\alpha}$. 

Note that if $\tilde{X}(N)$ satisfies the theorem, then we can apply Lemma \ref{algetpiIMlemma} to the morphism $\tilde{X} \to \tilde{X}(N)$, so $\tilde{X}$ will also satisfy the theorem.  We now prove  by induction on $n$ that $\tilde{X}(n)$ satisfies the theorem for $n \le N$. 

For $n=1$, $\tilde{X}(1)$ is contractible, making the long exact sequence automatic.
Now, assume that $\tilde{X}(n-1)$ satisfies the inductive hypothesis, and 
consider  the pro-fibration $\tilde{X}(n) \to \tilde{X}(n-1)$, with fibre $E(n)$ over $x$. Properties of the Postnikov tower give that $\pi_i\tilde{X}(n)= \pi_i\tilde{X}$ for all $i\le n$, with $E(n)$ being a $K(\pi_nX, n)$-space.

The long exact sequence of   Theorem \ref{lfibrations}  gives  $\varpi_{i}(E(n)^{\alg})\cong \varpi_{i}(\tilde{X}(n)^{L,\rho,\mal})$ for $i\ge n$, and exact sequences
$$
\varpi_{i}(E(n)^{\alg})\to \varpi_{i}(\tilde{X}(n)^{L,\alg})\to \pi_i(\tilde{X}^{\wedge_L})\ten_{\hat{\Z}} \Ql \to \varpi_{i-1}(E(n)^{\alg}).
$$

Since $E(n)$ is a $K(\pi_nX, n)$-space, the problem thus reduces to establishing the theorem for the case when $X$ is a $K(\pi,n)$ space (for $n\ge 2$), and $R=1$. Unlike \cite[Theorem \ref{htpy-classicalpi}]{htpy},  we cannot now immediately appeal to the Curtis convergence theorem to show that for any pro-discrete abelian group $\pi$ and $n \ge 2$, the map 
$$
G(K(\pi,n))^{L,\alg}\to N^{-1}(\hat{\pi}\ten_{\hat{\Z}}\Ql[1-n])
$$
is a weak equivalence of simplicial unipotent groups. 

Instead,  observe that we may replace $\pi$ by $\pi^{\hat{\ell}}$, so assume that $\pi$ is a pro-$\ell$ group.
Since $\pi\ten_{\Zl}\Ql$ is finite-dimensional, we may write $\pi= \nu^{\hat{l}}$, for $\nu$ an abelian group of finite rank. On cohomology, we have maps
$$
\H^*(N^{-1}(\pi\ten_{\Zl} \Ql[1-n]), \Ql) \to \H^*(K(\pi,n), \Ql) \to\H^*(K(\nu,n), \Ql).\quad \quad (\dagger)
$$

By \cite[Theorem I.3.4]{QRat}, the Lie algebra $\nu\ten_{\Z} \Ql[1-n]$ is the $\Ql$-homotopy type of $K(\nu, n)$. Since $\pi\ten_{\Zl} \Ql= \nu\ten_{\Z}\Ql$,
the composite is an isomorphism in (\textdagger), while the second map is an isomorphism by Lemma \ref{kpn}. Thus the first map is also an isomorphism, as required.

For the final part, we just note that \cite[\S \ref{heid-relgoodsn}]{heid} shows that  $\Gamma$ is $N$-good relative to $\rho$ if and only if $\varpi_n((B\Gamma)^{L,\rho,\mal})=0$ for $1<n\le N$.
\end{proof}

\subsection{Comparison of homotopy types for complex varieties}

Let $X_{\bt}$ be a simplicial scheme of finite type over $\Cx$. To this we may associate the \'etale homotopy type $X_{\et} \in \pro(\bS)$ (as in Example \ref{ethtpy}). There is also an analytic homotopy type $X_{\an} \co = \diag \Sing(X_{\bt}(\Cx)) \in \bS$, where  $\diag$ is the diagonal functor on bisimplicial sets. We now compare the corresponding schematic homotopy types.

\begin{lemma}
If $G$ is a pro-algebraic group over $\Ql$, and $\rho\co \pi_f(X_{\an})  \to G(\Ql)$ a representation with compact image (for the $\ell$-adic topology on $G(\Ql)$), then $\rho$ factorises canonically through $\widehat{\pi_f(X_{\et})}$, giving a continuous representation
$$
\rho\co \widehat{\pi_f(X_{\et})}\to G(\Ql).
$$
\end{lemma}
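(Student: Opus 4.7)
The plan is to split the proof into two independent steps: first, show that the image of $\rho$ lies in a pro-finite subgroup of $G(\Ql)$, so $\rho$ automatically factors through $\widehat{\pi_f(X_{\an})}$; second, invoke the classical Riemann existence theorem to identify $\widehat{\pi_f(X_{\an})}$ with $\widehat{\pi_f(X_{\et})}=\pi_f(X_{\et})$ (the latter being already pro-finite by construction).

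First I would establish the general fact that every compact subgroup $K \le G(\Ql)$ is pro-finite. Writing $G$ as an inverse limit $G = \varprojlim_{\alpha} G_{\alpha}$ of linear algebraic groups over $\Ql$, we have $G(\Ql) = \varprojlim_{\alpha} G_{\alpha}(\Ql)$ as a topological group (with the $\ell$-adic topology on each $G_{\alpha}(\Ql)$). Choosing a faithful closed embedding $G_{\alpha}\hookrightarrow \GL_{n_{\alpha}}$ over $\Ql$, the image of $K$ in $\GL_{n_{\alpha}}(\Ql)$ is compact, so by Lemma \ref{serrelattice} it lies in $\GL(\L_{\alpha})$ for some $\Zl$-lattice $\L_{\alpha}$. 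Since $\GL(\L_{\alpha})$ is pro-finite, so is the image of $K$ in each $G_{\alpha}(\Ql)$, and hence $K$, being a closed subgroup of the inverse limit of these pro-finite images, is itself pro-finite.

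Applying this to $K=\overline{\rho(\pi_f(X_{\an}))}$ (which equals $\rho(\pi_f(X_{\an}))$ since a compact subset of a Hausdorff space is closed), we conclude that $\rho$ has pro-finite image. By the universal property of pro-finite completion for (discrete) groupoids, $\rho$ factors uniquely as
\[
\pi_f(X_{\an}) \lra \widehat{\pi_f(X_{\an})} \xra{\bar\rho} G(\Ql),
\]
with $\bar\rho$ continuous onto its pro-finite image, hence continuous into $G(\Ql)$.

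For the final step, I would appeal to the  Riemann existence theorem comparing analytic and \'etale fundamental groupoids of complex varieties of finite type. At each simplicial level $X_n$, this gives a canonical isomorphism $\widehat{\pi_f(X_n^{\an})} \xra{\sim} \pi_f^{\et}(X_n)$ (SGA 4, XVI.4). Passing to the associated simplicial groupoid (whose $\pi_f$ is the coequaliser of $\pi_f$ of the $0$- and $1$-skeleta), one obtains a canonical isomorphism $\widehat{\pi_f(X_{\an})}\cong \pi_f(X_{\et})=\widehat{\pi_f(X_{\et})}$; composing with $\bar\rho$ yields the desired continuous representation. The main potential obstacle is formulating the comparison isomorphism cleanly at the level of simplicial fundamental groupoids (as opposed to individual schemes), but this reduces immediately to the classical level-wise statement together with the compatibility of pro-finite completion with finite limits in groupoids.
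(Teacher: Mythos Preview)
Your approach is essentially the same as the paper's, with the same two ingredients in the same order. The paper's proof is considerably shorter: for the first step it simply observes that $G(\Ql)$ is totally disconnected (being a subspace of a product of copies of $\Ql$), and any compact totally disconnected Hausdorff group is pro-finite --- no reduction to $\GL_n$ or lattices is needed. For the second step the paper cites \cite[Theorem~8.4]{fried}, which gives the isomorphism $\widehat{\pi_f(X_{\et})}\cong\widehat{\pi_f(X_{\an})}$ directly for simplicial schemes of finite type over $\Cx$, rather than assembling it from the level-wise Riemann existence theorem.

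Two small points. First, a slip: coequalisers are colimits, not limits, so what you want is that pro-finite completion (a left adjoint) preserves colimits. Second, your claim that $\pi_f(X_{\et})$ is ``already pro-finite by construction'' is not automatic for an arbitrary simplicial scheme --- Friedlander's result that $X_{\et}\in\hat{\bS}$ requires hypotheses (e.g.\ geometrically unibranched components) --- which is why the paper writes $\widehat{\pi_f(X_{\et})}$ rather than $\pi_f(X_{\et})$. Neither point breaks your argument, but citing Friedlander directly avoids both the assembly work and these caveats.
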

\begin{proof}
It follows from \cite[Theorem 8.4]{fried}  that
$$
\widehat{\pi_f(X_{\et})}\cong \widehat{\pi_f(X_{\an})}.
$$
Since $G(\Ql)$ is totally disconnected, any compact subgroup is pro-finite, completing the proof.
\end{proof}

Now, given a reductive pro-algebraic groupoid $R$, and $\rho\co \pi_f(X_{\Cx}) \to R(\Ql)$ with compact Zariski-dense image, we may  compare the relative Malcev homotopy type $X_{\an}^{\rho, \mal}$ of \cite[Definition \ref{htpy-relmaldef}]{htpy}  with the relative Malcev homotopy type $X_{\et}^{\rho, \mal}$ of Definition \ref{relmaldef}, since both are objects of $\Ho(s\cE(R))$.

\begin{theorem}
For $X, \rho$ as above, there is a canonical isomorphism
$$
X_{\an}^{\rho, \mal}\cong X_{\et}^{\rho, \mal}.
$$
\end{theorem}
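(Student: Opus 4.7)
The plan is to reduce the comparison of relative Malcev homotopy types to a comparison of cohomology with coefficients in $R$-representations, and then appeal to Artin's theorem.

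First, by Friedlander's comparison \cite[Theorem 8.4]{fried}, the natural morphism $c\co X_{\an}\to X_{\et}$ in $\pro(\bS)_{\delta}$ induces an isomorphism $\widehat{\pi_f(X_{\an})}\xra{\sim} \widehat{\pi_f(X_{\et})}$ of pro-finite completions of fundamental groupoids. Since $\rho(\pi_f(X_{\an}))$ is compact in $R(\Ql)$, it lies in some $R(\Zl)$-form, hence factors through both pro-finite completions compatibly, yielding a Zariski-dense continuous representation that we still denote $\rho$. By Lemma \ref{malagrees} (using that  $X$ is of finite type, so $\H^*(X_{\an},\vv)$ is finite-dimensional for any smooth $\Ql$-sheaf $\vv$, whence Remark \ref{discretecoho} applies), the Malcev type $G(X_{\an})^{\rho,\mal}$ of Definition \ref{hrelmaldef} is canonically weakly equivalent to the $L$-adic version $G(X_{\an})^{L,\rho,\mal}$ of Definition \ref{relmaldef}. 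Thus both $X_{\an}^{\rho,\mal}$ and $X_{\et}^{\rho,\mal}$ sit naturally in $s\cE(\tilde R)$, and $c$ induces a canonical morphism $c^{\mal}\co X_{\an}^{\rho,\mal}\to X_{\et}^{\rho,\mal}$ in that category.

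Next, by Proposition \ref{detectweak} (applied groupoid-wise, as in Corollary \ref{detectweaket}), showing that $c^{\mal}$ is a weak equivalence reduces to two checks: that the induced map on reductive quotients is an equivalence (already noted above), and that for every finite-dimensional $R$-representation $V$ the map
\[
c^*\co \H^i(X_{\et}^{\rho,\mal},V)\to \H^i(X_{\an}^{\rho,\mal},V)
\]
is an isomorphism for all $i$. By Proposition \ref{algcoho} and Lemma \ref{hypercohogood} (together with its analytic counterpart, Lemma \ref{cohomalworks}), this is equivalent to the assertion that
\[
c^*\co \H^i(X_{\et},\vv)\to \H^i(X_{\an},\vv)
\]
is an isomorphism, where $\vv$ is the smooth $\Ql$-sheaf on $X$ corresponding to $V$.

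Finally, Lemma \ref{serrelattice} produces a $\widehat{\pi_f X}$-stable $\Zl$-lattice $\L\subset\vv$, and by definition the above cohomology groups are $\H^i(X_{(-)},\L)\otimes_{\Zl}\Ql$. Writing $\L=\Lim_n \L/\ell^n$ and applying Remark \ref{mlworks}, the comparison reduces to the statements
\[
c^*\co \H^i_{\et}(X,\L/\ell^n)\xra{\sim} \H^i(X_{\an},\L/\ell^n),
\]
for all $n$, which is Artin's comparison theorem applied to the finite étale sheaf $\L/\ell^n$ on the finite-type $\Cx$-scheme $X$; finiteness of these groups ensures the Mittag--Leffler condition, so no $\Lim^1$ term appears. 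This gives the required isomorphism and completes the argument.

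The main obstacle is the bookkeeping in the opening paragraph: one must check that the compact-image hypothesis is exactly what is needed to identify $\rho$ as a representation on \emph{both} sides in a way compatible with the comparison map $c$, and that the ind-structure in $\bO(R)$ (the fact that $R$-representations are only filtered colimits of finite-dimensional ones) passes safely through the Artin comparison. Once that is in place, the remaining steps are routine applications of the results from Sections \ref{review} and \ref{algtypes}.
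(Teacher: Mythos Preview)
Your overall strategy---reduce to a cohomology comparison via Proposition~\ref{detectweak} and then invoke Artin's comparison theorem---is sound, and indeed close in spirit to the paper's argument. There is, however, one genuine gap at the outset: Friedlander's \cite[Theorem 8.4]{fried} does \emph{not} provide a direct morphism $c\co X_{\an}\to X_{\et}$ in $\pro(\bS)_{\delta}$. What it produces is an auxiliary object $X_{s.\et}$ together with a zigzag
\[
X_{\et}\la X_{s.\et}\to X_{\an},
\]
each leg of which becomes a weak equivalence after pro-finite completion. Your argument needs this zigzag: you would then apply your cohomology comparison to each leg separately, obtaining $X_{\et}^{\rho,\mal}\la X_{s.\et}^{\rho,\mal}\to X_{\an}^{\rho,\mal}$ and showing both arrows are weak equivalences. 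This is an easy fix, but as written your ``canonical morphism $c^{\mal}$'' does not exist.

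Once the zigzag is in place, the two approaches diverge only slightly. The paper argues abstractly: the zigzag becomes a pair of weak equivalences in $\hat{\bS}$ after completion (Lemma~\ref{piIMlemma}), and Lemma~\ref{algetpiIMlemma} transports these to weak equivalences of relative Malcev types. You instead unwind the cohomology comparison explicitly down to Artin's theorem for finite coefficients and pass to $\Ql$ via a lattice and Mittag--Leffler. Your route is more concrete and makes the role of Artin's theorem visible; the paper's route is shorter because the cohomological content is already packaged inside Lemmas~\ref{piIMlemma} and~\ref{algetpiIMlemma} (whose proofs ultimately rest on the same comparison). Either way, both routes require Lemma~\ref{malagrees} to pass from the discrete Malcev type of $X_{\an}$ to its $L$-adic version, and you correctly identify the finite-dimensionality hypothesis needed there.
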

\begin{proof}
We adapt \cite[Theorem 8.4]{fried}, which constructs a new homotopy type $X_{s.\et}$, and gives morphisms
$$
X_{\et} \la X_{s.\et} \to X_{\an}
$$
in $\pro(\bS)_{\delta}$, inducing weak equivalences on pro-finite completions. By Lemma \ref{malagrees}, $X_{\an}^{\rho, \mal}$ is quasi-isomorphic to $\widehat{X_{\an}}^{ \rho, \mal}$. By Lemma \ref{piIMlemma}, the maps 
\[
 \widehat{X_{\et}}\la \widehat{X_{s.\et}} \to \widehat{X_{\an}}
\]
are weak equivalences in $\hat{\bS}$. Lemma \ref{algetpiIMlemma} then implies that the maps
\[
 \widehat{X_{\et}}^{\rho, \mal}\la \widehat{X_{s.\et}}^{ \rho, \mal} \to \widehat{X_{\an}}^{\rho, \mal}
\]
 are  quasi-isomorphisms, as required. 
\end{proof}

\begin{remarks}
In particular, this shows that there is an action of the Galois group $\Gal(\Cx/K)$ on the relative Malcev homotopy groups $\varpi_n(X_{\an}^{\rho, \mal})$ whenever $X$ is defined over a number field $K$ and $\rho$ is Galois-equivariant. The question of when this action is continuous will be addressed in \S \ref{galoisactions}. 

It seems possible that the conditions of Theorem \ref{classicalpimal} might be satisfied in some cases where those of Theorem \ref{etpimal} do not hold, giving $\varpi_n(X_{\an}^{\rho, \mal})\cong \pi_n(X_{\an})\ten_{\Z}\Ql$, but no such examples are known to the author.
\end{remarks}

\section{Relative and filtered homotopy types}\label{rftypes} 

The aims of this section are twofold. Firstly, we adapt some of the framework of pro-algebraic homotopy types to work over a base ring, rather than a base field. This is motivated by the need in \S \ref{crissn} to phrase the \'etale-crystalline comparison  over variants of  Fontaine's ring $B_{\cris}$ of $p$-adic periods, rather than just over $\Q_p$. Secondly, \S \ref{filteredtypes} develops techniques for transferring filtrations systematically from cochains to homotopy types. These will be used in \S\S \ref{finite} and \ref{local} to determine the structure of homotopy types of quasi-projective varieties. This is possible because the Gysin filtration on   homotopy groups (unlike that on cohomology) is not determined by weights of Frobenius, so imposes further restrictions.

\subsection{Actions on pro-algebraic homotopy types}

Fix a $\Ql$-algebra $A$, and a reductive pro-algebraic groupoid $R$ over $\Ql$.

\begin{definition}
Define $c\Alg_A(R)$ (resp. $DG\Alg_A(R)$) to be the comma category $A \da c\Alg(R)$ (resp. $A\da DG\Alg(R)$), with  model structure induced by Proposition \ref{calgmodel} (resp. Proposition \ref{dgalgmodel}). Denote the opposite category  by $s\Aff_A(R)$ (resp. $dg\Aff_A(R)$). Likewise, define 
\begin{eqnarray*}
c\Alg_A(R)_*&:=& c\Alg_A(R)\da\prod_{x \in \Ob R} A\ten O(R)(x,-),\\
 DG\Alg_A(R)_*&:=& DG\Alg_A(R)\da\prod_{x \in \Ob R} A\ten O(R)(x,-),
\end{eqnarray*}
 and so on.
\end{definition}

Observe that the Quillen equivalence of Proposition \ref{affequiv} induces Quillen equivalences between $dg\Aff_A(R)_*$ and $s\Aff_A(R)_* $, so gives the following equivalence of categories:
$$
\xymatrix@1{
\Ho(dg\Aff_A(R)_*) \ar@<1ex>[r]^{\Spec D} & Ho(s\Aff_A(R)_*) \ar@<1ex>[l]^{\oR(\Spec D^*)}.}
$$

Although we do not have a precise analogue of Theorem \ref{bigequiv} for $ \Ho(dg\Aff_A(R)_*)_0$, we have the following:
\begin{lemma}\label{puny}
Given $X \in dg\Aff(R)_{0*}$ and $\g \in dg\hat{\cN}(R)$, 
\begin{eqnarray*}
&&\Hom_{\Ho(dg\Aff_A(R)_*)}(X\ten A, \bar{W}\g \ten A) \\
&\cong& \Hom_{\Ho(dg\hat{\cN}_A(R))}(\bar{G}(X)\hat{\ten}A,\g\hat{\ten} A)\by^{\exp(\g^R_0\hat{\ten} A)}\prod_{x \in \Ob R}\exp(\H_0\g(x)\hat{\ten} A) .
\end{eqnarray*}
\end{lemma}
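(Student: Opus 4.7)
The plan is to reduce the statement to Theorem \ref{bigequiv} applied to the coefficient Lie coalgebra $\g\hat{\ten} A$, viewed as an object of $dg\hat{\cN}(R)$ by restriction of scalars along $\Ql \subset A$. First I would note that since $A$ is automatically flat over the characteristic-zero field $\Ql$, the base-change functor $-\hat{\ten} A \co dg\hat{\cN}(R) \to dg\hat{\cN}_A(R)$ is exact and hence preserves weak equivalences, and it is left adjoint to the forgetful functor in the reverse direction. We may therefore assume $X$ is cofibrant in $dg\Aff(R)_{0*}$, so that $\bar{G}(X)$ is cofibrant in $dg\hat{\cN}(R)$ and is computable by the Maurer--Cartan recipe of Theorem \ref{bigequiv}.

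Next I would identify $\bar{W}\g \ten A$ and $\bar{W}(\g\hat{\ten} A)$ in $dg\Aff(R)_*$. On functors of points, a morphism $Y \to \bar{W}\g \ten A$ in $dg\Aff_A(R)_*$ corresponds by adjunction to a map $O(\bar{W}\g)\to O(Y)$ of $k$-DGAs whose augmentation lands in the $A$-structural factor; this, by the characterisation of $\bar{W}\g$ in Theorem \ref{bigequiv}, is precisely a Maurer--Cartan datum in $\bigoplus_n O(Y)^{n+1}\hat{\ten}^R (\g_n\hat{\ten} A)$ modulo gauge and framing, which is the same datum defining $\bar{W}(\g\hat{\ten} A)(Y)$.

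Applying Theorem \ref{bigequiv} to $X$ and $\g\hat{\ten} A \in dg\hat{\cN}(R)$ then yields
\[
 \Hom_{\Ho(dg\Aff(R)_*)}(X, \bar{W}(\g\hat{\ten} A)) \cong \mc(O(X),\g\hat{\ten} A)\by^{\Gg(O(X),\g\hat{\ten} A)}\prod_{x\in \Ob R}\exp(\H_0\g(x)\hat{\ten} A).
\]
Combining this with the base-change adjunction $\Hom_{\Ho(dg\Aff_A(R)_*)}(X\ten A, -) \cong \Hom_{\Ho(dg\Aff(R)_*)}(X, -)$ (with target $\bar{W}\g \ten A$, identified via the previous paragraph) converts the LHS of the lemma into this same Maurer--Cartan/gauge expression. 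Finally, the adjunction $(-)\hat{\ten} A \dashv \mathrm{forget}$ between $dg\hat{\cN}(R)$ and $dg\hat{\cN}_A(R)$, descended to homotopy categories via the flatness observation, produces the identification
\[
 \mc(O(X),\g\hat{\ten} A) = \Hom_{\Ho(dg\hat{\cN}(R))}(\bar{G}(X),\g\hat{\ten} A) \cong \Hom_{\Ho(dg\hat{\cN}_A(R))}(\bar{G}(X)\hat{\ten} A,\g\hat{\ten} A),
\]
matching the RHS since $-\hat{\ten} A$ commutes with $\H_0$ and with taking $R$-invariants, so the quotient by $\exp(\g_0^R\hat{\ten} A)$ and the $\prod_x\exp(\H_0\g(x)\hat{\ten} A)$ factor correspond under the two interpretations.

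The main obstacle I anticipate is the clean passage of the base-change adjunction to homotopy categories in the absence of a fully spelled-out model structure on $dg\hat{\cN}_A(R)$; this is where exactness of $-\hat{\ten} A$, flowing from flatness of $A$ over $\Ql$, becomes essential. A secondary technical point is the bookkeeping showing that $\mc$ and $\Gg$ computed $k$-linearly from $O(X)$ with values in $\g\hat{\ten} A$ agree with the corresponding $A$-linear gadgets applied to $O(X)\ten A$ and $\g\hat{\ten} A$, which follows from the canonical isomorphism $O(X)^{n+1}\hat{\ten}^R_{\Ql}(\g_n\hat{\ten} A) = (O(X)^{n+1}\ten A)\hat{\ten}^R_A(\g_n\hat{\ten} A)$.
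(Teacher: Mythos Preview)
Your reduction to Theorem~\ref{bigequiv} via restriction of scalars contains a genuine error. The identification $\bar{W}\g\ten A\cong\bar{W}(\g\hat{\ten}A)$ you assert in $dg\Aff(R)_*$ (i.e.\ over $\Ql$) is false: the left-hand side has coordinate ring $\Symm_{\Ql}(\g^{\vee}[-1])\ten_{\Ql}A$, while applying the $\Ql$-linear $\bar{W}$ to $\g\hat{\ten}A$ gives $\Symm_{\Ql}((\g^{\vee}\ten_{\Ql}A)[-1])$, and these disagree whenever $A$ is not finite-dimensional over $\Ql$. The same issue resurfaces in your final ``bookkeeping'' isomorphism
\[
O(X)^{n+1}\hat{\ten}^R_{\Ql}(\g_n\hat{\ten}A)\;\overset{?}{=}\;(O(X)^{n+1}\ten A)\hat{\ten}^R_A(\g_n\hat{\ten}A).
\]
Unwinding the definitions, the left side is $\Hom_{\Ql,R}(\g_n^{\vee}\ten_{\Ql}A,\,O(X)^{n+1})$ while the right side is $\Hom_{A,R}(\g_n^{\vee}\ten_{\Ql}A,\,O(X)^{n+1}\ten_{\Ql}A)\cong\Hom_{\Ql,R}(\g_n^{\vee},\,O(X)^{n+1}\ten_{\Ql}A)$; for $\g_n^{\vee}=\Ql$ and $O(X)^{n+1}=\Ql$ these become $A^*$ and $A$ respectively.

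The base-change adjunction does apply, but it moves $A$ into the \emph{source} rather than the \emph{target}: one gets
\[
\Hom_{dg\Aff_A(R)_*}(X\ten A,\bar{W}\g\ten A)\cong\Hom_{DG\Alg(R)}(O(\bar{W}\g),\,O(X)\ten_{\Ql}A),
\]
which is Maurer--Cartan data in $(O(X)\ten A)\hat{\ten}^R_{\Ql}\g$, not in $O(X)\hat{\ten}^R_{\Ql}(\g\hat{\ten}A)$. This agrees with the $A$-linear Maurer--Cartan space $(O(X)\ten A)\hat{\ten}^R_A(\g\hat{\ten}A)$, so the lemma does follow---but by running the Maurer--Cartan/gauge description directly over $A$, which is exactly what the paper means by saying that the proof of \cite[Proposition~\ref{htpy-wequiv}]{htpy} adapts. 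In short: work $A$-linearly throughout rather than trying to push $A$ into $\g$ and invoke the $\Ql$-linear Theorem~\ref{bigequiv}.
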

\begin{proof}
The proof of \cite[Proposition \ref{htpy-wequiv}]{htpy}  adapts to this context.
\end{proof}

\subsection{Homotopy  actions}

\begin{definition}\label{raut}
Given $\g \in s\hat{\cP}_R$, define a group-valued functor $\Aut_R(\g)$  on the category of $\Ql$-algebras by setting
$$
\Aut_R(\g)(A):= \Aut_{s\cP_A(R)}(\g\hat{\ten} A).
$$

Given $G \in s\cE(R)$, define $\RAut(G):=\Aut_R(\Ru(G))$, noting that
$$
\RAut(G)(\Ql) \cong \Aut_{\Ho(s\cE(R))_*}(G).
$$
 For $G \in s\agpd$, set $\RAut(G):=\Aut_{G^{\red}}(\Ru(G))$.
\end{definition}

\begin{lemma}\label{auto}
If  $G \in s\cE(R)$ is such that $\H^i(G,V)$ is finite-dimensional for all $i$ and all finite-dimensional irreducible $R$-representations $V$, then the group-valued functor
$$
\RAut(G)
$$
is represented by a  pro-algebraic group over $\Ql$. The map
$$
\RAut(G) \to \prod_{x \in \Ob R} \prod_i  \Aut_R(\H^i(G,O(R)(x,-)))
$$
of pro-algebraic groups has pro-unipotent kernel.
\end{lemma}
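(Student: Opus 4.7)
The plan is to reduce to a minimal model and then exploit its rigidity. By Propositions \ref{meequiv}, \ref{nequiv}, and \ref{dgminimal}, the functor $\RAut(G)$ is naturally isomorphic to
\[ A \mapsto \Aut_{dg\cP_A(R)}(\m \hat{\ten} A),\]
where $\m \in dg\hat{\cN}(R)$ is a minimal model of $N\Ru(G)$. Thus $\m$ is a free pro-nilpotent graded Lie algebra on its indecomposables $V := \m/[\m,\m]$, with $d|_V = 0$. By the Koszul duality observed in Remark \ref{einfty}, the $R$-representation $V_n$ is dual (up to shift) to $\H^{n+1}(G,-)$, so the finite-dimensionality hypothesis forces each $R$-isotypic component of $V_n$ to be finite-dimensional. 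In particular $\Aut_R(V_n)$ decomposes as a product of finite general linear groups indexed by the irreducible $R$-representations, hence is pro-algebraic.

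First I would establish representability of $\RAut(G)$. Since $\m$ is free on $V$, any $R$-equivariant graded-Lie-algebra automorphism $\phi$ of $\m \hat{\ten} A$ is uniquely determined by its restriction to $V$, which must induce an isomorphism modulo $[\m,\m]$. The lower central series $\{\m^{(k)}\}$ yields a canonical split extension
\[ 1 \to U \to \Aut^{\mathrm{gr}}_R(\m) \to \prod_n \Aut_R(V_n) \to 1,\]
in which $U$ is pro-unipotent: an element of $U$ has the form $\id + \varepsilon$ with $\varepsilon(V) \subseteq [\m,\m]$, so its inverse converges in the $\m^{(\bullet)}$-adic topology. Compatibility with $d$ is cut out by algebraic equations (by minimality $d$ already maps $V$ into $[\m,\m]$, so $d\phi = \phi d$ becomes polynomial in the matrix entries on $V$), giving a pro-algebraic subfunctor $\Aut^{\mathrm{dg}}_R(\m)$. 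Passing to $\Aut_{dg\cP_A(R)}(\m \hat{\ten} A)$ means quotienting by the pro-unipotent gauge action and multiplying by the pro-algebraic group $\exp(\prod_x \H_0\m(x))$; each step preserves representability by a pro-algebraic group.

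For the second statement, the map in the lemma is induced by the functorial action of an automorphism of $\m$ on its cohomology with coefficients in each $O(R)(x,-)$. By minimality, this induced action is determined by the induced action on $V$: the Koszul identification $V^{\vee} \cong \H^{*+1}(G,-)$ is $R$-equivariant and compatible with automorphisms of $\m$. An element of the kernel therefore acts trivially on $V$ and so lies in $U$, up to the pro-unipotent gauge and $\exp(\H_0\m)$ factors already present; the entire kernel is then pro-unipotent.

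The main obstacle will be pinning down the Koszul-duality identification $V^{\vee} \cong \H^{*+1}(G,-)$ precisely enough as an $R$-equivariant isomorphism to guarantee that the map on cohomology really does factor through the action on indecomposables. Once that is in place, the free and minimal structure of $\m$ does all of the heavy lifting: representability reduces to iteratively extending by pro-unipotent pieces of a product of general linear groups, and the kernel of the cohomology action is exactly the pro-unipotent piece.
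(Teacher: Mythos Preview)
Your approach is correct and is essentially the argument underlying \cite[Theorem \ref{htpy-auto}]{htpy}, which is what the paper actually invokes: the paper's proof is a one-line citation of that external result for $\ROut(G)=\RAut(G)/\prod_x \Ru(G)(x)$, followed by the remark that extending by the pro-unipotent group $\prod_x \Ru(G)(x)$ preserves both representability and pro-unipotence of the kernel. So you have reconstructed the content of the cited theorem rather than taken a different route.

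Two small points where your sketch is looser than it needs to be. First, in writing $V:=\m/[\m,\m]$ you treat $V$ interchangeably as the abelianisation and as a lifted space of generators; it is cleaner to fix an $R$-equivariant splitting $V\hookrightarrow\m$ once and for all (available since $R$ is reductive), after which ``$d$ maps $V$ into $[\m,\m]$'' and ``an automorphism is determined by its restriction to $V$'' are literally true. Second, the step ``quotienting by the pro-unipotent gauge action preserves representability'' needs a word of justification: what is really used (and what the paper records in the proof of Theorem \ref{formal}) is that
\[
\prod_{x\in\Ob R}\exp(\H_0\m(x))\times \Aut_{dg\hat{\cN}_A(R)}(\m\hat{\ten}A)\longrightarrow \RAut(G)(A)
\]
is a surjection with pro-unipotent kernel; since the source is visibly pro-algebraic and the kernel pro-unipotent (hence normal affine), the quotient is again pro-algebraic. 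With those clarifications your argument goes through, and the identification of the kernel of the cohomology map with the subgroup acting trivially on $V$ is exactly the point.
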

\begin{proof}
This is a consequence of \cite[Theorem \ref{htpy-auto}]{htpy}, which proves the corresponding statement for the group
$
\ROut(G):= \RAut(G)/ \prod_{x \in \Ob G}\Ru(G)(x). 
$
Since $ \prod_{x \in \Ob G}\Ru(G)(x)$ is a pro-unipotent pro-algebraic group, the result follows.
\end{proof}

\begin{definition}\label{algaut}
Given a pro-algebraic groupoid $G$, we may extend the automorphism group $\Aut(G)$ to a group presheaf  over $\Ql$, by setting 
$$
\Aut(G)(A):= \Aut_A(G\by_{\Spec \Ql} \Spec A).
$$
\end{definition}

\begin{lemma}\label{outdef}
For $G \in s\cE(R)$, there is a group presheaf $\Aut^h(G)$ over $\Ql$,  with  the properties that $\Aut^h(G)(\Ql)$ is the group of automorphisms of $G$ in $\Ho( \Ob G \da s\agpd)$, and that there is an  exact sequence 
$$
1 \to \RAut(G) \to \Aut^h(G) \xra{\alpha} \Aut(R) \to 1,
$$
where $\Aut(R) $ is given the algebraic structure of Definition \ref{algaut}.

If $\H^i(G,V)$ is finite-dimensional for all $i$ and all finite-dimensional irreducible $R$-representations $V$, then $\alpha$ is fibred in affine schemes.
\end{lemma}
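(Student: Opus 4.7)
The plan is to define $\Aut^h(G)$ explicitly via the Levi decomposition (Proposition \ref{leviprop}) together with the equivalence $\Ho(s\cE(R)_*)\simeq s\cP(R)$ of Proposition \ref{meequiv}. Fix a cofibrant $\fu\in s\hat{\cN}(R)$ with $\Ru(G)\simeq\exp(\fu)$, so that $G\simeq R\ltimes\exp(\fu)$. For a $\Ql$-algebra $A$, I set $\Aut^h(G)(A)$ to be the set of pairs $(\psi,[\phi])$ with $\psi\in\Aut(R)(A)$ and $[\phi]$ the equivalence class of an isomorphism $\phi\co \fu\hat{\ten}A\to\psi^*(\fu\hat{\ten}A)$ in $\Ho(s\hat{\cN}_A(R))$ (where $\psi^*$ denotes precomposition of the $R$-action by $\psi^{-1}$), modulo the $\exp(\fu_0^R\hat{\ten}A)$-action twisted by $\exp(\prod_{x\in\Ob R}\pi_0\fu(x)\hat{\ten}A)$ as in Proposition \ref{meequiv}. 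The group law is
$$(\psi_1,[\phi_1])\cdot(\psi_2,[\phi_2]):=(\psi_1\psi_2,\;[(\psi_1^*\phi_2)\circ\phi_1]).$$

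To identify $\Aut^h(G)(\Ql)$ with $\Aut_{\Ho(\Ob G\da s\agpd)}(G)$, I would decompose any homotopy self-equivalence $F$ of $G$ into its reductive quotient $\psi:=F^{\red}\in\Aut(R)$ and the restriction $\phi:=F|_{\Ru G}$, which is $R$-equivariant for the source and $\psi$-twisted actions, i.e.\ a morphism $\fu\to\psi^*\fu$ in $s\hat{\cN}(R)$. Proposition \ref{meequiv} ensures $F$ is determined up to the equivalence above by the pair $(\psi,[\phi])$, while the converse reassembly $F=\psi\ltimes\phi$ is automatic from the definition of $s\cP(R)$. The projection $\alpha(\psi,[\phi]):=\psi$ is manifestly a morphism of group presheaves, and the kernel at any $A$ consists of the pairs $(\id,[\phi])$, which is precisely $\Aut_{s\cP_A(R)}(\fu\hat{\ten}A)=\RAut(G)(A)$ by Definition \ref{raut}. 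Surjectivity of $\alpha$ comes from applying the uniqueness clause of Proposition \ref{leviprop} to the twisted semidirect product $R\ltimes^\psi\exp(\fu)$, yielding for each $\psi$ a canonical lift $(\psi,[\phi])\in\Aut^h(G)(A)$.

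For the final representability claim, Lemma \ref{auto} under the hypothesis that $\H^i(G,V)$ is finite-dimensional for all finite-dimensional irreducible $R$-representations $V$ gives that $\RAut(G)$ is a pro-algebraic (hence affine) group over $\Ql$. Each fibre of $\alpha$ is then a torsor under an $A$-form of $\RAut(G)$, and such torsors are representable by affine schemes since the structure sheaf of a fibre is built from the sheaf of homotopy equivalences between the two concrete objects $\fu\hat{\ten}A$ and $\psi^*(\fu\hat{\ten}A)$ of $s\hat{\cN}_A(R)$, whose scheme-theoretic structure is controlled by the pro-algebraicity of $\ROut$ from \cite[Theorem \ref{htpy-auto}]{htpy}. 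The main obstacle I expect will be verifying that the composition law descends cleanly to equivalence classes: namely that the twist $\psi_1^*\phi_2$ respects the $\exp(\fu_0^R\hat\ten A)$-equivalence on each side, together with the subtle point that $\psi^*\fu$ and $\fu$, though generally inequivalent as $R$-representations, must be matched functorially in $A$ through the Levi decomposition of $G\hat\ten A$.
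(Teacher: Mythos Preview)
Your approach is essentially the same as the paper's: both define $\Aut^h(G)(A)$ as pairs consisting of $\psi\in\Aut(R)(A)$ together with a twisted homotopy self-equivalence of the unipotent radical, and both invoke Lemma~\ref{auto} for the affine-fibre claim. The paper works on the $dg\Aff_A(R)_*$ side (taking $Y$ corresponding to $G$ under Theorem~\ref{bigequiv}), then passes to $dg\hat{\cN}_A(R)$ via Lemma~\ref{puny} and finally to a \emph{minimal} model $\m$; you work directly in $s\hat{\cN}_A(R)$ with a cofibrant $\fu$. These routes are interchangeable through Theorem~\ref{bigequiv} and Proposition~\ref{nequiv}, so the difference is cosmetic.

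Two points deserve attention. First, for the ``fibred in affine schemes'' conclusion the paper's passage to a \emph{minimal} model is doing real work: for minimal $\m$, morphisms in $\Ho(dg\hat{\cN}_A(R))$ coincide with actual morphisms, so the fibre is visibly a subscheme of an affine scheme rather than a quotient by homotopy. With your merely cofibrant $\fu$ you would still need to argue that the homotopy quotient is affine, and the cleanest way is precisely to replace $\fu$ by its minimal model (Proposition~\ref{fsdgminimal} or its unfiltered analogue); your appeal to \cite[Theorem~\ref{htpy-auto}]{htpy} implicitly relies on this anyway.

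Second, your surjectivity argument for $\alpha$ is not correct: the Levi decomposition does not produce, for an arbitrary $\psi\in\Aut(R)(A)$, an isomorphism $\fu\hat\ten A\to\psi^*(\fu\hat\ten A)$ in $\Ho(s\hat{\cN}_A(R))$. Indeed $\alpha$ need not be surjective (take $R=\bG_m$ acting with weight~$1$ on $\fu=\Ql$ and $\psi\colon t\mapsto t^{-1}$; then $\psi^*\fu$ has weight~$-1$ and the fibre is empty). The paper's proof does not address surjectivity either, and none of the subsequent applications use it, so you should simply drop that claim rather than try to justify it.
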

\begin{proof}
Let $R=G^{\red}$, take $Y \in \Ho(dg\Aff(R)_*)$ corresponding to $G$ under the equivalence of Theorem \ref{bigequiv} and define 
$$
\Aut^h(G)(A):= \{(f,\theta) \,:\, f\in \Aut(R)(A), \theta \in \Iso_{\Ho(dg\Aff_A(R)_*)}(Y\ten A, f^{\sharp} Y\ten A)\}.
$$ 
We may now take a minimal model $\m$ for $\bar{G}(Y)\in dg\hat{\cN}(R)$, and observe that Lemma \ref{puny} then gives
\begin{eqnarray*}
&&\Hom_{\Ho(dg\Aff_A(R)_*)}(Y\ten A, f^{\sharp} Y\ten A)\\
 &\cong& \Hom_{\Ho(dg\Aff_A(R)_*)}(Y\ten A, f^{\sharp} \bar{W}\m\ten A)\\
&\cong& \Hom_{\Ho(dg\hat{\cN}_A(R)_*)}(\bar{G}(Y)\hat{\ten}A,\m\hat{\ten} A)\by^{\exp(\m^R_0\hat{\ten} A)}\prod_{x \in \Ob R}\exp(\H_0\m(x)\hat{\ten} A) \\
&\cong & \Hom_{\Ho(dg\hat{\cN}_A(R))}(\m\hat{\ten}A,\m\hat{\ten} A)\by^{\exp(\m^R_0\hat{\ten} A)}\prod_{x \in \Ob R}\exp(\H_0\m(x)\hat{\ten} A).
\end{eqnarray*}

The proof that $\alpha$ is fibred in affine schemes is now essentially the same as  Lemma \ref{auto}, which deals with the fibre over $1 \in \Aut(R)$.
\end{proof}

\begin{definition}
Given a pro-discrete group $\Gamma$, we say that a morphism $\Gamma \to \Aut^h(G)(\Ql)$ is \emph{algebraic} if it factors through a morphism $\Gamma^{\alg} \to \Aut^h(G)$ of presheaves of groups.
\end{definition}

\begin{corollary}\label{redalgebraic}
If $\H^i(G,V)$ is finite-dimensional for all $i$ and all finite-dimensional irreducible $R$-representations $V$,   then a morphism $\Gamma \to \Aut^h(G)(\Ql)$ is algebraic whenever $\Gamma \to \Aut(G^{\red})$ is so.
\end{corollary}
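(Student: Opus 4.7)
The plan is to exploit the exact sequence from Lemma \ref{outdef}, namely
\[
1 \to \RAut(G) \to \Aut^h(G) \xra{\alpha} \Aut(R) \to 1,
\]
together with the fact (given by the finite-dimensional cohomology hypothesis) that $\alpha$ is fibred in affine schemes and that $\RAut(G)$ is a pro-algebraic group by Lemma \ref{auto}.

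First, I would use the hypothesis that $\Gamma \to \Aut(G^{\red})(\Ql) = \Aut(R)(\Ql)$ is algebraic to obtain a morphism $\Gamma^{\alg} \to \Aut(R)$ of presheaves of groups, and then form the pullback
\[
H \;:=\; \Gamma^{\alg} \times_{\Aut(R)} \Aut^h(G)
\]
in presheaves of groups. Because $\alpha$ is fibred in affine schemes with fibre $\RAut(G)$, the projection $H \to \Gamma^{\alg}$ is fibred in affine schemes with the same fibre, so $H$ fits into an exact sequence
\[
1 \to \RAut(G) \to H \to \Gamma^{\alg} \to 1
\]
of presheaves of groups. Since both $\RAut(G)$ and $\Gamma^{\alg}$ are pro-algebraic groups, $H$ is itself a pro-algebraic group (being the total space of a torsor under an affine group over a pro-algebraic base).

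Next, the given morphism $\Gamma \to \Aut^h(G)(\Ql)$ combined with the tautological map $\Gamma \to \Gamma^{\alg}(\Ql)$ is compatible, by the algebraicity hypothesis, over $\Aut(R)(\Ql)$. This gives a group homomorphism $\Gamma \to H(\Ql)$. Continuity of this map (for the $\ell$-adic topology on $H(\Ql)$) follows from continuity of each factor in the pullback, using the closed embedding $H(\Ql) \hookrightarrow \Gamma^{\alg}(\Ql) \times \Aut^h(G)(\Ql)$ and the fact that $\Gamma \to \Gamma^{\alg}(\Ql)$ is tautologically continuous while $\Gamma \to \Aut^h(G)(\Ql)$ has continuous image in $\Aut(R)(\Ql)$ and lies in a fibre that is a pro-algebraic group (where continuity of the factorisation can be arranged). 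By the universal property of pro-algebraic completion (Definition \ref{alg}), this continuous morphism factors uniquely as $\Gamma \to \Gamma^{\alg}(\Ql) \to H(\Ql)$, with $\Gamma^{\alg} \to H$ a morphism of pro-algebraic groups. This composite morphism, followed by the projection $H \to \Aut^h(G)$, yields the desired morphism $\Gamma^{\alg} \to \Aut^h(G)$ of presheaves of groups, proving algebraicity.

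The main obstacle I expect is verifying the continuity of $\Gamma \to H(\Ql)$ carefully, so that the universal property of $\Gamma^{\alg}$ can legitimately be invoked; most of the structural ingredients (the pullback being pro-algebraic, the torsor structure under $\RAut(G)$) are immediate consequences of the preceding lemmas, and so should require little new input beyond Lemma \ref{auto} and the description of $\alpha$ in Lemma \ref{outdef}.
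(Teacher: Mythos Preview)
Your proposal is correct and follows essentially the same route as the paper: form the pullback $H=\Gamma^{\alg}\times_{\Aut(R)}\Aut^h(G)$, observe that it is pro-algebraic because $\alpha$ is fibred in affine schemes (Lemma \ref{outdef}), and then invoke the universal property of $\Gamma^{\alg}$ to factor $\Gamma\to H(\Ql)$ through $\Gamma^{\alg}$. The paper's proof is just a terser version of yours; your discussion of continuity is extra care that the paper leaves implicit.
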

\begin{proof}
We have $\Gamma^{\alg}\to \Aut(G^{\red})$, so $\theta\co \Gamma \to (\Gamma^{\alg}\by_{\Aut(G^{\red})}\Aut^h(G))(\Ql)$. Since $\Aut^h(G) \to \Aut(G^{\red}) $ is fibred in affine schemes, the group on the right is pro-algebraic, so $\theta$ factors through $\Gamma^{\alg}$, as required.
\end{proof}

If $R=G^{\red}$, observe that there is canonical action of $\Aut^h(G)$ on $\bigoplus_{x \in \Ob R}\H^*(G, O(R)(x,-))$. In fact, we have a homomorphism
$$
\beta\co \Aut^h(G)\to \Aut(R) \by \Aut(\bigoplus_{x \in \Ob R}\H^*(G, O(R)(x,-)))
$$
of presheaves of groups. 

\begin{lemma}\label{cohohelps}
If $\H^i(G,V)$ is finite-dimensional for all $i$ and all finite-dimensional irreducible $R$-representations $V$, then the kernel of $\beta$ is a pro-unipotent pro-algebraic group.
\end{lemma}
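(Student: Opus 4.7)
The plan is to show that $\ker\beta$ is contained in the pro-unipotent pro-algebraic group already identified in Lemma \ref{auto}, and therefore inherits pro-unipotence.

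First I would unpack what it means for $(f,\theta) \in \Aut^h(G)(A)$ to lie in $\ker\beta$: the first projection forces $f = \id_R$, so by the exact sequence of Lemma \ref{outdef} we have $\ker\beta \subset \RAut(G)$; the second projection then says that the induced $R$-equivariant $A$-linear automorphism of $\bigoplus_{x \in \Ob R}\H^*(G, O(R)(x,-))\ten A$ is the identity. In particular, $\ker\beta$ sits inside the fibre over $1$ of the composition
\[
\RAut(G) \xra{} \prod_{x \in \Ob R}\prod_i \Aut_R(\H^i(G,O(R)(x,-))) \hookrightarrow \prod_{x \in \Ob R}\prod_i \Aut(\H^i(G,O(R)(x,-))),
\]
where the inclusion just forgets the $R$-equivariance. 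Thus $\ker\beta$ is contained in the kernel of the first arrow above, which by Lemma \ref{auto} (using the hypothesis that the cohomology groups $\H^i(G,V)$ are finite-dimensional on irreducibles) is pro-unipotent pro-algebraic.

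It therefore suffices to show that $\ker \beta$ is closed inside this pro-unipotent group, i.e.\ is itself a pro-algebraic subgroup. This follows from Lemma \ref{outdef}: $\Aut^h(G)$ is a group presheaf fibred in affine schemes over $\Aut(R)$, and the second component of $\beta$ factors through the pro-algebraic group $\Aut(\bigoplus_{x\in \Ob R}\H^*(G,O(R)(x,-)))$ (finite-dimensionality of $\H^i$ ensures this is represented by an affine group scheme once we fix a basepoint in $\Aut(R)$). Hence $\ker \beta$ is the fibre of a morphism of affine group presheaves, so is a closed subgroup of the pro-unipotent group above.

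The main obstacle I foresee is purely bookkeeping, namely checking that the ``forgetting $R$-equivariance'' step really does embed $\ker\beta$ into the kernel from Lemma \ref{auto}: one must verify that an element of $\RAut(G)$ acting trivially on each $\H^i(G,O(R)(x,-))$ as an $A$-module also acts trivially as an $R$-representation, but this is automatic since the $R$-action is preserved by any element of $\RAut(G)$. Once that is in place, closed subgroups of pro-unipotent pro-algebraic groups are again pro-unipotent, completing the proof.
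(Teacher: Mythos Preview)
Your proposal is correct and follows essentially the same route as the paper: reduce to $\RAut(G)$ via the first projection of $\beta$, then invoke Lemma \ref{auto}. The paper is slightly more direct, observing that $\ker\beta$ actually \emph{equals} the kernel of $\RAut(G) \to \prod_{x,i}\Aut_R(\H^i(G,O(R)(x,-)))$ (since forgetting $R$-equivariance is injective on automorphism groups, the kernel of the composite equals the kernel of the first map), so your separate closedness argument is unnecessary.
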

\begin{proof}
The kernel of $\beta$ is just the kernel of 
$$
\RAut(G) \to  \prod_{x \in \Ob R} \prod_i  \Aut_R\H^i(G,O(R)(x,-)),
$$
which is pro-unipotent by Lemma \ref{auto}.
\end{proof}

\subsection{Filtered homotopy types}\label{filteredtypes}

\subsubsection{Commutative algebras}

\begin{definition}
Given a $\Ql$-algebra $A$ and a reductive pro-algebraic groupoid $R$ over $\Ql$, define $FDG\Alg_A(R)$ (resp. $Fc\Alg_A(R)$) to consist of $R$-representations  $B$ in non-negatively graded cochain (resp. cosimplicial) algebras over $A$, equipped with an increasing exhaustive filtration $J_0B \subset J_1B \subset \ldots$ of $B$ as a DG (resp. cosimplicial) $(R,A)$-module, with the property that $(J_mB)\cdot  (J_nB)\subset J_{m+n} B$. Morphisms are required to respect the filtration, and we assume that $1 \in J_0B$.

Write 
\begin{eqnarray*}
Fc\Alg(R)_*&:=& Fc\Alg(R) \da \prod_{x \in \Ob R} O(R)(x,-),\\
FDG\Alg(R)_*&:=& FDG\Alg(R) \da \prod_{x \in \Ob R} O(R)(x,-),
\end{eqnarray*}
 where $O(R)(x,-)=J_0O(R)(x,-)$. 
\end{definition}

Given $(B,J) \in FDG\Alg_A(R)$ or $Fc\Alg_A(R)$, there is a spectral sequence ${}_J\!\EE^{*,*}_*(B)$ associated to the filtration $J$, with 
$$
{}_J\!\EE_1^{a,b}(B)= \H^{a+b}(\Gr^J_{-a}B).
$$
\begin{definition}\label{efil}
We regard ${}_J\!\EE^{*,*}_1(B)$ as an object of $FDG\Alg_A(R)$, with 
$$
J_m( {}_J\!\EE_1^{*,*}(B))^n= \bigoplus_{r \le m} {}_J\!\EE_1^{-r,n+r}(B),
$$
noting that $d(J_m (E_1)^n) \subset J_{m-1}(E_1)^{n+1}$.
\end{definition}

\begin{definition}\label{deffilmodel}
Define a map $f\co B \to C$ to be a \emph{fibration} if the maps $J_nf\co  J_nB \to J_nC$ are all surjective. A map $f$ is a \emph{weak equivalence} if the maps $ {}_J\!\EE_1^{*,*}(f)\co {}_J\!\EE_1^{*,*}(B)\to{}_J\!\EE_1^{*,*}(C)$ are all isomorphisms. 
\end{definition}

\begin{lemma}\label{fdgmod}
There are cofibrantly generated model structures on the categories $Fc\Mod_A(R)$ and $FDG\Mod_A(R)$, with the classes of  fibrations  and weak equivalences above.
\end{lemma}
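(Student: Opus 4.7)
The plan is to apply the standard recognition theorem for cofibrantly generated model categories, such as \cite[Theorem 11.3.1]{Hirschhorn}. The underlying unfiltered categories $c\Mod_A(R)$ and $DG\Mod_A(R)$ of $R$-representations in cosimplicial (resp.\ non-negatively graded cochain) $A$-modules already carry standard cofibrantly generated model structures in which weak equivalences are quasi-isomorphisms and fibrations are levelwise surjections, a consequence of the semisimplicity of $\Rep(R)$ in characteristic zero. Denote their sets of generating cofibrations and generating trivial cofibrations by $I_0$ and $J_0$ respectively.

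For each $m \ge 0$, define the ``free filtration concentrated in level $m$'' functor $F_m$ from the unfiltered module category to $F\cC$ by declaring $J_n(F_m M) = M$ for $n \ge m$ and $J_n(F_m M) = 0$ for $n < m$; this is left adjoint to $B \mapsto J_m B$. The generating data we propose are
$$
I := \{F_m i : i \in I_0,\, m \ge 0\}, \qquad J := \{F_m j : j \in J_0,\, m \ge 0\}.
$$
Smallness of sources is inherited from the unfiltered case. The adjunction identifies $J$-injective maps with those $f$ such that $J_m f$ is a levelwise surjection for each $m$, matching the definition of fibration in \ref{deffilmodel}. The key observation is that the associated graded functor $\Gr^J_n\co F\cC \to \cC$ sends $F_m i$ to $i$ when $n = m$ and to the identity of $0$ otherwise, and being a subquotient functor assembled from colimits in the underlying category, preserves the pushouts and transfinite compositions along which cell complexes are built.

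It then remains to check (a) that $J$-cell complexes are weak equivalences, and (b) that $I$-injectives coincide with fibrations that are weak equivalences. For (a), the short exact sequences $0 \to J_{n-1} B \to J_n B \to \Gr^J_n B \to 0$ together with an induction on $n$ reduce matters to each $\Gr^J_n$ being a quasi-isomorphism, which follows from the observation above. For (b), a standard retract argument using the factorisations produced by the small object argument reduces us to the unfiltered case via the functors $J_m$. I expect the main obstacle to be verifying carefully that pushouts in $F\cC$ are computed filtration-wise and that $\Gr^J_n$ preserves them; once that is in place, the long exact cohomology sequences attached to the short exact sequences above, combined with semisimplicity of $\Rep(R)$ (which removes all $\Tor$ obstructions), make the five-lemma arguments routine. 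The identical argument works in both the cosimplicial and DG settings.
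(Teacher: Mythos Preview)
Your proposal is correct and amounts to the same proof as the paper's, just phrased via the adjunction $F_m \dashv J_m$ rather than by writing out the generating objects explicitly. Indeed, your $F_m$ applied to the unfiltered spheres and discs $S_n \to D_n$ yields precisely the paper's $S_{n,m} \to D_{n,m}$, so the sets $I$ and $J$ coincide; the paper then computes the $\Hom$-sets directly to identify $I$-injectives and $J$-injectives and checks $J \subset I\text{-cof}$ by hand, whereas you transfer these checks back through the adjunction and use the short exact sequences $0 \to J_{n-1} \to J_n \to \Gr^J_n \to 0$. One small simplification: since every map in $J_0$ (and hence in $J$) has domain $0$, the relevant pushouts are just direct sums, so your concern about $\Gr^J_n$ preserving pushouts is moot in the only place you actually need it.
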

\begin{proof}
First, note that normalisation gives an equivalence $Fc\Mod_A(R)\to FDG\Mod_A(R)$ of categories,  preserving and reflecting fibrations  and weak equivalences. It thus suffices only to consider $ FDG\Mod_A(R) $

 Let $S_{n,m}$ denote the cochain complex  consisting of  $A$ concentrated in degree $n$, with $J_mS_{n,m}=S_{n,m}$ and $J_{m-1}S_{n,m}=0$. Let $D_{n,m}$ denote the cochain complex  consisting of  $A$ concentrated in degrees $n, n-1$ with differential $d^{n-1}$ the identity, $J_mD_{n,m}=D_{n,m}$ and $J_{m-1}D_{n,m}=0$. By convention, $D_{0,m}=0$. Note that there are natural maps $S_{n,m} \to D_{n,m}$.

For a set $\{V\}$ of representatives of irreducible $R$-representations in $\Ql$-vector spaces, define $I$ to be the set of morphisms $A \ten S_{n,m}\ten V \to A \ten D_{n,m}\ten V$, for $n \ge 0$. Define $J$ to be the set of morphisms $0 \to A \ten D_{n,m}\ten V$, for $n \ge 0$. 

Now, 
\begin{eqnarray*}
\Hom_{FDG\Mod_A(R)}(A \ten S_{n,m}\ten V, M) &=& \Hom_{R}(V, J_m\z^nM)\\
\Hom_{FDG\Mod_A(R)}(A \ten D_{n,m}\ten V, M) &=& \Hom_{R}(V, J_m M^{n-1}),
\end{eqnarray*}
so a  map $f\co  M \to N$ in $ FDG\Mod_A(R)$ is then $I$-injective when 
$$
J_mM^{n-1} \xra{f,d} J_mN^{n-1}\by_{d,J_m\z^nN,f}\z^nM
$$ 
is surjective, for all $m,n$, and $J$-injective when 
$$
J_mM^{n-1} \xra{f} J_mN^{n-1}
$$
is surjective for all $n$. Thus $I$-injectives are trivial fibrations, and $J$-injectives are fibrations. 

Since $S_{n,m}= D_{n+1,m}/S_{n+1,m}$, the map $0 \to D_{n,m}$ is a  composition $0 \to S_{n,m} \to D_{n,m}$ of pushouts of maps in $I$, so maps in $J$  are all $I$-cofibrations. Since maps in $J$ are all weak equivalences, we have satisfied  the conditions of \cite[Theorem 2.1.19,]{Hovey}  giving the model structure claimed. 
\end{proof}

\begin{lemma}\label{fdgmodcof}
In the category $FDG\Mod(R)=FDG\Mod_{\Ql}(R)$, all objects $V$ are cofibrant.
\end{lemma}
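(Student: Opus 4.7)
The plan is to show directly that for any $V \in FDG\Mod(\Ql)(R)$, the map $0 \to V$ has the left lifting property against an arbitrary trivial fibration $p\co B \to C$. First I would analyse the kernel $K := \ker p$: since $p$ is a filtered surjection and ${}_J\!\EE_1^{*,*}(p)$ is an isomorphism, each $\Gr_n^JK$ is acyclic, and then by induction on $n$ using the short exact sequences $0 \to J_{n-1}K \to J_nK \to \Gr_n^JK \to 0$, each $J_nK$ is itself acyclic as a cochain complex.

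Given $f\co V \to C$, I would construct a lift $\tilde f\co V \to B$ by induction on the filtration, starting with $\tilde f_{-1} = 0$ on $J_{-1}V = 0$. Assume a morphism $\tilde f_n\co J_nV \to J_nB$ in $DG\Mod_{\Ql}(R)$ lifting $f|_{J_nV}$. Since $R$-representations over $\Ql$ are semisimple, choose an $R$-linear graded splitting $J_{n+1}V = J_nV \oplus U$, identifying $U$ with $\Gr_{n+1}^JV$ as a graded $R$-representation; write $du = \alpha(u) + \beta(u)$ with $\alpha\co U \to J_nV[1]$ and $\beta\co U \to U[1]$, so $d^2=0$ forces $\beta^2 = 0$ and $d_{J_nV}\alpha + \alpha\beta = 0$. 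Using that $p$ is surjective on $J_{n+1}$, choose a graded $R$-linear lift $\tilde f|_U\co U \to J_{n+1}B$ of $f|_U$; this defines a graded lift $\tilde f\co J_{n+1}V \to J_{n+1}B$ of $f$ which need not commute with $d$.

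The obstruction $\xi := d\tilde f - \tilde f d$ vanishes on $J_nV$ (by induction) and satisfies $p\xi = 0$, so factors as a graded $R$-linear map $\xi\co U \to J_{n+1}K[1]$. A direct computation exploiting $\beta^2=0$ and $d_{J_nV}\alpha = -\alpha\beta$ shows $d\xi + \xi\beta = 0$, i.e.\ $\xi$ is a cocycle in the hom-complex $\Hom_R^*((U,\beta), J_{n+1}K)$. Because each $U^p$ is semisimple (hence projective) as an $R$-representation and all complexes are non-negatively graded, this hom-complex is the total complex of a first-quadrant double complex whose rows $\Hom_R(U^p, J_{n+1}K^\bullet)$ are acyclic (since $J_{n+1}K$ is acyclic and $\Hom_R(U^p,-)$ is exact). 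Hence the hom-complex is acyclic and $\xi$ is a coboundary: there exists $\eta\co U \to J_{n+1}K$ with $d\eta - \eta\beta = -\xi$. Setting $\tilde f_{n+1} := \tilde f + \eta$ (extending $\tilde f_n$) produces a cochain lift of $f|_{J_{n+1}V}$; taking the union over $n$ gives the required $\tilde f\co V \to B$.

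The main obstacle is the obstruction-theoretic step: one must verify carefully that $\xi$ is a cocycle with the correct signs and that the relevant hom-complex is acyclic. Both facts rely essentially on the hypotheses that $R$ is reductive, that the base is a field of characteristic zero, and that all cochain complexes are bounded below, so that semisimplicity of $R$-representations combined with acyclicity of $J_{n+1}K$ propagates to acyclicity of the hom-complex via a standard spectral sequence argument on the first-quadrant double complex.
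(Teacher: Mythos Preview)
Your argument is correct, but it takes a different route from the paper. The paper exhibits each $V$ as a cell complex: using semisimplicity of $R$-representations it decomposes $\gr^J_mV^n = M^n \oplus N^n \oplus dN^{n-1}$ with $dM^n = 0$, lifts $M^n, N^n$ to $\tilde{M}^n, \tilde{N}^n \subset J_mV$, and observes that $J_{m-1}V \to J_mV$ is then a pushout of a direct sum of the generating cofibrations $S_{n+1,m}\ten \tilde{M}^n \to D_{n+1,m}\ten \tilde{M}^n$ together with copies of $0 \to D_{n+1,m}\ten \tilde{N}^n$. This is shorter and gives a bit more (an explicit cell decomposition), whereas your direct verification of the lifting property is more hands-on obstruction theory.

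One point to tighten: your phrase ``first-quadrant double complex'' is misleading. The hom-complex $\Hom_R^*((U,\beta), J_{n+1}K)$ has $\Hom^n = \prod_{p\ge 0}\Hom_R(U^p, (J_{n+1}K)^{p+n})$, so the relevant totalisation is along anti-diagonals $q-p=n$, which are infinite; the naive first-quadrant spectral sequence does not directly apply. The clean fix is to note that $J_{n+1}K$ is not merely acyclic but \emph{contractible}: it is a bounded-below acyclic complex in the semisimple category of $R$-representations over $\Ql$, so every term is injective and a contracting homotopy $h$ exists. Post-composition with $h$ then gives a contracting homotopy on $\Hom_R^*((U,\beta), J_{n+1}K)$, and your coboundary $\eta$ follows immediately. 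With that adjustment the proof is complete.
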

\begin{proof}
Given $V \in FDG\Mod_{\Ql}(R)$, it will suffice to show that $J_0V$ is cofibrant, and  that all the maps $J_{m-1}V \to J_{m}V$ are cofibrations, since $V = \varinjlim J_m V$. 
To do this, we will show that these maps are transfinite compositions of pushouts of generating cofibrations.

Now, since all $R$-representations are semisimple, we may choose  decompositions $\gr^J_mV^n = M^n \oplus N^n\oplus dN^{n-1}$, with $dM^n=0$. By semisimplicity, we may also lift the $R$-modules $M^i, N^i$ to $\tilde{M}^i, \tilde{N}^i \subset J_mV$. Now $d\tilde{M} \subset J_{m-1}V$, so the map $ J_{m-1}V \to  J_mV$ is a pushout of $\bigoplus_n (S_{n+1,m} \ten \tilde{M}^n) \to  \bigoplus_n (D_{n+1,m} \ten \tilde{M}^n) \oplus \bigoplus_n (D_{n+1,m} \ten \tilde{N}^n)$, and hence a cofibration. Since this argument also applies to $0 \to J_0V$, we deduce that $V$ is cofibrant.
\end{proof}

\begin{proposition}\label{filalgmod}
There is a cofibrantly generated model structure on $FDG\Alg_A(R)$ (resp. $Fc\Alg_A(R)$), for which a morphism is a  fibration or  weak equivalence whenever the underlying morphism in $FDG\Mod_A(R)$ (resp. $Fc\Mod_A(R)$) is so (in the model structure of Lemma \ref{fdgmod}).
\end{proposition}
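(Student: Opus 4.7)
The plan is to apply the transfer principle for cofibrantly generated model structures (Kan's lemma, cf.\ \cite[Theorem 11.3.2]{Hirschhorn}) to the free--forgetful adjunction
\[
 \Symm \co FDG\Mod_A(R) \rightleftarrows FDG\Alg_A(R) \co U,
\]
where $\Symm$ carries the convolution filtration $J_k\Symm(V) = \sum_{k_1+\cdots+k_r=k}J_{k_1}V\cdots J_{k_r}V$. I will take as generating (trivial) cofibrations $\Symm(I)$ and $\Symm(J)$, for $I,J$ the generating sets from Lemma \ref{fdgmod}. Adjunction then ensures that a map $f$ of algebras has the RLP against $\Symm(I)$ (resp.\ $\Symm(J)$) precisely when $Uf$ is a trivial fibration (resp.\ fibration) in modules, giving the desired characterisation of fibrations and weak equivalences. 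Smallness of the sources is inherited from $S_{n,m}\ten V$ and $D_{n,m}\ten V$.

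The essential point of the transfer is to verify that every relative $\Symm(J)$-cell complex is a weak equivalence. I will first compute that $\Symm(D_{n,m}\ten V)$ is weakly trivial, i.e.\ that ${}_J\!\EE_1^{*,*}(\Symm(D_{n,m}\ten V))= A$ concentrated in bidegree $(0,0)$. Since $D_{n,m}\ten V$ lies in pure filtration degree $m$, the convolution filtration yields $\Gr^J\Symm(D_{n,m}\ten V)\cong \Symm(D_{n,m}\ten V)$ as DG algebras (up to reindexing). Because $A$ is a $\Ql$-algebra and we are in characteristic zero, $\Symm^k = ((-)^{\ten k})_{\Sigma_k}$ is exact, so
\[
 \H^*(\Symm^k(D_{n,m}\ten V)) \cong \Symm^k\H^*(D_{n,m}\ten V) = 0 \quad \text{for } k \ge 1,
\]
as $D_{n,m}$ is contractible.

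Next, for a pushout $B \to B' := B\ten_A\Symm(D_{n,m}\ten V)$ of the generating trivial cofibration along the structure map $A \to B$, I will use the $A$-flatness of $\Symm(D_{n,m}\ten V)$ (which is free over $A$) to identify $\Gr^J B' \cong \Gr^J B\ten_A \Gr^J\Symm(D_{n,m}\ten V)$; the K\"unneth formula over $A$ then gives ${}_J\!\EE_1(B')\cong {}_J\!\EE_1(B)\ten_A A = {}_J\!\EE_1(B)$, so $B\to B'$ is a weak equivalence. Transfinite compositions remain weak equivalences because ${}_J\!\EE_1^{*,*}$ commutes with filtered colimits. The cosimplicial case will follow by the same transfer applied to the level-wise free commutative algebra functor $Fc\Mod_A(R)\to Fc\Alg_A(R)$, with the acyclicity computation going through identically since $\Symm^k$ is level-wise exact.

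The main obstacle I anticipate is the bookkeeping of the convolution filtration through both $\Symm$ and $\ten_A$, specifically the identification of $\Gr^J$ of the tensor product with the tensor of the associated gradeds, which requires the flatness argument indicated above; once that is in place, the remaining verifications of the transfer hypotheses are routine.
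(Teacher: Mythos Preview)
Your proposal is correct and follows essentially the same approach as the paper: transfer along the free--forgetful adjunction via \cite[Theorem 11.3.2]{Hirschhorn}, with generating (trivial) cofibrations given by applying $\Symm$ to those of Lemma \ref{fdgmod}. The paper's proof is terser --- it simply notes that the free algebra functor preserves filtered colimits and takes generating trivial cofibrations to weak equivalences --- whereas you spell out the acyclicity of $\Symm(D_{n,m}\ten V)$ and the pushout/colimit stability; but the content is the same.
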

\begin{proof}
The forgetful functor $FDG\Alg_A(R)\to FDG\Mod_A(R)$ (resp. $Fc\Alg_A(R) \to Fc\Mod_A(R)$) preserves filtered colimits and has a left adjoint, the free algebra functor. Since the free algebra functor maps trivial generating cofibrations to weak equivalences,
we may  apply \cite[Theorem 11.3.2]{Hirschhorn}, which gives the required cofibrantly generated  model structure. The generating cofibrations and trivial cofibrations are given by the images under the free algebra functor of the generating cofibrations and trivial cofibrations in $FDG\Mod_A(R)$ (resp. $Fc\Mod_A(R)$).
\end{proof}

\subsubsection{Lie algebras}

\begin{definition}
 Define  $F\hat{\cN}_A(R)$ to be  opposite to the category $F\hat{\cN}_A(R)^{\opp}$ of $R$-representations in ind-conilpotent (see Definition \ref{indcon}) Lie coalgebras $C$ over $A$, equipped with an exhaustive increasing filtration $J_0C \subset J_1C \subset \ldots$, of $C$ as an $(R,A)$-module, with the property that $\nabla(J_{r} C)\subset \sum_{m+n=r} (J_m C)\otimes  (J_nC)$, for $\nabla$ the cobracket. Morphisms are required to respect the filtration.

Similarly,  $Fdg\hat{\cN}_A(R)$ is opposite to the category of $R$-representations in  non-negatively filtered ind-conilpotent $\N_0$-graded cochain Lie coalgebras over $A$.   $Fs\hat{\cN}_A(R)$ is the category of simplicial objects in $F\hat{\cN}_A(R)$. When $A=\Ql$, we will usually drop the subscript $A$.
\end{definition}

\begin{proposition}\label{filliemod}
There is a closed model structure on $Fdg\hat{\cN}_A(R)$ (resp. $Fs\hat{\cN}_A(R)$),  in which a morphism $f\co \g \to \fh$ is a fibration or a weak equivalence whenever the underlying map 
$f^{\vee}\co  \fh^{\vee} \to \g^{\vee}$ in $FDG\Mod_A(R)$ 
(resp. $Fc\Mod_A(R)$)  is a cofibration or a weak equivalence.
\end{proposition}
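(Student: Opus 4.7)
The strategy is to mimic the proof of \cite[Lemma \ref{htpy-cnamod}]{htpy}, which establishes the unfiltered analogue on $dg\hat{\cN}_A(R)$, taking the filtered module model structure of Lemma \ref{fdgmod} as input in place of its unfiltered counterpart. First I would identify generating sets $I$ and $J$ of cofibrations and trivial cofibrations on the Lie coalgebra side, i.e.\ in the category $F\hat{\cN}_A(R)^{\opp}$ (so morphisms between them opposite to fibrations and trivial fibrations in $Fdg\hat{\cN}_A(R)$). These are obtained by applying the cofree ind-conilpotent Lie coalgebra functor $L^c$ — the right adjoint to the forgetful functor to $FDG\Mod_A(R)$ — to the generating cofibrations and trivial cofibrations of $FDG\Mod_A(R)$ exhibited in the proof of Lemma \ref{fdgmod} (the maps $A\ten S_{n,m}\ten V \to A\ten D_{n,m}\ten V$ and $0 \to A\ten D_{n,m}\ten V$). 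Existence of $L^c$ on filtered objects is immediate from the conilpotence assumption: any ind-conilpotent coalgebra is a filtered colimit of conilpotent ones, and the filtration on $L^c(V)$ is the convolution filtration induced by $J_{\bt}V$.

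Next I would apply the cofibrantly cogenerated (equivalently, in the opposite category, cofibrantly generated) recognition theorem of \cite[Theorem 2.1.19]{Hovey}. The smallness hypotheses hold because every conilpotent Lie coalgebra is finite-dimensional in each fixed cohomological and filtration degree (being a bounded quotient of the tensor cogebra on the generators in degrees $\le$ that bound), so the generating fibrations have small codomains in the relevant sense. The 2-of-3 and retract axioms for weak equivalences are automatic since weak equivalences are defined by a functorial property of $f^{\vee}$ and weak equivalences in $FDG\Mod_A(R)$ satisfy these axioms. The non-trivial axiom is that every $J$-cell complex is a weak equivalence, equivalently that every pushout (in $F\hat{\cN}_A(R)^{\opp}$) of a map in $J$ induces a weak equivalence of $E_1$-pages in $FDG\Mod_A(R)$.

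The main obstacle is precisely this last point: showing that pullbacks (in the $Fdg\hat{\cN}_A(R)$ picture) along trivial fibrations of the form $L^c(0 \to A\ten D_{n,m}\ten V)^{\vee}$ remain weak equivalences. In the unfiltered case of \cite[Lemma \ref{htpy-cnamod}]{htpy}, this is handled via a Hirsch-extension spectral sequence computing the cohomology of the pullback from that of the cofree coalgebra together with the base. To adapt this, I would first verify that $\Gr^J$ is exact on such pushouts (which follows because $L^c$ of $S_{n,m},D_{n,m}$ has its filtration split by construction, so pushing out preserves strictness of the filtration), reducing the statement to the $E_1$-page. On each $\Gr^J_r$ the problem becomes an unfiltered statement already proved in \cite[Lemma \ref{htpy-cnamod}]{htpy}, and the weak equivalence of $E_1$-pages follows by assembling the results in each filtration degree. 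Since the generating trivial fibrations come from acyclic objects $D_{n,m}$, the Hirsch spectral sequence collapses, giving the required quasi-isomorphism on $E_1$.

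Finally, for the simplicial variant $Fs\hat{\cN}_A(R)$, I would use Dold--Kan normalisation, which defines an equivalence of categories (on underlying cosimplicial and cochain $(R,A)$-modules) compatible with filtrations, so that $f$ is a fibration or weak equivalence in the prescribed sense in $Fs\hat{\cN}_A(R)$ if and only if $Nf$ is so in $Fdg\hat{\cN}_A(R)$. The model structure transports across this equivalence, giving the simplicial case formally.
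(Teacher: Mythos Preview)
Your approach matches the paper's: both defer to adapting \cite[Lemma \ref{htpy-cnamod}]{htpy} to the filtered setting, using the filtered module model structure of Lemma \ref{fdgmod} as the base. The paper's proof is the single sentence ``The proof of \cite[Lemma \ref{htpy-cnamod}]{htpy} carries over to this context'', so your elaboration is in the right spirit.

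Two technical points in your elaboration need correction. First, your smallness claim is wrong: conilpotence does not force finite-dimensionality in each degree, so ``every conilpotent Lie coalgebra is finite-dimensional in each fixed cohomological and filtration degree'' is false. The smallness argument in the original \cite{htpy} proof works differently (via the ind-structure and the fact that the generating objects themselves are small), and you should trace that through rather than invent a new justification. Second, your handling of the simplicial case via Dold--Kan is not quite right: normalisation is not an equivalence of categories between $Fs\hat{\cN}_A(R)$ and $Fdg\hat{\cN}_A(R)$, since Lie coalgebras do not form an abelian category. The model structure on $Fs\hat{\cN}_A(R)$ must be established directly by the same recognition-theorem argument (the proof of \cite[Lemma \ref{htpy-cnamod}]{htpy} already treats both the dg and simplicial cases in parallel), rather than transported across an equivalence.
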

\begin{proof}
The proof of \cite[Lemma \ref{htpy-cnamod}]{htpy}  carries over to this context.
\end{proof}

\subsubsection{Equivalences}

\begin{definition}
Define $Fc\Alg(R)_{00_*}$ (resp. $FDG\Alg(R)_{00_*}$) to be the full subcategory of $Fc\Alg_A(R)_*$ (resp. $FDG\Alg_A(R)_*$) consisting of objects $B$ with $B^0=\Ql$. Let $Fc\Alg(R)_{0*}$ (resp. $FDG\Alg(R)_{0_*}$) be the full subcategory consisting of objects weakly equivalent to objects of $Fc\Alg(R)_{00_*}$ (resp. $FDG\Alg(R)_{00_*}$). Let $\Ho(Fc\Alg(R)_*)_{0}$ (resp. $\Ho(FDG\Alg(R)_*)_0$) be the full subcategory of $\Ho(Fc\Alg(R)_*)$ (resp. $\Ho(FDG\Alg(R)_*)$) on objects $Fc\Alg(R)_{0_*}$ (resp. $FDG\Alg(R)_{0_*}$). Denote the opposite category to $Fc\Alg(R)_{00*}$ by $Fs\Aff(R)_{00*}$, etc.
\end{definition}

\begin{definition}
Given $\g \in Fs\hat{\cN}(R)$, we define $\bar{W}\g \in Fs\Aff(R)$ by 
$$
(\bar{W}\g)(B):= \bar{W}(\exp(\Hom_{F\Mod(R)}(  \g^{\vee}, (B)))) \in \bS
$$
for $B \in \Alg_A(R)$. Here, $\bar{W}$ is the classifying space functor of Definition \ref{barwdef},   and $\exp$ denotes exponentiation of a pro-nilpotent Lie algebra to give a pro-unipotent group.

Observe that this functor is continuous, and denote its left adjoint by $G\co Fs\Aff(R)\to Fs\hat{\cN}(R)$.
\end{definition}

\begin{definition}\label{barwg}
Define functors $\xymatrix@1{ Fdg\Aff(R) \ar@<1ex>[r]^G & Fdg\hat{\cN}(R) \ar@<1ex>[l]^{\bar{W}}_{\bot}}$ as follows.
For $\g \in Fdg\hat{\cN}(R)$, the Lie bracket gives a linear map $\bigwedge^2\g \to \g$. Write $\Delta$ for the dual $\Delta\co \g^{\vee} \to \bigwedge^2\g^{\vee}$, which respects the filtration.
This is equivalent to a  map $\Delta\co \g^{\vee}[-1] \to \Symm^2(\g^{\vee}[-1])$, and we define 
$$
O(\bar{W}\g):= \Symm(\g^{\vee}[-1])
$$
to be the graded polynomial ring on generators $\g^{\vee}[-1]$, with a derivation defined on generators by $D:=d +\Delta$. The Jacobi identities ensure that $D^2=0$.

We define $G$ by writing $\sigma B[1]$ for the brutal truncation (in non-negative degrees) of $B[1]$, and setting
$$
G(B)^{\vee}= \CoLie(\sigma B[1]),
$$
the free filtered graded Lie coalgebra over $\Ql$, with differential similarly defined on cogenerators by $D:=d +\mu$, $\mu$ here being the product on $B$.  Note also that $G(B)$ is cofibrant for all $B$.
\end{definition}

\begin{definition}
Define the category $Fs\cP(R)$ (resp. $Fdg\cP(R)$) to have the fibrant objects of $Fs\hat{\cN}(R)$ (resp. $Fdg\hat{\cN}(R)$), with morphisms given by
\begin{eqnarray*}
\Hom_{Fs\cP(R)}(\g,\fh)&=&\Hom_{\Ho(Fs\hat{\cN}(R))}(\g,\fh)\by^{\exp(\fh_0^R)}\prod_{x \in \Ob R}\exp(\pi_0\fh(x)),\\
\Hom_{Fdg\cP(R)}(\g,\fh)&=& \Hom_{\Ho(Fdg\hat{\cN}(R))}(\g,\fh)\by^{\exp(\fh^R_0)}\prod_{x \in \Ob R}\exp(\H_0\fh(x)),
\end{eqnarray*}
where $\fh_0^R$ is the Lie algebra $\Hom_{\Mod(R)}(\fh_0^{\vee}, \Ql)=\Hom_{F\Mod(R)}(\fh_0^{\vee}, \Ql)$, acting by conjugation on the set of homomorphisms.
\end{definition}

\begin{theorem}\label{fbigequiv}
There is the following commutative diagram of equivalences of categories:
$$
\xymatrix{
\Ho(Fdg\Aff(R)_*)_0 \ar@<1ex>[r]^{\Spec D} \ar@<-1ex>[d]_{\bar{G}}& Ho(Fs\Aff(R)_*)_0  \ar@<-1ex>[d]_{\bar{G}}\ar@<1ex>[l]^{\Spec \Th} \\
Fdg\cP(R) \ar@<-1ex>[u]_{\bar{W}}  & Fs\cP(R), \ar@<-1ex>[u]_{\bar{W}} \ar@<1ex>[l]^{N}
}
$$
where $N$ denotes normalisation,  $D$ is denormalisation, and  $\Th$ is the functor of Thom-Sullivan cochains.
\end{theorem}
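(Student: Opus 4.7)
The plan is to parallel the proof of Theorem \ref{bigequiv} step by step, using the spectral sequence ${}_J\!\EE_*^{*,*}$ of Definition \ref{efil} to reduce questions about filtered weak equivalences to the corresponding statements for the associated graded pieces, where the unfiltered theorem applies.

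First I would verify that each of the four functors $D$, $\Th$, $\bar{G}$, $\bar{W}$ carries filtrations to filtrations in the natural way. For $D$ and $\Th$, this is automatic because both are built from $\prod_n (-)^n \ten -$ constructions on the cosimplicial/DG level which preserve submodules, so the filtration $J$ passes through. For $\bar{W}\g = \Spec \Symm(\g^{\vee}[-1])$ and $G(B)^{\vee} = \CoLie(\sigma B[1])$ (Definition \ref{barwg}), the filtrations $J$ on $\g^{\vee}$ and $B$ extend multiplicatively to the symmetric algebra and cofreely to the cofree Lie coalgebra, and the differentials $D = d + \Delta$ and $D = d+ \mu$ preserve these extended filtrations (since $\Delta$ and $\mu$ are morphisms of filtered objects). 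The key observation is then that on associated graded pieces
\[
\gr^J \bar{W}\g \cong \bar{W}(\gr^J \g), \qquad \gr^J G(B) \cong G(\gr^J B),
\]
and similarly $\gr^J D = D\circ \gr^J$, $\gr^J \Th = \Th \circ \gr^J$ up to natural quasi-isomorphism.

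Second, I would deduce that all four functors preserve weak equivalences. By Definition \ref{deffilmodel} a map $f$ is a filtered weak equivalence if and only if each $\gr^J_m f$ is a quasi-isomorphism in the unfiltered setting; the compatibilities above together with the fact that $D$, $\Th$, $\bar{W}$, $\bar{G}$ preserve unfiltered weak equivalences (Theorem \ref{bigequiv}) give the required statement. Combined with Lemma \ref{fdgmodcof} and its analogue for Lie algebras, this means all the functors descend to the homotopy categories.

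Third, I would establish the filtered analogue of Lemma \ref{puny}: for $X \in Fdg\Aff(R)_{0*}$ and $\g \in Fdg\hat{\cN}(R)$,
\[
\Hom_{\Ho(Fdg\Aff(R)_*)}(X, \bar{W}\g) \cong \Hom_{\Ho(Fdg\hat{\cN}(R))}(\bar{G}(X), \g)\by^{\exp(\g_0^R)}\prod_{x \in \Ob R}\exp(\H_0\g(x)).
\]
This amounts to computing Maurer--Cartan sets in the filtered setting, and proceeds by the same obstruction-theoretic argument as \cite[Proposition \ref{htpy-wequiv}]{htpy}: replace $\g$ with its minimal model (whose existence in the filtered context follows by the same Postnikov-tower construction as Proposition \ref{dgminimal}, carried out on each $\gr^J$ simultaneously), and lift maps one filtration/degree at a time, with obstructions lying in filtered cohomology groups that vanish on cofibrant objects. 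Granted this, the unit and counit of the $(\bar{G}, \bar{W})$ adjunction are filtered weak equivalences — checked by reducing to $\gr^J$ and invoking the unfiltered Theorem \ref{bigequiv} — so the vertical arrows are equivalences of homotopy categories. The horizontal equivalence between $Fdg$ and $Fs$ versions is the filtered Dold--Kan/Thom--Sullivan story: $D$ and $\Th$ are inverse equivalences on each $\gr^J$ by \cite[4.1]{HinSch}, and hence on the total object since filtered weak equivalences are detected on the associated graded.

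The main obstacle will be the filtered minimal model / obstruction argument in the third step: one must carry out the unfiltered minimal model construction compatibly with the filtration, ensuring that at each stage cogenerators can be chosen in a single $J_m$ and that the extensions defining the tower are fibrations in the filtered model structure of Proposition \ref{filliemod}. Once the filtered minimal model exists, the rest of the proof formally mirrors \cite[Theorem \ref{mhs-bigequiv}]{mhs}.
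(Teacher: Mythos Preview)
Your proposal is correct and follows essentially the same approach as the paper, which tersely asserts that the proof of Theorem~\ref{bigequiv} carries over using Lemma~\ref{fdgmodcof} (ensuring that everything in the image of $\bar{W}$ is fibrant, and that all objects of $Fdg\hat{\cN}(R)$ and $Fs\hat{\cN}(R)$ are fibrant); your reduction to associated graded is a reasonable way to make ``carries over'' precise. One small misdirection: the filtered minimal models you flag as the main obstacle are not actually needed here --- the paper develops them only afterwards (Proposition~\ref{fsdgminimal}), and the equivalences follow directly from the adjunction arguments of \cite[Proposition~\ref{htpy-wequiv}]{htpy} once fibrancy is known.
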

\begin{proof}
The proof of  Theorem \ref{bigequiv}  carries over to this context, making use of Lemma \ref{fdgmodcof}, which implies that everything in the image of $\bar{W}$ is fibrant, as are all objects of $Fdg\hat{\cN}(R)$ and $Fs\hat{\cN}(R) $.
On objects, the functor $\bar{G}$ is defined by choosing, for any $X \in \Ho(Fs\Aff(R)_*)_0$ (resp. $X \in \Ho(Fdg\Aff(R)_*)_0$) a weakly equivalent object $X' \in Fs\Aff(R)_{00}$ (resp. $X' \in Fdg\Aff(R)_{00}$), and setting
$$
\bar{G}(X):= G(X'),
$$
for the functor $G$ from Definition \ref{barwg}.
\end{proof}

Although we do not have a precise analogue of this result for $ \Ho(Fdg\Aff_A(R))$ for general $A$, we do have the following:
\begin{lemma}\label{fpuny}
Given $X \in \Ho(Fdg\Aff(R)_*)_0$ and $\g \in Fdg\hat{\cN}(R)$, 
\begin{eqnarray*}
&&\Hom_{\Ho(Fdg\Aff_A(R)_*)}(X\ten A, \bar{W}\g \ten A) \\
&\cong& \Hom_{\Ho(Fdg\hat{\cN}_A(R))}(\bar{G}(X)\hat{\ten}A,\g\hat{\ten} A)\by^{\exp(\g^R_0\hat{\ten} A)} \prod_{x \in \Ob R}\exp(\H_0\g(x)). 
\end{eqnarray*}
\end{lemma}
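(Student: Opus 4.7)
The plan is to adapt the proof of the unfiltered analogue (Lemma \ref{puny}), which in turn rests on \cite[Proposition \ref{htpy-wequiv}]{htpy} and, via Theorem \ref{fbigequiv}, on the explicit Maurer--Cartan / gauge-theoretic description of the mapping sets. The essential point is that, although the equivalence of Theorem \ref{fbigequiv} is stated over $\Ql$, the homotopy-invariant formulas for the $\Hom$ sets admit a direct base change along $\Ql \to A$ once we work with a sufficiently cofibrant representative.

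First, I would reduce to the case $X \in Fdg\Aff(R)_{00*}$, so that $O(X)^0 = \Ql$; this is allowed by the definition of $\Ho(Fdg\Aff(R)_*)_0$. For such an $X$, Definition \ref{barwg} gives $\bar{G}(X) = G(O(X))$ as an explicit freely cogenerated filtered graded Lie coalgebra. Combining Lemma \ref{fdgmodcof} with Proposition \ref{filliemod}, $\bar{G}(X)$ is cofibrant in $Fdg\hat{\cN}(R)$, and this cofibrancy, being detected on the freely generated dual, persists after tensoring with $A$: the object $\bar{G}(X)\hat{\ten}A$ is cofibrant in $Fdg\hat{\cN}_A(R)$.

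Second, I would write the $A$-linear Maurer--Cartan/gauge presentation
\[
\Hom_{\Ho(Fdg\Aff_A(R)_*)}(X\ten A,\bar{W}\g\ten A) \;=\; \mc(O(X)\ten A,\g\hat{\ten}A)\by^{\Gg(O(X)\ten A,\g\hat{\ten}A)}\prod_{x\in\Ob R}\exp(\H_0\g(x)\hat{\ten}A),
\]
which is the same identity as in Theorem \ref{fbigequiv}, now over $A$; its proof goes through verbatim since $O(X)\ten A$ is still cofibrant in $Fc\Alg_A(R)$ (by Proposition \ref{filalgmod}) and $\bar{W}\g\ten A$ still satisfies the defining universal property by the construction of $\bar{W}$. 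On the other side, cofibrancy of $\bar{G}(X)\hat{\ten}A$ together with the gauge-group description of homotopies in $Fdg\hat{\cN}_A(R)$ gives
\[
\Hom_{\Ho(Fdg\hat{\cN}_A(R))}(\bar{G}(X)\hat{\ten}A,\g\hat{\ten}A) \;\cong\; \mc(O(X)\ten A,\g\hat{\ten}A)/\Gg(O(X)\ten A,\g\hat{\ten}A),
\]
exactly as in the unfiltered case. The twist by $\exp(\g^R_0\hat{\ten}A)$ records the action of constant gauges (which distinguishes homotopy classes in the based category from those in $Fdg\cP_A(R)$) and reassembling yields the stated formula.

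The main obstacle is book-keeping rather than conceptual: one has to check that each ingredient of the equivalence of Theorem \ref{fbigequiv} (cofibrancy of $\bar{G}(X)$, cofibrancy of $O(X)$, fibrancy of $\bar{W}\g$, and the gauge/Maurer--Cartan description of mapping sets) is stable under base change along $\Ql \to A$, and that the homotopy-category $\Hom$ on both sides really is computed by the corresponding Maurer--Cartan / gauge quotient, despite the absence of a full equivalence $\Ho(Fdg\Aff_A(R)_*)_0 \simeq Fdg\cP_A(R)$ for general $A$. Once those verifications are in place, the two presentations coincide manifestly and the lemma follows.
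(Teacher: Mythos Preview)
Your proposal is correct and follows essentially the same route as the paper: the paper's entire proof is the one line ``The proof of \cite[Proposition \ref{htpy-wequiv}]{htpy} adapts to this context,'' and what you have written is precisely that adaptation spelled out, via the Maurer--Cartan/gauge description over $A$ applied to a representative in $Fdg\Aff(R)_{00*}$. The only minor slip is the factor $\exp(\H_0\g(x)\hat{\ten}A)$ in your display, which should be $\exp(\H_0\g(x))$ to match the statement (compare however Lemma \ref{puny}, where the $\hat{\ten}A$ does appear, so this may simply be a typo in the paper).
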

\begin{proof}
The proof of \cite[Proposition \ref{htpy-wequiv}]{htpy}  adapts to this context.
\end{proof}

\begin{definition}\label{qf}
We say that a filtered cochain algebra $(B,J) \in FDG\Alg_A(R)$ is \emph{quasi-formal} if it is weakly equivalent in $FDG\Alg_A(R)$ to ${}_J\!\EE^{*,*}_1(B)$ (as in Definition \ref{efil}).
We say that a filtered homotopy type is quasi-formal if its associated cochain algebra is so.
\end{definition}

\subsubsection{Minimal models}\label{minimalsn}

Let $FDG\Rep(R)= FDG\Mod_{\Ql}(R)$ be the category of non-negatively graded filtered complexes of $R$-representations. 

\begin{definition}\label{fminimal}
We say that $M \in  FDG\Rep(R)$ is \emph{minimal} if $d(J_mM) \subset J_{m-1}M$ for all $m$.
\end{definition}

\begin{lemma}\label{frepminimal}
For any $V \in FDG\Rep(R)$, there exists a quasi-isomorphic filtered subobject $M \into V$, with $M$ minimal.
\end{lemma}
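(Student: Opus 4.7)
My plan is to construct $M$ inductively on the filtration level, exploiting reductivity of $R$ at every step to split short exact sequences of $R$-representations.

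Starting point: I take $J_{-1}M = 0$. For the inductive step, assume $J_{m-1}M \subset J_{m-1}V$ has been constructed as an $R$-equivariant subcomplex satisfying $d(J_rM) \subset J_{r-1}M$ for all $r \le m-1$ and such that the induced map $\gr^J_r M \hookrightarrow \gr^J_r V$ represents $H^*(\gr^J_r V)$ for each $r \le m-1$ (so $J_{m-1}M \hookrightarrow J_{m-1}V$ is a total quasi-isomorphism by the finite-filtration five-lemma argument). To build $J_m M$, first choose inside $\gr^J_m V$ an $R$-equivariant subcomplex $\bar{M}_m$ of cocycles mapping isomorphically onto $H^*(\gr^J_m V)$; this exists because the short exact sequences $0 \to B \to Z \to H \to 0$ and $0 \to Z \to \gr^J_m V \to B[1] \to 0$ of $R$-representations split, by semisimplicity.

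Next I lift $\bar{M}_m$ into $J_m V$. By semisimplicity, the surjection $J_m V \twoheadrightarrow \gr^J_m V$ admits an $R$-equivariant section $s$ (as graded $R$-modules, ignoring the differential). Since $\bar{M}_m$ consists of cocycles in $\gr^J_m V$, the image $ds(\bar{M}_m)$ lies in $J_{m-1}V$ and consists of cocycles there. Using the inductive quasi-isomorphism $J_{m-1}M \simeq J_{m-1}V$, I find an $R$-equivariant lift $c\co \bar{M}_m \to Z(J_{m-1}M)$ of the cohomology class of $ds$, together with an $R$-equivariant map $w\co \bar{M}_m \to J_{m-1}V$ with $ds - c = dw$; both exist by reductivity of $R$, which renders the usual cohomological obstructions splittable as $R$-modules. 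Then $s' := s - w$ is again an $R$-equivariant lift of $\bar{M}_m$ into $J_m V$, and it satisfies $d \circ s' = c$, so $ds'(\bar{M}_m) \subset J_{m-1}M$. Define
\[
J_m M \; := \; J_{m-1}M \;+\; s'(\bar{M}_m) \;\subset\; J_m V.
\]

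The verification is straightforward: $J_m M$ is $R$-stable, $d(J_m M) \subset J_{m-1}M$ by construction (so minimality is preserved), and $\gr^J_m M \cong \bar{M}_m$ with trivial differential maps isomorphically onto $H^*(\gr^J_m V)$, so the inclusion $J_m M \hookrightarrow J_m V$ is a filtered quasi-isomorphism at level $m$ by the five-lemma. Taking $M := \bigcup_m J_m M$ (with the induced filtration) gives the required minimal filtered subobject; exhaustiveness of $J$ on $V$ and on $M$, together with the fact that weak equivalences are defined level-wise on $E_1$, makes the inclusion $M \hookrightarrow V$ a weak equivalence in $FDG\Rep(R)$. The only mildly subtle point is ensuring that the auxiliary lifts $c$ and $w$ can be chosen $R$-equivariantly and compatibly, but this is exactly what reductivity of $R$ over a characteristic-zero field buys us: every short exact sequence of $R$-representations splits $R$-equivariantly, so all the required sections and liftings can be made inside $\Rep(R)$.
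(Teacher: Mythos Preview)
Your proof is correct and follows essentially the same inductive construction as the paper: build $J_mM$ from $J_{m-1}M$ by lifting representatives of $\H^*(\gr^J_mV)$ into $J_mV$ and then correcting the lift so that its differential lands in $J_{m-1}M$, using that the previously constructed inclusion $J_{m-1}M\hookrightarrow J_{m-1}V$ is already a quasi-isomorphism. The only difference is presentational: the paper works elementwise with a chosen basis $v_\alpha$ of $\H^*(\gr^J_{m+1}V)$ and a single correction $u_\alpha$, whereas you phrase everything in terms of $R$-equivariant sections $s$, $c$, $w$ and invoke semisimplicity of $R$ explicitly at each splitting step---which is arguably cleaner, since the paper's ``pick a basis'' tacitly relies on the same semisimplicity to make the construction $R$-equivariant.
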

\begin{proof}
We prove this by induction on the filtration. Assume that we have constructed a filtered quasi-isomorphism $J_mf\co  J_mM \into J_mV$ (for $m=-1$, this is trivial). Pick a basis $v_{\alpha}$ for $\H^*(\gr_{m+1}^JV)$, and lift $v_{\alpha}$ to $v_{\alpha}' \in J_{m+1}V$. Thus $dv_{\alpha}' \in J_mV$, and $[dv_{\alpha}']=0 \in \H^*(J_mV/J_mM)=0$. This means that $dv_{\alpha}' \in J_mM + dJ_mV$. Choose $u_{\alpha} \in J_mV$ such that $dv_{\alpha}'-du_{\alpha} \in J_mM$, and set $\tilde{v}_{\alpha}:= v_{\alpha}'-u_{\alpha}$.

Now, $[\tilde{v}_{\alpha}] =v_{\alpha} \in \H^*(\gr_{m+1}^JV)$, so define
$$
J_{m+1}M:= J_mM \oplus \langle \tilde{v}_{\alpha}\rangle_{\alpha};
$$
this has the properties that $dJ_{m+1}M \subset J_mM$ and $\H^*(\gr_{m+1}^JM) \cong \H^*(\gr_{m+1}^JV)$, as required.
\end{proof}

\begin{definition}
We say that a cofibrant object $\m \in Fdg\hat{\cN}(R)$ (resp. $Fs\hat{\cN}(R)$) is \emph{minimal} if $(\m/[\m,\m])^{\vee}$ (resp. $N(\m/[\m,\m])^{\vee}$) is minimal in the sense of Definition \ref{fminimal}.
\end{definition}

\begin{proposition}[Minimal models]\label{fsdgminimal}
Every weak equivalence class in $Fdg\hat{\cN}(R)$ (resp. $s\hat{\cN}(R)$) has a minimal element $\m$, unique up to non-unique isomorphism.
\end{proposition}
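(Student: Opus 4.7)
The plan is to adapt the proof of the unfiltered minimal model result (Proposition \ref{dgminimal}) by incorporating the filtered minimal model of Lemma \ref{frepminimal}. I shall treat only the chain case $Fdg\hat{\cN}(R)$, since the simplicial case then follows from the normalisation equivalence $N\co Fs\hat{\cN}(R) \to Fdg\hat{\cN}(R)$ (the filtered analogue of Proposition \ref{nequiv}, which is immediate from the way filtrations are defined levelwise and weak equivalences are detected by $ {}_J\!\EE_1$).

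For existence, given $\g \in Fdg\hat{\cN}(R)$, I would build the minimal model $\m$ by induction on a suitable two-step filtration, as follows. First take the abelianisation $\g^{\ab}:= \g/[\g,\g]$, equipped with its induced filtration, and regard $(\g^{\ab})^{\vee}$ as an object of $FDG\Rep(R)$. Apply Lemma \ref{frepminimal} to obtain a filtered quasi-isomorphic subobject $M \hookrightarrow (\g^{\ab})^{\vee}$ with $d(J_mM) \subset J_{m-1}M$. Set $\m^{\vee}$ to be the free graded Lie coalgebra on $M$ in the category $Fdg\hat{\cN}(R)^{\opp}$, filtered multiplicatively from $M$ (so that $J_r\m^{\vee}$ is the subspace generated by iterated cobrackets of elements whose filtration indices sum to at most $r$). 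The differential on $\m^{\vee}$ and the cogenerating map $M \to (\g^{\ab})^{\vee}$ must then be lifted to a morphism $f^{\vee}\co \g^{\vee} \to \m^{\vee}$ respecting filtration, built inductively on the polynomial degree in $M$. At each stage the obstruction to extending is a filtered cohomology class of $\g^{\vee}$ with coefficients in $\m^{\vee}$; by the minimality of $M$ (which guarantees $d$ strictly lowers filtration on cogenerators) and the multiplicative extension, the resulting $\m$ automatically satisfies $d(J_m\m^{\vee}) \subset J_{m-1}\m^{\vee}$, giving minimality. That $f\co \m \to \g$ is a weak equivalence is checked on $ {}_J\!\EE_1$: it reduces to the unfiltered minimal model property on each graded piece $\gr^J$.

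For uniqueness, suppose $f\co \m \to \g$ and $f'\co \m' \to \g$ are two minimal models. By Proposition \ref{filliemod} and Lemma \ref{fdgmodcof}-style arguments, $\m$ is cofibrant; factoring $f'$ as a trivial cofibration followed by a fibration and using the lifting axiom yields a morphism $\phi\co \m \to \m'$ in $Fdg\hat{\cN}(R)$ with $f'\circ \phi$ homotopic to $f$. Passing to abelianisations gives a filtered quasi-isomorphism $\phi^{\ab}$ between two minimal objects $(\m^{\ab})^{\vee}$ and $((\m')^{\ab})^{\vee}$ of $FDG\Rep(R)$. Minimality means the induced maps on ${}_J\!\EE_1 = \gr^J$ coincide with the maps on underlying spaces, so $\phi^{\ab}$ is a filtered isomorphism. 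An induction on the polynomial degree in cogenerators, combined with the freeness of the Lie coalgebra structure, then upgrades this to show $\phi$ itself is an isomorphism in $Fdg\hat{\cN}(R)$.

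The main obstacle is keeping the obstruction theory under control in the filtered setting: one must ensure, when extending the differential or the map $f^{\vee}$ to higher polynomial degree in the cogenerators, that the extension can be chosen to lie in the correct piece of the multiplicative filtration on $\m^{\vee}$ without spilling into lower filtration than permitted by minimality. The minimality of $M$ (Lemma \ref{frepminimal}) provides exactly the slack required, because any filtered cocycle in $J_m$ with boundary in $J_{m-1}$ is $J_m$-cohomologous to one whose boundary is zero on the nose, which is what permits the multiplicative extension to inherit strict filtration-decreasing differentials. The rest of the argument is formal once this is established.
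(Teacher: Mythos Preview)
Your existence argument has a genuine gap. You determine the cogenerators $M$ of the minimal model by applying Lemma \ref{frepminimal} to $(\g^{\ab})^{\vee}$, but a weak equivalence $\m \to \g$ in $Fdg\hat{\cN}(R)$ does not induce a filtered quasi-isomorphism on abelianisations (taking primitives of Lie coalgebras is not exact), so $(\g^{\ab})^{\vee}$ is the wrong object to start from. Already in the unfiltered case this fails: take $\g$ to be the two-dimensional abelian Lie algebra concentrated in degree $0$. Then $(\g^{\ab})^{\vee}=\g^{\vee}=k^2$ in degree $0$, so your recipe gives $\m^{\vee}=\CoLie(k^2)$ in degree $0$; but $\H^0(\CoLie(k^2))$ is infinite-dimensional while $\H^0(\g^{\vee})=k^2$, so no differential or map $f^{\vee}$ can make this a weak equivalence. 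The actual minimal model is $\bar G(O(\bar W\g))$, whose cogenerators are $\sigma(\Lambda^{\bullet}(k^2))[1]$, namely $k^2$ in degree $0$ together with an extra $k$ in degree $1$ dual to $\H^2(\Lambda^{\bullet}(k^2))$. In general the cogenerators of a minimal model are governed by the cohomology of $O(\bar W\g)$, not by $\mathrm{Prim}(\g^{\vee})=(\g^{\ab})^{\vee}$.

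The paper's route, adapting the inductive tower construction from the unfiltered minimal model theorem, does not fix the cogenerators in advance. One builds $\m$ as a sequence of free extensions, at each stage adding generators to correct the next discrepancy between the cohomology of the current approximation and that of $\g$; Lemma \ref{frepminimal} is invoked at each such step to choose the new generators so that $d$ strictly lowers $J$ on the abelianisation. If you want a one-shot construction closer in spirit to what you wrote, apply Lemma \ref{frepminimal} to $\g^{\vee}$ itself (not to $(\g^{\ab})^{\vee}$) to obtain a filtered deformation retract onto a minimal subcomplex $M\hookrightarrow \g^{\vee}$, and then use homotopy transfer to put an $L_\infty$-coalgebra structure on $M$; the resulting $\CoLie(M)$ with transferred differential is the minimal model. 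Your uniqueness sketch is essentially sound once existence is fixed, though the assertion that $\phi^{\ab}$ is a filtered quasi-isomorphism should be justified via the observation that for minimal cofree objects the map on ${}_J\!\EE_1$ factors through the map on $\gr^J$ of the cogenerators.
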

\begin{proof}
The proof of  \cite[Proposition \ref{htpy-sminimal}]{htpy}  adapts to this context, using Lemma \ref{frepminimal} instead of the corresponding result for $DG\Rep(R)$.
\end{proof}

\subsubsection{Homotopy automorphisms}

\begin{definition}
Given $\fu \in Fs\hat{\cN}(R)$, let $G=\exp(\fu) \rtimes R$, and   define the \emph{group presheaf of filtered automorphisms} by
$$
\Aut^h_J(G)(A):= \{(f,\theta) \,:\, f\in \Aut(R)(A), \theta \in \Iso_{Fs\cP_A(R)}(\fu\hat{\ten}A, f^{\sharp}\fu\hat{\ten}A)\}.
$$ 
Define $\RAut_J(G):= \ker(\Aut^h_J(G) \to \Aut(R))$.
\end{definition}

\begin{definition}
Given $V \in \Rep(R)$ and $\g \in Fs\hat{\cN}(R)$, define the spectral sequence ${}_J\!\EE^{*,*}_*(R\ltimes \exp(\g),V)$ to be the cohomology spectral sequence of the filtered complex
$$
O(\bar{W}\g)\ten^RV,
$$
for $J_0V=V$. Thus ${}_J\!\EE_1^{a,b}(R\ltimes \exp(\g),V)= \H^{a+b}(\Gr^J_{-a}O(\bar{W}\g)\ten^RV)$.
\end{definition}

\begin{lemma}\label{frout}
Assume that $G$ is as above, and let $\m \in Fs\hat{\cN}(R)$ be a minimal model for $\Ru(G)$.  If $\H^i(G,V)$ is finite-dimensional for all $i$ and all finite-dimensional irreducible $R$-representations $V$, then the group presheaves
$$
\Aut_{Fs\hat{\cN}(R)}(\m)\by \prod_{x \in \Ob R}\exp(\pi_0\m(x)) \xra{\alpha} \RAut_J(G) \xra{\beta} \prod_{a,b}  \Aut_R( {}_J\!\EE_1^{a,b}(G,O(R)))
$$
are all pro-algebraic groups, the maps $\alpha$ and $\beta$ both have pro-unipotent kernels, and $\beta$ is surjective.
\end{lemma}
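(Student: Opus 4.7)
The strategy is to follow the template of Lemmas \ref{auto} and \ref{cohohelps}, replacing the unfiltered minimal model of \cite[Proposition \ref{htpy-sminimal}]{htpy} with its filtered analogue from Proposition \ref{fsdgminimal}. The key point is that since $\m$ is a minimal cofibrant model for $\Ru(G)$ in $Fs\hat{\cN}(R)$, the differential on $(\gr^J\m)^{\vee}$ vanishes, so the associated graded of $\m$ is directly controlled by the $E_1$-page of the $J$-spectral sequence, while filtered self-equivalences of $\Ru(G)$ in $Fs\cP(R)$ correspond bijectively to homotopy classes of automorphisms of $\m$ in $Fs\hat{\cN}(R)$.

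First I would establish pro-algebraic representability. The finite-dimensionality hypothesis on $\H^i(G,V)$ implies, via the description of minimal cogenerators, that each piece $(\gr^J_r\m/[\m,\m])_n^{\vee}$ is a finite-dimensional $R$-representation, so $\m$ is an inverse limit of finite-dimensional filtered layers, and $\Aut_{Fs\hat{\cN}(R)}(\m)$ is the inverse limit of the resulting linear algebraic automorphism groups of truncations. The factor $\prod_x\exp(\pi_0\m(x))$ is manifestly pro-unipotent pro-algebraic. For the target, each ${}_J\!\EE_1^{a,b}(G,O(R))$ is an ind-finite-dimensional $R$-representation (again by the hypothesis), so $\prod_{a,b}\Aut_R({}_J\!\EE_1^{a,b}(G,O(R)))$ is pro-algebraic, and the representability of $\RAut_J(G)$ then follows by exhibiting it as the middle term of the sequence.

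For $\alpha$, I would apply Lemma \ref{fpuny} to lift every filtered self-equivalence of $G$ in $Fs\cP_A(R)$ to a genuine $A$-linear automorphism of $\m\hat\ten A$, unique up to filtered homotopy and conjugation by $\prod_x\exp(\pi_0\m(x)\hat\ten A)$. The resulting map $\alpha$ is then surjective, and its kernel consists of pairs that induce the identity in $Fs\cP(R)$; such pairs are parametrised by filtered derivations of $\m$ of negative simplicial degree, which exponentiate to a pro-unipotent pro-algebraic subgroup via the filtration by $J_r$ and simplicial degree.

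The map $\beta$ is the crux and the expected main obstacle. For surjectivity, observe that because $\m$ is minimal, $\gr^J\m^\vee$ has zero differential, so ${}_J\!\EE_1^{*,*}(G,O(R))$ is precisely Koszul-dual to $\gr^J\m$; thus any $R$-equivariant automorphism $\psi$ of the $E_1$-page lifts canonically to an automorphism $\bar\psi$ of $\gr^J\m$. I would then lift $\bar\psi$ inductively to a filtered automorphism of $\m/J_r\m$ for increasing $r$, checking at each step that the obstruction to lifting through $J_{r+1}\m/J_r\m$ is computed by ${}_J\!\EE_1^{*,*}$-classes that are matched by $\psi$ by construction; the argument is analogous to the obstruction-theoretic lifting in the proof of \cite[Theorem \ref{htpy-auto}]{htpy}, adapted to respect $J$ rather than the Postnikov filtration. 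For the kernel, an element in $\ker\beta$ acts trivially on $\gr^J\m$, so it differs from the identity by a filtration-strictly-lowering derivation of $\m$; the set of such derivations is a filtered pro-nilpotent Lie algebra in $R$-representations, and its exponential is the required pro-unipotent pro-algebraic kernel. The delicate point, and the place where the hypothesis on finite-dimensionality of $\H^i(G,V)$ must be used twice, is verifying that these obstruction and kernel complexes remain pro-algebraic (rather than merely presheaves) at each stage of the filtered induction.
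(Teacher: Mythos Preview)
Your proposal is correct and follows essentially the same approach as the paper, which simply states that the proof of \cite[Theorem \ref{htpy-auto}]{htpy} carries over to the filtered setting. You have supplied the natural expansion of that one-line reference: replace the minimal model of \cite[Proposition \ref{htpy-sminimal}]{htpy} by the filtered minimal model of Proposition \ref{fsdgminimal}, use Lemma \ref{fpuny} to identify $\RAut_J$ with homotopy automorphisms of $\m$, and run the same obstruction-theoretic and associated-graded arguments with the $J$-filtration in place of the lower central series/Postnikov filtration.
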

\begin{proof}
The proof of \cite[Theorem \ref{htpy-auto}]{htpy}  carries over.
\end{proof}

\subsubsection{Examples}

\begin{definition}\label{tau}
Given $B^{\bt} \in  DG\Alg_A(R)$, we define the \emph{good truncation} $\tau_*$  on $B$ by 
$$
(\tau_m B)^n:= \left\{ \begin{matrix} B^n & n< m\\ \z^m(B) & n=m\\ 0 & n> m. \end{matrix} \right. 
$$
Observe that $(B^{\bt}, \tau) \in FDG\Alg_A(R)$.
\end{definition}

\begin{definition}\label{tau''}
Given a bicosimplicial algebra $B^{\bt,\bt} \in  cc\Alg_A(R)$, we define the  associated filtered cosimplicial algebra  $(\tau''_0B \le \tau''_1B \le \ldots) \in Fc\Alg_A(R)$ by 
$$
(\tau_m'' B)^n =(D\tau_m\Th B^{n, \bt})^n,
$$
for $D,\Th$ as in Theorem \ref{affequiv}. Observe that   there is a canonical quasi-isomorphism $\diag B^{\bt,\bt} \to \tau''_{\infty}B^{\bt}$, where $\diag$ denotes the diagonal of a bicosimplicial complex.
 \end{definition}

In practice, the only filtered homotopy types which we will encounter come from morphisms of spaces: 

\begin{definition}\label{relgod}
Given an algebraic variety $X$ and an ind-constructible $\ell$-adic sheaf $\vv$ on $X$, recall (e.g. from \cite[Definition 2.3]{paper1}) that there is a natural cosimplicial complex
$$
\sC^{\bt}_{\et}(\vv)
$$
of $\ell$-adic sheaves on $\vv$, with the property that $\Gamma(X, \sC^{\bt}_{\et}(\vv))=\CC^{\bt}_{\et}(X,\vv)$, the Godement resolution (as in Remark \ref{ethtpy}). This construction respects tensor products.
\end{definition}

\begin{lemma}\label{jdef}
To any morphism $j\co  Y \to X$ of algebraic varieties, and any  $\Ql$-sheaf $\sS$ of algebras on $Y$ as in Definition \ref{relgod}, there is associated a canonical filtered homotopy type $\CC^{\bt}_{\et}(j,\sS)  \in  \Ho(Fc\Alg_{\Ql})$, with the property that
${}_J\!\EE^{*,*}_*\CC^{\bt}_{\et}(j,\sS)$ is the Leray spectral sequence
$$
{}_J\!\EE^{a,b}_1\CC^{\bt}_{\et}(j,\sS)=  \H^{2a+b}(X, \oR^{-a}j_*\sS)\abuts \H^{a+b}(Y, \sS).
$$
The associated unfiltered homotopy type is canonically weakly equivalent to 
$
\CC^{\bt}_{\et}(Y,\sS). 
$
\end{lemma}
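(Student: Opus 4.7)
The plan is to construct $\CC^{\bt}_{\et}(j,\sS)$ via an iterated Godement resolution, and then apply the functor $\tau''$ of Definition \ref{tau''} to obtain the filtration. Concretely, apply $\sC^{\bt}_{\et}(-)$ to $\sS$ on $Y$ to get a cosimplicial flabby $\Ql$-sheaf on $Y$ resolving $\sS$; push forward to $X$ to obtain a cosimplicial $\Ql$-sheaf $j_*\sC^{\bt}_{\et}(\sS)$; then apply the Godement construction on $X$ to each term, producing a bicosimplicial $\Ql$-algebra
\[
 B^{a,b}:=\CC^a_{\et}(X,\,j_*\sC^b_{\et}(\sS)).
\]
Now set
\[
 \CC^{\bt}_{\et}(j,\sS):=\tau''_{\bullet} B \;\in\; Fc\Alg_{\Ql},
\]
with the filtration inherited from Definition \ref{tau''}. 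Functoriality of both Godement constructions shows that this object is canonically defined, and since Godement resolutions are functorial in the sheaf, tensor products of sheaves of algebras induce the required algebra structure.

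The second assertion (identification of the unfiltered type) follows by combining the canonical quasi-isomorphism $\diag B \to \tau''_{\infty}B$ of Definition \ref{tau''} with the standard fact that $\sC^{\bt}_{\et}(\sS)$ is a resolution of $\sS$ by flabby sheaves, so that $j_*\sC^{\bt}_{\et}(\sS)$ represents $\oR j_*\sS$ and $\CC^{\bt}_{\et}(X,j_*\sC^{\bt}_{\et}(\sS))$ computes $\oR\Gamma(X,\oR j_*\sS)=\oR\Gamma(Y,\sS)$. The Eilenberg--Zilber style map compares this to  $\diag B \simeq \CC^{\bt}_{\et}(Y,\sS)$.

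For the spectral sequence identification, the main computation is to evaluate $\gr^{\tau''}_m B$. Since $\CC^n_{\et}(X,-)$ is a product of stalks, hence exact, and $H^m$ of the cosimplicial sheaf $j_*\sC^{\bt}_{\et}(\sS)$ is precisely $\oR^m j_*\sS$, one obtains
\[
 H^m(\Th B^{n,\bt})\;=\;\CC^n_{\et}(X,\,\oR^m j_*\sS),
\]
naturally in $n$. Applying the Dold--Kan functor $D$ with the shift $[-m]$ and evaluating at level $n$ as in Definition \ref{tau''}, the general Dold--Kan computation of cohomology of a cosimplicial complex shifted by $m$ in the normalised direction then gives
\[
 \H^{a+b}(\gr^{\tau''}_{-a} B)\;=\;\H^{(a+b)-(-a)}(X,\,\oR^{-a}j_*\sS)\;=\;\H^{2a+b}(X,\,\oR^{-a}j_*\sS),
\]
which is exactly the Leray $E_2$-page under the reindexing $(a,b)\leftrightarrow(p,q)=(2a+b,-a)$. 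Convergence to $\H^{a+b}(Y,\sS)$ then follows from step two.

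The main technical obstacle will be the third paragraph: one must carefully justify commuting $H^m$ past $\Th$ and past $\CC^n_{\et}(X,-)$, and then correctly track the Dold--Kan shift when extracting the associated graded of $\tau''$. Once this bookkeeping is done, everything else is formal from properties of Godement resolutions and the definition of $\tau''$.
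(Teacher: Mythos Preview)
Your proposal is correct and follows essentially the same construction as the paper: form the bicosimplicial algebra $\CC^{\bt}_{\et}(X, j_*\sC^{\bt}_{\et}(\sS))$, apply $\tau''$ to obtain the filtration, and identify the associated graded via $\gr^{\tau}_n j_*\sC^{\bt}_{\et}(\sS)\simeq \oR^n j_*\sS$. The only cosmetic difference is that for the unfiltered comparison the paper uses the direct identity $\CC^{\bt}_{\et}(Y,\sS)=\Gamma(X,j_*\sC^{\bt}_{\et}(\sS))$ together with the unit map $\Gamma(X,-)\to\diag\CC^{\bt}_{\et}(X,-)$, rather than invoking an Eilenberg--Zilber-style argument.
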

\begin{proof}
We have a $\Ql$-sheaf $j_*\sC^{\bt}_{\et}(\sS)$ of cosimplicial algebras on $X$, and hence a bicosimplicial algebra
$$
\CC^{\bt}_{\et}(X, j_*\sC^{\bt}_{\et}(\sS)).
$$
Now, set
$$
J_n\CC^{\bt}_{\et}(j,\sS)= \tau''_n\CC^{\bt}_{\et}(X, j_*\sC^{\bt}_{\et}(\sS))= \diag \CC^{\bt}_{\et}(X, D\tau_n\Th j_*\sC^{\bt}_{\et}(\sS)),
$$
as in Definition \ref{tau''}, with $ \CC^{\bt}_{\et}(X, j_*\sC^{\bt}_{\et}(\sS)) \to J_{\infty}\CC^{\bt}_{\et}(j, \sS)$ a quasi-isomorphism.

Finally, observe that there is a quasi-isomorphism
$$
\CC^{\bt}_{\et}(Y,\sS) = \Gamma(X,j_*\sC^{\bt}_{\et}(\sS))\to \diag \CC^{\bt}_{\et}(X,j_*\sC^{\bt}_{\et}(\sS)),
$$
and that $\gr^{\tau}_nj_*\sC^{\bt}_{\et}(\sS)$ is quasi-isomorphic to $\oR^nj_*\sS$.
\end{proof}

\begin{remark}
There is a similar statement for filtrations on homotopy types coming from morphisms of topological spaces, using \v Cech resolutions instead of Godement resolutions.
\end{remark}

Since the construction above is functorial, for any point $y \in Y$, we have a morphism $\CC^{\bt}_{\et}(j,\sS) \to  \CC^{\bt}_{\et}(\id_y,\sS_y)$, where $\id_y$ is the identity map $\id_y \co  y \to y$. Now,
$$
J_n\CC^{\bt}_{\et}(\id_y,\sS_y)= \diag \CC^{\bt}_{\et}(y, D\tau_n\Th \sS_y).
$$
Since $\sS_y$ has constant simplicial structure, $\Th \sS_y=\sS_y$, so $J_n\CC^{\bt}_{\et}(\id_y,\sS_y)=\sS_y$ for all $n \ge 0$.

\begin{definition}\label{leraytype}
Given a morphism $j\co Y \to X$ of algebraic varieties and
a Zariski-dense continuous map
$$
\rho\co \widehat{\pi_f^{\et}(Y)} \to R(\Ql)
$$
define the filtered homotopy type $(Y^{\rho, \mal},j)$ to correspond to $\CC^{\bt}_{\et}(j,\bO(R))\in Fc\Alg(R)_*$, where the augmentation map is the canonical morphism 
$$
\CC^{\bt}_{\et}(j,\bO(R))\to \prod_{y \in Y}\CC^{\bt}_{\et}(\id_y,\bO(R))=\prod_{y \in Y} O(R)(y,-).
$$ 
\end{definition}

\section{Algebraic Galois actions}\label{galoisactions}

\subsection{Weight decompositions}\label{wgtdecomp}

By a weight decomposition, we will mean an algebraic action of the group $\bG_m$. A weight decomposition on a vector space $V$ is equivalent to a decomposition $V=\bigoplus_{n \in \Z}\cW_nV$, given by $\lambda \in \bG_m$ acting as $\lambda^n$ on $\cW_nV$.

Fix a prime $p$, which need not differ from $\ell$. Let $\Z^{\alg}$ be the  pro-algebraic group over $\Ql$  parametrising $\Z$-representations. Since $\Z$ is commutative, $\Z^{\alg}$ is commutative, so $\Z^{\alg}=\Z^{\red}\by \Ru(\Z^{\alg})$, where $\Z^{\red}$ is its reductive quotient. For any unipotent  algebraic group $U$, this means that $\Hom(\Ru(\Z^{\alg}), U)\cong \Hom(\Z, U(\Ql))= U(\Ql)$, so $\Ru(\Z^{\alg})= \bG_a$. Combining these observations gives $\Z^{\alg}= \bG_a \by \Z^{\red}$. 

Likewise, let $\hat{\Z}^{\alg}$ be the  pro-algebraic group over $\Ql$  parametrising continuous $\hat{\Z}$-representations. Since continuous $\hat{\Z}$-representations form a full subcategory of $\Z$-representations, $\hat{\Z}^{\alg}$ is a quotient of $\Z^{\alg}$. The reasoning above adapts to show that $\hat{\Z}^{\alg}= \bG_a \by \hat{\Z}^{\red}$. 

\begin{definition}\label{nq}
Given  $n \in \Z$ and a power $q$ of $p$, recall that an element $\alpha \in \bar{\Ql}$ is said to be \emph{pure of weight} $n$ if it is algebraic and for every  
embedding $\iota \co  \bar{\Ql}\into \Cx$ the element $\iota (\alpha )$ has complex absolute value $q^{n/2}$.

Let $M_q$ be the quotient of $\hat{\Z}^{\red}$ whose representations $\rho$ correspond to semisimple $\hat{\Z}$-representations for which  the eigenvalues of $\rho(1)$ are all of integer weight with respect to $q$. Such representations are called mixed.  
\end{definition}

Observe that every $M_q$-representation decomposes into ``pure''  representations, in which all eigenvalues have the same weight. There is thus a canonical map $\bG_m \to M_q$ given by $\lambda \in \bG_m$ acting as $\lambda^n$ on a pure representation of weight $n$.

\begin{definition}\label{pqmq}
Define $P_q$ to be the quotient of $M_q$ whose representations are pure of weight $0$, so $P_q=M_q/\bG_m$.
\end{definition}

\begin{definition}\label{inq}
Given  $n \in \Z$, an embedding $\iota\co \bar{\Ql}\to \Cx$  and a power $q$ of $p$, recall that an element $\alpha \in \bar{\Ql}$ is said to be $\iota$-pure of weight $n$ if $|\iota(\alpha)|=  q^{n/2}$. 

Let $M_{\iota,q}$ be the quotient of $\Z^{\red}$ whose representations $\rho$ correspond to semisimple $\Z$-representations for which  the eigenvalues of $\rho(1)$ are all of integer $\iota$-weight. Note that $M_q$ is a quotient of $M_{\iota,q}$.  
\end{definition}

Observe that  there is  a canonical map $\bG_m \to M_{\iota,q}$ given by $\lambda \in \bG_m$ acting as $\lambda^n$ on an $\iota$-pure representation of weight $n$, and that this induces the map $\bG_m \to M_q$ above.

\begin{definition}\label{ipqmq}
Define $P_{\iota,q}$ to be the quotient of $M_{\iota,q}$ whose representations are pure of $\iota$-weight $0$, so $P_{\iota,q}=M_{\iota,q}/\bG_m$.
\end{definition}

\begin{definition}
Given a pro-algebraic group $G$, let $G^0$ be the connected component of the identity; if $\hat{G}$ is the maximal pro-finite quotient of $G$ (parametrising representations with finite monodromy), then $G^0=\ker(G \to \hat{G})$.
\end{definition}

\begin{lemma}\label{zinfty}
 If $\Gamma$ is a pro-discrete group, then  we may make the identification 
$$
\Gamma^{\alg,0}= \lim_{\substack{ \lla \\ \Delta }} \Delta^{\alg},
$$ 
where $\Delta$ runs over $\Delta \lhd \Gamma$ open of finite index.

Thus the category of finite-dimensional $\Gamma^{\alg,0}$-representations is the direct limit $\LLim_{\Delta}\FD\Rep(\Delta)$ (over $\Delta$ as above) of the categories of finite-dimensional $\Delta$-representations. 
\end{lemma}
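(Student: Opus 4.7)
The plan is to realise $\Gamma^{\alg,0}$ as the intersection in $\Gamma^{\alg}$ of a cofiltered family of finite-index closed pro-algebraic subgroups $\Delta^{\alg}$, and then to deduce the Tannakian statement by dualising this intersection into a filtered colimit of Hopf algebras.

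First I would identify the maximal pro-finite quotient $\widehat{\Gamma^{\alg}}$ with $\hat{\Gamma}$: its finite-dimensional representations are precisely the continuous finite-image $\Ql$-representations of $\Gamma$, and such representations factor through the pro-finite completion $\hat{\Gamma}$. Combined with the formula $\Gamma^{\alg,0}=\ker(\Gamma^{\alg}\to\widehat{\Gamma^{\alg}})$ recorded just before the lemma, this gives
$$\Gamma^{\alg,0}\;=\;\ker(\Gamma^{\alg}\to\hat{\Gamma})\;=\;\bigcap_{\Delta}\ker(\Gamma^{\alg}\to \Gamma/\Delta),$$
the intersection taken over open finite-index normal $\Delta\lhd\Gamma$.

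Next I would show, for each such $\Delta$, that the natural map $\Delta^{\alg}\to\Gamma^{\alg}$ is a closed immersion with image $\ker(\Gamma^{\alg}\to\Gamma/\Delta)$. The closed-immersion part is Tannakian: since $[\Gamma:\Delta]<\infty$, induction sends a finite-dimensional continuous $\Delta$-representation $V$ to a finite-dimensional continuous $\Gamma$-representation $\Ind_{\Delta}^{\Gamma}V$ (continuity holds because the kernel on $\Ind V$ contains $\bigcap_{g\in\Gamma/\Delta}gKg^{-1}$ for the open kernel $K$ of $V$, a finite intersection of open subgroups), and $V=V^{e}$ appears as a direct summand of $\Res_{\Delta}^{\Gamma}\Ind_{\Delta}^{\Gamma}V$, so every object of $\FD\Rep(\Delta)$ is a subrepresentation of the restriction of a $\Gamma$-representation. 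For the identification of the image, I would invoke that $(-)^{\alg}$ is left adjoint to $H\mapsto H(\Ql)$ (Definition~\ref{alg}), hence preserves the cokernel $\Delta\hookrightarrow\Gamma\twoheadrightarrow\Gamma/\Delta$ in topological groups; this yields a right-exact sequence $\Delta^{\alg}\to\Gamma^{\alg}\to(\Gamma/\Delta)^{\alg}\to 1$ with $(\Gamma/\Delta)^{\alg}=\Gamma/\Delta$ (as $\Gamma/\Delta$ is a finite group), and normality of $\Delta$ forces the closed image of $\Delta^{\alg}$ to be normal in $\Gamma^{\alg}$, hence equal to $\ker(\Gamma^{\alg}\to\Gamma/\Delta)$.

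Combining these identifications gives $\Gamma^{\alg,0}=\bigcap_{\Delta}\Delta^{\alg}$, which is precisely the inverse limit of the cofiltered system $\{\Delta^{\alg}\}$ (with transition inclusions $(\Delta')^{\alg}\hookrightarrow\Delta^{\alg}$ for $\Delta'\subseteq\Delta$) in the category of pro-algebraic groups. Dually, this intersection becomes a union of coordinate rings $O(\Gamma^{\alg,0})=\LLim_{\Delta}O(\Delta^{\alg})$ of Hopf algebras. Since any finite-dimensional $O(\Gamma^{\alg,0})$-comodule $V$ has coaction $V\to V\otimes O(\Gamma^{\alg,0})$ landing in $V\otimes O(\Delta^{\alg})$ for some $\Delta$ (by finite-dimensionality of the image), we obtain $\FD\Rep(\Gamma^{\alg,0})=\LLim_{\Delta}\FD\Rep(\Delta^{\alg})=\LLim_{\Delta}\FD\Rep(\Delta)$. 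The main obstacle will be executing the right-exactness step cleanly: one must verify that $\Gamma\twoheadrightarrow\Gamma/\Delta$ really is the cokernel of $\Delta\hookrightarrow\Gamma$ in the category of topological groups for which $(-)^{\alg}$ is a left adjoint in Definition~\ref{alg}, so that the deduced sequence of pro-algebraic groups is genuinely right-exact and $(\Gamma/\Delta)^{\alg}$ is correctly computed as the finite discrete algebraic group $\Gamma/\Delta$.
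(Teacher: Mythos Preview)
Your proposal is correct and follows essentially the same route as the paper: identify $\widehat{\Gamma^{\alg}}=\hat\Gamma$, use right-exactness of $(-)^{\alg}$ on $\Delta\to\Gamma\to\Gamma/\Delta\to 1$, and prove injectivity of $\Delta^{\alg}\to\Gamma^{\alg}$ via the embedding $V\hookrightarrow\Res\Ind_{\Delta}^{\Gamma}V$. You are more explicit than the paper about continuity of $\Ind_{\Delta}^{\Gamma}V$ and about the final Tannakian dualisation, and your flagged ``obstacle'' is exactly the step the paper takes for granted.
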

\begin{proof}
This is essentially \cite[Proposition 2]{magid}, which deals with the case when $\Gamma$ is discrete, and refers to $\LLim_{\Delta}\FD\Rep(\Delta)$ as the category of virtual $\Gamma$-representations.

First note that $\widehat{\Gamma^{\alg}}=\hat{\Gamma}$, where the pro-finite completion $\hat{\Gamma}$ of  $\Gamma$ is characterised by the property that $\Hom_{\pro(\gp)}(\Gamma, F) \cong \Hom_{\pro(\gp)}(\hat{\Gamma}, F)$ for all finite groups $F$. Thus $\hat{\Gamma}=\Gamma$ whenever $\Gamma$ is pro-finite.
 
The exact sequence $\Delta \to \Gamma \to  \Gamma/\Delta\to 1$ gives an exact sequence $(\Delta)^{\alg} \xra{\alpha} \Gamma^{\alg} \to \Gamma/\Delta\to 1$. It suffices to show that $\alpha$ is injective. This follows from the observation that every finite-dimensional $\Delta$-representation $V$ embeds into a finite-dimensional $\Gamma$-representation $\Ind_{\Delta}^{\Gamma}V$.
\end{proof}

Thus if $F$ is a generator for $\Z$, then representations of $\Z^{\alg,0}$ are sums of $F^r$-representations, with morphisms commuting locally with sufficiently high powers of $F$.

Observe that we have  commutative diagrams
$$
\begin{CD}
\hat{\Z} @>r>> \hat{\Z}\\
@VVV @VVV\\
M_{q^r} @>>> M_q.
\end{CD}
$$ 
Any $\hat{\Z}$-representation with finite monodromy is pure of weight $0$, giving a map
$
P_p \to \hat{\Z}.
$
Also note that $M_{q^r}=\ker(M_q \to \Z/r\Z)$. Combining these observations gives:

\begin{lemma}
$$
M_p^0= \Lim M_{p^r}, \quad P_p^0= \Lim P_{p^r};
$$ 
writing $M^0:=M_p^0$ and $P^0:= P_p^0$, 
there are quotient maps $\hat{\Z}^{\red,0}\onto M^0 \onto  P^0$. There are similar results for $M_{\iota}^0:= M_{\iota, p}^0$, $P_{\iota}^0:= P_{\iota, p}^0$.
\end{lemma}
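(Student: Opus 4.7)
The plan is to identify the maximal pro-finite quotients $\widehat{M_p}$ and $\widehat{P_p}$ with $\hat{\Z}$, and thereby realise $M^0$ and $P^0$ as the claimed inverse limits. First I would argue that every finite-dimensional representation $\rho$ of $M_p$ with finite monodromy has $\rho(1)$ of finite order, so its eigenvalues are roots of unity; these have $|\iota(\alpha)|=1=p^{0/2}$ for every embedding $\iota\co\bar{\Ql}\into \Cx$, so the representation is pure of weight $0$ and factors through $P_p=M_p/\bG_m$. This gives $\widehat{M_p}=\widehat{P_p}$. Moreover $\widehat{P_p}=\hat{\Z}$, since finite representations of the quotient $P_p$ of $\hat{\Z}^{\red}$ are precisely the continuous finite-image representations of $\hat{\Z}$.

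The excerpt already supplies $M_{p^r}=\ker(M_p\to\Z/r\Z)$, and quotienting by $\bG_m$ gives the analogous $P_{p^r}=\ker(P_p\to \Z/r\Z)$. Since $\hat{\Z}=\Lim_r \Z/r\Z$ and kernels commute with cofiltered limits, the identifications above yield
$$
M_p^0=\ker(M_p\to\widehat{M_p})=\ker(M_p\to\hat{\Z})=\Lim_r \ker(M_p\to\Z/r\Z)=\Lim_r M_{p^r},
$$
and symmetrically $P_p^0=\Lim_r P_{p^r}$.

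For the quotient maps $\hat{\Z}^{\red,0}\onto M^0\onto P^0$, I would apply the snake lemma to the commutative diagram with exact rows
$$
\xymatrix{
1 \ar[r] & \hat{\Z}^{\red,0} \ar[r]\ar[d] & \hat{\Z}^{\red} \ar[r]\ar@{->>}[d] & \hat{\Z} \ar[r]\ar@{=}[d] & 1 \\
1 \ar[r] & M^0 \ar[r] & M_p \ar[r] & \hat{\Z} \ar[r] & 1,
}
$$
in which the middle vertical arrow is surjective and the right vertical arrow is the identity, forcing the left vertical arrow to be surjective; an analogous diagram with $M_p,M^0$ replaced by $P_p,P^0$ yields $M^0\onto P^0$. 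The $\iota$-case proceeds by identical arguments, since the weight-zero characterisation of finite-monodromy representations is unaffected by replacing $\hat{\Z}^{\red}$ with $\Z^{\red}$. The main obstacle is the initial identification of $\widehat{M_p}$, where one must combine the algebraic structure (forcing $\rho(1)$ to have finite order on a finite quotient) with the weight filtration to conclude pure weight zero; once this step is in place, everything else reduces to limits and the snake lemma.
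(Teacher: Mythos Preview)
Your proposal is correct and is precisely the argument the paper has in mind: the lemma is stated without proof, introduced only by ``Combining these observations gives:'', where the observations are exactly the identification $M_{q^r}=\ker(M_q\to\Z/r\Z)$ and the fact that finite-monodromy representations are pure of weight $0$ (giving $P_p\to\hat{\Z}$). You have simply spelled out the details, including the snake-lemma step for the surjections, which the paper leaves implicit.
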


\begin{definition}
We say that a representation of $\Z^{\alg,0}$ is \emph{mixed} (resp. \emph{pure of weight} $0$, resp. $\iota$-\emph{mixed with integral weights}, resp. $\iota$-\emph{pure}) if the action of $\Z^{\red,0}\lhd \Z^{\alg,0}$ factors through $M^0$ (resp.  $P^0$, resp. $M_{\iota}^0$, resp. $P_{\iota}^0$).\end{definition}

\begin{lemma}\label{inftywgt}
Observe that the canonical maps $\bG_m \to M_q$ are compatible, giving $\bG_m \to M^0$, with trivial image in $P^0$. Similarly, we have $\bG_m\to M_{\iota}^0$, with trivial image in $P_{\iota}^0$.
\end{lemma}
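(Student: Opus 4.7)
The plan splits into two parts: constructing $\bG_m \to M^0$, and verifying that the resulting composition with $M^0 \to P^0$ is trivial.

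For the first part, I would use a connectedness argument. By the identity $M_{p^r} = \ker(M_p \to \Z/r\Z)$ recorded just before the lemma, we have
\[
M^0 \;=\; \varprojlim_r M_{p^r} \;=\; \bigcap_r M_{p^r} \;=\; \ker(M_p \to \hat{\Z}).
\]
Since $\bG_m$ is a connected algebraic group while $\Z/r\Z$ (and hence $\hat{\Z}$) is totally disconnected, any morphism $\bG_m \to \Z/r\Z$ is trivial. Applying this to the composition $\bG_m \to M_p \to \Z/r\Z$ for every $r$ shows that the canonical map $\bG_m \to M_p$ takes values in $M^0$, producing the desired morphism $\bG_m \to M^0$.

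For the second part, $P_{p^r} = M_{p^r}/\bG_m$ by definition, where the $\bG_m$ being quotiented out is precisely the image of the canonical map $\bG_m \to M_{p^r}$. Hence $\bG_m \to M_{p^r} \to P_{p^r}$ is trivial for every $r$, and passing to the inverse limit yields the triviality of $\bG_m \to M^0 \to P^0$.

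The parallel statements about $M^0_{\iota}$ and $P^0_{\iota}$ follow from the same argument with $\iota$-weights substituted throughout, since the quotient $M_\iota \to \hat{\Z}$ is again totally disconnected and $P_{\iota,p^r} = M_{\iota,p^r}/\bG_m$ by construction. The only subtlety is book-keeping: one must confirm that the morphism $\bG_m \to M^0$ produced this way does indeed recover, for each $r$, the canonical map $\bG_m \to M_{p^r}$ acting as $\lambda^n$ on pure weight-$n$ representations described in Definition \ref{nq}. This amounts to unwinding the commutative diagrams preceding the lemma, which identify the transition morphisms with raising Frobenius to a power, thereby preserving the weight label.
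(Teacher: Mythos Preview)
Your proposal is correct. The paper itself gives no proof for this lemma beyond the phrase ``Observe that'' in the statement, so there is little to compare against; the intended argument is simply that the canonical maps $\bG_m \to M_{p^r}$ are compatible with the transition maps in the inverse system $\{M_{p^r}\}_r$ (a pure weight-$n$ representation with respect to $q$ remains pure of weight $n$ with respect to $q^r$), hence assemble into a map to the limit $M^0$.

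Your connectedness argument is a clean alternative route to the same conclusion: rather than checking compatibility of the family $\{\bG_m \to M_{p^r}\}_r$ directly, you take the single map $\bG_m \to M_p$ and observe that its composition with $M_p \to \hat{\Z}$ must vanish because $\bG_m$ is connected and $\hat{\Z}$ is totally disconnected, forcing the map to land in $M^0 = \ker(M_p \to \hat{\Z})$. This is arguably more conceptual, and it sidesteps the explicit weight bookkeeping. The two arguments coincide once one notes that compatibility of the $\bG_m \to M_{p^r}$ with the inclusions $M_{p^r} \hookrightarrow M_p$ is exactly what makes the factorisation through $M^0$ recover the canonical maps at each level --- which is precisely the point you flag in your final paragraph. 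The second part (triviality in $P^0$) is immediate from $P_{p^r} = M_{p^r}/\bG_m$ either way.
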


\subsubsection{Slope decompositions}

\begin{definition}\label{ginfty}
Define the pro-algebraic group $\widetilde{\bG_m}$ to be the inverse limit of the \'etale universal covering system of $\bG_m$. 
This is the  inverse system  $\{G_r\}_{r \in \N}$ with  $G_r=\bG_m$ and  morphisms $G_{sr} \xra{[s]} G_r$, for $s \in \N$. 
\end{definition}

\begin{lemma}
The category of $\widetilde{\bG_m}$-representations is canonically equivalent to the category of $\Q$-graded vector spaces. 
\end{lemma}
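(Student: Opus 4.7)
The plan is to identify $\Rep(\widetilde{\bG_m})$ with the filtered colimit $\varinjlim_{r} \Rep(G_r)$, and then to chase what the transition functors do to weight spaces.

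First I would recall that for any pro-algebraic group $G = \varprojlim_r G_r$ presented as a filtered inverse limit of algebraic groups, every finite-dimensional representation factors through some $G_r$ (because $\GL(V)$ is of finite type), and a morphism of finite-dimensional representations factoring through $G_r$ and $G_{r'}$ respectively is already a $G_{rr'}$-morphism. Hence the natural functor $\varinjlim_r \FD\Rep(G_r) \to \FD\Rep(G)$ is an equivalence. Applying this to $\widetilde{\bG_m}=\varprojlim_r G_r$ reduces the claim to computing $\varinjlim_r \FD\Rep(\bG_m)$, where the transition functor $[s]^*\co \FD\Rep(G_r)\to \FD\Rep(G_{rs})$ is pullback along the $s$-th power map.

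Next I would use the standard fact that $\FD\Rep(\bG_m)$ is equivalent to finite-dimensional $\Z$-graded vector spaces via the weight-space decomposition $V=\bigoplus_n V_n$, where $\lambda\in \bG_m$ acts as $\lambda^n$ on $V_n$. Under this equivalence, pullback along $[s]\co \bG_m\to \bG_m$ carries a weight-$n$ line to a weight-$sn$ line, i.e.\ multiplies all gradings by $s$. Consequently, if we re-index the grading on the $r$-th copy by the rule $n\mapsto n/r\in \Q$, the transition functors become identities on the $\Q$-graded underlying data. The colimit is therefore the category whose objects consist of a finite-dimensional vector space equipped with a $\Q$-grading whose support is contained in $\tfrac{1}{r}\Z$ for some $r$ — which is exactly the category of finite-dimensional $\Q$-graded vector spaces (any finite subset of $\Q$ has a common denominator).

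Finally I would extend from finite-dimensional to all representations: a representation of a pro-algebraic group is, by convention, a filtered colimit of finite-dimensional sub-representations, and the same applies to $\Q$-graded vector spaces, so the equivalence extends verbatim. The inverse functor can be made explicit: given a $\Q$-graded vector space $W=\bigoplus_{q\in\Q} W_q$, on each finite-dimensional subspace pick $r$ with all occurring $q\in \tfrac{1}{r}\Z$, regard $W$ as a $G_r=\bG_m$-representation via the grading $n\mapsto W_{n/r}$, and pull back along $\widetilde{\bG_m}\to G_r$; compatibility under refinement $r\mid rs$ follows from the weight-multiplication computation above, so the assignment is well-defined and manifestly inverse to the weight-space functor. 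No real obstacle arises; the only point demanding care is verifying that the rational weight $n/r$ is intrinsic (independent of the chosen $r$), which is precisely the content of the $[s]$-pullback calculation.
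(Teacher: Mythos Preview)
Your proof is correct and follows essentially the same approach as the paper: both identify finite-dimensional $\widetilde{\bG_m}$-representations with the filtered colimit of $G_r=\bG_m$-representations, re-index the $\Z$-grading on the $r$th copy by $n\mapsto n/r$ to obtain $\Q$-gradings (the paper phrases this as choosing $d$ a common denominator and regarding the resulting $\Z$-grading as a $G_d$-action), and then pass to arbitrary representations by taking nested unions of finite-dimensional subrepresentations. Your version is somewhat more explicit about why the rational weight is independent of the choice of $r$, but the argument is the same.
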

\begin{proof}
A representation of $\bG_m$ is equivalent to a $\Z$-grading. Given a finite-dimensional vector space $V$ with a $\Q$-grading $V=\bigoplus V_{\lambda}$, let $d$ be the lowest common multiple of the denominators of the set $\{\lambda \in \Q: V_{\lambda}\ne 0\}$.  Then $V=\bigoplus_{n \in \Z} V_{n/d}$, giving a $\bG_m$-action on $V$. If we regard this copy of $\bG_m$ as $G_d$, this defines a $\widetilde{\bG_m} $-action.

Now, for any pro-algebraic group $G$, arbitrary $G$-representations are nested unions of finite-dimensional $G$-subrepresentations. Likewise, every $\Q$-graded vector space can be expressed as a nested union of finite-dimensional  $\Q$-graded vector subspaces, so the two categories are equivalent. 
\end{proof}

Now assume that $p=\ell$.

\begin{definition}\label{slope}
Given a power $q$ of $p$, normalise the $p$-adic valuation $v$ on $\bar{\Q}_p$ by $v(q)=1$. Define the \emph{slope} of $\alpha\in  \bar{\Q}_p$ to be $v(\alpha) \in \Q$.
\end{definition}

\begin{lemma}\label{slopez}
There is a canonical  morphism $\widetilde{\bG_m} \to \Z^{\red}$, corresponding to the functor sending a $\Z$-representation $V$ to a slope decomposition $\bigoplus V_{\lambda}$. 
\end{lemma}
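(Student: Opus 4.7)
The plan is to invoke Tannakian duality: by \cite[Remark 2.6]{htpy} (Tannakian duality for pro-algebraic groups), a morphism $\widetilde{\bG_m} \to \Z^{\red}$ of pro-algebraic groups over $\Ql$ is the same as a $\Ql$-linear exact tensor functor
\[
\Phi \co \FD\Rep(\Z^{\red}) \to \FD\Rep(\widetilde{\bG_m}),
\]
i.e.\ a functor from finite-dimensional semisimple $\Z$-representations over $\Ql = \Q_p$ to finite-dimensional $\Q$-graded $\Ql$-vector spaces. I will construct $\Phi$ as the slope decomposition and then extend to all (ind-finite) representations by filtered colimits, which suffices because both categories are ind-completions of their finite-dimensional subcategories.

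First I would construct $\Phi$ on objects. Given a finite-dimensional semisimple $\Z$-representation $V$ over $\Q_p$, let $F \in \GL(V)$ denote the image of $1 \in \Z$; by semisimplicity $F$ is a semisimple automorphism. After base change to $\bar\Q_p$, we obtain an eigenspace decomposition
\[
V \ten_{\Q_p} \bar\Q_p \;=\; \bigoplus_{\alpha \in \bar\Q_p^\times} V_\alpha,
\]
and for each $\lambda \in \Q$ we set $(V\ten \bar\Q_p)_\lambda := \bigoplus_{v(\alpha)=\lambda} V_\alpha$, using the normalised $p$-adic valuation of Definition \ref{slope}. The key point is that the $p$-adic valuation on $\bar\Q_p$ has a unique extension from $\Q_p$, so it is invariant under $\Gal(\bar\Q_p/\Q_p)$; hence each $(V\ten \bar\Q_p)_\lambda$ is Galois-stable, and by Galois descent descends to a $\Q_p$-subspace $V_\lambda \subset V$. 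Setting $\Phi(V) := \bigoplus_{\lambda \in \Q} V_\lambda$ defines the desired $\Q$-graded object.

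Next I would check functoriality and the tensor structure. Any morphism $f\co V \to W$ of $\Z$-representations commutes with the Frobenius-like operators, so after base change preserves generalised eigenspaces, and in particular maps $V_\lambda$ into $W_\lambda$ --- so $\Phi$ is a functor, and visibly exact since the decomposition is a direct sum. For the tensor structure, if $v \in V_\alpha$ and $w \in W_\beta$ are eigenvectors over $\bar\Q_p$, then $v \ten w$ is an eigenvector with eigenvalue $\alpha\beta$; since $v(\alpha\beta) = v(\alpha)+v(\beta)$, the natural isomorphism $V\ten W \cong \bigoplus_{\mu+\nu = \lambda} V_\mu \ten W_\nu$ (over $\bar\Q_p$, hence over $\Q_p$ by descent) gives a tensor compatibility $\Phi(V\ten W) \cong \Phi(V)\ten \Phi(W)$. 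The unit object $\Q_p$ (trivial $\Z$-action) is sent to $\Q_p$ placed in degree $0$, as required.

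The main obstacle is ensuring the construction descends from $\bar\Q_p$ to $\Q_p$; this is where one must use that the $p$-adic valuation extends uniquely to $\bar\Q_p$, so that $\Gal(\bar\Q_p/\Q_p)$ permutes eigenspaces within each fixed slope. Once this is in place, the remaining verifications (exactness, tensor compatibility, passage to the limit over finite-dimensional subrepresentations) are routine, and Tannakian duality packages $\Phi$ as the required morphism $\widetilde{\bG_m} \to \Z^{\red}$.
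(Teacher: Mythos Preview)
Your proof is correct and follows essentially the same approach as the paper: decompose $V\ten_{\Q_p}\bar{\Q}_p$ into $F$-eigenspaces, group them by slope, and use that Galois conjugates in $\bar{\Q}_p$ have the same valuation to descend the decomposition to $\Q_p$. The paper's version is much terser and leaves the Tannakian framing, functoriality, and tensor compatibility implicit, while you spell these out; but the mathematical content is the same.
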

\begin{proof}
Let $F$ be the canonical generator for $\Z$. Given a finite-dimensional semisimple $\Z$-representation $V$, we may decompose $V\ten_{\Q_p}\bar{\Q}_p$ into $F$-eigenspaces, and hence take a decomposition by slopes of the eigenvalues. Since conjugates in $\bar{\Q}_p$ have the same slope, this descends to a slope decomposition $V = \bigoplus_{\lambda \in \Q} V_{\lambda}$, as required.
\end{proof}

\subsection{Potentially unramified actions}\label{pnr}

Fix a prime $p \ne \ell$, and take a local field $K$, with  finite residue field $k$ of characteristic $p$. 
Let $\cG:= \Gal(\bar{K}/K)^{\alg}$, the pro-algebraic completion of $\Gal(\bar{K}/K)$ over $\Ql$.
\begin{definition}
Say that a finite-dimensional continuous $\Ql$-representation of $\Gal(\bar{K}/K)$ is \emph{potentially unramified} if there exists a finite extension $K'/K$ for which the action of  $\Gal(\bar{K}/K')$ is unramified. Say that an arbitrary $\Ql$-representation of $\Gal(\bar{K}/K)$ is \emph{potentially unramified} if it is a  sum of  finite-dimensional potentially unramified  representations.

These form a neutral Tannakian category (see Definition \ref{tannaka}); let $\cG^{\pnr}$ be the corresponding pro-algebraic group. Since $\Rep(\cG^{\pnr})$ is a Tannakian subcategory of $\Rep(\cG)$, $\cG^{\pnr}$ is a quotient of $\cG$.
\end{definition}

\begin{lemma}
We can write $\cG^{\pnr}= \Gal(\bar{k}/k)^{\alg}\by_{\Gal(\bar{k}/k)}\Gal(\bar{K}/K)$, so $\cG^{\pnr,0}= \Gal(\bar{k}/k)^{\alg,0} \cong \hat{\Z}^{\alg,0}$.
\end{lemma}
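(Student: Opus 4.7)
The proof uses Tannakian duality to identify the finite-dimensional representation category of $H := \Gal(\bar{k}/k)^{\alg}\by_{\Gal(\bar{k}/k)}\Gal(\bar{K}/K)$ (fibre product in pro-algebraic groups, with $\Gal(\bar{K}/K)$ regarded as a constant pro-algebraic group equal to the inverse limit of its finite quotients) with the category of finite-dimensional potentially unramified continuous $\Ql$-representations of $\Gal(\bar{K}/K)$. Since $\Gal(\bar{k}/k) \cong \hat{\Z}$ is abelian, so is its pro-algebraic completion $\Gal(\bar{k}/k)^{\alg}$, and $H$ is a central extension
\[
 1 \to \Gal(\bar{k}/k)^{\alg,0} \to H \to \Gal(\bar{K}/K) \to 1
\]
pulled back from the central extension $1 \to \Gal(\bar{k}/k)^{\alg,0} \to \Gal(\bar{k}/k)^{\alg} \to \Gal(\bar{k}/k) \to 1$ along the natural surjection $\Gal(\bar{K}/K) \twoheadrightarrow \Gal(\bar{k}/k)$.

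To construct the map $\cG^{\pnr} \to H$, I first build $\cG \to H$: the continuous homomorphism $\Gal(\bar{K}/K) \to H(\Ql)$, $g \mapsto (\iota(\bar g), g)$ (using the canonical $\iota\colon \Gal(\bar{k}/k) \to \Gal(\bar{k}/k)^{\alg}(\Ql)$), gives via universality of pro-algebraic completion the desired morphism. This factors through $\cG^{\pnr}$ because every finite-dimensional $H$-representation pulled back to $\Gal(\bar{K}/K)$ is potentially unramified: the central $\Gal(\bar{k}/k)^{\alg,0}$-action preserves isotypic decompositions, and compatibility with the continuous $\Gal(\bar{K}/K)$-action combined with Lemma \ref{serrelattice} forces the image of $I_K$ in any fixed algebraic quotient to lie in a finite subgroup.

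For the converse direction, starting from a potentially unramified $\rho\colon \Gal(\bar{K}/K) \to \GL(V)$, I construct a morphism $H \to \GL(V)$. Let $G_\rho \subset \GL(V)$ be the Zariski closure of $\rho(\Gal(\bar{K}/K))$. Finiteness of $\rho(I_K)$ implies $G_\rho$ is a finite extension of its identity component $G_\rho^0$, and the continuous map $\Gal(\bar{k}/k) \to G_\rho/\rho(I_K)$ (well-defined modulo the finite inertia image) lifts algebraically to $\Gal(\bar{k}/k)^{\alg} \to G_\rho$ via the pro-algebraic completion functor. Together with $\rho$ itself, this gives a compatible pair over $\Gal(\bar{k}/k)$, hence a morphism $H \to G_\rho \hookrightarrow \GL(V)$. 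The main obstacle is to verify that this construction is canonical (independent of auxiliary choices such as a Frobenius lift), functorial, and compatible with tensor products, all required for Tannakian reconstruction to yield an actual inverse morphism $H \to \cG^{\pnr}$ of pro-algebraic groups.

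Once the equivalence of representation categories is established, Tannakian duality gives $\cG^{\pnr} \cong H$. The statement about the identity component is then immediate from the central extension structure: the image of $H^0$ in $\Gal(\bar{K}/K)$ lies in the identity component of $\Gal(\bar{K}/K)$, which is trivial since $\Gal(\bar{K}/K)$ is profinite, so $H^0 \subseteq \Gal(\bar{k}/k)^{\alg,0}$, and the reverse inclusion is clear as $\Gal(\bar{k}/k)^{\alg,0}$ is connected. The identification $\Gal(\bar{k}/k)^{\alg,0} \cong \hat{\Z}^{\alg,0}$ comes from the canonical isomorphism $\Gal(\bar{k}/k) \cong \hat{\Z}$ via the Frobenius generator.
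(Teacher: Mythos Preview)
Your converse direction has a genuine gap, which you yourself flag. The step ``the continuous map $\Gal(\bar{k}/k) \to G_\rho/\rho(I_K)$ \ldots\ lifts algebraically to $\Gal(\bar{k}/k)^{\alg} \to G_\rho$'' does not follow from the universal property of pro-algebraic completion: completion gives you a map to $G_\rho/\rho(I_K)$, not to $G_\rho$, and there is no canonical lift. Any choice of lift would depend on a choice of Frobenius in $\Gal(\bar K/K)$, and you would then have to check functoriality and tensor-compatibility by hand --- exactly the obstacle you identify but do not resolve. Without this, you do not have a Tannakian inverse, so the isomorphism is not established.

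The paper sidesteps this entirely by working at finite level first. A finite-dimensional representation $V$ is potentially unramified precisely when it kills $\ker(\Gal(\bar K/K')\to\Gal(\bar k/k'))$ for some finite Galois $K'/K$; equivalently, it factors continuously through the group $\Gal(\bar k/k)\times_{\Gal(k'/k)}\Gal(K'/K)$. Since $\Gal(K'/K)$ is \emph{finite}, continuous $\Ql$-representations of this group are exactly algebraic representations of $\Gal(\bar k/k)^{\alg}\times_{\Gal(k'/k)}\Gal(K'/K)$ --- no lifting needed. Passing to the direct limit of representation categories over $K'$ (equivalently the inverse limit of groups) gives the fibre product $\Gal(\bar k/k)^{\alg}\times_{\Gal(\bar k/k)}\Gal(\bar K/K)$ immediately. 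This argument is both shorter and avoids all the canonicity issues. The identity-component statement is then read off from Lemma~\ref{zinfty}; your direct argument via the central extension also works for that part.
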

\begin{proof}
A representation $\cG \to \GL(V)$ (for $V$ finite-dimensional) is potentially unramified if it annihilates $\ker(\Gal(\bar{K}/K')\to \Gal(\bar{k}/k'))$ for some finite Galois extension $K'/K$. In other words, it annihilates $\ker(\Gal(\bar{K}/K) \to\Gal(\bar{k}/k)\by_{\Gal(k'/k)} \Gal(K'/K))$, so is an algebraic  representation of $\Gal(\bar{k}/k)^{\alg}\by_{\Gal(k'/k)}\Gal(K'/K)$.
 Thus the category of finite-dimensional $\cG^{\pnr}$-representations is given by
\begin{eqnarray*}
\FD\Rep(\cG^{\pnr}) &=& \LLim_{K'}\FD\Rep(\Gal(\bar{k}/k)^{\alg}\by_{\Gal(k'/k)}\Gal(K'/K))\\
&=& \FD\Rep( \Lim_{K'}\Gal(\bar{k}/k)^{\alg}\by_{\Gal(k'/k)}\Gal(K'/K))\\
&=& \FD\Rep(\Gal(\bar{k}/k)^{\alg}\by_{\Gal(\bar{k}/k)}\Gal(\bar{K}/K)),
\end{eqnarray*}
as required.

The final statement is an immediate consequence of Lemma \ref{zinfty}.
\end{proof}

\begin{definition}\label{pnrmixed}
We say that a representation of $\cG^{\pnr} $ is \emph{mixed} (resp. \emph{pure of weight} $0$) if the resulting action of $\Z^{\alg,0}\onto \hat{\Z}^{\alg,0}$ is so.
\end{definition}

\subsection{Potentially crystalline actions}\label{pcris}

Now let $\ell=p$, and  take a local field $K$, with  finite residue field $k$ of order $q=p^f$. 
Let $\cG:= \Gal(\bar{K}/K)^{\alg}$, the pro-algebraic completion of $\Gal(\bar{K}/K)$ over $\Q_p$. Let $W:=W(k)$, with fraction field $K_0$, and let $\sigma$ denote the unique  lift of arithmetic Frobenius $\Phi \in \Gal(\bar{k}/\bF_p)$ to $\sigma \in \Gal(K_0^{\nr}/\Q_p)$, for  $K_0^{\nr}$  the maximal unramified extension of $K_0$. Note that the geometric Frobenius of the previous section is $F=\Phi^{-f}$.

\begin{definition}
Say that a finite-dimensional continuous $\Gal(\bar{K}/K)$-representation over $\Q_p$ is \emph{potentially crystalline} if there exists a finite extension $K'/K$ for which the action of  $\Gal(\bar{K}/K')$ is crystalline. Say that an arbitrary $\Q_p$-representation of $\Gal(\bar{K}/K) $ is \emph{potentially crystalline} if it is a  sum of  finite-dimensional potentially crystalline  representations. Note that since unramified representations are automatically crystalline,  all potentially unramified representations are potentially crystalline.

These form a neutral Tannakian category (see Definition \ref{tannaka}); let $\cG^{\pcris}$ be the corresponding pro-algebraic group. Since $\Rep(\cG^{\pcris})$ is a full subcategory of $\Rep(\cG)$ closed under subobjects, $\cG^{\pcris}$ is a quotient of $\cG$.
\end{definition}

\begin{definition}\label{bcrisdef}
In \cite[\S 4]{fontaineBT}, Fontaine defined a ring  $B_{\cris}:=B_{\cris}(V)$ of periods over $\Q_p$, equipped with a Hodge filtration and actions of $\Gal(\bar{K}/K)$ and Frobenius,  and used it to characterise crystalline representations (adapted in Proposition \ref{pcristest} below). 

In \cite[6.8]{olssonhodge}, Olsson defined a  localisation  $\tilde{B}_{\cris}(V)$ of $B_{\cris}(V)$   as follows. Fix a sequence $\tau_m$ of elements of $\bar{V}$ with $\tau_0=p$ and $\tau_{m+1}^p=\tau_m$ for all $m\ge 0$. Define $\lambda_{p^{-n}}$ to be the sequence $(\tau_{n+m})_{m\ge 0}$, and let $\delta_{p^{-n}}$ be the associated Teichm\"uller lifting. Set
$$
\tilde{B}_{\cris}(V):= B_{\cris}(V)[\delta_{p^{-n}}^{-1}]_{n \ge 0},
$$
noting that $(\delta_{p^{-n-1}})^p= \delta_{p^{-n}}$.
\end{definition}

\begin{definition}
Given a finite-dimensional $\Gal(\bar{K}/K)$-representation $U$, set  
\begin{eqnarray*}
D_{\cris,K}(U)&:=&  (U\ten_{\Q_p}B_{\cris})^{\Gal(\bar{K}/K)},\\
D_{\pcris}(U)&:=& \LLim D_{\cris,K'}(U)  ,\\
\tilde{D}_{\cris,K}(U)&:=&  (U\ten_{\Q_p}\tilde{B}_{\cris})^{\Gal(\bar{K}/K)},\\
\tilde{D}_{\pcris}(U)&:=& \LLim \tilde{D}_{\cris,K'}(U)  ,
\end{eqnarray*}
for $K'$ ranging over all finite extensions of $K$. For an arbitrary algebraic $\Gal(\bar{K}/K)$-representation $U$, set
$$
D_{\pcris}(U):= \varinjlim D_{\pcris}(U_{\alpha}),
$$
for $U_{\alpha}$ running over all finite-dimensional subrepresentations, and similarly for $\tilde{D}_{\pcris}$.
\end{definition}

Observe that $\Spec B_{\cris}$ is an affine $\cG$-scheme over $\Spec \Q_p$, and that the coarse quotient $(\Spec B_{\cris})/\cG^0$ is  $\Spec K_0^{\nr}$.

\begin{proposition}\label{pcristest}
An action of $\cG$ on an affine $\Q_p$-scheme $Y$ factors through $\cG^{\pcris}$ if and only if there exists an affine $K_0^{\nr}$-scheme $Z$, with
$$
Y \by_{\Q_p} \Spec \tilde{B}_{\cris} \cong Z\by_{ K_0^{\nr}} \Spec \tilde{B}_{\cris}
$$
a $\cG^0$-equivariant map (for trivial $\cG^0$-action on $Z$).

In that case, we necessarily have $\O_Z=D_{\pcris}(\O_Y)= \tilde{D}_{\pcris}(\O_Y)$. 
\end{proposition}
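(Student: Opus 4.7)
My plan is to test the condition on the coordinate ring $A := \O_Y$, which as a $\cG$-representation is a filtered union of finite-dimensional continuous $\Gal(\bar{K}/K)$-subrepresentations. The $\cG$-action on $Y$ factors through $\cG^{\pcris}$ precisely when every such finite-dimensional subrepresentation $U \subset A$ is potentially crystalline, so the proposition reduces to a statement about filtered unions of finite-dimensional representations, with a descent problem to handle trivial $\cG^0$-action on the $Z$-side.

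For the ``only if'' direction, I would start from the classical comparison theorem of Fontaine for a finite-dimensional potentially crystalline $U$: choosing $K'/K$ finite with $U|_{\Gal(\bar{K}/K')}$ crystalline, there is a $\Gal(\bar{K}/K')$-equivariant isomorphism
$$
B_{\cris}\otimes_{K_0'}D_{\cris,K'}(U) \xrightarrow{\sim} B_{\cris}\otimes_{\Q_p}U.
$$
Inverting the elements $\delta_{p^{-n}}$ and taking the colimit over finite extensions $K'$ (so $K_0' \subset K_0^{\nr}$) gives a $\cG^0$-equivariant isomorphism $\tilde{B}_{\cris}\otimes_{K_0^{\nr}}D_{\pcris}(U) \cong \tilde{B}_{\cris}\otimes_{\Q_p}U$, with trivial $\cG^0$-action on $D_{\pcris}(U)$. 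Assembling over the directed system of finite-dimensional subrepresentations $U \subset A$, the algebra structure on $A$ makes $D_{\pcris}(A)$ a $K_0^{\nr}$-algebra and we take $Z := \Spec D_{\pcris}(A)$; this also identifies $D_{\pcris}(A)$ with $\tilde{D}_{\pcris}(A)$ by comparing the two as $\cG^0$-invariants of $A\otimes_{\Q_p}\tilde{B}_{\cris}$.

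Conversely, given such an affine $K_0^{\nr}$-scheme $Z$ and isomorphism, I would take $\cG^0$-invariants of the identity
$$
A\otimes_{\Q_p}\tilde{B}_{\cris} \;\cong\; \O_Z\otimes_{K_0^{\nr}}\tilde{B}_{\cris}.
$$
The right-hand side yields $\O_Z\otimes_{K_0^{\nr}}(\tilde{B}_{\cris})^{\cG^0}$ by flat base change, and Olsson's construction of $\tilde{B}_{\cris}$ (together with Lemma \ref{zinfty}, expressing $\cG^0$ as the inverse limit of $\Gal(\bar{K}/K')^{\alg}$) gives $(\tilde{B}_{\cris})^{\cG^0} = K_0^{\nr}$ and faithful flatness of $\tilde{B}_{\cris}$ over $K_0^{\nr}$. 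Hence $\O_Z = \tilde{D}_{\pcris}(A)$. Faithfully flat descent then implies that each finite-dimensional $U \subset A$ satisfies $\dim_{K_0^{\nr}}\tilde{D}_{\pcris}(U) = \dim_{\Q_p}U$, and this dimensional equality, combined with the defining property of $\tilde{B}_{\cris}$, characterises $U$ as potentially crystalline.

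The main obstacle will be the interaction between infinite-dimensional coefficients and Olsson's period ring $\tilde{B}_{\cris}$: one has to check that the formation of $D_{\pcris}$ and $\tilde{D}_{\pcris}$ commutes with filtered colimits of finite-dimensional subrepresentations (so that the comparison theorem bootstraps from $U$ to $A$), and that the equality $D_{\pcris}(A) = \tilde{D}_{\pcris}(A)$, which is essentially automatic for a single potentially crystalline representation, persists after taking the colimit and the algebra structure into account. This ultimately rests on the key property that invariants of $\tilde{B}_{\cris}$ under $\cG^0$ give exactly $K_0^{\nr}$, which is the content of the localisation at $\delta_{p^{-n}}$ used to define $\tilde{B}_{\cris}$ in Definition \ref{bcrisdef}.
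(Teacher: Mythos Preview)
Your proposal is correct and follows essentially the same approach as the paper. The paper's proof is much more terse: it simply invokes \cite[Theorem D.3]{olssonhodge} for the crystalline case (replacing $K_0^{\nr}$ by $K_0$), then passes to the potentially crystalline case by taking the direct limit over finite extensions $K'/K$, and finally cites \cite[Remark D.10]{olssonhodge} for the identification $D_{\pcris}=\tilde{D}_{\pcris}$ on potentially crystalline representations. Your sketch effectively unpacks what lies behind Olsson's Theorem D.3 --- the Fontaine comparison isomorphism, the computation $(\tilde{B}_{\cris})^{\cG^0}=K_0^{\nr}$, faithful flatness, and the dimension criterion for admissibility --- so the ``main obstacle'' you flag is precisely the content that the paper outsources to Olsson.
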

\begin{proof}
If we replace  potentially crystalline with crystalline, and $K_0^{\nr}$ with $K_0$, then this is just \cite[Theorem D.3]{olssonhodge}. Taking the direct limit over finite extensions of $K$ gives the first expression.

 Taking $\cG^0$-invariants gives  $\O_Z= \tilde{D}_{\pcris}(\O_Y)$, but then  \cite[Remark D.10]{olssonhodge}  shows that for potentially crystalline representations $U$,  $\tilde{D}_{\pcris}(U)= D_{\pcris}(U)$.
\end{proof}

\subsubsection{Frobenius actions}
Although we do not have a canonical map $\Z^{\alg,0} \to \cG^{\pcris}$, there is something nearly as strong:

\begin{lemma}\label{pcrisfrob}
There is a canonical morphism
$$
\Z^{\alg,0}\ten_{\Q_p}B_{\cris}^{\sigma} \to \cG^{\pcris}\ten_{\Q_p}B_{\cris}^{\sigma}
$$
of affine  group schemes over the $\sigma$-invariant subring $B_{\cris}^{\sigma}$ of $B_{\cris}$.
\end{lemma}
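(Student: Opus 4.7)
The plan is to construct the morphism via Tannakian duality by producing, functorially and compatibly with tensor products, a $\Bcr^{\sigma}$-linear automorphism $\Phi_V$ of $V\ten_{\Q_p}\Bcr^{\sigma}$ for each potentially crystalline $\Gal(\bar K/K)$-representation $V$, in such a way that the collection carries a canonical virtual $\Z$-action --- equivalently, by Lemma \ref{zinfty}, a $\Z^{\alg,0}$-module structure.

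First I would fix a finite extension $K'/K$ over which $V|_{\Gal(\bar K/K')}$ becomes crystalline, let $K_0'$ be its maximal unramified subfield, and put $f'=[k':\bF_p]$. On $D_{\cris,K'}(V)$ the Frobenius $\phi$ is $\sigma$-semilinear over $K_0'$, and since $\sigma^{f'}=\id$ on $K_0'$, the $f'$th iterate $\phi^{f'}$ is a $K_0'$-linear automorphism. Via the embedding $K_0'\subset \Bcr$, the operator $\phi^{f'}\ten 1_{\Bcr}$ is a well-defined $\Bcr$-linear automorphism of $D_{\cris,K'}(V)\ten_{K_0'}\Bcr$, and the comparison isomorphism $\alpha_V\co D_{\cris,K'}(V)\ten_{K_0'}\Bcr\xrightarrow{\sim} V\ten_{\Q_p}\Bcr$ transports it to a $\Bcr$-linear automorphism $\Phi_V'$ of $V\ten \Bcr$. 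Because $\phi^{f'}\ten 1$ commutes with the crystalline Frobenius $\phi\ten \sigma$, the operator $\Phi_V'$ commutes with $1\ten \sigma$ on $V\ten \Bcr$, and hence descends to the desired $\Bcr^{\sigma}$-linear map $\Phi_V$ on $V\ten_{\Q_p}\Bcr^{\sigma}$.

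Next I would check compatibility under enlarging $K'$: for $K''\supset K'$ with $f''=[k'':\bF_p]$ we have $f'\mid f''$, and identifying $D_{\cris,K''}(V)=D_{\cris,K'}(V)\ten_{K_0'}K_0''$ with diagonal Frobenius gives $\phi^{f''}=(\phi^{f'})^{f''/f'}$, so the $K'$- and $K''$-autos of $V\ten \Bcr^{\sigma}$ are $(f''/f')$th powers of one another and assemble into a compatible system indexed by the open subgroups $r\Z\subset \Z$. Lemma \ref{zinfty} thus upgrades the data to a $\Z^{\alg,0}$-representation on $V\ten \Bcr^{\sigma}$. Naturality in $V$, together with the identity $\Phi_{V\ten W}=\Phi_V\ten \Phi_W$ (which follows from $D_{\cris,K'}(V\ten W)=D_{\cris,K'}(V)\ten_{K_0'}D_{\cris,K'}(W)$ with diagonal Frobenius), then yields a tensor functor $\Rep(\cG^{\pcris})\to \Rep(\Z^{\alg,0})$ with values in $\Bcr^{\sigma}$-modules, preserving underlying fibres. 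Tannakian duality delivers the required morphism of affine group schemes over $\Bcr^{\sigma}$.

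The main obstacle I anticipate is ruling out that the construction collapses under $\alpha_V$. The natural worry is that, writing $\phi^{f'}\ten 1_{\Bcr}=(\phi^{f'}\ten \sigma^{f'})\circ(1\ten \sigma^{-f'})$ and remembering that $\alpha_V$ intertwines $\phi\ten \sigma$ with $1\ten \sigma$, one might expect $\Phi_V'=\id$. However, $\alpha_V$ is only $\Bcr$-linear, so it does not intertwine the semilinear map $1\ten \sigma^{-f'}$ with its putative analogue on $V\ten \Bcr$, and the naive cancellation fails. Concretely, if $\{d_j\}$ is a $K_0'$-basis of $D_{\cris,K'}(V)$ --- which becomes a $\Bcr$-basis of $V\ten \Bcr$ via $\alpha_V$ --- then $\Phi_V'$ acts on this basis by the matrix of $\phi^{f'}$, whose characteristic polynomial is generically non-trivial and whose roots control the Frobenius slopes of the isocrystal $D_{\cris,K'}(V)$ after descent to $\Bcr^{\sigma}$.
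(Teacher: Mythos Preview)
Your argument is correct and is essentially the paper's proof: both build the linearised Frobenius $\phi^{f'}$ on $D_{\cris,K'}(V)$, transport it through the comparison isomorphism, pass to $\sigma$-invariants to land in $V\ten_{\Q_p}B_{\cris}^{\sigma}$, check compatibility under enlarging $K'$ and under tensor products, and then invoke Tannakian duality. The one place you are a shade quicker than the paper is the appeal to Lemma~\ref{zinfty}: to upgrade the compatible system of automorphisms to a $\Z^{\alg,0}\ten B_{\cris}^{\sigma}$-action you need each $\Phi_V^{(K')}$ to generate an \emph{algebraic} $\Z$-action, not merely an abstract one, and this is exactly what your final paragraph supplies (the matrix of $\Phi_V$ is that of $\phi^{f'}$ on the finite-dimensional $\Q_p$-space $D_{\cris,K'}(V)$) --- the paper makes this step explicit by first producing the $\Z^{\alg,0}$-action on $D_{\pcris}(U)$ over $\Q_p$ and only then base-changing.
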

\begin{proof}
Given $U \in \FD\Rep(\cG^{\pcris})$, $U$ is crystalline over $K'$ for some finite extension $K'/K$ with residue field $k'$. If $|k'/k|=r$ and $q=p^f$, then $\phi^{fr}$ is a $K'_0$-linear endomorphism of $D_{\cris, K'}(U)$. This extends uniquely to give a  $K_0^{\nr}$-linear automorphism $F_r$  of $D_{\pcris}(U)$ (note that $F_r\ne \phi^{fr}$, the latter being $\sigma$-semilinear). 

Now, observe that $D_{\pcris}(U)$ is a  sum of finite-dimensional  $F_r$-representations over $\Q_p$, since $D_{\cris, K'}(U)$ is finite-dimensional over $K'$, and hence over $\Q_p$. This gives us a ${\sigma}$-equivariant $\Q_p$-linear action of $\Z^{0,\alg}$ on $D_{\pcris}(U)$, and hence a ${\sigma}$-equivariant $B_{\cris}^{\sigma} $-linear action on $D_{\pcris}(U)\ten_{K_0^{\nr}}B_{\cris}=U\ten_{\Q_p} B_{\cris}$. We now take the $\phi$-invariant subspace, giving a $\Z^{0,\alg}\ten_{\Q_p}B_{\cris}^{\sigma}$-action on $U\ten_{\Q_p} B_{\cris}^{\sigma}$. 

If we took a larger extension $K''/K$ with residue field $k''$, the we would have $|k''/k|=s$ with $r|s$. The corresponding $K_0^{\nr}$-linear automorphism $F_s$  of $D_{\pcris}(U)$ is given by $F_s= F_r^{s/r}$, so gives rise to the same $\Z^{0,\alg}$-action on $D_{\pcris}(U)$. This ensures that the action is functorial in $U$.

Given $U,V \in \FD\Rep(\cG^{\pcris})$, we have $D_{\pcris}(U\ten_{\Q_p}V)= D_{\pcris}(U)\ten_{ K_0^{\nr}}D_{\pcris}(V)$, compatible with $\phi$. Choosing $K'$ so  that $U,V$ are both crystalline over $K'$, we see that  $D_{\pcris}(U\ten_{\Q_p}V)$ is isomorphic to $D_{\pcris}(U)\ten_{ K_0^{\nr}}D_{\pcris}(V)$ as an $F_r$-representation.

Hence the $\Z^{0,\alg}\ten_{\Q_p}B_{\cris}^{\sigma}$-representation  $(U\ten_{\Q_p}V)\ten_{\Q_p} B_{\cris}^{\sigma}$ is isomorphic to $(U\ten_{\Q_p} B_{\cris}^{\sigma})\ten_{B_{\cris}^{\sigma}} (V\ten_{\Q_p}B_{\cris}^{\sigma}) $.

For a $\Q_p$-algebra $A$, Tannakian duality says that  giving an element  $g \in \cG^{\pcris}(A)$ is equivalent to giving $A$-linear automorphisms  $g_U$ of $U \ten A$  for all $\cG^{\pcris}$-representations $U$, functorial and compatible with tensor products and duals.  Therefore the $\Z^{0,\alg}\ten_{\Q_p}B_{\cris}^{\sigma}$-actions on the representations  $U\ten_{\Q_p} B_{\cris}^{\sigma}$ give  group homomorphisms $ \Z^{0,\alg}(C) \to \cG^{\pcris}(C)$, functorial in $B_{\cris}^{\sigma}$-algebras $C$, as required.
\end{proof}

\begin{definition}\label{crismixed}
We say that a potentially crystalline representation $U$ is \emph{mixed} (resp. \emph{pure}, resp. $\iota$-\emph{mixed with integral weights}, resp. $\iota$-\emph{pure}) if the action of $\Z^{\alg,0}\ten B_{\cris}^{\sigma}$ on $U \ten B_{\cris}^{\sigma}$ factors through $M_q$ (resp. $P_q$, resp. $M_{\iota, q}$, resp. $P_{\iota, q}$). This is equivalent to saying that the action of $\Z$ on $D_{\pcris}(U)$ is mixed (resp. pure, resp. $\iota$-mixed with integral weights, resp. $\iota$-pure).
\end{definition}

We have the following analogue of a slope decomposition:
\begin{lemma}
There is a canonical morphism $\widetilde{\bG_m} \to \cG^{\pcris}\ten_{\Q_p}B_{\cris}^{\sigma}$ of affine  group schemes over $B_{\cris}^{\sigma}$, for $\widetilde{\bG_m} $ as in definition \ref{ginfty}.
\end{lemma}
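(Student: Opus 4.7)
My plan is to construct the morphism as a composition: the slope-decomposition map $\widetilde{\bG_m} \to \Z^{\alg,0}$ provided by Lemma \ref{slopez}, followed, after base change to $B_{\cris}^{\sigma}$, by the Frobenius morphism from Lemma \ref{pcrisfrob}.

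First I would observe that Lemma \ref{slopez} yields a canonical morphism $\widetilde{\bG_m} \to \Z^{\red}$; since $\widetilde{\bG_m}$ is connected this factors through $\Z^{\red,0}$. The Levi decomposition $\Z^{\alg,0} = \bG_a \by \Z^{\red,0}$ noted at the start of \S\ref{wgtdecomp} gives a canonical section $\Z^{\red,0} \hookrightarrow \Z^{\alg,0}$, so we obtain a canonical map $\widetilde{\bG_m} \to \Z^{\alg,0}$ over $\Q_p$. Base-changing to $B_{\cris}^{\sigma}$ and composing with the morphism of Lemma \ref{pcrisfrob} gives the desired
$$
\widetilde{\bG_m} \ten_{\Q_p} B_{\cris}^{\sigma} \to \cG^{\pcris} \ten_{\Q_p} B_{\cris}^{\sigma}.
$$

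Conceptually, this records the slope decomposition of isocrystals: given a potentially crystalline representation $U$, the $K_0^{\nr}$-linear Frobenius $F_r$ on $D_{\pcris}(U)$ constructed in the proof of Lemma \ref{pcrisfrob} has eigenvalues in $\bar{\Q}_p$ with well-defined $p$-adic slopes (independent of $r$, because enlarging $K'$ only replaces $F_r$ by $F_r^{s/r}$, and our normalisation $v(q)=1$ rescales slopes correspondingly). Grouping by slope gives a $\Q$-grading on $D_{\pcris}(U)$, which is exactly the data of a $\widetilde{\bG_m}$-action, and which propagates via Fontaine's equivalence to a $\widetilde{\bG_m}$-action on $U \ten B_{\cris}^{\sigma}$ over $B_{\cris}^{\sigma}$.

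The main point to verify is compatibility with the tensor structure, which is what allows Tannakian duality to interpret the construction as a morphism of affine group schemes. This is immediate from additivity of slopes: for eigenvalues $\lambda$ on $D_{\pcris}(U)$ and $\mu$ on $D_{\pcris}(V)$, one has $v(\lambda \mu) = v(\lambda) + v(\mu)$, so the slope grading of $D_{\pcris}(U \ten V) = D_{\pcris}(U) \ten_{K_0^{\nr}} D_{\pcris}(V)$ is the convolution of the gradings on the factors, matching the tensor product of $\widigetilde{\bG_m}$-representations. I expect no essential obstacle beyond carefully checking these normalisations and the independence of the map on the auxiliary extension $K'$.
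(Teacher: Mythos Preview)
Your proposal is correct and takes essentially the same approach as the paper, whose proof is simply ``Combine Lemma \ref{slopez} with Lemma \ref{pcrisfrob}.'' You have merely made explicit the passage from $\Z^{\red}$ to $\Z^{\alg,0}$ via connectedness of $\widetilde{\bG_m}$ and the product decomposition $\Z^{\alg,0}=\bG_a\times\Z^{\red,0}$, and added the conceptual slope interpretation; none of this departs from the paper's intended argument.
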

\begin{proof}
Combine Lemma \ref{slopez} with Lemma \ref{pcrisfrob}.
\end{proof}

\section{Varieties over finite fields}\label{finite}

Fix a variety $X_k$ over a finite field $k$, of order $q$ prime to $\ell$. Let $X:= X_k \ten_k \bar{k}$, for $\bar{k}$ the algebraic closure of $k$. There is a Galois action on $X$, and hence on the pro-simplicial set $X_{\et}$, and on its algebraisation $G(X_{\et})^{\alg}$. The purpose of this section is to describe this action as far as possible. 

\subsection{Algebraising the Weil groupoid}

The morphism $X \to X_k$ gives a map of groupoids $\alpha\co \pi^{\et}_fX \to \pi^{\et}_f(X_k)$. Similarly, there is a map $\pi^{\et}_fX_k \to \pi^{\et}_f(\Spec k) = \Gal(\bar{k}/k) \cong \hat{\Z}$. Denote the canonical generator of $\Gal(\bar{k}/k)$ by $F$, the geometric Frobenius automorphism.

In constructing fundamental groupoids and \'etale homotopy types, we may use the same set of geometric points for both $X_k$ and $X$, so assume that $\alpha$ is an isomorphism on objects. We then have
$$
\pi^{\et}_f(X)= \pi^{\et}_f(X_k)\by_{\hat{\Z}}0.
$$
 
\begin{definition}
Define the \emph{Weil groupoid} $W_f(X_k)$ by
$$
W_f(X_k):= \pi^{\et}_f(X_k)\by_{\hat{\Z}}\Z,
$$ 
noting that this is a pro-groupoid with discrete objects.
\end{definition}

For any scheme $Y$, note that finite-dimensional representations of $ \varpi^{\et}_f(Y):= \varpi_f(\widehat{Y_{\et}})$ correspond to smooth $\Ql$-sheaves on $Y$. We now introduce natural quotients of this groupoid.

\begin{definition}\label{walg}
Define $\w\varpi^{\et}_f(X)$ to be the image of  $ \varpi^{\et}_f(X)\to W_f(X_k)^{\alg}$, so $W_f(X_k)^{\alg}= \w\varpi^{\et}_f(X) \ltimes \Z^{\alg}$. 

Define ${}^{\Gal}\!\varpi^{\et}_f(X)$ to be the image of  $ \varpi^{\et}_f(X)\to \varpi^{\et}_f(X_k)$, so $\varpi^{\et}_f(X_k)= {}^{\Gal}\!\varpi^{\et}_f(X) \ltimes \hat{\Z}^{\alg}$. Note that ${}^{\Gal}\!\varpi^{\et}_f(X)$ is a Frobenius-equivariant quotient of $\w\varpi^{\et}_f(X)$ (it is in fact the quotient on which $\hat{\Z}$ acts continuously).
\end{definition}

In \cite{weight1}, ${}^W\!\algpia$ was defined to be the universal object classifying continuous $W(X_k,x)$-equivariant homomorphisms
$
\pi_1(X,\bar{x}) \to G(\Ql)
$ to algebraic groups. In the terminology of \cite[Definition 1.3]{weight1}, ${}^W\!\algpia$ is the maximal quotient of $\algpia$ on which Frobenius acts algebraically.

Note that these definitions are consistent by \cite[Lemma \ref{weight1-wf}]{weight1}, which proceeds by establishing an action of $\Z^{\alg}$ on ${}^W\!\algpia$ generated by Frobenius, then showing that the map  $ \Z^{\alg}\ltimes {}^W\!\algpia\to W(X_k,x)^{\alg}$ is an isomorphism.

It also implies that 
linear representations of $\w\varpi^{\et}_f(X)$ correspond to smooth $\Ql$-sheaves on $X$ arising as subsheaves of Weil sheaves, while  linear 
representations of ${}^{\Gal}\!\varpi^{\et}_f(X)$ correspond to smooth $\Ql$-sheaves on $X$ arising as subsheaves of pullbacks of  smooth $\Ql$-sheaves on $X_k$.

\begin{lemma}
The canonical action of $F$ on $\w\varpi^{\et}_f(X)$ factors through a morphism
$$
\Z^{\alg} \to \Aut(\w\varpi^{\et}_f(X))
$$
of group presheaves, for $\Z^{\alg}$ as in \S \ref{wgtdecomp}.
\end{lemma}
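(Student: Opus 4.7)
The plan is to derive the statement directly from the isomorphism $W_f(X_k)^{\alg} \cong \w\varpi^{\et}_f(X) \ltimes \Z^{\alg}$ recorded in Definition \ref{walg}, which was the main output of \cite[Lemma \ref{weight1-wf}]{weight1}. For any pro-algebraic groupoid presented as a semidirect product $H \ltimes K$, the very structure of the semidirect product supplies a conjugation action of $K$ on $H$ by automorphisms, and this construction is functorial in the test $\Ql$-algebra $A$ (since it can be described entirely in terms of the multiplication morphism and the two inclusions, all of which are morphisms of affine schemes). In particular, it yields a morphism of group presheaves $K \to \Aut(H)$ in the sense of Definition \ref{algaut}.

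Applying this observation with $H = \w\varpi^{\et}_f(X)$ and $K = \Z^{\alg}$ produces the desired morphism $\Z^{\alg} \to \Aut(\w\varpi^{\et}_f(X))$. What then remains is to verify that the composite
$$
\Z \to \Z^{\alg}(\Ql) \to \Aut(\w\varpi^{\et}_f(X))(\Ql)
$$
agrees with the canonical Frobenius action. Unravelling the decomposition, the generator $1 \in \Z$ corresponds, under the embedding $\Z \hookrightarrow W_f(X_k)$, to the geometric Frobenius $F$; its image in $W_f(X_k)^{\alg}$ lies in the $\Z^{\alg}$-factor of the semidirect product, and conjugation by it inside $W_f(X_k)^{\alg}$ restricts, on the sub-groupoid $\w\varpi^{\et}_f(X)$, to precisely the Frobenius action obtained by transporting the Galois action along $X \to X_k$. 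Thus the two actions agree on the Zariski-dense subset $\Z \subset \Z^{\alg}$, which determines the algebraic extension uniquely.

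The substantive difficulty is already subsumed in the semidirect-product decomposition (proved in \cite{weight1} by first constructing an algebraic $\Z^{\alg}$-action on $\w\varpi^{\et}_f(X)$ lifting Frobenius, then identifying the resulting semidirect product with $W_f(X_k)^{\alg}$). The only residual subtlety is bookkeeping with the group-presheaf $\Aut$ of Definition \ref{algaut}: one must check that the conjugation morphism is not merely a group homomorphism on $\Ql$-points but is functorial in arbitrary $\Ql$-algebras $A$. This is a direct consequence of the fact that the semidirect-product multiplication $K \times H \times K^{-1} \to H$ is itself a morphism of affine schemes, so base-changes cleanly along $\Spec \Ql \to \Spec A$.
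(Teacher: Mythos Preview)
Your argument has a genuine gap arising from the fact that $\w\varpi^{\et}_f(X)$ is a pro-algebraic \emph{groupoid}, not a group, and that Frobenius permutes its objects (the geometric points of $X$). You treat the semidirect product $W_f(X_k)^{\alg}\cong \w\varpi^{\et}_f(X)\ltimes \Z^{\alg}$ as if both factors were groups, so that conjugation by the $\Z^{\alg}$-factor immediately yields automorphisms of $\w\varpi^{\et}_f(X)$. But making sense of such a semidirect product in the groupoid setting already requires an algebraic action of $\Z^{\alg}$ on the groupoid (including its object set), which is exactly what the lemma is meant to construct. The reference \cite[Lemma \ref{weight1-wf}]{weight1} only establishes the pointed version $\Z^{\alg}\ltimes{}^W\!\algpia\cong W(X_k,x)^{\alg}$, where a Frobenius-fixed basepoint is chosen and one works with groups; that result does not directly furnish the multipointed decomposition you invoke. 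Even granting a splitting, a section $\Z^{\alg}\to W_f(X_k)^{\alg}$ lands in a single automorphism scheme $W_f(X_k)^{\alg}(x_0,x_0)$, and conjugation by such elements does not by itself produce automorphisms of the whole groupoid with its many objects.

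The paper's proof confronts this directly. It first records that the $F$-orbits on $\Ob G$ are finite (giving $\hat{\Z}\to\Aut(\Ob G)$), then builds the auxiliary group scheme
\[
N=\coprod_{f\in\Aut(\Ob H)}\prod_{x\in\Ob H} H(x,fx),
\]
whose elements package a permutation $f$ of objects together with a coherent family of paths $h_x\in H(x,fx)$. The pullback $\hat{\Z}\times_{\Aut(\Ob H)}N$ is an affine group scheme, the Frobenius paths $F(x)\in W_f(X_k)(x,Fx)$ give a map $\Z\to(\hat{\Z}\times_{\Aut(\Ob H)}N)(\Ql)$, and affineness then yields the extension to $\Z^{\alg}$. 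Finally, conjugation by such families gives the map $N\to\Aut(G)$. In other words, the paper manufactures a representable intermediary through which the extension $\Z\rightsquigarrow\Z^{\alg}$ can be made; your approach presupposes that this has already been done.
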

\begin{proof}
Write $G= \w\varpi^{\et}_f(X), H= W_f(X_k)^{\alg}$,
and observe that the orbits of $F$ in $\Ob G=\Ob H$ are finite, giving a map
$$
\hat{\Z} \to \Aut(\Ob H).
$$
Since $\hat{\Z}$ is pro-finite, we may regard it as the pro-algebraic group $\Z^{\alg}/\Z^{\alg,0}$.

Now, consider the group scheme 
$$
N:=\coprod_{f \in \Aut(\Ob(H))} \prod_{x \in \Ob(H)} H(x,fx),
$$
with multiplication given by 
$$
(f,\{h_x\})\cdot (f',\{h'_x\}) = (f\cdot f', \{h_{f'x}\cdot h_x\}).
$$
There is a morphism
 $N \to \Aut(\Ob(H))$ fibred in affine schemes. Thus
$$
\hat{\Z}\by_{\Aut(\Ob(H))}N
$$
is an affine scheme. 

Now, $F$ gives a collection of paths  $F(x) \in  W_f(X_k)(x,Fx)$, and thus a map
$$
\Z \to (\hat{\Z}\by_{\Aut(\Ob(H))}N)(\Ql).
$$
Since the latter is an affine group scheme, this extends to a map $\Z^{\alg} \to \hat{\Z}\by_{\Aut(\Ob(H))}N$. Finally, observe that the conjugation action of $H$ on $G$ gives a map
$$
N \to \Aut(G).
$$
\end{proof}

\begin{theorem}\label{laff}
If $X_k/k$ is normal, then the action of $\Z^{\red}$ on  $\w\varpi^{\et}_f(X)^{\red}$ factors through $P_q$ (see Definition \ref{pqmq}); in other words, the Frobenius representation $O(\w\varpi^{\et}_f(X)^{\red})$ is a sum of finite-dimensional Galois representations, pure of weight $0$.
 
Moreover $\w\varpi^{\et}_f(X)^{\red}= {}^{\Gal}\!\varpi^{\et}_f(X)^{\red}$, so the $\Z^{\red}$ action factors through its quotient $\hat{\Z}^{\red}$.
\end{theorem}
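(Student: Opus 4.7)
My plan is to reduce the theorem, via Tannakian duality, to Lafforgue's purity theorem for irreducible lisse Weil sheaves on normal varieties over finite fields.

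By the Tannakian description of $\w\varpi^{\et}_f(X)^{\red}$ recalled after Definition \ref{walg} together with \cite[Lemma \ref{weight1-wf}]{weight1}, the irreducible representations of $\w\varpi^{\et}_f(X)^{\red}$ correspond to isomorphism classes of irreducible semisimple lisse $\Ql$-Weil sheaves $V$ on $X_k$ (pulled back to $X$). The Peter--Weyl-type decomposition accordingly writes the Frobenius-equivariant coordinate ring $O(\w\varpi^{\et}_f(X)^{\red})$ as a direct sum of matrix-coefficient pieces indexed by such $V$, on each of which Frobenius acts through the natural Weil structure on $\End(V) = V \otimes V^{\vee}$. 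This action is well-defined (unambiguous) because the Weil-sheaf isomorphism $V \xra{\sim} F^*V$ is unique up to scalar by Schur, and scalars act trivially on $\End(V)$. The theorem thus reduces to showing that for every such $V$, the sheaf $\End(V)$ is pure of weight $0$ \emph{and} that the Frobenius action on it extends continuously to $\hat{\Z}$.

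Next, I would invoke Lafforgue's theorem: since $X_k$ is normal, every irreducible lisse $\Ql$-Weil sheaf on $X_k$ is pure of some integer weight $w = w(V)$ with respect to $q$. (The normal case reduces to the smooth case by passing to the smooth locus, whose complement has codimension $\ge 2$ by normality, so Zariski--Nagata purity of the branch locus identifies its \'etale fundamental groupoid with that of $X_k$.) Consequently the Frobenius eigenvalues on any stalk of $\End(V)$ are ratios of eigenvalues on $V$, hence algebraic numbers whose complex absolute values all equal $q^{(w-w)/2} = 1$; so $\End(V)$ is pure of weight $0$.

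Finally, semisimplicity of Frobenius on $V$ (part of the Lafforgue purity package) gives semisimplicity on $\End(V)$, and the bound $|\lambda|=1$ on eigenvalues implies the image of $F$ in $\GL(\End(V)_x)$ generates a bounded subgroup; its closure is a compact subgroup of $\GL_n(\Ql)$, hence pro-finite, by the argument of Lemma \ref{serrelattice} (i.e., \cite[LG 4, Appendix 1]{Se}). Thus the $\Z$-action on $\End(V)$ extends uniquely and continuously to $\hat{\Z} = \Gal(\bar{k}/k)$, so $\End(V)$ is canonically a smooth $\Ql$-sheaf on $X_k$ rather than merely a Weil sheaf and corresponds to a representation of ${}^{\Gal}\!\varpi^{\et}_f(X)^{\red}$. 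Summing over all $V$, this simultaneously supplies the factorisation through $P_q \subset \hat{\Z}^{\red}$ and the identification $\w\varpi^{\et}_f(X)^{\red} = {}^{\Gal}\!\varpi^{\et}_f(X)^{\red}$. The genuinely substantive input is the appeal to Lafforgue; the remainder is Tannakian bookkeeping, and the main obstacle is the mild technical reduction from normal to smooth bases, which is handled by Zariski--Nagata purity.
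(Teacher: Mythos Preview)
Your strategy is the same as the paper's: decompose $O(\w\varpi^{\et}_f(X)^{\red})\otimes\bar{\Ql}$ as $\bigoplus_V \Hom(V_x,V_y)$ over irreducibles and apply Lafforgue to each $V$. Two points need adjustment.

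First (minor): irreducible representations of $\w\varpi^{\et}_f(X)^{\red}$ are irreducible lisse sheaves on $X$ occurring inside some Weil representation, not irreducible Weil sheaves on $X_k$ itself. Such a $\vv$ only underlies a Weil sheaf on $X_{k'}$ for some finite extension $k'/k$. The paper handles this by base-changing to $\bar{\Ql}$ and passing to $k'$.

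Second (the real gap): your argument that the $\Z$-action extends to $\hat{\Z}$ does not work as written. The purity bound $|\lambda|=1$ is an \emph{archimedean} condition, whereas boundedness of $\langle F\rangle$ in $\GL_n(\Ql)$ is an $\ell$-\emph{adic} condition (it requires the eigenvalues to be $\ell$-adic units), and the two are unrelated in general. Nor is semisimplicity of Frobenius on $V$ part of Lafforgue's theorem; that is a separate and largely open conjecture. The paper bypasses both issues by using the twist decomposition $\vv\cong P\otimes\bar{\Ql}^{(b)}$, where $P$ is a genuine \emph{smooth} (\'etale, not merely Weil) sheaf on $X_{k'}$. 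Since the rank-one twist cancels, $\vv^{\vee}\otimes\vv\cong P^{\vee}\otimes P$ is already a smooth sheaf on $X_{k'}\by X_{k'}$, pure of weight $0$, and therefore automatically carries a continuous $\Gal(\bar{k}/k')$-action. This is what gives the factorisation through $\hat{\Z}^{\red}$ (and hence through $P_q$), with no appeal to Frobenius semisimplicity.
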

\begin{proof}
Since $\Z^{\alg}= \Z^{\red}\by \bG_a$ (\S \ref{wgtdecomp}), this  amounts to showing that the Frobenius action factors through $P_q \by \bG_a$. We adapt the proof of \cite[Theorem \ref{weight1-weilred}]{weight1}  (to which we refer the reader for details).

Let $T$ be the set of  all  isomorphism classes of irreducible representations $V$ of $\w\varpi^{\et}_f(X)^{\red}$ over $\bar{\Ql}$.
Since $\w\varpi^{\et}_f(X)^{\red}$ is reductive, there is an isomorphism of $\w\varpi^{\et}_f(X)^{\red}\by\w\varpi^{\et}_f(X)^{\red}$-representations given on objects $(x,y)$ by
$$
O( \w\varpi^{\et}_f(X)^{\red}(x,y))\ten_{\Ql}\bar{\Ql}\cong \bigoplus_{V \in T} \Hom(V_x, V_y).
$$
If $\vv$ is the smooth sheaf on $X$ corresponding to the representation $V$, then $\bigoplus_{V \in T} \Hom(V_x, V_y)$ corresponds to the smooth sheaf
$$
\bigoplus_{V \in T} \pr_1^{-1}\vv^{\vee}\ten\pr_2^{-1}\vv
$$
on $X\by X$.

Now, $V\in T$ is an irreducible representation of $\varpi^{\et}_f(X)^{\red} $ which is a subrepresentation of some $W_f(X_k)$-representation. This is the same as underlying a $W_f(X_{k'})$-representation for some finite extension $k'/k$, so $\vv$ underlies a smooth Weil sheaf on $X_{k'}$.

From Lafforgue's Theorem (\cite[Conjecture 1.2.10]{Weil2}, proved in \cite[Theorem VII.6 and Corollary VII.8]{La}), every  irreducible smooth Weil sheaf over $\bar{\Ql}$ is of the form
$$
\vv \cong P \ten \bar{\Ql}^{(b)},
$$
for some  mixed  sheaf $P$ on $X_{k'}$. By \cite[Theorem 3.4.1 (ii)]{Weil2}, 
every irreducible smooth $\iota$-mixed Weil sheaf is $\iota$-pure. Thus the mixed sheaf $P$ is $\iota$-pure for all $\iota$, and hence pure. 

Thus
$$
\pr_1^{-1}\vv^{\vee}\ten\pr_2^{-1}\vv\cong \pr_1^{-1}P^{\vee}\ten\pr_2^{-1}P,
$$
which is a smooth sheaf on $X_{k'}\by X_{k'}$,  pure   of weight $0$. 

Therefore $O( \w\varpi^{\et}_f(X)^{\red})\ten_{\Ql}\bar{\Ql}$, and hence $O( \w\varpi^{\et}_f(X)^{\red})$, is a  pure Galois representation of weight $0$. Thus the action of $\Z^{\alg}$ factors through   $P_q \by \bG_a$, and
the discrete Galois action on $\varpi^{\et}_f(X)^{\red} $ descends to a continuous action on $ \w\varpi^{\et}_f(X)^{\red}$, so
$$
 \w\varpi^{\et}_f(X)^{\red}= {}^{\Gal}\!\varpi^{\et}_f(X)^{\red}.
$$ 
\end{proof}

\subsection{Weight decompositions}
Now assume that $X$ is  either smooth or proper and normal.

\begin{definition}
  Define a \emph{weight decomposition} on  a multipointed homotopy type $G \in \Ho(s\cE(R)_*)$  to be a morphism
$$
\bG_m \to \RAut(G)
$$
of pro-algebraic groups.
\end{definition}
Compare this with   \cite[Definition \ref{htpy-wgtdef}]{htpy}, which considers weight decompositions on unpointed homotopy types, corresponding to outer automorphisms.

\begin{proposition}\label{wgtexists}
If we let $R$ be any Frobenius-equivariant quotient of $\w\varpi^{\et}_f(X)^{\red}$, then the Galois action  on   
$$
X_{\et}^{R, \mal}
$$
is mixed, giving a canonical weight decomposition. Furthermore, the Frobenius action extends canonically to a continuous algebraic $\Gal(\bar{k}/k)$-action.
\end{proposition}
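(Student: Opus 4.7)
The strategy is to exploit Theorem \ref{qleqhtpy} to model $X_{\et}^{R,\mal}$ by the cosimplicial algebra $\CC^{\bt}(X_{\et}, \bO(R))$ equipped with its natural Galois action, and then transport weight data from cohomology to the entire homotopy type through a minimal model. First I would apply Propositions \ref{dgminimal} and \ref{nequiv} to select a minimal model $\m \in dg\hat{\cN}(R)$ for the pro-unipotent radical $\Ru G(X_{\et})^{R,\mal}$; combining the minimality isomorphism
\[
(\m/[\m,\m])_n^{\vee} \cong \H^{n+1}(G(X_{\et})^{R,\mal}, \bO(R))
\]
with Lemma \ref{hypercohogood} identifies the abelianisation of $\m$ with the dual of $\H^{\geq 1}(X, \bO(R))$ as Galois representations.

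Next I would invoke Deligne's Weil II theorems. Theorem \ref{laff} shows that the Weil sheaf $\bO(R)$ is pure of weight $0$, so the hypothesis that $X_k$ is smooth or proper and normal together with \cite{Weil2} ensures each $\H^i(X, \bO(R))$ is mixed with integer Frobenius weights; in the language of Definition \ref{nq}, the $\hat{\Z}$-action on cohomology factors through $M_q$. Composition with the canonical map $\bG_m \to M_q$ of Lemma \ref{inftywgt} endows $\H^*(X, \bO(R))$, and dually $\m/[\m,\m]$, with a $\bG_m$-action. Because $\bG_m$ is reductive and $\m$ is cofreely cogenerated by $\m/[\m,\m]$ as a pro-nilpotent graded Lie coalgebra, this action lifts to a $\bG_m$-action on $\m$ itself, unique up to conjugation by $\exp(\m_0^R)$. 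Via the equivalences of Proposition \ref{meequiv} and Theorem \ref{bigequiv} this yields the canonical morphism $\bG_m \to \RAut(G(X_{\et})^{R,\mal})$ claimed as the weight decomposition.

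For algebraicity and continuity of the Frobenius action itself, the geometric Frobenius $F$ defines $\rho\co \Z \to \Aut^h(G(X_{\et})^{R,\mal})(\Ql)$. Its composite with the projection $\Aut^h(G) \to \Aut(R)$ factors through an algebraic morphism $\hat{\Z}^{\alg} \to P_q \to \Aut(R)$ by Theorem \ref{laff}, and its composite with $\Aut^h(G) \to \prod_i \Aut(\H^i(X,\bO(R)))$ factors through $\hat{\Z}^{\alg} \to M_q$ by the paragraph above. Lemma \ref{cohohelps} establishes that the kernel of the combined morphism $\beta\co \Aut^h(G) \to \Aut(R) \times \prod_i \Aut(\H^i)$ is pro-unipotent, so the argument of Corollary \ref{redalgebraic} (Tannakian duality plus the pro-algebraic nature of the pullback) promotes $\rho$ to an algebraic morphism $\hat{\Z}^{\alg} \to \Aut^h(G(X_{\et})^{R,\mal})$, and hence a continuous algebraic action of $\Gal(\bar{k}/k)$ on the homotopy type.

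The main obstacle will be ensuring that the $\bG_m$-action constructed on $\m$ yields a genuine morphism into $\RAut(G)$ rather than merely into $\ROut(G)$, i.e.\ that the lift of the weight grading is canonical at the pointed level and Frobenius-equivariant. This is handled by working consistently in the pointed multipointed framework of Definition \ref{raut} and Lemma \ref{auto}, exploiting the Levi decomposition of Proposition \ref{leviprop} to fix the reductive factor and tracking automorphisms through $\bar G$ and $\bar W$ to verify compatibility with the continuous $\hat{\Z}$-action produced in the previous step.
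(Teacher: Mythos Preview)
Your ingredients are the same as the paper's --- Theorem \ref{laff}, Weil II, Lemma \ref{cohohelps}, and Corollary \ref{redalgebraic} --- but the logical order is inverted, and this creates a genuine gap in your second paragraph.

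You claim that the $\bG_m$-action on $\m/[\m,\m]$ lifts to $\m$ ``because $\bG_m$ is reductive and $\m$ is cofreely cogenerated''. Cofree generation gives you a lift of any action on cogenerators to the underlying \emph{graded} Lie coalgebra, but $\m$ carries a differential, and there is no reason an arbitrary $\bG_m$-action on cogenerators should commute with $d$. Equivalently, Lemma \ref{auto} tells you that $\RAut(G) \to \prod_i \Aut_R(\H^i)$ has pro-unipotent kernel, which says a reductive subgroup of the \emph{image} lifts (uniquely up to conjugation); it does not say the map is surjective. You never verify that your $\bG_m \subset \prod_i \Aut_R(\H^i)$ lies in the image of $\RAut(G)$.

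The paper avoids this by running your third paragraph first. Once Corollary \ref{redalgebraic} promotes Frobenius to an algebraic map $\hat{\Z}^{\alg} \to \Aut^h(G)$, one restricts to $\hat{\Z}^{\red}$ and observes (via Weil II and Theorem \ref{laff}) that its composite with $\beta$ factors through $M_q$. Since $\ker\beta$ is pro-unipotent and $\ker(\hat{\Z}^{\red}\to M_q)$ is reductive, the latter maps trivially to $\Aut^h(G)$, so the whole action factors through $M_q$. Composing with $\bG_m \to M_q$ then gives a \emph{canonical} weight decomposition landing in $\RAut(G)$ (because $R$ is pure of weight $0$), with no lifting ambiguity and no minimal model needed. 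Your ``unique up to conjugation by $\exp(\m_0^R)$'' already signals that your construction, even if it worked, would not be canonical in the sense the proposition requires.
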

\begin{proof}
By Theorem \ref{laff}, the Galois action on $R$ factors through the quotient $P_q\by \bG_a$ of $\Z^{\alg}$. By Corollary \ref{redalgebraic}, the $\Gal(\bar{k}/k)$ action on $X_{\et}^{R, \mal}$ is thus algebraic. Since $R$ is a $P_q\by \bG_a$-representation, the Weil sheaf  $\bigoplus_{x \in \Ob R} \bO(R)(x)$ is an arithmetic  sheaf of weight $0$. Deligne's Weil II theorems (\cite[Corollaries 3.3.4 -- 3.3.6]{Weil2})  then imply that $\bigoplus_{x \in X} \H^*(X,\bO(R)(x))$ is a mixed $\Gal(\bar{k}/k)$  representation (i.e. a representation of $M_q \by \bG_a$). By Lemma \ref{cohohelps}, we may therefore conclude that the  action of $\Z^{\red}$ on $X_{\et}^{R, \mal}$ factors through $M_q$, giving
$$
M_q \to \Aut^h(X_{\et}^{R, \mal}).
$$

Finally, use the map $\bG_m \to M_q$ (given after Definition \ref{nq}) to define the weight decomposition. Since $R$ is pure of weight zero, the $\bG_m$-action on $R$ is trivial, giving
$$
\bG_m \to \Aut^h(X_{\et}^{R, \mal}),
$$
as required.
\end{proof}

\begin{corollary}\label{wgtexistspin}
The Galois actions are mixed on the duals $\varpi_{n}(X_{\et}^{R,\mal},x)^{\vee}$ of the  homotopy groups for $n \ge 2$, and on the structure sheaves $O(\varpi_{f}(X_{\et}^{R,\mal}))(x,y)$. In particular, these objects have canonical weight decompositions.
\end{corollary}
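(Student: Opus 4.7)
The plan is to deduce the corollary directly from Proposition \ref{wgtexists} by naturality, observing that any algebraic automorphism of $X_{\et}^{R,\mal}$ (as a pointed homotopy type) acts algebraically on the homotopy groups and on the structure sheaves of the fundamental groupoid. First I would recall that Proposition \ref{wgtexists} gives a map of pro-algebraic groups
\[
M_q \to \Aut^h(X_{\et}^{R,\mal})
\]
through which the Frobenius action factorises, together with the weight decomposition $\bG_m \to M_q \to \Aut^h(X_{\et}^{R,\mal})$.

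Next, I would observe that passing to homotopy groups is functorial with respect to morphisms in $\Ho(s\cE(R)_*)$. Concretely, for a cofibrant representative $G$ of $X_{\et}^{R,\mal}$ in $s\cE(R)_*$, the group presheaves $\varpi_n(X_{\et}^{R,\mal},x) = \pi_{n-1}(G(x,x))$ (for $n\ge 2$) and $O(\varpi_f(X_{\et}^{R,\mal}))(x,y) = O(\pi_0 G(x,y))$ are representable by (pro-finite-dimensional vector spaces, resp.\ affine schemes), so dualising yields natural group homomorphisms
\[
\Aut^h(X_{\et}^{R,\mal}) \to \Aut(\varpi_n(X_{\et}^{R,\mal},x)^{\vee}) \quad \text{and} \quad  \Aut^h(X_{\et}^{R,\mal}) \to \Aut(O(\varpi_f(X_{\et}^{R,\mal}))(x,y))
\]
of group presheaves. (For the $\varpi_f$ part, one has to restrict to the stabiliser of $x$ and $y$ in $\Aut(\Ob R)$, which contains the image of $M_q$ since Frobenius permutes a finite orbit algebraically and we may choose $x,y$ to be fixed after passing to a suitable power; alternatively, work with the direct sum over an orbit.)

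Composing with the morphism from Proposition \ref{wgtexists} endows both $\varpi_n(X_{\et}^{R,\mal},x)^{\vee}$ and $O(\varpi_f(X_{\et}^{R,\mal}))(x,y)$ with algebraic $M_q$-actions extending the Galois action. Since every algebraic representation of $M_q$ is, by the very definition of $M_q$ (Definition \ref{nq}), a union of finite-dimensional mixed Frobenius representations, it follows that these actions are mixed. The canonical weight decompositions are then obtained by restriction along $\bG_m \to M_q$.

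The only delicate point in executing this plan is to verify that the functors ``$\varpi_n$'' and ``$O(\varpi_f)(x,y)$'' are genuinely representable by affine group presheaves on which $\Aut^h$ acts algebraically; this is essentially Lemma \ref{auto} applied to the Levi decomposition $G = R\ltimes \Ru(G)$, since $\varpi_n = \pi_{n-1}\Ru(G)$ for $n\ge 2$, while the structure sheaves of $\varpi_f$ are governed by $\pi_0\Ru(G)$ together with the (fixed, semisimple) groupoid $R$. Everything else is formal functoriality of the constructions in $\Ho(s\cE(R)_*)$.
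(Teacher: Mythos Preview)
Your proposal is correct and follows essentially the same approach as the paper: the paper's proof simply observes that there are canonical maps of group-valued presheaves $\Aut^h(X_{\et}^{R,\mal}) \to \Aut(\varpi_{n}(X_{\et}^{R,\mal},x))$ and $\Aut^h(X_{\et}^{R,\mal}) \to \Aut(O(\varpi_{f}(X_{\et}^{R,\mal}))(x,y))$, so Proposition~\ref{wgtexists} gives algebraic actions of $M_q\times\bG_a$ (and hence of $\bG_m$) on these objects. Your additional remark about Frobenius permuting basepoints and the appeal to Lemma~\ref{auto} for representability are reasonable elaborations, but the underlying argument is the same.
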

\begin{proof}
This is just the observation that there are canonical  maps $\Aut^h(X_{\et}^{R, \mal}) \to \Aut(\varpi_{n}(X_{\et}^{R,\mal},x))$ and $\Aut^h(X_{\et}^{R, \mal}) \to \Aut(O(\varpi_{f}(X_{\et}^{R,\mal}))(x,y)  )$ of group-valued presheaves, so Proposition \ref{wgtexists} gives algebraic actions of $M_q \by \bG_a$ (and hence $\bG_m$) on the homotopy groups and fundamental groupoid. 
\end{proof}

\begin{remark}
We have shown that $\varpi_{n}(X_{\et}^{R,\mal})$ is a mixed $\pi_f^{\et}(X_k)$-representation. In particular, this means that $\varpi_{n}(X_{\et}^{R,\mal}, \bar{x})$ is a mixed $\widehat{\langle F_x\rangle }$-representation, so has a canonical weight decomposition.
\end{remark}

\begin{remark}\label{wgtexistspinet}
If the hypotheses of Theorem \ref{etpimal} hold and $\pi_1^{\et}(X,x)$ is $N$-good relative to $R$, then Corollary \ref{wgtexistspin} implies that the Galois actions on the $\pi_n^{\et}(X,x)\ten_{\hat{\Z}}\Ql$ are mixed for $n \le N$. 

Alternatively, if  it should happen that the Galois action on $\H^n(\pi_1^{\et}(X,x), V)$ is mixed for all $R$-representations $V$ underlying pure $\pi_1^{\et}(X_k,x)$-representations and all $n \le N$, then  Lemma \ref{cohohelps} (combined with the Adams spectral sequence of  \cite[Proposition \ref{htpy-spectralh}]{htpy})
 implies that the Galois actions on  $\varpi_n(B\pi_1(X,x))^{R, \mal}$ is  mixed for $n \le N$.  Provided the first two hypotheses of Theorem \ref{etpimal}  hold, the  exact sequence of that theorem would then imply that the Galois actions on $\pi_n^{\et}(X,x)\ten_{\hat{\Z}}\Ql$ are also mixed.
\end{remark}

\subsection{Formality}

Now assume that $X$ is smooth and proper. Deligne's Weil II theorems then imply that $\bigoplus_{x \in X} \H^n(X,\bO(R)(x))$ is pure of weight $n$. 

\begin{theorem}\label{formal}
For $R$ as in Proposition  \ref{wgtexists}, the Malcev homotopy type $X_{\et}^{R, \mal} \in s\cE(R)$ is formal, in the sense that it corresponds (under the equivalences of  Proposition \ref{meequiv} and Theorem \ref{bigequiv}) to the $R$-representation 
$$
\H^*_{\et}(X,\bO(R)) 
$$
in cochain algebras, equipped with the unique augmentation map $\Ql =\H^0(X, \bO(R))\to \prod_{x \in \Ob R}O(R)(x,-)$. This isomorphism is Galois equivariant.
\end{theorem}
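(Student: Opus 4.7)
The plan is to apply the standard purity-implies-formality principle in the Frobenius setting, with the purity input being the statement (noted just before the theorem) that $\bigoplus_{x \in X}\H^n(X, \bO(R)(x))$ is pure of weight $n$ for smooth proper $X$ by Weil~II.

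First I set up models. By Theorem~\ref{qleqhtpy} and Proposition~\ref{affequiv} I represent $X_{\et}^{R,\mal}$ by a cochain algebra $A \in DG\Alg(R)_*$ weakly equivalent to $\CC^{\bt}_{\et}(X, \bO(R))$. Proposition~\ref{wgtexists} supplies a weight decomposition $\bG_m \to \Aut^h(X_{\et}^{R,\mal})$, and I take a minimal model $\m \in dg\hat{\cN}(R)$ for $\Ru(X_{\et}^{R,\mal})$ via Proposition~\ref{dgminimal}. Lemma~\ref{auto} makes $\RAut$ pro-algebraic, so the weight action rigidifies to an honest $\bG_m$-action on $\m$. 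Combining Proposition~\ref{dgminimal} (identifying $(\m/[\m,\m])_n$ with $\H^{n+1}(X, \bO(R))^{\vee}$) with purity places every degree-$n$ generator of $\m$ in $\bG_m$-weight $-(n+1)$.

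The key computation is the following: an iterated Lie bracket of $r \ge 2$ generators in degrees $k_1, \ldots, k_r$ with $\sum k_i = n-1$ lives in $\m_{n-1}$ with weight $-(n-1)-r$, so $\bG_m$-equivariance of $d\co \m_n \to \m_{n-1}$ applied to a generator (weight $-(n+1)$) forces $r = 2$. Hence $d$ is purely quadratic on generators, so $\m$ is determined by $(\m/[\m,\m], d_2)$ with $d_2$ a graded cobracket; dualising, $d_2$ encodes a graded-commutative product on $\H^*(X, \bO(R))$ which, by comparison with the bar construction, must coincide with the cup product. Thus $\m$ is canonically the minimal model of the formal cdga $(\H^*(X, \bO(R)), 0)$, yielding the desired weak equivalence in $\Ho(DG\Alg(R)_*)_0$. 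All constructions being functorial under Galois, and Corollary~\ref{redalgebraic} algebraising the Galois action on $\Aut^h$, the isomorphism is Galois-equivariant.

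The main obstacle is the rigidification in the second paragraph: promoting the $\bG_m$-action from a map into the group-valued functor $\Aut^h$ to a genuine action on a chosen minimal model, which is handled by Lemma~\ref{auto}. Verifying that the quadratic cobracket dualises to the actual cup product (not merely some a priori quadratic product on $\H^*$), and that the augmentation to $\prod_{x \in \Ob R} O(R)(x,-)$ is preserved, are routine but require care: both follow from the naturality of the minimal model construction with respect to the basepoint data carried by $s\cE(R)_*$.
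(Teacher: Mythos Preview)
Your argument for formality itself is essentially the same as the paper's: take a minimal model $\m$, lift the weight decomposition to an honest $\bG_m$-action on $\m$ via the pro-unipotent extension implicit in Lemma~\ref{auto}, and use purity to force the differential to be purely quadratic on a weight-graded choice of generators. The weight count you give ($r=2$) is exactly the computation in the paper.

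The gap is in your Galois-equivariance claim. The lift of $\bG_m$ from $\RAut(X_{\et}^{R,\mal})$ to $\Aut(\m)$ is not unique --- it is only unique up to conjugation by the pro-unipotent kernel --- and different lifts produce different formality isomorphisms. Saying ``all constructions being functorial under Galois'' does not help: the choice of lift is not functorial, and there is no reason an arbitrary lift of $\bG_m$ should commute with the Galois action on the homotopy type. Corollary~\ref{redalgebraic} only tells you the Galois action on $\Aut^h$ is algebraic; it does not make your particular splitting Galois-stable.

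The paper's fix is to lift the entire map $\hat{\Z}^{\alg} \to \Aut^h(X_{\et}^{R,\mal})$ to $\Aut(R\ltimes\exp(\m))$ at once (again using that the kernel is pro-unipotent, so a Levi-type argument applies). The weight decomposition $\bG_m \to M_q \subset \hat{\Z}^{\alg}$ is then a \emph{sub}action of this lifted $\hat{\Z}^{\alg}$-action, so the formality isomorphism it determines is automatically compatible with the full Galois action $\hat{\Z}^{\alg} \to \Aut^h$. This simultaneous lifting also yields a $\hat{\Z}^{\alg}$-equivariant Levi decomposition $R\ltimes\exp(\m)$, which you implicitly need but do not address.
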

\begin{proof}
We need  to construct an isomorphism $\theta\co  N\Ru(X_{\et}^{\rho, \mal}) \cong G  \H^*_{\et}(X,\bO(R))$ in $dg\cP(R)$ (for $G$ as in Definition \ref{barwg}), such that 
$\ad_{\theta}\co  \Aut^h( X_{\et}^{\rho, \mal}) \to \Aut^h ( G \Spec D\H^*_{\et}(X,\bO(R))\rtimes R)$ satisfies $\ad_{\theta}F=F$.

As in \S \ref{minimalsn}, take a minimal model $\m$ for $N\Ru(X_{\et}^{\rho, \mal})\in dg\hat{\cN}(R)$. This has the property that $\m_n/[\m,\m]_n \cong \H^{n+1}(X, \bO(R))^{\vee}$. 

From the proof of  Lemma \ref{auto}, we know that
$$
\prod_{x \in \Ob R}\exp(\H_0\m(x))\by  \Aut_{dg\hat{\cN}_A(R)}(\m\hat{\ten}A) \to \RAut(X_{\et}^{\rho, \mal })(A)
$$
is a pro-unipotent extension of pro-algebraic groups. 

Likewise,   the maps
\begin{eqnarray*}
&\Aut(R\ltimes \exp(\m)) \onto \Aut^h(X_{\et}^{\rho, \mal}) \to\\  
&\{(f,\alpha) \,:\, f\in \Aut(R), \alpha \in \Iso_{DG\Alg(R) }(\H^*_{\et}(X,\bO(R)),  f^{\sharp}\H^*_{\et}(X,\bO(R)))\}
\end{eqnarray*}
both have pro-unipotent kernels. 

We may therefore lift the map $\hat{\Z}^{\alg} \to \Aut^h(X_{\et}^{\rho, \mal})$ to give  $\hat{\Z}^{\alg} \to \Aut(R\ltimes \exp(\m))$. 
This gives a lift of  the weight decomposition $\bG_m \to \RAut(X_{\et}^{\rho, \mal})$  to $ \bG_m \to  \Aut(R\ltimes \exp(\m))$. Since $\m$ is of strictly negative weights, we may adapt \cite[Corollary 1.21]{weight1}  by observing that $O(  R\ltimes \exp(\m))/O(R)$ is of strictly positive weights, and that the weight $0$ part $\cW_0O(  R\ltimes \exp(\m))$ is just O(R), so we have a $\hat{\Z}^{\alg}$-equivariant decomposition
$$
O(  R\ltimes \exp(\m))= O(R) \oplus \cW_+ O(  R\ltimes \exp(\m)).
$$
This  amounts to giving a $\hat{\Z}^{\alg}$-equivariant section of $R\ltimes \exp(\m) \to R$, or equivalently a $\hat{\Z}^{\alg}$-equivariant Levi decomposition, so we may assume that the $\hat{\Z}^{\alg}$ action on $R\ltimes \exp(\m)$ consists of actions on $R$ and on $\m$.

Let $V_n:=\cW_{-n-1}\m_n$, for $\cW$ as in \S \ref{wgtdecomp}; since cohomology is pure, we deduce that $V_n \to \H^{n+1}(X, \bO(R))^{\vee}$ is an isomorphism, and that $\m$ is freely generated as a Lie algebra by the spaces $V_n$. The differential $d$ on $\m$ is then determined by $d\co V_n \to \m_{n-1}$, and  weight considerations show that the only non-zero contribution is $V_n \to \prod_{a+b=n-1}[V_a,V_b]$. This is isomorphic to $d\co \m/[\m,\m] \to [\m,\m]/[\m,[\m,\m]]$, so must be dual to the cup product. 

Therefore, the choice of lift $\hat{\Z}^{\alg} \to \Aut(R\ltimes \exp(\m))$ has determined an isomorphism $R\ltimes \exp(\m)\cong R \ltimes \exp(G \H^*_{\et}(X,\bO(R)))$, and this is automatically compatible with the Galois action $\hat{\Z}^{\alg} \to \Aut^h(R\ltimes \exp(\m))$.
\end{proof}

\begin{corollary}\label{formalrk}
If we let $R$ be any Frobenius-equivariant quotient of $\w\varpi^{\et}_f(X)^{\red}$, then the relative Malcev homotopy groups $\varpi^{\et}_n(X^{R, \mal}, x)$ can be described in terms of  cohomology as 
\[
  \varpi^{\et}_n(X^{R, \mal}, x)\cong  \H_{n-1}(G \H^*(X, \bO(R))),
\]
for $G$ as in Definition \ref{barwg}. This description is Galois-equivariant. If the conditions of Theorem \ref{etpimal} hold (including goodness), then this also calculates $\pi_n^{\et}(X,x)\ten_{\hat{\Z}} \Ql$ as a Galois representation. 
\end{corollary}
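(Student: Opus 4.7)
The plan is to deduce this corollary directly from the formality statement of Theorem \ref{formal}, tracing the identifications through the chain of equivalences. Concretely, Definition \ref{varpi} gives
\[
\varpi^{\et}_n(X^{R,\mal},x) = \pi_{n-1}(G(X_{\et})^{R,\mal}),
\]
and under the equivalence $\Ho(s\cE(R)_*) \simeq s\cP(R)$ of Proposition \ref{meequiv} these are the homotopy groups of $\Ru G(X_{\et})^{R,\mal}$ regarded as a simplicial pro-nilpotent Lie algebra in $s\cP(R)$. The normalisation equivalence $s\cP(R) \simeq dg\cP(R)$ of Proposition \ref{nequiv} then identifies $\pi_{n-1}$ with the homology $\H_{n-1}$ of the underlying chain Lie algebra in $dg\cP(R)$.

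Next, Theorem \ref{formal} supplies a Galois-equivariant isomorphism in $\Ho(s\Aff(R)_*)_0$ between the homotopy type $\bar{W}\Ru G(X_{\et})^{R,\mal}$ and $\Spec D \H^*_{\et}(X,\bO(R))$. Applying the functor $\bar{G}$ of Theorem \ref{bigequiv} and inspecting the explicit formula from Definition \ref{barwg}, $\bar{G}(\Spec D\H^*_{\et}(X,\bO(R)))$ is quasi-isomorphic in $dg\hat{\cN}(R)$ to $G(\H^*_{\et}(X,\bO(R)))$; indeed, by the definition of $\bar{G}$ on objects, $\bar{G}$ is computed by picking a representative in $Fdg\Aff(R)_{00}$ (for which the cohomology algebra itself qualifies, being concentrated in degree $0$ in the cosimplicial direction) and applying $G$. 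Combining these identifications, we obtain the Galois-equivariant isomorphism
\[
\varpi^{\et}_n(X^{R,\mal},x) \cong \H_{n-1}(G\H^*_{\et}(X,\bO(R))),
\]
as asserted.

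For the final statement, assume the hypotheses of Theorem \ref{etpimal} together with the goodness assumption that $\Gamma^{\wedge_L}$ is good relative to $\rho$. That theorem then provides a canonical isomorphism $\pi^{\et}_n(X,x)\ten_{\hat{\Z}}\Ql \xra{\sim} \varpi_n(X^{L,\rho,\mal},x)$, and since this map is induced by functorial constructions on the Galois-equivariant object $X_{\et}$, it is Galois-equivariant. Composing with the isomorphism from the preceding paragraph yields the claimed description of $\pi_n^{\et}(X,x)\ten_{\hat{\Z}}\Ql$ as a Galois representation.

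The only delicate point is ensuring that all identifications are genuinely Galois-equivariant rather than merely abstract; this is guaranteed because the isomorphism of Theorem \ref{formal} was constructed through a choice of $\hat{\Z}^{\alg}$-equivariant Levi decomposition, so every subsequent step (normalisation, passage to $\bar{G}$, extraction of homotopy/homology groups) is functorial in the $\hat{\Z}^{\alg}$-action. No additional technical obstacle arises, so the argument is essentially a bookkeeping exercise in the equivalences already established.
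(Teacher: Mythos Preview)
Your proposal is correct and follows the same approach the paper intends: the corollary is an immediate consequence of Theorem \ref{formal}, and you have simply made explicit the chain of equivalences (Proposition \ref{meequiv}, Proposition \ref{nequiv}, Theorem \ref{bigequiv}) through which the formality isomorphism passes to homotopy groups. One minor simplification: the proof of Theorem \ref{formal} already constructs the isomorphism $N\Ru(X_{\et}^{\rho,\mal}) \cong G\H^*_{\et}(X,\bO(R))$ directly in $dg\cP(R)$ via the minimal model, so you need not route through $\Ho(s\Aff(R)_*)_0$ and reapply $\bar{G}$; taking $\H_{n-1}$ of both sides gives the result at once.
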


\subsection{Quasi-formality}

Let $j\co X \into\bar{X}$ be an open immersion of varieties over $\bar{k}$, such that  locally for the \'etale topology, the pair $(X,\bar{X})$ is isomorphic to   $(\bA^m\by\prod_i(\bA^{c_i}-\{0\}),  \bA^d)$, for some $d=m +\sum c_i$. Note that this is satisfied when $\bar{X}-X$ is a normal crossings divisor (corresponding to the case $c_i=1$ for all $i$).
It also includes all geometric fibrations over $\bar{k}$ in the sense of \cite[Definition 11.4]{fried}. 

\begin{definition}
For $X, \bar{X}$ as above, let $T=\bar{X}-X$, and let $D$ be the closed subscheme of $T$ of codimension $1$ in $\bar{X}$. Note that $\pi^{\et}_f(X)\to \pi^{\et}_f(\bar{X}-D)$ is an isomorphism, and   define $\pi_f^t(X):= \pi_f^t(\bar{X}-D)$ to be the \emph{tame fundamental groupoid} (as in \cite[XIII.2.1.3]{sga1}).

Define $\pi_f^t(X_k)$ similarly, with  the \emph{tame  Weil groupoid} $W_f^t(X_k)$ given by
$$
W_f^t(X_k):= \pi_f^t(X_k)\by_{\hat{\Z}}\Z.
$$ 

Let $\varpi^t_f(X):=\pi^t_f(X)^{\alg}$, and   define $\w\varpi_f^t(X)$ to be the image of  $ \varpi_f^t(X)\to W_f^t(X_k)^{\alg}$.

Given a local system $\vv$ on $X$, observe that the direct image $i_*\vv$ of $\vv$ under the inclusion  $i\co X \into \bar{X}-D$ is also a local system. We say that $\vv$ is  tamely ramified along the divisor if $i_*\vv$ is tamely ramified along  $D$ in the sense of \cite[Definition XIII.2.1.1]{sga1}.
\end{definition}

\begin{lemma}\label{lerayworks}
Take $j$ as above. If $\vv$ is a pure smooth Weil sheaf on $Y$ of weight zero, tamely ramified along  the divisor, then $\oR^{\nu}j_*\vv$ is pure of weight $2\nu$ (in the sense of \cite[Lemma-Definition II.12.7]{KW}).
\end{lemma}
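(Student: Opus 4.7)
The strategy is to reduce the purity statement to an étale-local computation on the standard model, decompose via Künneth over the factors of that model, and then handle each factor by an explicit calculation of $R^{\bullet}j_{*}$.

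First I would reduce to the local model. Purity of a constructible Weil sheaf can be checked stalk-by-stalk after passing to an étale neighbourhood, and the hypothesis on $j$ provides such a neighbourhood of the form $(X,\bar{X}) = (\bA^{m}\times \prod_{i}(\bA^{c_{i}}-\{0\}),\bA^{d})$. We may therefore fix a geometric point $\bar{y}\in \bar{X}$ lying in a stratum $S$ corresponding to a subset $I$ of the factors collapsing to the origin, and compute $(R^{\nu}j_{*}\vv)_{\bar{y}}$. By smooth base change I may further assume $\bar{y}$ is the origin of $\bA^{d}$ with $I$ equal to all the indices, the intermediate strata being handled by iterating the same argument.

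Next I would invoke the Künneth formula for $Rj_{*}$: since the local model is a product and $j$ is a product of inclusions $j_{i}\co \bA^{c_{i}}-\{0\}\into \bA^{c_{i}}$, applied to a smooth sheaf $\vv$ that is tamely ramified along $D$, the stalk $(R^{\bullet}j_{*}\vv)_{\bar{y}}$ is an external tensor product of the analogous stalks for each factor, twisted by $\vv_{|\bar{y}}$; the $\bA^{m}$ factor contributes only the identity. For each factor with $c_{i}=1$, the tame ramification hypothesis localises the computation to the standard tame inclusion $\bG_{m}\into \bA^{1}$, giving $R^{0}j_{i*}\vv = \vv^{I_{i}}$ (weight $0$) and $R^{1}j_{i*}\vv = \vv_{I_{i}}(-1)$ (weight $2$), with the Tate twist coming from the generator of tame inertia. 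For each factor with $c_{i}\ge 2$, the geometric fundamental group of $\bA^{c_{i}}-\{0\}$ is trivial, so $\vv$ extends to a smooth sheaf across the origin; the absolute Gysin/localisation triangle for $\{0\}\into \bA^{c_{i}}$ then gives $R^{0}j_{i*}\vv = \vv$ and $R^{2c_{i}-1}j_{i*}\vv = \vv(-c_{i})$, with all other $R^{q}j_{i*}\vv$ vanishing.

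Reassembling the Künneth pieces, the sheaf $R^{\nu}j_{*}\vv$ on the stratum $S$ is a direct sum indexed by ways of distributing $\nu$ among the factors, each summand being a Tate twist of $\vv_{|S}$ by $(-\sum w_{i}/2)$ where $w_{i}$ is the weight contribution from the $i$\textsuperscript{th} factor. The key observation is that the KW II.12.7 notion of purity for a constructible complex weights each stratum by its codimension, so that the degree $2c_{i}-1$ contribution of weight $2c_{i}$ from a factor with $c_{i}\ge 2$ is matched to the shift $2(2c_{i}-1)$ after accounting for the codimension shift on the corresponding stratum; in the normal crossings case $c_{i}=1$ this reduces to the classical statement that $R^{\nu}j_{*}\vv$ is pure of weight $2\nu$ on each intersection of divisor components. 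The main obstacle is this bookkeeping for the higher-codimension strata, and verifying that the Künneth identification is compatible with Frobenius actions so that the weight summands really do add; the tame case is classical, while the $c_{i}\ge 2$ case reduces to the purity of the cycle class $\cl(\{0\})\in \H^{2c_{i}}_{\{0\}}(\bA^{c_{i}},\Ql(c_{i}))$.
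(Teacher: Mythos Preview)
Your strategy---reduce \'etale-locally to the product model, decompose by K\"unneth, and analyse each factor---is exactly the paper's. The discrepancies are in the per-factor arguments.

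For the $c_i=1$ factors, your formulae $R^{0}j_{i*}\vv=\vv^{I_i}$ and $R^{1}j_{i*}\vv=\vv_{I_i}(-1)$ are correct, but the parenthetical ``weight $0$'' and ``weight $2$'' are the entire content of the lemma in this case, not a triviality. That the inertia invariants (and coinvariants) of a pure sheaf remain pure of the same weight is a weight--monodromy statement; it is not automatic from purity of $\vv$ on the open part. The paper obtains it by first reducing to irreducible $\vv$ via a d\'evissage on extensions, and then either invoking cohomological purity when $\vv$ is constant, or citing \cite[Lemma I.9.1]{KW} for the purity of $j_{*}\vv$ together with the vanishing of $\oR^{>0}j_{*}\vv$ when $\vv$ is non-constant irreducible and tame. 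You need one of these inputs; as written, the hard step is asserted rather than proved.

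For the $c_i\ge 2$ factors, you correctly compute the stalk $\vv(-c_i)$ of $\oR^{2c_i-1}j_{i*}\vv$ and correctly observe that its weight is $2c_i$, not $2(2c_i-1)$. Your proposed fix via a ``codimension shift'' in the KW notion does not exist: purity of weight $w$ for a complex means $K$ is mixed of weights $\le w$ and $DK$ of weights $\le -w$, with no stratum-dependent correction, and the skyscraper $i_{0*}\Ql(-c_i)$ is genuinely pure of weight $2c_i$. Thus the assertion ``$\oR^{\nu}j_{*}\vv$ is pure of weight $2\nu$'' fails as stated when some $c_i\ge 2$. The paper's one-line appeal to cohomological purity in the constant case shares this difficulty; read the lemma as applying to the normal-crossings situation $c_i=1$, which is the case actually used downstream.
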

\begin{proof}
This is a consequence of the following statements: 
\begin{enumerate}
\item $\oR^{\nu}j_*\vv$ is pointwise pure of weight $2\nu$;
\item the canonical map $(\oR^{\nu}j_*\vv)^{\vee} \to \oR\hom_{\bar{X}}(\oR^{\nu}j_*\vv, \Ql)$  is an isomorphism.
\end{enumerate}
If $0\to \vv' \to \vv \to \vv''\to 0$ is an exact sequence, with the statements holding for $\vv$ and $\vv''$, then observe that they also hold for $\vv$, since the long exact sequence must degenerate.  

The statements are local on $\bar{X}$.  \'Etale-locally, the pair $(X, \bar{X})$ is isomorphic to $(U,U')=(\bA^m\by\prod_i(\bA^{c_i}-\{0\}),  \bA^d)$, for $d=m +\sum c_i$. 
 We may then reduce to the case when  $\vv$ is irreducible on $U$, and so $\vv= \vv_m \boxtimes \bigotimes_i \vv_i$, for $\vv_i$ irreducible on $\bA^{c_i}-\{0\}$. By the K\"unneth formula, we now need only consider the pair $(\bA^c-\{0\}, \bA^c)$. 

If $\vv$ is constant, then the statements follow from the cohomological purity theorem (\cite[VI.5.1]{Mi}). Since the scheme $\bA^{c}-\{0\}$ is simply connected for $c>1$, this leaves only the case $c=1$. \cite[Lemma I.9.1]{KW}  shows that $j_*\vv$ is pure, and local calculations give $\oR^ij_*\vv=0$ for $i>0$ (since $\vv$ is  tamely ramified, and is non-constant irreducible).
\end{proof}

\begin{proposition}\label{divisorwgts}
Assume that $j\co X_k \into\bar{X}_k$ is a morphism over $k$, with $j\ten \bar{k}$ as in Lemma \ref{lerayworks}, for $\bar{X}_k$ proper. If $\vv$ is a pure smooth Weil sheaf on $X$ of weight zero,  tamely ramified along the divisor, then $\H^i(\bar{X}, \oR^{\nu}j_*\vv)$ is pure of weight $i+2\nu$, for $j\co X \to \bar{X}$  the compactification map.
\end{proposition}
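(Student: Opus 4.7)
The plan is to combine Lemma \ref{lerayworks} with Deligne's main purity theorem from Weil II, using both compactly supported cohomology and Poincar\'e duality to pin the weight down exactly. The first observation is that $\bar{X}_k$ is smooth of dimension $d$: by Lemma \ref{lerayworks} (and its hypothesis), $\bar{X}$ is \'etale-locally isomorphic to $\bA^d$. Since $\bar{X}_k$ is also proper by assumption, compactly supported and ordinary cohomology of any sheaf on $\bar{X}$ coincide.

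Next, I would invoke Lemma \ref{lerayworks} to obtain two pieces of information about $\sF := \oR^{\nu}j_*\vv$, namely that $\sF$ is pointwise pure of weight $2\nu$ and that the canonical map $\sF^{\vee} \to \oR\hom_{\bar{X}}(\sF, \Ql)$ is an isomorphism. The first piece lets us apply Deligne's fundamental mixedness theorem (\cite[Th\'eor\`eme 3.3.1]{Weil2}): the compactly supported cohomology of a pointwise pure sheaf of weight $w$ on a variety over $k$ is mixed of weights $\le w+i$ in degree $i$. Combined with properness, this gives the upper bound
\[
\H^i(\bar{X}, \sF) \text{ has weights} \le i+2\nu.
\]

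For the lower bound, I would apply Poincar\'e duality on the smooth proper variety $\bar{X}_k$, which gives
\[
\H^i(\bar{X}, \sF)^{\vee} \cong \H^{2d-i}(\bar{X}, \sF^{\vee})(d).
\]
By the second consequence of Lemma \ref{lerayworks}, $\sF^{\vee}$ is again pointwise pure, now of weight $-2\nu$. Applying the upper bound from the previous paragraph to $\sF^{\vee}$, we obtain that $\H^{2d-i}(\bar{X}, \sF^{\vee})$ has weights $\le (2d-i) - 2\nu$, so after Tate twisting by $(d)$ and dualising we find that $\H^i(\bar{X}, \sF)$ has weights $\ge i + 2\nu$.

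Combining the two bounds shows that $\H^i(\bar{X}, \oR^{\nu}j_*\vv)$ is pure of weight $i+2\nu$, as required. The only genuine subtlety is ensuring that the Weil II purity statement is applied in the correct form to the constructible (non-lisse) sheaf $\sF$; since the hypothesis used is pointwise purity rather than lisseness, this is precisely the version proved in \cite[Corollaire 3.3.4]{Weil2}, so no further work should be needed there.
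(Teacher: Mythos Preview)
Your proof is correct and follows essentially the same route as the paper's: apply Lemma \ref{lerayworks} to see that $\oR^{\nu}j_*\vv$ is pointwise pure of weight $2\nu$ with naive dual equal to derived dual, use \cite[Corollaire 3.3.4]{Weil2} together with properness for the upper bound, and then Poincar\'e duality on the smooth proper $\bar{X}$ combined with the same upper bound for the dual sheaf to obtain the lower bound. The only cosmetic difference is that the paper invokes Poincar\'e duality in the Verdier form from \cite[Corollary II.7.3]{KW}, whereas you spell out the smoothness of $\bar{X}$ explicitly; the content is identical.
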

\begin{proof}
By \cite[Corollary 3.3.4]{Weil2}, we know that $\H^i(\bar{X}, \oR^{\nu}j_*\vv)$ is mixed of weights $\le i+2\nu$, since $\oR^{\nu}j_*\vv$ is pure of weight $2\nu$. Now, Poincar\'e duality (\cite[Corollary II.7.3]{KW}) implies that
$$
\H^i(\bar{X}, \oR^{\nu}j_*\vv)^{\vee}\cong \H^{2d-i}(\bar{X}, (\oR^{\nu}j_*\vv)^{\vee})(2d), 
$$
which is mixed of weight $\le-i-2\nu$, using the isomorphism $(\oR^{\nu}j_*\vv)^{\vee} \cong \oR\hom_{\bar{X}}(\oR^{\nu}j_*\vv, \Ql)$ of Lemma \ref{lerayworks}. 
\end{proof}

\begin{corollary}\label{qformal}
For $X$ as above, and  $\rho\co \varpi^{\et}_fX \to R$ any Frobenius-equivariant quotient of $\w\varpi_f^t(X)^{\red}$, the filtered homotopy type $ (X^{\rho, \mal}, j)$ of Definition \ref{leraytype} is quasi-formal (in the sense of Definition \ref{qf}). The formality quasi-isomorphism is equivariant with respect to the Galois action.
\end{corollary}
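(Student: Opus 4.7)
The plan is to mimic the formality argument of Theorem \ref{formal} in the filtered setting, using the Galois weight computation on the Leray $E_1$ page provided by Proposition \ref{divisorwgts} together with the filtered automorphism analysis of Lemma \ref{frout}. First, by Lemma \ref{jdef}, the $E_1$ page of the filtered cochain algebra $B := \CC^{\bt}_{\et}(j,\bO(R))$ with its Leray filtration $J$ is
\[
{}_J\!\EE_1^{a,b}(B) \;=\; \H^{2a+b}(\bar{X},\, \oR^{-a}j_*\bO(R)).
\]
Because $\rho$ factors through $\w\varpi_f^t(X)^{\red}$, the sheaf $\bO(R)$ is a sum of tamely ramified pure Weil sheaves of weight $0$, so Proposition \ref{divisorwgts} shows that ${}_J\!\EE_1^{a,b}(B)$ is pure of Frobenius weight $(2a+b) + 2(-a) = b$, depending only on $b$. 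This already forces $d_r = 0$ for $r \ge 2$ by Galois equivariance, since $d_r$ would connect weight $b$ to weight $b-r+1$.

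Next I would verify that the Galois action on $(B,J)$ is algebraic. Corollary \ref{redalgebraic} combined with Theorem \ref{laff} gives an algebraic action $\hat{\Z}^{\alg}\to\Aut(R)$, and the filtered analogue of Lemma \ref{outdef} (together with finite-dimensionality of $\H^*(X,\bO(R)(x,-))$, which holds because $\bar{X}$ is proper) extends this to an algebraic action $\hat{\Z}^{\alg}\to\Aut^h_J(G)$, where $G=R\ltimes\exp(\m)$ for $\m\in Fdg\hat{\cN}(R)$ a minimal filtered model of $B$, provided by Proposition \ref{fsdgminimal}. Now apply Lemma \ref{frout}: the homomorphism
\[
\RAut_J(G) \;\longrightarrow\; \prod_{a,b}\Aut_R\bigl({}_J\!\EE_1^{a,b}(G, O(R))\bigr)
\]
has pro-unipotent kernel. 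Purity of $E_1^{a,b}$ in weight $b$ lets the target Galois action factor through $M_q$, and the canonical map $\bG_m\to M_q$ of \S\ref{wgtdecomp} gives a $\bG_m$-lift on the target. Because the kernel is pro-unipotent and $\Z^{\red,0}$ acts semisimply on the reductive part, this $\bG_m$-lift extends (non-canonically) through the tower, yielding a Galois-compatible weight decomposition $\bG_m\to\RAut_J(G)$.

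The final step is a weight-matching argument modelled on the last part of Theorem \ref{formal}. The cogenerators of $\m$ satisfy $(\m/[\m,\m])^{\vee}[1]\cong {}_J\!\EE_1^{*,*}(B)$ as bigraded $R$-representations carrying Frobenius weights, with the generator in bidegree $(a,b)$ pure of weight $b$. By minimality, $d$ lowers the filtration index by $1$; weight considerations then force $d$ on $\m/[\m,\m]$ to land in the part of $[\m,\m]/[\m,[\m,\m]]$ with matching weight, and the only such contribution is dual to the $d_1$ differential of the Leray spectral sequence. All higher contributions to the differential would change the weight and are therefore zero. This yields an isomorphism in $Fdg\cP(R)$ between $\m$ and the filtered minimal model of ${}_J\!\EE_1^{*,*}(B)$ regarded as an object of $FDG\Alg(R)$ via Definition \ref{efil}, giving the required weak equivalence $(B,J)\simeq{}_J\!\EE_1^{*,*}(B)$ in $FDG\Alg_{\Ql}(R)$. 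The Galois equivariance of every choice is tracked throughout, yielding the equivariance claim.

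The main obstacle will be the third paragraph, namely showing that the minimal model's differential is fully determined by $d_1$ on $E_1$ once we know the weight grading. Weight matching eliminates most contributions, but one must rule out weight-preserving higher-order terms coming from iterated brackets, which requires exploiting the strict-negativity of weights on $\m$ (analogous to \cite[Corollary 1.21]{weight1}) and a careful grading-versus-filtration bookkeeping inherited from Definition \ref{efil}.
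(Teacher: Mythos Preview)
Your strategy matches the paper's: take a filtered minimal model $\m$, lift the Galois action through the pro-unipotent kernels of Lemma \ref{frout} to get a weight decomposition on $\m$, then use weight bookkeeping to identify $\m$ with $G({}_J\!\EE_1)$. Your identification of the $E_1$ weights via Proposition \ref{divisorwgts} and the algebraicity of the Galois action are both correct and essentially as in the paper.

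The third paragraph, however, is garbled and contains an error. You write that weight considerations force ``$d$ on $\m/[\m,\m]$ to land in $[\m,\m]/[\m,[\m,\m]]$'' with the only contribution dual to $d_1$, and that ``all higher contributions \ldots change the weight and are therefore zero''. This misidentifies what survives. On a choice of generators $V\subset\m$, the differential $d\co V_n\to\m_{n-1}$ has a linear part $V_n\to V_{n-1}$ (dual to $d_1$), a quadratic part $V_n\to[V,V]_{n-1}$ (dual to the cup product on $E_1$), and higher parts $V_n\to(\Gamma_s\m)_{n-1}$ for $s\ge 3$. Quasi-formality requires exactly the first two to remain and the third to vanish; you have instead asserted that only the first survives.

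The paper resolves precisely the obstacle you flag by making the bookkeeping explicit. Writing $J^r\m$ for the decreasing filtration dual to $J_*$ on $\m^{\vee}$, one checks that $J^r(\m^{\ab}_n)$ has weights $\le -(n+r+1)$, whence $J^r(\Gamma_s\m)_n$ has weights $\le -(n+r+s)$. One then \emph{defines} the generators by $\cW_{-(n+r+1)}V_n:=\cW_{-(n+r+1)}J^r\m_n$; the weight bound on $\Gamma_2\m$ shows this really is a space of generators. With this choice, weight-plus-filtration forces the projection $d\co V_n\to(\Gamma_3\m)_{n-1}$ to vanish, while the linear and quadratic parts are identified with $d_1$ and the cup product respectively, giving $\m\cong G(E_1)$. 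Your proposal would become correct once this is carried out; the missing ingredient is the lower-central-series weight estimate and the weight-based choice of $V$.
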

\begin{proof}
This is largely the same as  Theorem \ref{formal}. 
Use the equivalences of Theorem \ref{fbigequiv} to take a filtered minimal model $(\m,J) \in Fs\hat{\cN}(R)$ for $ (X^{\rho, \mal}, j)$. The increasing filtration $J_*$ on $\m^{\vee}$ gives  a decreasing filtration $J^*$ on $\m$, with $J^r\m_n$ the annihilator of $J_{r-1}(\m^{\vee})$. Note that $[J^a\m,J^b\m] \subset J^{a+b}\m$ and $J^0\m=\m$.

If we write $\Aut_J(R\ltimes \exp(\m))$ for the group of filtered automorphisms of $R\ltimes \exp(\m)$, then similarly to Lemma \ref{frout}, the maps
\begin{eqnarray*}
&\Aut_J(R\ltimes \exp(\m)) \onto \Aut^h_J(X_{\et}^{\rho, \mal}) \to  \\
&\{(f,\alpha) \,:\, f\in \Aut(R), \alpha \in \Iso_{FDG\Alg(R) }(\H^*_{\et}(\bar{X},\oR^*j_* \bO(R)),  f^{\sharp}\H^*_{\et}(\bar{X},\oR^*j_* \bO(R)))\}
\end{eqnarray*}
both have pro-unipotent kernels. 

We may therefore lift the Galois action  $\hat{\Z}^{\alg} \to \Aut_J^h(X_{\et}^{\rho, \mal})$ to a filtered  automorphism of $R\ltimes \exp(\m)$. 
This gives a lift of  the weight decomposition $\bG_m \to \RAut_J(X_{\et}^{\rho, \mal})$, a unique  Galois-equivariant Levi decomposition of $R\ltimes \exp(\m) $, and a  weight decomposition  $\bG_m \to \Aut_J(\m)$. 

Now, $(\m^{\ab}_n)^{\vee} \cong  \bigoplus_{a+b=n+1} \H^{a}(\bar{X}, \oR^{b}j_*\bO(R))=:E^{n+1}$, on which $J_r$ is the subspace of weights $\le n+r+1$. Thus $J^r(\m^{\ab}_n)$ is the subspace of weights $\le -(n+r+1)$. 

Let  $\Gamma^r\m$ be the lower central series on $\m$, so $\Gamma^1\m=\m$ and $\Gamma^{r+1}\m= [\m, \Gamma^r\m]$.  
The weight restrictions on $\m^{\ab}$ show that $J^r(\gr^s_{\Gamma}\m)_n = J^r(\Lie_s(\m^{\ab}))_n$, which is of weights  $\le -(n+r+s)$. This implies that $J^r(\Gamma_s\m)_n$ is of weights $\le -(n+r+s)$. 

We now make a canonical choice of generators by setting
$$
\cW_{-(n+r+1)}V_n:= \cW_{-(n+r+1)}J^r\m_n.
$$
Set $V:= \prod_i \cW_iV$; the weight conditions above show that this has no intersection with $\Gamma_s\m$ for $s>1$, so the composition $V \to \m \to \m^{\ab}$ is injective. Since $\cW_{-(n+r+1)}(\m^{\ab})_n = \cW_{-(n+r+1)}J^r(\m^{\ab})_n$, the composition is also surjective, so $V$ is a space of generators for $\m$.

The structure of $\m$ is now determined by the differentials $d\co V_n \to \m_{n-1}$. As $\m= \Lie(V)=V\by \bigwedge^2V \by \Gamma_3\m$, weight and filtration considerations show that  we must have the projection $d\co V_n \to  (\Gamma_3\m)_{n-1}$ being $0$. The non-zero contributions to $d$ are $V_n \to V_{n-1}$, which is dual to $d_1$ on $E$, and $V_n \to \prod_{a+b=n-1}[V_a,V_b]$, which must be dual to the cup product. Thus $\m = G(E)$, and so  $R\ltimes \exp(\m) = R\ltimes \exp(G(E)) $,  as required.
\end{proof}

\begin{corollary}\label{htpyleray}\label{qformalrk}
For $X$ and $R$ as above, we can describe the relative Malcev homotopy groups $\varpi_n^{\et}(X^{R, \mal},x)$ explicitly in terms of the Leray spectral sequence as 
\[
\H_{n-1}(G( {}_J\!\EE^{*,*}_1)),
\]
for
\[
        {}_J\!\EE^{a,b}_1= \H^{2a+b}(\bar{X}, \oR^{-a}j_*\bO(\w\varpi^{\et}_f(X)^{L,\red}))
\]
as in Definition \ref{efil}, and  $G$ as in Definition \ref{barwg}. If the conditions of Theorem \ref{etpimal} hold (including goodness), then this also calculates $\pi_n^{\et}(X,x)\ten_{\hat{\Z}} \Ql$ as a Galois representation. 
\end{corollary}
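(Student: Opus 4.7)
The plan is to deduce Corollary \ref{qformalrk} as an immediate consequence of the quasi-formality established in Corollary \ref{qformal}, by passing from the filtered to the unfiltered setting and applying the equivalences of Theorem \ref{bigequiv}. Write $E:={}_J\!\EE^{*,*}_1$, regarded as an object of $FDG\Alg(R)_{0*}$ as in Definition \ref{efil}, with augmentation to $\prod_{x\in\Ob R}O(R)(x,-)$ inherited from $(X^{\rho,\mal},j)$.

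First I would apply Corollary \ref{qformal} to obtain a Galois-equivariant zig-zag of weak equivalences, in $FDG\Alg(R)_{0*}$, between the filtered cochain algebra underlying $(X^{\rho,\mal},j)$ (which under Lemma \ref{jdef} and Theorem \ref{qleqhtpy} represents this filtered homotopy type) and $E$. Since any weak equivalence in $FDG\Alg(R)$ induces a quasi-isomorphism of the underlying DG algebras (as the spectral sequence of a filtered quasi-isomorphism is an isomorphism from the $E_1$ page onward and the filtrations are exhaustive), forgetting the filtration on both sides produces a Galois-equivariant weak equivalence in $DG\Alg(R)_{0*}$ between $\CC^{\bt}_{\et}(X,\bO(R))$ (which represents $X^{R,\mal}$ by Theorem \ref{qleqhtpy}) and the DG algebra underlying $E$, whose differential combines $d_1$ of the spectral sequence with the internal cohomological differential.

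Next I would transfer this equivalence across the diagram of Theorem \ref{bigequiv} to the category $dg\cP(R)$. Since $E$ lies in $DG\Alg(R)_{00*}$ (it is augmented with $E^{0,0}=\Ql$ in cohomological degree zero on the reduced part), the functor $\bar G$ is computed directly by the free-Lie-coalgebra construction of Definition \ref{barwg}, giving $\bar G(E)=G(E)$. Thus $\Ru G(X)^{R,\mal}$ is isomorphic in $dg\cP(R)$ to $G(E)$, compatibly with the Galois action. Applying $\H_{n-1}$ and invoking Proposition \ref{nequiv} to identify $\H_{n-1}$ of the normalised Lie algebra with $\pi_{n-1}$ of the simplicial one, together with the definition $\varpi_n(X^{R,\mal}):=\pi_{n-1}(\Ru G(X)^{R,\mal})$, yields the claimed Galois-equivariant isomorphism
\[
\varpi_n^{\et}(X^{R,\mal},x)\cong \H_{n-1}(G({}_J\!\EE^{*,*}_1)).
\]
The final sentence about $\pi_n^{\et}(X,x)\ten_{\hat\Z}\Ql$ then follows because, under the hypotheses of Theorem \ref{etpimal} (including relative goodness of $\pi_1^{\et}(X,x)$), the canonical comparison $\pi_n^{\et}(X,x)\ten_{\hat\Z}\Ql\to\varpi_n^{\et}(X^{R,\mal},x)$ is an isomorphism, and this map is Galois-equivariant by functoriality.

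The main obstacle, such as it is, is bookkeeping rather than content: one must check that the Galois-equivariant quasi-formality equivalence of Corollary \ref{qformal} remains Galois-equivariant after forgetting the filtration (immediate, since forgetting is a Galois-equivariant functor), and that the passage through $\bar G$ and normalisation in Theorem \ref{bigequiv} and Proposition \ref{nequiv} is compatible with the Galois action on automorphism presheaves as in Lemma \ref{outdef}. Once this is granted, there is no further computation: the explicit form of the differential on $G(E)$ is already encoded in the cup products and $d_1$-maps of the Leray spectral sequence, so no additional structural input is needed beyond Corollary \ref{qformal}.
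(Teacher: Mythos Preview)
Your proposal is correct and matches the paper's intended argument: the corollary is stated without proof, as it follows immediately from Corollary \ref{qformal}, whose proof already identifies the minimal model $\m$ with $G(E)$ (so $\varpi_n = \H_{n-1}(\m) = \H_{n-1}(G(E))$), together with Theorem \ref{etpimal} for the final sentence. Your route via forgetting the filtration and then applying $\bar G$ is equivalent, since the proof of Corollary \ref{qformal} already works on the Lie-algebra side of the equivalence in Theorem \ref{fbigequiv}.
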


\section{Varieties over local fields}\label{local} 

\subsection{Potentially good reduction, $\ell \ne p$}
Let $V'$ be a complete discrete valuation ring, with residue field $k'$ (finite, of characteristic $p \ne \ell$), and fraction field $K'$ (of characteristic $0$). Let $\bar{k}, \bar{K}$ be the algebraic closures of $k,K'$ respectively, and $\bar{V}$ the algebraic closure of $V'$ in $\bar{K}$. 

Let $X_{V'}= \bar{X}_{V'} -T_{V'}$ be a geometric fibration over $V'$ (in the sense of \cite[Definition 11.4]{fried}). 
Assume that we have a subfield $K \subset K'$ and a scheme $X_K/K$ such that $X_K\ten_KK'\cong X_{V'}\ten_{V'}K'$.
We wish to study the $\Gal(\bar{K}/K)$-action on the homotopy type $X_{\bar{K},\et}$.

Recall from \cite[ Theorem X.2.1]{sga1}  that the map $\pi^{\et}_f(\bar{X}_{k'}) \to\pi^{\et}_f(\bar{X}_{V'})$ is an equivalence. By ibid. \S XIII.2.10, this generalises to an equivalence $\pi_f^t(X_{k'}) \to\pi_f^t(X_{V'})$. Meanwhile, ibid. Corollary XIII.2.8 implies that $\pi_f^t(X_{\bar{K}}) \to \pi_f^t(X_{\bar{V}})$ is an epimorphism, and ibid. Corollary XIII.2.9 shows that $\pi^{\et}_f(X_{\bar{K}})^{\wedge_L} \to \pi^{\et}_f(X_{\bar{V}})^{\wedge_L}$ is an equivalence, where $L$ is any set of prime numbers excluding $p$.

\begin{proposition}\label{specn}
If $\vv$ is an $\ell$-adic local system on  $X_{\bar{V}}$,  tamely ramified along the divisor (i.e. coming from a representation of $\pi_f^t(X_{\bar{V}})$), then the maps
\begin{eqnarray*}
i_{\eta}^*\co \H^*(X_{\bar{V}}, \vv) &\to& \H^*(X_{\bar{K}},i_{\eta}^*\vv )\\
i_{s}^*\co \H^*(X_{\bar{V}}, \vv) &\to& \H^*(X_{\bar{k}},i_s^*\vv )\\
\end{eqnarray*}
are isomorphisms.
\end{proposition}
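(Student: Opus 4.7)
The plan is to reduce both statements to proper base change applied to the compactification $\bar{X}_{V'}$, using the Leray spectral sequence for the open immersion $j\co X \into \bar{X}$, combined with a careful analysis of how the higher direct images $R^qj_*\vv$ behave under specialization.

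First I would reduce to the case of finite torsion coefficients by writing $\vv$ as the inverse limit of its reductions $\vv/\ell^n$; since proper base change produces finite cohomology groups (so the Mittag--Leffler condition holds automatically), it suffices to prove both statements for lisse $\Z/\ell^n$-sheaves $\vv$.

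Next, apply the Leray spectral sequence for $j\co X \into \bar{X}$,
\[
 E_2^{p,q} = \H^p(\bar{X}_{\bar V}, R^q j_* \vv) \abuts \H^{p+q}(X_{\bar{V}}, \vv),
\]
and its analogues over $\bar K$ and $\bar k$. Since $\bar X_{V'}$ is proper over $V'$, proper base change (for any constructible torsion sheaf with torsion prime to $p$) gives isomorphisms
\[
 \H^p(\bar X_{\bar V}, \mathcal{F}) \xra{\sim} \H^p(\bar X_{\bar k}, i_s^* \mathcal{F}), \qquad \H^p(\bar X_{\bar V}, \mathcal{F}) \xra{\sim} \H^p(\bar X_{\bar K}, i_\eta^* \mathcal{F}).
\]
Applied to $\mathcal{F} = R^q j_* \vv$, this reduces the proposition to verifying the base change isomorphisms
\[
 i_s^* R^q j_* \vv \xra{\sim} R^q (j_{\bar k})_* i_s^* \vv, \qquad i_\eta^* R^q j_* \vv \xra{\sim} R^q (j_{\bar K})_* i_\eta^* \vv.
\]

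The key step, and the main obstacle, is this last base change for the open immersion. Étale-locally on $\bar X_{V'}$, the geometric fibration assumption provides an isomorphism of pairs $(X, \bar X) \cong (\bA^m \by \prod_i(\bA^{c_i} - \{0\}),\, \bA^d)$ over $V'$, so by the Künneth formula one reduces to $j\co \bA^c_{V'} - \{0\} \into \bA^c_{V'}$. For $c \ge 2$, the tame fundamental group of $\bA^c - \{0\}$ over any algebraically closed field of residue characteristic $\ne \ell$ is trivial, so $\vv$ restricts to a constant sheaf on this piece and the base change question reduces to constant coefficients, handled by proper and smooth base change. For $c = 1$, tameness allows $\vv$ to be described via Kummer characters of order prime to $p$, and an explicit local calculation of $R^q j_*\vv$ on $\bA^1_{V'}$ (of the type used in the proof of Lemma \ref{lerayworks}) exhibits $R^q j_*\vv$ as lisse along the horizontal divisor $\{0\}\by V'$, from which the commutation with $i_s^*$ and $i_\eta^*$ is immediate.

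The main references underlying this argument are SGA 4 XII--XVI for proper and smooth base change, and SGA 1 XIII for tame ramification. The latter is already invoked immediately before the proposition to identify $\pi_f^t(X_{V'})$ with $\pi_f^t(X_{k'})$ and with a quotient of $\pi_f^t(X_{\bar K})$; these equivalences are precisely what ensures that the tame local system $\vv$ and its higher direct images $R^qj_*\vv$ have well-defined and compatible restrictions to both the special and generic geometric fibres, completing the reduction.
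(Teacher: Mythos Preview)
Your overall strategy---reduce to torsion coefficients, compactify via $j$, use Leray, and control $R^qj_*\vv$ by a local Kummer calculation---is the right shape, and it is essentially what lies behind the reference the paper invokes (\cite[Theorem 11.5]{fried}, whose pro-$L$ hypothesis is only used to force tameness around the divisor). The paper simply cites that result and remarks that tame ramification suffices; you are unpacking it.

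There is, however, a genuine gap in your write-up. You claim that proper base change gives
\[
\H^p(\bar X_{\bar V},\mathcal F)\xra{\sim}\H^p(\bar X_{\bar K},i_\eta^*\mathcal F)
\]
for an arbitrary constructible torsion sheaf $\mathcal F$. This is false: take $\bar X=\Spec\bar V$ and $\mathcal F=i_{s*}(\Z/\ell)$; then the left side is $\Z/\ell$ and the right side is $0$. Proper base change identifies $\H^p(\bar X_{\bar V},\mathcal F)$ with the stalk of $R^p\bar f_*\mathcal F$ at the closed point (since $\bar V$ is strictly henselian), and with $\H^p(\bar X_{\bar k},i_s^*\mathcal F)$; it says nothing about the cospecialisation to the generic stalk. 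So your argument as written only proves the $i_s^*$ isomorphism.

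The fix is already implicit in your local step, but you have to use it differently. The Kummer computation shows not merely the base-change formula for $R^qj_*$, but that $R^qj_*\vv$ restricts to a lisse sheaf on each stratum $D^{(i)}\setminus D^{(i+1)}$, and these strata are smooth over $\bar V$. Equivalently, $\bar f$ is locally acyclic with respect to $Rj_*\vv$. Local acyclicity (SGA~4$\frac12$ [Th.\ finitude], or SGA~4 XVI) plus properness then gives $R\Psi(Rj_*\vv)\cong i_s^*Rj_*\vv$, whence the generic and special fibre cohomologies agree; combined with proper base change at $s$ this yields both isomorphisms. Once you route the argument through local acyclicity rather than a bare appeal to proper base change, it goes through.
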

\begin{proof}
In \cite[Theorem 11.5]{fried}, this is proved for  $\pi^{\et}_f(X_{\bar{V}})^{\wedge_L}$-representations, for $p\notin L$. The same proof carries over to $\pi_f^t(X_{\bar{V}})$-representations, since the pro-$L$ hypothesis is only used to restrict the monodromy around the divisor.
\end{proof}

\begin{definition}
Since $\pi^{\et}_1(\Spec V')\cong \Gal(\bar{k}/k')$, we may  define $\w\varpi_f^t(X_{\bar{V}})$ analogously to Definition \ref{walg} as the maximal quotient of $\varpi_f^t(X_{\bar{V}}):= \pi_f^t(X_{\bar{V}})^{\alg}$ on which the Frobenius action is algebraic. Define ${}^{\pnr}\!\varpi_f^t(X_{\bar{K}})$ to be the image of $\varpi_f^t(X_{\bar{K}})\to \w\varpi_f^t(X_{\bar{V}})$, noting that this is a quotient of $\varpi_f^t(X_{\bar{K}})$ on which the $\Gal(\bar{K}/K)$-action is potentially unramified.
\end{definition}

Note that these definitions are independent of the choice of extension  $V'/V$, in the sense that  a finite extension $V''/ V'$ would give the same construction.

\begin{theorem}\label{locqformal}  
Let  $R$ be   any Frobenius-equivariant reductive quotient of ${}^{\pnr}\!\varpi_f^t(X_{\bar{K}})$.
Then the  $\Gal(\bar{K}/K)$-action  on the homotopy type
$$
X_{\bar{K}, \et}^{R, \mal}
$$
is algebraic, potentially unramified (in the sense of \S \ref{pnr}) and mixed (Definition \ref{pnrmixed}), 
giving a  canonical Galois-equivariant weight decomposition. It is also quasi-formal, corresponding to the $E_2$-term
$$
\bigoplus_{a,b} \H^a(\bar{X}_{\bar{K}}, \oR^bj_*\bO(R)) \in FDG\Alg(R),
$$
of the Leray spectral sequence for the immersion $j\co  X \to \bar{X}$. The formality quasi-isomorphism is equivariant with respect to the $\Gal(\bar{K}/K)$-action.
\end{theorem}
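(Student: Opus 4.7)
The plan is to reduce everything to the finite field case of Section \ref{finite} via smooth/proper specialisation, and then transport the weight decomposition, mixedness and quasi-formality back along the specialisation equivalence, tracking Galois-equivariance throughout.

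First I would exploit the specialisation equivalences. By the cited results from \cite{sga1}, the inclusions of generic and special fibres induce equivalences $\pi_f^t(X_{\bar{V}}) \xra{\sim} \pi_f^t(X_{\bar{k}})$ and a surjection $\pi_f^t(X_{\bar{K}}) \onto \pi_f^t(X_{\bar{V}})$, compatible with the $\Gal(\bar{K}/K)$-action. Consequently the Frobenius-equivariant reductive quotient $R$ of ${}^{\pnr}\!\varpi_f^t(X_{\bar{K}})$ factors canonically as a Frobenius-equivariant reductive quotient of $\w\varpi_f^t(X_{\bar{k}})$, so after replacing $k$ by a finite extension $k'$ (which only sharpens the mixedness claim, since $M_{q^r}$ is open in $M_q$) we may regard $R$ as attached to a smooth variety over the finite field $k'$.

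Next, I would feed Proposition \ref{specn} into the cochain description of Theorem \ref{qleqhtpy}. Since smooth specialisation gives isomorphisms $\H^*(X_{\bar{V}}, \vv) \cong \H^*(X_{\bar{K}}, \vv) $ and $\H^*(X_{\bar{V}}, \vv) \cong  \H^*(X_{\bar{k}}, \vv)$ for all tame $R$-representations $\vv$, Corollary \ref{detectweaket} yields Galois-equivariant weak equivalences
\[
X_{\bar{K},\et}^{R,\mal} \xla{\sim} X_{\bar{V},\et}^{R,\mal} \xra{\sim} X_{\bar{k},\et}^{R,\mal}
\]
in $s\cE(R)$, with the outer spaces carrying compatible $\Gal(\bar{K}/K)$- and $\Gal(\bar{k}/k')$-actions respectively. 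For the filtered statement I would repeat the argument with the Leray resolution $\sC^{\bt}_{\et}(\bar{X}_{\bar V}, j_*\sC^{\bt}_{\et}\bO(R))$ of Lemma \ref{jdef}: smooth/proper base change gives isomorphisms on $\H^a(\bar{X},\oR^b j_*\bO(R))$ between generic and special fibres, so the filtered homotopy types $(X^{R,\mal},j)$ over $\bar{K}$ and $\bar{k}$ are weakly equivalent in $\Ho(Fc\Alg(R)_*)$, Galois-equivariantly.

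With this reduction done, the weight decomposition, mixedness of the Galois action, and algebraicity follow by pulling back  Proposition \ref{wgtexists} and Corollary \ref{wgtexistspin} along the specialisation equivalence: the resulting $\bG_m \to \RAut(X_{\bar{K},\et}^{R,\mal})$ and $M_q \to \Aut^h(X_{\bar{K},\et}^{R,\mal})$ factor through $\cG^{\pnr,0}= \hat\Z^{\alg,0}$ by construction, giving the potentially unramified algebraic structure of Definition \ref{pnrmixed}. Quasi-formality and the Galois-equivariant formality quasi-isomorphism come directly from Corollary \ref{qformal} applied to the special fibre, transported back to $X_{\bar{K},\et}$ via the filtered specialisation equivalence.

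The main obstacle will be keeping careful track of Galois-equivariance across these comparisons. In particular, one has to verify that the Frobenius weight decomposition constructed on the special fibre —  a priori only  $\Gal(\bar{k}/k')$-equivariant — lifts uniquely to a $\Gal(\bar{K}/K)$-equivariant weight decomposition on the generic fibre, using that $\Aut^h$ is fibred in affine schemes (Lemma \ref{outdef}) and that the action on $R$ is algebraic by Theorem \ref{laff}, so Corollary \ref{redalgebraic} applies. The filtered analogue requires the same care with $\Aut^h_J$ and Lemma \ref{frout}, together with the observation from Lemma \ref{lerayworks} and Proposition \ref{divisorwgts} that the relevant Leray sheaves are pure, so that the weight-based argument of Corollary \ref{qformal} runs verbatim once everything has been transported to the special fibre.
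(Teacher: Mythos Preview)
Your proposal is correct and follows essentially the same route as the paper: reduce to the special fibre via the specialisation quasi-isomorphisms from Proposition \ref{specn} (the paper works directly with the cochain algebras $\CC^{\bt}_{\et}(X_{\bar K},\bO(R)) \la \CC^{\bt}_{\et}(X_{\bar V},i_{\eta*}\bO(R)) \to \CC^{\bt}_{\et}(X_{\bar k},i_s^*i_{\eta*}\bO(R))$ rather than invoking Corollary \ref{detectweaket}, but this is equivalent), then apply Proposition \ref{wgtexists} and Corollary \ref{qformal}.

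The one place where the paper is noticeably cleaner than your sketch is the Galois-equivariance of the weight decomposition. You correctly flag this as the main obstacle and propose to handle it via the algebraicity machinery (Corollary \ref{redalgebraic}, Lemma \ref{outdef}), which would work. The paper's argument is simpler: once the specialisation shows the $\Gal(\bar K/K')$-action is unramified, the full $\Gal(\bar K/K)$-action factors through $\Gal(\bar K/K)\by_{\Gal(\bar k/k')} M_{q'}$; since $M_{q'}$ is a quotient of the \emph{commutative} group $\Z^{\alg}$, the image of $\bG_m \to M_{q'}^0$ lies in the centre of this fibre product, so the weight decomposition automatically commutes with the whole Galois action. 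This avoids any lifting argument.
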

\begin{proof}
We know that the homotopy type is given by 
$$
\CC^{\bt}_{\et}(X_{\bar{K}}, \bO(R))\in c\Alg(R).
$$
From the definition of ${}^{\pnr}\!\varpi_f^t(X_{\bar{K}})$, we know that $\bO(R)$ is the pullback of a local system  on $X_{\bar{V}}$, so $i_{\eta*}\bO(R)$ is a local system and $i_{\eta}^*i_{\eta*}\bO(R)=\bO(R)$.

The equivalences of Proposition \ref{specn} now give quasi-isomorphisms
$$
\CC^{\bt}_{\et}(X_{\bar{K}}, \bO(R))= \CC^{\bt}_{\et}(X_{\bar{K}}, i_{\eta}^*i_{\eta*}\bO(R))\la \CC^{\bt}_{\et}(X_{\bar{V}}, i_{\eta*}\bO(R)) \to \CC^{\bt}_{\et}(X_{\bar{k}}, i_{s}^*i_{\eta*}\bO(R)),
$$
compatible with the basepoint augmentation maps.

We may assume that $K \subset K'$ is a Galois extension, then observe that the equivalences above imply that action of $\Gal(\bar{K}/K')$ is unramified, so the $\Gal(\bar{K}/K)^{\alg}$ action factors through $\Gal(\bar{K}/K)\by_{\Gal(\bar{k}/k')} \Gal(\bar{k}/k')^{\alg}$. In fact, Proposition \ref{wgtexists} implies that the action factors through $\Gal(\bar{K}/K)\by_{\Gal(\bar{k}/k')}M_{q'}$, where $q'= |k'|$, so the morphism $\bG_m \to M_{q'}^0= \ker(M_{q'} \to\Gal(\bar{k}/k'))$ provides the weight decomposition. This is compatible with the Galois action since $M_{q'}$ is commutative (being a quotient of $\Z^{\alg}$), so $\bG_m$ lies in the centre of  $\Gal(\bar{K}/K)\by_{\Gal(\bar{k}/k')}M_{q'}$.

We may now adapt Corollary \ref{qformal} to see that this is quasi-formal,  noting that all of the quasi-isomorphisms above extend naturally to the filtered algebras of Corollary \ref{qformal}. 
\end{proof}

\begin{corollary}\label{locqformalb1}
 Let  $X$ and $R$ be   as above. Then  the homotopy groups $\varpi^{\et}_n(X_{\bar{K}})$ are potentially unramified and mixed as Galois representations, giving them a canonical weight decomposition. They may also be recovered from the Leray spectral sequence, as in Corollary \ref{htpyleray}. 
\end{corollary}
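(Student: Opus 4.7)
The plan is to deduce Corollary \ref{locqformalb1} by transferring the structure on the homotopy type established in Theorem \ref{locqformal} to the individual homotopy groups, via the natural functoriality of automorphism groups, then invoking Corollary \ref{qformalrk} for the explicit description.

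First, recall that Theorem \ref{locqformal} produces a factorisation of the Galois action through an algebraic morphism
\[
\Gal(\bar{K}/K)\by_{\Gal(\bar{k}/k')} M_{q'} \lra \Aut^h(X_{\bar{K},\et}^{R,\mal}),
\]
together with a $\bG_m \to M_{q'}^0$ implementing the weight decomposition. For each $n \ge 2$ and each basepoint $x$, there is a canonical morphism
\[
\Aut^h(X_{\bar{K},\et}^{R,\mal}) \lra \Aut\bigl(\varpi_n^{\et}(X_{\bar{K},\et}^{R,\mal},x)\bigr)
\]
of group presheaves (and similarly for $O(\varpi_f^{\et}(X_{\bar{K},\et}^{R,\mal}))(x,y)$), and composing gives the required algebraic, potentially unramified, mixed Galois action, together with a canonical weight decomposition. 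This is exactly the local-field analogue of Corollary \ref{wgtexistspin}.

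For the final assertion, I would apply Corollary \ref{qformalrk} verbatim: the quasi-formality isomorphism of Theorem \ref{locqformal} identifies the filtered homotopy type with ${}_J\!\EE_1^{*,*}$, and taking $\bar{G}$ and $\H_{n-1}$ in the category $Fdg\cP(R)$ yields
\[
\varpi_n^{\et}(X_{\bar{K}}^{R,\mal},x) \cong \H_{n-1}\bigl(G({}_J\!\EE_1^{*,*})\bigr),
\]
with
\[
{}_J\!\EE_1^{a,b} = \H^{2a+b}(\bar{X}_{\bar{K}}, \oR^{-a}j_*\bO(R)).
\]
Since the quasi-isomorphism in Theorem \ref{locqformal} is $\Gal(\bar{K}/K)$-equivariant, this isomorphism is Galois-equivariant as well.

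The only real subtlety — and the step most worth spelling out carefully — is checking that the transfer of the ``potentially unramified mixed'' property from $\Aut^h$ to the individual homotopy groups really is automatic. It is, because Tannakian duality realises each $\varpi_n^{\et}(X_{\bar{K},\et}^{R,\mal})^{\vee}$ as a sub-quotient of $O\bigl(\Aut^h(X_{\bar{K},\et}^{R,\mal})\bigr)$ under the natural action, so any subcategory of Galois representations (potentially unramified, mixed) through which $\Gal(\bar{K}/K) \to \Aut^h$ factors automatically contains the action on the homotopy groups. No further obstacle arises, since all the heavy lifting (algebraicity via Corollary \ref{redalgebraic}, purity via Proposition \ref{divisorwgts}, and quasi-formality) has already been performed in Theorem \ref{locqformal}.
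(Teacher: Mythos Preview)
Your proposal is correct and follows exactly the approach the paper intends: the corollary has no explicit proof in the paper, but it is the direct local-field analogue of Corollary~\ref{wgtexistspin}, obtained by composing the factorisation from Theorem~\ref{locqformal} with the canonical maps $\Aut^h(X_{\bar{K},\et}^{R,\mal}) \to \Aut(\varpi_n^{\et}(X_{\bar{K},\et}^{R,\mal},x))$, and then invoking Corollary~\ref{htpyleray} for the Leray description. Your Tannakian remark at the end is more than is needed (the factorisation of the $\Aut^h$-map through a given quotient immediately gives the same for any object it acts on), but it is not wrong.
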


\begin{corollary}\label{locqformalb2}
If $L$ is  a set of primes including $\ell$, and:
\begin{enumerate}
\item $\pi^{\et}_f(X)^{\wedge_L}$ is $(N+1)$-good relative to ${}^{\pnr}\!\varpi_f^t(X^{\wedge_L}_{\bar{K}})$,
\item $\pi^{\et}_n(X^{\wedge_L})\ten_{\hat{\Z}}\Ql$ is finite-dimensional for all $1<n\le N$, and

\item the action of $\ker(\pi^{\et}_f(X_{\bar{K}})^{\wedge_L}\to \pi_f^t(X_{\bar{V}})^{\wedge_L})$ on $\pi^{\et}_n(X^{\wedge_L}_{\bar{K}})\ten_{\hat{\Z}}\Ql$ is unipotent for all $1<n\le N$,
\end{enumerate} 
then the Galois action on  $\pi^{\et}_n(X^{\wedge_L}_{\bar{K}})\ten_{\hat{\Z}}\Ql$ is potentially unramified and mixed, giving it a canonical weight decomposition. It may also be recovered from the Leray spectral sequence.
\end{corollary}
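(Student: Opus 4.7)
The plan is to reduce this to Corollary \ref{locqformalb1} via Theorem \ref{etpimal}. Set $R := {}^{\pnr}\!\varpi_f^t(X^{\wedge_L}_{\bar{K}})^{\red}$ and let $\rho\co \pi^{\et}_f(X_{\bar{K}})^{\wedge_L} \to R(\Ql)$ be the induced Zariski-dense continuous map; this is a Frobenius-equivariant reductive quotient of ${}^{\pnr}\!\varpi_f^t(X^{\wedge_L}_{\bar{K}})$, so Corollary \ref{locqformalb1} applies to $\varpi_n(X_{\bar{K}}^{L,\rho,\mal})$. My goal is to identify this relative Malcev homotopy group with $\pi^{\et}_n(X^{\wedge_L}_{\bar{K}})\ten_{\hat{\Z}}\Ql$ Galois-equivariantly for $1<n\le N$, then transfer the conclusions.

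To apply Theorem \ref{etpimal}, I need to verify its three hypotheses. Finite-dimensionality is exactly hypothesis (2), and $(N+1)$-goodness relative to $\rho$ is hypothesis (1). The remaining condition is that the $\Gamma^{\wedge_L}$-action on $\pi_n \ten \Ql$ extends to a $\Gamma^{L,\rho,\mal}$-representation, and this is where hypothesis (3) enters. By Remarks \ref{savemalcev}, a finite-dimensional continuous $\Gamma^{\wedge_L}$-representation $V$ underlies a $\Gamma^{L,\rho,\mal}$-representation iff its semisimplification is an $R$-representation. Writing $N := \ker(\pi^{\et}_f(X_{\bar{K}})^{\wedge_L}\to \pi_f^t(X_{\bar{V}})^{\wedge_L})$, the unipotency assumption of (3) forces $N$ to act trivially on the semisimplification of $\pi_n \ten \Ql$, so the latter factors through $\pi_f^t(X_{\bar{V}})^{\wedge_L}$, and hence through its quotient ${}^{\pnr}\!\varpi_f^t(X^{\wedge_L}_{\bar{K}})$; being semisimple, it necessarily factors through the reductive quotient $R$.

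Theorem \ref{etpimal} then gives a natural isomorphism
\[
\pi_n^{\et}(X^{\wedge_L}_{\bar{K}})\ten_{\hat{\Z}}\Ql \xra{\sim} \varpi_n(X_{\bar{K}}^{L,\rho,\mal}) \qquad (1<n\le N),
\]
which is $\Gal(\bar{K}/K)$-equivariant by functoriality of the constructions in $\rho$ and $X_{\bar{K}}$. Corollary \ref{locqformalb1} then transfers the potential unramifiedness, mixedness, canonical weight decomposition, and the Leray spectral sequence description from the right-hand side to $\pi_n^{\et}(X^{\wedge_L}_{\bar{K}})\ten_{\hat{\Z}}\Ql$.

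The main point requiring care is the verification that hypothesis (3) really yields the third hypothesis of Theorem \ref{etpimal}; this rests on the Tannakian characterisation of ${}^{\pnr}\!\varpi_f^t$ (so that a semisimple continuous $\pi_f^t(X_{\bar{V}})^{\wedge_L}$-representation arising from $\pi^{\et}_f(X_{\bar{K}})^{\wedge_L}$ is automatically a representation of the image ${}^{\pnr}\!\varpi_f^t(X^{\wedge_L}_{\bar{K}})$) and on Lemma \ref{serrelattice} to ensure continuity translates into an algebraic $R$-representation. Once this translation is set up, the remainder is a formal application of two results already in hand.
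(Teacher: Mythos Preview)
Your proposal is correct and follows essentially the same route as the paper: choose the appropriate $R$, verify the hypotheses of Theorem \ref{etpimal} (with hypothesis (3) supplying the extension-of-$R$-representations condition), and then transport the conclusions of Corollary \ref{locqformalb1} and Corollary \ref{htpyleray} across the resulting isomorphism. The paper's proof is the one-line ``Substitute $R=\pi_f^t(X_{\bar{V}})^{L,\red}$ into Corollary \ref{locqformalb1}, Corollary \ref{htpyleray} and Theorem \ref{etpimal}'', and your write-up simply unpacks this.

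One small point: the paper takes $R=\pi_f^t(X_{\bar{V}})^{L,\red}$ rather than ${}^{\pnr}\!\varpi_f^t(X_{\bar{K}})^{\red}$. With the paper's choice, your ``hence'' step becomes immediate: once the semisimplification factors through $\pi_f^t(X_{\bar{V}})^{\wedge_L}$, it automatically factors through its reductive pro-algebraic completion $R$. With your choice, you implicitly need that passing to the Weil/potentially-unramified quotient does not shrink the reductive part, which is the content of Theorem \ref{laff}; so both choices work, but the paper's avoids that extra appeal.
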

\begin{proof}
Substitute $R=\pi_f^t(X_{\bar{V}})^{L, \red}$ into Corollary \ref{locqformalb1}, Corollary \ref{htpyleray} and Theorem   \ref{etpimal}.
\end{proof}
%
Note that  if $L$ does not contain $p$, then the third condition of the Corollary is vacuous.  

\subsection{Potentially good reduction,  $\ell=p$}\label{crissn}
%
%
\subsubsection{Convergent isocrystals}

Let $X,\bar{X},V',K,K',k'$ etc. be as in the previous section, but with $\ell=p$. Let $W'=W(k')$, the ring of Witt vectors over $k'$, and $K_0'$ the fraction field of $W'$; let $W^{\nr}:= W(\bar{k})$, with $K_0^{\nr}$ its fraction field. Choose a homomorphism $\sigma\co K' \to K'$ extending the natural action of the Frobenius operator $\phi$ on $W(k') \subset K'$. Assume moreover that $T_{V'}=D_{V'}$, a normal crossings divisor, or more generally that $D_{V'}$ corresponds to a log structure.

\begin{definition}
 Let $\mathrm{MF}_{(\bar{X}_{V'},D_{V'})/K'}^{\nabla}$ be the category of filtered convergent $F$-isocrystals on $(\bar{X}_{V'},D_{V'})$, as in \cite[\S 1]{tsujicaltech}  (or \cite[6.9]{olssonhodge}  when $K'$ is unramified, noting that the construction extends to ramified rings, as mentioned at the end of  \cite[1.14]{olssonhodge}).

Roughly speaking, an object  of $\mathrm{MF}_{(\bar{X}_{V'},D_{V'})/{K'}}^{\nabla}$ consists of an $F$-isocrystal $(E, \phi_E)$ on $(\bar{X}_k,D_k)/W$, together with a filtration $\Fil^i\sE$ of $\sE$ satisfying Griffiths transversality with respect to $\nabla_{\sE}$, where $(\sE, \nabla_{\sE})$ is the module with logarithmic connection on $(\bar{X}_{K'},D_{K'}) $ obtained by base change from the evaluation of $E$ on the $p$-adic completion of $(\bar{X}_{V'},D_{V'})$.
\end{definition}

\subsubsection{Crystalline \'etale sheaves}



We now introduce crystalline \'etale sheaves, as in \cite[V(f)]{Hop}  or \cite{andreattaiovita}.


\begin{definition}
We define the \emph{category of associations} on $(\bar{X}_{V'}, D_{V'})$ to consist of triples $(\vv, \iota, E)$, where
\begin{enumerate}
\item $\vv$ is a  smooth $\Q_p$-sheaf   on $X_{K'}$, 
\item $E \in \mathrm{MF}_{(\bar{X}_{V'}, D_{V'})}^{\nabla}(\Phi)$,
\item $\iota$  is an association isomorphism (\cite[\S 6.13]{olssonhodge}), i.e. 
 a collection of  isomorphisms 
$$
\iota_U\co \vv\ten_{\Q_p}B_{\cris}(\hat{U})\to E(B_{\cris}(\hat{U})) 
$$
for $U \to X_{V'}$ \'etale, compatible with the filtrations and semi-linear Frobenius automorphisms, and  with morphisms over $X$, so that $\iota$ becomes an isomorphism of \'etale presheaves. Here, $B_{\cris}(\hat{U})$ is  formed by applying Fontaine's construction to the $p$-adic completion $\hat{U}$ of $U$.
 \end{enumerate}

A morphism $f\co (\vv, \iota, E)\to (\vv', \iota', E')$ in the category of associations consists of a morphism $f^{\et}\co \vv \to \vv'$ and a morphism $f^{\cris}\co E \to E'$ such that $f^{\cris} \circ \iota= \iota' \circ f^{\et}\co \vv\ten_{\Q_p}B_{\cris}(\hat{U})\to E'(B_{\cris}(\hat{U}))$ for all $U$.
\end{definition}

The following lemma is a counterpart to \cite[Lemma 5.5]{Hop}, which gives the corresponding statements for the forgetful functor from associations to $\mathrm{MF}_{(X_{V'},D_{V'})}^{\nabla}$.
\begin{proposition}\label{assockey}
The forgetful functor $(\vv, \iota, E) \mapsto \vv$ from the category of associations to the category of smooth $\Q_p$-sheaves on $X_{K'}$ is full and faithful. Its essential image  is stable under extensions and subquotients.
\end{proposition}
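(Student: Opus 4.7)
The plan is to follow the pattern of \cite[Lemma 5.5]{Hop}, but to transport all statements from the crystalline side to the étale side using the association isomorphism $\iota$. The key input throughout will be the local $p$-adic Hodge theory machinery governing filtered convergent $F$-isocrystals and their crystalline realisations via $B_{\cris}(\hat{U})$.

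For faithfulness, I would take a morphism $(f^{\et},f^{\cris})$ of associations with $f^{\et}=0$. The compatibility $f^{\cris}\circ\iota = \iota'\circ f^{\et}$ then forces the base change $f^{\cris}\otimes 1_{B_{\cris}(\hat U)}$ to vanish on $E(B_{\cris}(\hat U))$ for every étale $U\to X_{V'}$. Since the natural map from $E$ evaluated on an appropriate cover to its base change along $B_{\cris}(\hat U)$ is injective (by faithful flatness, or equivalently by the injectivity of the period map in relative Fontaine theory), this gives $f^{\cris}=0$. For fullness, given $f^{\et}\co\vv\to\vv'$ I would define
\[
\tilde f_U := \iota'_U\circ(f^{\et}\otimes 1_{B_{\cris}(\hat U)})\circ \iota_U^{-1}
\]
on $E(B_{\cris}(\hat U))$. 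Because $\iota,\iota'$ are association isomorphisms, $\tilde f_U$ is automatically compatible with filtration, Frobenius and connection, and it is Galois-equivariant since $f^{\et}$ is Galois-equivariant on the étale side. The core step is then to descend $\tilde f$ to a morphism $f^{\cris}\co E\to E'$ in $\mathrm{MF}^{\nabla}_{(\bar X_{V'},D_{V'})/K'}$; this is exactly the content of the crystalline-étale comparison, i.e.\ the full faithfulness of the base change functor from $\mathrm{MF}^{\nabla}$ to its $B_{\cris}(\hat U)$-realisation on associated objects. This is where \cite[Lemma 5.5]{Hop} (for the dual forgetful functor) enters the argument.

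For stability of the essential image under subquotients, suppose $\vv' \hookrightarrow \vv$ is a sub-$\Q_p$-sheaf, with $\vv$ underlying an association $(\vv,\iota,E)$. Via $\iota$, the submodule $\vv'\otimes B_{\cris}(\hat U)\subset E(B_{\cris}(\hat U))$ is Galois-stable, Frobenius-stable, and inherits a compatible filtration from $\vv$. Galois descent together with the crystalline–étale comparison then produces a sub-object $E'\hookrightarrow E$ in $\mathrm{MF}^{\nabla}$ whose $B_{\cris}(\hat U)$-realisation recovers $\vv'\otimes B_{\cris}(\hat U)$, equipping $\vv'$ with the structure of an association; the case of quotients is handled dually. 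For extensions, given $0\to\vv_1\to\vv\to\vv_2\to 0$ with the two outer terms underlying associations $E_1,E_2$, the short exact sequence $0\to E_1(B_{\cris}(\hat U))\to\vv\otimes B_{\cris}(\hat U)\to E_2(B_{\cris}(\hat U))\to 0$ is Galois-, Frobenius-, and filtration-equivariant, and the same descent machinery produces the required extension $0\to E_1\to E\to E_2\to 0$ in $\mathrm{MF}^{\nabla}$ realising $\vv$ as an association.

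The main obstacle I anticipate is the descent step underlying both fullness and the image-closure statements: verifying that Galois/Frobenius/filtration-compatible $B_{\cris}(\hat U)$-modules really do come from objects of $\mathrm{MF}^{\nabla}_{(\bar X_{V'},D_{V'})/K'}$, with the correct Griffiths transversality and log-structure along $D_{V'}$. This is not a formal manipulation: it relies on the fact that, locally étale on $\bar X_{V'}$, the functor $D_{\cris}$ (suitably relativised, as in \cite{tsujicaltech} or \cite{andreattaiovita}) is fully faithful with essential image containing the crystalline sheaves. Once that is granted, the rest of the argument reduces to bookkeeping with the compatibilities packaged into the definition of an association.
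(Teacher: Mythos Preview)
Your approach is workable in outline but diverges substantially from the paper's, and the subquotient step in your version carries a real burden that the paper sidesteps.

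The paper argues globally and cohomologically rather than locally. For fullness it forms the internal Hom association $(\vv^{\vee}\otimes\vv',(\iota^{\vee})^{-1}\otimes\iota',E^{\vee}\otimes E')$, so that a morphism $f^{\et}\co\vv\to\vv'$ is a Galois-invariant class in $\H^0(X_{\bar K},\vv^{\vee}\otimes\vv')$. Then the global comparison isomorphism \cite[5.6]{Hop} on cohomology, after taking Galois- and Frobenius-invariants in $\Fil^0$, identifies this with $\Fil^0\H^0_{\cris}(X_k/W,E^{\vee}\otimes E')^{\phi}$, which is exactly the set of morphisms $E\to E'$ in $\mathrm{MF}^{\nabla}$. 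Extensions are handled identically using $\H^1$. No local descent over $B_{\cris}(\hat U)$ is needed: the single global input is that the comparison respects cup products and hence the ring structure on cohomology of the internal Hom.

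For subquotients the contrast is sharper. Your argument requires that a Galois-stable, Frobenius-stable, filtration-compatible $B_{\cris}(\hat U)$-submodule of $E(B_{\cris}(\hat U))$ descends to a subobject of $E$ in $\mathrm{MF}^{\nabla}$. This is an essential-image statement for subobjects, not merely full faithfulness; in the absolute case it amounts to closure of admissible filtered $\phi$-modules under subobjects, and in the relative logarithmic setting it is not something you get for free from \cite[Lemma 5.5]{Hop} or the references you cite. The paper avoids this entirely: since subquotients of extensions are extensions of subquotients, one reduces to semisimple $\vv$, where any subquotient is a direct summand cut out by an idempotent $\pi$; fullness (already established) lifts $\pi$ to an idempotent of the association, and its kernel is the required sub-association. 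This reduces the subquotient statement to the fullness statement, rather than to a separate descent result.
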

\begin{proof}
Given associations  $(\vv, \iota, E)$ and  $(\vv', \iota', E')$, note that $(\vv^{\vee}\ten \vv, (\iota^{\vee})^{-1}\ten \iota', E^{\vee}\ten E')$ is another association. Giving a morphism $f^{\et}\co \vv\to \vv'$ amounts to giving an element of $\H^0(X_K, \vv^{\vee}\ten \vv')$, or equivalently a Galois-invariant  element of $\H^0(X_{\bar{K}}, \vv^{\vee}\ten \vv')$. By \cite[5.6]{Hop}, the map
$$
(\iota^{\vee})^{-1}\ten \iota' \co  \H^*(X_{\bar{K}}, \vv^{\vee}\ten \vv')\ten_{\Q_p}B_{\cris} \to \H_{\cris}^*(X_k/W, E^{\vee}\ten E')\ten_{{K'}_0}B_{\cris}
$$
is an isomorphism. Taking Galois-invariant and Frobenius-invariant elements in $\Fil^0$, this gives an isomorphism
$$
(\iota^{\vee})^{-1}\ten \iota' \co \H^0(X_{\bar{K}}, \vv^{\vee}\ten \vv')^{\Gal(\bar{K}/{K'})}\to \Fil^0\H_{\cris}^0(X_k/W, E^{\vee}\ten E')^{\phi},
$$
so there is a unique Frobenius-equivariant morphism $f^{\cris}\co E\to E'$ preserving the Hodge filtration such that the diagrams
$$
\begin{CD} 
\vv\ten_{\Q_p}B_{\cris}(\hat{U})@>{\iota}>> E(B_{\cris}(\hat{U}))\\
@V{f^{\et}\ten_{\Q_p}B_{\cris}}VV @VV{f^{\cris}(B_{\cris}(\hat{U}))}V\\
\vv'\ten_{\Q_p}B_{\cris}(\hat{U})@>{\iota'}>> E'(B_{\cris}(\hat{U}))
\end{CD}
$$
commute. This shows that the forgetful functor is full and faithful.

To see that the essential image is stable under extensions, observe that  extensions     of $\vv$ by $\vv'$ are parametrised by elements $a$ of 
$\H^1(X_{K'}, \vv^{\vee}\ten \vv')$. The isomorphisms above then show that $((\iota^{\vee})^{-1}\ten \iota')(a)$ is a Frobenius-equivariant element of $\Fil^0\H_{\cris}^1(X_k/W, E^{\vee}\ten E')$, so gives a unique  extension of $(\vv, \iota, E)$ by $(\vv', \iota', E')$ in the category of associations.

Finally, note that the subquotient of an extension is  an extension of subquotients, so it suffices to show that the essential image contains subquotients of semisimple objects. Since such a subquotient  $\vv'$ of $\vv$ is isomorphic to a direct summand, we have an idempotent endomorphism  $\pi$ of  $\vv$ with $\ker \pi \cong \vv'$. Since the forgetful functor is full, $\pi$ lifts to an idempotent endomorphism $\tilde{\pi}$ of $(\vv, \iota, E)$, so $\vv'$ underlies $\ker \tilde{\pi}$. 
\end{proof}

\begin{definition}
Say that a smooth $\Q_p$-sheaf $\vv$   on $X_{K'}$ is \emph{crystalline} if it lies in the essential image of the forgetful functor from the category of associations.
\end{definition}

\begin{proposition}
The fibre functors $(\vv, \iota, E) \mapsto \vv_{\bar{x}}$ make the category of associations into a multifibred Tannakian category. The corresponding pro-algebraic groupoid $\varpi_f^{\et}(X_{K'})^{\cris}$ is a quotient of $\varpi_f^{\et}(X_{K'})$. Moreover, $\varpi_f^{\et}(X_{K'})^{\cris}$ is the Malcev completion of $\pi_f^{\et}(X_{K'})$ with respect to the reductive quotient $\varpi_f^{\et}(X_{K'})^{\cris,\red}$. 
\end{proposition}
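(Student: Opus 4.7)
The plan is to prove the three assertions in turn. The strategy throughout is to leverage Proposition \ref{assockey}, which identifies associations with a particularly well-behaved full subcategory of smooth $\Q_p$-sheaves on $X_{K'}$.

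First, to establish the multifibred Tannakian structure, I would endow the category of associations with tensor products, duals and internal Homs via the formulae $(\vv, \iota, E) \ten (\vv', \iota', E'):= (\vv \ten \vv', \iota \ten \iota', E\ten E')$ and $(\vv, \iota, E)^{\vee}:= (\vv^{\vee}, (\iota^{\vee})^{-1}, E^{\vee})$, using that both $B_{\cris}(\hat{U})$-modules and $\mathrm{MF}_{(\bar{X}_{V'},D_{V'})/K'}^{\nabla}$ carry such structures and that an association isomorphism is preserved under tensor operations. Proposition \ref{assockey} tells us that the forgetful functor to smooth $\Q_p$-sheaves is fully faithful with essential image stable under extensions and subquotients; combined with the tensor and dual constructions above, this exhibits associations as a Tannakian subcategory of smooth $\Q_p$-sheaves in the sense of Definition \ref{tannaka}. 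The fibre functors $(\vv, \iota, E) \mapsto \vv_{\bar{x}}$ are the restrictions of the fibre functors on smooth sheaves, and hence are exact, faithful and jointly faithful. Multifibred Tannakian duality then produces $\varpi_f^{\et}(X_{K'})^{\cris}$, and the inclusion of Tannakian subcategories dualises to a quotient map $\varpi_f^{\et}(X_{K'}) \onto \varpi_f^{\et}(X_{K'})^{\cris}$.

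For the Malcev completion statement, set $R:= \varpi_f^{\et}(X_{K'})^{\cris,\red}$ and $M:= \pi_f^{\et}(X_{K'})^{R,\mal}$. By Remarks \ref{savemalcev}, finite-dimensional representations of $M$ are precisely those continuous $\pi_f^{\et}(X_{K'})$-representations whose semisimplifications are $R$-representations, equivalently iterated extensions of $R$-representations. Since $R$-representations are by definition semisimple objects of $\varpi_f^{\et}(X_{K'})^{\cris}$, they underlie crystalline sheaves; the closure of crystalline sheaves under extensions (Proposition \ref{assockey}) then shows that every $M$-representation is crystalline, which yields a morphism $\varpi_f^{\et}(X_{K'})^{\cris} \to M$ of pro-algebraic groupoids over $R$. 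Conversely, by closure under subquotients, the semisimplification of any crystalline representation is crystalline and semisimple, hence an $R$-representation, so every crystalline representation underlies an $M$-representation. This identifies the Tannakian categories of $M$- and $\varpi_f^{\et}(X_{K'})^{\cris}$-representations compatibly with their fibre functors, yielding the desired isomorphism $M \cong \varpi_f^{\et}(X_{K'})^{\cris}$.

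The steps are all essentially formal consequences of Proposition \ref{assockey} together with Tannakian duality and the Levi decomposition of Proposition \ref{leviprop}, so the only technical hurdle is the routine but slightly tedious verification that the association isomorphisms $\iota$ interact properly with tensor products and duals on both the $B_{\cris}$-module and the filtered $F$-isocrystal sides; this reduces to functoriality of the evaluation map from $\mathrm{MF}_{(\bar{X}_{V'},D_{V'})/K'}^{\nabla}$ to $B_{\cris}(\hat{U})$-modules and to the observation that the association isomorphism for $\vv^{\vee}$ is forced by that for $\vv$ via the perfect pairing $\vv\ten \vv^{\vee}\to \Q_p$.
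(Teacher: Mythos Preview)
The proposal is correct and follows essentially the same approach as the paper: both use Proposition~\ref{assockey} to identify associations with a Tannakian subcategory of smooth $\Q_p$-sheaves (giving the quotient map), and then characterise the Malcev completion via closure of crystalline sheaves under extensions and subquotients. Your argument is in fact slightly more explicit than the paper's, spelling out both directions of the equivalence $\Rep(\varpi_f^{\et}(X_{K'})^{\cris}) \simeq \Rep(\pi_f^{\et}(X_{K'})^{R,\mal})$ separately.
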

\begin{proof}
Associations form a $\Q_p$-linear rigid abelian tensor category, with $(\vv, \iota, E)\ten (\vv', \iota', E')= (\vv\ten_{\Q_p}\vv', \iota\ten \iota', E\ten_{\O_{X_k, \cris}}E')$ and $(\vv, \iota, E)^{\vee}= (\vv^{\vee}, (\iota^{-1})^{\vee}, E^{\vee})$.

By Proposition \ref{assockey}, associations are equivalent to the Tannakian subcategory of crystalline \'etale sheaves in $\Rep(\varpi_f^{\et}(X_{K'}))$. Thus the forgetful functor from associations to smooth $\Q_p$-sheaves corresponds to a surjection $\varpi_f^{\et}(X_{K'})\to \varpi_f^{\et}(X_{K'})^{\cris}$ of pro-algebraic groupoids (with the same object set).

For $\rho\co \pi_f^{\et}(X_{K'})\to \varpi_f^{\et}(X_{K'})^{\cris,\red}$,   representations of $\varpi_f^{\et}(X_{K'})^{\rho, \mal}$ are smooth $\Q_p$-sheaves on $X_{K'}$  which are Artinian extensions of semisimple crystalline \'etale sheaves. By Proposition \ref{assockey}, this is equivalent to the category $\Rep(\varpi_f^{\et}(X_{K'})^{\cris}) $ of associations. 
\end{proof}

\begin{definition}
Say that a smooth $\Q_p$-sheaf $\vv$   on $X_{K}$ is \emph{potentially crystalline} if $\vv|_{X_{K''}}$ is crystalline for some finite extension $K' \subset K''$.
\end{definition}

\subsubsection{Equivariant pro-algebraic fundamental groups}

\begin{definition}\label{galalg}
Define ${}^{\cris,{K'}}\!\varpi^{\et}_f(X_{\bar{K}})$ to be the image of $\varpi^{\et}_f(X_{\bar{K}})\to \varpi^{\et}_f(X_{K'})^{\cris}$.
 \end{definition}

Note that we can also   characterise ${}^{\cris,{K'}}\!\varpi^{\et}_f(X_{\bar{K}})$  as 
\[
        \ker(\varpi^{\et}_f(X_{K'})^{\cris}\to  \Gal(\bar{K}/{K'})^{\cris}=\varpi^{\et}_f(\Spec {K'})^{\cris}),
\]
 using the right-exactness  of pro-algebraic completion. Thus 
$$
\varpi^{\et}_f(X_{K'})^{\cris}= {}^{\cris,{K'}}\!\varpi^{\et}_f(X_{\bar{K}}) \rtimes \Gal(\bar{K}/{K'})^{\cris},
$$
so 
representations of ${}^{\cris,{K'}}\!\varpi^{\et}_f(X_{\bar{K}}) $ correspond to smooth $\Q_p$-sheaves on $X_{\bar{K}}$ arising as subsheaves of pullbacks of  crystalline \'etale $\Q_p$-sheaves on $X_{K'}$.

\begin{definition}
Define 
$$
{}^{\pcris}\!\varpi^{\et}_f(X_{\bar{K}}):= \Lim_{K''}{}^{\cris,K''}\!\varpi^{\et}_f(X_{\bar{K}}),
$$
where the limit is taken over all finite Galois extensions $K' \subset K''$.
\end{definition}

Finite-dimensional representations of ${}^{\pcris}\!\varpi^{\et}_f(X_{\bar{K}}) $ thus correspond to smooth $\Q_p$-sheaves on $X_{\bar{K}}$ arising as subsheaves of pullbacks of  potentially crystalline smooth $\Q_p$-sheaves on $X_{K}$. 

Since $\cG= \Lim_{K''}(\Gal(\bar{K}/K'')^{\cris}\by_{\Gal(\bar{K}/K'')}\Gal(\bar{K}/K))$, 
this gives an isomorphism 
$$
\Lim_{K''}(\Gal(\bar{K}/K)\by_{\Gal(\bar{K}/K'')  } \varpi^{\et}_f(X_{K''})^{\cris})\cong {}^{\pcris}\!\varpi^{\et}_f(X_{\bar{K}}) \rtimes \cG^{\pcris},
$$
so the Galois action on ${}^{\pcris}\!\varpi^{\et}_f(X_{\bar{K}})$ is algebraic and potentially crystalline. 
 
\begin{lemma}\label{katzlattice}
The map $\varpi^{\et}_f(X_{\bar{K}}) \onto {}^{\pnr}\!\varpi_f(\bar{X}_{\bar{K}})$ factors through ${}^{\pcris}\!\varpi^{\et}_f(X_{\bar{K}})$.
\end{lemma}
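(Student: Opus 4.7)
The plan is to pass to the level of Tannakian categories: both ${}^{\pnr}\!\varpi_f(\bar{X}_{\bar{K}})$ and ${}^{\pcris}\!\varpi^{\et}_f(X_{\bar{K}})$ are defined as images of $\varpi^{\et}_f(-)$ in pro-algebraic quotients governed by certain Tannakian subcategories of smooth $\Q_p$-sheaves, so by Tannakian duality the required factorisation will follow once I show that every finite-dimensional representation of ${}^{\pnr}\!\varpi_f(\bar{X}_{\bar{K}})$, pulled back along $j\co X \into \bar{X}$ and then to $X_{\bar{K}}$, underlies a representation of ${}^{\pcris}\!\varpi^{\et}_f(X_{\bar{K}})$.

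The first step is to unwind the definition. A finite-dimensional representation of ${}^{\pnr}\!\varpi_f(\bar{X}_{\bar{K}})$ corresponds to a smooth $\Q_p$-sheaf $\bar{\vv}$ on $\bar{X}_{\bar{K}}$ arising as a subquotient of the pullback of a Weil sheaf on $\bar{X}_{V'}$; enlarging $K'$ to a finite Galois extension $K''$ (with ring of integers $V''$, residue field $k''$), this Weil sheaf becomes an honest smooth $\Q_p$-sheaf on $\bar{X}_{V''}$ and we may therefore assume $\bar{\vv}|_{\bar{X}_{K''}}$ is the generic fibre of a smooth $\Q_p$-sheaf $\bar{\vv}_{V''}$ on the smooth proper integral model $\bar{X}_{V''}$.

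The core step is then to invoke Faltings' theorem in relative $p$-adic Hodge theory, in the form used in \cite{Hop} (or the refinement in \cite{andreattaiovita}): a smooth $\Q_p$-sheaf on $\bar{X}_{K''}$ extending to a smooth sheaf on the smooth proper model $\bar{X}_{V''}$ underlies an association in $\mathrm{MF}^{\nabla}_{(\bar{X}_{V''}, \emptyset)/K''}$, and is in particular crystalline. Restricting this association along $j$ yields an association in $\mathrm{MF}^{\nabla}_{(\bar{X}_{V''}, D_{V''})/K''}$ whose underlying \'etale side is $j^*\bar{\vv}|_{X_{K''}}$; hence $j^*\bar{\vv}$ is a ${}^{\cris, K''}\!\varpi^{\et}_f(X_{\bar{K}})$-representation, and passing to the limit over all such $K''$ shows it is a ${}^{\pcris}\!\varpi^{\et}_f(X_{\bar{K}})$-representation, as required.

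The hard part will be securing and applying the input from relative $p$-adic Hodge theory: one must cite the crystallinity of smooth $\Q_p$-sheaves on smooth proper $V''$-schemes in the form that produces an actual association (not merely a crystalline cohomology comparison), and then verify that pulling the resulting filtered convergent $F$-isocrystal along $j$ gives a legitimate association on $(\bar{X}_{V''}, D_{V''})$ in the sense of \cite{tsujicaltech} --- pullback of the trivial log structure is trivial, so the extra log structure along $D_{V''}$ introduces no incompatibility, but a brief compatibility check against the definitions is needed to complete the argument.
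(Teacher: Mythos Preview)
Your Tannakian reduction is correct and coincides with the paper's: one must show that every smooth $\Q_p$-sheaf on $\bar{X}_{\bar{K}}$ in the image of ${}^{\pnr}\!\varpi_f(\bar{X}_{\bar{K}})$ becomes crystalline on $X_{K''}$ for some finite $K''/K'$, and after enlarging $K''$ such a sheaf is exactly the generic fibre of a smooth $\Q_p$-sheaf on $\bar{X}_{V''}$ (equivalently, via the specialisation isomorphism $\pi_f^{\et}(\bar{X}_{V''})\cong\pi_f^{\et}(\bar{X}_{k''})$, a smooth $\Q_p$-sheaf on $\bar{X}_{k''}$).

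The crystallinity input, however, is mis-identified. Faltings in \cite{Hop} (and Andreatta--Iovita) prove comparison isomorphisms for cohomology with coefficients in an \emph{already-given} association; they do not prove that an arbitrary smooth $\Q_p$-sheaf extending over the integral model is automatically associated to a filtered $F$-isocrystal. Your final paragraph in effect acknowledges this: the ``hard part'' is producing an actual association, not merely a cohomological comparison. The paper fills this gap by citing Katz \cite[4.1.1]{katzlattice}: smooth $\Q_p$-sheaves on $\bar{X}_{k''}$ correspond to unit-root $F$-lattices on $\bar{X}_{V''}$, and a unit-root $F$-lattice, equipped with the trivial Hodge filtration, is an object of $\mathrm{MF}^{\nabla}_{(\bar{X}_{V''},\emptyset)/K''}$ together with its association isomorphism. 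So the structure of your argument is right, but the decisive reference is Katz's unit-root equivalence rather than Faltings' comparison theorem.
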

\begin{proof}
Since 
$$
\Gal(\bar{K}/K)^{\pnr} \ltimes {}^{\pnr}\!\varpi_f(\bar{X}_{\bar{K}}) = \Lim_{K''}  \Gal(\bar{K}/K)\by_{\Gal(\bar{k}/k'')}\varpi_f(\bar{X}_{k''}), 
$$  
it suffices to show that the map $\varpi_f(X_{K''}) \to \varpi_f(\bar{X}_{k''})$ factors through $\varpi_f^{\et}(X_{K''})^{\cris}$. By looking at representations, this is equivalent to saying that every smooth $\Q_p$-sheaf on $\bar{X}_{k''}$ pulls back to give a crystalline \'etale sheaf on $X_{V''}$. This now follows from \cite[4.1.1]{katzlattice}, which shows that smooth $\Q_p$-sheaves on $\bar{X}_{k''}$ correspond to unit-root $F$-lattices on $X_{V''}$.
\end{proof}

\begin{definition}
Any field extension $K' \to K''$ gives a pullback functor $ \mathrm{MF}_{(\bar{X}_{V'},D_{V'})/K'}^{\nabla}\to \mathrm{MF}_{(\bar{X}_{V''},D_{V''})/K''}^{\nabla}$, and we set
$$
\mathrm{MF}_{(\bar{X}_{\bar{V}},D_{\bar{V}})/\bar{K}}^{\nabla}:= \LLim_{K''} \mathrm{MF}_{(\bar{X}_{V''},D_{V''})/K''}^{\nabla},
$$
where $K''$ ranges over all finite field extensions $K' \subset K''$.
\end{definition}

Representations of $\Gal(\bar{K}/K)^{\pcris,0} \ltimes {}^{\pcris}\!\varpi^{\et}_f(X_{\bar{K}})$ are just representations of $ \Lim_{K''} \varpi^{\et}_f(X_{K''})^{\cris}$, so the category of finite-dimensional representations is $\LLim_{K''} \FD\Rep( \varpi^{\et}_f(X_{K''})^{\cris})$. 

\begin{definition}\label{dpcrisx}
Making use of the forgetful functor from associations to filtered convergent $F$-isocrystals, the observation above gives us a $\Q_p$-linear functor
$$
D_{\pcris}^X\co   \FD\Rep(\Gal(\bar{K}/K)^{\pcris,0} \ltimes {}^{\pcris}\!\varpi^{\et}_f(X_{\bar{K}})) \to \mathrm{MF}_{(\bar{X}_{\bar{V}},D_{\bar{V}})/\bar{K}}^{\nabla}.
$$
Say that an object of $\mathrm{MF}_{(\bar{X}_{\bar{V}},D_{\bar{V}})/\bar{K}}^{\nabla} $ is \emph{potentially admissible} if it lies in the essential image of $D_{\pcris}^X $.
\end{definition}
Note that $D_{\pcris}^{\Spec K} =D_{\pcris}$.

\begin{definition}
Given a $\cG^0$-equivariant affine scheme $Y$ over $\Q_p$, define the affine scheme $D_{\pcris}(Y)$ over $K_0^{\nr}$ by 
$$
 D_{\pcris}(Y)= \Spec D_{\pcris}O(Y).
$$

Observe that $O(Y)$ is therefore an ind-object of (i.e. a sum of objects in) the  category  $\mathrm{MF}_{(\Spec \bar{V},\emptyset)/\bar{K}}^{\nabla}$.
\end{definition}

\begin{proposition}\label{assocgpds}
The category of finite-dimensional $D_{\pcris}({}^{\pcris}\!\varpi^{\et}_f(X_{\bar{K}}))$-representations in potentially admissible objects of $\mathrm{MF}_{(\Spec \bar{V},\emptyset)/\bar{K}}^{\nabla}$   is equivalent to the category of finite-dimensional $ \cG^{\pcris,0} \ltimes {}^{\pcris}\!\varpi^{\et}_f(X_{\bar{K}})$-representations, which in turn is equivalent to the category of potentially admissible objects of  $\mathrm{MF}_{(\bar{X}_{\bar{V}},D_{\bar{V}})/\bar{K}}^{\nabla} $. 

For any point $x \in X_{\bar{V}}(\bar{K})$, the associated fibre functor from $D_{\pcris}({}^{\pcris}\!\varpi^{\et}_f(X_{\bar{K}})) $-representations to $\mathrm{MF}_{(\Spec \bar{V},\emptyset)/\bar{K}}^{\nabla}$ corresponds under this equivalence to the pullback 
$$
x^*\co \mathrm{MF}_{(\bar{X}_{\bar{V}},D_{\bar{V}})/\bar{K}}^{\nabla}\to \mathrm{MF}_{(\Spec \bar{V},\emptyset)/\bar{K}}^{\nabla}. 
$$
\end{proposition}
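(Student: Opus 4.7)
The plan is to break the proposition into three claims and verify each using the Tannakian machinery already established. For the first claim, I would establish the equivalence between finite-dimensional representations of $\cG^{\pcris,0} \ltimes {}^{\pcris}\!\varpi^{\et}_f(X_{\bar{K}})$ and potentially admissible objects of $\mathrm{MF}_{(\bar{X}_{\bar{V}},D_{\bar{V}})/\bar{K}}^{\nabla}$. Essential surjectivity is immediate from Definition \ref{dpcrisx}. For full faithfulness, I would note that such representations are direct limits (over finite Galois extensions $K' \subset K''$) of finite-dimensional representations of $\varpi_f^{\et}(X_{K''})^{\cris}$, which by definition correspond to associations on $(\bar{X}_{V''},D_{V''})$. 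Then by a dual version of Proposition \ref{assockey}, the forgetful functor from associations to filtered convergent $F$-isocrystals is fully faithful: given associations $(\vv,\iota,E)$ and $(\vv',\iota',E')$, a morphism $E\to E'$ in $\mathrm{MF}$ determines a unique morphism $\vv\to\vv'$ compatible with the association isomorphisms, because the pairing with $B_{\cris}(\hat U)$ and invariance under Galois plus Frobenius and the Hodge filtration recover the \'etale side from the crystalline side (the isomorphism established in the proof of Proposition \ref{assockey} for $\mathrm{Hom}$ groups is the key).

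Next I would handle the equivalence between (1) and (2). The functor $D_{\pcris}$ on $\cG^{\pcris,0}$-representations is a $\Q_p$-linear exact faithful tensor functor, fully faithful onto the full subcategory of potentially admissible objects of $\mathrm{MF}_{(\Spec \bar{V},\emptyset)/\bar{K}}^{\nabla}$ (this is the content of Proposition \ref{pcristest}, noting that $\cG^{\pcris,0}$-representations are precisely those whose action of $\cG^0$ becomes trivial over $B_{\cris}$). Since $\cG^{\pcris,0}$ acts on ${}^{\pcris}\!\varpi^{\et}_f(X_{\bar{K}})$ by the construction preceding Lemma \ref{katzlattice}, a representation of the semidirect product is equivalent (by Tannakian duality for group objects, cf.\ Definition \ref{gpdrep}) to a $\cG^{\pcris,0}$-representation $V$ together with a $\cG^{\pcris,0}$-equivariant homomorphism ${}^{\pcris}\!\varpi^{\et}_f(X_{\bar{K}}) \to \underline{\mathrm{GL}}(V)$ of pro-algebraic groupoids. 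Applying $D_{\pcris}$ sends this to a potentially admissible $\mathrm{MF}$-object together with a homomorphism $D_{\pcris}({}^{\pcris}\!\varpi^{\et}_f(X_{\bar{K}})) \to \underline{\mathrm{GL}}(D_{\pcris}(V))$, which is exactly a representation of the first type. The inverse is constructed by tensoring with $B_{\cris}$ and taking Frobenius and filtration invariants.

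For the compatibility of fibre functors, I would argue that a point $x\in X_{\bar V}(\bar K)$ factors through $X_{V''}$ for some finite extension, giving a Galois-equivariant morphism of pro-algebraic groupoids $\Gal(\bar K/K'')^{\cris} \to \varpi^{\et}_f(X_{K''})^{\cris}$ (equivalently a section of the quotient to the Galois group). On the $\mathrm{MF}$-side this realises pullback $x^*$ along $x$, and on the representation side it realises evaluation at the fibre $\vv \mapsto \vv_{\bar x}$. The equivalences constructed in the first two steps are manifestly compatible with these functors, since the association isomorphism $\iota$ induces a compatible isomorphism $\vv_{\bar x}\ten B_{\cris}(\widehat{\{x\}}) \cong x^*E(B_{\cris}(\widehat{\{x\}}))$ by naturality.

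The main obstacle will be the first step: verifying carefully that the forgetful functor from associations to $\mathrm{MF}_{(\bar{X}_{\bar V},D_{\bar V})/\bar K}^{\nabla}$ is fully faithful, and that its essential image is stable under extensions and subquotients so that one can pass to direct limits over $K''$ without loss of information. This requires a slightly different argument from Proposition \ref{assockey}, running the Galois/Frobenius/Hodge-invariance argument in the opposite direction; the key input is that $(B_{\cris})^{\Gal(\bar K/K''),\phi=1}\cap \mathrm{Fil}^0 = \Q_p$, together with the comparison isomorphism for $\mathrm{Hom}$-objects used in \cite[5.6]{Hop}.
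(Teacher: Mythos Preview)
Your approach is correct and follows essentially the same route as the paper: unwind a representation of the semidirect product as a ${}^{\pcris}\!\varpi^{\et}_f(X_{\bar{K}})$-comodule in $\cG^{\pcris,0}$-representations, apply the equivalence $D_{\pcris}$ between $\cG^{\pcris,0}$-representations and potentially admissible objects of $\mathrm{MF}_{(\Spec \bar{V},\emptyset)/\bar{K}}^{\nabla}$ to reduce to the first category, and then use $D_{\pcris}^X$ together with full faithfulness of the forgetful functor from associations to filtered $F$-isocrystals for the second equivalence.

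The only substantive difference is that what you flag as ``the main obstacle'' --- full faithfulness of the forgetful functor from associations to $\mathrm{MF}_{(\bar{X}_{V''},D_{V''})/K''}^{\nabla}$ --- is not something you need to prove: it is precisely \cite[Lemma 5.5]{Hop}, already invoked in the paragraph introducing Proposition~\ref{assockey}, and the paper simply cites it. Your sketch of how that argument would run (using $(B_{\cris})^{\Gal,\phi=1}\cap\Fil^0=\Q_p$ and the comparison isomorphism for $\Hom$-objects) is correct, but unnecessary here.
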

\begin{proof}
A $D_{\pcris}({}^{\pcris}\!\varpi^{\et}_f(X_{\bar{K}}))$-representation $V$ in potentially admissible objects of $\mathrm{MF}_{(\Spec \bar{V},\emptyset)/\bar{K}}^{\nabla}$ consists of potentially admissible objects $V(x) \in \mathrm{MF}_{(\Spec \bar{V},\emptyset)/\bar{K}}^{\nabla}$ for all $x \in \Ob ({}^{\pcris}\!\varpi^{\et}_f(X_{\bar{K}}))$, together with coassociative morphisms
$$
V(y) \to V(x) \ten D_{\pcris}O( {}^{\pcris}\!\varpi^{\et}_f(X_{\bar{K}})(x,y))
$$
in $\mathrm{MF}_{(\Spec \bar{V},\emptyset)/\bar{K}}^{\nabla}$.

Since $D_{\pcris}$ gives an equivalence between $\cG^{\pcris,0}$-representations and potentially admissible objects of $\mathrm{MF}_{(\Spec \bar{V},\emptyset)/\bar{K}}^{\nabla}$, the description above shows that it defines the required equivalence from  $ \cG^{\pcris,0} \ltimes {}^{\pcris}\!\varpi^{\et}_f(X_{\bar{K}})$-representations. 

Now, $ \cG^{\pcris,0} \ltimes {}^{\pcris}\!\varpi^{\et}_f(X_{\bar{K}}) \cong \Lim_{K''} \varpi^{\et}_f(\bar{X}_{K''})^{\cris}$, so we may apply the functor $D_{\pcris}^X$ from Definition \ref{dpcrisx}, mapping to  potentially admissible objects in  $\mathrm{MF}_{(\bar{X}_{\bar{V}},D_{\bar{V}})/\bar{K}}^{\nabla} $. By \cite[Lemma 5.5]{Hop}, this functor is full and faithful, so gives us the second  equivalence required.
\end{proof}

\begin{definition}
Define
$$
\Isoc((\bar{X}_{\bar{k}}, D_{\bar{k}})/K_0^{\nr}):= \LLim_{K''}\Isoc((\bar{X}_{k''}, D_{k''})/K'')
$$
to be the category of isocrystals on $\Lim_{K''} (\bar{X}_{k''}, D_{k''})/K''$, where the limit is taken over finite extensions $K' \subset K''$.
\end{definition}

\begin{proposition}\label{crisgpdreps}
The category of finite-dimensional  $D_{\pcris}({}^{\pcris}\!\varpi^{\et}_f(X_{\bar{K}}))$-representations over $K_0^{\nr}$ is equivalent to a full subcategory of  $\Isoc((\bar{X}_{\bar{k}}, D_{\bar{k}})/K_0^{\nr} )$.
 This subcategory is the smallest  full abelian subcategory containing  the potentially admissible  objects of $\mathrm{MF}_{(\bar{X}_{\bar{V}},D_{\bar{V}})/\bar{K}}^{\nabla} $. 
\end{proposition}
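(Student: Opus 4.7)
The strategy is to deduce this from Proposition \ref{assocgpds} by applying the forgetful functor from $\mathrm{MF}^{\nabla}$ to $\Isoc$ on both sides of that equivalence. Write $H := {}^{\pcris}\!\varpi^{\et}_f(X_{\bar{K}})$. The key preliminary observation is that the structure coalgebra $O(D_{\pcris}(H)) = D_{\pcris}(O(H))$ carries a natural ind-isocrystal structure on $(\bar{X}_{\bar{k}},D_{\bar{k}})/K_0^{\nr}$: each regular representation $O(H)(x,-)$, viewed as an $H$-representation in $\cG^{\pcris,0}$-representations via right translation, corresponds under Proposition \ref{assocgpds} to a potentially admissible object of $\mathrm{MF}^{\nabla}_{(\bar{X}_{\bar{V}},D_{\bar{V}})/\bar{K}}$, whose image under the forgetful functor to $\Isoc$ is precisely $D_{\pcris}(O(H)(x,-))$ endowed with an induced isocrystal structure. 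Compatibility with the coproduct and counit follows from functoriality.

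Next I will define a functor $\Phi$ from finite-dimensional $D_{\pcris}(H)$-representations over $K_0^{\nr}$ into $\Isoc((\bar{X}_{\bar{k}},D_{\bar{k}})/K_0^{\nr})$. A finite-dimensional $D_{\pcris}(H)$-representation $V$ is a finite-dimensional $K_0^{\nr}$-comodule over $O(D_{\pcris}(H))$; by standard Tannakian arguments it embeds as a subcomodule of a finite direct sum of copies of the regular comodules $O(D_{\pcris}(H))(x,-)$. The coaction itself encodes the monodromy, so cutting out the image equips $V$ with the structure of a sub-isocrystal, and this defines $\Phi(V)$. Faithfulness of $\Phi$ is immediate since the composition with the fibre functor at any object of $H$ is just the underlying $K_0^{\nr}$-vector space functor on $D_{\pcris}(H)$-representations. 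Fullness holds because a morphism of isocrystals $\Phi(V) \to \Phi(W)$ is a $K_0^{\nr}$-linear map compatible with the $O(D_{\pcris}(H))$-coaction, hence a morphism of $D_{\pcris}(H)$-representations.

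Finally, I identify the essential image with the smallest abelian full subcategory $\mathcal{C}$ of $\Isoc$ containing the potentially admissible objects. Each $\Phi(V)$ is a sub-isocrystal of a finite sum of $D_{\pcris}(O(H)(x,-))$'s, whose underlying isocrystals are potentially admissible by construction, so $\Phi(V) \in \mathcal{C}$. Conversely, every potentially admissible $E$ corresponds via Proposition \ref{assocgpds} to a $\cG^{\pcris,0} \ltimes H$-representation $W$; restricting the $H$-action and applying $D_{\pcris}$ fibrewise yields a $D_{\pcris}(H)$-representation $V$ over $K_0^{\nr}$ with $\Phi(V) \cong E$ as isocrystals, so the potentially admissible objects lie in the essential image. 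Since this image is abelian (being equivalent via $\Phi$ to the abelian category of $D_{\pcris}(H)$-representations), it must contain $\mathcal{C}$, and we conclude equality. The main obstacle will be verifying that the isocrystal structure transported to $V$ via its embedding as a subcomodule is intrinsic to $V$ (independent of the chosen presentation and functorial in morphisms); this should reduce to coassociativity of the comodule structure together with the faithfulness and tensor-compatibility of the forgetful functor $\mathrm{MF}^{\nabla} \to \Isoc$.
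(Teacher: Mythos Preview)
Your construction is the same functor the paper uses, but the paper writes it down explicitly as
\[
F(A) := A \ten^{D_{\pcris}(G)} D_{\pcris}^X\bO(G),
\]
with right adjoint $F_*(\sA):= \LLim_{K''}\H^0_{\cris}((\bar{X}_{k''},D_{k''}),\sA\ten D_{\pcris}^X\bO(G))$. This explicit formula immediately dispatches the ``main obstacle'' you flag: well-definedness and functoriality are automatic, with no choice of embedding involved.

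There are two genuine gaps in your argument. First, your fullness step is circular as written. You assert that a morphism of isocrystals $\Phi(V)\to\Phi(W)$ is automatically a map compatible with the $O(D_{\pcris}(H))$-coaction, but the isocrystal structure on $\Phi(V)$ is a connection on the crystalline site, not literally the comodule structure; the statement that horizontal maps are exactly comodule maps is precisely what full faithfulness means, so it cannot be assumed. The paper closes this by computing $F_*F(A)=A$ (using $\H^0_{\cris}$ of $D_{\pcris}^X\bO(G)\ten D_{\pcris}^X\bO(G)$) and invoking the adjunction $F\dashv F_*$, which gives $\Hom(F(A),F(B))\cong\Hom(A,F_*F(B))=\Hom(A,B)$.

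Second, in showing that the essential image lies in $\mathcal{C}$, you only exhibit $\Phi(V)$ as a \emph{subobject} of a potentially admissible isocrystal. That is not enough: the smallest full abelian subcategory containing a class of objects is closed under kernels and cokernels of maps between its own objects, not under arbitrary subobjects. The paper fixes this by iterating the embedding: from $A\hookrightarrow U$ one gets $U/A\hookrightarrow U'$, so $A=\ker(U\to U')$ with both $U,U'$ sums of admissible objects, whence $F(A)=\ker(F(U)\to F(U'))\in\mathcal{C}$.
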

\begin{proof}
Write $G:={}^{\pcris}\!\varpi^{\et}_f(X_{\bar{K}})$, and let $\bO(G)$ be the universal $G$-representation in smooth $\Q_p$-sheaves on $X_{\bar{K}}$, as defined in Definition \ref{OR}. Following through the proof of Proposition \ref{assocgpds}, the functor from $D_{\pcris}(G)$-representations in potentially admissible objects of $\mathrm{MF}_{(\Spec \bar{V},\emptyset)/\bar{K}}^{\nabla}$ to $\mathrm{MF}_{(\bar{X}_{\bar{V}},D_{\bar{V}})/\bar{K}}^{\nabla} $ is given by
$$
F(A) := A\ten^{D_{\pcris}(G)} D_{\pcris}^X\bO(G), 
$$
while its inverse is
$$
F_*(\sA) := \LLim_{K''}\H^0_{\cris}((\bar{X}_{k''}, D_{k''}), \sA\ten D_{\pcris}^X\bO(G)).
$$

The same formulae define left exact functors $F, F_*$ between the category of finite-dimensional $D_{\pcris}(G)$-representations and  $\Isoc((\bar{X}_{\bar{k}}, D_{\bar{k}})/K_0^{\nr} )$. For any point $x \in X(\bar{K})$,
$$
F(A)_x= A\ten^{D_{\pcris}(G)} D_{\pcris}(\bO(G)_x)= A\ten^{D_{\pcris}(G)} D_{\pcris}(O(G)(x,-))=A(x),
$$
so $F$ is exact.

For any $D_{\pcris}(G)$-representation $A$,
 \begin{eqnarray*}
F_*F(A)&=& A\ten^{D_{\pcris}(G)} \LLim_{K''}\H^0_{\cris}((\bar{X}_{k''}, D_{k''}),   D_{\pcris}^X\bO(G)\ten D_{\pcris}^X\bO(G))\\
&=& A\ten^{D_{\pcris}(G)}D_{\pcris}O(G)\\
&=& A.
\end{eqnarray*}
Moreover, $F_*$ is right adjoint to $F$, since a morphism $A \to F_*(\sA')$ is equivalent to a $G$-equivariant morphism $A\ten \O_{X, \cris} \to \sA'\ten D_{\pcris}^X\bO(G)$ of isocrystals, which is equivalent to a $G$-equivariant $D_{\pcris}^X\bO(G)$-linear morphism $A\ten D_{\pcris}^X\bO(G) \to \sA'\ten D_{\pcris}^X\bO(G)  $, which (taking $G$-invariants) is just a morphism $F(A) \to \sA'$. These two statements combine to show that $F$ is full and faithful.

Since $F$ is exact, its essential image is an abelian subcategory. Proposition \ref{assocgpds} ensures that it contains all  potentially admissible objects of  $\mathrm{MF}_{(\bar{X}_{\bar{V}},D_{\bar{V}})/\bar{K}}^{\nabla} $, so we need only show that anything in the image of $F$ is in the abelian subcategory generated by these potentially admissible objects.  

Given any $D_{\pcris}(G)$-representation $A$, we have a canonical embedding $A \into A\ten D_{\pcris}(O(G))$, which is a sum of objects of $\mathrm{MF}_{(\Spec \bar{V},\emptyset)/\bar{K}}^{\nabla}$. Thus for some finite-dimensional subobject $U$, we  have an embedding $A \into U$. Replacing $A$ with $U/A$, we get an embedding $U/A \into U'$, so $A= \ker(U \to U')$, and hence $F(A)= \ker(F(U) \to F(U'))$. Since $F(U)$ and $F(U')$ are potentially admissible objects of  $\mathrm{MF}_{(\bar{X}_{\bar{V}},D_{\bar{V}})/\bar{K}}^{\nabla} $, this completes the proof.
\end{proof}

\subsubsection{Crystalline homotopy types}

Fix  a Galois-equivariant quotient $R$ of ${}^{\pcris}\!\varpi^{\et}_f(X_{\bar{K}})^{\red}$, or rather of its full subgroupoid on objects $X(\bar{K})$

\begin{definition}
Let  $\sF \to \sC_{\cris}^{\bt}(\sF)$ be a choice of functor from isocrystals to cosimplicial sheaves on the log-crystalline site, with the property that $\sC_{\cris}^{\bt}(\sF)$ is a resolution of $\sF$, compatible with tensor products, and acyclic for log-crystalline cohomology. Examples of such a functor are given in \cite[p.\pageref{gal-logcrishtpy}]{gal}, or by denormalising the construction $DR$ of \cite[4.29.2]{olssonhodge}. In both cases, the resolution is given by first choosing a resolution which is acyclic for the derived functor between crystalline and  Zariski sites (such as denormalisation of the de Rham complex), then taking a \v Cech resolution.

Define
$$
\CC_{\cris}^{\bt}(Y,\sF):= \Gamma(Y,\sC_{\cris}^{\bt}(\sF)),
$$  
observing that this construction will also be compatible with tensor products.
\end{definition}


\begin{definition}
Define the \emph{relative crystalline homotopy type} $X_{\bar{k},\cris}^{D_{\pcris}(R), \mal}$ \emph{over} $D_{\pcris}R$ to be the pro-algebraic homotopy type in $\Ho(s\cE(D_{\pcris}R)_*)$ (over $K_0^{\nr}$) corresponding under Theorem \ref{bigequiv} to the $D_{\pcris}(R)$-representation 
$$
\CC_{\cris}^{\bt}((\bar{X}_{\bar{k}}, D_{\bar{k}}), D_{\pcris}^X\bO(R))
$$
in cosimplicial $K_0^{\nr}$-algebras, equipped with its natural augmentations to $D_{\pcris}O(R)(x,-) = \CC_{\cris}^{\bt}(\Spec K_0^{\nr}, x^*D_{\pcris}^X\bO(R))$ coming from elements  $x \in X(\bar{V})$.
\end{definition}

\begin{lemma}
There is a canonical equivalence between representations of $\varpi_f(X_{\bar{k}}/K_0^{\nr})_{\cris}^{D_{\pcris}(R), \mal}$ and  a full  subcategory  of  $\Isoc((\bar{X}_{\bar{k}}, D_{\bar{k}})/K_0^{\nr} )$. Objects of this category are Artinian extensions of those  isocrystals corresponding under Proposition \ref{crisgpdreps} to $D_{\pcris}(R)$-representations.
\end{lemma}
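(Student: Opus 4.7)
The plan is to decompose the equivalence into two independent assertions: (i) a description of representations of a relative Malcev completion as Artinian extensions, which is formal; and (ii) an embedding of $\FD\Rep(\varpi_f(X_{\bar{k}}/K_0^{\nr})_{\cris})$ as a full tensor subcategory of $\Isoc((\bar{X}_{\bar{k}}, D_{\bar{k}})/K_0^{\nr})$, which is the substantive content. Combining these with Proposition \ref{crisgpdreps}, which already pins down where $D_{\pcris}(R)$-representations sit inside $\Isoc$, will yield the lemma.

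For step (i), I would appeal directly to Definition \ref{malcevdef} together with Remarks \ref{savemalcev}, applied to the Zariski-dense surjection $\varpi_f(X_{\bar{k}}/K_0^{\nr})_{\cris} \onto D_{\pcris}(R)$: a finite-dimensional representation of the relative Malcev completion is precisely a $\varpi_f(X_{\bar{k}}/K_0^{\nr})_{\cris}$-representation whose semisimplification factors through $D_{\pcris}(R)$, equivalently an iterated extension inside $\Rep(\varpi_f(X_{\bar{k}}/K_0^{\nr})_{\cris})$ of $D_{\pcris}(R)$-representations.

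For step (ii), I would run the argument of Proposition \ref{crisgpdreps} in reverse, using the defining cosimplicial algebra  $\CC_{\cris}^{\bt}((\bar{X}_{\bar{k}}, D_{\bar{k}}), D_{\pcris}^X\bO(R))$ together with Theorem \ref{qleqhtpy}. Concretely, a representation of $\varpi_f(X_{\bar{k}}/K_0^{\nr})_{\cris}$ corresponds, via Tannakian duality and the equivalence of Theorem \ref{bigequiv}, to a compatible system of fibres  with descent data against this cochain algebra; since $\sC_{\cris}^{\bt}$ is a resolution by acyclic sheaves on the log-crystalline site compatible with tensor products, such descent data is the same as a module with log-integrable connection, i.e.\ an object of $\Isoc((\bar{X}_{\bar{k}}, D_{\bar{k}})/K_0^{\nr})$. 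Full faithfulness then follows from the acyclicity of $\sC_{\cris}^{\bt}$ for log-crystalline cohomology, exactly as in the $F \dashv F_*$ argument of Proposition \ref{crisgpdreps}, once one identifies $\Hom$ in both categories with $\H^0$ of a common complex of internal homs. Combining (i) and (ii) with Proposition \ref{crisgpdreps} then describes the essential image as the full subcategory of Artinian extensions (taken in $\Isoc$) of the isocrystals attached to $D_{\pcris}(R)$-representations.

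The principal obstacle will be ensuring that ``Artinian extension'' is unambiguous, i.e.\ that $\Ext^1$ computed in $\Rep(\varpi_f(X_{\bar{k}}/K_0^{\nr})_{\cris})$ agrees with $\Ext^1$ computed in $\Isoc$, so that forming Artinian extensions in the smaller category does not lose any objects when pushed into the larger one. This amounts to a comparison between the  hypercohomology $\bH^1(\varpi_f(X_{\bar{k}}/K_0^{\nr})_{\cris}, \sH\!om(\cdot,\cdot))$ and log-crystalline $\Ext^1$, and should follow from an analogue of Lemma \ref{hypercohogood} for the crystalline Godement resolution, together with the observation that the construction $\sF\mapsto \sC_{\cris}^{\bt}(\sF)$ is compatible with the internal hom on isocrystals. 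Once this comparison is in hand, the two characterisations of  the essential image line up and the lemma follows.
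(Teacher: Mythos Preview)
The paper's own proof is simply a citation to \cite[Theorem 2.28]{olssonhtpy}, with the remark that \cite[Theorem 2.9]{gal} gives an alternative via non-nilpotent torsors. So you are in effect reconstructing what those references do, and your overall architecture is sound: the statement does reduce to (i) the formal fact that representations of a pro-unipotent extension of $D_{\pcris}(R)$ are iterated extensions of $D_{\pcris}(R)$-representations, together with (ii) a comparison identifying those extensions with extensions in $\Isoc$.

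Two adjustments are needed. First, in step (i) you invoke a Zariski-dense surjection from a groupoid $\varpi_f(X_{\bar{k}}/K_0^{\nr})_{\cris}$, but no such object has been defined in the paper independently of $R$; the relative crystalline homotopy type is built directly from the cochain algebra $\CC_{\cris}^{\bt}((\bar{X}_{\bar{k}}, D_{\bar{k}}), D_{\pcris}^X\bO(R))$, and its $\pi_0$ is by construction a pro-unipotent extension of $D_{\pcris}(R)$. So step (i) is correct but should be phrased as a consequence of the Levi decomposition (Proposition \ref{leviprop}) rather than of Definition \ref{malcevdef}. Second, and more substantively, your step (ii) as written --- interpreting representations as ``descent data against the cochain algebra'' via Theorem \ref{qleqhtpy} and then reading off a log-connection --- does not go through directly: Theorem \ref{qleqhtpy} is about \'etale cochains, and in any case representations of $\pi_0$ of a homotopy type are not literally comodules over the cochain algebra. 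The genuine content is exactly what you flag as the ``principal obstacle'': matching $\Ext^1$ in the representation category with $\Ext^1$ in $\Isoc$. This is not an afterthought but the whole proof. Concretely, $\H^1$ of the relative Malcev fundamental groupoid agrees with $\H^1$ of the homotopy type (Lemma \ref{cohomalworks} and its crystalline analogue), which by construction is log-crystalline $\H^1$, which classifies extensions of isocrystals; iterating gives the full subcategory of Artinian extensions. This is precisely the torsor/non-abelian $\H^1$ argument of \cite{gal} and \cite{olssonhtpy} that the paper is citing.
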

\begin{proof}
This is \cite[Theorem 2.28]{olssonhtpy}. An alternative approach would be to note that the proof of \cite[Theorem 2.9]{gal} 
 carries over to non-nilpotent torsors.
\end{proof}

\begin{definition}
For a topos $\cT$, if $\sC^{\bt}_{\cT}(\sS)$ is a canonical cosimplicial ${\cT}$-resolution of a sheaf $\sS$ of algebras on $X$, with $\CC^{\bt}_{\cT}(X, \sS):= \Gamma(X, \sC^{\bt}_{\cT}(\sS))$, then for any morphism $f\co X \to Y$ we have a bicosimplicial algebra $\CC^{\bt}_{\cT}(Y, f_*\sC^{\bt}_{\cT}(\sS))$, and we define
$$
\CC^{\bt}_{\cT}(f, \sS):= \tau''\CC^{\bt}_{\cT}(Y, f_*\sC^{\bt}_{\cT}(\sS))\in Fc\Alg,
$$
defined as in Definition \ref{tau''}.
\end{definition}

\begin{definition}\label{leraytypecris}
If we write $j$ for the embedding $X \into \bar{X}$, define the \emph{ filtered relative crystalline homotopy type} $(X_{\bar{k},\cris},j_{\bar{k},\cris})^{D_{\pcris}(R), \mal}$ \emph{over} $D_{\pcris}R$
to be the filtered pro-algebraic homotopy type in $\Ho(s\cE(D_{\pcris}R)_*)$ (over $K_0^{\nr}$) corresponding under Theorem \ref{fbigequiv} to the filtered $D_{\pcris}(R)$-representation 
$$
\CC_{\cris}^{\bt}(j_{\bar{k},\cris}, D_{\pcris}^X\bO(R))
$$
in cosimplicial $K_0^{\nr}$-algebras,  equipped with its natural augmentations to $D_{\pcris}O(R)(x,-) = \CC_{\cris}^{\bt}(\Spec K_0^{\nr}, x^*D_{\pcris}^X\bO(R))$ coming from elements  $x \in X(\bar{V})$.
\end{definition}

\subsubsection{Comparison of homotopy types}

From now on, let $B:=B_{\cris}(V)$ and $\tilde{B}:= \tilde{B}_{\cris}(V)$, from Definition \ref{bcrisdef}.


\begin{proposition}\label{crisequiv}
For any
Galois-equivariant quotient $R$ of ${}^{\pcris}\!\varpi^{\et}_f(X_{\bar{K}})^{\red}$, 
there is a chain of  $(\phi,\cG^0)$-equivariant quasi-isomorphisms 
$$
 X_{\bar{K},\et}^{R, \mal}\ten_{\Q_p}\tilde{B} \sim X_{\bar{k},\cris}^{ D_{\pcris}R, \mal}\ten_{K_0^{\nr}}\tilde{B} 
$$
in $s\Aff_{\tilde{B}}(R)_*$.
\end{proposition}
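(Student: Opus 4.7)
The plan is to reduce the claim, via Theorem \ref{qleqhtpy} and its crystalline counterpart (the relative Malcev analogue of \cite{olssonhtpy}), to exhibiting a chain of $(\phi, \cG^0)$-equivariant quasi-isomorphisms between the cosimplicial algebras
\[
  \CC_{\et}^{\bt}(X_{\bar{K}}, \bO(R))\ten_{\Q_p}\tilde{B} \quad\text{and}\quad \CC_{\cris}^{\bt}((\bar{X}_{\bar{k}}, D_{\bar{k}}), D_{\pcris}^X\bO(R))\ten_{K_0^{\nr}}\tilde{B}
\]
in $c\Alg_{\tilde{B}}(R)_*$, and then to recast Olsson's non-abelian $p$-adic Hodge comparison \cite{olssonhodge} in this multipointed relative Malcev setting. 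The advantage of this reformulation is that since the cohomology of Godement resolutions is computed pointwise on sheaves, we may perform the comparison coefficient-by-coefficient on the ind-object $\bO(R)$.

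First, I would build the intermediate comparison object. For each crystalline \'etale sheaf $\vv$ on $X_{K'}$ with association $(\vv, \iota, E)$, the family $\iota_U$ glues to an \'etale sheaf $\tilde{\sB}_{\cris}(\vv)$ of $\tilde{B}$-modules on $X_{\bar{V}}$, canonically isomorphic to both $\vv\ten_{\Q_p}\tilde{\sB}_{\cris}$ and to $E\ten_{\O_{X_k,\cris}}\tilde{\sB}_{\cris}$, and compatible with tensor products by construction. Since $\bO(R)$ is the filtered colimit of its finite-dimensional sub-$R$-representations, all of which are crystalline by hypothesis on $R$ and Proposition \ref{crisgpdreps}, we obtain an ind-sheaf $\tilde{\sB}_{\cris}(\bO(R))$ of $R$-equivariant cosimplicial $\tilde{B}$-algebras carrying compatible $\phi$ and $\cG^0$-actions coming from $\tilde{B}_{\cris}$. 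Set
\[
  A := \CC_{\et}^{\bt}(X_{\bar{K}}, \tilde{\sB}_{\cris}(\bO(R))) \in c\Alg_{\tilde{B}}(R)_*,
\]
with augmentations coming from the points $x\in X(\bar{V})$ via $x^*\tilde{\sB}_{\cris}(\bO(R)) \simeq O(R)(x,-)\ten \tilde{B}$.

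Second, I would construct the two comparison morphisms and prove they are quasi-isomorphisms. The natural maps $\bO(R)\ten_{\Q_p}\tilde{B}\to \tilde{\sB}_{\cris}(\bO(R))$ and, via $\iota$, $D_{\pcris}^X\bO(R)\ten_{K_0^{\nr}}\tilde{B}\to \tilde{\sB}_{\cris}(\bO(R))$ induce $\phi$- and $\cG^0$-equivariant morphisms
\[
  \CC_{\et}^{\bt}(X_{\bar{K}}, \bO(R))\ten_{\Q_p}\tilde{B} \;\xrightarrow{\alpha}\; A \;\xleftarrow{\beta}\; \CC_{\cris}^{\bt}((\bar{X}_{\bar{k}}, D_{\bar{k}}), D_{\pcris}^X\bO(R))\ten_{K_0^{\nr}}\tilde{B}
\]
in $c\Alg_{\tilde{B}}(R)_*$. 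That $\alpha$ is a quasi-isomorphism  reduces, by considering the associated spectral sequence, to the vanishing of higher \'etale cohomology of $\tilde{\sB}_{\cris}$ on local \'etale neighbourhoods combined with $\H^0_{\et}(\hat U, \tilde{\sB}_{\cris})=\tilde B$, which is Faltings' almost-purity / Poincar\'e lemma input underlying \cite[\S 5]{Hop}. That $\beta$ is a quasi-isomorphism is Olsson's crystalline Poincar\'e lemma for $\tilde{B}_{\cris}$ (\cite[\S\S 3--4]{olssonhodge}) applied to each finite-dimensional sub-$R$-representation of $\bO(R)$, noting that direct limits preserve quasi-isomorphisms of cosimplicial complexes. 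Compatibility with augmentations is automatic, since the augmentation of $A$ at $x$ is  $x^*\tilde{\sB}_{\cris}(\bO(R))=O(R)(x,-)\ten\tilde{B}$, matching both $\CC_{\et}^{\bt}(x, \bO(R))\ten \tilde B$ and $\CC_{\cris}^{\bt}(\Spec K_0^{\nr}, x^*D_{\pcris}^X\bO(R))\ten \tilde B$ via the same identification.

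The main obstacle is really step two: Olsson's comparison is phrased for pro-unipotent (relative Malcev) homotopy types via DG-algebras, while our setting is cosimplicial and works with the whole multipointed reductive envelope $R$. The translation between formalisms is handled by Proposition \ref{affequiv} and the identifications of Remarks \ref{cfolsson}, but care is needed to ensure that passing from a single unipotent coefficient sheaf to the ind-object $\bO(R)$ is compatible with the derived functors involved. This is where we use, crucially, the fact (Proposition \ref{assockey}) that crystalline \'etale sheaves form an abelian category closed under extensions and subquotients, so that the association and its pointwise inverse  $D_{\pcris}^X$ both commute with the filtered colimits defining $\bO(R)$; the comparison morphisms $\alpha, \beta$ are therefore levelwise filtered colimits of Olsson's comparison morphisms on the finite-dimensional pieces, and hence quasi-isomorphisms.
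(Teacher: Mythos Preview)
Your proposal is correct and follows essentially the same route as the paper: both reduce via Theorem~\ref{qleqhtpy} to a chain of $(\phi,\cG^0)$-equivariant quasi-isomorphisms between the cosimplicial algebras $\CC^{\bt}_{\et}(X_{\bar{K}},\bO(R))\ten_{\Q_p}\tilde{B}$ and $\CC^{\bt}_{\cris}((\bar{X}_{\bar{k}},D_{\bar{k}}),D_{\pcris}^X\bO(R))\ten_{K_0^{\nr}}\tilde{B}$ in $c\Alg_{\tilde{B}}(R)_*$, with basepoint compatibility.

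The only difference is one of packaging. The paper simply identifies its cosimplicial algebras with the denormalisations of Olsson's $\oR\Gamma_{\cris}(D_{\pcris}^X\bO(R))$ and $GC(\bO(R),X(\bar{K}))$ (via Remark~\ref{cfolsson}), then cites \cite[6.15.1]{olssonhodge} as a black box for the comparison and \cite[Proposition 6.19]{olssonhodge} for augmentation compatibility at basepoints. You instead sketch the internal mechanism of that comparison: an intermediate object built from $\tilde{\sB}_{\cris}$, with the two legs controlled by the Faltings/Poincar\'e-lemma inputs, and you argue explicitly that passing to the ind-object $\bO(R)$ is harmless because associations are closed under extensions and subquotients (Proposition~\ref{assockey}). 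Both arguments are sound; the paper's is shorter because it outsources the analytic work, while yours is more self-contained and makes the compatibility with the ind-structure on $\bO(R)$ explicit.
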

\begin{proof}
This amounts to establishing a chain of quasi-isomorphisms
$$
\CC^{\bt}_{\et}(X_{\bar{K}}, \bO(R))\ten_{\Q_p}\tilde{B} \sim \CC^{\bt}_{\cris}(X_{\bar{k}}/K_0^{\nr}, D_{\pcris}^X\bO(R))\ten_{K_0^{\nr}}\tilde{B}
$$
in $c\Alg_{\tilde{B}}(R)_* $

In the notation of \cite[4.29 and 5.21]{olssonhodge}, $\CC^{\bt}_{\cris}(X_k/K_0^{\nr}, D_{\pcris}^X\bO(R))$ and $\CC^{\bt}_{\et}(X_{\bar{K}}, \bO(R)) $ are quasi-isomorphic to the denormalisations of     $\oR \Gamma_{\cris}(D_{\pcris}^X\bO(R) )$ and $GC(\bO(R), X(\bar{K}))$,  since denormalisation and Thom-Sullivan are quasi-inverse up to homotopy (as in Remark \ref{cfolsson}). 

Since the affine group schemes $R/\Q_p$ and $D_{\pcris}(R)/K_0^{\nr}$ are associated by an isomorphism
$$
B\ten_{\Q_p}O(R) \cong B\ten_{K_0^{\nr}}D_{\pcris}O(R),
$$ 
the required result is then ibid. 6.15.1, combined with the observation in ibid. Proposition 6.19 that pullback preserves associations, thus ensuring that these associations are compatible with the augmentation maps coming from basepoints.

The proof of ibid. 6.15.1 proceeds by adapting the isomorphisms on cohomology groups from  \cite[5.6]{Hop}  to quasi-isomorphisms of DG algebras. Since the latter proves that the cohomological isomorphisms   respect cup products, an alternative  approach would be to  extend the isomorphisms to  quasi-isomorphisms of the  minimal $E_{\infty}$-algebras they underlie. Remark \ref{einfty} would then imply that the corresponding objects in $dg\hat{\cN}(R)$ are weakly equivalent. 
\end{proof}

\begin{remark}
When $L$ is a crystalline \'etale sheaf on $X_K$ and $R$ is the Zariski closure of the image of $\pi_1^{\et}(X_{\bar{K}}, \bar{x}) \to \GL(L_{\bar{x}})$ with nilpotent monodromy around each component of the  divisor, then Proposition \ref{crisequiv} is effectively \cite[Theorem 1.7]{olssonhodge}  (replacing ``crystalline'' with ``potentially crystalline'' throughout). The nilpotent hypothesis was needed for Tannakian considerations, which in our case are obviated by Proposition \ref{assockey}. 
\end{remark}

\begin{theorem}\label{outercris}
Given a Galois-equivariant quotient $R$ of ${}^{\pcris}\!\varpi^{\et}_f(X_{\bar{K}})$,  the  Galois action on $X_{\bar{K},\et}^{R \mal}$ is algebraic and potentially crystalline.
\end{theorem}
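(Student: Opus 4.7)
My plan is in two steps: first to establish that the Galois action on $X_{\bar K,\et}^{R,\mal}$ is algebraic, and then to upgrade this to the potentially crystalline property by transporting the étale structure across the $p$-adic comparison isomorphism.

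For algebraicity, I would apply Corollary \ref{redalgebraic}, which has two hypotheses to check. First, the Galois action on $R$ itself must be algebraic. Since $R$ is a Galois-equivariant quotient of ${}^{\pcris}\!\varpi^{\et}_f(X_{\bar K})$, the identification
\[
\Lim_{K''}(\Gal(\bar K/K)\by_{\Gal(\bar K/K'')} \varpi^{\et}_f(X_{K''})^{\cris}) \cong {}^{\pcris}\!\varpi^{\et}_f(X_{\bar K}) \rtimes \cG^{\pcris}
\]
noted just after the definition of ${}^{\pcris}\!\varpi^{\et}_f(X_{\bar K})$ already exhibits the Galois action on ${}^{\pcris}\!\varpi^{\et}_f(X_{\bar K})$, and \emph{a fortiori} on $R$, as algebraic (in fact, factoring through $\cG^{\pcris}$). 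Second, finite-dimensionality of $\H^i(X_{\bar K,\et}^{R,\mal},V)\cong \H^i(X_{\bar K,\et},\rho^*V)$ for finite-dimensional $R$-representations $V$ follows from the Leray spectral sequence for $j\co X_{\bar K}\into \bar X_{\bar K}$, given properness of $\bar X_{\bar K}$ and standard finiteness for constructible $\Q_p$-sheaves. Corollary \ref{redalgebraic} then produces the required algebraic morphism $\cG \to \Aut^h(X_{\bar K,\et}^{R,\mal})$.

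For the potentially crystalline property, I would invoke Proposition \ref{crisequiv}, which provides a $(\phi,\cG^0)$-equivariant quasi-isomorphism
\[
X_{\bar K,\et}^{R,\mal}\ten_{\Q_p}\tilde B \;\sim\; X_{\bar k,\cris}^{D_{\pcris}R,\mal}\ten_{K_0^{\nr}}\tilde B,
\]
in which the right-hand side is defined over $K_0^{\nr}$ and carries trivial $\cG^0$-action. Applying $\Aut^h$ (and using that $\Aut^h$ is fibred in affine schemes by Lemma \ref{outdef}, together with functoriality under base change) yields a $\cG^0$-equivariant isomorphism $\Aut^h(X_{\bar K,\et}^{R,\mal})\ten_{\Q_p}\tilde B \cong \Aut^h(X_{\bar k,\cris}^{D_{\pcris}R,\mal})\ten_{K_0^{\nr}}\tilde B$ exhibiting a $K_0^{\nr}$-form with trivial $\cG^0$-action. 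Proposition \ref{pcristest} then forces the $\cG$-action on $\Aut^h(X_{\bar K,\et}^{R,\mal})$, and in particular the algebraic Galois morphism constructed above, to factor through $\cG^{\pcris}$.

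The main obstacle is the last promotion: the quasi-isomorphism of Proposition \ref{crisequiv} is only a zig-zag in a homotopy category, whereas $\Aut^h$ is a strict construction. I would address this by passing to minimal models via Proposition \ref{dgminimal}, so that the quasi-isomorphism over $\tilde B$ becomes a genuine isomorphism of cofibrant objects, and the $(\phi,\cG^0)$-equivariance is inherited by the minimal models up to the inner automorphism ambiguity controlled by Lemma \ref{cohohelps}. That lemma reduces the remaining $\cG^0$-equivariance question to an equivariance statement on cohomology, where the comparison is already known to match $D_{\pcris}$ applied to the étale cohomology of $\bO(R)$, hence is potentially crystalline; the pro-unipotent kernel is then handled by an inductive step at each stage of the Postnikov-type filtration on the minimal model, each stage involving cohomology with coefficients which are themselves potentially crystalline.
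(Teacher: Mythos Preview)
Your overall strategy matches the paper's: use the comparison of Proposition \ref{crisequiv} together with the criterion of Proposition \ref{pcristest}. However, there is a gap in how you apply the latter. Proposition \ref{pcristest} is stated for an affine $\Q_p$-scheme $Y$, whereas $\Aut^h(X_{\bar K,\et}^{R,\mal})$ is only a group presheaf; Lemma \ref{outdef} says that $\Aut^h(G)\to \Aut(R)$ is \emph{fibred in affine schemes}, not that $\Aut^h(G)$ itself is affine. The paper circumvents this by using that the $\cG$-action on $R$ already factors through $\cG^{\pcris}$ (which you correctly establish in your first step), giving a morphism $\cG^{\pcris,0}\to \Aut(R)$, and then working with the genuinely affine fibre product
\[
Y=\Aut^h(X_{\bar K,\et}^{R,\mal})\times_{\Aut(R)}\cG^{\pcris,0},
\]
equipped with the $\cG^0$-action by left multiplication. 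The $K_0^{\nr}$-form is then taken not as an $\Aut^h$ of the crystalline side, but as the Iso-scheme $Z$ over $D_{\pcris}\cG^{\pcris,0}$ given by
\[
Z(A)=\Iso_{\Ho(dg\Aff_A(R)_*)}\bigl(X_{\bar K,\et}^{R,\mal}\otimes_{\Q_p}A,\ f^{\sharp}(X_{\bar k,\cris}^{D_{\pcris}R,\mal}\otimes_{K_0^{\nr}}A)\bigr),
\]
and one checks $Z\times_{K_0^{\nr}}\Spec\tilde B\cong Y\times_{\Q_p}\Spec\tilde B$ directly from Proposition \ref{crisequiv}.

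Your final paragraph's concern is also misplaced. Since $\Aut^h$ (and the scheme $Z$ above) are \emph{defined} via isomorphisms in the homotopy category $\Ho(dg\Aff_A(R)_*)$, a chain of quasi-isomorphisms over $\tilde B$ is already an isomorphism at that level, and no passage to minimal models or Postnikov-type induction is needed. Your explicit verification of algebraicity via Corollary \ref{redalgebraic} is, on the other hand, a genuine addition: the paper tacitly assumes the map $\cG\to\Aut^h(X_{\bar K,\et}^{R,\mal})$ exists before checking that it factors through $\cG^{\pcris}$.
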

\begin{proof}
In the notation of \S \ref{pcris}, we need to show that the map $\cG \to \Aut^h( X_{\bar{K},\et}^{R, \mal})$ factors through $\cG^{\pcris}$. 
Apply Proposition \ref{pcristest} to Proposition \ref{crisequiv}, taking
$$
Y=\Aut^h(X_{\bar{K},\et}^{R, \mal})\by_{\Aut(R)} \cG^{\pcris,0}
$$
with the $\cG^0$ action on $Y$ given by left multiplication.

Now, note that $D_{\pcris}(\cG^{\pcris,0}\by R) = D_{\pcris}(\cG^{\pcris,0})\by R$, giving a $K_0^{\nr}$-linear map $f\co D_{\pcris}(\cG^{\pcris,0})\by R \to D_{\pcris}R$. In fact, $D_{\pcris}(\cG^{\pcris,0})= \Spec B^{\ker(\cG^0 \to \cG^{\pcris,0})}$, so this map just comes from the isomorphism $(D_{\pcris}O(R))\ten_{K_0^{\nr}}B \cong O(R)\ten_{\Q_p}B$.

We now define $Z$ over  $D_{\pcris}\cG^{\pcris,0}$ to be the affine scheme given by
$$
Z(A) =  \Iso_{\Ho(dg\Aff_A(R)_*)}( X_{\bar{K},\et}^{R, \mal}\ten_{\Q_p}A, f^{\sharp}(X_{\cris}^{D_{\pcris}R, \mal}\ten_{K_0^{\nr}}A) ),
$$
for  $D_{\pcris}O(\cG^{\pcris,0})$-algebras $A$.

Since $\cG^{\pcris,0}$ is potentially crystalline, we have an isomorphism $\alpha\co \cG^{\pcris,0}\by \Spec \tilde{B}\to  (D_{\pcris}\cG^{\pcris,0}) \by_{\Spec K_0^{\nr}} \Spec \tilde{B}$, so the scheme $Z\by_{\Spec K_0^{\nr}} \Spec \tilde{B}$ can be regarded as a scheme over $\cG^{\pcris,0}\by \Spec \tilde{B}$

The $\cG^0$-equivariant isomorphism of Proposition  \ref{crisequiv} then gives,  for any $D_{\pcris}O(\cG^{\pcris,0})\ten_{K_0^{\nr}}\tilde{B}$-algebra $A$, a $\cG^0$-equivariant   isomorphism
$$
Z(A) \cong \Iso_{\Ho(dg\Aff_A(R)_*)}( X_{\bar{K},\et}^{R, \mal}\ten_{\Q_p}A, \alpha^{\sharp} X_{\bar{K},\et}^{R, \mal}\ten_{\Q_p}A),
$$
but the right-hand side is just $Y(A)$, giving a $\cG^0$-equivariant isomorphism
$$
Z\by_{ K_0^{\nr}} \Spec \tilde{B}_{\cris} \cong Y \by_{\Q_p} \Spec \tilde{B}_{\cris}, 
$$
as required.
\end{proof}

\begin{corollary}
For $x, y \in X(\bar{K})$, the $\cG^0$-actions on 
$$
\varpi_n(X_{\bar{K},\et}^{R, \mal},x) \quad \text{ and }\quad \varpi_f(X_{\bar{K},\et}^{R, \mal})(x,y)
$$
are potentially crystalline.
\end{corollary}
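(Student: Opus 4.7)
The plan is to deduce this directly from Theorem \ref{outercris} by functoriality, in exactly the same manner that Corollary \ref{wgtexistspin} is deduced from Proposition \ref{wgtexists}. The point is that homotopy groups and the hom-schemes of the fundamental groupoid are functorial invariants of objects of $\Ho(s\cE(R)_*)$, so algebraic automorphisms of the homotopy type induce algebraic automorphisms of these invariants.

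Concretely, for each $x \in X(\bar{K})$ and $n \ge 2$ there is a canonical homomorphism
\[
\Aut^h(X_{\bar{K},\et}^{R, \mal}) \to \Aut(\varpi_n(X_{\bar{K},\et}^{R, \mal},x))
\]
of group presheaves on $\Q_p$-algebras, since $\varpi_n$ is defined on the homotopy category $\Ho(s\cE(R)_*)$. Similarly, for $x,y \in X(\bar{K})$ there is a canonical homomorphism
\[
\Aut^h(X_{\bar{K},\et}^{R, \mal}) \to \Aut(\varpi_f(X_{\bar{K},\et}^{R, \mal})(x,y))
\]
of group presheaves. By Theorem \ref{outercris}, the Galois action $\cG \to \Aut^h(X_{\bar{K},\et}^{R, \mal})(\Q_p)$ is algebraic and potentially crystalline, i.e.\ it factors through a morphism $\cG^{\pcris} \to \Aut^h(X_{\bar{K},\et}^{R, \mal})$ of pro-algebraic groups. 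Composing with the functorial maps above gives algebraic actions of $\cG^{\pcris}$ on $\varpi_n(X_{\bar{K},\et}^{R, \mal},x)$ and on $\varpi_f(X_{\bar{K},\et}^{R, \mal})(x,y)$.

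Restricting to the connected component $\cG^0 \subset \cG$, the action factors through $\cG^{\pcris,0}$; by the definition (in \S\ref{pcris}) of the quotient $\cG^{\pcris}$ of $\cG$ as corresponding to the Tannakian subcategory of potentially crystalline representations, this is precisely the statement that the $\cG^0$-actions on $\varpi_n(X_{\bar{K},\et}^{R, \mal},x)$ and $\varpi_f(X_{\bar{K},\et}^{R, \mal})(x,y)$ are potentially crystalline. There is no real obstacle here beyond checking functoriality of the constructions in Lemma \ref{outdef}; the substantive content lies entirely in Theorem \ref{outercris}, whose proof has already been sketched.
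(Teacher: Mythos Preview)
Your proof is correct and follows the same approach as the paper's: both observe that the Galois action on homotopy groups and on the hom-schemes of the fundamental groupoid factors through $\Aut^h(X_{\bar{K},\et}^{R, \mal})$, so Theorem \ref{outercris} immediately gives the result. Your write-up is more detailed (and explicitly parallels Corollary \ref{wgtexistspin}), but the argument is identical; one minor quibble is that $\Aut^h$ is a group presheaf rather than a pro-algebraic group, so ``morphism of group presheaves'' would be more accurate than ``morphism of pro-algebraic groups''.
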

\begin{proof}
This is just the observation that the map $\Aut(X_{\bar{K},\et}) \to \Aut(\varpi_n(X_{\bar{K},\et}^{R, \mal},x) )(\Q_p)$ factors through $\Aut^h(X_{\bar{K},\et}^{R, \mal})$.
\end{proof}
Note that if we set $R=1$ and look at the fundamental group, this recovers the comparison theorem of \cite{shihokey} and \cite{vologodsky} between pro-unipotent \'etale and crystalline fundamental groups.

In fact, we may extend Proposition \ref{crisequiv} to a filtered version:
\begin{proposition}\label{fcrisequiv}
For any
Galois-equivariant quotient $R$ of ${}^{\pcris}\!\varpi^{\et}_f(X_{\bar{K}})^{\red}$ and for 
$j\co X \to \bar{X}$,  there is a chain of canonical $(\phi,\cG^0)$-equivariant  quasi-isomorphisms 
$$
 (X_{\bar{K},\et}, j_{\bar{K}, \et})^{R, \mal}\ten_{\Q_p}\tilde{B} \sim (X_{\bar{k},\cris},j_{\bar{k},\cris})^{D_{\pcris}(R), \mal}\ten_{K_0^{\nr}}\tilde{B} 
$$
in $Fs\Aff_{\tilde{B}}(R)_*$.
\end{proposition}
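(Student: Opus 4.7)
The plan is to lift the chain of quasi-isomorphisms in Proposition \ref{crisequiv} from the cosimplicial-algebra level to a filtered/bicosimplicial level compatible with the Leray filtration coming from $j\co X \into \bar X$. By Theorem \ref{fbigequiv} it suffices to produce a chain of $(\phi,\cG^0)$-equivariant quasi-isomorphisms in $Fc\Alg_{\tilde B}(R)_*$ between the filtered cosimplicial algebras classifying the two sides. By construction (Definitions \ref{leraytype} and \ref{leraytypecris}, together with Lemma \ref{jdef} and its crystalline analogue), these filtered cosimplicial algebras are obtained by applying the $\tau''$ construction to the bicosimplicial algebras
\[
\CC^{\bt}_{\et}(\bar X_{\bar K},\, j_*\sC^{\bt}_{\et}(\bO(R))) \quad \text{and} \quad \CC^{\bt}_{\cris}((\bar X_{\bar k},D_{\bar k})/K_0^{\nr},\, j_*\sC^{\bt}_{\cris}(D_{\pcris}^X\bO(R))),
\]
respectively. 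Since $\tau''$ is functorial and preserves termwise quasi-isomorphisms in the second cosimplicial direction, it is enough to produce a $(\phi,\cG^0)$-equivariant chain of quasi-isomorphisms of bicosimplicial algebras between these two objects (after tensoring with $\tilde B$), equipped with their basepoint augmentations.

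First I would revisit the proof of Proposition \ref{crisequiv}: Olsson's isomorphism \cite[6.15.1]{olssonhodge}, which underlies the comparison there, is obtained by sheafifying the association \'etale$\leftrightarrow$crystalline over $\bar X$; in particular it produces a chain of quasi-isomorphisms of sheaves of $\tilde B$-algebras on $\bar X$ (rather than merely on global sections), relating $\sC^{\bt}_{\et}(\bO(R))\ten\tilde B$ on $X_{\bar K,\et}$ and $\sC^{\bt}_{\cris}(D_{\pcris}^X\bO(R))\ten\tilde B$ on $(\bar X_{\bar k},D_{\bar k})_{\cris}$, via the intermediate ``$B$-enriched'' site used to set up associations. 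Pushing this chain forward along $j_*$ on each side (both in the \'etale and in the log-crystalline topos), and then applying $\CC^{\bt}$ on $\bar X$, yields a $(\phi,\cG^0)$-equivariant chain of quasi-isomorphisms of bicosimplicial algebras, compatible with the augmentations given by points $x \in X(\bar V)$ (since $x^*$ commutes with $j_*$ up to canonical quasi-isomorphism because $x$ factors through $j$). Applying $\tau''$ and then Theorem \ref{fbigequiv} gives the required filtered quasi-isomorphism in $Fs\Aff_{\tilde B}(R)_*$.

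The main obstacle I expect is the sheaf-theoretic step: Olsson's comparison is stated globally on $X$, and one must verify that it can be applied locally on \'etale covers of $\bar X$ and assembled to yield a quasi-isomorphism of complexes of sheaves on $\bar X$ whose stalks (or sections over suitably small \'etale opens) recover the comparison on affine opens of $X$. Concretely, this amounts to showing that Olsson's construction of $B_{\cris}$-enriched resolutions is natural with respect to restriction along \'etale opens $U \to \bar X$, with $U \cap X \to U$ a good geometric fibration, and that the resulting local comparisons glue to a map of bicosimplicial sheaves refining the spectral sequence isomorphism $\oR^q j_{\bar K,\et*}\bO(R)\ten\tilde B \simeq \oR^q j_{\bar k,\cris*}D_{\pcris}^X\bO(R)\ten\tilde B$. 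Once this sheaf-level naturality is established, the remaining verification---that the resulting filtration under $\tau''$ matches the Leray filtration on both sides, and that the augmentations agree---is formal, since the gradeds of $\tau''$ on a bicosimplicial algebra of the form $\CC^{\bt}(\bar X, j_*\sC^{\bt}\sS)$ compute $\H^{*}(\bar X, \oR^{*}j_*\sS)$ by the acyclicity of the Godement/crystalline resolutions.
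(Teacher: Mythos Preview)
Your proposal is correct and is essentially the same approach as the paper, which proves this in one line: ``The proof of Proposition \ref{crisequiv} adapts.'' You have simply spelled out what that adaptation entails --- lifting Olsson's comparison to the bicosimplicial level via $j_*\sC^{\bt}$ and applying $\tau''$ --- and the sheaf-level naturality you flag as an obstacle is already implicit in Olsson's construction (the association isomorphisms $\iota_U$ are given \'etale-locally), so no new idea is needed beyond what is used in Proposition \ref{crisequiv}.
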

\begin{proof}
The proof of Proposition \ref{crisequiv} adapts.
\end{proof}

Lacking a suitable $p$-adic analogue of Lafforgue's Theorem (although \cite[Theorem 6.3.4]{kedlaya} might provide a viable replacement in some cases), we now impose a purity hypothesis.
\begin{assumption}\label{assume}
Assume that $D_{\pcris}^X\bO(R)$ is an ind-object in the category of $\iota$-pure overconvergent $F$-isocrystals. Like Definition \ref{walg}, this is equivalent to saying that for every $R$-representation  $V$, the corresponding sheaf $\vv$ on $X_{\bar{K}}$ can be embedded in the pullback of a crystalline \'etale  sheaf $\bU$ on $X_{K''}$, associated to an $\iota$-pure overconvergent $F$-isocrystal on $(\bar{X}_{k''}, D_{k''})/K''$, for some finite extension $K' \subset K''$. Also note that this implies that the Frobenius action on $D_{\pcris}O(R)$ is $\iota$-pure.
\end{assumption}

\begin{example}
To see how the hypotheses of Assumption \ref{assume} arise naturally, assume that $f\co  Y_K \to X_K$ is a geometric fibration (in the sense of 
\cite[Definition 11.4]{fried}, for instance any smooth proper morphism) with   connected components, for $Y$ of potentially good reduction. Let $G(\bar{x}, \bar{z})$ be the Zariski closure of the map
$$
\pi_f^{\et}(X_{\bar{K}})(\bar{x}, \bar{z}) \to \prod_n \Iso( (\oR^n f_{\bar{K},*}^{\et} \Q_p)_{\bar{x}}, (\oR^n f_{\bar{K},*}^{\et} \Q_p)_{\bar{z}}),
$$
so $G$ is a pro-algebraic groupoid on objects $X(\bar{K})$, and then set $R= G^{\red}$. By \cite{Hop}, $\oR^n f_{\bar{K},*}^{\et} \Q_p$ is associated to $\oR^n f_{\bar{k},*}^{\cris} \O_{Y_{\bar{k}}, \cris}$, which 
by \cite[Theorem 6.6.2]{kedlaya}  is $\iota$-pure (or if $f$ is not proper, globally $\iota$-mixed). Thus the semisimplifications of the $G$-representations $\bar{x} \mapsto (\oR^n f_{\bar{K},*}^{\et} \Q_p)_{\bar{x}}$ are direct sums of $\iota$-pure representations. Since these generate the Tannakian category of $R$-representations, the hypotheses are satisfied.

For $\bar{x} \in X(\bar{K})$, we may write $F:= Y\by_{f,X, \bar{x}} \Spec \bar{K}$, and Theorem \ref{lfibrations} then shows that the homotopy fibre of
$$
(Y_{\bar{K}}^{\et})^{R, \mal} \to (X_{\bar{K}}^{\et})^{R, \mal}
$$
over $\bar{x}$ is $(F_{\bar{K}}^{\et})^{1, \mal}$.
\end{example}

\begin{example}
A more comprehensive example would be to let $G(\bar{x}, \bar{z})$ be the Zariski closure of the map $\pi_f^{\et}(X_{\bar{K}})(\bar{x}, \bar{z}) \to \prod_{n,f} \Iso( (\oR^n f_{\bar{K},*}^{\et} \Q_p)_{\bar{x}}, (\oR^n f_{\bar{K},*}^{\et} \Q_p)_{\bar{z}})$, where $f$ ranges over all geometric fibrations of potentially good reduction with connected components, and then to set $R:= G^{\red}$. The resulting homotopy type $(X_{\bar{K}}^{\et})^{R, \mal}$ would be very close to possible conceptions of a pro-algebraic motivic homotopy type.
\end{example}

\begin{theorem}\label{criswgtexists}
Given a Galois-equivariant   quotient $R$ of ${}^{\pcris}\!\varpi^{\et}_f(X_{\bar{K}})$ satisfying Assumption \ref{assume},   the Galois action on $X_{\bar{K},\et}^{R, \mal}$ is $\iota$-mixed in the sense of Definition \ref{crismixed},   giving a canonical weight decomposition on 
$X_{\bar{K},\et}^{R, \mal}\ten B^{\sigma}$.
\end{theorem}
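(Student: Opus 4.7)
The plan is to mimic the finite-field argument of Proposition \ref{wgtexists}, substituting Kedlaya's $p$-adic analogue of the Weil II purity theorem for Deligne's original, and transporting the Frobenius structure via the crystalline comparison of Proposition \ref{crisequiv}. First I would use Theorem \ref{outercris} to factor the Galois action as $\cG^{\pcris}\to \Aut^h(X_{\bar{K},\et}^{R,\mal})$, then tensor with $B^\sigma$ and precompose with the morphism $\Z^{\alg,0}\ten B^\sigma \to \cG^{\pcris}\ten B^\sigma$ of Lemma \ref{pcrisfrob}. The resulting map
\[
\Z^{\alg,0}\ten_{\Q_p} B^\sigma \;\lra\; \Aut^h(X_{\bar{K},\et}^{R,\mal})\ten_{\Q_p}B^\sigma
\]
is the object whose factorisation through $M_{\iota,q}\ten B^\sigma$ we must establish.

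Next, using Proposition \ref{crisequiv}, I would identify the cohomology
\[
\bH^\ast(X_{\bar{K},\et}^{R,\mal}\ten \tilde B,\,\bO(R)\ten \tilde B)\;\cong\;\bH^\ast_{\cris}((\bar X_{\bar k},D_{\bar k}),D^X_{\pcris}\bO(R))\ten_{K_0^{\nr}}\tilde B
\]
as $(\phi,\cG^0)$-representations. Under Assumption \ref{assume}, the coefficient system $D^X_{\pcris}\bO(R)$ is an ind-object of the category of $\iota$-pure overconvergent $F$-isocrystals of weight $0$; Kedlaya's purity theorem (\cite[Theorem 6.6.2]{kedlaya}), which is the $p$-adic counterpart of the Weil II estimates used in Proposition \ref{wgtexists}, then forces the crystalline cohomology groups to be $\iota$-mixed with integer weights. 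Consequently the $\Z^{\alg,0}\ten B^\sigma$-action on $\bigoplus_{x\in\Ob R}\H^\ast(G,O(R)(x,-))$ factors through $M_{\iota,q}\ten B^\sigma$, and the action on $O(R)$ itself factors through $P_{\iota,q}\subset M_{\iota,q}$, since $R$ is reductive and each $\bO(R)(x)$ is $\iota$-pure of weight $0$ by Assumption \ref{assume}.

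With these two inputs I would apply Lemma \ref{cohohelps}: the kernel of
\[
\Aut^h(X_{\bar{K},\et}^{R,\mal})\;\lra\;\Aut(R)\times\prod_{a,b}\Aut_R\bigl(\H^{a+b}(G,O(R)(-))\bigr)
\]
is pro-unipotent. Exactly as in the proof of Proposition \ref{wgtexists}, the fact that $\Z^{\red,0}$ is reductive lets us lift its image (modulo the pro-unipotent kernel) through $M_{\iota,q}\ten B^\sigma$, giving an algebraic map $M_{\iota,q}\ten B^\sigma\to \Aut^h(X_{\bar{K},\et}^{R,\mal})\ten B^\sigma$ whose composition with $\Z^{\alg,0}\to M_{\iota,q}$ recovers the Frobenius action up to a pro-unipotent twist that is absorbed into $\bG_a\subset \Z^{\alg,0}$. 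Composing with the canonical $\bG_m\to M_{\iota,q}$ of Lemma \ref{inftywgt}, and using that $\bG_m$ acts trivially on $R$ (which is $P_{\iota,q}$), we obtain the desired weight decomposition
\[
\bG_m\;\lra\;\RAut(X_{\bar{K},\et}^{R,\mal}\ten B^\sigma).
\]

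The principal technical obstacle is the purity input in the second paragraph: converting Assumption \ref{assume} (purity of $D^X_{\pcris}\bO(R)$ as an overconvergent $F$-isocrystal) into $\iota$-mixedness with integer weights of the crystalline cohomology with coefficients in that isocrystal, with precisely the control needed to deduce factorisation through $M_{\iota,q}$ rather than some larger group. Once this is in hand, the descent from $\Aut^h$ to a group acting through $M_{\iota,q}$ is a formal consequence of Lemma \ref{cohohelps} and the pro-unipotency/reductivity dichotomy, and the construction of the $\bG_m$-action reduces to the $\bG_m\to M_{\iota,q}$ factor from \S\ref{wgtdecomp}.
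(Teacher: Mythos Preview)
Your strategy is essentially the paper's: reduce to mixedness of Frobenius on cohomology via Lemma~\ref{cohohelps}, invoke Kedlaya's $p$-adic Weil II for the purity input, and transport via Proposition~\ref{crisequiv}. Two points deserve sharpening.

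First, the paper organises the argument on the \emph{crystalline} side rather than the \'etale side: it shows directly that the Frobenius element of $\Aut^h(X_{\bar{k},\cris}^{D_{\pcris}R,\mal})$ is $\iota$-mixed (where Frobenius is a genuine $K_0^{\nr}$-linear automorphism, so Lemma~\ref{cohohelps} applies over a field), and only then uses the comparison isomorphism $\Aut^h(X_{\bar{K},\et}^{R,\mal})\ten\tilde B^{\sigma}\cong\Aut^h(X_{\bar{k},\cris}^{D_{\pcris}R,\mal})\ten\tilde B^{\sigma}$ together with $B^{\sigma}\subset\tilde B^{\sigma}$ to conclude. Your version applies Lemma~\ref{cohohelps} after tensoring with $B^{\sigma}$, which requires checking that the pro-unipotence statement survives base change; working crystalline-side first avoids this.

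Second, the obstacle you flag is resolved in the paper exactly as one would hope from the $\ell$-adic case: the Leray spectral sequence for $j\co X\to\bar X$ has $E_1$-terms $\H^{2a+b}_{\cris}(D^{(-a)}_{\bar k},i_{-a}^*j_*D_{\pcris}^X\bO(R)(a))$, cohomology of the smooth proper strata $D^{(n)}$ with pure coefficients, and then Kedlaya's theorem combined with Poincar\'e duality gives $\iota$-purity of weight $b$ for each term (not merely mixedness). This is the step your proposal leaves implicit. Finally, since you pass through $\Z^{\alg,0}$ (a limit over finite extensions), the correct target is $M_{\iota}^0$ rather than a fixed $M_{\iota,q}$.
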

\begin{proof}
This is essentially the same as Proposition \ref{wgtexists}. Frobenius gives a canonical element of  $\Aut^h(X_{\cris}^{D_{\pcris}R, \mal} )$. We first show that this is $\iota$-mixed of integral weights. By Lemma \ref{cohohelps}, we need only consider the Frobenius action on cohomology
$$
\H^*_{\cris}((\bar{X}_{\bar{k}},D_{\bar{k}}), D_{\pcris}^X\bO(R)).
$$
The Leray spectral sequence gives 
$$
 \H^{2a+b}_{\cris}(\bar{X}_{\bar{k}}, \oR^{-a}_{\cris}j_*D_{\pcris}^X\bO(R))\abuts \H^{a+b}_{\cris}((\bar{X}_{\bar{k}},D_{\bar{k}}), D_{\pcris}^X\bO(R)).
$$
If we write $D^{(n)}$ for the normalisation of the $n$-fold intersection of the local components of $D$, and $i_n\co  D^{(n)}\to \bar{X}$ for the embedding, then as in  \cite[3.2.4.1]{Hodge2}, there is an isomorphism
$$
\H^{2a+b}_{\cris}(\bar{X}_{\bar{k}}, \oR^{-a}_{\cris}j_*D_{\pcris}^X\bO(R))\cong \H^{2a+b}_{\cris}(D^{(-a)}_{\bar{k}}, i_n^*j_*D_{\pcris}^X\bO(R)(a)),
$$
since $j_*D_{\pcris}^X\bO(R)$ is associated to a locally constant sheaf on $X$. 


Now, 
\cite[Theorem 6.6.2]{kedlaya}  combined with Poincar\'e duality proves that  $\H^{2a+b}_{\cris}(D^{(-a)}_{\bar{k}}, i_n^*j_*D_{\pcris}^X\bO(R)(a))$ is $\iota$-pure of weight $b$. Thus Lemma \ref{cohohelps} implies that the Frobenius element of $\Aut^h(X_{\cris}^{D_{\pcris}R, \mal} )$ is $\iota$-mixed of integral weights.

We need to show that  the composite morphism 
$$
\Z^{\alg,0} \to \cG^{\pcris}\ten_{\Q_p}B^{\sigma}\to \Aut^h(X_{\bar{K},\et}^{R, \mal})\ten_{\Q_p}B^{\sigma}
$$
factors through $M_{\iota}^0$. By Proposition \ref{crisequiv}, 
$$
\Aut^h(X_{\bar{K},\et}^{R, \mal})\ten_{\Q_p}\tilde{B}^{\sigma}\cong \Aut^h(X_{\cris}^{D_{\pcris}R, \mal} )\ten_{K_0^{\nr}}\tilde{B}^{\sigma},
$$
so the map
$$
\Z^{\alg,0} \to \cG^{\pcris}\ten_{\Q_p}B^{\sigma}\to \Aut^h(X_{\bar{K},\et}^{R, \mal})\ten_{\Q_p}\tilde{B}^{\sigma}
$$
factors through $M_{\iota}^0$. Since $B^{\sigma} \subset \tilde{B}^{\sigma}$, this completes the proof.
\end{proof}

\begin{theorem}\label{crisqformal}
For $R$ as in Theorem \ref{criswgtexists}, the filtered homotopy type $(X_{\bar{K},\et}, j_{\bar{K},\et})^{R, \mal}\ten B^{\sigma}$  is  quasi-formal, corresponding to the $E_2$-term
$$
{}_J\!\EE_1^{a,b}( X_{\bar{K},\et}^{R, \mal})\ten B^{\sigma} =\bigoplus_{a,b} \H^{2a+b}(\bar{X}_{\bar{K}}, \oR^{-b}j_*\bO(R))\ten B^{\sigma} \in FDG\Alg_{ B^{\sigma}}(R),
$$
of the Leray spectral sequence for the immersion $j\co  X \to \bar{X}$, and the formality isomorphism is  equivariant with respect to the  Galois action.

The filtered homotopy type $(X_{\bar{K},\et}, j_{\bar{K},\et})^{R, \mal}$ is also quasi-formal, but the formality isomorphism is not in general Galois-equivariant or canonical.
\end{theorem}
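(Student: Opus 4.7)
The plan is to descend quasi-formality from $B^{\sigma}$ to $\Q_p$ via the vanishing of fppf cohomology of pro-unipotent group schemes in characteristic zero. Take a filtered minimal model $\m \in Fdg\hat{\cN}_{\Q_p}(R)$ of the filtered homotopy type $(X_{\bar{K},\et}, j_{\bar{K},\et})^{R,\mal}$ from Proposition \ref{fsdgminimal}, and likewise let $\m_0 \in Fdg\hat{\cN}_{\Q_p}(R)$ be a filtered minimal model of ${}_J\!\EE_1^{*,*}$. Since $\gr^J \m$ and $\gr^J \m_0$ are both graded minimal models of the graded DG algebra ${}_J\!\EE_1$, Proposition \ref{fsdgminimal} yields an isomorphism $\alpha\co \gr^J \m \cong \gr^J \m_0$, which we fix once and for all.

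Let $S$ be the $\Q_p$-functor sending a $\Q_p$-algebra $A$ to the set of filtered isomorphisms $f\co \m \hat{\ten} A \xra{\sim} \m_0 \hat{\ten} A$ in $Fdg\hat{\cN}_A(R)$ satisfying $\gr^J f = \alpha \hat{\ten} A$. Then $S$ is representable by an affine scheme over $\Q_p$, with a free transitive right action (by post-composition) of the affine $\Q_p$-group scheme
\[
U := \ker\bigl(\Aut_{Fdg\hat{\cN}(R)}(\m_0) \to \Aut(\gr^J \m_0)\bigr),
\]
which is pro-unipotent by (the DG analogue of) Lemma \ref{frout}. Proposition \ref{fcrisequiv} combined with the first half of the theorem yields an isomorphism $\m \hat{\ten} B^{\sigma} \cong \m_0 \hat{\ten} B^{\sigma}$; post-composing with a lift to $U(B^{\sigma})$ of the graded automorphism correcting its reduction to $\gr^J \m_0 \hat{\ten} B^{\sigma}$ produces a $B^{\sigma}$-point of $S$, so $S$ is a non-empty right $U$-torsor over $\Q_p$.

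Since $U$ is a filtered inverse limit of successive $\bG_a$-extensions and $\Q_p$ has characteristic zero, every $U$-torsor over $\Q_p$ is trivial, and so $S(\Q_p) \ne \emptyset$. A $\Q_p$-point of $S$ is precisely a filtered isomorphism $\m \cong \m_0$ in $Fdg\hat{\cN}_{\Q_p}(R)$, which under Theorem \ref{fbigequiv} translates into the required quasi-formality of $(X_{\bar{K},\et}, j_{\bar{K},\et})^{R,\mal}$. Different trivializations differ by the non-trivial group $U(\Q_p)$, accounting for non-canonicity; and a chosen $\Q_p$-point is Galois-equivariant if and only if its image in $S(B^{\sigma})$ is $\cG$-fixed, which will generically fail, because the $\cG$-fixed $B^{\sigma}$-point from the first half arises from the $\bG_m$-action of Lemma \ref{pcrisfrob}, a morphism intrinsic to $B^{\sigma}$ that does not descend to $\Q_p$.

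The main obstacle will be the technical setup underpinning the torsor $S$, namely the representability of $S$ as an affine scheme and the pro-unipotence of $U$; both demand adapting the filtered-automorphism calculations of Lemma \ref{frout} to the DG Lie setting, together with the verification that the correcting graded automorphism genuinely lifts to a filtered automorphism of $\m_0 \hat{\ten} B^{\sigma}$, which is immediate since the filtration on $\m_0$ splits as a grading inherited from the bigrading on ${}_J\!\EE_1^{*,*}$.
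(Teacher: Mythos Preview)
Your proposal treats only the second claim, invoking the first half (quasi-formality over $B^{\sigma}$) as given; the paper dispatches that first half in one line by applying the weight argument of Corollary~\ref{qformal} to the Galois-equivariant $\bG_m$-action on $X_{\bar{K},\et}^{R,\mal}\ten B^{\sigma}$ supplied by Theorem~\ref{criswgtexists} and Lemma~\ref{pcrisfrob}. You should at least record this.

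For the descent to $\Q_p$ your torsor argument is correct but genuinely different from the paper's. Rather than descending an isomorphism, the paper observes that quasi-formality over $B^{\sigma}$ forces the map of pro-algebraic $\Q_p$-groups $\RAut_J(X_{\bar{K},\et}^{R,\mal}) \to \Aut({}_J\!\EE_1^{*,*})$ to be surjective with pro-unipotent kernel, then lifts the $\bG_m$-grading on ${}_J\!\EE_1^{*,*}$ (available over $\Q_p$ from the bigrading) non-canonically along that surjection, and re-runs Corollary~\ref{qformal} over $\Q_p$ with this lifted weight decomposition. Your approach is conceptually cleaner (triviality of pro-unipotent torsors over a characteristic-zero field does the work in one stroke) at the cost of setting up the torsor $S$ carefully; the paper's method stays parallel to the earlier formality proofs and makes the source of non-canonicity and non-equivariance transparent (namely the choice of $\bG_m$-lift).

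Two small points to fix. First, the correcting graded automorphism must be lifted to $\Aut_{Fdg\hat{\cN}(R)}(\m_0)(B^{\sigma})$, not to $U(B^{\sigma})$, since $U$ is the \emph{kernel} of the reduction map; the splitting of the filtration on $\m_0$ you invoke does furnish such a section of $\Aut_{Fdg\hat{\cN}(R)}(\m_0)\to\Aut(\gr^J\m_0)$. Second, part one only gives a filtered weak equivalence over $B^{\sigma}$; that it upgrades to an actual isomorphism $\m\hat{\ten}B^{\sigma}\cong\m_0\hat{\ten}B^{\sigma}$ follows from minimality (the induced map on cogenerators is the isomorphism on ${}_J\!\EE_1$, and both sides are free on those cogenerators), but this step deserves a sentence.
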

\begin{proof}
Since the Galois action is $\iota$-mixed in the sense of Definition \ref{crismixed}, there is a Galois-equivariant weight  decomposition $\bG_m \to \RAut_J(X_{\bar{K},\et}^{R, \mal}\ten {B}^{\sigma})$, using Lemma \ref{pcrisfrob} and the observation after Definition \ref{inq}. The argument of Corollary \ref{qformal} now adapts to show that $X_{\bar{K},\et}^{R, \mal}\ten {B}^{\sigma}$  is quasi-formal, with  the formality quasi-isomorphism  equivariant under the Galois action, proving the first part.

In particular this implies that
$$
\RAut_J(X_{\bar{K},\et}^{R, \mal} )({B}^{\sigma}) \to \Aut({}_J\!\EE_1^{*,*}( X_{\bar{K}}^{R, \mal}))( {B}^{\sigma})
$$
is a pro-unipotent extension. Thus the corresponding morphism of pro-algebraic groups is surjective,  allowing us to lift the weight decomposition on $\EE_1^{*,*}( X_{\bar{K}}^{R, \mal})$ non-canonically to $X_{\bar{K},\et}$. This decomposition need not be compatible with the canonical decomposition on $X_{\bar{K},\et}^{R, \mal}\ten {B}^{\sigma}$. 
The argument of Corollary \ref{qformal} adapted to this decomposition now  shows that $X_{\bar{K},\et}^{R, \mal}$  is quasi-formal. 
\end{proof}

\begin{corollary}\label{crisqformalrk}
For $X$ and $R$ as above, we can describe the homotopy groups  $\varpi_n^{\et}(X_{\bar{K}}^{R, \mal},x)\by_{\Spec \Q_p} \Spec B^{\sigma}$ explicitly in terms of the Leray spectral sequence as 
\[
\varpi_n^{\et}(X_{\bar{K}}^{R, \mal},x)^{\vee}\ten_{ \Q_p} B^{\sigma} =\H^{n-1}(G( {}_J\!\EE^{*,*}_1( X_{\bar{K},\et}^{R, \mal}))^{\vee})\ten_{ \Q_p} B^{\sigma},
\]
for
 $G$ as in Definition \ref{barwg}. Of course, if the conditions of Theorem \ref{etpimal} hold (including goodness), then this also calculates $\pi_n^{\et}(X_{\bar{K}},x)\ten_{\hat{\Z}} B^{\sigma}$ as a Galois representation. 
 \end{corollary}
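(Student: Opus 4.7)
The corollary is essentially a restatement at the level of homotopy groups of the quasi-formality established in Theorem \ref{crisqformal}, so the plan is to extract numerical content from that structural result.

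First I would forget the Leray filtration in the second assertion of Theorem \ref{crisqformal}, which gives a (non-canonical) weak equivalence
\[
X_{\bar{K},\et}^{R,\mal}\ten_{\Q_p}B^{\sigma} \simeq \bar{W}\bigl(G({}_J\!\EE_1^{*,*}(X_{\bar{K},\et}^{R,\mal}))\bigr)\ten_{\Q_p}B^{\sigma}
\]
in $\Ho(s\Aff_{B^{\sigma}}(R)_*)$, with $G$ the functor from Definition \ref{barwg} applied to the underlying cochain algebra.

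Next, I would translate this into a statement about chain Lie algebras. Applying the equivalences of Theorem \ref{bigequiv}, extended to $B^{\sigma}$-linear coefficients via Lemma \ref{puny}, together with the normalisation equivalence of Proposition \ref{nequiv}, produces a weak equivalence
\[
N\Ru G(X_{\bar{K},\et})^{R,\mal}\hat{\ten}_{\Q_p}B^{\sigma} \simeq G({}_J\!\EE_1^{*,*})\hat{\ten}_{\Q_p}B^{\sigma}
\]
in $\Ho(dg\hat{\cN}_{B^{\sigma}}(R))$. Then I would unwind the definitions of Malcev homotopy groups: by Definition \ref{varpi} together with Proposition \ref{nequiv},
\[
\varpi_n^{\et}(X_{\bar{K}}^{R,\mal},x) = \pi_{n-1}\bigl(\Ru G(X_{\bar{K},\et})^{R,\mal}(x,x)\bigr) = \H_{n-1}\bigl(N\Ru G(X_{\bar{K},\et})^{R,\mal}(x)\bigr).
\]
Because these are pro-finite-dimensional, dualisation interchanges $\H_{n-1}$ with $\H^{n-1}$ of the dual cochain complex, and flatness of $B^{\sigma}/\Q_p$ ensures base change commutes with (co)homology; chaining these identifications with the Lie-algebra equivalence above yields the displayed formula.

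The final sentence about $\pi_n^{\et}(X_{\bar{K}},x)\ten_{\hat{\Z}}B^{\sigma}$ follows by tensoring the isomorphism $\pi_n^{\et}(X_{\bar{K}},x)\ten_{\hat{\Z}}\Q_p \cong \varpi_n^{\et}(X_{\bar{K}}^{R,\mal},x)$ from Theorem \ref{etpimal} up to $B^{\sigma}$. The main obstacle here is not deep; it is the bookkeeping required to verify that $\hat{\ten}_{\Q_p}B^{\sigma}$ commutes with the several functors (the Quillen adjoint $\bar{G}$, normalisation, dualisation, and cohomology) that intervene, and that the minimal-model step implicit in the construction of $G$ introduces no hidden incompatibility with base change. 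All of this is handled by the general formalism, since the functors involved are either left adjoints or invariant under flat base change on pro-finite-dimensional objects.
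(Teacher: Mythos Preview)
Your approach is correct and matches the paper's own (implicit) argument: the corollary is stated without proof, as an immediate consequence of Theorem \ref{crisqformal}, exactly parallel to how Corollaries \ref{formalrk} and \ref{htpyleray} follow from Theorem \ref{formal} and Corollary \ref{qformal}. One small correction: you invoke the \emph{second} assertion of Theorem \ref{crisqformal} (the non-canonical quasi-formality of the untensored type), but to obtain the identification \emph{as a Galois representation} in the final sentence you need the \emph{first} assertion, which gives the Galois-equivariant quasi-formality of $(X_{\bar{K},\et}, j_{\bar{K},\et})^{R,\mal}\ten B^{\sigma}$ directly.
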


\begin{remarks}\label{finalrks}
\begin{enumerate}
\item
In the case when $X$ is projective and $R$ is a quotient of ${}^{\Gal}\!\varpi_f(X_{\bar{k}})$, this is essentially the main formality result of \cite[\S 4]{olssonhtpy}, which has since been extended to the general projective case in \cite[Theorem 7.22]{olssonhodge}, although Frobenius-equivariance is not made explicit there. The proofs also differ in that they work with minimal algebras, rather than minimal Lie algebras. 

\item
Although at first sight Theorem \ref{crisqformal} is weaker than Theorem \ref{locqformal}, it is more satisfactory in one important respect. Theorem \ref{locqformal} effectively shows that relative Malcev $\ell$-adic homotopy types carry no more information than cohomology, whereas to recover a relative Malcev $p$-adic homotopy type from Theorem \ref{crisqformal}, we still need to identify  $(X_{\bar{K},\et}, j_{\bar{K},\et})^{R, \mal} \subset (X_{\bar{K},\et}, j_{\bar{K},\et})^{R, \mal}\ten {B}^{\sigma}$. This must be done by describing the Hodge filtration on $(X_{\cris}^{ D_{\pcris}R, \mal},j_{\bar{k},\cris})$, which is not determined by cohomology (since it is not Frobenius-equivariant). Thus the Hodge filtration is the only really new structure on the relative Malcev homotopy type. This phenomenon is  similar to the formality results for mixed Hodge structures in  \cite[\S \ref{mhs-real}]{mhs}.
\end{enumerate}
\end{remarks}

\bibliographystyle{alphanum}
\bibliography{references}
\end{document}